\tikzset{
  math to/.tip={Glyph[glyph math command=rightarrow]},
  math double/.tip={Glyph[glyph math command=twoheadrightarrow]},
  loop/.tip={Glyph[glyph math command=looparrowleft, swap]},
  weird/.tip={Glyph[glyph math command=Rrightarrow, glyph length=1.5ex]},
  pi/.tip={Glyph[glyph math command=pi, glyph length=1.5ex, glyph axis=0pt]},
  pool/.tip={Glyph[glyph math command=looparrowleft]},
}
\newtheorem{thm}{Theorem}[section]
\newtheorem{cor}[thm]{Corollary}
\newtheorem{prop}[thm]{Proposition} 
\newtheorem{lem}[thm]{Lemma}
\newtheorem{conv}[thm]{Convention}
\newtheorem{mainthm}{Theorem}
\newtheorem{conj}[mainthm]{Conjecture}
\theoremstyle{definition}
\newtheorem{defn}[thm]{Definition}
\theoremstyle{remark}
\newcommand{\define}{\textit}
\renewcommand{\vec}{\underline}
	\def\command@factory#1{%
		\expandafter\def\csname cal#1\endcsname{\mathcal{#1}}
		\expandafter\def\csname frak#1\endcsname{\mathfrak{#1}}
		\expandafter\def\csname scr#1\endcsname{\mathscr{#1}}
		\expandafter\def\csname bb#1\endcsname{\mathbb{#1}}
		\expandafter\def\csname rm#1\endcsname{\mathrm{#1}}
		\expandafter\def\csname bf#1\endcsname{\mathbf{#1}}
	}
\DeclareMathAlphabet{\mathscr}{LS1}{stixscr}{m}{n}
\newcommand{\onto}{\ensuremath{\twoheadrightarrow}}
\newcommand{\bk}[1]{{\left\langle #1 \right\rangle}}
\newcommand{\into}{\ensuremath{\hookrightarrow}}
\newcommand{\Stab}{{\rm Stab}}
\newcommand{\freegp}[1]{{\bbF(#1)}}
\newcommand{\nsgp}{\triangleleft}
\newcommand{\charsgp}{\nsgp_{\mathrm c}}
\newcommand{\fidx}[1]{\stackrel{\mathrm{f.i.}}{#1}}
\newcommand{\semidirect}[1]{\rtimes_{#1}}
\newcommand{\torus}[1]
{{{\bbT_{#1}}}}
\newcommand{\verts}[1]{{{\textbf{V}(#1)}}}
\newcommand{\edges}[1]{{{\textbf{E}(#1)}}}
\newcommand{\aut}[1]{{\mathrm{Aut}\left(#1\right)}}
\newcommand{\Aut}[1]{{\mathrm{Aut}\left(#1\right)}}
\newcommand{\Autfo}[1]{{\mathrm{Aut_{fo}}\left(#1\right)}}
\newcommand{\autfo}[1]{{\mathrm{Aut_{fo}}\left(#1\right)}}
\newcommand{\out}[1]{{\mathrm{Out}\left(#1\right)}}
\newcommand{\Outfo}[1]{{\mathrm{Out_{fo}}\left(#1\right)}}
\newcommand{\outfo}[1]{{\mathrm{Out_{fo}}\left(#1\right)}}
\newcommand{\Bass}{\mathbf{Bass}} 
\newcommand{\Inn}[1]{{\mathrm{Inn}\left(#1\right)}}
\newcommand{\StalGraph}[1]{\mathcal{G}\left(#1\right)}
\newcommand{\BasedStalGraph}[2]{{\left(\mathcal{B}\left(#1\right),#2\right)}}
\newcommand{\ad}[1]{{\mathrm{ad}_{#1}}}
\renewcommand{\hat}{\widehat}
\newcommand{\Hom}[2]{{\mathrm{Hom}\left(#1,#2\right)}}
\newcommand{\elev}[2]{\mathrm{Elev}_{#2}\left(#1\right)}
\newcommand{\G}{{\torus \alpha}}
\newcommand{\link}[1]{{\mathscr{l}}_{#1}}
\title{Unipotent linear suspensions of free groups}
\author{Fran\c{c}ois Dahmani, Nicholas Touikan}
\begin{document}
\maketitle
\begin{abstract}
    Motivated by the study of the conjugacy problem for outer automorphisms of free groups,  we develop the algorithmic theory of the free-by-cyclic groups  produced by unipotent linearly growing automorphisms of finitely generated free groups. 
    
    We compute canonical splittings of these suspensions as well as their subgroups. We compute their automorphism groups. We show that this class of suspensions is effectively coherent. We solve the mixed Whitehead problem in these suspensions and show that their subgroups all satisfy the Minkowski property, i.e. that torsion in their outer automorphism group is faithfully represented in some computable finite quotients.
    
    An application of our results is a solution to  the conjugacy problem for exponentially growing outer automorphisms of  free groups whose polynomially growing part is unipotent linear.

  \end{abstract}

\tableofcontents

    \section{Introduction}
    \subsection{Reducibility and the failure of semisimplicity in $\out{F_n}$}
        
        A single automorphism $\alpha$ of a group $G$ will typically have several interesting dynamical, algebraic, and geometric features associated to  its conjugacy class $[\alpha]$ in the ambient automorphism group $\aut G$.
        When one attempts to extract and  analyze  these features, one often encounters a situation of  reduction to  subgroups that encapsulate a specific feature.

        A classical example is $GL_n(\bbC)$, the automorphism group of the vector space $\bbC^n$. The dynamics of $A \in GL_n(\bbC)$ are fully captured by examining the action of $A$ on generalized eigenspaces. The minimal Jordan blocks in a Jordan  form decompose $A$ into a sum of irreducible automorphisms, which are easily understood.
        
        A more sophisticated example is the (extended) Mapping Class Group of a closed orientable surface $\Sigma$, which classically plays a role of exotic analogue of the lattices $GL_n(\bbZ)$.   It is the group of isotopy classes of homeomorphisms of the surface and the Dehn-Nielsen theorem identifies this group with $\out{\pi_1(\Sigma)}$. Here the outer automorphism group $\out{G}$ is the quotient of $\Aut{G}$ by the normal subgroup of the inner automorphisms (or conjugations). This construction makes $\out{\pi_1(\Sigma)}$ well-defined even without a choice of basepoint for $\Sigma$. In particular outer automorphisms tend to capture the dynamical, algebraic, and geometric features that are of the most interest.
        
        The Nielsen-Thurston classification of the elements of Mapping Class Groups divides mapping classes into irreducible mapping classes (i.e. pseudo-Anosov) and reducible mapping classes: those have a power that preserves a decomposition of $\Sigma$ into a union of invariant sub-surfaces. One then has a reduction to the mapping class groups of the subsurfaces:  a power of reducible mapping class can be described as disjoint support mapping classes with some twist parameter for gluing subsurfaces together.
        
        Thus both $GL_n(\bbC)$ or mapping class groups have the following \emph{semisimplicity} property: every element has a power that can be described as  
        a collection of irreducible automorphisms on smaller objects, with gluing parameters. For  $GL_n(\bbC)$ (unlike the Mapping Class groups), these parameters are always trivial, and the restrictions to subspaces is sufficient. 
        
        This decomposition facilitates the conjugacy problem, that aims to describe sufficient invariants to  determine whether two automorphisms are conjugate. Unfortunately, the outer automorphism group of free groups does not satisfy such a semisimplicity property.  
        
        Following \cite{bestvina_train_1992}, $\out{F}$, the outer automorphism group of a finitely generated free group $F$, also contains irreducible elements, and reducible ones: those have a power that preserves a conjugacy class of a free factor.
        If a reducible outer automorphism $\Phi$ happens to preserve a free factorization of $F$ then indeed $\Phi$ can be well-understood much like reducible mapping classes, by its restrictions to invariant free factors. Unlike for mapping class groups or linear groups, however, reducibility in $\out{F}$ does not imply the preservation of a nice decomposition. 
        
        A powerful perspective to make sense of reducibility in $\out{F}$ is via the study of suspensions of automorphisms. If $\phi$ is an automorphism of $F$, its \emph{suspension} is  the  semi-direct product $F \rtimes_\phi \bk t$ in which the conjugation on $F$ by the generator $t$  $\phi$. Proposition \ref{prop:conj-crit} gives an equivalence of categories
        between $\out{F}$ and suspensions of $F$ where conjugacy corresponds to fibre and orientation preserving isomorphisms.

        It was proved in \cite{dahmani_li_relative_2022, ghosh_relative_2018} that, when $F$ is a finitely generated free group,  $F\hookrightarrow F\rtimes \bbZ$ possesses a geometry that detects possibilities of reduction: it is relatively hyperbolic with respect to some sub-suspensions of some subgroups of $F$ (its parabolic subgroups), whose conjugacy classes are preserved by $\phi$, on which it induces polynomial growth of conjugacy classes. This leads to the following \emph{suspension classification of elements of $\out{F}$}: their suspensions either
        \begin{enumerate}
            \item do not admit non-trivial relatively hyperbolic structure, in which case they are called \emph{NRH groups}, and the corresponding automorphism is polynomially growing, or
            \item are hyperbolic groups, in which case the automorphism is atoroidal and exponentially growing, or
            \item are hyperbolic relative to a non-empty family of NRH subgroups, in which case the automorphism is exponentially growing, but there are subgroups of $F$ on which the restriction is polynomially growing.
        \end{enumerate}
        This classification is not compatible with standard reducible/irreducible classification that has proven itself so useful in the study of $\out{F}$ and the dynamics of actions on outer space, but it does provide a concept of reducibility that allows us to recast semisimplicity in a way that is amenable to the conjugacy problem in $\out{F}$.
        
        Recall that the free group analogue of a decomposition into direct summands for a vector space, or a decomposition into subsurfaces for a surface, is a free product decomposition. These all correspond to the semisimple regime, where reducible automorphisms are just direct sums of simpler irreducible automorphisms.
        
        A free product $\Gamma = K_1*\ldots*K_n$ provides a way to embed the groups $K_i$ into a larger group $\Gamma$ in such a way that $\Gamma$ can be fully understood in terms of these free factors. A generalization of a free product of $K_1,\ldots,K_n$ is a group $\Delta = (\Delta;K_1,\ldots,K_n)$ that is hyperbolic relative to $K_1,\ldots,K_n$.
        
        Our version of reducibility in $\out{F}$ therefore occurs when the suspension of an automorphism is relatively hyperbolic and the irreducible components are isomorphic to suspensions of polynomially growing automorphisms of subgroups of $F$. The failure of semisimplicity comes from the fact that in the general relatively hyperbolic case, the whole group cannot be fully constituted from these irreducible components. What this means is that to solve the conjugacy problem for reducible automorphisms it is not enough to simply to solve the conjugacy problem for the polynomially growing restrictions, we also need to take into account the substantial interactions of our irreducible components with the ambient group.
        
        A recurrent theme in relative hyperbolicity is the successful reductions of global algorithmic problems to algorithmic problems in parabolic subgroups and in \cite{dahmani_touikan_reducing_2021} the authors give a complete set of extra algorithmic problems that need to be solved in the irreducible components in order to determine the conjugacy of two reducible automorphisms. In this paper we show how to solve these problems for the base case of suspensions of unipotent linearly growing automorphisms, thus overcoming the failure of semisimplicity in $\out{F}$, and as an immediate consequence of \cite{dahmani_touikan_reducing_2021}  and the main results of this paper we get
        
         \begin{thm}\label{thm:CP-rel-ulg}
             Let $F$ be a finitely generated free group. Dehn's Conjugacy Problem in $\out F$ among exponentially growing  outer automorphisms whose polynomial growth part is unipotent linear is algorithmically solvable. 
         \end{thm}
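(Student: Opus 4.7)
The plan is to deduce the theorem by combining the reduction framework of \cite{dahmani_touikan_reducing_2021} with the algorithmic results of the present paper, using the suspension viewpoint outlined in the introduction. Given two outer automorphisms $\Phi_1,\Phi_2 \in \out F$ whose polynomially growing parts are unipotent linear, I first replace the question of their conjugacy by the question of the existence of a fiber- and orientation-preserving isomorphism between their suspensions $F\rtimes_{\phi_i}\bk t$, via the equivalence mentioned as Proposition \ref{prop:conj-crit}.

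Next I invoke the suspension trichotomy: by \cite{dahmani_li_relative_2022, ghosh_relative_2018}, each $F\rtimes_{\phi_i}\bk t$ is either NRH (polynomial case), hyperbolic (atoroidal case), or hyperbolic relative to a computable family of NRH sub-suspensions that are themselves suspensions of the polynomially growing restrictions of $\phi_i$ to invariant free factors. Under our standing hypothesis, these parabolic sub-suspensions are suspensions of unipotent linearly growing automorphisms. The hyperbolic and atoroidal cases are handled by preexisting solutions to the conjugacy problem for hyperbolic groups and atoroidal automorphisms, so the content is entirely in the relatively hyperbolic case.

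In that case I apply the main reduction of \cite{dahmani_touikan_reducing_2021}, which provides an algorithm for the fiber- and orientation-preserving isomorphism problem between two such relatively hyperbolic suspensions, conditional on a prescribed list of algorithmic inputs concerning the parabolic subgroups: computation of canonical splittings, effective access to their automorphism groups, effective coherence, the mixed Whitehead problem, and the Minkowski property. These are precisely the items listed in the abstract as the main algorithmic contributions of this paper, proven for suspensions of unipotent linearly growing automorphisms.

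The main obstacle is therefore not in this deduction itself but in the body of the paper, where each of the required ingredients must be established for unipotent linear suspensions; once those are in hand, the theorem follows by feeding them into the reduction of \cite{dahmani_touikan_reducing_2021}. I would accordingly present the proof as a short paragraph announcing this combination, with a pointer to each ingredient inside the paper that matches a bullet of the reduction's hypothesis list, and with a remark that the ``extremal'' hyperbolic and NRH cases are either classical or handled directly by the polynomial-growth input.
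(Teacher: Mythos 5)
Your proposal is correct and follows the same route as the paper: the theorem is obtained as an immediate consequence of the reduction theorem of \cite{dahmani_touikan_reducing_2021}, whose required algorithmic inputs for the parabolic (unipotent linear) suspensions — canonical splittings, effective coherence and tractability, the mixed Whitehead problem, and the Minkowski property — are exactly the results established in this paper. The paper itself gives no more detailed argument than this combination, so your presentation matches its proof in substance.
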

         The authors take this opportunity to express their hope that this first step will be successfully followed by the study of suspensions of automorphisms with higher degrees of polynomial growth with the  final goal of a complete solution to the conjugacy problem in $\out{F}$.
         
        \subsection{Growth of automorphisms}
        
        Let $G$ be a group acting by isometries on a metric space $(X,d)$.  An automorphism $\alpha \in \aut {G}$ is said to have polynomial growth (for $(X,d)$) if, for all $g\in G$,  there exists a polynomial $P\in \bbZ[X]$ such that for all $n\in \bbN$,  \[ \inf_{x\in X} d(x,\alpha^n(g)x) \leq P(n).\]  
        
        In many cases (and in all this paper), the space $(X,d)$  will be $G$ equipped with the word metric of a finite generating set.  
        For $g\in G$,  the integer $\|g\| =  \inf_{h\in G} d(h,gh)$ denotes the length of a shortest element in its conjugacy class. The quantity $\inf_{x\in X} d(x,\alpha^n(g)x)$ is then simply $\|\alpha^n(g)\|$. 
          An automorphism of $G$ is therefore \emph{linearly growing} (for a word metric $(G,d)$) if
        for any $g \in G$, there  are $\lambda, \mu \in \bbN$, such that, for all $n\geq 1$, 
        \[
          \|\alpha^n(g)\| \leq \lambda n +\mu.
        \]

         Linear growth is a property  shared by all
        automorphisms $\alpha'  \in \aut{G}$ in the same outer class as
        $\alpha$ in $\out{G}$, and that  persists under change of
        finite generating set  of $G$. Being linearly growing is  a property of conjugacy classes of outer automorphisms, in $\out{G}$.  
         
        Our main interest is for $G=F$ a  finitely generated free group,  (we will sometimes insist on its rank by calling it $F_m$), with the word metric given by a chosen basis.
        
        The most relevant elementary example is a  Dehn twist. Write $F$ as a free product $F= A*B$, and to define $\alpha$, apply to $A$ the identity, and to  $B$ a conjugation by an element of $A$. This is an automorphism that has linear growth.  Write $F$ as a free HNN extension $F=A*_{\{1\}}$, of stable letter $s$, and define another automorphism $\alpha$ to be the identity on $A$ and to send $s$ on $sa$ for some $a\in A$.  Again such $\alpha$ has linear growth. More generally a composition of compatible (in a certain sense) Dehn twists is a linearly growing automorphism. 
        
        Another example is the case of automorphisms of fundamental groups of closed surface groups: for a closed surface $\Sigma$,  any Dehn twist along a simple closed curve of  $\Sigma$ defines a linearly growing outer automorphism of $\pi_1(\Sigma)$, as well as a composition of Dehn twists along disjoint curves. 
        There is more: by classification of mapping classes, any automorphism of a closed surface group that induces a polynomially growing automorphism on an invariant finitely generated subgroup, induces actually a linearly growing automorphism on this subgroup.  If the subgroup is of infinite index, it is free, and the situation is covered by the former example, though more classical. 

       
       Most automorphisms of non-abelian free groups and of (non-virtually abelian) closed surface groups  are not polynomially growing (most  are exponentially growing), nevertheless the polynomially growing and linearly growing ones are important. 
       


       

        Among polynomially growing automorphisms, one may define those that are unipotent: consider $\alpha \in \aut{F_m}$ of polynomial growth, and its  image $\bar \alpha $ in $GL_m(\bbZ)$ through the abelianization map $F_m \to \bbZ^m$, we say that $\alpha$ is unipotent if $\bar \alpha$ is conjugate to an upper triangular matrix with ones on the diagonal (see \cite[p. 564]{bestvina_tits_2000}). This subclass is peculiarly more equipped with tools. On the other hand, it is not a rare condition: for all polynomially growing automorphism of  a free group of  rank $m$,  its power by a uniform exponent (specifically $|GL_m(\bbZ/3)|$)  
        is unipotent \cite[Corollary 5.7.6]{bestvina_tits_2000}.  
        Observe also that our earlier Dehn twists examples are easily seen to be unipotent.  In contrast, a non-trivial finite order outer automorphism is linearly growing but is not unipotent. 

        In    this paper we develop the structure and algorithmic theory of suspensions of unipotent linear automorphisms of free groups, the first instance of unipotent polynomial automorphisms. 
        
        \subsection{Mapping tori}
        
        Given any $\alpha \in \aut{F}$ it is possible to form its
        \emph{mapping torus} or
        \emph{suspension}: if we write the conjugation as $f^t= t^{-1} f t$, it is the semidirect product  \[ \torus \alpha=F \semidirect\alpha \bk t = \bk{F, t \mid
            f^t=\alpha(t); f \in F}.
        \]
        We call such an explicit decomposition as a semidirect product a
        \emph{fibration} and we call the normal semidirect factor $F$ the
        \emph{fibre}. We use this terminology since $F = \ker(\phi)$ for
        some $\phi \in \Hom{F \semidirect\alpha \bk t}{\bbZ}$. In the case
        where $\alpha \in \aut F$ is linearly growing,  we call $F \semidirect\alpha \bk t$ a
        \emph{linear suspension.}

        More generally, one may define the set of polynomially growing automorphisms and  outer automorphisms 
        and for such an automorphism, we
        would call the resulting  suspension a \emph{polynomial suspension}. 
        

        There are many results (e.g. \cite{brinkmann_hyperbolic_2000, dahmani_li_relative_2022})  indicating that the algebraic structure of the group $ \Gamma= F \semidirect\alpha \bk{t}$ gives  information about
        the outer class of $\alpha$. It is almost immediate to see that
        if $\alpha,\beta \in \aut{F}$ have conjugate images in $\out{F}$, then we have an
        isomorphism of suspensions\[ \torus\alpha \simeq \torus\beta.
        \]
        
        The converse, however, is not true. For certain  $\alpha$ in
        $\aut{F}$,
        there may be infinitely many other
        fibrations of $\torus \alpha = F\semidirect\alpha\bk t$ (see for
        example \cite{mecham_hyperbolic_2009,dowdall_dynamics_2015,button_mapping_2007})
        corresponding to kernels of elements of $\Hom{\torus \alpha} \bbZ$ with finitely
        generated kernel.  The ranks of the fibres that arise in
        this case can be unbounded, though the dynamics of twisting
        automorphisms maintain similarities. We leave it to a reader who wants
        to grasp conjugacy in $\out{F}$ to prove the following criterion for
        themselves. An isomorphism between two semidirect products $F\rtimes_\alpha \langle t\rangle$, and $F\rtimes_\beta \langle s\rangle$  (marked by their fibres and the elements $s,t$) is fibre and orientation preserving if it sends $F$ to $F$ and $t$ in $sF$. 
        
        \begin{prop}[Conjugacy criterion]\label{prop:conj-crit}
          Let $\alpha$ and $\beta$ be automorphisms of a group $F$.    Then $\alpha$ and $\beta$ have  conjugate images in $\out{F}$ if and only if          there is a fibre and orientation preserving isomorphism\[
            F\semidirect\alpha\bk t \stackrel{\sim}{\to}
            F\semidirect\beta\bk s.\]
            
            Moreover,  $\alpha$ and $\beta$ are conjugate in $\aut{F}$ if and only if 
            there is a fibre-preserving isomorphism   $F\semidirect\alpha\bk t \stackrel{\sim}{\to}
            F\semidirect\beta\bk s$   that sends $t$ on $s$. 
        \end{prop}
        
      
        For the case of an arbitrary automorphism $\alpha$ of a group $G$ that is either free (finitely generated) or fundamental group of  a closed surface group, we know that the mapping torus $G\rtimes_\alpha \bbZ$ is  hyperbolic relative to mapping tori of finitely generated subgroups of $G$ on which $\alpha$ (taken to a suitable power, post-composed by a suitable inner automorphism) induces a polynomially growing automorphism (and even always linearly growing, in the case of a surface group) \cite{ghosh_relative_2018, dahmani_li_relative_2022}. In both situations, mapping tori of finitely generated free groups by linearly growing automorphisms play an important role in the structure of an arbitrary automorphism.

        \subsection{Objectives and results}
       
        The following conjectures illustrate the goal of this paper. The first is whether the suspension as a group (rather than as a group equipped with a normal subgroup) retains the information about the automorphism.
        
        \begin{conj}[Unipotent polynomial monofibration]\label{conj:poly-mono}
        
          Let $\Gamma = F_n \semidirect\alpha \bk t$ be a unipotent polynomial
          suspension of a free group of rank $n$. If $\Gamma = F_m\semidirect\beta \bk s$ is another unipotent polynomial fibration of $\Gamma$, for a free group of rank $m$,  then $m=n$ and $[\alpha]$ and $[\beta]^{\pm 1}$ are conjugate in $\out{F_n}$.
        \end{conj}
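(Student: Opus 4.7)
By Proposition \ref{prop:conj-crit}, the conjecture is equivalent to the existence of a fibre- and orientation-preserving isomorphism between $F_n \semidirect{\alpha} \bk{t}$ and $F_m \semidirect{\beta} \bk{s}$, possibly after replacing $\beta$ by $\beta^{-1}$. The plan is therefore to show that the fibre is an intrinsic subgroup of $\Gamma$: any normal subgroup $N \nsgp \Gamma$ that is finitely generated free, with $\Gamma/N \simeq \bbZ$ and with unipotent polynomially growing monodromy, is uniquely determined up to $\Gamma$-conjugacy, and the associated surjection $\Gamma \onto \bbZ$ is unique up to sign.

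The strategy is to extract this canonical subgroup from the canonical splitting of $\Gamma$ announced in the paper. Such a splitting is automatically preserved by $\aut{\Gamma}$ and therefore depends only on $\Gamma$ as an abstract group, not on the chosen fibration. Combining the splitting with the Bestvina--Feighn--Handel filtration of the fibre by polynomial growth degrees (which under the unipotence hypothesis refines a canonical rational filtration on $H_1(\Gamma)$, on which $t$ acts unipotently), one hopes to characterise $N$ intrinsically, e.g. as the maximal finitely generated free normal subgroup of $\Gamma$ that carries a $\Gamma$-equivariant filtration of this form and is compatible with the vertex groups of the canonical splitting. Once $N$ is identified, one has $m=n$ automatically and a well-defined homomorphism $\Gamma \onto \bbZ$ up to sign; the sign accounts for the $\pm 1$ in the conclusion, since replacing $t$ by $t^{\mo}$ exchanges $[\alpha]$ with $[\alpha^{\mo}]$. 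Applying Proposition \ref{prop:conj-crit} then yields the required $\out{F_n}$-conjugacy.

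The main obstacle is precisely the intrinsic characterisation of the fibre, and this is where the unipotence must be used essentially. Non-unipotent linearly growing examples (for instance those involving nontrivial finite order outer automorphisms) can admit inequivalent fibrations, so the argument must exploit unipotence to force a BNS-style rigidity statement: in $H^1(\Gamma;\bbR)$, the integral characters whose kernel is finitely generated free with unipotent polynomial monodromy should occupy a single pair of antipodal rays. Establishing this rigidity, and then matching it against the canonical splitting so that it really isolates one subgroup $N$ rather than a multi-parameter family of candidates, is the technical heart of any proof I would attempt. A secondary subtlety is that, a priori, the two fibrations could restrict to different normal structures on a common $N$; ruling this out likely requires the full computation of $\aut{\Gamma}$ carried out elsewhere in the paper, used to enumerate the possible fibre- and orientation-preserving isomorphisms.
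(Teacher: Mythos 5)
This statement is stated in the paper as a conjecture, and the paper itself only proves the unipotent \emph{linear} case (Corollary \ref{cor;ULMF}); your text is likewise a plan rather than a proof, and you say yourself that its ``technical heart'' is unestablished. But the more serious problem is that the specific rigidity your plan rests on is false. You propose to characterise the fibre intrinsically, concluding that the kernel $N$ is ``uniquely determined up to $\Gamma$-conjugacy'' and that the admissible characters occupy ``a single pair of antipodal rays'' in $H^1(\Gamma;\bbR)$. Since $N$ is normal, uniqueness up to conjugacy means equality of kernels, i.e.\ uniqueness of the character up to sign — and this already fails for $\Gamma = F_2\times\bk t$ with $\alpha=\mathrm{id}$ (which is unipotent and linearly, indeed polynomially, growing): every primitive integral character that is nonzero on the central generator $t$ has finitely generated free kernel of rank $2$ on which the monodromy is trivial, so there are infinitely many distinct unipotent polynomial fibrations with pairwise different fibres. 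The conjecture is nevertheless true in such examples because the correct uniqueness statement is ``up to an automorphism of $\Gamma$,'' not up to conjugacy; so a BNS-style antipodality argument cannot be the mechanism.

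For comparison, the paper's proof in the linear case never claims the fibre is canonical as a subgroup. It shows (Lemma \ref{lem:black-fibers}) that any two unipotent linear fibres intersect the abelian ($\bbZ^2$) vertex groups of the canonical piecewise-trivial splitting in the \emph{same} cyclic summand, then aligns the fibres inside each white vertex group $F_w\oplus\bk{t_w}$ by an explicit ``matching'' automorphism fixing the peripheral data (Lemma \ref{lem;align-aut}), and finally corrects the discrepancy on the stable letters by a generalized Dehn twist read off from the abelianization, producing $\Psi\in\delta_0\aut\bbX$ with $\Psi(F')=F$ (Proposition \ref{prop;full-fiber-align}); Proposition \ref{prop:conj-crit} then gives conjugacy of $[\alpha]$ and $[\beta]^{\pm1}$. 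If you want to pursue the general polynomial case, the useful target is an analogue of that fibre-alignment statement relative to a canonical (hierarchical) splitting, not an intrinsic identification of a unique fibre, which the $F_2\times\bbZ$ example rules out at the outset.
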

        
        The second is whether the suspension's finite order automorphisms are visible in the profinite completion, or specifically in a computable finite quotient. 
        
        \begin{conj}[Minkowski Property]\label{conj:mink-poly} 
        
            Let $\Gamma = F \semidirect\alpha \bk t$ be a unipotent polynomial
          suspension of a finitely generated free group. Then, there is a computable characteristic finite quotient $\Gamma \onto Q$ so that the congurence quotient $\out \Gamma \to \out Q$ has torsion free kernel.
        \end{conj}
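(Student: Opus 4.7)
The plan is to combine the canonical JSJ-type splittings of $\Gamma$ developed earlier in the paper with a Minkowski-type argument applied to vertex-group abelianizations, and to verify that any residual torsion lives in a torsion-free subgroup of edge twists.

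By uniqueness of the canonical splitting $\mathcal{G}$ of $\Gamma$, any $\Phi \in \out \Gamma$ descends to an automorphism $\sigma_\Phi$ of the finite underlying graph together with outer automorphisms $\Phi_v \in \out{G_v}$ of each vertex group, well-defined modulo the twists along adjacent edges. This yields a structural homomorphism
\[
\rho \colon \out \Gamma \to \aut{\mathcal{G}} \ltimes \prod_v \out{G_v}/(\text{edge twists}),
\]
whose kernel is the twist subgroup. For a torsion $\Phi$ in the kernel of $\out \Gamma \to \out Q$, it would then suffice to produce $Q$ making both $\rho(\Phi)$ and the residual twist contribution trivial.

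Concretely, I would proceed in three steps. (1) Choose $Q$ fine enough to separate the conjugacy classes of the vertex stabilizers in $\Gamma$, so that any non-trivial $\sigma_\Phi$ acts non-trivially on the induced decomposition of $Q$. (2) For each vertex group $G_v$, which in the canonical splitting of a unipotent polynomial suspension is again a unipotent polynomial suspension of strictly smaller complexity (or a simpler piece, e.g.\ free or abelian), induct on the polynomial growth degree. The base case is the unipotent linear regime established in the body of the paper, which itself bottoms out in the classical Minkowski theorem applied to the action of $\out{F_k}$ on $H_1(F_k;\bbZ/m)$ for $m \geq 3$. (3) Use that the subgroup of twists along the canonical edges is free abelian---hence torsion-free---by virtue of the abelian structure of the edge groups in the canonical splitting. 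Taking $Q$ as a computable intersection of characteristic finite quotients witnessing (1)--(3) yields the required quotient.

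The hard part is twofold. First, in step (2) the characteristic finite quotient produced by the inductive hypothesis on $G_v$ must be lifted compatibly to a characteristic finite quotient of the ambient $\Gamma$; this calls for an effective form of subgroup separability compatible with the splitting, which one hopes to derive from effective coherence together with the Minkowski property for subgroups proved in the linear case, bootstrapped along the induction. Second, in step (3) one must verify that the twist subgroup of $\out \Gamma$ is free abelian of finite, computable rank: in the unipotent polynomial regime the edge groups of the canonical splittings should be finitely generated abelian, but making this structure effective at polynomial degree $\geq 2$---and in particular ruling out torsion in the twist group coming from non-trivial interactions with the polynomial part---is likely the principal technical bottleneck for the conjecture.
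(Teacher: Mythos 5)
The statement you are trying to prove is stated in the paper as a \emph{conjecture}: the paper itself only establishes the Minkowski property in the unipotent \emph{linear} case (Theorem \ref{theo;SPTS_M}, via Theorems \ref{theo;EMP_free_product} and \ref{theo;TPSC_Mink}), and your proposal does not close the gap between the linear and the general polynomial case — indeed you concede as much in your final sentence. The concrete missing ingredient is step (2) together with the structural input it presupposes. Your induction on polynomial degree requires, at each stage, a canonical ($\out\Gamma$-invariant), computable splitting of $\Gamma$ whose vertex groups are lower-degree suspensions; but every canonicity result in the paper (Lemmas \ref{lem;keep_cyl}, \ref{lem;keep_acyl}, Proposition \ref{prop:structural_tree_is_canonical}, and the tree-of-cylinders uniqueness they invoke) rests on the vertex groups being \emph{trivial} suspensions $F_w\times\bbZ$ and on the resulting $4$-acylindricity, which is exactly the feature that disappears at quadratic growth and above. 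A Macura-type hierarchy exists at higher degree, but its $\out\Gamma$-invariance, uniqueness, and computability are precisely what is open. Moreover, even granting such a splitting, your lifting step needs characteristic finite quotients of the vertex groups (now nonlinear suspensions) to be induced by characteristic finite quotients of $\Gamma$ with control on conjugacy; in the linear case the paper obtains this not from abelianizations but from clean covers (Marshall Hall plus Wise's omnipotence), vertex fillings onto virtually free groups, conjugacy separability of virtually free groups, and Grossman's theorem producing witnesses for finite-order outer automorphisms. None of these tools is available, or replaced by anything in your sketch, once vertex groups are themselves polynomially growing suspensions.

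Step (3) is also not right as stated. The paper never argues that the twist subgroup is free abelian; what it proves (Proposition \ref{prop;2cases}) is that a finite-order, nontrivial element of $\delta\aut\bbX$ acting trivially on the graph must restrict to a nontrivial finite-order outer automorphism of some vertex group, and the proof uses that the twisting elements $\gamma_e$ have infinite order and that vertex groups modulo their centers are torsion-free — again properties tied to the trivial-suspension structure of the linear case. Asserting that twists form a torsion-free (free abelian) group at arbitrary polynomial degree, and that torsion in the congruence kernel cannot interact with them, is an unproved claim, not a reduction; and your base case remark that the linear regime ``bottoms out in classical Minkowski on $H_1$'' misdescribes the paper's argument, which detects finite-order outer automorphisms of $F_v\times\bbZ$ through conjugacy-class witnesses in virtually free fillings, not through homology. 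So the proposal is a plausible roadmap, but it is not a proof: the inductive mechanism, the invariance of the higher-degree splittings, and the torsion-freeness of the twist contribution are all genuinely open points, which is why the statement remains a conjecture in the paper.
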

        
        The third, more algorithmic, is a general orbit problem for the automorphism group of suspensions of unipotent linear automorphisms of free groups. We follow Bogopolski and Ventura \cite{Bogopolski_Ventura} in its denomination.  
        
        \begin{conj}[Mixed Whitehead Problem]\label{conj:whit-poly} 
            Let $\Gamma = F \semidirect\alpha \bk t$ be a unipotent polynomial
          suspension of a finitely generated free group. Then, the orbit problem for tuples of conjugacy classes of tuples of elements under the action of $\aut {\Gamma, F}$, is solvable. 
        \end{conj}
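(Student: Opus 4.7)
My overall strategy would be an induction on the polynomial growth degree of $\alpha$, with the unipotent linear case (established elsewhere in this paper) as the base. The key tool should be the canonical graph-of-groups decomposition of $\Gamma = F \semidirect\alpha \bk t$, which is explicit in the linear setting and which in the polynomial setting one expects from a JSJ-type construction adapted to polynomial suspensions. Every conjugacy class in $\Gamma$ carries a combinatorial fingerprint relative to this decomposition: either it is elliptic in a conjugacy class of vertex groups, or it is hyperbolic with a well-defined cyclic word in vertex and edge data. Since the underlying graph is finite, only finitely many fingerprints arise, and any $\aut{\Gamma, F}$-orbit equivalence between two prescribed tuples of conjugacy classes of tuples must begin by matching fingerprints entry by entry.

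Once fingerprints are aligned, I would exploit the structural description of $\aut{\Gamma, F}$ established for the linear case in this paper, which splits that group into a finite part (graph-of-groups symmetries), a vertex part (fibre-preserving automorphisms of each vertex group, which are themselves lower-degree unipotent polynomial suspensions), and a continuous twist part (free abelian, supported on edge stabilisers). After enumerating the finitely many combinatorial symmetries, the remaining orbit problem reduces to simultaneous instances of the orbit problem in the vertex groups --- handled by the inductive hypothesis --- combined with an integer linear problem encoding the twist parameters through translation lengths of hyperbolic entries along edge stabilisers.

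The main obstacle I expect, beyond securing the JSJ-type splitting and the structural description of $\aut{\Gamma,F}$ in arbitrary polynomial degree, is coordinating the vertex-group orbit problems with the ambient conjugacy: elliptic pieces may only become conjugate in $\Gamma$ after passing through edge elements, and the entries of a single conjugated tuple may a priori straddle several vertex groups. This is precisely why the statement is phrased for tuples of conjugacy classes of tuples rather than single conjugacy classes, and it forces the inductive hypothesis to be taken in the same mixed form so that the recursion closes. Closing that recursion uniformly, and guaranteeing that the twist-group action on coordinatised conjugacy data remains effectively an integer linear algebra problem simultaneously over all vertex and edge constraints, is where I expect the bulk of the technical effort to concentrate.
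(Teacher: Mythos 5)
You should first note that the statement you were asked to prove is stated in the paper as a \emph{conjecture}: the paper itself only establishes it in the unipotent \emph{linear} case (Theorem \ref{thm;MWP_fo}), and explicitly defers the higher polynomial degrees to future work. So there is no proof in the paper of the general statement to compare yours against; what you propose is an induction on polynomial growth degree whose base case is the paper's linear result. As a research programme this is reasonable, and two of your ingredients do echo the linear-case argument (reduction to finitely many graph-of-groups symmetries, and a final integer linear algebra step over twist/power parameters), but as a proof it has genuine gaps.

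The first gap is foundational: your inductive step presupposes a canonical JSJ-type splitting of a degree-$d$ suspension and a decomposition of $\aut{\Gamma,F}$ into a finite graph part, a vertex part, and a free abelian twist part. Neither is established for $d\geq 2$, and the paper's machinery does not transfer: the linear case rests on vertex groups of the form $F_v\times\bbZ$, $\bbZ^2$ edge groups, trees of cylinders and $4$-acylindricity, all of which fail once vertex groups are themselves higher-degree suspensions with more complicated centralizers. The second gap is that even granting such a splitting, your reduction to ``orbit problems in the vertex groups, handled by the inductive hypothesis'' does not close the recursion: what actually arises in the vertex groups is an orbit problem for \emph{sequences of double cosets of edge groups} under automorphisms constrained by the Bass diagrams and preserving peripheral structure, not the mixed Whitehead problem for conjugacy classes of tuples under the full fibre-preserving automorphism group, which is what the conjecture (and hence your inductive hypothesis) asserts. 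This is exactly the delicate point in the linear case: there is no general algorithm for automorphism orbits of double cosets even in free groups, and the paper has to circumvent it by encoding each double coset as a pair of mutually dual linkages (Proposition \ref{prop;same_linkages_imples_double_coset}), assembling linkage configurations at vertices, and invoking a strengthened Gersten theorem for tuples of conjugacy classes of tuples of subgroups (Theorem \ref{thm;gersten_hack}), together with short positions and power vectors to control conjugators. Your sketch names the difficulty (``elliptic pieces may only become conjugate after passing through edge elements'') but supplies no mechanism for it; without an analogue of the linkage/Gersten step formulated relative to peripheral structures, and without the higher-degree structure theory, the argument does not go through.
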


        A positive answer to  Conjecture \ref{conj:poly-mono} would relate the conjugacy 
        problem for these automorphisms 
        to the isomorphism problem for 
        suspensions. 
        We note that the conjugacy problem for unipotent polynomially growing automorphisms 
        was recently solved by Feighn and Handel \cite{feighn_conjugacy_2019}. It should be noted that one cannot expect that all isomorphisms preserve the fibre.
        
        A positive answer to Conjectures \ref{conj:mink-poly} and \ref{conj:whit-poly} would be a significant advance to the conjugacy problem for all outer automorphisms of free groups, through  the main result of \cite{dahmani_touikan_reducing_2021} and of   \cite{feighn_conjugacy_2019} (the statement of Conjecture \ref{conj:mink-poly} would have to be strengthened to cover finitely generated subgroups of the suspensions as well).

    

        We prove that all three   Conjectures are true if 
         $\alpha$ is an unipotent linearly growing automorphism of free groups. 
         
        
        \begin{thm}[Unipotent linear monofibration, see Corollary \ref{cor;ULMF}]\label{thm:UL-mono}
        Let $\phi$ and $\psi$  be two  unipotent linearly growing automorphisms  of a finitely generated free group $F$,  the semidirect products $F\rtimes_\phi \bbZ$ and $F\rtimes_\psi \bbZ$ are isomorphic if and only if  $\phi$ is conjugate to $\psi^{\pm 1}$ in $\out F$.
        
        \end{thm}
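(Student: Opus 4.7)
The ``if'' direction follows directly from Proposition~\ref{prop:conj-crit}: if $\phi$ is conjugate to $\psi$ in $\out F$, there is a fiber-and-orientation-preserving isomorphism between the two suspensions. If instead $\phi$ is conjugate to $\psi^{-1}$, then the identity on $F$ together with the assignment $t\mapsto s^{-1}$ defines a fiber-preserving (orientation-reversing) isomorphism $F\rtimes_{\psi^{-1}}\bbZ\to F\rtimes_\psi\bbZ$, which composed with the fiber-preserving isomorphism from $F\rtimes_\phi\bbZ$ produced by Proposition~\ref{prop:conj-crit} yields the required abstract isomorphism.

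For the ``only if'' direction, given an abstract isomorphism $\Psi\colon F\rtimes_\phi\bbZ\to F\rtimes_\psi\bbZ$, by Proposition~\ref{prop:conj-crit} it suffices to post-compose $\Psi$ with a suitable automorphism of the target so that the resulting map sends $F$ to $F$ and $t$ into a coset $s^{\pm 1}F$. This requires two ingredients: (i) a \emph{monofibration} result stating that, in any presentation of a given group $\Gamma$ as a unipotent linearly growing free-by-cyclic suspension, the fiber is determined up to the action of $\aut{\Gamma}$; and (ii) control of the residual orientation ambiguity, which is what produces the $\pm 1$ in the statement.

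My plan for (i) is to exploit the canonical splittings of unipotent linear suspensions developed earlier in the paper. A unipotent linearly growing automorphism of $F$ is, up to inner, a composition of commuting Dehn twists along a graph-of-groups decomposition of $F$. Correspondingly, $\Gamma=F\rtimes_\phi\bbZ$ inherits a canonical (JSJ-type) splitting whose edge groups are $\bbZ^2$-subgroups carrying both the twist direction and the fiber direction, and whose vertex groups are essentially free-times-$\bbZ$. Because this splitting is canonical, any automorphism of $\Gamma$ preserves it up to conjugation; from its combinatorics one reads off the rank of the fiber together with the Dehn-twist data that reconstruct $\phi$ on it. This recovery gives a normal form for $\Psi$ after which it becomes fiber-preserving. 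The orientation ambiguity in (ii) then arises because inverting the generator of the quotient $\bbZ$ replaces the monodromy by its inverse.

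The main obstacle I expect is the faithfulness of the recovery in (i): singling out the correct homomorphism $\Gamma\to\bbZ$ among possibly several classes of the BNS invariant whose kernel is finitely generated, and showing that the $\bbZ^2$ edge stabilizers of the canonical splitting together with the way they sit inside the vertex groups encode $\phi$ uniquely up to $\out F$-conjugacy. Once this is established, composing with the orientation flip handles the $\psi^{\pm 1}$ and Proposition~\ref{prop:conj-crit} closes the argument.
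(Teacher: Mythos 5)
Your "if" direction is fine, and your overall route for "only if" — pass through the canonical splitting of $\Gamma$ and reduce to Proposition~\ref{prop:conj-crit} — is the same route the paper takes. The problem is that the entire mathematical content of the theorem sits in your ingredient (i), and you have only announced a plan for it while explicitly flagging it as "the main obstacle"; nothing in the proposal actually proves that two unipotent linear fibers of the same group $\Gamma$ lie in one $\aut\Gamma$-orbit. Your suggested mechanism — "from the combinatorics of the canonical splitting one reads off the rank of the fiber together with the Dehn-twist data that reconstruct $\phi$" — does not work as stated: the canonical splitting is an invariant of $\Gamma$ alone, so it is identical for every fibration, and a priori it cannot distinguish between several unipotent linear fibrations any more than it distinguishes the genuinely different (non-unipotent) fibrations that do exist, with fiber ranks going to infinity (Button's result, cited after Corollary~\ref{cor;ULMF}). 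The splitting does not carry a preferred map $\Gamma\to\bbZ$, so "reading off the rank of the fiber" is precisely what must be justified, not a consequence of canonicity.

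What actually closes this gap in the paper is a three-step fiber-matching argument that your proposal does not contain. First, Lemma~\ref{lem:black-fibers}: for any two unipotent linear fibers $F,F'$, the intersections with every black ($\bbZ^2$) vertex group coincide, $\bbX_b\cap F=\bbX_b\cap F'$; this uses the oriented central elements $t_w$ of the white vertex groups and $4$-acylindricity (Lemma~\ref{lem;keep_acyl}) to exhibit a common nontrivial element $c_{ef}$ of both intersections. Second, Lemma~\ref{lem;align-aut}: inside each white vertex group $F_w\oplus\bk{t_w}$ one builds an automorphism carrying $F'\cap\bbX_w$ to $F\cap\bbX_w$ while fixing $F\cap F'$, hence fixing the images of incident edge groups, so these local automorphisms assemble into an element of $\delta_0\aut\bbX$. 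Third, in Proposition~\ref{prop;full-fiber-align}, an abelianization computation shows the remaining discrepancy is concentrated on the stable letters and is killed by an explicit generalized Dehn twist $e_i\mapsto e_i t_i^{-n_i}$, yielding $\Psi\in\aut\Gamma$ with $\Psi(F')=F$; only then does Proposition~\ref{prop:conj-crit} (plus the orientation flip, which gives the $\pm1$) finish the proof. Without some version of these steps — in particular the black-vertex intersection lemma, which is where the unipotent linear hypothesis on \emph{both} fibrations is used — your argument is an outline of the statement rather than a proof of it.
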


        \begin{thm}[Mixed Whitehead problem, see Theorem \ref{thm;MWP_fo}]\label{thm;MWP_intro}
        
            Let $\phi$ be a unipotent linearly growing automorphism of  a finitely generated free group $F$, and  $\Gamma= F\rtimes_\phi \bbZ$. 
            
            The orbit problem for tuples of conjugacy classes of tuples of elements of $\Gamma$ under the action of $\aut{\Gamma, F}$ is solvable.   
        \end{thm}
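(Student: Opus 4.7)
The plan is to reduce the orbit problem to computable sub-problems on a canonical graph-of-groups decomposition of $\Gamma$, combined with the paper's earlier description of $\aut{\Gamma, F}$.

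\textbf{Setup and normal forms.} First, I would fix the canonical splitting of $\Gamma$ (constructed earlier in the paper) as a graph of groups adapted to the Dehn-twist structure of $\phi$, whose vertex groups are simpler suspensions, ultimately of $\bbZ^k$ or $F_k\times \bbZ$ type. Using decidability of the conjugacy problem in $\Gamma$ together with Bass--Serre theory, every element of a tuple is brought into a cyclically reduced normal form. A conjugacy class of a tuple is then encoded by a computable piece of combinatorial data: a finite decorated loop-pattern in the underlying graph, recording the sequence of edges crossed together with vertex-group elements, well-defined up to the standard cyclic and local conjugation moves.

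\textbf{Structure of the acting group.} Second, I would invoke the paper's explicit computation of $\aut{\Gamma, F}$ to pick a finite-index subgroup $A_0\leq \aut{\Gamma, F}$ that preserves the canonical splitting, together with generators of $A_0$: inner automorphisms, Dehn twists along edges of the splitting, and admissible vertex-group automorphisms (those induced by elements of the centralizer of $\phi$ in $\aut F$, modulo inner twists). Each generator acts on the normal-form data in a computable way, and quotienting by $\aut{\Gamma, F}/A_0$ only costs a finite enumeration of graph symmetries.

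\textbf{Solving the reduced orbit problem.} Third, given two tuples of conjugacy classes of tuples, deciding whether some element of $A_0$ maps one to the other splits into three decidable pieces: (i) matching the underlying decorated loop-patterns combinatorially up to the finite group of splitting symmetries; (ii) matching vertex-group decorations up to the action of the induced admissible vertex-group automorphism group, which is a classical Whitehead-type orbit problem in a much simpler (free-abelian or $F_k\times\bbZ$-like) group; and (iii) matching edge decorations up to the Dehn-twist lattice, whose action on crossing counts is by integer translations and so yields a linear Diophantine system. Iterating over the finite coset of $\aut{\Gamma, F}/A_0$ then solves the full orbit problem.

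\textbf{Main obstacle.} The principal difficulty is step (ii): controlling the action of the centralizer of $\phi$ in $\aut F$ on the vertex-group decorations, and confirming that this reduces cleanly to a classical decidable orbit problem in the simpler vertex groups. The structural description of $\aut{\Gamma, F}$ established earlier in the paper is exactly what is needed to pin this action down. Equally delicate is integrating (ii) with the Dehn-twist lattice action (iii) into a single decidable system, since twists along different edges can alter the vertex-group decorations of adjacent vertices; this is handled by first solving (iii) modulo the lattice to normalize edge crossings, then feeding the residual vertex-group data into (ii).
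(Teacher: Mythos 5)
Your outline is aligned with the paper at a coarse level: canonical splitting, normal forms, finite reduction over graph symmetries, a vertex-group orbit problem, and a final $\bbZ$-linear system for the twist data. You also correctly flagged step (ii) as the principal difficulty. However, the way you characterize step (ii) is exactly where your proposal has a genuine gap.

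The vertex-group ``decorations'' left over after normalizing a path $g_1 e_1 g_2 e_2 \cdots e_n g_{n+1}$ are not conjugacy classes of vertex-group elements: they are \emph{double cosets} $I_{e_{i-1}} g_i O_{e_i}$ of the images of adjacent edge groups, because the freedom in choosing the word for $g$ (and in choosing the path to the basepoint) exactly absorbs left/right multiplication by these edge-group images. For these double cosets there is \emph{no} ``classical Whitehead-type orbit problem'' to fall back on: the paper states explicitly that for a free group there is no reasonable way $\aut F$ acts on a general double coset space $H\backslash F/K$ and no known algorithm decides when an automorphism maps one double coset to another. Your proposal silently assumes the contrary, and this is not something the earlier structural description of $\aut{\Gamma,F}$ repairs; the structure of $\aut{\Gamma,F}$ constrains \emph{which} automorphisms can act, but it does not by itself make the double-coset orbit problem decidable in the vertex groups.

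What the paper actually does at this step is essentially the content of the whole Section~\ref{sec;MWP}: replace each double coset by a pair of ``linkage'' subgroups $\langle I_e, fO_{e'}f^{-1}\rangle$ and its dual, prove (Proposition~\ref{prop;same_linkages_imples_double_coset}, via nontrivial facts such as the nonexistence of proper powers among nontrivial commutators and residual nilpotence) that the dual pair of linkages determines the double coset, and then reduce to an orbit problem for \emph{tuples of conjugacy classes of tuples of subgroups} of a free group, for which they need an extension of Gersten's theorem (Theorem~\ref{thm;gersten_hack}), itself requiring an auxiliary free factor, rigid $C$-test subgroups, and a careful folding argument (Lemma~\ref{lem;length-folding}). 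None of this machinery appears in your proposal, and without it step (ii) does not reduce to anything classical. Your step (iii) and the final linear-algebra step are in the right spirit (they match the power-vector homomorphism $\chi_{D,\rho,e}$ into an embedding lattice), but their correct formulation depends on first having rigid control over the double cosets via the linkage construction, so (iii) cannot be cleanly ``pre-solved'' independently of (ii) as you suggest; the paper solves (ii) first using linkages/Gersten, then computes a stabilizer whose residual action on power vectors yields the Diophantine system.
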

        
        \begin{thm}[Minkowski property, see Theorem \ref{theo;SPTS_M}]
        
            Let $\phi$ be a unipotent linearly growing automorphism of  a finitely generated free group $F$, and $H$ be a finitely generated subgroup of $F\rtimes_\phi \bbZ$. There exists a computable characteristic finite quotient $H\onto Q$ such that the induced congruence map
            $\out H \to \out Q$ has torsion free kernel.
        \end{thm}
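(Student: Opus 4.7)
The plan is to leverage the canonical graph-of-groups decomposition $\calD_H$ of $H$ produced by the effective coherence and canonical-splitting results established earlier in the paper. Every finitely generated subgroup $H$ of $\Gamma=F\rtimes_\phi \bbZ$ admits such a computable $\calD_H$, with vertex groups of restricted type (essentially free groups, free abelian groups, and direct products thereof arising from restrictions of the suspension structure) and with cyclic or rank-two abelian edge groups. Canonicity implies that every element of $\aut H$ preserves $\calD_H$ up to conjugation, so there is an induced homomorphism $\rho\colon \out H\to \aut{\Lambda_H}$ to the finite symmetry group of the underlying graph $\Lambda_H$, and the kernel of $\rho$ acts on each vertex group up to inner automorphism.

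I would build $Q$ in three layers. First, for each vertex group $G_v$ of $\calD_H$, pick a characteristic finite quotient $G_v\to Q_v$ for which $\out{G_v}\to \out{Q_v}$ has torsion-free kernel: this is supplied by the classical Minkowski theorem $GL_n(\bbZ)\to GL_n(\bbZ/3)$ in the free abelian case and by the analogous fact that $\out{F_n}\to GL_n(\bbZ/3)$ has torsion-free kernel in the free case, combined in the obvious way for products. Second, refine each $Q_v$ so that edge inclusions descend consistently: on each cyclic or $\bbZ^2$ edge subgroup this amounts to passing to a common divisor of the two induced images. Third, assemble the refined vertex quotients into a finite quotient $H\to Q_1$ via Bass-Serre theory, and then intersect over the finite $\aut H$-orbit of this construction (finite both on $\Lambda_H$ and on the set of vertex groups up to conjugacy) to obtain a characteristic quotient $H\onto Q$.

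To verify the torsion-free kernel property, let $\Phi\in \out H$ have finite order and lie in the kernel of $\out H\to \out Q$. By also encoding a labelling of $\Lambda_H$ in $Q$, one arranges that $\rho(\Phi)=1$, so $\Phi$ is represented by an automorphism $\phi$ preserving every vertex and edge group setwise. The induced outer automorphism on each $G_v$ lies in the kernel of $\out{G_v}\to \out{Q_v}$ and is therefore trivial by the Minkowski property chosen at $G_v$. Hence $\phi$ acts by inner automorphisms on every vertex group, so $[\phi]$ belongs to the twist subgroup of $\out H$ associated to $\calD_H$. Standard Bass-Serre theory (for instance via Levitt's description of twist subgroups) shows that this twist group is torsion free, forcing $\Phi=1$.

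The main obstacle will be arranging simultaneous compatibility: the vertex quotients $Q_v$ must be chosen so that they patch correctly across every edge of $\Lambda_H$, and the resulting global quotient must remain characteristic under the full action of $\aut H$. Compatibility on edge groups is handled by refining the $Q_v$'s to common-divisor quotients along each cyclic (or rank-two abelian) edge subgroup, and characteristicity is enforced by intersecting the construction over the finite $\aut H$-orbit acting on $\Lambda_H$ and on conjugacy classes of vertex groups. Computability of $Q$ then follows from the effectivity of the canonical splitting supplied by the earlier sections, from the explicit nature of the Minkowski-type quotients at each vertex, and from the effective coherence of $H$.
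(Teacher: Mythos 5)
There is a genuine gap at the verification step, and it is exactly the point the paper's machinery is built to handle. You argue: if a finite-order $\Phi$ dies in $\out Q$, then (after killing the graph permutation) the induced outer automorphism of each vertex group $G_v$ lies in the kernel of $\out{G_v}\to\out{Q_v}$, hence is trivial. But triviality of $\Phi$ in $\out Q$ only says the induced automorphism of $Q$ is conjugation by some $\bar g\in Q$; restricted to the image of $G_v$ this is conjugation by an element that merely \emph{normalizes} that image, which need not be inner in $Q_v$, and in a finite group the outer action of a normalizer is typically non-trivial. So you cannot conclude that the vertex restriction of $\Phi$ is trivial in $\out{Q_v}$, and the whole descent from $\out Q$ to the $\out{Q_v}$'s is unjustified. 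The paper circumvents this by certifying non-triviality in $\out Q$ through \emph{conjugacy classes}: by Grossman's theorem a non-trivial finite-order outer automorphism of a vertex group (free, or free times cyclic) moves the conjugacy class of some witness element, and the quotient $Q$ is engineered (via a characteristic clean cover using Marshall Hall and Wise's omnipotence, a vertex filling to a virtually free group, and conjugacy separability of virtually free groups, with care about peripheral automorphisms and coset representatives) so that these witness classes remain non-conjugate in $Q$. Your construction of $Q$ by naively patching vertexwise Minkowski quotients gives no control of conjugacy in $Q$, so the kernel of $\out H\to\out Q$ could a priori still contain torsion.

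Two further points. First, your argument starts from a canonical splitting $\calD_H$ preserved by all of $\aut H$, but such a splitting exists only for one-ended $H$ (Corollary \ref{coro;hybrid_canonical_for_sbgp}); when $H$ is free or freely decomposable there is no $\aut H$-invariant splitting, and the paper has to treat this case separately via the Minkowski property for free groups and a Nielsen-realization argument for free products (Theorem \ref{theo;EMP_free_product}), which your proposal does not address. Second, your appeal to ``the twist group is torsion free'' by generic Bass--Serre/Levitt theory is not automatic: twist groups are quotients of products of edge-centralizers by vertex and edge centers and such quotients can have torsion in general; in this specific setting the needed statement is true but requires the normal-form argument of Proposition \ref{prop;2cases} (using that the twisting elements $\gamma_e$ have infinite order and that vertex groups modulo their centers are torsion free), not a citation to the general theory. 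Your vertexwise use of Minkowski quotients for $F_v\times\bbZ$ ``combined in the obvious way'' also glosses over the fact that $\aut{F_v\times\bbZ}$ is larger than $\aut{F_v}\times\aut{\bbZ}$, though that issue is more easily repaired than the main gap above.
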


        
        

        
       We also  establish a set of properties, as the following items, for this  class of  suspensions of free groups.   
       
       We establish the algorithmic tractability of their class of subgroups (Section \ref{sec;algo_trac}, Theorem \ref{thm;algo_trac}), which is a collection of solutions to general algorithmic problems in the class of groups (and subgroups), such as computing presentations of subgroups, deciding the conjugacy problem, and the generation problem.   In particular, we show (Propositions \ref{prop;eff_coh_ump_for_GDT}, \ref{prop;eff_coh}) that these suspensions are \emph{effectively
          coherent}, strengthening the result of Feighn and Handel \cite{feighn_mapping_1999}
        for this class of suspensions.
        
        We recover  the solution to the  fibre and orientation preserving isomorphism problem due to Cohen and Lustig  \cite{cohen_lustig_conjugacy_1999}  (Section \ref{sec;UL_IP_aut}, Corollary \ref{cor;ULMF}, Proposition \ref{prop:iso-problem-for-suspensions}), which by the monofibration theorem, provides a solution to the isomorphism problem too.
       
       A direct consequence of the results of this paper, and of the main reduction result of  \cite{dahmani_touikan_reducing_2021} is Theorem \ref{thm:CP-rel-ulg}. It is rewarding to notice that Theorem \ref{thm;MWP_intro}, thanks to the second part of the criterion of Proposition \ref{prop:conj-crit}, allows to proves the following.
       
       \begin{thm}\label{thm:cp_aut}
           Let $F$ be a finitely generated free group. Dehn's Conjugacy Problem in $\Aut F$ among unipotent linearly growing automorphisms is algorithmically solvable.
       \end{thm}
       
       \begin{proof} Let $\alpha, \beta$ be two such automorphisms. We can first decide whether they are  conjugate in $\out{F}$, by  \cite{cohen_lustig_conjugacy_1999}  (or the results mentioned above). If they are not, we are done, and if they are, we can compute an explicit fibre-and-orientation preserving isomorphism $f:F\semidirect\alpha \bk t \to F\semidirect\beta \bk s$. The criterion of Proposition \ref{prop:conj-crit} reduces the problem to determining whether there is a fibre and orientation preserving automorphism of $F\semidirect \beta \bk s$ that sends $f(t)$ to $s$. By  Theorem \ref{thm;MWP_intro}, we can decide whether there is such an automorphism that sends conjugacy class $[f(t)]$ to the conjugacy class $[s]$. Since $F$ is normal in the group, after composing by the appropriate inner automorphism, we are done. \end{proof}

        \subsection{Overview of the arguments and methods}

        We now discuss methods and arguments. One of the main ingredients of our approach is JSJ theory, which was first developed by Rips and Sela \cite{rips_cyclic_1997}, then through the works \cite{fujiwara_papasoglu,dunwoody_sageev} and is presented in an accomplished form in the monograph  \cite{guirardel_jsj_2017}.
        
        The first step is to consider  the train-track description of elements of unipotent polynomially growing outer automorphisms. The first construction was  
        given in \cite{bestvina_tits_2005}, and, was
        observed in \cite{macura_detour_2002} (see also \cite{hagen_cubulating_2016}) to give  a cyclic
        hierarchical decomposition of $F\semidirect\alpha\bk{t}$. 
        Using a more sophisticated type of train tracks, namely the completely split train tracks of Feighn and Handel \cite{feighn_recognition_2011}, we prove the folklore theorem that unipotent linearly growing automorphisms coincide with generalized Dehn twists and give a description of the induced graph of groups decomposition of the underlying free group as relative JSJ decompositions. 
        
        This decomposition of the free group $F$ gives rise to a decomposition of corresponding suspension $F\semidirect\alpha\bk{t}$ and we are able to show that this composition is canonical. This thus produces a preferred Macura decomposition of $F\semidirect\alpha\bk{t}$. One novelty of our work is that these suspensions are not so-called CSA groups nor do they resemble any other groups to which JSJ theory has effectively applied, nonetheless  the setting in \cite{guirardel_jsj_2017} is sufficiently rich to yield a canonical splitting in this new context.

        
        
        An application of Bass-Serre theory allows us to go further and produce canonical splittings for one-ended finitely generated  subgroups of $F\semidirect\alpha\bk{t}$ as well, where vertex groups are either free groups or direct products of free groups with $\bbZ$. Of course, we keep track of the computability of these canonical splittings.
        
        As a consequence of the canonicity of these splittings, we are able to describe the automorphisms groups of suspensions and of subgroups of suspensions in terms of automorphisms of graphs of groups. We note that recent works of Andrew and Martino involving similar methods give related results \cite{andrew_free-by-cyclic_2022,andrew_centralisers_2022}. This understanding of  automorphism groups is a fundamental component of our proofs of the main results of this paper. In working out preliminaries for solving the mixed Whitehead problem, we immediately obtain the monofibration of Theorem \ref{thm:UL-mono} and an alternative solution to the conjugacy problem for unipotent linearly growing outer automorphisms. The computability of these canonical splittings is also a crucial component in our computability results.
        
        The next task is to prove that the class of suspensions of free groups by unipotent linearly growing automorphisms is an algorithmically tractable class of groups. In particular, using the properties of our splittings of being benign, in the sense of Kapovich, Weidmann and Miasnikov \cite{kapovich_foldings_2005}, we prove that the groups are effectively coherent. 
        
        The Mixed Whitehead problem for suspensions $F\semidirect\alpha\bk{t}$ is also approached  through the obtained canonical splitting. Given an element, one considers a normal form in the decomposition. It is not unique, and one retains the sequence of double cosets of edge groups in the encountered vertex groups. This is the information shared by the different normal forms. Unfortunately, the orbit problem for collections of double cosets in vertex groups is delicate. We associate a free subgroup (a linkage) to a double coset, generated by conjugates of the two involved subgroups, and keep track of the configuration of the linkages appearing in each vertex group.  The orbit problem for subgroups, in the vertex groups,  is solved by a result of Gersten \cite{gersten_whiteheads_1984}, and  will be used here as well, but to have all the requisite information preserved, a dual construction must be carried simultaneously, which makes the argument more delicate. As mentioned earlier, the argument relies on Gersten's solution of the orbit problem for the action of automorphisms of free groups on conjugacy classes of subgroups. It has to be modified though, in order to treat tuples of conjugacy classes of tuples of subgroups, which we propose as an application of Gersten's result.

        Finally, we study the Minkowski property, which asks, given a subgroup $H$ of $F\semidirect\alpha\bk{t}$,  for a finite quotient of $H$  in which all finite order outer automorphism of $H$  survive. We first produce a characteristic clean covering of the canonical graph of groups decomposing $H$ that we obtained earlier.  It is done through an argument borrowed from \cite{cotton-barratt_conjugacy_2012},  
        involving coverings of graphs of spaces, Marshall Hall's theorem, and the omnipotence theorem of Wise \cite{wise_subgroup_2000}. Then, once in clean position, we consider $V$ the associated finite index subgroup of $H$, and we may produce a Dehn filling quotient of the graph of groups of $V$, that quotient vertex groups to free groups, and thus produces a virtually free group. We take care to keep the separation of certain witnesses of finite order automorphisms of $H$ (these witnesses are built from elements of $H$ moved by the automorphisms). Then, by a classical conjugacy separability result, we quotient the virtually free group $V$ to a finite group $Q_0$ in which all conjugacy classes of finite order elements are kept distinct. Since it concerns all elements in vertex groups of $V$, the witnesses of the finite order automorphisms of $H$ are kept distinct in the finite quotient $Q_0$. The care with which the witnesses were defined allows us finally to show that the finite order outer automorphisms of $H$ descend to automorphisms of $Q$, a finite group containing $Q_0$, in a non-trivial way.

\vskip .5cm

{\it Acknowledgements.} F.D. acknowledges support of the ANR-22-CE40-0004 GoFR, the Labex Carmin, ANR-10-LABX-59-01,   and the  CNRS
IRL 3457
(CRM, Montr\'eal).  N.T. acknowledges the support of the Natural Sciences and Engineering Research Council of Canada (NSERC). Both authors are grateful for  the support and facilities of the program `Recherche en Bin\^ome' in CIRM, Luminy,   no. 2943.  
It is a pleasure to thank N. Macura, A. Martino, S. Francaviglia, S. Hugues, M. Kudlinska,  for discussions related to this work. We thank the anonymous referees for pointing out an error in the initial version.

    \section{Background on trees and graphs of groups}
     
        \subsection{Bass-Serre theory}\label{sec;vocab}
     
            While we assume the reader familiar with Bass-Serre theory, we still must set notation and terminology conventions.
            
             A \emph{graph} $X= (\verts X, \edges X, i, \tau, -)$ consists of a set of vertices $\verts X$, a set of oriented (or directed) edges $\edges X$, two maps $i, \tau :\edges X \to \verts X $, and a fixed-point free involution $-: \edges X\to \edges X$ satisfying ${i(\bar e)} = \tau(e)$.

             The cell 1-complex whose vertex set is $\verts{X}$, and one-cells are the pairs $\{e,\bar e\}$, $e\in\edges{X}$,  is the \emph{geometric realization of X.}

             A \emph{graph of groups} $\bbX$ consists of an underlying graph $X$, a \emph{vertex group}
             $\bbX_v$ for each vertex $v \in \verts X$, an \emph{edge group} $\bbX_e$   for each edge $e \in \edges X$, with a pair of injective homomorphisms   $i_e:\bbX_e \into \bbX_{i(e)}, \tau_e:\bbX_e \into \bbX_{\tau(e)}$,      such that for all edge $e$,  
             $\bbX_e = \bbX_{\bar e}$, and   $i_e = \tau_{\bar e}$.

             Given a graph of groups $\bbX$, one may consider the free product of all the vertex groups and the free group over the unoriented edges:     
                \[ \mathbb{F}(\bbX) =    
             		\left( \underset{ v \in \verts X}\Asterisk \bbX_v \right) * \left(F_{\edges{X}}/_{\langle\!\langle \{  e\bar e, \, e\in
             		\edges X \}  \rangle\!\rangle} \right).
             	\]     
             The \emph{Bass group} of
             $\bbX$ is  given as the following quotient, which identifies attached edge groups     
                \begin{equation}\label{eqn:bass}
         	        \Bass(\bbX) = \mathbb{F}(\bbX)/_{\langle \!\langle    \{ i_e(g)e \tau_e(g)^{-1} e^{-1},\;   e\in \edges X, \, g \in \bbX_e\}    \rangle \!\rangle}.
                 \end{equation}
             
             In the two previous groups, the normal subgroups by which one quotients thus provide the (more readable)  relations in the Bass group:
             \[ \forall e\in \edges X,  \forall  g \in \bbX_e: \qquad  e^{-1}= \bar e, \qquad  i_e(g) e   = e \tau_e(g).    \]
             
             Given some $b \in \verts X$, the \emph{fundamental group of $\bbX$ based at $b$}, $\pi_1(\bbX,b)$, 
             is the subgroup of the Bass group generated by elements, called
             \emph{$\bbX$-loops based at $b$}, of the
             form
                \[ w=a_1e_1a_2e_2\cdots e_n a_{n+1}
                \] 
            which satisfy the following criteria:  $a_1, a_{n+1} \in \bbX_b$,
            and for all $j\in\{1,\ldots,n\}$,  $a_j \in \bbX_{i(e_j)}, a_{j+1} \in \bbX_{\tau(e_j)}$. In particular, the
            concatenation  $ \overline{w}_X = e_1\cdots e_n$ 
            is a closed loop based at $b$ in $X$. 

            Given a spanning tree $\tau \subset X$ the {\it fundamental group  of $\bbX$ at $\tau$}  is   
            $ \pi_1(\bbX,\tau) = \Bass(\bbX)/_{\langle \! \langle \edges \tau\rangle \! \rangle }$.

            The following is an important foundation of Bass-Serre theory.
        
            \begin{thm}[{\cite{serre_trees_2003}}]
         	    Let $\tau \subset X$ be a spanning tree,   and $F:\Bass(\bbX) \to \pi_1 (\bbX,\tau)$ the quotient map. 
         	
             	Then, for all vertex  $u \in \verts
             	X$,  the restriction  $F|_{\pi_1(\bbX,u)}: \pi_1(\bbX,u) \to \pi_1 (\bbX,\tau)$    is an isomorphism.  
            \end{thm}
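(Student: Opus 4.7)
The plan is to construct an explicit inverse to $F|_{\pi_1(\bbX,u)}$ using the spanning tree $\tau$. For every vertex $v \in \verts{X}$, let $\gamma_v$ denote the unique reduced edge-path in $\tau$ from $u$ to $v$ (so $\gamma_u$ is the empty word and each $\gamma_v$ is an element of $\Bass(\bbX)$ supported on tree edges and identities in the intermediate vertex groups). I define a map $\phi : \Bass(\bbX) \to \Bass(\bbX)$ on generators by
\[
\phi(a) = \gamma_v \, a \, \gamma_v^{-1} \quad (a \in \bbX_v), \qquad \phi(e) = \gamma_{i(e)} \, e \, \gamma_{\tau(e)}^{-1} \quad (e \in \edges{X}),
\]
so that by construction each generator is sent to an $\bbX$-loop based at $u$; hence $\phi$, if well-defined as a homomorphism, takes values in $\pi_1(\bbX,u)$.

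Next I verify that $\phi$ respects the defining relations of $\Bass(\bbX)$ from equation \eqref{eqn:bass}. The edge-inversion relation $\bar e = e^{-1}$ is immediate since $i(\bar e) = \tau(e)$. For the conjugation relation $i_e(g) \, e = e \, \tau_e(g)$ with $g \in \bbX_e$ and $e$ running from $v$ to $w$, the two sides of the image become
\[
\gamma_v i_e(g) \gamma_v^{-1} \cdot \gamma_v e \gamma_w^{-1} = \gamma_v i_e(g) e \gamma_w^{-1}, \qquad \gamma_v e \gamma_w^{-1} \cdot \gamma_w \tau_e(g) \gamma_w^{-1} = \gamma_v e \tau_e(g) \gamma_w^{-1},
\]
which coincide in $\Bass(\bbX)$. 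So $\phi$ is a well-defined homomorphism $\Bass(\bbX) \to \pi_1(\bbX,u)$. I then observe that $\phi$ kills every tree edge: if $e \in \edges\tau$ goes from $v$ to $w$, then $\gamma_v e$ is the unique reduced tree-path from $u$ to $w$, so $\gamma_v e = \gamma_w$ in $\Bass(\bbX)$, whence $\phi(e) = \gamma_v e \gamma_w^{-1} = 1$. Consequently $\phi$ descends to a homomorphism $\bar\phi : \pi_1(\bbX,\tau) \to \pi_1(\bbX,u)$.

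It remains to check $\bar\phi$ is a two-sided inverse of $F|_{\pi_1(\bbX,u)}$. For any generator $x$ of $\Bass(\bbX)$, the element $\phi(x)$ differs from $x$ by factors of $\gamma_{\bullet}$ and $\gamma_{\bullet}^{-1}$ that are supported on tree edges, so $F(\phi(x)) = F(x)$; this gives $F \circ \bar\phi = \mathrm{id}_{\pi_1(\bbX,\tau)}$. Conversely, given an $\bbX$-loop at $u$ written $w = a_1 e_1 a_2 \cdots e_n a_{n+1}$ with $\tau(e_j) = i(e_{j+1}) = v_j$, applying $\phi$ produces
\[
\gamma_u a_1 \gamma_u^{-1} \cdot \gamma_u e_1 \gamma_{v_1}^{-1} \cdot \gamma_{v_1} a_2 \gamma_{v_1}^{-1} \cdots \gamma_{v_{n-1}} e_n \gamma_u^{-1} \cdot \gamma_u a_{n+1} \gamma_u^{-1},
\]
and since $\gamma_u = 1$ the intermediate $\gamma_{v_j} \gamma_{v_j}^{-1}$ pairs cancel in $\Bass(\bbX)$, leaving exactly $w$; hence $\bar\phi \circ F|_{\pi_1(\bbX,u)} = \mathrm{id}_{\pi_1(\bbX,u)}$.

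The only conceptually delicate point is the well-definedness of $\phi$ on relations of $\Bass(\bbX)$, since the formulas involve the spanning tree in a somewhat asymmetric way; but once one fixes the convention that the $\gamma_v$ are specific reduced words in $\Bass(\bbX)$, the verification is purely formal as above. All other steps are bookkeeping with tree-paths.
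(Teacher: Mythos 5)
Your proof is correct, and it is the standard argument for this classical fact, which the paper does not prove itself but simply cites to Serre: one builds the retraction $\bar\phi:\pi_1(\bbX,\tau)\to\pi_1(\bbX,u)$ by conjugating generators by the tree-paths $\gamma_v$, checks the Bass relations, and verifies the two composites are identities by telescoping. The only small imprecision is the claim that $\gamma_v e$ is itself the reduced tree-path to $w$ — when $e$ points toward $u$ in $\tau$ one instead uses $\gamma_v=\gamma_w\bar e$ and the relation $\bar e e=1$ in $\Bass(\bbX)$ — but the needed identity $\gamma_v e=\gamma_w$ in $\Bass(\bbX)$ holds in both cases, so the argument goes through.
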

         
            We  call an isomorphism $G \simeq \pi_1(\bbX, v_0)$ or  $G \simeq \pi_1(\bbX, \tau)$   a \emph{splitting 
            of $G$	as a graph of groups.}

            Let $G$ be a group.  A $G$-tree $T$ is a tree with an action of $G$
            without edge inversion. Given $v\in \verts T$ and $e \in \edges T$,
            we denote the stabilizers of $v$ and $e$ by $G_v$ and $G_e$
            respectively.  
         
            We will say $T$ is a \emph{minimal $G$-tree} if $T$ has no
            $G$-invariant subtrees.

            The Bass-Serre duality \cite{serre_trees_2003} famously provides a correspondence between graphs of groups $\bbX$ and their dual trees, that are $\pi_1(\bbX, \tau)$-trees $T$, for which  $X = \pi_1(\bbX, \tau)\backslash T$, and for which isotropy groups  are the conjugates of the corresponding vertex or edge groups  of $\bbX$. 

        \subsection{$G$-tree vocabulary and JSJ theories}\label{sec:G-trees}
         If $\calA$  and $\calH$  are two classes of subgroups of $G$, we say that a $G$-tree $T$ is \emph{an $(\calA, \calH)$-tree} if all edge stabilizers are in $\calA$, and all elements of $\calH$ are elliptic. If $\calH$ is empty, we say $T$ is a  $\calA$-tree.  
        
        The concept of JSJ decomposition is one of the most important tools in this situation. The theory, initiated by Rips and Sela, is comprehensively presented in \cite{guirardel_jsj_2017}.   We will now follow \cite{guirardel_jsj_2017} to recall certain notions. Certain terms will be streamlined to the contexts in which they will be used. \emph{Morphisms} of $G-$graphs (hence of trees) are simplicial, while \emph{maps} need only be continuous.
        
        Let $G,H$ be groups,   $\varphi:G \to H$ be a group homomorphism,  $S$ be a $G$-tree and $T$ be an $H$-tree. A map  $a:S \to T$,  is  \emph{$\varphi$-equivariant} if  for all $x \in S$ and $g \in G$, one has  $a(g\cdot x) = \varphi(g)\cdot a(x)$. We will sometimes leave the equivariance implicit. Also, $\phi$ will often be the identity map on $G$.  Equivariant surjective maps between trees (which may fail to be simplicial) are called \emph{dominations}. If $S,T$  are both $G$-trees that dominate each other, then we say they are in the same \emph{deformation space.} Being in the same deformation space amounts to saying both threes have the same vertex stabilizers (though the trees themselves may not be isomorphic). For a fixed group $G$ we say a $(\calA,\calH)$-tree is \emph{universally elliptic} if its edge stabilizers act elliptically on every other $(\calA,\calH)$-tree. An \emph{$(\calA,\calH)$-JSJ tree} $T$ is a universally elliptic tree that dominates every other universally elliptic $(\calA,\calH)$-tree. Let $k \geq 0$ be an integer. An action is \emph{$k$-acylindrical} if, for every pair of vertices  that are at distance at least $k+1$,   the stabilizer of the pair     is trivial. Thus  
        an action on a tree with trivial edge stabilizers is
        thus $0$-acylindrical.

        We will typically use $\calA$ to denote the class of of allowable edge groups of our splittings and denote by $\calA_\infty$ the infinite subgroups in $\calA$. In this paper, $\calA$ will either be the class of cyclic groups or the class of free abelian groups of rank at most 2. Following \cite[Definition 7.1]{guirardel_jsj_2017}, let $\calA$ be class of subgroup of $G$ such that $\calA_\infty$ is \emph{sandwich closed}, which is to say if $H,K \in \calA_\infty$ and $H\leq I \leq K$ then $I \in \calA_\infty$. Then an equivalence relation $\sim$ on $\calA_\infty$ is said to be \emph{admissible} (relative to $\calH$) if it satisfies the following axioms:
        \begin{enumerate}
            \item If $A\sim B$ and $g\in G$, then $gAg^{-1}\sim gBg^{-1}$.
            \item If $A \leq B$, then $A \sim B$.
            \item Let $T$ be a $G$-tree in with edge stabilizers in $\calA_\infty$ (in which subgroups in $\calH$ are elliptic). If $A\sim B$ and $A,B$ fix $a,b \in T$ respectively then each edge $e$ in the path from $a$ to $b$ we have $G_e \sim A \sim B.$
        \end{enumerate}
        The equivalence class of $A\in \calA_\infty$ is denoted $[A]$ and the stabilizer of $[A]$ under the action of $G$ on $\calA_\infty/\sim$ induced by conjugation is denoted $G_{[A]}$. Still following \cite[\S 7.1]{guirardel_jsj_2017}, given an $(\calA,\calH)$ tree $T$, we define two edges $e,f$ to be equivalent if $G_e \sim G_f$ define a \emph{cylinder} to be the union of edges in an equivalence class. Cylinders are subtrees and distinct cylinders intersect at at most a point. We define the \emph{tree of cylinders} $T_c$ of $T$ as follows: it is a bipartite tree whose set of black vertices is the set of cylinders and whose white vertices is the set of vertices of $T$ that lie in at least two distinct cylinders. A white vertex $w$ is adjacent to a black vertex $b$ in $T_c$ if, in $T$, $w$ lies in the cylinder $b$.\footnote{\emph{Collapsed} trees of cylinders are also important in \cite{guirardel_jsj_2017} but will not be needed in this paper.}  The existence of JSJ trees and the properties of trees of cylinders depend in particular on the nature of $\calA$ and $\calH$.  

        This leads us to an informal concept that we call a \emph{JSJ theory}; which basically corresponds to a class of groups, a class $\calA$ of allowable edge groups, some peripheral structure (or a class of peripheral groups) $\calH$, and an admissible equivalence relation $\sim$ on $\calA_\infty$. Examples of JSJ theories include: the class of one ended hyperbolic groups where $\calA$ is the class of virtually cyclic groups and $A\sim B$ if and only if $A$ and $B$ are commensurable, or the class of finitely generated CSA groups (maximal abelian subgroups are malnormal) where $\calA$ is the class of abelian groups and $A\sim B$ if and only if $\bk{A,B}$ is abelian. We make this concept explicit since we will encounter different JSJ theories throughout this paper. 
        
        We'll say an $(\calA,\calH)$-tree $T$ is \emph{canonical} for $G$ if for any \emph{$(\calA,\calH)$-automorphism of $G$}, i.e. an an automorphism that preserves the classes $(\calA,\calH)$ extends naturally to a $G$-equivariant automorphism of $T$. This implies, for example, $(\calA,\calH)$-automorphisms map vertex groups to vertex groups. The existence of non-trivial canonical trees is a feature of a JSJ-theory. 

        In most ``classical'' JSJ theories, a canonical $(\calA,\calH)$-tree arises from taking the tree of cylinders of an $(\calA,\calH)$-JSJ tree, see \cite[Corollary 9.1]{guirardel_jsj_2017}. It is worth noting, however, that some JSJ theories do not admit canonical trees. One such negative example would be the case of a free group $F$ with $\calA$ being the set of infinite cyclic groups. On the other hand, if we take a class of subgroups $\calH$ such that $F$ is one-ended relative to $\calH$, then $F$ will admit a canonical $(\calA,\calH)$-tree $T_c$ that is the tree of cylinders of an $(\calA,\calH)$-JSJ tree $T$, see \cite[Theorem 9.5]{guirardel_jsj_2017}.

        \subsection{Automorphisms of graphs of groups}\label{sec;autom_gog}
 
        By the duality theorem, the notion of an automorphism of $G$-trees allows us to define automorphisms of a graph of groups. We will now give a more  combinatorial account of this concept. Although the treatment here may appear excessively formal, the notation introduced will be used throughout the paper.
 
        An automorphism $\Phi$ of a graph of groups $\bbX$ is a tuple
        consisting of an automorphism $\phi_X:X\to X$ of the underlying
         graph $X$, an isomorphism $\phi_v : \bbX_v\to \bbX_{\phi_X(v)}$ for each
         $v \in \verts X$, an isomorphism $\phi_e : \bbX_e\to \bbX_{\phi_X(e)}$ for
         each $e \in \edges X$, with $\phi_{\bar e} = \phi_e$, and elements
         $\gamma_e\in \bbX_{\phi_X(\tau(e))}$ for each $e \in \edges X$, that satisfy
         the Bass commutative diagram equations:
            \begin{equation}\label{eq:Bass_Diagram}  
             	\begin{tikzpicture}
             		\node[
             		regular polygon,
             		regular polygon sides=5,
             		minimum width=50mm,
             		yscale=.65
             		] (PG) {}
             		(PG.corner 1) node (PG1) {$\bbX_{\phi_X(e)}$} 
             		(PG.corner 2) node (PG2) {$\bbX_{e}$}
             		(PG.corner 3) node (PG3) {$\bbX_{\tau(e)}$}
             		(PG.corner 4) node (PG4) {$\bbX_{\phi_X(\tau(e))}$}
             		(PG.corner 5) node (PG5) {$\bbX_{\phi_X(\tau(e))}$}
             		;
             		\draw[->] (PG2) -- (PG1) node [midway,sloped,above] {$\phi_{e} $};
             		\draw[->] (PG1) -- (PG5)  node [midway,sloped,above] {$\tau_{\phi_X(e)} $} ;
             		\draw[->] (PG5) -- (PG4)  node [midway,sloped,below] {$\ad{\gamma_e} $} ;
             		\draw[->] (PG3) -- (PG4)  node [midway,above] {$ \phi_{\tau(e)} $};
             		\draw[->] (PG2) -- (PG3)   node  [midway,sloped,below] {$\tau_e$};
             	\end{tikzpicture}
            \end{equation}

        In other words: $  \phi_{\tau(e)} \circ \tau_e =
        \ad{\gamma_{e}}\circ \tau_{\phi_X(e)} \circ \phi_{ e}   $. An automorphism of the graph of groups $\bbX$ extends naturally to an automorphism of the Bass group in a unique way by sending the generator $e$ as follows 
            \[ e \mapsto \gamma_{\bar e}^{-1} \phi_X(e) \gamma_e. \]
         
        While vertices or spanning trees are not necessarily preserved,  an automorphism of a graph of groups induces an isomorphism between
        $\pi_1(\bbX, v)$ and $\pi_1(\bbX, \phi_X(v))$.  See 
        \cite[\S 2.3]{bass_covering_1993}, \cite[Lemmas 2.20-2.22]{dahmani_isomorphism_2010}.  The
        composition rule is as follows
        \begin{eqnarray*}
 	        && (\phi_X, (\phi_v)_v, (\phi_e)_e, (\gamma_e)_e) \circ (\phi_X',(\phi'_v)_v, (\phi'_e)_e, (\gamma'_e)_e)\\
 	        && = \left( \phi_X\circ \phi_X', \, (\phi_{\phi_X'(v)} \circ \phi'_v)_v, \, (\phi_{\phi_X'(e)} \circ \phi'_e )_e, \, \gamma_{\phi_X'(e)} \phi_{\phi_X'(\tau(e))}(\gamma'_e) \right).
        \end{eqnarray*}
  
        The group of all automorphisms of the form
            \[(\phi_X, (\phi_v)_v, (\phi_e)_e, (\gamma_e)_e)\] 
        satisfying \eqref{eq:Bass_Diagram} is denoted $\delta \aut\bbX$, and maps naturally to a subgroup of $\out{ \pi_1(\bbX, v)}$.

         The subgroup $\delta_0\aut\bbX \leq \delta\aut\bbX$ of \emph{pure automorphisms} is the kernel of the natural map $\delta \aut\bbX \to \aut X$ and is clearly of finite index in $\delta \aut\bbX$. Observe that $\delta_0\aut\bbX$ maps naturally to (a subgroup of) $\aut{\pi_1(\bbX, v)}$ for an arbitrary vertex $v$, and to $\aut{\pi_1(\bbX, \tau)}$, for arbitrary spanning tree $\tau$.

        It is worth observing that, for elements of $\delta_0\aut{\bbX}$, the composition rule simplifies to 
            \begin{equation}\label{eq:delta_0-composition} \begin{array}{ccl} \big( Id, (\phi_v)_v, (\phi_e)_e, (\gamma_e)_e \big) & \circ & \big( Id,
             (\phi'_v)_v, (\phi'_e)_e, (\gamma'_e)_e \big) 
              \\ &=& 
             \left( Id,
             (\phi_v \circ \phi'_v)_v, (\phi_e \circ \phi'_e )_e, ( \gamma_{(e)}
            \phi_{\tau(e)}(\gamma'_e))_e \right) \end{array}
            \end{equation} 
        and the Bass diagram also simplifies to:

            \begin{equation}\label{eq:Bass_Diagram_delta_0}  
 	        \begin{tikzpicture}
         		\node[
         		regular polygon,
         		regular polygon sides=5,
         		minimum width=30mm,
         		] (PG) {}
         		(PG.corner 1) node (PG1) {$\bbX_{e}$} 
         		(PG.corner 2) node (PG2) {$\bbX_{e}$}
         		(PG.corner 3) node (PG3) {$\bbX_{\tau(e)}$}
         		(PG.corner 4) node (PG4) {$\bbX_{\tau(e)}$}
         		(PG.corner 5) node (PG5) {$\bbX_{\tau(e)}$}
         		;
         		\draw[->] (PG2) -- (PG1) node [midway,sloped,above] {$\phi_{e} $};
         		\draw[->] (PG1) -- (PG5)  node [midway,sloped,above] {$\tau_{e} $} ;
         		\draw[->] (PG5) -- (PG4)  node [midway,sloped,below] {$\ad{\gamma_e} $} ;
         		\draw[->] (PG3) -- (PG4)  node [midway,below] {$ \phi_{\tau(e)} $};
         		\draw[->] (PG2) -- (PG3) node   [midway,sloped,below] {$\tau_e$};
 	        \end{tikzpicture}
            \end{equation}
 
        Thus, $\delta_0\aut\bbX$  naturally maps to $\left(\prod_{v\in \verts{\bbX}}  \aut{\bbX_{v}} \right) 
        \times \left( \prod_{e\in \edges{\bbX}}  \aut{\bbX_{e}} \right) $. 
        
        We define the {\it small modular group of $\bbX$} to be the kernel.
        
        Let us also define $\delta_1\aut{\bbX} \leq \delta_0\aut\bbX$ to be the kernel of the map to the second factor  $ \left( \prod_{e\in \edges{\bbX}}  \aut{\bbX_{e}} \right) $.
        
        
        
        
        Note a slight variation with the denomination of small modular group in \cite[\S 1.2]{D_TAMS}:  $\aut{\bbX_{v}}$ was there replaced by $\out{\bbX_{v}}$. The proposed convention seems more appropriate. 
        The Bass diagram and the specification of the automorphism on the Bass group give the following. 
 
        \begin{prop}\label{prop;formula_for_GDT} 
            The elements of the small modular group are the tuples 
            \[( Id_X, (Id_v)_v,  (Id_e)_e, (\gamma_e)_e ), \; \hbox{such that } \;  \forall  e,\, \gamma_e \in Z_{\bbX_{\tau(e)}} (i_e(\bbX_e)) \] 

            The composition rule is 
            \[ \begin{array}{ccl} ( Id_X, (Id_v)_v,  (Id_e)_e, (\gamma_e)_e ) & \circ & ( Id_X, (Id_v)_v,  (Id_e)_e, (\gamma'_e)_e ) \\ &= & ( Id_X, (Id_v)_v,  (Id_e)_e, (\gamma_e\gamma'_e)_e ).\end{array}  \]

            The image by $( Id_X, (Id_v)_v,  (Id_e)_e, (\gamma_e)_e )$ of a loop $e_1e_2\dots e_n $  of $\pi_1(\bbX, v_0)$  is  
            \[\eta = \gamma_{\bar e_1}^{-1} e_1 \gamma_{e_1} \gamma_{\bar e_2}^{-1} e_2\gamma_{e_2} \dots \gamma_{\bar e_n}^{-1} e_n \gamma_{e_n}.  \]

            The image by $( Id_X, (Id_v)_v,  (Id_e)_e, (\gamma_e)_e )$ of an element $e_1e_2\dots e_m \gamma_v \bar e_m \dots \bar e_2 \bar e_1$  of $\pi_1(\bbX, v_0)$  is $\eta   \gamma_v  \eta^{-1}$ for \[\eta = \gamma_{\bar e_1}^{-1} e_1 \gamma_{e_1} \gamma_{\bar e_2}^{-1} e_2\gamma_{e_2} \dots \gamma_{\bar e_m}^{-1} e_m \gamma_{e_m}.  \]

        \end{prop}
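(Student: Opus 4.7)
The plan is to unwind each of the four claims directly from the definitions of $\delta_0\aut\bbX$, the simplified Bass diagram \eqref{eq:Bass_Diagram_delta_0}, the composition rule \eqref{eq:delta_0-composition}, and the extension formula $e \mapsto \gamma_{\bar e}^{-1}\phi_X(e)\gamma_e$ for the Bass group. Since the small modular group is the kernel of the map to $\prod_v \aut{\bbX_v}\times \prod_e \aut{\bbX_e}$, any of its elements has $\phi_X=Id_X$, $\phi_v=Id_v$, and $\phi_e=Id_e$, so the data reduces to the family $(\gamma_e)_e$.

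For the first assertion I would substitute $\phi_v=Id$ and $\phi_e=Id$ into the commutative pentagon \eqref{eq:Bass_Diagram_delta_0}. The relation $\phi_{\tau(e)}\circ\tau_e = \ad{\gamma_e}\circ\tau_e\circ\phi_e$ collapses to $\tau_e = \ad{\gamma_e}\circ\tau_e$, which is exactly the statement that $\gamma_e$ centralizes the image of the edge group in $\bbX_{\tau(e)}$, i.e. $\gamma_e\in Z_{\bbX_{\tau(e)}}(i_e(\bbX_e))$ in the paper's convention. Conversely, any tuple $(Id_X,(Id_v)_v,(Id_e)_e,(\gamma_e)_e)$ with this centralizer property trivially satisfies \eqref{eq:Bass_Diagram_delta_0}, so it defines an element of $\delta_0\aut\bbX$ in the kernel we want.

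For the second assertion I plug the triviality of all $\phi_v$ into \eqref{eq:delta_0-composition}: the composition rule involves the term $\gamma_e\,\phi_{\tau(e)}(\gamma'_e)$, and here $\phi_{\tau(e)}=Id$, so it collapses to $\gamma_e\gamma'_e$ componentwise. The small modular group is thus isomorphic to a subgroup of $\prod_e Z_{\bbX_{\tau(e)}}(i_e(\bbX_e))$ with coordinatewise multiplication (subject to the $\phi_{\bar e}=\phi_e$ compatibility, so that one keeps one copy per geometric edge).

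For the last two assertions I would apply the extension formula for the Bass group. For any edge letter $e$, one has $e\mapsto \gamma_{\bar e}^{-1}\phi_X(e)\gamma_e = \gamma_{\bar e}^{-1}e\gamma_e$, and by symmetry $\bar e\mapsto \gamma_e^{-1}\bar e\gamma_{\bar e}$. Applied letter by letter to a loop $e_1\cdots e_n$ (where vertex group elements are fixed since $\phi_v=Id$), one obtains the claimed expression. For a conjugate of a vertex group element $e_1\cdots e_m\,\gamma_v\,\bar e_m\cdots\bar e_1$, applying the formulas to the $e_i$ and $\bar e_i$ letters produces $\eta\,\gamma_v\,\eta^{-1}$ by direct concatenation, with $\eta$ exactly as stated. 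None of these steps involves a real obstacle; the only point to keep track of carefully is the symmetry $\phi_{\bar e}=\phi_e$ (so the same cocycle $\gamma$ governs both orientations of each geometric edge) and the sign convention $\bar e\mapsto \gamma_e^{-1}\bar e\gamma_{\bar e}$ that makes the conjugation formula telescope cleanly.
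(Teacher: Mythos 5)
Your unwinding is correct and is precisely what the paper invokes when it says ``The Bass diagram and the specification of the automorphism on the Bass group give the following'' --- the paper gives no further argument. One small note: you implicitly fix a notational slip in the statement, since the Bass diagram yields $\gamma_e \in Z_{\bbX_{\tau(e)}}(\tau_e(\bbX_e))$ (equivalently $i_{\bar e}(\bbX_e)$), and your reading as ``the image of the edge group in $\bbX_{\tau(e)}$'' is the intended one.
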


        We will call the elements of the small modular group, the generalized Dehn twists of $\bbX$.

        \subsection{Substitutions, generalized Dehn twists and linear growth} 
 
         We present now a formal substitution rule that will describe generalized Dehn twists. An orientation of a graph $X$ is a choice of a subset $\edges{X}^+$ of $\edges X$ that intersects each pair $\{e, \bar e\}$ on one element exactly. If the graph is bipartite we will choose the orientation such that  $e\in \edges{X}^+$ if and only if $i(e)$ is a black vertex.  Recall (see Section \ref{sec;vocab}) that if $\bbX$ is a graph of groups with underlying graph $X$, the notation $\bbF(\bbX)$ denotes the free product of all vertex groups and of the free group with basis $\edges{X}^+$.
         
         \begin{defn}[Full substitutions]
            Let $\bbX$ be a bipartite graph of groups in which black vertex groups are abelian and coincide with incident edge groups.
         
            An homomorphism $\Psi: \bbF (\bbX) \to \bbF(\bbX)$
            is  a $2$-sided  $\edges{X}$-substitution if it is the identity on  $(\bigcup_{v\in \verts X} \bbX_v)$ and sends each $e\in  \edges X$ to an element $\gamma_{\bar e}^{-1} e \gamma_e$ with $\gamma_e \in \tau_{e}(\bbX_{\tau(e)})$ and  $\gamma_{\bar e} \in \tau_{\bar e}(\bbX_{\tau(\bar e)})$).  
 
            If $\gamma_e=1$ for all $e\notin \edges{X}^+$, and for all edges whose  initial vertex is white, then we say that $\Psi$ is a \define{one-sided substitution}, or simply a $\edges{\bbX}$-substitution.

            We say that a one-sided substitution is \define{full} if,  
            for any length 2 path $e_i^{-1}e_j$ going through a black vertex $b$,  
            applying the substitution, we have in $\Bass(\bbX)$: \[
                e_i^{-1}e_j \neq \gamma_{e_i}^{-1}e_i^{-1}e_j \gamma_{e_j}. \]
                
                Observe that $\gamma_{e_i}^{-1}e_i^{-1}e_j \gamma_{e_j} =  e_i^{-1}e_j (c'\gamma_{e_j}) $  for some $c' \in \tau_{e_j}(\bbX_{e_j})$. We have then required that $c'\gamma_{e_j} \neq 1$. 
        \end{defn}
        
        We note that the requirement that a substitution is full ensures that no $\bbX$-loops with reduced form $g_1e_i^{-1}e_jg_2e_j^{-1}e_i$ will be left invariant. Thus, it is a weak form of the requirement that an $\edges \bbX$-substitution doesn't fix any hyperbolic conjugacy classes.

        Observing that  $\bar e$ is sent to the inverse of the image of $e$, and  $\gamma_e$ is in the centralizer in $\bbX_{\tau(e)}$ of the image of $\bbX_e$, which  contains $\tau_e(g)$, by considering the presentations of the groups  $\mathbb{F}(\bbX)$,  $\Bass(\bbX)$, and the definition of the subgroup  $\pi_1 (\bbX,v_0)$, we consign the following.
        
        \begin{lem} \label{lem;subst_to_gdt}
            Let $v_0$ be a vertex in $X$. Any 2-sided $\edges{\bbX}$-substitution extends   as  an automorphism of the Bass group, that preserves the subgroups  $\pi_1 (\bbX,v_0)$, and that is a generalized Dehn twist $(Id_X, (Id_v), (Id_e), (\gamma_e))$.
        \end{lem}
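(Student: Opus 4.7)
The plan is to verify in turn that $\Psi$ descends to the Bass group, is bijective, preserves $\pi_1(\bbX, v_0)$, and coincides with the generalized Dehn twist $(Id_X, (Id_v)_v, (Id_e)_e, (\gamma_e)_e)$. The crucial preliminary observation is that since $\bbX$ is bipartite with abelian black vertex groups, every edge group $\bbX_e$ embeds (via $i_e$ or $\tau_e$) into an abelian black vertex group and is therefore itself abelian. In particular $\gamma_e \in \tau_e(\bbX_e)$ commutes with every element of $\tau_e(\bbX_e)$ inside $\bbX_{\tau(e)}$, and similarly $\gamma_{\bar e} \in \tau_{\bar e}(\bbX_{\bar e}) = i_e(\bbX_e)$ commutes with every element of $i_e(\bbX_e)$ inside $\bbX_{i(e)}$.

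For the first step, I check each defining relator $i_e(g)\, e\, \tau_e(g)^{-1}\, e^{-1}$ of $\Bass(\bbX)$. Substituting $\Psi(e) = \gamma_{\bar e}^{-1} e \gamma_e$ while $\Psi(i_e(g)) = i_e(g)$ and $\Psi(\tau_e(g)) = \tau_e(g)$, and then using the two commutations above together with the original Bass relation $e\, \tau_e(g)\, e^{-1} = i_e(g)$, the image reduces to the identity. Bijectivity follows by exhibiting an inverse: the substitution sending $e \mapsto \gamma_{\bar e}\, e\, \gamma_e^{-1}$ satisfies the same hypotheses (the elements $\gamma_e^{-1}, \gamma_{\bar e}^{-1}$ remain in the same image subgroups), defines an endomorphism of $\Bass(\bbX)$ by the same argument, and composes with $\Psi$ to the identity on generators.

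Preservation of $\pi_1(\bbX, v_0)$ is then immediate: a $\bbX$-loop $w = a_1 e_1 a_2 \cdots e_n a_{n+1}$ is sent to a word in which, for each $j$, the inserted elements $\gamma_{\bar e_j}^{-1} \in \bbX_{i(e_j)}$ and $\gamma_{e_j} \in \bbX_{\tau(e_j)}$ are absorbed into the adjacent vertex-group letters, yielding again a $\bbX$-loop based at $v_0$.

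Finally, to identify $\Psi$ with a generalized Dehn twist, I invoke the explicit extension formula $e \mapsto \gamma_{\bar e}^{-1} \phi_X(e) \gamma_e$ recalled in Section \ref{sec;autom_gog}, which, applied with $\phi_X = Id$, is exactly $\Psi(e)$. Both the Bass diagram \eqref{eq:Bass_Diagram_delta_0} and the centralizer condition on $\gamma_e$ appearing in Proposition \ref{prop;formula_for_GDT} reduce to ``$\gamma_e$ commutes with $\tau_e(\bbX_e)$'' (and symmetrically for $\gamma_{\bar e}$), which is the abelian-edge-group observation already used above. The main obstacle throughout is precisely that observation; once it is recorded, every verification becomes a direct application of the Bass relations and the formulas of Section \ref{sec;autom_gog}.
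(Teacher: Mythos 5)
Your proof is correct and follows essentially the same route as the paper, which simply records the key observation that $\gamma_e$ lies in (hence centralizes) the image of the abelian edge group in $\bbX_{\tau(e)}$ and then appeals to the presentations of $\mathbb{F}(\bbX)$, $\Bass(\bbX)$ and the definition of $\pi_1(\bbX,v_0)$. You have merely made explicit the relator check, the inverse substitution, and the identification with the tuple $(Id_X,(Id_v),(Id_e),(\gamma_e))$ via the formula of Section \ref{sec;autom_gog}, all of which the paper leaves implicit.
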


        The formula of Proposition \ref{prop;formula_for_GDT} indeed allows us to recognize the generalized Dehn twist. For terminology purposes, we will talk about a substitution Dehn twist for a Dehn twist induced by a one-sided substitution.

        Recall that an outer automorphism $\Phi$ of a finitely generated  group $G$ (equipped with a proper word metric) is said to have (at most) linear growth if,  for all element $g\in G$,  there exists $a,b \in \bbN$ such that for all $n\in \mathbb{Z}$  the shortest word length in the conjugacy class $\Phi^n ([g])$ is at most $(a|n| +b)\times \|g\|$. 
 
        \begin{prop} 
            If $\bbX$ is a graph-of-groups, and  $\Phi= (Id_X, (Id_v),  (Id_e), (\gamma_e) )$ is a generalized Dehn twist of $\bbX$, then, given $v_0\in \verts{\bbX}$,  the outer automorphism of $\pi_1(\bbX,v_0)$ defined by $\Phi$ has at most linear growth.
        \end{prop}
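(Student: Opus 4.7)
The plan is to obtain a closed form for $\Phi^n$ on each edge, and then read off the length bound from the normal form of an $\bbX$-loop.

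\medskip

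\noindent\textbf{Step 1: Closed form for iterates.}
By the last part of Proposition \ref{prop;formula_for_GDT}, a generalised Dehn twist fixes every vertex group pointwise, and sends each oriented edge by $e \mapsto \gamma_{\bar e}^{-1} e \gamma_e$. Since $\gamma_e \in \bbX_{\tau(e)}$ and $\gamma_{\bar e} \in \bbX_{i(e)}$ lie in vertex groups, both are fixed by $\Phi$. A direct induction on $n \geq 0$ then gives
\[ \Phi^n(e) \;=\; \gamma_{\bar e}^{-n} \, e \, \gamma_e^{n}, \]
the case $n=1$ being the formula of Proposition \ref{prop;formula_for_GDT}, and the step using $\Phi(\gamma_e) = \gamma_e$ and $\Phi(\gamma_{\bar e}) = \gamma_{\bar e}$. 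The same formula holds for $n < 0$ by symmetry.

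\medskip

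\noindent\textbf{Step 2: Effect on a loop.}
Write an arbitrary $\bbX$-loop based at $v_0$ in Bass normal form
\[ w \;=\; a_0 \, e_1 \, a_1 \, e_2 \cdots e_k \, a_k, \]
with $a_i$ in the appropriate vertex groups. Because $\Phi$ fixes each $a_i$, Step 1 gives
\[ \Phi^n(w) \;=\; a_0 \, (\gamma_{\bar e_1}^{-n} e_1 \gamma_{e_1}^{n}) \, a_1 \cdots a_{k-1} \, (\gamma_{\bar e_k}^{-n} e_k \gamma_{e_k}^{n}) \, a_k. \]

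\medskip

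\noindent\textbf{Step 3: Length estimate.}
Fix a finite generating set $S$ of $\pi_1(\bbX, v_0)$, and set
\[ M \;=\; \max_{e \in \edges X} \bigl( |\gamma_e|_S + |\gamma_{\bar e}|_S \bigr), \]
which is finite since $\bbX$ is finite. Comparing the formulas above we see
\[ |\Phi^n(w)|_S \;\leq\; |w|_S + k \cdot M \cdot |n|, \]
and since the combinatorial length $k$ of a Bass normal form of $w$ is bounded above by a constant multiple of $|w|_S$ (edges appearing in $w$ can be read off from any expression of $w$ in generators of $S$), we get
\[ |\Phi^n(w)|_S \;\leq\; \bigl(1 + C M |n|\bigr)\,|w|_S \]
for a constant $C$ independent of $w$ and $n$. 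Passing to the conjugacy length only decreases this bound, so $\|\Phi^n([w])\| \leq (a|n| + b)\|w\|$, as required.

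\medskip

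\noindent There is no real obstacle: the only mildly technical point is Step 3, where one has to compare combinatorial $\bbX$-length of a normal form to word length in $S$. This is standard but worth stating explicitly, since the growth statement is phrased in terms of the word metric of $\pi_1(\bbX,v_0)$ rather than the combinatorial structure of $\bbX$.
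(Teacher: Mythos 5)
Your proof is correct and follows essentially the same route as the paper: iterate the substitution formula $\Phi^n(e)=\gamma_{\bar e}^{-n}e\gamma_e^{n}$ (the paper uses the formula of Proposition \ref{prop;formula_for_GDT} directly) and bound the word length by a constant times $|n|$ times the number of edge letters in a reduced expression. The only cosmetic difference is that the paper works with a generating set adapted to $\bbF(\bbX)$, so the edge-letter count $\ell_E(\gamma)$ is immediately bounded by $|\gamma|$ and your Step 3 comparison between combinatorial $\bbX$-length and $S$-length is not needed.
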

 
        \begin{proof} 
            Consider a generating set adapted to the free product $\bbF(\bbX)$, and the associated word length $|\cdot |$. Let $\gamma$ be an element of $\pi_1(\bbX,v_0)$, and $\ell_E(\gamma)$ the number of letters in $\edges X$ of a shortest word defining $\gamma$.  Choosing $a$ to be the maximal word length of the elements $\gamma_e$, $e\in \edges X$, one controls the word length:  $|\Phi^n(\gamma)| \leq 2an\times \ell_E(\gamma) + |\gamma|$, which is at most $(2an+1)|\gamma|$.
        \end{proof}

        Recall that an (outer)-automorphism of a free group $F$ is unipotent if it induces a unipotent automorphism of the abelianization $F^{ab}$ (that is, given a basis, an element $A$ of $GL_n(\bbZ)$  such that $ A-I_n$ is nilpotent).
 
        \begin{prop} 
            If $\bbX$ is a graph-of-groups whose fundamental group is free, and with trivial edge groups,  and  $\Phi= (Id_X, (Id_v)_v,  (Id_e)_e, (\gamma_e)_e )$ is a generalized Dehn twist of $\bbX$, then, given $v_0\in \verts{\bbX}$,  the outer automorphism of $\pi_1(\bbX,v_0)$ defined by $\Phi$ is unipotent.
        \end{prop}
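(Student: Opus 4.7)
The plan is to fix a spanning tree $\tau\subset X$ and, for each $v\in\verts{X}$, the reduced path $p_v\subset\tau$ from $v_0$ to $v$. Since edge groups are trivial and $F:=\pi_1(\bbX,v_0)$ is free, each vertex group $\bbX_v$ is itself free, and $F$ decomposes as the free product of the conjugated vertex subgroups $p_v\bbX_v p_v^{-1}$ together with the free group on the loops $L_e:=p_{i(e)}\,e\,p_{\tau(e)}^{-1}$, one for each $e\in(\edges{X}\setminus\edges{\tau})^+$. Fixing a basis of each $\bbX_v^{ab}$, the abelianization $F^{ab}$ thereby acquires a $\bbZ$-basis partitioned into \emph{vertex-type} generators $[p_v g p_v^{-1}]$ and \emph{edge-type} generators $[L_e]$. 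The goal is to show that in this ordered basis the induced endomorphism of $F^{ab}$ has upper unitriangular matrix.

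For a vertex-type generator, Proposition~\ref{prop;formula_for_GDT} gives $\Phi(p_v g p_v^{-1})=\eta_v\, g\, \eta_v^{-1}$, where $\eta_v$ is the word in the Bass group obtained by substituting, in $p_v$, each edge $e$ by $\gamma_{\bar e}^{-1}\,e\,\gamma_e$. Setting $\zeta_v:=\eta_v p_v^{-1}$, a loop at $v_0$, one has $\Phi(p_v g p_v^{-1})=\zeta_v\,(p_v g p_v^{-1})\,\zeta_v^{-1}$, whose class in $F^{ab}$ is simply $[p_v g p_v^{-1}]$. For an edge-type generator $L_e$, inserting $p_{i(e)}^{-1}p_{i(e)}$ and $p_{\tau(e)}^{-1}p_{\tau(e)}$ into $\Phi(L_e)=\eta_{i(e)}\,\gamma_{\bar e}^{-1}\,e\,\gamma_e\,\eta_{\tau(e)}^{-1}$ yields
\[ \Phi(L_e)\;=\;\zeta_{i(e)}\, \tilde\gamma_{\bar e}^{-1}\, L_e\,\tilde\gamma_e\, \zeta_{\tau(e)}^{-1},\]
with $\tilde\gamma_e:=p_{\tau(e)}\gamma_e p_{\tau(e)}^{-1}$ and $\tilde\gamma_{\bar e}:=p_{i(e)}\gamma_{\bar e}p_{i(e)}^{-1}$, both loops at $v_0$ lying in conjugated vertex subgroups.

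The crucial observation is then that each of the loops $\zeta_v$, $\tilde\gamma_e$, $\tilde\gamma_{\bar e}$ is a word built exclusively from tree edges and vertex-group letters, so its class in $F^{ab}$ lies in the $\bbZ$-span of the vertex-type generators. Granting this, $\Phi$ fixes every vertex-type generator and sends each $[L_e]$ to itself plus a $\bbZ$-linear combination of vertex-type generators; the matrix of $\Phi$ on $F^{ab}$ therefore takes the block form $\begin{pmatrix}I&A\\0&I\end{pmatrix}$, hence $(\Phi-I)^2=0$ and $\Phi$ is unipotent. The only point that deserves care is precisely the claim just used, that a loop at $v_0$ composed only of tree edges and vertex-group letters has no edge-type contribution in $F^{ab}$; this is immediate since, under the natural isomorphism $F\cong\pi_1(\bbX,\tau)$, such a loop collapses to a product of vertex-group elements once the tree edges are set to $1$, and thus lands in the vertex-group part of $\pi_1(\bbX,\tau)^{ab}$.
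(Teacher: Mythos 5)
Your proof is correct and follows essentially the same route as the paper: choose a basis of $\pi_1(\bbX,v_0)$ adapted to the free decomposition (conjugated vertex-group generators plus one loop per non-tree edge) and observe that on the abelianization the vertex-type generators are fixed while each edge-type generator picks up only vertex-type contributions, giving a unitriangular matrix. The paper states this in one line; your write-up simply supplies the explicit conjugators $\zeta_v$, $\tilde\gamma_e$ and the block form $\begin{pmatrix}I&A\\0&I\end{pmatrix}$, which is a faithful elaboration of the same argument.
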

 
        \begin{proof} 
            There exists a basis of the free group  $\pi_1(\bbX,v_0)$,  adapted to the free decomposition $\bbX$. after possible  reordering, it is  of the form $g_1, \dots, g_r, e_1, \dots, e_k$, with $g_i$ in vertex groups, and $e_i$ some non-separating edges of $X$. This basis gives a basis of the abelianization $\bbZ^{k+r}$, and the matrix of the induced automorphism from $\Phi$ in this basis is upper triangular with $1$s on the diagonal, hence unipotent. 
        \end{proof}
        
        Let $\Phi \in \out F$. A subgroup $H\leq F$ is said to be \define{$\Phi$-non-growing} for any $ \phi \in \Phi \subset \aut F$ there is some $f_H \in F$ such that $(\ad{f_F}\circ \phi)|_{H} = Id_H$.

        Our next result, Theorem \ref{thm;unipotent-linear-gdt}, establishes (with Lemma \ref{lem;subst_to_gdt}) that a unipotent automorphism of free group of linear growth has a description as a generalized Dehn twists on some graph of groups.

        \begin{thm}\label{thm;unipotent-linear-gdt}
            Let $F$ be a finitely generated free group, and let $\Phi$ be a unipotent outer automorphism with linear growth. Let $\calE$ denote the class of $\Phi$-non-growing subgroups of $F$. Then $F$ is one-ended relative to $\calE$. 
            
            Let $\calA$ denote the class of cyclic groups and let $T_c$ be the tree of cylinders of an $(\calA,\calE)$-JSJ tree $T$. The non-abelian vertex stabilizers of $T_c$ are precisely the maximal subgroups in $\calE$ and if $\bbX_c$ is the cyclic splitting of $F$ dual to $T_c$ then $\Phi$ induces a full $\edges{\bbX_c}$-substitution. Furthermore, all edge group monomorphisms to cylinder groups are surjective.
        \end{thm}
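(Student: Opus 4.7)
The plan is to begin with a completely split train track (CT) representative $f : G \to G$ of a suitable representative of $\Phi$, in the sense of Feighn-Handel. Since $\Phi$ is unipotent linearly growing, no edge grows exponentially, and the CT structure forces every edge of $G$ to be either fixed by $f$ or \emph{linear}, meaning $f(E) = E \cdot u_E^{\varepsilon_E}$ for some closed Nielsen loop $u_E$ contained in the fixed subgraph $G^f$ based at the terminal vertex of $E$ and some $\varepsilon_E \in \{\pm 1\}$. From this I read off the graph of groups $\bbX_c$: the white vertices are the non-contractible components of $G^f$ (with their fundamental groups as vertex groups), the black vertices are the distinct conjugacy classes of maximal cyclic subgroups containing a twistor $[u_E]$, and the edges correspond to the linear edges of $G$. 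The edge group of the edge dual to $E$ is the infinite cyclic group generated by $[u_E]$, and its inclusion into the black vertex group $\langle [u_E]\rangle$ is the identity, which immediately gives the surjectivity of all edge-to-cylinder monomorphisms.

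Next, I would match $\bbX_c$ with the canonical object named in the statement. A subgroup $H\le F$ is $\Phi$-non-growing exactly when, up to conjugation, it is carried by a component of $G^f$; hence $\calE$ coincides with the set of subgroups conjugate into a white vertex group of $\bbX_c$, and the maximal non-abelian members of $\calE$ are precisely the white (non-abelian) vertex stabilizers of the dual tree. One-endedness relative to $\calE$ follows because the maximal members of $\calE$ together with the twistors generate $F$ and the underlying graph of $\bbX_c$ is connected by minimality of the CT, ruling out any free splitting of $F$ in which every element of $\calE$ is elliptic. To recognize the dual tree of $\bbX_c$ as $T_c$, I would first verify that commutation on infinite cyclic edge groups is an admissible equivalence in the sense of \cite{guirardel_jsj_2017} in this relative setting, and then invoke the universal property: any $(\calC,\calE)$-tree must make each twistor $[u_E]$ elliptic (as $[u_E]$ lies in a maximal member of $\calE$), while the flexibility of linear edges provides the required domination. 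Collapsing edges within cylinders merges the parallel Dehn twist edges sharing a common twistor onto a single black cylinder vertex, producing the bipartite $\bbX_c$ described above.

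Finally, the substitution property is immediate: the CT rule $E \mapsto E u_E^{\varepsilon_E}$, read through the Bass-Serre dictionary, is precisely the one-sided $\edges{\bbX_c}$-substitution with $\gamma_e = [u_E]^{\varepsilon_E}$ on the black side of the edge dual to $E$ and $\gamma_{\bar e}=1$ on the white side, matching the generalized Dehn twist formula of Proposition \ref{prop;formula_for_GDT}. The requirement that the substitution be \emph{full} reduces to showing that for consecutive linear edges $e_i, e_j$ meeting at a black vertex, the twistor elements $\gamma_{e_i},\gamma_{e_j}$ cannot cancel against any element of $\tau_{e_j}(\bbX_{e_j})$ so as to trivialize the transition on a length-$2$ path; this is a direct consequence of linear growth of $\Phi$ under iteration and the fact that CT maps introduce no cancellation along Nielsen paths. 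The main obstacle I anticipate is not the construction itself but the verification that $\bbX_c$ is the \emph{canonical} tree of cylinders of the relative cyclic JSJ deformation space rather than some arbitrary $\Phi$-invariant splitting; this requires a careful check of admissibility and the universal property outside the customary CSA framework, adapted to the pair $(F,\calE)$.
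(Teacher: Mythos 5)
There is a genuine gap, and it is exactly the subtlety the paper flags with Macura's example just before the proof. You assert that for a CT representative of a unipotent linearly growing $\Phi$, every non-fixed edge $E$ satisfies $f(E)=E\cdot u_E^{\pm 1}$ with the twist loop $u_E$ \emph{contained in the fixed subgraph} $G^f$, and you then build $\bbX_c$ with white vertex groups equal to the fundamental groups of the non-contractible components of $G^f$. Linear growth only forces $u_E$ to be a \emph{Nielsen} loop, i.e.\ fixed by $f_\flat$, and Nielsen loops may cross linearly growing edges through indecomposable Nielsen paths of the form $\epsilon_j\,\ell_{\epsilon_j}^{k}\,\bar\epsilon_j$. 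In the example $a\mapsto a$, $b\mapsto ba$, $c\mapsto c\,bab^{-1}a^{-2}$ the twist loop of $c$ is $bab^{-1}a^{-2}$, which is not carried by $G^f=\{a\}$; moreover $\bk{a,\,bab^{-1}}$ is a rank-two $\Phi$-non-growing subgroup that is not conjugate into any component of $G^f$. So with your definition of the white vertices, the identification ``$\calE$ = subgroups conjugate into white vertex groups'' fails, the non-abelian vertex stabilizers of your tree are not the maximal elements of $\calE$, and $\Phi$ does not act on your graph of groups by an $\edges{\bbX_c}$-substitution, since the twistor of $c$ is not elliptic in your splitting. This is precisely why the paper, after proving that the twist loops are Nielsen (its Lemma \ref{lem:linear_little_growth}), spends the bulk of the proof on an inductive graph-of-spaces construction: it adjoins auxiliary loops $\mu_{\epsilon_i}$ and $2$-cells so that indecomposable Nielsen paths can be homotoped into enlarged vertex spaces, making the white vertex groups strictly larger than $\pi_1(G^f)$ in general, and only then identifies the elliptic subgroups with $\calE$ and passes to the tree of cylinders.

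Two secondary points would also need repair even after fixing the main issue. Your one-endedness argument (``the maximal members of $\calE$ and the twistors generate $F$ and the graph is connected'') is not a proof that $F$ admits no free splitting relative to $\calE$; the paper gets this from the structure of the constructed splitting together with \cite[Corollary 1.5]{touikan_one-endedness_2015}. And fullness of the substitution is not an automatic consequence of ``no cancellation along Nielsen paths'': the paper's argument uses that the tree-of-cylinders construction forces the white vertex groups adjacent to a black vertex to be non-abelian, and produces a hyperbolic element that would be fixed by the substitution if two twist coefficients cancelled, contradicting ellipticity of $\calE$. Your sketch does correctly anticipate that the delicate step is matching the constructed splitting with the canonical relative tree of cylinders outside the CSA-for-$H$ comfort zone, but the construction you feed into that step is not the right one.
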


        Before going to the proof, some comment is perhaps appropriate. This statement is somewhat folklore (see \cite[Remark 2.4.7]{andrew_free-by-cyclic_2022} and the statement of \cite[Theorem 2.4.6]{andrew_free-by-cyclic_2022}). Yet, our proof is quite long, and it is a rare place in our work where we have to use advanced train tracks, such as CTs, and not only irreducible train tracks. This deserves an explanation. In order to ``see'' the generalized Dehn twist in a given unipotent linear growth automorphism $\Phi$, one needs to find the correct tree (or splitting) on which the twist occurs. JSJ theory is useful for this: the correct tree will be a canonical tree relative to the collection $\calE$ of the  subgroups  of $F$ on which $\Phi$ restricts to an inner automorphism. This canonical tree exists  because the free group $F$  is one-ended relative to $\calE$. Using this canonical tree, it then becomes easy to characterize $\Phi$ as a generalized Dehn twist. So the key claim is that the free group $F$ is  one-ended relative to the said collection $\calE$, as stated. We see no straightforward proof of this fact.

        Our concern is when $\Phi$ preserves the conjugacy classes of two free factors $A, B$, giving $F=A*B$, and  
        induces non-trivial generalized Dehn twists on  $A$ and on $B$ 
        with a conjugation of $B$ somehow overlapping with one of the twists in $A$. The case is similar to an example  of unipotent linear growth automorphisms with ``seemingly overlapping'' twists, that was communicated to us by N. Macura.   This difficulty explains our detour through CT maps. Here is Macura's example.
        
        Let $F_3=\langle a,b,c\rangle$ and let $\alpha: a\mapsto a, b\mapsto ba, c\mapsto cbab^{-1}a^{-2}$. The automorphism $\alpha$ has a structure that is typical for polynomial growth: each generator is sent to itself times a product of generators of lower strata. Since $b$ itself grows linearly, we know that $c$ grows at most quadratically. By \cite[Lemma 2.16]{macura_detour_2002} such estimates are sharp for quadratic growth and above, however here the automorphism $\alpha$ is still only linearly  growing due to cancellations in $bab^{-1}$ after applying $\alpha$. This is an example of an indecomposable Nielsen path. The construction in the proof below adds cylinders to a graph that enables us to homotope indecomposable Nielsen paths to paths consisting of non-growing edges.


 
 
 
        We may now go in the proof of the theorem. We will first recall and use facts about train tracks, then construct from them a splitting of $F$ on which the automorphism is described as a substitution, and then check that this is a JSJ tree, and that, on the  canonical tree in its deformation space, the automorphism is still described as a full substitution.
    
    \begin{proof} 
            We will use the completely split train tracks theorem of Feighn and Handel \cite[Theorem 2.19]{feighn_recognition_2011} (see the exposition of the case of unipotent polynomial growth in \cite[Section 3.5]{feighn_conjugacy_2019}). 
            
            There is a graph $Y$ with an isomorphism from $F$ to $\pi_1(Y, y_0)$, a representative $f=Y\to Y$ of $\Phi$, a $f$-invariant  filtration $\emptyset= Y_{-1} \subset Y_0 \subset Y_1 \subset \dots \subset Y_r\subset Y_{r+1} = Y$    such that for all $i\geq 0$,  $Y_i\setminus Y_{i-1}$ is a single edge $\{\epsilon_i, \bar \epsilon_i\}$, with a preferred orientation $\epsilon_i$, and for which there exists a reduced immersed loop (circuit) $\ell_{\epsilon_i}$ in $Y_{i-1}$ such that    $f(\epsilon_i) = \epsilon_i \ell_{\epsilon_i}$.
            

            The loop $\ell_{\epsilon_0}$ is necessarily empty, so $Y_0$ is a fixed edge for $f$.  
                We may alter this stratification as  $Y'_0 \subset Y'_1 \subset \dots \subset Y'_s\subset Y'_{s+1} = Y$, such that $Y'_0$ is the set of all fixed edges by $f$, and for all $i\geq 1$,  $Y'_i\setminus Y'_{i-1}$ is a single edge $\{e_i, \bar e_i\}$, with a preferred orientation $e_i$, for which  $f(e_i) = e_i \ell_{e_i}$ for a reduced loop $\ell_{e_i}$ in $Y'_{i-1}$. 
                Consequently, all loops $\ell_{e_i}$ are non-trivial, and the concatenation  $e_i \ell_{e_i}$ is reduced. 
            
            We adopt the notation that $\sigma_1\cdot \sigma_2$ is the concatenation of the two paths $\sigma_1, \sigma_2$ with no reduction, and $f_\flat (\sigma_1\cdot \sigma_2)$ is the reduction\footnote{On notations: it seemed to us that $\flat$ was more appropriate than $\sharp$ used in \cite{feighn_conjugacy_2019} since it corresponds to a flattening of the path.} 
            of the image by $f$ of $\sigma_1\cdot \sigma_2$. In particular, it is interesting to know whether it is $f_\flat (\sigma_1)\cdot f_\flat (\sigma_2)$ or not -- a question for which train tracks are relevant.

            The theory of completely split train tracks puts in evidence two types of paths: 
            \begin{itemize} 
            \item the \emph{indecomposable Nielsen paths}, which are of the form $e \ell_e^k \bar e$, for $e$ such that $\ell_e$ is a loop fixed by $f_\flat$ 
            (and $k$ rational such that $\ell_e^k$ is a loop), 
            \item and the \emph{exceptional paths} $e\ell_e^k \bar e'$ for $e$ as before, but where $e'\neq e$ and such that $\ell_{e'}$ is a rational power of $\ell_e$ (both fixed by $f_\flat$), furthermore $\ell_{e'}$ and $\ell_e$ are not the same power so that $e\ell_e^k \bar e'$ is forced to grow linearly (see \cite[Section 3.5]{feighn_conjugacy_2019}). 
            \end{itemize} 
            
            We should comment that Nielsen paths are paths preserved by $f_\flat$ (or in principle by a power, but for completely split train track maps, the exponent is one).  All edges in $Y_0$ are Nielsen paths (as well as all their subarcs), nevertheless, the paths that are called indecomposable Nielsen paths are of the special form  $e \ell_e^k \bar e$ indicated above. 
            
            An important result (\cite[Lemma 4.26]{feighn_recognition_2011})  is that, for all circuit $\sigma$ in $Y$, there exists $k_\sigma$ such that $f_\flat^{k_\sigma}(\sigma)$ is a concatenation \[\sigma_1 \cdot \sigma_2\cdot \dots \cdot \sigma_r\] of $\sigma_i$ being indecomposable Nielsen paths, exceptional paths, and edges, for which for all $m\geq 1$,  $f^m_\flat(\sigma_i \cdot \sigma_{i+1}) = f^m_\flat(\sigma_i)\cdot f^m_\flat(\sigma_{i+1})$.
 
            \begin{lem}\label{lem:linear_little_growth}
            If $\Phi$ is of linear growth,  for every edge $e$, the loop $\ell_{e}$ is a Nielsen path. In particular it is a  concatenation of indecomposable Nielsen paths and edges fixed by $f_\flat$.
            
            \end{lem}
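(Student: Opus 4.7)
The plan is to argue by contradiction: pick the least index $i$ for which $\ell_{e_i}$ is not a Nielsen path, and show that then $f^n(e_i)$ grows at least quadratically, contradicting the linear growth of $\Phi$. Because $\Phi$ is unipotent and polynomially growing, one can arrange that Nielsen paths are fixed by $f_\flat$ itself (not merely by a power), which will make bookkeeping clean.

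First I would record the elementary growth formula. Iterating $f(e_j) = e_j\cdot\ell_{e_j}$ gives, after reduction,
\[
    f_\flat^n(e_j) \;=\; e_j \cdot \ell_{e_j} \cdot f_\flat(\ell_{e_j}) \cdot f_\flat^2(\ell_{e_j}) \cdots f_\flat^{n-1}(\ell_{e_j}).
\]
By \cite[Lemma 4.26]{feighn_recognition_2011}, after possibly replacing $f$ by a uniformly bounded power (which affects neither whether $\Phi$ has linear growth, nor whether $\ell_{e_j}$ is a Nielsen path), the complete splitting of $\ell_{e_j}$ is $f_\flat$-invariant, and each iterate $f_\flat^k(\ell_{e_j})$ equals the concatenation of the $f_\flat^k$-images of its complete-splitting pieces with no further reduction at the seams. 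This legitimises the formula above as a reduced concatenation.

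Now let $i$ be minimal with $\ell_{e_i}$ not a Nielsen path. For every $j$ with $1 \leq j < i$, $\ell_{e_j}$ is a Nielsen path, hence fixed by $f_\flat$, so $|f_\flat^n(e_j)| = 1 + n\,|\ell_{e_j}|$ grows exactly linearly; similarly every exceptional path $e\ell_e^k\bar{e'}$ with $e,e'$ at levels below $i$ grows linearly by direct computation. The complete splitting of $\ell_{e_i}$ must contain at least one piece that is not a Nielsen path, which is necessarily either (a) an edge $e_m$ with $m<i$ and $\ell_{e_m}$ non-trivial, or (b) an exceptional path at levels below $i$. In either case the piece grows at least linearly under $f_\flat$, and by the clean concatenation property of Lemma 4.26 this growth transfers to the whole: $|f_\flat^k(\ell_{e_i})| \geq c\,k$ for some $c>0$ and all large $k$. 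Substituting,
\[
    |f_\flat^n(e_i)| \;\geq\; \sum_{k=0}^{n-1} |f_\flat^k(\ell_{e_i})| \;\geq\; c'\, n^2,
\]
which contradicts the linear growth of $\Phi$, completing the proof.

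The main obstacle is precisely controlling cancellations in the iterated concatenation that defines $f^n(e_i)$. This is the content of Macura's example preceding the lemma: a naively written expression such as $e\cdot\ell_e\cdot f(\ell_e)\cdots$ can collapse dramatically under reduction, as with the $b^{-1}$-cancellation in $c\mapsto cbab^{-1}a^{-2}$, turning apparent quadratic growth into actual linear growth. The technology of completely split train tracks, and specifically the clean concatenation conclusion of \cite[Lemma 4.26]{feighn_recognition_2011}, is designed exactly to preclude such collapse after a uniformly bounded iteration, thereby justifying the length lower bounds used above.
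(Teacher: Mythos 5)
There is a genuine gap at the final step. You establish that if $\ell_{e_i}$ is not a Nielsen path then the \emph{edge path} $f_\flat^n(e_i)$ grows at least quadratically, and then assert that this ``contradicts the linear growth of $\Phi$.'' But linear growth of $\Phi$ is a statement about conjugacy lengths $\|\Phi^n(g)\|$ of \emph{elements} $g\in F$, i.e.\ about cyclically reduced circuits; $e_i$ is not a closed loop, so quadratic growth of the path $e_i$ does not by itself yield quadratic growth of any conjugacy class --- an edge could in principle grow fast while every circuit containing it stays linearly bounded, if the edge always gets buried inside Nielsen or exceptional subpaths of the iterated image. Closing this gap is precisely what the paper's proof does next, and it is the heart of the argument: fix a circuit $\sigma$ whose conjugacy class is carried by no proper free factor (so every $f_\flat^k(\sigma)$ contains every edge, in particular $\varepsilon$), pass to the complete splitting $f_\flat^{k_\sigma}(\sigma)=\sigma_1\cdot\dots\cdot\sigma_r$ provided by \cite[Lemma 4.26]{feighn_recognition_2011}, and use \cite[Lemma 2.16]{macura_detour_2002} to rule out $\varepsilon$ sitting inside an indecomposable Nielsen or exceptional piece (those grow only linearly). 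One concludes $\sigma_{i_0}=\varepsilon$ as a singleton, so $f_\flat^{mk}(\sigma)$ contains $f_\flat^m(\varepsilon)$ as a subpath and $\|\Phi^n([\sigma])\|$ really does grow quadratically. Your proposal never produces such a $g\in F$.

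A secondary issue: you misidentify what Macura's example is warning against. In $c\mapsto cbab^{-1}a^{-2}$ the cancellation occurs \emph{inside} $f(\ell_c)$ (the $a\cdot a^{-1}$ collapses), which is exactly what makes $\ell_c$ a Nielsen path fixed by $f_\flat$ --- so the lemma holds there. The eigenray $\varepsilon\cdot\ell_\varepsilon\cdot f_\flat(\ell_\varepsilon)\cdots$ that you worry about is automatically reduced for a train track map, so ``clean concatenation'' in that ray is not where the difficulty lies. The subtlety the example illustrates is the one your proof skips: an edge that \emph{appears} to grow fast may contribute only linearly to every conjugacy class, and one needs the complete-splitting machinery to show that a genuinely quadratically growing edge is witnessed by some circuit.
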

            The edges fixed by $f_\flat$ are those in $Y'_0$, those whose loop $\ell_e$ is trivial. 
                        
            \begin{proof}
            Assume that for an edge $\varepsilon$, the loop  $\ell_{\varepsilon}$ is not preserved by $f_\flat$.  Since all periodic paths under $f_\flat$ have period $1$ (hence are Nielsen in our notations), the iterates of $f_\flat$ on $\varepsilon$  grow at least quadratically, and the eigenray \[ \varepsilon \cdot \ell_{\varepsilon} \cdot f_\flat (\ell_{\varepsilon})\cdot f^2_\flat (\ell_{\varepsilon}) \cdots \] is reduced. 
            Consider $\sigma$ a circuit defining a conjugacy class of $F$ that is not in any proper free factor. Then all the images $f_\flat^k (\sigma)$ must contain all edges of $Y$, hence
            $\varepsilon$, in particular. Choose $k= k_\sigma$ as provided by the completely split train track theorem, and decompose $f_\flat^k (\sigma)$ as above 
             \[f_\flat^k (\sigma)=\sigma_1 \cdot \sigma_2\cdot \dots \cdot \sigma_r.\]
            Let $i_0$ be such that $\varepsilon \in \sigma_{i_0}$.

            Since $\varepsilon$ is of quadratic growth, by \cite[Lemma 2.16]{macura_detour_2002}, it cannot be in an indecomposable Nielsen path, nor in an exceptional path (if it was in $e\ell_e^k\bar e'$, with $e'$ linear, it is different from $e'$, hence is in $e\ell_e^k\bar e$ as well, and the former remark applies).

        
            
            Therefore the piece $ \sigma_{i_0}$ cannot be Nielsen, nor exceptional. Therefore, it is a single edge, $\sigma_{i_0}= \varepsilon$. 
            
            It follows that for all $m$, $f^{mk}_\flat(\sigma)$ contains as a subpath $f^m_\flat(\varepsilon)$. However, the latter grows at least quadratically, which contradicts the linear growth of $\sigma$ under $f$.
            
            We thus have proved that  all loops $\ell_{e}$ are invariant by $f_\flat$.
            \end{proof}
            
            We will now construct the graph of groups decomposition $\bbX$ of $F$ that that will be a cyclic JSJ splitting for $F$-trees where the subgroups in $\calE$ are elliptic. $\bbX$ will be constructed from $Y$ in such a way that the loops $\ell_{\epsilon_i}$ correspond to elliptic elements in the splitting. To do so we will have to define several auxiliary objects simultaneously.
            
            Let  $Y^{''}_0 = Y'_0 \cup \verts Y$.  Its set of edges is $E_0 = \edges{Y'_0}$.
            Let $E_1 = \edges Y \setminus E_0$. This is the set of strictly growing edges through the map $f$ (those whose loop $\ell_e$ is non-trivial). Let us number the edges of $E_1$ as follows, according to our filtration $Y'_1 \subset \dots \subset Y'_{s+1}$: for $i\geq 1$, $\epsilon_i$ denotes the preferred orientation of the only edge in $Y'_{i}\setminus Y'_{i-1}$. 

            The vertex set $\verts X$ of the graph $X$ underlying $\bbX$ is defined to be the set of connected components of $Y^{''}_0$. The edge set is $\edges X = E_1$ and we define $\epsilon_i$ to join (in $X$) vertices $u,v \in \verts X$ if in $Y$ the endpoints of $\epsilon_i$ lie in the connected components $u$ and $v$ of $Y^{''}_0$. It remains to define the vertex groups, the edge groups, and the attachment maps.

            We will now construct a topological space $Z$ such that $\pi_1(Z)\simeq X$ will be a graph of spaces (see \cite{scott_topological_1979}) with underlying graph $X$. The vertex spaces will be graphs, the edge spaces will be circles and we will show that the corresponding graph of groups $\bbX$ has the required properties. Denote by $X^0_v \subset Y^{''}_0$ be the connected component $v$. Let $Z_0= Y^{''}_0$. We will construct $X^i_v, v\in \verts X$ and $Z_i$ inductively.

            Suppose that these have been constructed for $0\leq i <s$: $Z_i$ is a cellular space with the same set of vertices (or $0$-cells) as $Y$, and  so that the $X^i_v$ form a  
            collection subspaces of $Z_i$.
            
            
            Recall that 
            $Y'_{i+1}$ 
            is $Y'_i \cup \{\epsilon_{i+1},  \bar \epsilon_{i+1}\}$, and let $u_{i+1} \in \verts Y$ be the initial vertex and let $v_{i+1} \in \verts Y$ be the terminal vertex of  $\epsilon_{i+1}$. 
            Let $w_{i+1} \in \verts X$ be the connected component of $Y^{''}_0$ containing $u_{i+1}$. 
            
            We define $X^{i+1}_{w_{i+1}} = X^{i}_{w_{i+1}} \cup \mu_{\epsilon_{i+1}}$ where $\mu_{\epsilon_{i+1}}$ is a new loop, based at $u_{i+1} \in X^{i}_{w_{i+1}}$. For all other components $w\neq w_{i+1}$, then $X^{i+1}_w = X^{i}_w$. 
            We define $Z_{i+1}$ by first adding the loop $\mu_{\epsilon_{i+1}}$ based at $u_{i+1}$ that belongs to $X^{i+1}_{w_{i+1}}$,  second adding the edge $\epsilon_{i+1}$ attached at its natural vertices in $Y'_i$,  and third,   attaching a 2-cell as follows. Let $\mathring\ell_{i+1}$ be an immersed loop based at $v_{i+1} \in \verts Z$ such that $\ell_{i+1}$ is homotopic in $Z_i$ to a concatenation $\mathring\ell_{i+1}*\cdots*\mathring\ell_{i+1}$ but such that $\mathring\ell_{i+1}$ cannot be further decomposed as such a concatenation. Suppose furthermore that $\mathring\ell_{i+1}$ lies completely in $X^i_{c+1}$ where $X^i_{c+1} \subset Z_i$ is the subgraph that contains the terminal vertex $v_{i+1}\in \verts Z_i$ of $\epsilon_{i+1}$. Then we attach a 2-cell by immersing its boundary along the immersed loop \begin{equation}\label{eq:attaching_map}\bar\mu_{\epsilon_{i+1}}*\epsilon_{i+1}*\mathring\ell_{\epsilon_{i+1}}*\bar\epsilon_{i+1}.
            \end{equation} To be able to carry out this construction, we now need to prove the existence of a closed loop $\mathring\ell_{\epsilon_{i+1}}$ with the desired properties.

            \textbf{Claim:} For all $i$ there is a homotopy in $Z_{i-1}$relative to the basepoint $v_i$ from the loop $\ell_{\gamma_i}$ in $Y$ to some loop that lies completely in $X^i_v$ where $v$ in $\verts X$ is the connected component of $Y^{''}_0$ that contains the endpoint $v_i$ of  $\epsilon_i$. If $i=1$ then by our choice of ordering, $\ell_{\epsilon_1}$ must lie in $Y^{''}_0$ so the result holds. Suppose this was true for all $0\leq j \leq i$ and consider $\ell_{\epsilon_{i+1}}$. If $\ell_{\epsilon_{i+1}}$ doesn't contain any edges in $E_1$ then it must lie in $X^i_v$ and there is nothing to show. By Lemma \ref{lem:linear_little_growth}, $\ell_{\epsilon_{i+1}}$ must be a concatenation of indecomposable Nielsen paths. Such paths  either consist of individual edges in $Y^{''}_0$ or contain a ``growing'' edge $\epsilon_j$ for some $1\leq j<i+1$. In this case, the indecomposable Nielsen path is 
            of the special form $\epsilon_j \ell_{\epsilon_j}^k \bar \epsilon_j$. 
            
            Consider all maximal (with respect to containment) 
            indecomposable Nielsen subpaths of $\ell_{\epsilon_{i+1}}$. Note that because $\ell_{\epsilon_j}$ cannot contain any edges $\epsilon_k, k>j$ none 
            of these maximal subpaths can overlap. It follows that, $\ell_{\epsilon_{i+1}}$ is a concatenation of disjoint maximal thus special 
            subpaths and edges of $Y^{''}_0$ that lie in some component $c_{i+1} \in \verts X$, it follows that edges from of $Y^{''}_0$ lie in $X^i_{c_{i+1}}$ and the endpoints of any maximal subpaths $\epsilon_j \ell_{\epsilon_j}^k \bar\epsilon_j$ must also lie in $X^i_{c_{i+1}}$. By induction hypothesis, all the $\ell_{\epsilon_j}^k$ are homotopic in $Z_j$ to some path lying completely in $X^j_{c'}\subset X^j_{c'}$ and the 2-cell we added in passing from $Z_{j-1}$ to $Z_j$ lets us homotope the path  $\epsilon_j \ell_{\epsilon_j}^k \bar\epsilon_j$ to the path $\mu_{\epsilon_j}^{kn'}$ relative to endpoints, for some suitable power $kn'$. Thus $\ell_{\epsilon_{i+1}}$ is homotopic in $Z_{i}$ to some loop lying completely in $X_c^{i}\subset X^{i+1}_c$. The claim follows by induction.

            We may therefore construct the sequence of 2-complexes $Z_0\subset\cdots Z_s = Z$. We note that we have a natural inclusion $Y\subset Z$. Furthermore, for each $v \in \verts X$ we have subgraphs $X^{r}_v=X_v\subset Z$. We must now relate $\pi_1(Z)$ to $F\simeq \pi_1(Y)$.
            
            \textbf{Claim:}\textit{ For all $i$ there is a deformation retraction $Z_i \onto Z_{i-1}\cup\epsilon_i$}. Note that $Z_{i+1}$ is created by adding the edge $\epsilon_{i+1}$ the loop $\mu_{\epsilon_{i+1}}$ and a 2-cell to $Z_i$. Since the loop $\epsilon_{i+1}$ is only contained in that 2-cell, there is a deformation retraction, a free face collapse, that maps the 2-cell to path $\epsilon_{i+1}*\mathring\ell_{\epsilon_{i+1}}*\bar\epsilon_{i+1}$ that lies completely in $Z_i \cup \epsilon_{i+1}$. The claim again follows by induction.

            Iterating the claim above gives a deformation retraction $Z\onto Z_0\cup \epsilon_1 \cup \cdots \cup \epsilon_r = Y$ this gives the isomorphism $\pi_1(Z)\simeq F$. Furthermore we have shown that the inclusion $i:Y \subset Z$ induces an isomorphic inclusion $i_\sharp:\pi_1(Y) \stackrel{\simeq}{\into} \pi_1(Z)$. This allows us to to write $\pi_1(Y,y_0)=\pi_1(Z,y_0)$ for any point $y_0 \in Y \subset Z.$
            
            We will now define the graph of groups $\bbX$. The first step is to make a technical modification to $Y$ and $Z$ that will address any basepoint considerations and simplify notation. We could only make the modification at this point since we needed to be able to define the subgraphs $X_v, v\in \verts X$ and establish that $Z$ and $Y$ are homotopically equivalent.
            
            \textbf{Modification:} \textit{For each subgraph $X_v \subset Z$ we take a spanning tree $\tau_v$ and identify it to a point.} We note that $X_v$ either consists of vertices and edges from $Y^{''}_0$ or of loops $\mu_{\epsilon_i}$ that start and end at the same vertex. It follows that\[
                \zeta=\bigcup_{v \in \verts X} \tau_v \subset Y^{''}_0,
            \] is a disjoint union of trees and collapsing every connected component of $\zeta$  in $Y\subset Z$ to a point induces a homotopy equivalence, furthermore, the original homotopy equivalence $f:Y \to Y$ that carries $\Phi$ descends to this quotient of $Y$. We will therefore assume, without loss of generality, that $Y$ is this quotient graph. Since $\zeta \subset Z$ is also a forest, collapsing the components of $\zeta$ to points will also be a homotopy equivalence of $Z$.
            
            \textbf{Modification:} \textit{For each $v \in \verts X$ we now identify the vertices of $X_v$ (which are also the vertices of $\tau_v$, which we view as 0-cells of $Z$, with the vertex $v$ and set $v\in \verts X$ to be the basepoint of $X_v$. By collapsing the components of $\zeta$ we can identify the vertices of $Z$ with the vertices of $X$.} We also note that now whether viewing $\epsilon_i$ as an edge of $X$ or of $Z$, there is no longer any ambiguity as to what the initial and terminal vertices are.
            
            We can now at last fully specify the graph of groups $\bbX$. The vertex groups of $\bbX$ are $\bbX_v = \pi_1(X_v,v)$. For each edge $\epsilon_i \in \edges X$,  we note that the loop $\ell_{\epsilon_i}$ is a loop that is based at $v = \tau(\epsilon_i)$ and that it is homotopic (relative basepoint) in $Z$ to a power of $\mathring\ell_{\epsilon_1} \in \pi_1(X_v,v)$ we set $c_{\epsilon_i}=\mathring\ell_{\epsilon_i} \in \bbX_{\tau(\epsilon_i)}$. Similarly, if $i(\epsilon_i)=u$, then we define  $c_{\bar\epsilon_i} \in \bbX_{u}$ to be the loop $\mu_{\epsilon_i} \in \pi_1(X_u,u)$ that is directed so that it is freely homotopic to $\mathring\ell_{\epsilon_i}$ in $Z$. We denote the cyclic edge groups $\bbX_{\epsilon_i} = \bk{g_{\epsilon_i}}$ and we define the edge group monomorphisms via the generator mappings $i_{\epsilon_i}: g_{\epsilon_i} \mapsto c_{\bar \epsilon_i}$ and $\tau_{\epsilon_i}: g_{\epsilon_i} \mapsto c_{\epsilon_i}$. It is straightforward from \eqref{eq:attaching_map} to see that the graph of groups $\bbX$ defined in this way has fundamental group isomorphic to $\pi_1(Z)$.
            
             We now must show that the linearly growing outer automorphism $\Phi\in \out F$ is given by an $\edges\bbX$-substitution. We identify $F=\pi_1(Y,y_0)$ for some vertex $y_0 \in \verts Y=\verts X$. Let $g \in \pi_1(Y,y_0)=\pi_1(Z,y_0)$. $g$ is an edge path in $Y$  and we express it as follows
             \begin{equation}\label{eq:og-loop}
               g= y_0\epsilon_{i_1}^{\delta_1}\cdots \epsilon_{i_l}^{\delta_l} y_l,
            \end{equation} 
            where the $y_i$ are edges in $Y^{''}_0$ and where $\delta_i \in \pm 1$ and we adopt the convention that $\epsilon_j^1=\epsilon_j$ and $\epsilon_j^{-1} = \bar \epsilon_j$. We note that since all the components of $Y\setminus E_1$ only have one vertex, \eqref{eq:og-loop} is literally a $\bbX$-loop based at $y_0$. Applying $f$ to $g$ replaces $\epsilon_i$ with $\epsilon_i\ell_{\epsilon_i}$. After homotoping $\ell_{\epsilon_i}$ to $\mathring\ell_{\epsilon_i}^{k_i}$ in $Z$, which now sits in $X_{\tau(\epsilon_i)}$, we see that this $\Phi$ is precisely the one-sided $\edges \bbX$-substitution\[
                \epsilon_i \mapsto \epsilon_i c_{\epsilon_i}^{k_i}.
            \]
            
            Consider the elements of $F$ whose conjugacy classes are periodic under iteration of $\Phi$ and let $\calE$ denote the collection of maximal subgroups of $F$ consisting of these elements.
            
            \textbf{Claim:} \textit{$\calE$ consists precisely of the elliptic subgroups of $\bbX$.} Indeed suppose there is some $g \in \pi_1(Y,y_0) = \pi_1(\bbX,y_0)$ that is hyperbolic in $\bbX$ but whose conjugacy class is periodic. Forgetting basepoints and considering $g$ as an immersed loop in $Y$ we find that for some $k$, $f_\flat^k(g)=g$ so that $g$ is a Nielsen path. It follows that we have a splitting $f_\flat^k(g) = \sigma_1\cdot\cdots\cdot\sigma_r$  where each $\sigma_i$ either lies in $Y^{''}_0$ or is of the special form $\epsilon_i \ell_{\epsilon_i}^k\bar\epsilon_i$ of indecomposable Nielsen paths. By construction of $Z_i$, any subpath in special form can be homotoped into $X_{i(\epsilon_i)}$ and so it follows that $g$ is actually homotopic in $Z$ to some loop in $X_v$ for some $v\in \verts X$. It follows that $g$ is elliptic in $\bbX$. The claim is proved.
            
            It follows that we can take $\calE$ to be the set of vertex groups $\calE = \{\bbX_v| v\in \verts X\}$. Let $T$ be the Bass-Serre tree dual to $\bbX$. Since $\bbX$ has infinite cyclic edge groups and since vertex groups are one-ended relative to themselves, \cite[Corollary 1.5]{touikan_one-endedness_2015} implies 
            $F$ is one-ended relative to $\calE$. We also have that $T$ dominates any $(\calA,\calE)$-tree so it is in fact an $(\calA,\calE)$-JSJ tree (recall that $\calA$ in this context is the class of cyclic groups and recall the definitions in Section \ref{sec:G-trees}). 
             By \cite[Theorem 9.5]{guirardel_jsj_2017}, $F$ admits a unique canonical $(\calA,\calE)$-tree of cylinders $T_c$ relative to $\calE$ for the relation of commutation, and since all abelian groups are cyclic, $T$ and $T_c$ are in the same deformation space. Let $\bbX_c$ be the graph of groups dual to the action of $F$ on $T_c$.
            
            The process of passing from $T$ to $T_c$ may be complicated. Although, by \cite[Theorem 9.5]{guirardel_jsj_2017}, both trees have the same non-abelian vertex groups, there can be subtrees of $T$  all of whose vertex groups are cyclic. 
            Nonetheless, $\Phi$ preserves the subgroups $\calE$ so by the invariance properties of trees of cylinders it preserves the tree $T_c$. It follows that $\Phi$ can be represented by an element in $\delta\aut{\bbX_c}$. Furthermore, the non-abelian vertex groups of $T_c$ are $\Phi$-non-growing and since the cylinders subgroups of non-abelian vertex groups, these are also $\Phi$-non-growing. It follows that $\Phi=(Id_{X_c},(Id_v),(Id_e),(\gamma_e))$, i.e. 
            $\Phi$ induces an $\edges{\bbX_c}$-substitution. Furthermore, since edge groups are maximal cyclic, we can arrange this substitution to be one-sided. We may pick an edge orientation so that all edges point away from black vertices, which are the cylinder vertices. We also note that this implies that edge groups map surjectively onto adjacent cylinder groups.
            
            Finally, to see that this substitution is full, suppose towards a contradiction that in $\bbX_c$ there were two distinct edges $e_i,e_j$ pointing away from a common vertex black $b$ such that the substitution coefficients cancelled out i.e. $c_{e_i}^{-1}e_i^{-1}e_jc_{e_j} = e_{i}^{-1}e_j^{-1}$. Then letting $u = \tau(e_i)$ and $v = \tau(e_j)$ and noting that the tree of cylinders construction forces the vertex groups $(\bbX_c)_u$ and  $(\bbX_c)_v$ to be non-abelian, we can find $g_u \in (\bbX_c)_u$ and $g_v \in (\bbX_c)_v$ that do not commute with $c_{e_i}$ and $c_{e_j}$ respectively so that the element $g=g_ue_i^{-1}e_jg_ve_j^{-1}e_i$ is an element in $\pi_1(\bbX_c,u)$ written out in reduced form. By choice of $g_u$ and $g_v$ this element is hyperbolic in $\bbX_c$ but note that the substitution induced by $\Phi$ leaves $g$ unchanged, this contradicts that elements in $\calE$ are elliptic in $\bbX_c$. This completes the proof of the theorem.
            
        \end{proof}
 
        \begin{cor} \label{cor;ulg}
            An automorphism of a free group $F$ is unipotent of linear growth, if and only if it is an $\edges \bbX$-substitution for some cyclic splitting of $F$, if and only if it is an element of the small modular group for a free  product decomposition of $F$ (aka a generalized Dehn twist).
        \end{cor}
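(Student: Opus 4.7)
The plan is to close the cycle $(c)\Rightarrow(a)\Rightarrow(b)\Rightarrow(c)$, where $(a)$, $(b)$, $(c)$ denote the three listed conditions respectively.

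First, I would obtain $(c)\Rightarrow(a)$ immediately by combining the two propositions established just before Theorem~\ref{thm;unipotent-linear-gdt}: a generalized Dehn twist has at most linear growth, and a generalized Dehn twist of a graph of groups with trivial edge groups and free fundamental group is unipotent. Next, $(a)\Rightarrow(b)$ is precisely the content of Theorem~\ref{thm;unipotent-linear-gdt}, which from the data of $\Phi$ constructs the cyclic JSJ splitting $\bbX_c$ together with a full $\edges{\bbX_c}$-substitution realizing $\Phi$.

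For the substantive implication $(b)\Rightarrow(c)$, I would transform a cyclic splitting $\bbX$ together with its substitution into a free product decomposition $\bbY$ realizing the same outer automorphism as a small modular group element. The idea is to absorb each cylinder (black abelian) vertex $b$ with cyclic group $\bk{c_b}$ into one chosen adjacent non-abelian vertex $u_1$, where $\bk{c_b}$ already sits as $\tau_{e_1}(\bk{c_b})\subset \bbX_{u_1}$. For each other adjacent vertex $u_i$ (with $i\geq 2$), a Nielsen-type transformation peels the cyclic image $\tau_{e_i}(\bk{c_b})\subset \bbX_{u_i}$ off as a free factor of $\bbX_{u_i}$, which is then identified with $\tau_{e_1}(\bk{c_b})\subset \bbX_{u_1}$ across a new trivial-edge-group edge of $\bbY$. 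The one-sided shift $e_i \mapsto e_i\, c_{e_i}^{k_i}$ on $\bbX$ then corresponds, as an outer automorphism, to the partial conjugation of $\bbX_{u_i}$ by $c_b^{k_i}$, which is realized in $\bbY$ as a two-sided small modular shift across the new trivial edge.

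The hard part will be to verify the simultaneous consistency of these Nielsen-type transformations over all cylinders: that the adjustments can be made compatibly, that the resulting graph of groups has fundamental group still isomorphic as a marked free group to $F$, and that the ensemble of small modular shifts in $\bbY$ recovers the outer class of $\Phi$. This relies on the fact that cyclic subgroups of free groups extend to rank-one free factors of the ambient vertex groups up to Nielsen equivalence, together with the rigidity of the tree of cylinders from Theorem~\ref{thm;unipotent-linear-gdt} (namely the surjectivity of edge group monomorphisms onto cylinder groups and the fullness of the substitution), which guarantees that local choices fit into a coherent global decomposition $\bbY$.
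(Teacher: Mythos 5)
Your cycle $(c)\Rightarrow(a)\Rightarrow(b)$ is exactly what the paper intends: the corollary carries no separate proof, $(c)\Rightarrow(a)$ is the pair of propositions preceding Theorem~\ref{thm;unipotent-linear-gdt}, and $(a)\Rightarrow(b)$ is that theorem. The divergence is your $(b)\Rightarrow(c)$. Under the paper's own definition a \emph{generalized Dehn twist} is an element of the small modular group of \emph{some} graph-of-groups decomposition, and the lemma right after the definition of substitutions already records that an $\edges\bbX$-substitution is a generalized Dehn twist of that very cyclic splitting; read that way, $(b)\Rightarrow(c)$ is a citation, not a construction. (The only loose end on that reading is that unipotence of a twist over non-trivial edge groups is not literally the stated proposition, but the same abelianization computation gives $(\bar\Phi-\mathrm{Id})^2=0$.) You instead read ``free product decomposition'' literally, i.e.\ trivial edge groups, and try to convert the cyclic splitting into a free one.

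That conversion has a genuine gap and, as you read the statement, cannot be repaired. The auxiliary fact you lean on is false: $\bk{w}$ is a free factor of a free group only when $w$ is primitive ($\bk{[a,b]}\leq F_2$ is not even contained in a proper free factor), and even when each incident edge-group image is primitive in its vertex group they need not form a free factor system simultaneously. Worse, after ``peeling'' a cyclic edge away, the substitution coefficient $c_{e_i}$, which lives in the cylinder group, typically no longer lies in any vertex group of the new splitting, so the automorphism is not in its small modular group. The paper's Macura example $\alpha\colon a\mapsto a,\ b\mapsto ba,\ c\mapsto c\,bab^{-1}a^{-2}$ shows the implication is false under your reading: in any free splitting exhibiting $\alpha$ as a small modular element the vertex groups are $\alpha$-non-growing, hence elliptic in the canonical tree and conjugate into $\mathrm{Fix}(\alpha)=\bk{a,\ bab^{-1},\ c(bab^{-1}a^{-2})c^{-1}}$ or into a cylinder, all of whose images in $H_1(F_3)$ lie in $\bbZ[a]$; a rank count in $H_1$ then forces the vertex groups to have total rank at most one with at least two HNN edges, so all twisting is about a single conjugacy class of cyclic subgroups, contradicting that the canonical tree of cylinders of $\alpha$ has the two non-conjugate cylinder stabilizers $\bk{a}$ and $\bk{bab^{-1}a^{-2}}$. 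So either take the third clause in the paper's broad sense, in which case your elaborate $(b)\Rightarrow(c)$ argument is unnecessary, or take it literally, in which case no argument of this kind can close it.
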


    \section{Piecewise trivial suspensions}

        \subsection{Piecewise trivial suspensions and suspensions of generalized Dehn twists}\label{sec;gen-Dehn-twists}

        We use the notation $\calA$ for the collection of abelian subgroup of a group.

        \begin{defn}\label{def;pts}
            A group $\Gamma$ acting on a $\calA$-bipartite tree $T$ is a piecewise trivial suspension over $T$  if: 
            \begin{itemize}
                \item black vertex stabilizers, and edge stabilizers are free abelian of rank $2$,
                \item for every white vertex $v$, its stabilizer $\Gamma_v$ is a direct product of a subgroup $G_v$ of $\Gamma$  with trivial centre by an infinite cyclic subgroup $\langle t_v \rangle$,
                \item for every white vertex $v$, the element $t_v$ and its non-trivial powers fix exactly the ball of radius $2$ around $v$.
            \end{itemize}
            
            We call $G_v$ the local direct factor, or the local fibre (with respect to the white vertex $v$). We note that the centre of every vertex group is characteristic and that the local direct factor is well-defined up to automorphism. When $\Gamma$ is a semi-direct product $G\rtimes \mathbb{Z}$, and when the structure of piecewise trivial suspension involves the groups $G_v = G\cap \Gamma_v$, we may say that it is globally fibered, with fibre $G$. 
            
        \end{defn}
        
        \begin{prop}\label{prop;PTS_are_acyl}
         If $\Gamma$ is a piecewise trivial suspension over $T$, 
         for any black vertex $b$ in $T$, and any different vertices $v,v'$ adjacent to $b$, the stabilizer $\Gamma_b$ of $b$ contains $\bk {t_v} \oplus \bk{t_{v'}}$ as a finite index subgroup. Moreover,  the action of $\Gamma$ on $T$ is $4$-acylindrical. Finally, for any pair of distinct edges adjacent to a white vertex, their stabilizers have cyclic intersection.
        \end{prop}
        \begin{proof}
        First, if a black vertex $b$ is between two white vertices $v, v'$, then $t_v$ and $t_{v'}$ are in $\Gamma_b$, the stabilizer of $b$, and therefore commute. We claim that they do not have a common non-trivial power. Assume otherwise, and let $b'$ another black neighbour of $v'$ and let $v''$ another white neighbour of $b'$. A power of $t_{v}$ then fixes $v$, $v'$ (by the third point of the definition) and $v''$ (because it equals a power of $t_{v'}$, and $t_{v'}$ fixes $v''$ by the third point of the definition). Therefore it fixes more than the ball of radius $2$ around $v$, contradicting the third point of the definition. 
        
        We have therefore established that $t_v, t_{v'}$ generate a subgroup that is abelian of rank $2$ in $\Gamma_b$, and therefore it is of finite index in $\Gamma_b$. Obviously, this subgroup fixes the star of $b$, because it lies at distance $\leq 2$ from $v$ and $v'$.   
        

        Let us now prove acylindricity. 
        Consider a segment of length $5$. Its ends have different colours, so assume that it starts at a white vertex $v_1$. Let $v_1, v_2, v_3$ the consecutive white vertices of this segment, and $b_{12}, b_{23}, b_{e}$ the consecutive black vertices ($b_{ij}$ is between $v_i$ and $v_j$, $b_e$ is the end of the segment). 
        
        Assume towards a contradiction,  that an element $g\neq 1$ fixes the segment. It is in the stabilizer of $b_{12}$, and of $b_{23}$, therefore a positive power of $g$ is in both $\langle t_{v_1}, t_{v_2} \rangle $ and $ \langle t_{v_2}, t_{v_3} \rangle$, because they are both of finite index in the stabilizers of $b_{12}$ and  $b_{23}$. 
        
        We thus have integers $a,b,c,d$ such that  $t_{v_1}^at_{v_2}^b =  t_{v_2}^ct_{v_3}^d$. However, since by the third point of the definition, $t_{v_1}^a \notin \Gamma_{b_{23}}$, but the other are, we have $a=0$. Symmetrically, $d=0$ as well, and $g$ is a power of $t_{v_2}$. But we already know that such a power only fixes a ball of radius $2$. It cannot fix ${b_e}$, contradicting our assumption. This proves the acylindricity. 
        
        Finally, consider two black vertices adjacent to a same white vertex. They are both fixed by a power of the generator of the centre of the white vertex stabilizer. Assuming the intersection of their stabilizers is non-cyclic, it would be abelian of rank $2$, and of finite index in both black vertex stabilizers. Consider a white vertex $v'$ one step further from one of the black vertices. The generator of its centre has some power that is in this intersection. It fixes only the ball of radius $2$ around $v'$, but the black vertex on the other side is at distance $3$, contradiction.
        
        \end{proof}
        
        It is perhaps worth noting that, in the context of Proposition \ref{prop;PTS_are_acyl}, if the local fibres are CSA (i.e. their maximal abelian subgroups are malnormal), the tree $T$ is its own tree of cylinders for the class $\calE$ of infinite abelian subgroups that have an infinite intersection with a local fibre of a vertex group, and the equivalence relation where that two groups are equivalent if the union of their intersections with the local fibres generates an abelian group is an admissible equivalence relation in the sense of Section \ref{sec:G-trees}.

        The next proposition, that says that generalized Dehn twists give rise to piecewise trivial suspensions, is easy for classical Dehn twists (defined for an amalgam or an HNN extension), but requires some care for generalized Dehn twists defined on a graph of groups.

        \begin{prop}\label{prop;gdt-PTS}
	        Let $\bbX$ be a bipartite graph-of-groups with  cyclic black vertex  groups, $v_0 \in \verts X$, and let $G= \pi_1(\bbX, v_0)$. Let $T$ be the Bass-Serre tree dual to $\bbX$. 
	        
	        Assume  that    every edge stabilizer in $T$ is equal to its adjacent black vertex stabilizer, and two edges adjacent to a same white vertex have stabilizers with trivial intersection.

	        Let  $\alpha$ be a generalized Dehn twist of $\bbX$ that is a full substitution.

	        Then $G\semidirect\alpha\bk t$ is a 
	        piecewise trivial suspension  over  $T$, which is globally fibered with fibre $G$. Furthermore, for $G\semidirect\alpha\bk t$, the stabilizers of edges coincide with the stabilizers of adjacent black vertices.

        \end{prop}

        \begin{proof}
            By assumption, $\alpha$ is a (1-sided) $\edges X$-substitution. Consider $\torus \alpha = G\rtimes_\alpha \bbZ$. 
            Clearly, $T$ is a $\calA$-bipartite $\torus \alpha$-tree. 
            Consider $v$ a vertex of $T$, its stabilizer   ${\rm Stab}_G(v)$ (in $G$) is the image of the subgroup $e_1\dots e_r \bbX_{\nu} \bar e_r\dots \bar e_1$ of the Bass group,  for some vertex group $\bbX_\nu$ and consecutive edges $e_1,\dots e_r$ in $X^{(1)}$ (with $i(e_1) =v_0$ a white vertex). 

            If $g'\in{\rm Stab}_G(v)$,  expressed as  $e_1\dots e_r g \bar e_r\dots \bar e_1$ for $g\in \bbX_\nu$.     Its image by $\alpha$ is $\eta^{-1} g \eta$ for $\eta^{-1} =  e_1c_{1}  e_2 c_{2} \dots  e_r c_{r}$, by the definition of the substitution (the coefficients $c_i$ are the products, in white vertex groups of the substitution coefficients $c_{e_i}$ of $e_i$ and $c_{\bar e_{i+1}}^{-1}$ of $\bar e_{i+1}$ ;   one could work with $2$-sided substitutions too, here). In other words, $\alpha(g') = g'^{e_1\dots e_r \eta}$.

            Therefore, if $t$ is the element of $\torus\alpha$ that induces $\alpha$ by conjugation on $G$,   $(g')^t = g'^{e_1\dots e_r \eta}$, and  the element $t_v = t \eta^{-1} \bar e_r\dots \bar e_1$ induces the identity on  ${\rm Stab}_G(v)$ by conjugation. This proves the two first points of the definition of piecewise trivial suspension (and that black vertex stabilizers are equal to their adjacent edges stabilizers). 

            If (as it is the case with the assumption on acylindricity) 
            ${\rm Stab}_G(v)$ fixes only one white vertex, then  $t_v$ has to fix it too.


            Moreover, if the substitution is full, consider a triple of adjacent edges $e_{r+1},e_{r+2}, e_{r+3}$, forming a reduced path from a white vertex $v$ to a black vertex $v'$.  We need to check that no non-trivial power of $t_v $ fixes $v'$. If one fixes $v'$, it must normalize its  stabilizer: the image (from the Bass group) of   \[e_1\dots e_re_{r+1}e_{r+2} \bbX_{\mu} \bar e_{r+2} \bar e_{r+2} \bar e_r\dots \bar e_1.\] 

            Let $g$ be non-trivial in $\bbX_\mu$ and \[g'=e_1\dots e_re_{r+1}e_{r+2}e_{r+3}\, \cdot  g\cdot  \,\bar e_{r+3} \bar e_{r+2} \bar e_{r+1} \bar e_r\dots \bar e_1 \in \Stab(v').\] 
            
             After applying the substitution, by our hypothesis that edge groups in $\bbX$ map surjectively onto cylinders groups, the relations in the Bass group allow us to completely migrate (by relations of the Bass group)     
            the substitution coefficients across edges:\[
            e_{r+1}e_{r+2} \mapsto c_{e_{r+1}}^{-1}e_{r+1}e_{r+2}c_{e_{r+2}}= e_{r+1}e_{r+2}c'.
            \] 
            Since the substitution is full, we have that $1 \neq c' \in \tau_{e_{r+2}}(\bbX_{e_{r+2}}).$ 
            
            Conjugating $g'$ by $t_v =   t \eta^{-1} \bar e_r\dots \bar e_1$, one obtains 
            \[ (g')^{t_v} =    g^{(  \bar e_{r+3} c(c')^{-1}\bar e_{r+2}  \bar e_{r+1} \bar e_r \bar e_{r-1} \dots \bar e_1)} \]
            with $c \in \tau_{\bar e_{r+3}}(\bbX_{e_{r+3}})$ and $(c')^{-1}$ is  nontrivial as above. It follows that the element $c(c')^{-1}$ cannot be conjugated into the image $\tau_{\bar e_{r+3}}(\bbX_{e_{r+3}})$, so from the reduced form it is immediate that $(g')^{t_v}$ is not in the stabilizer of $v'$.



 




        \end{proof}
    It is worth noting that if we drop the requirement, in the fiber,  that the images of edge groups of $\bbX$ are surjective onto black vertex groups, then the action of $G\semidirect\alpha\bk t$ on the tree $T$ will no longer be acylindrical.
    Theorem \ref{thm;unipotent-linear-gdt} now immediately implies the following.

    \begin{prop}\label{prop;ULG->PWT}
      Let $F$ be a free group, $\alpha$ be a linearly growing unipotent automorphism, and $\calE$ the class of $\alpha$-non-growing subgroups of $F$. If $T$ is the tree of cylinders for abelian splittings of $F$ relative to $\calE$ then $\Gamma= F\semidirect \alpha \bk t$ is a globally fibered  piecewise trivial suspension over $T$, as a $\Gamma$-tree.
    \end{prop}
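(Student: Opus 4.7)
The plan is to invoke Proposition~\ref{prop;gdt-PTS} with the graph of groups $\bbX_c$ supplied by Theorem~\ref{thm;unipotent-linear-gdt}. That theorem already gives us a cyclic bipartite splitting $\bbX_c$ of $F$ dual to $T$ on which $\alpha$ is represented as a full one-sided $\edges{\bbX_c}$-substitution, and in which every edge group monomorphism onto its adjacent cylinder is surjective. These are exactly two of the three hypotheses demanded by the ``moreover'' clause of Proposition~\ref{prop;gdt-PTS}. The only missing ingredient is $2$-acylindricity of the action of $F$ on $T$.

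To verify $2$-acylindricity, I would take $g \in F$ fixing a path of length $3$ in $T$ and show $g = 1$. Since $T$ is bipartite, such a path has four vertices whose colours alternate, and in particular contains two distinct black vertices $v_b \neq v_b'$. Their stabilisers $\Stab(v_b)$ and $\Stab(v_b')$ are the two corresponding cylinders, i.e.\ distinct maximal abelian subgroups of $F$. The element $g$ lies in both. Because $F$ is free, hence CSA, its maximal abelian subgroups are malnormal, and distinct malnormal abelian subgroups intersect trivially, so $g = 1$.

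Having checked all three hypotheses, Proposition~\ref{prop;gdt-PTS} then yields that $\Gamma = F \semidirect\alpha \bk t$ is a piecewise trivial suspension, completing the argument. I expect no serious obstacle here; the proof is essentially a matter of threading the hypotheses together, and the only genuinely substantive point is the $2$-acylindricity check, which reduces cleanly to malnormality of maximal abelian subgroups of $F$ once one recognises that, in the tree of cylinders construction for the commutation relation on a CSA group, the black vertex stabilisers are precisely those maximal abelian subgroups.
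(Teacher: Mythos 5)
Your argument is correct and essentially identical to the paper's: it combines Theorem~\ref{thm;unipotent-linear-gdt} with Proposition~\ref{prop;gdt-PTS}, supplying the one missing hypothesis of $2$-acylindricity via the observation (stated in the paper just before Lemma~\ref{lem;keep_acyl}) that two distinct black vertices of the tree of cylinders of a CSA group carry distinct maximal abelian stabilizers, which therefore intersect trivially. The only small wording slip is that ``distinct malnormal abelian subgroups intersect trivially'' is not a formal consequence of malnormality alone; the fact you actually need is that distinct maximal abelian subgroups of a CSA group intersect trivially (because the centralizer of any nontrivial element is abelian).
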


    \subsection{Canonical structural trees of  piecewise trivial suspensions}
               
        In this subsection, we will prove the following, and then discuss consequences.
    
        \begin{prop}\label{prop:structural_tree_is_canonical}
        Let $\Gamma$ be a group, with two $\calA$-bipartite $\Gamma$-trees $T, T'$ each 
        such that $\Gamma$ is  a  piecewise trivial suspension over $T$ and over $T'$.  Then,  there is an isomorphism of $\Gamma$-trees $f:T\to T'$.

        \end{prop}

        \begin{proof}

        Let us  define an equivariant bijection between the white vertices of $T$ and the white vertices of  $T'$.
        
        Consider a white vertex $v$ of $T$, and $\Gamma_{v}$ its stabilizer in $\Gamma$. We claim that $\Gamma_{v}$ is elliptic on $T$. Would it be otherwise, it would have a hyperbolic element $g_{v}$, with an axis in $T$. However, every  element of $\Gamma_{v}$ has a subgroup  isomorphic to $\mathbb{Z}^2$ in its centralizer (if $g_v \notin \langle t_v\rangle$, it contains $\langle g_v,t_v\rangle$, and if $g_v\in \langle t_v\rangle$,  any $h\in G_v<\Gamma_v$ commutes with $g_v$). Therefore the axis of $g_v$ in $T'$ is pointwise fixed by an infinite cyclic group, and this contradicts the acylindricity of the action of $\Gamma$ on $T'$ (Proposition \ref{prop;PTS_are_acyl}).  We have established that $\Gamma_v$ is elliptic. But $\Gamma_v$ cannot fix any black vertices of $T'$, because the local direct factor $G_v$ is assumed to be non-abelian. Therefore it fixes a unique white vertex. This produces a well defined equivariant map from the set of white vertices of $T$ to the set of white vertices of $T'$.
        
        By symmetry of the assumptions, we have an equivariant map from the white vertices of $T'$ to those of $T$. The compositions are the identities on $T$ and $T'$: if $v$ is a white vertex in $T$, the image of $v$ is fixed by $\Gamma_v$, so $\Gamma_{v'}$ contains $\Gamma_v$, and then the image of $v'$ is fixed by $\Gamma_{v'}$, so in particular by $\Gamma_{v}$, and it is white, so it is $v$.
        
        Now that we have an equivariant bijection between white vertices of $T$ and $T'$, we extend it to the whole tree.
    
        Consider two white vertices in $T$ that are separated only by a black vertex. They are sent on different white vertices of $T'$ (by injectivity) and the intersection of their stabilizers, which is abelian non-cyclic, by the first point of Proposition \ref{prop;PTS_are_acyl},  (as a subgroup fixing a black vertex in $T$)   fixes the segment between their images. If this segment contains a white vertex in its interior, by  the last point of Proposition \ref{prop;PTS_are_acyl}, the fixator of the segment is cyclic or trivial.  This is a contradiction with non-cyclicity of the intersection. 
        
        

        \end{proof}

         If $\Gamma$ is a piecewise trivial suspension,    Proposition \ref{prop:structural_tree_is_canonical} allows us to talk about the unique (up to isomorphism) tree $T$ over which it is a piecewise trivial suspension. We call it its  \define{canonical structural tree}. All automorphisms of $\Gamma$ must therefore give rise to equivariant automorphisms of  $T$:

    \begin{prop} \label{prop;delta-aut-surjecte-sur-out}
        Let $\Gamma$ be a piecewise trivial suspension and let $T$ be a canonical structural tree. Let $\bbX$ be the  graph-of-group dual to $T$. Then the natural map $\delta \aut\bbX \to \out{\Gamma}$ is surjective.
    \end{prop}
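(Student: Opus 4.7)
The plan is to exploit the canonicity of $T$ established in Proposition \ref{prop:structural_tree_is_canonical}. Given $\psi \in \aut\Gamma$, I will construct an element of $\delta\aut\bbX$ whose image in $\out\Gamma$ is $[\psi]$, by exhibiting $T$ as a canonical structural tree for a second piecewise trivial suspension structure on $\Gamma$ and then applying canonicity.

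For this I would twist the original action. Write $\Gamma = G\rtimes_\alpha\bk t$, set $G' := \psi^{-1}(G)$ (still CSA, since $\psi^{-1}$ is an isomorphism) and $t' := \psi^{-1}(t)$, and let $T^\psi$ denote $T$ endowed with the new $\Gamma$-action $g\cdot_\psi x := \psi(g)\cdot x$. Stabilizers under $\cdot_\psi$ are the $\psi^{-1}$-images of the original ones, so the white-vertex stabilizer becomes $\psi^{-1}(\Stab_G(v))\times\bk{\psi^{-1}(t_v)}$ with $\psi^{-1}(t_v)\in G't'$; moreover $\psi^{-1}(t_v)$ acts via $\cdot_\psi$ on $T$ exactly as $t_v$ acts via $\cdot$, so it fixes precisely the ball of radius $2$ around any white vertex $v$. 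Hence $\Gamma$ is also a piecewise trivial suspension over $(G',T^\psi)$, and Proposition \ref{prop:structural_tree_is_canonical} provides a $\Gamma$-equivariant graph isomorphism $f: T \to T^\psi$ -- equivalently, a simplicial self-automorphism of the underlying tree satisfying $f(g\cdot x) = \psi(g)\cdot f(x)$ for all $g\in\Gamma$, i.e., $f$ is $\psi$-equivariant.

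From $f$ I would then read off an element of $\delta\aut\bbX$ in the standard way. The map $f$ descends to a graph automorphism $\phi_X$ of $X = \Gamma\backslash T$. Fixing lifts $\tilde v$ for each vertex of $X$ and writing $f(\tilde v) = \eta_v\cdot\widetilde{\phi_X(v)}$ for some $\eta_v\in\Gamma$, the formula $\phi_v: g\mapsto\eta_v^{-1}\psi(g)\eta_v$ yields an isomorphism $\bbX_v = \Stab_\Gamma(\tilde v) \to \Stab_\Gamma(\widetilde{\phi_X(v)}) = \bbX_{\phi_X(v)}$ (well-defined since $f$ is $\psi$-equivariant and $g$ fixes $\tilde v$). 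Analogous formulas for edges, together with correction elements $\gamma_e\in\bbX_{\phi_X(\tau(e))}$ comparing lifts across edges, produce a tuple $(\phi_X,(\phi_v)_v,(\phi_e)_e,(\gamma_e)_e)$ satisfying the Bass diagram \eqref{eq:Bass_Diagram}. Its extension to the Bass group acts on $\pi_1(\bbX,v_0)\cong\Gamma$ as $\psi$ followed by conjugation by $\eta_{v_0}$, so its image in $\out\Gamma$ is exactly $[\psi]$. The main obstacle is the bookkeeping for the edge isomorphisms and correction elements: these must be chosen compatibly so that the Bass diagram commutes and, simultaneously, so that the induced action on the Bass group matches $\psi$ up to a single global inner twist; the $\psi$-equivariance of $f$ is precisely what dictates all these choices canonically.
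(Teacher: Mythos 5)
Your proposal is correct and follows the same route the paper takes: the paper treats Proposition~\ref{prop;delta-aut-surjecte-sur-out} as an immediate consequence of the canonicity of $T$ (it appears right after the remark ``All automorphisms of $\Gamma$ must be automorphisms of the action on $T$ by the canonicity of $T$''), and you are simply making that remark precise. Twisting the action by $\psi$, verifying $(G',T^{\psi})$ is again a piecewise trivial suspension, invoking Proposition~\ref{prop:structural_tree_is_canonical} to obtain a $\psi$-equivariant automorphism of $T$, and then reading off an element of $\delta\aut\bbX$ via Bass's covering-theoretic correspondence (cited in Section~\ref{sec;autom_gog}) is exactly the intended argument.
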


    Another consequence of the canonicity of structural trees is the following computational result which is important for calculations later in the paper.

    \begin{prop}\label{prop;struct_tree_compute}
      Let $F$ be a finitely generated free group, $\alpha$ a linearly growing unipotent  automorphism, and let $\Gamma = \torus\alpha$ as in Proposition \ref{prop;ULG->PWT} then it is possible algorithmically to construct a splitting $\bbX$ of $\Gamma$ that is dual to the structural tree $T$ for $\alpha$ directly from a finite presentation of $\Gamma$.
    \end{prop}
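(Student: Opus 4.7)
The plan is to follow the explicit construction given in the proof of Theorem \ref{thm;unipotent-linear-gdt} and verify that every step can be carried out effectively, then package the resulting splitting of $F$ into a splitting of $\Gamma$ via Proposition \ref{prop;gdt-PTS} and \ref{prop;ULG->PWT}. Canonicity (Proposition \ref{prop:structural_tree_is_canonical}) guarantees that the output is well-defined up to $\Gamma$-equivariant isomorphism, so any algorithmic choice of fibration and of train track gives the same answer.

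The first step is to extract from a finite presentation of $\Gamma$ an explicit monomorphism $F \into \Gamma$ with quotient $\bbZ$, together with a finite automorphism $\alpha \in \aut F$ realising the monodromy. Either this fibration is part of the input, or we recover it by enumerating homomorphisms $\Gamma \onto \bbZ$, testing for finite generation of the kernel and computing a free basis by Stallings' folds. Known algorithms for free-by-cyclic groups (e.g.\ via BNS-invariants, or Feighn--Handel) produce the data $(F,\alpha)$ together with a free basis of $F$ and the matrix of $\alpha$ on it. From this, unipotence and linear growth can be verified directly.

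Next, we apply the Feighn--Handel algorithm \cite{feighn_recognition_2011} to produce a completely split train track representative $f:Y\to Y$ of $\alpha$, together with its filtration $Y_0\subset Y_1 \subset \cdots \subset Y_r=Y$ and the loops $\ell_{\epsilon_i}$. All of this is effective on a combinatorial graph. From here the construction in Theorem \ref{thm;unipotent-linear-gdt} is entirely algorithmic: we identify the fixed subgraph $Y''_0$, list its connected components (the vertices of $X$), add for each growing edge $\epsilon_i$ a loop $\mu_{\epsilon_i}$ in the appropriate vertex space, and iteratively homotope each $\ell_{\epsilon_i}$ into $X^i_v$ using the 2-cells attached via \eqref{eq:attaching_map}. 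Finding $\mathring\ell_{\epsilon_i}$ reduces to testing primitivity of a word in a free group and computing its root, which is effective. The edge group monomorphisms $i_{\epsilon_i}, \tau_{\epsilon_i}$ are then read off the combinatorial data.

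This produces a cyclic graph-of-groups decomposition $\bbX_c$ of $F$; by the argument of Theorem \ref{thm;unipotent-linear-gdt} its Bass-Serre tree is the tree of cylinders $T_c$ for the abelian JSJ deformation space relative to $\calE$. To pass to $\Gamma$, we apply Proposition \ref{prop;ULG->PWT}: for each vertex $v$ of $T_c$ we compute the canonical element $t_v \in Ft$ using the recipe from the proof of Proposition \ref{prop;gdt-PTS} (writing $t_v = t\eta^{-1}\bar e_r\cdots\bar e_1$ explicitly from the substitution coefficients of $\alpha$ along a path from $v_0$ to $v$), and replace the vertex group $F_v$ by $F_v\times\langle t_v\rangle$. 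Edge groups remain cyclic, now sitting inside the new vertex groups via the same embeddings. The result is a splitting $\bbX$ of $\Gamma$ whose Bass-Serre tree is, by Proposition \ref{prop:structural_tree_is_canonical}, the canonical structural tree $T$. The main obstacle, and the reason this proposition is nontrivial, is ensuring that the Feighn--Handel output is sufficiently explicit to carry out the cell-attaching construction of Theorem \ref{thm;unipotent-linear-gdt} combinatorially; once that is granted, every subsequent step is a finite manipulation of the resulting train track graph.
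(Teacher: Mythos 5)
Your approach is genuinely different from the paper's. The paper simply enumerates finite presentations of $\Gamma$ (reachable by Tietze transformations from the given one) until it finds one explicitly exhibiting a graph-of-groups structure satisfying a short checkable list of conditions: bipartite underlying graph, $\bbZ^2$ black vertex groups, white vertex groups explicitly of the form $F_v\times\bbZ$, edge images maximal abelian, distinct incident edge images non-conjugate in each white vertex group, and a non-degeneracy condition at distance $2$. These conditions force $4$-acylindricity and the tree-of-cylinders property, so Proposition~\ref{prop:structural_tree_is_canonical} immediately identifies the resulting Bass--Serre tree with $T$. That argument needs neither effective train track theory nor any fibration data; it just relies on canonicity. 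Your constructive route is plausible in spirit and would give an actual algorithm rather than an unbounded search, but as written it has two genuine gaps.

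First, the claim ``Edge groups remain cyclic'' is wrong. In the structural $\Gamma$-tree $T$, every edge stabilizer is free abelian of rank $2$ (this is stated and used explicitly in Proposition~\ref{prop;canonical_for_subgroups}). Concretely, if $e$ joins a white vertex $w$ to a black vertex $b$, then $F_e=\langle c_e\rangle$ is cyclic, but by piecewise triviality the element $t_w$ fixes the ball of radius $2$ around $w$, hence fixes $e$, giving $\Gamma_e\supseteq\langle c_e,t_w\rangle\cong\bbZ^2$; since $\Gamma_e/F_e$ embeds in $\bbZ$ this is an equality. If you keep cyclic edge groups in the $\Gamma$-splitting while enlarging vertex groups, you get the wrong fundamental group: the central factors $\langle t_v\rangle$ at distinct white vertices would not be identified through the black $\bbZ^2$ vertices. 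You must blow up the edge groups to $\bbZ^2$ as well.

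Second, ``we recover [the fibration] by enumerating homomorphisms $\Gamma\onto\bbZ$, testing for finite generation of the kernel and computing a free basis by Stallings' folds'' is not a justified algorithm. The kernel of such a homomorphism has no finite generating set handed to you, so Stallings folds do not apply directly, and deciding whether a kernel is finitely generated requires a BNS-invariant-type or Brown-criterion argument that you would have to supply (and argue terminates). The paper's enumeration argument bypasses this entirely: it never needs to reconstruct $(F,\alpha)$, because Proposition~\ref{prop:structural_tree_is_canonical} guarantees that \emph{any} splitting of $\Gamma$ satisfying the structural conditions is the canonical one, regardless of which fibration, if any, was used to find it.
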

    \begin{proof}
        Let $\bk{X|R}$ be a presentation for $\Gamma$, and enumerate the presentations for $\Gamma$ until there is a presentation of $\Gamma$ as a graph of groups where:
        \begin{enumerate}
            \item The vertex groups are either explicitly isomorphic to $\bbZ^2$ or explicitly direct products of non-abelian free groups and cyclic groups.
            \item The underlying graph is bipartite with $\bbZ^2$ black vertices.
            \item The images of edge groups are maximal abelian subgroups of the vertex groups.
            \item In the white (non-abelian) vertex group, distinct edges groups map to non-conjugate subgroups.
            \item The centralizer of a white vertex group is not equal to the centralizer of another white vertex group at distance 2.
        \end{enumerate}
        Since such a splitting $\bbX'$ of $\Gamma$ exists, an enumeration of all finite presentations of $\Gamma$ will eventually produce such a splitting. Furthermore, all such conditions are easily verifiable in the vertex groups and from the Bass-Serre relations.
        
        The conditions above immediately imply the two first properties of the definition of piecewise trivial suspensions. Let us now check the third property.
         Consider, for each white vertex $w$ in $\bbX'$, a generator $t_w$ of the given cyclic direct factor.  We need to check that if $e_1e_2e_3$ is a reduced path in the Bass Serre tree of $\bbX'$ starting at the vertex fixed by $\Gamma_w$,  then $t_{w} $ does not fix $e_3$. Let $b$ be the black vertex adjacent to $e_1$, by the third condition we verified on $\bbX'$, it follows that the stabilizer of $e_1$ and $b$ coincide and similarly that it coincides with the stabilizer of $e_2$. But, by the fifth condition $t_w$ is not central in $\Gamma_{w'}$, for $w'$ the vertex between $e_2$ and $e_3$. By properties of maximal abelian subgroups of direct products of a free group and a cyclic group and the fourth condition, $t_w$ fixes only one edge adjacent to $w'$, and it must be $e_2$. Therefore it does not fix $e_3$, and we are done. 
         
        Proposition \ref{prop:structural_tree_is_canonical} then implies that $T'$ the Bass-Serre tree dual to $\bbX'$ is isomorphic to the canonical tree $T$.
    \end{proof}

    \subsection{Subgroups of globally fibered piecewise trivial suspensions}

    In this subsection we study finitely generated subgroups of a globally fibered piecewise trivial suspension $\Gamma$. We will denote by $T$ the canonical structural tree of $\Gamma$, and $G$ the global fiber: the local fibres are the intersections of $G$ with the vertex stabilizers. We denote by $t$ an element such that 
    $\Gamma = G\rtimes \bk t$. Our main example is when $G$ is a free group.


        \begin{prop}\label{prop;subpdp}
            Assume that $\Gamma$ is as above, and that the local fibres, i.e. the intersection of  $G$ with the vertex stabilizers, are free.  Let $H<\Gamma$ be a finitely generated freely indecomposable subgroup of $\Gamma$, with minimal subtree $T_0 \subset T$, that is infinite. 
            
            If for all edges $e$ of $T_0$, the index of $H\cap \Gamma_e $ in $\Gamma_e$ is finite, then   $ T_0 $ is a $\calA$-bipartite  $H$-tree, 
            and $H$ is piecewise trivial suspension over $T_0$. 
        \end{prop} 

        \begin{proof}

            Consider a white vertex $v$ in $T_0$.  Its stabilizer $\Gamma_v$ in $\Gamma$ is a direct product   $G_v \oplus \bk {t_v}$. Let $H_v$ denote $H\cap \Gamma_v$. We first show that  $H_v$ is a direct product of a non-abelian free group and a cyclic group.
            
            Consider the quotient map  $\Gamma_v \onto  \Gamma_v/\bk{t_v} $. In restriction to   $H_v$, it is non-injective, because it is so on the adjacent edge stabilizers. Therefore $ H_v$ intersects 
            $\bk {t_v}$, necessarily in an infinite cyclic group. 
            
             The quotient by $H_v\cap \bk{t_v}$ maps $H_v$ into $\Gamma_v/ \bk{t_v}$, which is isomorphic to $G_v$. Hence the image of    $H_v$ by this quotient is free, and therefore $H_v$ splits as a semi-direct product  $H_v= (\bk{t_v}\cap H)\rtimes F_v$, for $F_v$ any lift of the free group  $H_v/(H\cap \bk{t_v}))$ in $H_v$. 
            Since the left factor is central, this is a direct product. Observe however that we cannot guarantee that the other factor is contained in $G_v$.
            
            Since $H_v$ intersects different (at least $2$) edge stabilizers adjacent to $v$ as finite index subgroups, it is non-abelian (recall that, by Proposition \ref{prop;PTS_are_acyl}, two distinct incident edge groups intersect in a cyclic subgroup that itself trivially intersects the local fibre of $v$, thus their union generate a non-abelian group).  
            
         
            For each edge of $T_0$, its stabilizer in $H$ is of finite index in its stabilizer in $\Gamma$, by assumption, so it is free abelian of rank $2$. This holds too for stabilizers of black vertices of $T_0$ in $H$. It remains to show the third point of  Definition \ref{def;pts}  for $H$ on $T_0$. 
            
            If $v$ is a white vertex of $T_0$, we know 
            that all non-trivial powers of $t_v$ fix exactly the ball of radius $2$ around $v$, and we know also that the centre of $H_v$ is generated by a power of  $t_v$. Therefore we have the third point of the definition.
        \end{proof}

        We will say that a subgroup $K\leq H$ is \define{thick} if contains a subgroup isomorphic to $\mathbb Z^2$. If a subgroup $K$ is not thick, we will say it is \define{thin}.

        Recall from \cite{rips_cyclic_1997,fujiwara_papasoglu,dunwoody_sageev,guirardel_jsj_2017} that a group $R$ is \define{rigid} relative to a collection $\calH$ of subgroups if $R$ admits no abelian splittings relative to $\calH$. A vertex groups $\bbX_q$ of a graph of groups $\bbX$ with cyclic edge groups is said to be \define{quadratically hanging (QH) in $\bbX$} it is the fundamental group of a compact surface $\Sigma$ with boundary and the incident edge groups coincide with the $\pi_1$-images of the connected components of the boundary $\partial\Sigma$. We say a QH subgroup is \define{maximal QH} if it is not conjugate into a proper subgroup of another QH group.
        
        The next proposition analyses a freely indecomposable finitely generated subgroup of a piecewise trivial suspension, with a free global fibre. In order to do so we will need to consider a new JSJ theory. Because many properties need to be verified, the proof below is long. We refer to reader to Section \ref{sec:G-trees} for the terminology that will be used in the proof.
        
        \begin{prop} \label{prop;canonical_for_subgroups}
	        Assume that $\Gamma$ is a globally fibered piecewise trivial suspension over a tree $T$, with a global fibre  $G$ that is free.
	        Let $H<\Gamma$ be a finitely generated freely indecomposable subgroup of $\Gamma$, with minimal subtree $T_H \subset T$, that is infinite. Let $\calA$ be the class of cyclic subgroups of $H$, 
	        and $\calH$ be the collection of subgroups of $H$ that are piecewise trivial suspension. Then there exists $\calT$ a collapsed tree of cylinders of an $(\calA,\calH)$-JSJ tree for the relation of commensurability.
	                
	        The action of $H$ on $\calT$ is dual to a bipartite graph of groups $\bbX$ with cyclic edge groups edge groups. Black vertex groups are maximal cyclic and white vertex groups $\bbX_w$ either:
	        \begin{enumerate}[(a)] 
	            \item\label{it:max-pts} isomorphic to piecewise trivial suspensions,
	            \item\label{it:free} free groups that are either do not admit any cyclic splitting relative to incident edge groups or that are maximal QH in $\bbX$.
	       \end{enumerate}
	       Furthermore the following hold:
	       \begin{enumerate}
	           \item\label{it:interstitial-central} For any edge of $\calT$ adjacent to a vertex $w$ of with edge group $\bbX_w$ of type (\ref{it:max-pts}), the edge group $\bbX_e$ is coincides with the centralizer of a subgroup isomorphic to $F\oplus \bbZ$ that is a vertex group of the canonical splitting of $\bbX_w$ as a piecewise trivial suspension.
	           \item\label{it:verts-canonical} Any other splitting of $H$ with the same 
	       \end{enumerate}
	       Finally $\delta\aut{\bbX}$ maps surjectively onto $\out H$.
        \end{prop}  

    \begin{proof}
        Let $T_H$ be the minimal $H$-subtree for the action of $H$ on $T$. Non-abelian thin vertex groups of $T_H$ are subgroups of white vertex groups of $T$, so these must be subgroups that intersect the centre of the white vertex group trivially and therefore are themselves free groups. Since $H$ is one-ended all edge groups of $T_H$ must be isomorphic to $\bbZ$ or $\bbZ^2$.
        
        We call a maximal subtree of $T_H$ consisting of edges with $\bbZ^2$ edge stabilizers a \emph{thick component}. By Proposition \ref{prop;subpdp} the stabilizers of thick components are themselves piecewise trivial suspensions, though it is possible that they do not admit a global fibre.
        
        We colour the vertices of $T_H$ the same as the vertices of $T$ and we say a vertex $v$ is \emph{thick,thin} if the vertex group $H_v$ is thick, thin respectively. Similarly we will say a vertex is \emph{black,white} if the vertex it stabilizes is black, white respectively. We first investigate cyclic edge groups of $T_H$.
        
        It is clear that any edge of $T_H$ with cyclic stabilizer will not be adjacent to a thick black vertex. This next fact is a bit more subtle.
        
        \textbf{Claim:} \emph{If an edge $e$ of $T_H$ with a cyclic stabilizer is adjacent to a thick subgroup $v$ then $v$ must be white and the subgroup $H_e \leq H_v$ must lie in the centre of $H_v$.} Consider the overgroup $\Gamma_v \geq H_v$. We have $\Gamma_v = G_v\oplus\langle t_v\rangle$ and $H_v \simeq K_v\oplus\langle t_v^{n_v}\rangle$ for some subgroup $K_v \leq G_v$ and where the subgroup $\langle t_v^{n_v}\rangle$ is the centre of $H_v$. If the image of $H_e$ in $H_v$ was not central, then $\langle H_e,t_v^{n_v}\rangle \simeq \bbZ^2$ is a subgroup of $H$ that stabilizes the edge $e$ of $T$, contradicting $H_e \simeq \bbZ$. This proves the claim.
        
        The argument above also shows that white vertex groups of $T_H$ are non-abelian, for otherwise such a vertex group would be completely contained in an edge group. Since distinct edges incident to a white vertex group have distinct edge groups, this would force the vertex to have valence 1 which contradicts the minimality of $T_H$.
        
         We define a \emph{thin component} of $T_H$ to be a maximal subtree whose vertices are thin. The acylindricity of the action of $\Gamma$ on $T$ and the claim above imply that a thin black vertex is adjacent to at most one thick white vertex and therefore that non-empty thin components are non-abelian. We say an edge is \emph{interstitial} if it joins a thick and a thin vertex. Interstitial edges are precisely the edges that are neither in a thick or a thin component and their stabilizers coincide with those of the edges considered in item \ref{it:interstitial-central} in the statement of the proposition.
         


        We will now modify $T_H$ in order to get a canonical $H$-tree. However, doing so requires some care. We want $\calA$, our class of edge groups, to be either the class of cyclic or abelian subgroups, but since $T_H$ has a mix of cyclic and abelian edge groups, and since $H$ does not satisfy commutation transitivity, there is no possible equivalence relation on $\calA$ that is admissible described in Section \ref{sec:G-trees}. Our solution is to let $\calH$, our peripheral structure, be the set of subgroups of $H$ that are isomorphic to piecewise trivial suspensions, to let our edge groups lie in $\calA$ the set of infinite cyclic groups, and to let $\sim$ be the commensurability relation. For $H$, given our knowledge of piecewise trivial suspension, it is easy to verify $\sim$ is an admissible equivalence relation on $\calA$. Our first task is to show that there indeed exists a canonical $(\calA,\calH)$-tree.


         
         \textbf{Claim:}\emph{With the equivalence relation $\sim$ and the classes of subgroups $\calA$ and $\calH$ given above, $H$ admits a canonical $(\calA,\calH)$-tree $\calT$ that is constructed as the tree of cylinders of an $(\calA,\calH)$-JSJ tree.} This amounts to showing that the hypotheses of \cite[Corollary 9.1]{guirardel_jsj_2017} are satisfied. In our case, it will be sufficient to show two things. Firstly, that for every $A \in \calA$ the stabilizer (under conjugation) $H_{[A]}$ of the $\sim$-equivalence class $[A]$ is \emph{small in any $(\calA,\calH)$-tree}, which means that $H_{[A]}$ will always fix a vertex or a point in the boundary of an $(\calA,\calH)$-tree. Secondly, that if $A\leq A', [A':A]=2$ and $A \in \calA$, then $A' \in \calA$.
         
         Let $S$ be an arbitrary $(\calA,\calH)$-tree and let $A \in \calA$. The group $H_{[A]}\leq H$ that fixes the equivalence class $[A]$ via conjugation is then the normalizer of the maximal cyclic subgroup containing $A$. If $H_{[A]}$ is not cyclic then $H_{[A]}$ must be contained in a subgroup isomorphic to a piecewise trivial suspension and therefore must be contained in a subgroup in $\calH$, so $H_{[A]}$ fixes a vertex of $S$. If $H_{[A]}$ is cyclic, by the classification of isometries of simplicial trees, either it fixes a vertex or leaves a line of $S$ invariant. In all cases the action of $H_{[A]}$ is \define{small}. We now move on the the second thing we need to show.
         
         Next, note that for any cyclic group in $A\leq H$, we have that if $[A':A]\leq 2$ then $A'\in \calA$. Indeed suppose this was not the case. We pass to the overgroup $\Gamma\geq H$. $A$ is elliptic in the $\Gamma$ tree $T$ if and only if $A'$ is elliptic. By the structure of trivial suspensions, if $A$ is elliptic in $T$ then $A'$ must also be cyclic,. If $A$ is hyperbolic in $T$ then $A'$ must also fix the axis of $A$. We have $A' = \bk{A,g}$ for some element $g\in \Gamma$. If $A'$ is not itself cyclic then, by the classification of discrete isometries of a line, $g$ must fix a point and $g^2$ must fix the entire axis. This contradicts acylindricity since $g^2\neq 1.$
         
         Finally, since any equivalence class stabilizer $H_{[A]}, A \in \calA$, is either cyclic or lies inside a trivial suspension, we have that every group in $\calS_{\mathrm{nvc}}$, the set of equivalence class stabilizers that are not virtually cyclic, is contained in a group in $\calH$. The claim now immediately follows from \cite[Corollary 9.1]{guirardel_jsj_2017}.
         
         Now that we have proven the existence of a canonical $(\calA,\calH)$ tree in our novel JSJ theory, it remains to describe it. We will do so by first constructing a $(\calA,\calH)$-JSJ tree (recall Section \ref{sec:G-trees} for the precise definition) from $T_H$.
         
         Consider the tree $\dot T_H$ obtained by $H$-equivariantly collapsing every thick and every thin component to a point. In particular there is an $H$-equivariant map $T_H \onto \dot T_H$ and the only edges that are not collapsed to points are the interstitial edges. Now $T_H$ is an $(\calA,\calH)$-tree and it has vertices that are either piecewise trivial suspensions that we call \emph{$\calH$-vertices}, or acylindrical graphs of free groups with cyclic edge groups, that we call \emph{thin type}. Let $w$ be a thin-type vertex of $\dot T_H$ and denote by $\calE_w$ the images of the incident edge groups. On the one hand, since $H$ is one ended, $H_w$ is also one ended relative to $\calE_w$. On the other hand $H_w$, being the fundamental group of a graph of free groups without any Baumslag-Solitar subgroups is torsion-free hyperbolic by \cite{bestvina_combination_1992}, so
         \cite[Theorem 9.5]{guirardel_jsj_2017} and immediately tells us that that $H_w$ admits a $(\calA,\calE_w)$-JSJ tree $T_w$, that the collapsed tree of cylinders coincides with the tree of cylinders $(T_w)_c$ and that $T_w$ is in the same deformation space as $(T_w)_c$. Let $\hat T_H$ be the tree obtained by blowing up every thin type vertex $w$ of $\dot T_H$ to the tree $T_w$ as follows: $H$-equivariantly delete $w$ from $\dot T_H$. Now every edge $e$ incident to $w$ in $\dot T_H$ has a stabilizer $H_e$ that fixes a vertex $w_e$ of $T_w$, so we $H$-equivariantly attach $e$ to $w_e$. 
        
        The next three claims are verifications of facts that are unsurprising but still need to be shown since we are dealing with a novel JSJ theory.
          
         \textbf{Claim:} \emph{Every edge group in $\hat T_H$ is $(\calA,\calH)$-universally elliptic.} Indeed, first note that every interstitial edge has stabilizer lying in a subgroup of $\calH$. Let $e$ be some non-interstitial edge. We need to show that $H_e$ acts elliptically on an arbitrary $(\calA,\calH)$-tree $S$. By construction $e$ must lie in some subtree $T_w$ where $w \in \verts{\dot T_H}$ is a thin-type vertex. There is an induced action of $H_w$ on $S$ with abelian edge stabilizers and where the subgroups in $\calE_w$ are elliptic, this $S$, viewed as an $H_w$-tree, is a $(\calA,\calE_w)$-tree. Since $H_e \leq H_w$ is an edge group of $T_w$ and $T_w$ is an $(\calA,\calE_w)$-JSJ tree, $H_e$ must act elliptically on $S$. This proves the claim.
        
        \textbf{Claim:} \textit{$\hat T_H$ is an $(\calA,\calH)$-JSJ tree}. We've already shown that $\hat T_H$ is a universally elliptic $(\calA,\calH)$-tree. Now we must show it dominates every other universally elliptic $(\calA,\calH)$-tree. Suppose towards a contradiction that this was not the case, then there would exist some universally elliptic $(\calA,\calH)$-tree $S$ with a single edge orbit that is not dominated by $\hat T_H$. This implies that some vertex group $H_u$ of $\hat T$ must acts hyperbolically on $S$. This vertex group cannot by thick-type so by universal ellipticity of $\hat T_H$,  $H_u$ must be flexible-type vertex group of some subtree $T_w\subset \hat T_H$ where  $w$ is a thin-type vertex of $\hat T$. By \cite[Theorem 9.14]{guirardel_jsj_2017} $H_u$ must be a QH subgroup in $\hat\calT$, the incident edge groups of $H_u$ must be elliptic. Thus, up to conjugacy, any edge group in $S$ is commensurable to the $\pi_1$-image of some simple closed curve $\gamma$ in the underlying surface $\Sigma_u$ with boundary. In particular, by taking another simple closed curve $\delta$ that essentially intersects $\gamma$, it is possible to produce a $(\calA,\calH)$-splitting of $H$ in which this edge stabilizer of $S$ is hyperbolic, which is a contradiction. The claim now follows.
            
        It now follows from \cite[Corollary 9.1]{guirardel_jsj_2017} that $\cal T$ and $\hat T_H$ have the same vertex stabilizers and by \cite[Theorem 9.14]{guirardel_jsj_2017} that the vertex stabilizers are as in the statement of the proposition since the stabilizers of thin-type vertices of $\dot T_H$ are torsion-free hyperbolic. $\cal T$ can now be obtained from $\hat T_H$ as a tree of cylinders. There is a natural mapping from $\verts{T_H}$ to $\verts\calT$, that is injective on the set of vertices with non-abelian stabilizer, but that may identify cyclic edge groups that lie in the same cylinder. We note that this may ``merge'' subtrees coming from distinct thin components of $T_H$.
        
        \textbf{Claim:} \emph{Suppose that an $H$-tree $S$ is yet another $(\calA,\calH)$-tree whose non-abelian vertices are as described in (\ref{it:max-pts}) and (\ref{it:free}) in the statement of the proposition, then $S$ is in the same deformation space as $\calT$.} In this case, consider a connected component $S'$ of $S\setminus IE$ where $IE$ is the set of edges adjacent to vertex groups that lie in $\calH$. Denote by $H_{S'}$ the stabilizer of the subtree $S'$. As an $H_{S'}$-tree, $S'$ is an $(\calA,\calE')$-tree, where $\calE'$ is the set of subgroups that lie in the images of edges groups coming form edges in $IE$, or equivalently, are adjacent to vertex groups in $\calH$. As for $H_w$ above, $H_{S'}$ must be one ended relative to $\calE_{S'}$ and $H_{S'}$ must be torsion-free hyperbolic. Since the vertex groups of $H'_{S'}$ in this $(\calA,\calE')$ splitting are either rigid or maximal QH, we have that, by \cite[Theorem 3.22]{dahmani_deciding_2019}, $S'$ must be a $(\calA,\calE')$-JSJ tree for $H_{S'}$. This claim now follows from the two previous claims, but where $S$ is used instead of $\hat T_H$. This works because the subtree $S'$ of $S$ and the subtree $T_w$ of $\hat T_H$ above share the same essential properties.
        
        Finally \cite[Corollary 9.1]{guirardel_jsj_2017} states that $\cal T$ is invariant under automorphisms of $H$ that fix $\calA$ and $\calH$, since all automorphisms of $H$ preserve these two classes we have a surjection $\delta\aut{\bbX_c} \onto \out H$, where $\bbX$ is the graph of groups splitting dual to $\calT$.
        \end{proof}

       The previous proposition proves that there exists a suitable decomposition for our subgroup $H$ of $\Gamma$. The next statement covers the computability of this decomposition and also gives a refinement of this decomposition that we will use in Section \ref{sec;EMP}. The corollary below has some extra hypotheses (solution to word problem, effective coherence) that will be proven in Section \ref{sec;algo_trac}.    

    \begin{cor}\label{coro;hybrid_canonical_for_sbgp}
        Let $G$ be a finitely generated free group. 
        Suppose 
        that $\Gamma = G \semidirect\alpha \bk t$ is a globally fibered piecewise trivial suspension over $T$ with fiber $G$, where $G$ has decidable word problem and $\Gamma$ is effectively coherent. 
        
        Then there is an algorithm that, given  $H \leq \Gamma$ be a finitely generated one-ended subgroup, constructs the splitting $\bbX$ of $H$ dual to the tree $\calT$ given in Proposition \ref{prop;canonical_for_subgroups}. Furthermore, the maximal thick vertex groups can be algorithmically refined to give a graph of groups $\hat\bbX$ whose vertex groups are either trivial suspension, i.e. isomorphic direct products of subgroups of $G$ with $\bbZ$, or to finitely generated subgroups of $G$. The edge groups are either isomorphic to $\bbZ^2$ or $\bbZ$, and if a $\bbZ$-edge group maps into a trivial suspension, then its image is in the centre of that suspension. Furthermore, we still have a surjection $\delta\aut{\hat\bbX} \onto \out H$.

    \end{cor}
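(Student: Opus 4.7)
The plan is to adapt the enumerate-and-verify strategy of Proposition \ref{prop;struct_tree_compute} to the subgroup setting. Since $\Gamma$ is effectively coherent, I would first compute a finite presentation of $H$. The existence of a splitting $\bbX$ with the properties described in Proposition \ref{prop;canonical_for_subgroups} guarantees that an enumeration procedure will eventually produce a candidate with the required structural features; the task is to recognize it algorithmically, and then to refine it at the thick vertices.

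Concretely, I would enumerate all finite presentations of $H$ as a graph of groups $\bbY$ together with finite presentations of the vertex groups and finite data specifying the edge groups and the edge-to-vertex monomorphisms. For each candidate, I check: (i) the underlying graph is bipartite; (ii) all edge groups are infinite cyclic and the black vertex groups are maximal infinite cyclic into which incident edge groups embed as finite-index subgroups; (iii) every white vertex group $\bbY_w$ is either a \emph{thick-type} group, detected by an explicit central $\bbZ$-direct factor such that the images of incident edge groups coincide with the centers of maximal trivially-suspended direct summands of a candidate structural splitting of $\bbY_w$, or a \emph{thin-type} subgroup of $G$ which is either rigid relative to the incident edge groups or maximal QH in $\bbY$; and (iv) no black vertex is adjacent to more than one thick-type white vertex. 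Hereditary algorithmic tractability of $G$, the computability of centralizers in subgroups of $G$, the decidable word problem in $\Gamma$, and effective coherence together allow each of these conditions to be tested; the delicate point, recognizing rigidity and maximal QH-ness in the thin components, reduces to tractability assertions about subgroups of $G$.

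Once $\bbX$ is located, the refinement proceeds vertex by vertex. For each maximal thick vertex group $H_w \leq H$, Proposition \ref{prop;subpdp} identifies $H_w$ as a piecewise trivial suspension over $(G \cap H_w, T_0)$ for an appropriate subtree $T_0 \subset T$; the fiber $G \cap H_w$ inherits the required algorithmic properties from $G$. A direct variant of Proposition \ref{prop;struct_tree_compute}, applied inside $H_w$, produces the canonical structural splitting of $H_w$ with $\bbZ^2$ edge groups and trivially-suspended vertex groups. I would then equivariantly blow up the vertex $w$ in $\bbX$ along this splitting, re-attaching each $\bbZ$-edge of $\bbX$ incident to $w$ to the unique vertex of the structural splitting of $H_w$ whose stabilizer contains the image of the edge group; by condition \ref{it:max-thick-sgps} of Proposition \ref{prop;canonical_for_subgroups}, this image lies in the center of a trivially-suspended summand, which forces the described edge/vertex configuration for $\hat\bbX$. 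The surjection $\delta\aut{\hat\bbX} \onto \out H$ is then inherited from the surjection $\delta\aut{\bbX} \onto \out H$ of Proposition \ref{prop;canonical_for_subgroups}: Proposition \ref{prop:structural_tree_is_canonical} applied inside each $H_w$ shows that any automorphism of $H_w$ preserves its canonical structural tree, so automorphisms of $\bbX$ lift canonically to automorphisms of the refined splitting.

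The main obstacle is the verification step in the enumeration, i.e.\ certifying that a candidate white vertex group is of the correct type with the correct centralizer/edge-group configuration. This is markedly more delicate than the analogous step in Proposition \ref{prop;struct_tree_compute}, because finitely generated subgroups of $\Gamma$ do not share the rigid structure of the ambient group: one must rely on hereditary algorithmic tractability of $G$ to handle the thin components, on effective coherence and computability of centralizers to detect the central $\bbZ$-direct factors distinguishing thick vertex groups, and on the canonicity results of Proposition \ref{prop:structural_tree_is_canonical} to guarantee that competing candidate splittings that pass all tests are in fact $H$-equivariantly isomorphic.
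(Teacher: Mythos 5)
Your overall route is the paper's route: compute a presentation of $H$ by effective coherence, enumerate presentations of $H$ as graphs of groups, verify structural conditions until a candidate matching Proposition \ref{prop;canonical_for_subgroups} appears, then blow up the maximal thick vertices using (a variant of) Proposition \ref{prop;struct_tree_compute} and inherit the surjection $\delta\aut{\hat\bbX}\onto\out H$ from canonicity. However, your verification list has a concrete gap: you never certify that the dual tree of the candidate splitting is \emph{its own tree of cylinders}. Your conditions (i)--(iv) only guarantee (via the last assertion of Proposition \ref{prop;canonical_for_subgroups}) that the candidate is an $(\calA,\calH)$-JSJ tree in the same deformation space as $\calT$; a JSJ tree in that deformation space need not equal the collapsed tree of cylinders, e.g.\ two distinct edges at a white vertex could have conjugate or commensurable images. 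The paper closes this by its condition on incident edge groups (its condition labelled \ref{cylII}): at every vertex, the image of each incident edge group meets every conjugate of the image of any other incident edge group trivially, a condition checked using the conjugacy problem and centralizer computations in subgroups of $G$ (and in subgroups of $G\times\bbZ$). Without this check your algorithm can terminate on a splitting that is not dual to $\calT$, and then the final claim is in jeopardy: only the canonical (tree-of-cylinders) splitting is $\out H$-invariant as a tree, so the surjection $\delta\aut{\hat\bbX}\onto\out H$ is not guaranteed for an arbitrary JSJ splitting in the deformation space.

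Relatedly, your appeal to Proposition \ref{prop:structural_tree_is_canonical} to argue that ``competing candidate splittings that pass all tests are $H$-equivariantly isomorphic'' is misdirected: that proposition concerns two piecewise-trivial-suspension fibrations of the same group and their structural trees, not the mixed thick/thin splitting of a subgroup $H$ containing rigid and QH thin vertices. The uniqueness you need is exactly the tree-of-cylinders uniqueness invoked in Proposition \ref{prop;canonical_for_subgroups}, and it only applies once the candidate has been certified to be equal to its tree of cylinders --- which is the missing verification above. The rest of your argument (using Proposition \ref{prop;subpdp} to see thick vertex groups as piecewise trivial suspensions, refining them via Proposition \ref{prop;struct_tree_compute}, and re-attaching $\bbZ$-edges into the centers prescribed by item \ref{it:max-thick-sgps}) matches the paper and is fine once the certification step is repaired.
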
    

    \begin{proof}
      The idea is along the same lines as the proof of Proposition \ref{prop;struct_tree_compute}. Given a generating set $S \subset \Gamma$ for $H$, we can use the effective coherence of $\Gamma$ to construct a presentation $\bk{X|R}$ for $H$. Next, we enumerate presentations of $H$. By Proposition \ref{prop;canonical_for_subgroups} one of these  presentations of $H$ is  explicitly that of  a graph of groups  isomorphic to $\bbX$. For the algorithm constructing $\bbX$ to terminate we need to recognize the following conditions:
      \begin{enumerate}
          \item\label{bipartite} \textit{The underlying graph should be bipartite, black vertices are maximal cyclic in $H$.} Verifying that the underlying graph is bipartite is obvious, verifying that vertex groups are cyclic follows from effective coherence and a solution to the word problem. Verifying that a vertex group is maximal cyclic in $H$ can be done by computing centralizers in vertex groups.
          \item\label{vert_gps}\textit{White vertex groups are as given in Proposition \ref{prop;canonical_for_subgroups}.} This falls into two subcases:
          \begin{enumerate}
              \item \textit{Piecewise trivial suspensions.} This can be verified by finding a presentation of the vertex groups of a bipartite graph of groups whose red vertex groups are isomorphic to $\bbZ^2$ and whose green vertices are direct products of subgroups of $G$ with $\bbZ$, which again can be seen from the right kind of presentation.
              \item \textit{Free.} Because we can find presentations of the vertex groups from the presentation of $H$, because the word problem is decidable in these vertex groups, and because maximal abelian subgroups are malnormal in these vertex groups, we can apply \cite[Theorem B]{touikan_detecting_2018} to certify whether a thin vertex group is rigid relative to incident edge groups. We can verify that a subgroup is QH by finding a presentation of that subgroup that is a standard presentation of the fundamental group of a with boundary, that the incident edge groups are conjugate to the boundary subgroups. We can verify that such groups are maximal QH by ensuring that there are no black vertices of valence 2 that are adjacent to two QH groups such that collapsing the edges gives a larger QH group. 
          \end{enumerate}
          \item\label{cylII}\textit{If two edges $e,e'$ terminate in a vertex $w$ then the images $\tau_e(\bbX_e)$ in $\bbX_w$ intersects all conjugates of the image of $\tau_{e'}(\bbX{e'})$ trivially.} This again can be verified by computing centralizers in vertex groups and by using hereditary algorithmic tractability. In particular, we need only solve the conjugacy problem either in subgroups of $G$ or in subgroups of $G\times\bbZ$. The latter reduces to the conjugacy problem in subgroups of $G$.
      \end{enumerate}

      Proposition \ref{prop;canonical_for_subgroups} already implies that if $\bbX$ satisfies \ref{vert_gps} then $\bbX$ is dual to a JSJ tree. Conditions \ref{bipartite} and \ref{cylII} imply that the tree dual to $\bbX$ is equal to its tree of cylinders. It follows that there is an algorithm that will terminate and output the splitting $\bbX$ dual to $\calT$.
      
      Finally a canonical graph of groups $\hat\bbX$ as in the statement of the Proposition can be constructed as follows. For every vertex $v\in \verts X$ whose vertex group $\bbX_v$ is a piecewise trivial suspension, blow up $v$ to the canonical splitting $\bbY^v$ that can be computed by Proposition \ref{prop;struct_tree_compute}. We must now reattach the the edges originally incident to $v$ to vertices of $Y^v$. There is a subtlety here: such an incident edge group will actually lie in multiple vertex groups of $\bbY^v$. By Proposition \ref{prop;canonical_for_subgroups} item \ref{it:interstitial-central}, the edge group associated to every such edge must coincide with the the centre of exactly one of the non-abelian vertex groups of $\bbY^v$. This thus gives us a canonical choice of which vertex group of $\bbY^v$ we want to send the edge group.
      
     It is clear from Propositions \ref{prop;canonical_for_subgroups} and \ref{prop;delta-aut-surjecte-sur-out} that $\hat\bbX$ is still $\out H$-invariant, so the last claim follows. 
    \end{proof}
    
    We will call such a decomposition, \emph{canonical trivially partially suspended}.

        \subsection{Fibre and orientation preserving automorphisms of suspensions}\label{sec;UL_IP_aut}

        We are interested in the fibre and orientation-preserving automorphisms of suspensions. We begin by drawing a connection between these automorphism groups and centralizers of outer automorphisms.
        \subsubsection{Centralizers and $\outfo{\torus \alpha}$ }

Let $F$ be a group, and $\alpha$ an automorphism. The subgroup $\autfo{F\rtimes_\alpha \bk t}$ of $\aut{F\rtimes_\alpha \bk t}$ is the subgroup of all automorphisms that preserve the fibre and the orientation of the semi-direct product, i.e. those that preserve $F$ and send  $t$ in $tF$.  The subscript $\mathrm{fo}$ stands for fibre and orientation.

All inner automorphisms are in  $\autfo{F\rtimes_\alpha \bk t}$. The quotient by this normal subgroup of inner automorphisms is the group  $\outfo{F\rtimes_\alpha \bk t}$.   

As a general observation, one can note that the image of $\autfo{F\rtimes_\alpha \bk t}$ in $\aut{F}$ by the restriction map composed by the quotient map $\aut F \to \out F$, is the centralizer of $[\alpha]$  in $\out F$.  Also, the image of $\outfo{F\rtimes_\alpha \bk t}$ in $\out F$ by the restriction map is $Z_{\out F} ([\alpha])/\bk{[\alpha]}$, for   $Z_{\out \bbF} ([\alpha])$ the centralizer in $\out \bbF$ of the outer class of $\alpha$.

\begin{prop} Let $\torus \alpha = F \rtimes_\alpha \langle t \rangle$. Then 
	$\Outfo {\torus \alpha}  \simeq Z_{\out \bbF} ([\alpha])/ \langle [\alpha]\rangle $. 
\end{prop}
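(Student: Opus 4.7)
The plan is to construct the claimed isomorphism explicitly by restriction to the fibre. Let $R:\Autfo{\torus\alpha}\to\Aut{F}$ be the homomorphism $\Psi\mapsto\Psi|_F$, well defined because every $\Psi\in\Autfo{\torus\alpha}$ preserves $F$, and let $\pi:\Aut{F}\twoheadrightarrow\Out{F}$ denote the canonical projection.

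First I would verify that $\pi\circ R$ takes values in $Z_{\Out F}([\alpha])$. Writing $\Psi(t)=tf$ with $f\in F$ (possible because $\Psi$ is orientation preserving) and applying $\Psi$ to the defining relation $g^{t}=\alpha(g)$ for $g\in F$ produces an identity
\[
\Psi|_F\circ\alpha \;=\; \ad{f^{-1}}\circ\alpha\circ\Psi|_F,
\]
which projects in $\Out F$ to the commutation relation $[\Psi|_F][\alpha]=[\alpha][\Psi|_F]$. For the surjectivity of $\pi\circ R$ onto $Z_{\Out F}([\alpha])$ I would reverse this argument: given a representative $\beta\in\Aut F$ of a centralizing outer class, the commutation modulo $\Inn{F}$ furnishes some $f\in F$ satisfying the displayed identity with $\Psi|_F$ replaced by $\beta$, and then defining $\Psi|_F:=\beta$ and $\Psi(t):=tf$ gives a bona fide $\Psi\in\Autfo{\torus\alpha}$ with $R(\Psi)=\beta$.

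Next I would analyse the image of $\Inn{\torus\alpha}$: a direct computation shows that the inner automorphism $\ad{t^n f_0}$ restricts to $F$ as $\alpha^{\pm n}\circ\ad{f_0}$ (the sign depending on the convention for conjugation by $t$), and so projects to $[\alpha]^{\pm n}\in\Out F$. Hence $(\pi\circ R)(\Inn{\torus\alpha})=\langle[\alpha]\rangle$, and $\pi\circ R$ descends to a surjective homomorphism
\[
\overline{R}\colon\Outfo{\torus\alpha}\longrightarrow Z_{\Out F}([\alpha])/\langle[\alpha]\rangle.
\]

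The main obstacle is the injectivity of $\overline{R}$. Suppose $\Psi\in\Autfo{\torus\alpha}$ satisfies $[\Psi|_F]=[\alpha]^n$ in $\Out F$, and write $\Psi|_F=\alpha^n\circ\ad{f_0}$ for some $f_0\in F$; then the automorphism $\Psi_0:=\ad{t^{n}f_0}^{-1}\circ\Psi$ still lies in $\Autfo{\torus\alpha}$ and restricts to the identity on $F$. Writing $\Psi_0(t)=tf$ and substituting $\Psi_0|_F=\mathrm{id}$ into the compatibility identity of the first step forces $\ad{f^{-1}}=\mathrm{id}_F$, so $f\in Z(F)$. For $F$ a non-abelian free group---the setting in which the proposition is applied throughout the paper---the centre is trivial, so $f=1$, hence $\Psi_0=\mathrm{id}$ and $\Psi$ is inner. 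Combined with the surjectivity and the descent above, this establishes that $\overline{R}$ is an isomorphism.
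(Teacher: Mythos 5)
Your proof is correct and takes essentially the same route as the paper's: restrict to the fibre, read off the commutation with $[\alpha]$ from $\Psi(g^t)=\Psi(g)^{\Psi(t)}$, observe that inner automorphisms of $\torus\alpha$ project onto $\langle[\alpha]\rangle$, and for injectivity normalize $\Psi$ by an inner automorphism so it fixes $F$ pointwise and then use the triviality of $Z(F)$. One small slip: since $\ad{t^{n}f_0}$ restricts to $F$ as $\ad{f_0}\circ\alpha^{n}$ rather than $\alpha^{n}\circ\ad{f_0}$, you should write $\Psi|_F=\ad{f_0}\circ\alpha^{n}$ (or equivalently re-choose $f_0$) for $\Psi_0:=\ad{t^{n}f_0}^{-1}\circ\Psi$ to restrict to $\mathrm{id}_F$ as claimed; the two normal forms differ only by an inner automorphism so the argument is unaffected. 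You are also more explicit than the paper in flagging where the hypothesis $Z(F)=\{1\}$ is used (which the paper invokes tacitly), a worthwhile clarification.
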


\begin{proof}
	There is a natural homomorphism $\Autfo {\torus \alpha}  \to \Aut F$, by restriction, and hence by composition, one has the homomorphism $\Autfo   {\torus \alpha} \to \out F$. 
	
	If $\phi$ is in $\Autfo   {\torus \alpha} $, one has  $\phi(t) = f_0t$ for some $f_0\in F$.  For $f\in F$,   one has 
	\[\phi(f^t) = \phi (\alpha(f)) = \phi(f)^{\phi(t)} = \alpha( \phi(f)^{f_0} ) ={\alpha( \phi(f))^{\alpha(f_0)}}.\] 
	
	It follows that the image of $\phi$ in $\out F $ commutes with $\alpha$, so one can restrict the target of the previous homomorphism, and obtain the homomorphism $\Autfo {\torus \alpha}  \to Z_{\out F}([\alpha])$.  Conversely, any automorphism of $F$ commuting with $\alpha$ allows us to define an automorphism of the semidirect product, hence this later homomorphism is surjective.

	Also by composition, one has the homomorphism $$\Autfo  {\torus \alpha}  \to Z_{\out F}([\alpha]) / \langle [\alpha] \rangle.$$  
	
	Observe that any inner automorphism of $\torus \alpha$ is in the kernel of this homomorphism (it is obvious for inner automorphism by conjugation by $t$, and by any $f\in F$, hence by any product $ft^k$).
	
	Thus, there is a natural surjective homomorphism  $$\Outfo {\torus \alpha}\to Z_{ \out F}([\alpha])/\langle  [\alpha]  \rangle.$$

	Now consider an element  $\Phi$ in the kernel of this homomorphism. Realize $\Phi$ as an automorphism $\phi$. In restriction to $F$, it is in the group generated by $\alpha$ and the inner automorphisms of $F$. Therefore, its restriction to $F$  is that  of an inner automorphism of    $\torus \alpha $. Up to choosing $\phi$ suitably in the class of $\Phi$, we may thus assume that $\phi|_F= Id_{F}$. But then  again, $\phi(t) = t f_0$, and $  \alpha(f) = f^t =  \phi(f^t) = \phi(f)^{\phi(t)} = \alpha(f)^{f_0}$. Thus, $f_0= 1$, and $\phi$ is the identity. Thus, the  homomorphism  $\Outfo {\torus \alpha}\to Z_{ \out F}([\alpha])/\langle  [\alpha]  \rangle$ is injective, hence an isomorphism.
	
\end{proof}

        \subsubsection{Fiber and orientation preservation in unipotent linearly growing  suspensions}

        We will now specialize to the case of a suspension $\Gamma = \torus\alpha = F \semidirect\alpha\bk t$ where $F$ is free and $\alpha \in \aut F$ is a unipotent linear automorphism. The data $\Gamma = F \semidirect\alpha\bk t$ encodes a homomorphism $\Gamma \onto \Gamma/F \simeq \bbZ$ as well as a distinguished generator $tF$ of the $\bbZ$-quotient. We call the free normal subgroup $F$ a \define{fibre} and the coset $tF$ an \define{orientation}. Fibres and orientations are not unique to $\Gamma$ and we may consider other fibrations $\Gamma = F' \semidirect{\alpha'}\bk{t'}$ which correspond to different homomorphisms to $\bbZ$ or different distinguished generators of the quotient.
 
        Since $\Gamma$ is a piecewise trivial suspension it admits a canonical graphs of groups decomposition $\bbX$, by Proposition \ref{prop:structural_tree_is_canonical}. $X$ the graph underlying $\bbX$ is bipartite and we colour the vertices as follows: a vertex $b$ is \define{black} if the corresponding vertex group $\bbX_b \simeq \bbZ^2$ and a vertex $w$ is \define{white} if $\bbX_w \simeq F_w\times \bbZ$, where $F_w$ is a free group.

        We note that the centre of every white vertex group is characteristic (invariant by automorphisms) and isomorphic to $\bbZ$, if $\Gamma$ is given an orientation $tF$, then for each $\bbX_w$ we will pick an element $t_w \in tF \cap \bbX_w$ that generates the centre of $\bbX_w$. We note that, although without fixing a basepoint or spanning tree of $\bbX$, vertex groups only define subgroups up to conjugacy, the choice of orientation of $\Gamma$ still uniquely determines a distinguished central element of $\bbX_w$. If we are also explicit about the fibre $F \triangleleft \Gamma$ then we can write $\bbX_w = F_w\times\bk{t_w}$ where $F_w = \bbX_w \cap F$. This is well-defined because $F$ is a normal subgroup.
 
        We start by examining how fibres coming from unipotent linear automorphisms intersect the black vertex groups.
 
        \begin{lem}\label{lem:black-fibers}
            Suppose that $\Gamma=F\rtimes_\alpha \bk{t} = F'\rtimes_{\alpha'}\bk{t'}$ where both $\alpha$ and $\alpha'$ are unipotent and linear, and let $\bbX$ be the canonical graph of groups decomposition of $\Gamma$ as a piecewise trivial suspension. Then for every black vertex in $\bbX$ one has
            \[
            \bbX_b \cap F = \bbX_b \cap F'.
            \] 
            In particular, black vertex groups have canonical fibres.
        \end{lem}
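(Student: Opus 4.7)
The plan is to show that both $t$ and $t'$ lie inside $\bbX_b$, making $\bbX_b$ simultaneously a rank-2 abelian group containing the two "time" directions, and then to pin down the "horizontal" $F$-direction (resp.\ $F'$-direction) inside $\bbX_b$ using the canonically defined central elements of the adjacent white vertex groups.

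First I would exploit the Dehn twist structure: since $\alpha$ is a generalised Dehn twist and $c_b$ generates a cylinder vertex group, $\alpha(c_b)=c_b$, i.e.\ $[c_b,t]=1$ in $G$. A direct computation (writing any element of $G$ as $ft^n$ with $f\in F$ and using $\alpha(c_b)=c_b$) shows that $Z_G(c_b)=\bk{c_b,t}$ is abelian of rank $2$. Since $\bbX_b$ is abelian and contains $c_b$, we have $\bbX_b\subseteq Z_G(c_b)=\bk{c_b,t}$, and a comparison of ranks (both are $\bbZ^2$, and $t_b\in tF$ is characterised by $\phi(t_b)=1$) forces $\bbX_b=\bk{c_b,t}$, so $t\in\bbX_b$. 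The symmetric argument applied to $\alpha'$ and $c_b'$ gives $t'\in\bbX_b$ and $\bbX_b=\bk{c_b',t'}$.

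Next I would use the formula for $t_w$ in Proposition~\ref{prop;gdt-PTS}. For a white vertex $w$ adjacent to $b$ through a single edge $e$, that formula unpacks to $t_w=t\,c_b^{k_e}$, where $k_e$ is the substitution coefficient of $\alpha$ at $e$; in the basis $\{c_b,t\}$ of $\bbX_b$ this reads $t_w=(k_e,1)$. The analogous formula for $F'$ gives $t_w'=t'(c_b')^{k_e'}$. Because $Z(\bbX_w)$ is canonical (it depends only on the group $\bbX_w$, which is fixed by Proposition~\ref{prop:structural_tree_is_canonical}), $\bk{t_w}=\bk{t_w'}$, and so $t_w'=t_w^{\epsilon_w}$ for some $\epsilon_w\in\{\pm1\}$; equivalently $\phi'(t_w)=\epsilon_w$. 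By Lemma~\ref{lem;keep_acyl}, two distinct adjacent white vertices $w\neq w'$ of $b$ give distinct elements $t_w\neq t_{w'}$ of $\bbX_b$ (each fixes \emph{exactly} the radius-$2$ ball around its own white vertex), hence $k_e\neq k_{e'}$ by the displayed formula.

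Finally I would deduce $A=A'$. Writing the change of basis in $\bbX_b$ between $\{c_b,t\}$ and $\{c_b',t'\}$ as a matrix $M=\bigl(\begin{smallmatrix}r&s\\p&q\end{smallmatrix}\bigr)\in GL_2(\bbZ)$ (so $c_b'=rc_b+st$, $t'=pc_b+qt$), the lemma amounts to showing $s=0$. From $t_{w}=(k_e,1)$ and the expression $t_w'=t'(c_b')^{k_e'}$, taking second coordinates yields $q+k_e's=\epsilon_w$ for every edge $e$ adjacent to $b$, with the corresponding $F'$-coefficient $k_e'$. If the signs $\epsilon_w$ are the same for all adjacent $w$ (this is the main point I need to argue; it should follow from the fact that $t'F'$ is a single coset and the piecewise-trivial suspension is rigid enough to propagate one consistent orientation between adjacent white vertices through the black vertex), then varying $e$ among the at-least-two adjacent edges with distinct $k_e'$ forces $s=0$. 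Consequently $c_b'=rc_b$ with $r=\pm1$ by $\det M=\pm1$, giving $\bbX_b\cap F=\bk{c_b}=\bk{c_b'}=\bbX_b\cap F'$.

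The main obstacle is the orientation-consistency step: ruling out the possibility that $\epsilon_w\neq\epsilon_{w'}$ for two different adjacent white vertices of $b$. If that sign could flip, one could have $\phi'(c_b)\neq 0$ and the two black fibres would genuinely differ; so the argument must use the global coherence of the $F'$-orientation $t'F'$ together with the $2$-acylindrical piecewise-trivial structure to exclude mixed signs, presumably by producing from a sign flip an element of $\bbX_b$ that witnesses a contradiction with $\bbX_b=A'\times\bk{t_b'}$ being a direct product inside $G$.
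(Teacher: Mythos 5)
Your proposal has a genuine gap, and it sits exactly at the decisive point. Everything before your final paragraph is a reformulation rather than a proof: as your own change-of-basis computation shows, the conclusion $\bbX_b\cap F=\bbX_b\cap F'$ is \emph{equivalent} to the sign-consistency claim $\epsilon_w=\epsilon_u$ for the white vertices adjacent to $b$ (equivalently $t_wt_u^{-1}\in F'$, equivalently $s=0$), and you explicitly leave that claim unargued. So the submission proves nothing beyond translating the lemma into the language of the matrix $M$; the global input you gesture at is precisely what a proof must supply, and the closing guess that a sign flip would contradict ``$\bbX_b$ being a direct product inside $G$'' cannot work, since any rank-one direct summand of $\bbZ^2$ complements a suitable $\bbZ$-factor, so the local product structure at the black vertex is insensitive to which summand is the fibre. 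For comparison, the paper's proof follows the same outline as yours: it takes the oriented central elements $t_w\in tF\cap\bbX_w$ and $t_u\in tF\cap\bbX_u$ of two white vertex groups adjacent to $b$, views them inside $\bbX_b$ through the edge groups, uses acylindricity (Lemma \ref{lem;keep_acyl}) to get $c_{ef}=t_et_f^{-1}\neq 1$ in $\bbX_b\cap F$, then repeats this with the $F'$-oriented central elements $t_w'=t_w^{\pm1}$, $t_u'=t_u^{\pm1}$ to obtain $t_e'(t_f')^{-1}=c_{ef}^{\pm1}\in\bbX_b\cap F'$, and concludes because a rank-one direct summand of $\bbZ^2$ containing a given nonzero element is unique — no basis matrix is needed. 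The step $t_e'(t_f')^{-1}=c_{ef}^{\pm1}$ is exactly the sign alignment you flagged, so you have correctly located the crux of the argument, but you have not closed it, and without it the lemma is not established.

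A secondary, fixable, inaccuracy: your opening claims $\alpha(c_b)=c_b$, $Z_G(c_b)=\bk{c_b,t}$ and hence $t\in\bbX_b$ are false for the given generator $t$. A generalized Dehn twist fixes the cylinder group only up to conjugacy, and the element of the coset $tF$ lying in the stabilizer of the black vertex is the vertex-dependent element $t_b=ft$ furnished by the piecewise-trivial-suspension property (Proposition \ref{prop;gdt-PTS}), not $t$ itself; the same caveat applies to the formula $t_w=t\,c_b^{k_e}$. Since the statement only involves the cosets $tF$ and $t'F'$, you may harmlessly replace $t$ and $t'$ by such representatives at a fixed black vertex (which is all your later computation uses), but as written these steps are incorrect and should be rephrased in terms of $t_b\in tF\cap\bbX_b$ and $t_b'\in t'F'\cap\bbX_b$.
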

        
        \begin{proof}
            Let $\bbX_b$ be a black vertex group and let $e,f$ be two distinct edges incident to $b$. Let $w,u$ be the white vertices adjacent to $e,f$ respectively. For $v=u,w$ we have $\bbX_v \simeq F_v\oplus \bbZ$ where $F_v = \bbX_v\cap F$ and $F_w' = \bbX_w\cap F'$, and in particular that $F'_w \simeq F_w$. Now although the fibres may not match up in a white vertex group, the couple $\{t_w^{\pm 1}\}$ is canonical: it is the symmetric generating couple of the centre of the vertex group. Let $t_w$ be the oriented central element corresponding to an orientation of $\Gamma$ with fibre $F$ and let $t'_w = t_w^{\pm 1}$ be the oriented central element corresponding to an orientation of $\Gamma$ with fibre $F'$.
    
            The element $t_w$ maps to a generator of either quotient $\bbX_w/(\bbX_w\cap F)$ or $\bbX_w/(\bbX_w\cap F')$. Moreover $t_w$ lies in the image $i_e(\bbX_e)$ so we can pick the unique pre-image $t_e$ such that $i_e(t_e)=t_w$.
            If we take the oriented central element $t_u \in \bbX_u$ for the same orientation as we took for $\Gamma/F$ and define $t_f \in \bbX_b$ to be the pre-image such that $i_f(t_f)=t_u$, then by looking at the quotient $\Gamma \to \Gamma/F$ we have
            \[
            t_e t_f^{-1}=c_{ef} \in \bbX_b\cap F
            \] 
            and restricting the quotient to the abelian subgroup $\bbX_b$ we see that $\bbX_b \cap F$ is the cyclic direct summand of $\bbX_b$ containing $c_{ef}$. Furthermore we note that $c_{ef}\neq 1$, for otherwise we would have $t_e=t_f$ which would imply that the element $t_u \in \bbX_u$ also centralizes $\bbX_w$ so that $t_u$ pointwise fixes an infinite subtree of the Bass-Serre tree dual to $\bbX$, contradicting acylindricity guaranteed by Proposition \ref{prop;PTS_are_acyl}.

            Now if we were to consider the orientation for $\Gamma/F'$ then we would have corresponding elements $t'_w, t'_e,t'_u,t'_f$ and we have $t'_w = t_w^{\pm 1}$
            if and only if $t'_e = t_e^{\pm 1}, t'_u = t_u^{\pm 1}, t'_f = t_f^{\pm 1}$. So that 
            \[ t_e' {t_f}^{-1}{}' = {c_{ef}}^{\pm 1} \in \bbX_b \cap F'.
            \] 
            It therefore follows that $\bbX_b\cap F$ and $\bbX_b\cap F'$ have a common non-trivial element and, since these subgroups are codimension 1 direct summands of $\bbZ^2$, they coincide.
        \end{proof}

    We, therefore, have the peculiar situation that although $\torus\alpha$ could admit multiple unipotent linear fibrations, all fibres must intersect the black vertex groups the same way, yet unlike in the white vertex groups, there is no canonical pair of central elements. We now work towards matching up fibres in white vertex groups.
    
    \begin{lem}[Fibre matching vertex group automorphism]\label{lem;align-aut}
        Let $\Gamma$ be a trivial suspension of a free group with centralizer $\bk{t}$ and let $F,F'\leq \Gamma$ be two free fibres, i.e. $F \simeq F'$ are free groups and
            \[
            F'\oplus\bk t \simeq \bk{F',t}=\Gamma=\bk{F,t} \simeq F\oplus\bk t.
            \] 
        Then there is an automorphism $\eta_{F',F}\in \aut \Gamma$ such that $\eta_{F',F}(F')=F$ and such that for all $g \in F \cap F'$, $\eta_{F',F}(g)=g$. 
    \end{lem}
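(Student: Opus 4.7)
The plan is to construct $\eta_{F',F}$ by twisting elements of $F'$ by an appropriate power of $t$, using the fact that both $F$ and $F'$ are complements to the center $\bk t$ inside $G$.

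First I would use the decomposition $G = F \oplus \bk t$ to define the projection homomorphism $\psi: G \to \bk t$ with kernel $F$; concretely, if $g = f \cdot t^n$ with $f \in F$, then $\psi(g) = t^n$. Restricting to the other fibre gives a homomorphism $\chi := \psi|_{F'} : F' \to \bk t$, and I would observe that $\ker\chi = F \cap F'$: indeed, if $g \in F'$ satisfies $\chi(g) = 1$ then $g \in \ker\psi = F$, and conversely any $g \in F \cap F'$ has $\psi(g) = 1$ because $g \in F$.

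Next, I would define $\eta_{F',F}$ on $F'$ by
\[
\eta_{F',F}(g') \;=\; g'\,\chi(g')^{-1} \qquad (g' \in F'),
\]
and extend to $G = F' \oplus \bk t$ by declaring $\eta_{F',F}(t) = t$. Centrality of $\bk t$ ensures this is a homomorphism: the relation $\eta_{F',F}(g_1' g_2') = \eta_{F',F}(g_1')\,\eta_{F',F}(g_2')$ boils down to moving a central factor past $g_2'$, which is free. The image $\eta_{F',F}(g')$ lies in $F$ because $\psi\bigl(g'\chi(g')^{-1}\bigr) = \chi(g')\chi(g')^{-1} = 1$, so $\eta_{F',F}(F') \subseteq F$.

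Finally, I would check bijectivity by exhibiting the inverse. Swapping the roles of $F$ and $F'$, one forms $\psi' : G \to \bk t$ with kernel $F'$, sets $\chi' = \psi'|_F$, and defines $\eta_{F,F'}(f) = f\,\chi'(f)^{-1}$ on $F$, extended by $t \mapsto t$. A short direct computation (writing $f = f'\,t^m$ with $f' \in F'$ and noting $\chi'(f) = t^m$, hence $\eta_{F,F'}(f) = f'$, and symmetrically $\chi(f') = t^{-m}$) verifies that $\eta_{F,F'}$ and $\eta_{F',F}$ are mutually inverse on $F$ and $F'$, which extends by construction to an inverse on all of $G$. The fixing property is then immediate: for $g \in F \cap F'$ we have $\chi(g) = \psi(g) = 1$ since $g \in F = \ker\psi$, so $\eta_{F',F}(g) = g\cdot 1 = g$. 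No step looks genuinely difficult; the only thing to take care of is keeping the two decompositions of $G$ separate when verifying the homomorphism property, which centrality of $\bk t$ makes painless.
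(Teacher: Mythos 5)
Your proof is correct and is essentially the same as the paper's: both define $\eta_{F',F}$ on $F'$ as the projection onto $F$ along $\bk t$ (you realize it as $g'\mapsto g'\chi(g')^{-1}$, the paper as $\rho_F|_{F'}$ packaged through the isomorphisms $d,d',\kappa$, and these coincide since $g'\psi(g')^{-1}=\rho_F(g')$), and both extend by $t\mapsto t$ and deduce bijectivity and the fixing property on $F\cap F'$ the same way.
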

        
    \begin{proof}
        Let $d:F\oplus\bk t \to \Gamma=\bk{F,t}$ be the isomorphism given by $(f,t^n) \mapsto ft^n$ and let $d':F'\oplus \bk t \to \Gamma$ be defined analogously.            Let $\rho_F:\Gamma \onto F$ be the projection onto the fibre $F\leq \Gamma$ given by $d\circ p_F\circ d^{-1}$ where $p_F:F\oplus\bk t \to F \oplus\bk t$ is the projection $(f,t^n) \mapsto (f,1)$. Let $\kappa: F'\oplus\bk t \to F\oplus \bk t$ be the isomorphism given by
            \[
            (f',t^n) \mapsto (\rho_F|_{F'}(f'),t^n),
            \] 
        where $F'$ is viewed as $F'\leq \Gamma$ and $\rho_F|_{F'}:F' \to \Gamma$ is the restriction of the projection onto $F$. Then the composition of isomorphisms $\eta_{F',F}=d\circ\kappa\circ(d')^{-1}$ has the desired properties.
    \end{proof}

    \begin{prop}\label{prop;full-fiber-align}
        Suppose that $\G=F\rtimes_\alpha \bk{t} = F'\rtimes_{\alpha'}\bk{t'}$ where both $\alpha$ and $\alpha'$ are unipotent and linear, and let $\bbX$ be the canonical graph of groups decomposition of $\G$ as a piecewise trivial suspension. Then there is an automorphism of $\Psi\in \delta_0\aut \bbX \leq \aut \G$ that restricts to an isomorphism $\Psi|_F:F' \to F$.
    \end{prop}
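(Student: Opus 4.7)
The plan is to construct $\Psi$ explicitly as an element of $\delta_0\aut{\bbX}$ by specifying its tuple $(Id_X, (\phi_v)_v, (\phi_e)_e, (\gamma_e)_e)$, and then verify that the resulting automorphism of $\Gamma = \pi_1(\bbX)$ maps $F'$ to $F$. First, I align orientations: possibly replacing $t'$ by $t'^{-1}$, I arrange that $\pi_{F'}(t_w) = \pi_F(t_w) = 1$ at every white vertex $w$, where $\pi_F, \pi_{F'}:\Gamma\to\bbZ$ denote the canonical surjections for each fibration. This global alignment is achievable because alignment at a single base vertex propagates through the bipartite graph: at an intermediate black cylinder $\bbX_b \simeq \bbZ^2$ a parity argument combined with Lemma \ref{lem:black-fibers} rules out opposite-sign alignments at its two white neighbours.

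On each white vertex $w$, Lemma \ref{lem;align-aut} applied to the two trivial suspension decompositions $\bbX_w = F_w\oplus\bk{t_w} = F'_w\oplus\bk{t_w}$ (with $F_w := \bbX_w\cap F$, $F'_w := \bbX_w\cap F'$) produces $\phi_w := \eta_{F'_w, F_w}$, sending $F'_w$ to $F_w$, fixing $F_w\cap F'_w$ pointwise, and fixing $t_w$. On each black vertex $b$, Lemma \ref{lem:black-fibers} already provides $\bbX_b\cap F = \bbX_b\cap F'$, so I take $\phi_b = Id$. The key technical observation is that $\phi_w|_{\tau_e(\bbX_e)} = Id$ for each edge $e$ with $\tau(e) = w$. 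Since $F, F'$ are normal in $\Gamma$ and the edge monomorphism $i_e:\bbX_e \to \bbX_b$ is surjective onto the cylinder group (Proposition \ref{prop;gdt-PTS}), Lemma \ref{lem:black-fibers} propagates to $\bbX_e\cap F = i_e^{-1}(\bbX_b\cap F) = i_e^{-1}(\bbX_b\cap F') = \bbX_e\cap F'$, and hence via $\tau_e$ to $\tau_e(\bbX_e)\cap F = \tau_e(\bbX_e)\cap F'$; this common cyclic subgroup $E_e$ lies in $F_w\cap F'_w$ and is therefore fixed by $\phi_w$, and combined with $\phi_w(t_w) = t_w$ and the splitting $\tau_e(\bbX_e) = E_e\oplus\bk{t_w}$ the claim follows. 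Setting $\phi_e = Id$, the Bass diagrams reduce to requiring $\gamma_e$ and $\gamma_{\bar e}$ centralize abelian subgroups, which is automatic.

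To finish, I choose the twisters so that $\pi_F\circ\Psi = \pi_{F'}$ as homomorphisms $\Gamma\to\bbZ$. The equality already holds on vertex group generators by construction and the orientation alignment; for each edge $e$ with $\tau(e)$ white, setting $\gamma_e := t_{\tau(e)}^{\delta(e)} \in \tau_e(\bbX_e)$ with $\delta(e) := \pi_{F'}(e) - \pi_F(e)$ and $\gamma_{\bar e} := 1$ shifts $\pi_F(\Psi(e))$ by $\delta(e)$ to match $\pi_{F'}(e)$. Then $\Psi^{-1}(F) = \ker(\pi_F\circ\Psi) = \ker\pi_{F'} = F'$, and so $\Psi(F') = F$ as required. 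The main obstacle is the edge-compatibility step: Lemma \ref{lem:black-fibers} is a statement about black vertex groups, and the proposition's hypothesis of piecewise triviality — specifically, that edge monomorphisms into cylinder groups are surjective — is precisely what allows this canonical fibre identification to propagate back through $i_e$ and into the edge group images inside the white vertices, so that the local fibre-matching automorphisms $\phi_w$ glue coherently into a global element of $\delta_0\aut{\bbX}$.
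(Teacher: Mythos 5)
Your proof is correct and follows essentially the same route as the paper's: build the fibre-matching vertex automorphisms $\phi_w$ from Lemmas \ref{lem:black-fibers} and \ref{lem;align-aut} (noting they restrict to the identity on the images of edge groups), then correct with a generalized Dehn twist on the stable letters so that the fibration homomorphism is matched. The only difference is presentational — you verify $\pi_F\circ\Psi = \pi_{F'}$ directly on a generating set, whereas the paper first forms the intermediate $\Phi=(Id,(\phi_v),(Id_e),(1))$, passes to the abelianizations of $G_\tau$ and $G$ to locate the defect of $\Phi(F')=F$ on the stable letters, and only then computes the correcting twist $\Delta$ — but the ingredients and the resulting automorphism are essentially the same, and your direct check of the homomorphism identity on generators arguably makes the last step more transparent.
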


    \begin{proof}
        We first note that $F$ and $F'$ are both normal subgroups of $\G$, therefore the notion of an intersection with a vertex group $\bbX_v,v \in \verts X$ is well-defined irrespective of the choice of basepoint in $\bbX$. By Lemma \ref{lem:black-fibers}, for any black vertex $b \in \verts X$, $F \cap \bbX_b = F' \cap \bbX_b$.
    
        From this it follows that in every white vertex $w$, for every edge $e$ joining $w$ to some black vertex $b$ we have $F\cap i_e(\bbX_b)= F'\cap i_e(\bbX_b)  = \bk{g_e} \leq \bbX_w$. Since the images of the edge group is generated as $i_e(\bbX_b) = \bk{g_e,t_v}$, the matching isomorphism $\phi_w=\eta_{\bbX_w\cap F,\bbX_w\cap F'}:\bbX_w \stackrel\sim\to \bbX_w$ given Lemma \ref{lem;align-aut} fixes a generating of $i_e(\bbX_b)$ pointwise and thus restricts to the identity on $i_e(\bbX_b)$. Thus, each matching automorphism extends to a graph of groups automorphism of $\bbX$. Applying them all in parallel gives a graph of groups automorphism $\Phi \in \delta_0\aut\bbX\leq \aut \G$ given by\[
            \Phi=(Id,(\phi_v)_v, (Id_e)_e, (1)_e),
        \] where $\phi_v$ is as given above for white vertex and is the identity on black vertex groups. It follows that $\Phi(F')\cap \bbX_v = F \cap \bbX_v$, when $v$ is a white vertex group. If $v$ is a black vertex group, then the equality follows from Lemma \ref{lem:black-fibers}. 
        We still cannot yet conclude that $\Phi(F')=F$ as we haven't fully considered what happens "outside" the vertex groups.
    
        We now fix a basepoint $x_0 \in \verts X$ and identify $\G = \pi_1(\bbX,x_0)$. We have our target fibre $F \leq \G$ and we denote the image of the fibre $F'' = \eta(F')$. Let $\tau \subset X$ be a spanning tree and let $G_\tau\leq \G$ be the subgroup of $\G$ that is the iterated amalgamated product along the edges groups with edges lying in $\tau$. Because $F'' \cap \bbX_v = F \cap \bbX_v$ for every $v \in \verts X$, and $G_\tau$ is an iterated fibered coproduct, in passing to the category of abelian groups we see that $F\cap G_\tau$ and $F''\cap G_\tau$ have the same  image $\bar F_A$ in the abelianization  $A=G_\tau/[G_\tau,G_\tau]$.
    
        Now $G = \bk{G_\tau,e_1,\ldots,e_{-\chi(X)+1}}$ where the edges $e_i$ play the role of stable letters realizing $\G$ as multiple HNN extension of $G_\tau$ as is the case in the construction of $\pi_1(\bbX,\tau)$ given in Section \ref{sec;vocab}. Treating these edges as group elements also makes sense given that $G$ is realized as a quotient of the Bass group $\Bass(\bbX)$ given in \eqref{eqn:bass}. We think of the graph $X/\tau$, obtained by collapsing the subtree $\tau$ to a vertex as the graph underlying this multiple HNN extension. We also note (recall the construction in Theorem \ref{thm;unipotent-linear-gdt}) that $F''$ gets an induced splitting as an iterated HNN extension $\bk{F''\cap G_\tau,r_1,\ldots,r_{-\chi(X)+1}}$ with the same underlying graph $X/\tau$. In particular we have that $r_i = e_ig_i$ for some $g_i, \in G_\tau$.
    
        Consider the abelianization $\G_{ab} = \G/[\G,\G]$. By functoriality, the inclusion $G_{\tau}\leq \G$ gives rise to a canonical (possibly non-injective) map $i:A\to \G_{ab}$. The subgroup generated by stable letters $\bk{r_1,\ldots,r_{-\chi(X)+1}} \leq F''$ maps to a direct factor of $\G_{ab}$ with rank $-\chi(X)+1$.
    
        The group $F$ is precisely the preimage in $\G$ of $\ker(f)=\bar F$ of some mapping $f:\G_{ab} \onto \mathbb Z$. The image $\bar F''$ of $F''$ in $\G_{ab}$ is generated by $i(\bar F_A)\leq \ker(f)$ and by the images of the stable letters $\bar{r_i}$. One has $F'' \neq F$ if and only if for some stable letter image, one has $f(\bar{r_i})\neq 0$. For every stable letter $e_i$, the corresponding edge group $\bbX_{e_i}$ contains a central element $t_i$ such that $f(\bar t_i)=1$, where $\bar t_i$ is the image of $t_i$ in $\G_{ab}$. Thus setting $n_i = f(\bar{r_i}), i=1,\ldots,-\chi(X)+1$ and applying the generalized Dehn twist specified by the $\edges{X}$-substitution
        \[
            e_i \mapsto e_i{t_i}^{-n_i}, e_i \in \edges{X\setminus\tau}
        \] 
        which can be written as $\Delta \in \delta_0\aut\bbX$ given by\[
            \Delta = (Id,(Id_v)_v,(Id_e)_e, (\gamma_e)_e),
        \] where $\gamma_e=1$ if $e\in \edges \tau$ and $\gamma_e = {t_i}^{-n_i}$ if $e=e_i \in \edges{X\setminus \tau}$, gives us $\Delta(F'')$ which which vanishes in the map $\G\onto \G/F \simeq \bbZ$ and we therefore have $\Delta(F'')=F$. The result now follows by taking $\Psi=\Delta\circ\Phi.$
        \end{proof}

        From this proposition we immediately have:

        \begin{cor}[Unipotent linear monofibration]\label{cor;ULMF}
          
            Let $F, F'$ be two finitely generated free groups
            and $\phi \in \aut{F}$ and $\psi \in \aut{F'}$ be unipotent linearly growing automorphisms.  
            The corresponding suspensions are abstractly isomorphic, i.e. 
            \[
            F\rtimes_\phi \bbZ \simeq F' \rtimes_\psi \bbZ
            \] 
            if and only $F$ and $F'$ have same rank, and for an isomorphism  $\iota: F\to F'$,   $\iota^{-1}\psi \iota$ and $\phi^{\pm 1}$ have conjugate images in $\out{F}$.
        \end{cor}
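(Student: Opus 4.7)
The plan is to combine the fibre-matching result of Proposition \ref{prop;full-fiber-align} with the abstract conjugacy criterion of Proposition \ref{prop:conj-crit}.

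The backward implication is immediate: if $\iota: F \to F'$ is an isomorphism of free groups and $\iota^{-1}\psi\iota$ is conjugate to $\phi^{\pm 1}$ in $\out F$, then Proposition \ref{prop:conj-crit} gives a fibre and orientation preserving isomorphism $F\rtimes_\phi \bk t \stackrel{\sim}{\to} F\rtimes_{\iota^{-1}\psi\iota}\bk{s}$ (or to the analogous suspension with exponent $-1$, which yields the same abstract group with orientation reversed). Composing with the isomorphism $F\rtimes_{\iota^{-1}\psi\iota}\bk{s} \to F'\rtimes_\psi \bk s$ induced by $\iota$ gives the desired abstract isomorphism.

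For the forward implication, suppose $\Xi: F\rtimes_\phi \bk t \stackrel{\sim}{\to} F'\rtimes_\psi \bk s$ is an abstract isomorphism. Setting $\Gamma = F'\rtimes_\psi\bk s$, we then have \emph{two} unipotent linear fibrations of $\Gamma$: the original one with fibre $F'$ and orientation generator $sF'$, and the transported one with fibre $\Xi(F)$ and orientation generator $\Xi(t)\Xi(F)$. By Proposition \ref{prop;full-fiber-align}, there exists $\Psi \in \delta_0\aut{\bbX} \leq \aut\Gamma$ (where $\bbX$ is the canonical structural splitting of $\Gamma$) such that $\Psi(\Xi(F)) = F'$. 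In particular $F$ and $F'$ have the same rank. The composition $\Psi\circ \Xi$ is therefore an isomorphism $F\rtimes_\phi\bk t \to F'\rtimes_\psi\bk s$ that maps $F$ onto $F'$.

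It remains to address orientations. The element $(\Psi\circ\Xi)(t)$ projects to a generator of $\Gamma/F' \simeq \bbZ$, which is either $sF'$ or $s^{-1}F'$. In the first case $\Psi\circ\Xi$ is fibre and orientation preserving, and Proposition \ref{prop:conj-crit} applied with $\iota=(\Psi\circ\Xi)|_F$ identifies $\iota^{-1}\psi\iota$ with $\phi$ in $\out F$. In the second case, compose with the fibre-preserving, orientation-reversing isomorphism $F'\rtimes_\psi\bk s \to F'\rtimes_{\psi^{-1}}\bk{s^{-1}}$ given by the identity on $F'$ and $s \mapsto s^{-1}$; after this further composition, orientation is preserved, so Proposition \ref{prop:conj-crit} yields that $\iota^{-1}\psi^{-1}\iota$ is conjugate to $\phi$ in $\out F$, i.e.\ $\iota^{-1}\psi\iota$ is conjugate to $\phi^{-1}$. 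In either case, we obtain the stated conclusion.

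The substantive step is the fibre-alignment produced by Proposition \ref{prop;full-fiber-align}, which already encodes the hard work (canonicity of the structural tree, Lemma \ref{lem:black-fibers} on black-vertex fibres, Lemma \ref{lem;align-aut} on white vertices, and a generalized Dehn twist correction to eliminate residual cocycles). Everything else is a bookkeeping exercise in orientation, and the only minor subtlety is confirming that the orientation-reversal isomorphism $s \mapsto s^{-1}$ is well defined at the level of suspensions — this simply reflects that $F\rtimes_\psi\bbZ = F\rtimes_{\psi^{-1}}\bbZ$ as abstract groups, with the two presentations differing only by the choice of generator of $\bbZ$.
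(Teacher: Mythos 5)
Your proposal is correct and follows essentially the same route as the paper: the paper deduces the corollary directly from Proposition \ref{prop;full-fiber-align} (fibre alignment inside the canonical splitting) combined with the conjugacy criterion of Proposition \ref{prop:conj-crit}, which is exactly your argument. Your explicit handling of the orientation-reversal case ($s\mapsto s^{-1}$, accounting for the $\pm 1$) is just the bookkeeping the paper leaves implicit.
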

        
        It is worth noting that by a result of Button \cite[Theorem 3.4]{button_mapping_2007}, for all such groups, there are infinitely many other ways they can be given as suspensions of non-unipotent automorphisms with fibre ranks going to infinity.

        \subsubsection{Detection of isomorphisms preserving the fibre in vertex groups and graphs of groups}
        
        \begin{prop}\label{prop:iso-problem-for-suspensions}
            Let $\bbX,\bbX'$ be two canonical graph of groups decompositions of globally fibered piecewise trivial suspensions $\G=F\semidirect\alpha\bk t, \G'=F'\semidirect{\alpha'}\bk{t'}$ both with the same underlying graph $X$. Suppose furthermore that we have fixed an orientation for each. There is an algorithm which decides whether there is an isomorphism $\bbX\stackrel{\sim}{\to}\bbX'$ that restricts to isomorphisms
            \[
            \bbX_v \stackrel\sim\to \bbX_v'; v \in \verts X\qquad \bbX_e \stackrel\sim\to \bbX_e'; e \in \edges X,
            \] which preserves the orientation on the white vertex groups.
        \end{prop}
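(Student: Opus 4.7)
The plan is to build the desired isomorphism vertex-by-vertex and then verify the edge-level data. In the language of Section \ref{sec;autom_gog}, any isomorphism of the required form is a tuple $(Id_X,(\phi_v)_v,(\phi_e)_e,(\gamma_e)_e)$ satisfying the Bass diagram \eqref{eq:Bass_Diagram}, together with the additional constraint $\phi_w(t_w)=t'_w$ for every white vertex $w$, where $t_w$ is the distinguished central element picked out by the chosen orientation.

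First I would handle the black vertices. By Lemma \ref{lem:black-fibers}, the intersection $\bbX_b\cap F$ is a canonical cyclic direct summand of $\bbX_b\simeq\bbZ^2$, and similarly for $\bbX'_b$. The candidate $\phi_b$ must carry this summand to its counterpart in $\bbX'_b$ and match up the finitely many cyclic subgroups arising as images of the incident edge groups; this is a finite linear system in $GL_2(\bbZ)$ and yields an effectively computable finite family of candidates.

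Next I would treat the white vertices, which is where the main difficulty lies. Since $t_w$ is central in $\bbX_w\simeq F_w\times\bk{t_w}$ and must map to $t'_w$, any admissible $\phi_w$ descends to an isomorphism $\bar\phi_w:F_w\to F'_w$ of free groups (which forces equality of ranks as a first necessary check). For each edge $e$ incident to $w$, the image $\tau_e(\bbX_e)$ is a maximal abelian subgroup of $\bbX_w$ of the form $\bk{g_e,t_w}$, and its reduction modulo $t_w$ defines a well-defined conjugacy class $[\bar g_e]$ in $F_w$. The existence of $\bar\phi_w$ sending the tuple of conjugacy classes $([\bar g_e])_{e\ni w}$ to the corresponding tuple in $F'_w$ is precisely an instance of the orbit problem for tuples of conjugacy classes in free groups under $\aut{F_w}$. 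This is decidable by Gersten's extension of Whitehead's algorithm \cite{gersten_whiteheads_1984}, and it moreover produces explicit witnesses $\bar\phi_w$ (together with a description of the stabiliser) whenever such a $\bar\phi_w$ exists.

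The hard part will therefore be this Whitehead-type orbit problem in the white vertex groups; everything else reduces to elementary checks. Once the $\phi_v$ are chosen, the Bass compatibility at an edge $e$ amounts to deciding whether the two embeddings $\phi_{\tau(e)}\circ\tau_e$ and $\tau'_e\circ\phi_e$ of $\bbX_e$ into $\bbX'_{\tau(e)}$ differ by conjugation by some $\gamma_e$. Because edge group images are maximal abelian in their vertex groups, any such $\gamma_e$ is determined up to an element of their centralizer, and its existence reduces to deciding conjugacy of abelian subgroups in $F'_w\times\bk{t'_w}$ or in $\bbZ^2$, both of which are decidable. Enumerating the finite sets of vertex-level candidates produced above and testing the edge-level compatibility for each combination then yields the required algorithm.
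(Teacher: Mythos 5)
Your overall strategy is the same as the paper's (reduce the white-vertex matching to a Whitehead/Gersten orbit problem on the fibres, treat the black vertices by linear algebra over $\bbZ$, and glue via the Bass diagrams), but there is a genuine gap at the point where you propose to ``enumerate the finite sets of vertex-level candidates and test the edge-level compatibility for each combination.'' Neither candidate set is finite. At a black vertex the images of the incident edge groups are all of $\bbX_b\simeq\bbZ^2$ (in the canonical splitting the edge groups surject onto the cylinder groups), so the automorphisms of $\bbX_b$ preserving the fibre summand form an infinite group, not a finite family; and at a white vertex Gersten/Whitehead gives you one witness $\phi_w$ together with a typically infinite stabilizer, so the set of admissible $\phi_w$ is an infinite coset. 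Since the Bass compatibility at an edge depends on which element of these infinite sets you pick (different choices of $\phi_w$ transmit different markings to the adjacent black vertex, and the same black vertex receives constraints from several white vertices simultaneously), your final step is not an algorithm as stated: you cannot test ``each combination.''

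What is missing is precisely the finiteness statement that the paper isolates in its Step 3. One must show that varying $\phi_w$ within the fibre-and-orientation-preserving automorphisms of $\bbX'_w$ that preserve the unmarked peripheral structure changes the induced data on an incident edge group only in finitely many ways: such an automorphism fixes $t'_w$ and sends each peripheral generator $c_e$ to a conjugate of $c_e^{\pm 1}$, so the orbit of the \emph{marked} peripheral structure is finite (the ``admissible marked peripheral structures''). With that in hand one enumerates these finitely many markings and, for each choice, the black-vertex condition becomes a single solvable problem in $\mathrm{GL}_2(\bbZ)$ (sending one finite tuple of vectors to another), which is where the linear algebra you invoke legitimately enters. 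A correct write-up along your lines would therefore need to (i) replace the claim of finite candidate sets at black vertices by the observation that $\phi_b$ is determined by the markings pushed through the surjective edge maps, and (ii) prove the finite-orbit statement for marked peripheral structures under $\autfo{\bbX'_w,\calP_{uo}}$ before any enumeration; you should also note the orientation issue, handled in the paper by composing with the automorphism inverting the centre while fixing the fibre.
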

        
        Only for the proof below, since we are following \cite[\S 4]{dahmani_deciding_2019}, we will call the images of edge groups in vertex groups \define{peripheral subgroups} of the vertex groups. An \define{unmarked ordered peripheral structure} is a tuple whose entries form a set of representatives of conjugacy classes of images of edge groups in vertex groups and a \define{marked ordered peripheral structure} is a tuple of tuples of generators of the images of edge groups in vertex groups.
            
        \begin{proof}
            Our algorithm consists of three steps. Each step will introduce a level of structure and will end with a test. It will be evident that the desired isomorphism of graphs of groups will exist if and only if all three tests are passed.
            
            \noindent \textbf{Step 1 Isomorphism of vertex groups.} The hypotheses put us in the setting of \cite[\S 4]{dahmani_deciding_2019}. The first test is straightforward.

            \noindent \textbf{Test 1.} Are the vertex groups $\bbX_v$ and $\bbX'_v$ isomorphic for each vertex $v \in \verts X$? Otherwise, we can conclude that no suitable isomorphism of graphs of groups exists.
            
            \noindent \textbf{Step 2 Extensions adjustments and matching peripheral subgroups in white vertex groups.} Having passed the first test, we have that $X$ is a bipartite graph and we can assume that white vertices have vertex groups that are direct products of non-abelian free groups and cyclic groups and that the black vertices are isomorphic to $\bbZ^2$. For each $v \in \verts X$ we can find a fibre and orientation preserving isomorphism $\varphi_v:\bbX_v \to \bbX'_v$. Let $b,w \in \verts X$ and let $e \in \edges X$ join these vertices. Consider the following diagram:
            \begin{equation}\label{eq:ext-adj-square}
                \begin{tikzpicture}[xscale=2]
                    \node (xb) at (0,0) {$\bbX_b$};
                    \node (xpb) at (0,-1) {$\bbX_b$'};
                    \node (xe) at (1,0) {$\bbX_e$};
                    \node (xpe) at (1,-1) {$\bbX_e$'};
                    \node (xw) at (2,0) {$\bbX_w$};
                    \node (xpw) at (2,-1) {$\bbX_w$'};
                    \draw[right hook ->] (xe) --node[above]{$i_{e,w}$} (xw);
                    \draw[left hook ->] (xe) --node[above]{$i_{e,b}$} (xb);
                    \draw[right hook ->] (xpe) --node[above]{$i_{e',w'}$} (xpw);
                    \draw[left hook ->] (xpe) --node[above]{$i_{e',b'}$} (xpb);
                    \draw[->] (xw) --node[right]{$\varphi_w$} (xpw);
                    \draw[->] (xb) --node[left]{$\varphi_b$} (xpb);
                \end{tikzpicture}
            \end{equation}
            where the horizontal monomorphisms are the edge group monomorphisms and the vertical maps are fibre and orientation preserving isomorphisms between vertex groups. By \cite[Proposition 4.4]{dahmani_deciding_2019}, the graphs of groups $\bbX$ and $\bbX'$ are isomorphic if and only if for each $v \in \verts X$ and each $e \in \edges X$ incident to $v$ there is an automorphism $\alpha_v \in \aut{\bbX'_v}$ and elements $g_{e,v} \in \bbX'_v$ such that for every $\gamma \in \bbX_e$ we have
            \begin{equation}\label{eq:ext-adj-eqn}
                i_{e',b'}^{-1}\circ\ad{g_{e,b}}\circ\alpha_b\circ\varphi_b\circ i_{e,b}(\gamma)= i_{e',w'}^{-1}\circ\ad{g_{e,w}}\circ\alpha_w\circ\varphi_w\circ i_{e,2}(\gamma).
            \end{equation}
            This equation should be interpreted as using the automorphisms $\alpha_b,\alpha_w$ and the conjugations $\ad{g_{e,b}},\ad{g_{e,w}}$ to make the diagram \eqref{eq:ext-adj-square} commutative. Now the horizontal maps in\eqref{eq:ext-adj-square} are not bijective so they do not have inverses, however, because they are injective, they are bijective onto their ranges. Therefore for \eqref{eq:ext-adj-eqn} to make sense we must be able to match up the peripheral subgroups. Since black vertex groups coincide with the images of incident edge groups, we must first only focus on white vertex groups
            
            \noindent \textbf{Test 2.} Decide for each white vertex $w\in\verts X$ whether there exists a fibre preserving automorphism $\alpha_w \in \aut{\bbX'_v}$ such that for each $e \in \edges X$ incident to $w$, with $b$ being the other vertex incident to $e$, there is a conjugating element $g_{e,w}$ that\[
                \ad{g_{e,w}} \circ \alpha_w(\varphi_w(i_{e,w}(\bbX_e))=i_{e',w'}(\bbX_e'),
            \] 
            where the various mappings are those given in \eqref{eq:ext-adj-square}. By our colouring convention $\bbX_w = F_w \oplus \bk{t_w}$. Then $i_{e,w}(\bbX_e) = \bk{c_{e,w},t_w}$ where $c_{e,w}$ lies in the fibre and $\varphi_w(i_{e,w}(\bbX_e)) = \bk{\varphi_w(c_{e,q}),t_w'}$, where $t_w'$ is a generator of the centralizer of $\bbX_w' = F_w' \oplus\bk{t_w'}$. The second test therefore amounts to finding an automorphism $\alpha_w$ such that each edge $e$ incident to $w$ the element $\alpha_w(\varphi_w(c_{e,w})$ is conjugate to $c_{e',w'}^{\pm 1}$, where $i_{e',q'}(\bbX_e') = \bk{c_{e',w'},t_w'}$ and $c_{e',w'}$ lies in the fibre. But this is precisely what the classical Whitehead algorithm for tuples of elements in $F_w'$ decides.
            
            So either the Whitehead algorithm gives us an automorphism of the fibre, which gives us a fibre preserving automorphism of $\bbX_w'$ that matches up images of edge groups, or there is no such automorphism of the fibre. If there is no such automorphism of the fibre, then we can conclude that there is no automorphism at all which matches the peripheral subgroups. This is because a non-fibre preserving automorphism of $\bbX_w'$ will still descend to an automorphism of the image\[
                \bbX_w' = F_w'\oplus\bk{t_w'} \onto F_w',
            \] since the centre is characteristic. Therefore, if $\bbX$ and $\bbX'$ are isomorphic, then there is a fibre-preserving automorphism of each vertex group that passes Test 2. Furthermore, composition with the automorphisms of $\bbX_w'$ that inverts the generator of the centre and fixes the fibre pointwise gives another automorphism that passes Test 2, so we may assume that the automorphism we found is also orientation preserving.
            
            \noindent \textbf{Step 3 Orbits of markings and matching peripheral subgroups in black vertex groups.} Passing Test 2 ensures the right hand side of \eqref{eq:ext-adj-eqn}, namely\[
            i_{e',w'}^{-1}\circ\ad{g_{e,w}}\circ\alpha_w\circ\varphi_w\circ i_{e,2}(\gamma) 
            \] is well-defined. Recall that the marking of a group is a choice of a tuple of generators. Picking $(c_e,t_w)$ as a marking of $i_{e,w}(\bbX_e) \leq \bbX_w$, as shown in diagram \eqref{eq:ext-adj-square}, gives us a pullback marking of $\bbX_e$, which we can then push forward to $\bbX_b$ to get the pair of elements
            \begin{equation}\label{eqn:ccw-pair}
                (c_e',t_e') = (\varphi_b\circ i_{e,b}\circ i_{e,w}^{-1}(c_e),\varphi_b\circ i_{e,b}\circ i_{e,w}^{-1}(t_w))\in \bbX_b'\times \bbX_b'.
            \end{equation}
            Similarly for each of the pairs $(c_e,t_w) \in \bbX_w\times \bbX_w$ given above we can push them forward to \[
                (c_e'',t_w'')=(\ad{g_{e,w}}\circ\alpha_w\circ\varphi_w(c_e),\ad{g_{e,w}}\circ\alpha_w\circ\varphi(t_w)) \in \bbX_w'\times\bbX_w'.
            \] We note that the hypothesis that $\varphi_w$ is fibre and orientation preserving gives us that $t_w'' = t_w'$, the generator of centre of $\bbX_w'$ prescribed by the orientation. Let $\calP_{uo}$ denote the unmarked ordered peripheral structure on $\bbX_w'$, then $A_w'=\autfo{\bbX_w',\calP_{uo}}$ denotes the set of fibre and orientation preserving automorphisms of $\bbX_w'$ that map each peripheral group to a conjugate of itself. 
            It is easy to see that for any $\beta \in A_w'$, after post composing by inner automorphism so that $\ad{p_{\beta,e'}}\circ\beta(c_e'')=c_e^\beta \in i_{e',w'}(\bbX_e')$, the element $c_e^\beta$ depends only on $\beta$ and, in fact, we must have $c_e^\beta = c_e''^{\pm 1}$. It follows that the orbit of the marked peripheral structure, originally marked by the pairs $(c_e'',t_w')$, has a finite orbit $\calP_m^{\beta_1},\ldots,\calP_m^{\beta_{k_w}}$ under $A_w'$. We call these the \emph{admissible marked peripheral structures of $\bbX_w'$}.
            
            Let $\vec\calP_m$ be a choice of admissible marked peripheral structures for each $\bbX_w'$ where $w$ ranges over the white vertices of $X$. For each edge $e$ incident to a white vertex $w$ there is an element $c_e^{\beta_j}\in \bbX'_w$ which is the generator of the fibre component the image of the edge group for the marking in $\vec\calP_m$. Denote\[
                (c_e^{\vec\calP_m},t_e) = (i_{e',b'}\circ i_{e',w'}^{-1}(c_e^{\beta_j}),i_{e',b'}\circ i_{e',w'}^{-1}(t_w')) \in \bbX_b'\times \bbX_b'.
            \]
            
            \textbf{Test 3.} For each collection of admissible marked peripheral structures $\vec{\calP_m}$ we do the following: in each black vertex $b$ we have, for each edge $e$ incident to $b$, 
            two pairs\[
                (c'_e,t_e') \textrm{~and~} (c_e^{\vec\calP_m},t_e).
            \] Where $(c'_e,t_e')$ is defined in \eqref{eqn:ccw-pair}. We determine if there is a collection of automorphisms $\alpha_b \in \aut{\bbX_b'}$ for each black vertex $b$ such that for every pair we have\[
            (\alpha_b(c'_e),\alpha_b(t_e'))=(c_e^{\vec\calP_m},t_e).\]
            
            This amounts to finding some $\alpha_b \in \mathrm{GL}(2,\mathbb Z)$ that sends a tuple of vectors to another tuple of vectors and is standard linear algebra over $\mathbb Z$.
            
            If we can find such admissible markings and such automorphisms of the black vertex groups, then we will have constructed orientation-preserving isomorphisms of the graph of groups of the desired form. On the other hand, the nonexistence of such markings and automorphisms implies that there are no isomorphisms $\bbX \stackrel\sim\to \bbX'$ that restrict to fibre preserving isomorphisms of the vertex groups and edge groups. By Proposition \ref{prop;full-fiber-align}, this then implies that no isomorphism $\bbX \stackrel\sim\to \bbX'$ of the desired form exists.
        \end{proof}
        
        Noting that a graph automorphism can give us two different graph of groups structures on the same underlying graph gives:
        
        \begin{cor}\label{cor:graph-aut-to-gog-aut}
            Let $\bbX$ be the canonical graph of groups decomposition of a piecewise trivial suspension $\torus\alpha$ with underlying graph $X$. Then there is an algorithm that decides if an automorphism $s:X\to X$ of graphs can be extended to fibre preserving automorphism $\Sigma =(s,(\varphi_v),(\varphi_e),(\gamma_e)) \in \delta\aut\bbX$.
        \end{cor}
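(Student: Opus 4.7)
The plan is to reduce the question directly to Proposition~\ref{prop:iso-problem-for-suspensions}. Given $s:X\to X$, I will build a new graph of groups $\bbX^s$ with the same underlying graph $X$ by pulling back along $s$: set $\bbX^s_v := \bbX_{s(v)}$ and $\bbX^s_e := \bbX_{s(e)}$, with edge monomorphisms inherited from those of $\bbX$. Tautologically $\bbX^s$ is another canonical graph-of-groups decomposition of the same piecewise trivial suspension $\torus\alpha$, so the pair $(\bbX,\bbX^s)$ is a legal input for Proposition~\ref{prop:iso-problem-for-suspensions}. The key observation is that specifying $(\varphi_v)$, $(\varphi_e)$, $(\gamma_e)$ making $\Sigma = (s, (\varphi_v), (\varphi_e), (\gamma_e))$ into a fibre preserving element of $\delta\aut\bbX$ is exactly the same data as a graph-of-groups isomorphism $\bbX \to \bbX^s$ that induces the identity on $X$ and restricts to fibre preserving isomorphisms on each vertex and each edge group; this is a direct comparison of the Bass diagram \eqref{eq:Bass_Diagram} with the commutativity diagrams implicit in such an isomorphism.

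To apply Proposition~\ref{prop:iso-problem-for-suspensions} one must also handle orientation, since that proposition decides fibre \emph{and} orientation preserving isomorphisms. Fixing an orientation on $\torus\alpha$ induces a compatible orientation on $\bbX$, which I then transport to $\bbX^s$ via $s$. A fibre preserving extension of $s$ will either respect this induced orientation globally, or reverse it globally, because the distinguished generators $t_w$ of the centers of the white vertex groups are all determined by a single orientation of $\torus\alpha$. I will therefore run the algorithm of Proposition~\ref{prop:iso-problem-for-suspensions} twice, with $\bbX^s$ equipped with each of the two orientations in turn, and answer ``yes'' iff at least one of the two calls succeeds.

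Before calling the subroutine I will perform one sanity check: since black vertex groups are isomorphic to $\bbZ^2$ while white vertex groups are non-abelian direct products $F_w \times \bbZ$, any graph automorphism admitting a fibre preserving extension must preserve the bipartite coloring of $X$; this is immediate and can be verified at the outset. I do not anticipate any real obstacle beyond this reduction: the substantive computational content, namely matching peripheral subgroups in the white vertex groups via Whitehead's algorithm and resolving the linear-algebra condition in the $\bbZ^2$ black vertex groups, is already carried out inside Proposition~\ref{prop:iso-problem-for-suspensions}, so all that remains is the bookkeeping above.
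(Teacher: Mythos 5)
Your proof is correct and takes the same route as the paper, which dispatches this corollary with a one-sentence remark ("a graph automorphism gives a second graph-of-groups structure on the same underlying graph") and implicitly invokes Proposition~\ref{prop:iso-problem-for-suspensions}. Your write-up makes explicit what the paper leaves unstated: the construction of the pulled-back structure $\bbX^s$, the identification of a fibre preserving extension of $s$ with a graph-of-groups isomorphism $\bbX \to \bbX^s$ over $\mathrm{Id}_X$, the preliminary bipartiteness check, and—most usefully—the orientation bookkeeping. That last point is genuinely worth spelling out: since the corollary asks only for \emph{fibre} preservation while Proposition~\ref{prop:iso-problem-for-suspensions} decides fibre \emph{and orientation} preservation, and since one cannot in general globally flip the orientation of $\torus\alpha$ by a fibre preserving automorphism (that would require $\alpha$ to be conjugate to $\alpha^{-1}$), running the subroutine with both orientations on $\bbX^s$ is the right fix.
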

        
        Recall that $\delta_0\autfo{\bbX}$ is the set of fibre and orientation preserving graph of groups automorphisms that induce a trivial automorphism of graph $X$ underlying $\bbX$. Recall that  $\delta_1\autfo{\bbX} \leq \delta_0\autfo\bbX$ is the subgroup that acts trivially on the edge groups (see  Section \ref{sec;autom_gog}).

        \begin{lem}\label{lem:delta_0-in-delta_1}
            We can compute a complete set $\{\phi_1,\ldots,\phi_k\}  \subset \delta_0\autfo\bbX$ of $\delta_1\autfo\bbX$-coset representatives.
        \end{lem}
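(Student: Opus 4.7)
The plan is to realise $\delta_0\autfo\bbX/\delta_1\autfo\bbX$ as a finite, effectively enumerable subset of $\prod_{e\in\edges X^+}\autfo{\bbX_e}$. Indeed, $\delta_1\autfo\bbX$ is by definition the kernel of the natural restriction map
\[
\rho:\delta_0\autfo\bbX\longrightarrow\prod_{e\in\edges X^+}\autfo{\bbX_e},\quad \Phi\mapsto(\phi_e)_e,
\]
so the quotient injects into this product. It remains to bound the image and give an algorithm listing it.

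The key computation is that $\phi_e$ has at most two possibilities per edge. Each edge group $\bbX_e\simeq\bbZ^2$ admits a basis $(g_e,t_e)$ with $g_e\in F$ in the fibre and $t_e\in tF$ in the orientation coset; any fibre and orientation preserving automorphism of $\bbX_e$ therefore has the form $g_e\mapsto g_e^{\epsilon_e},\,t_e\mapsto t_e g_e^{n_e}$ with $\epsilon_e\in\{\pm 1\}$ and $n_e\in\bbZ$. Fix a white vertex $w$ incident to $e$, write $\bbX_w=F_w\oplus\bk{t_w}$, and express the image of the edge inclusion as $i_e(\bbX_e)=\bk{c_e,t_w}$ with $i_e(t_e)=t_w c_e^{m_e}$ for some $m_e\in\bbZ$. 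Because $\phi_w(t_w)=t_w$ (by orientation preservation) and $t_w$ is central in $\bbX_w$, the Bass compatibility diagram \eqref{eq:Bass_Diagram_delta_0} applied to $t_e$ reduces to $\epsilon_e m_e=m_e+n_e$, forcing $n_e=m_e(\epsilon_e-1)$. Hence $\phi_e$ is uniquely determined by the sign $\epsilon_e\in\{\pm 1\}$, and $|\rho(\delta_0\autfo\bbX)|\leq 2^{|\edges X^+|}$.

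To enumerate the coset representatives, I loop over the sign patterns $\vec\epsilon=(\epsilon_e)_{e\in\edges X^+}$ and test realisability for each. At a black vertex $b$, surjectivity of $\tau_e:\bbX_e\to\bbX_b$ forces the induced $\phi_b$ on the fibre direction to coincide with $\epsilon_e$ on every incident edge, so the pattern is compatible at $b$ if and only if all incident signs agree; this is a trivial check. At a white vertex $w$, realisability amounts to finding $\phi_w\in\aut{F_w}$ sending each $c_e$ to a conjugate of $c_e^{\epsilon_e}$ for all edges $e$ incident to $w$. This is the classical Whitehead problem for tuples of conjugacy classes in a free group, which is algorithmic and moreover outputs the explicit automorphism $\phi_w|_{F_w}$ together with the conjugators needed to define the $\gamma_e$. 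Extending $\phi_w$ by $t_w\mapsto t_w$, pushing forward via each $\tau_e$ to obtain $\phi_b$, and taking the sign-prescribed $\phi_e$, I assemble the full datum $\Phi=(Id_X,(\phi_v)_v,(\phi_e)_e,(\gamma_e)_e)\in\delta_0\autfo\bbX$ realising $\vec\epsilon$.

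The main obstacle is the finiteness step: the rigid relation $n_e=m_e(\epsilon_e-1)$ rests on the canonical structural features of $\bbX$, namely the existence of a distinguished central element $t_w$ in every white vertex group and the surjectivity of edge inclusions into the black (cylinder) vertex groups, both of which are consequences of Theorem \ref{thm;unipotent-linear-gdt} and Proposition \ref{prop:structural_tree_is_canonical}. Once this constraint is isolated, the remainder of the argument is a routine combination of the Whitehead algorithm in free groups and the vertex-level isomorphism recognition developed in Proposition \ref{prop:iso-problem-for-suspensions}, the latter of which could alternatively be invoked as a black-box subroutine at the white-vertex stage.
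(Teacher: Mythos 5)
Your overall strategy is the same as the paper's: identify $\delta_1\autfo{\bbX}$ as the kernel of the restriction map to the edge group automorphisms, use the Bass relation at the white end together with $\phi_w(t_w)=t_w$ to pin each $\phi_e$ down to a single sign $\epsilon_e\in\{\pm 1\}$ (your computation $n_e=m_e(\epsilon_e-1)$ is correct), and then enumerate sign patterns and test realisability by the classical Whitehead algorithm in the fibres $F_w$ of the white vertex groups. Up to that point your argument matches the paper's proof, and the finiteness/completeness part (same sign pattern implies same coset) is sound.

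The gap is at the black vertices. You claim the pattern is ``compatible at $b$ if and only if all incident signs agree'' and later ``push forward via each $\tau_e$ to obtain $\phi_b$'' as if those push-forwards automatically coincide; they need not. Since $\bbX_b$ is abelian, the conjugator in the Bass square \eqref{eq:Bass_Diagram_delta_0} at the black end acts trivially, so a genuine element of $\delta_0\autfo{\bbX}$ must satisfy $\phi_b=\tau_e\circ\phi_e\circ\tau_e^{-1}$ for \emph{every} incident edge $e$, and these must agree as automorphisms of $\bbZ^2$, not merely in the fibre direction. Concretely, your relation $n_e=m_e(\epsilon_e-1)$ says exactly that $\phi_e$ fixes the preimage in $\bbX_e$ of the oriented central element $t_w$ of the adjacent white vertex group; hence $\phi_b$ must fix its image $s_e\in\bbX_b$ for every incident edge, while acting by the common sign $\epsilon$ on $\bbX_b\cap F=\bk{a}$. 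But by the acylindricity argument in the proof of Lemma \ref{lem:black-fibers}, for two distinct incident edges $e,e'$ one has $s_{e'}=s_e a^{k}$ with $k\neq 0$, so fixing both $s_e$ and $s_{e'}$ forces $\phi_b(a^{k})=a^{k}$, i.e.\ $\epsilon=+1$. Thus a pattern with common sign $-1$ at such a black vertex passes your check but is not realisable, and the tuple you assemble then violates the Bass diagram at $b$, so it is not an element of $\delta_0\autfo{\bbX}$ and your output need not be a set of coset representatives. The repair is exactly what the paper does: after the white-vertex Whitehead step, explicitly test whether automorphisms of the black vertex groups (a $GL_2(\bbZ)$ problem, elementary linear algebra over $\bbZ$) complete all the Bass squares, and retain only the sign patterns and vertex data for which this succeeds.
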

        \begin{proof}
            Let $w \in \verts X$ be a white vertex and let $t_w$ be the generator of the centre of $\bbX_w$ given by the orientation. For each $e \in \edges X$ fix some $c_e$ such that $\bk{c_e,t_w}$ generates the the image of the edge group $\bbX_e$ in the unique incident white vertex group $\bbX_w$ and $c_e$ lies in the fibre.
            
            The restriction $\varphi_w$ of any element $\Phi=(Id,\varphi_v,\varphi_e,\gamma_e) \in \delta_0\autfo\bbX$ to a white vertex group $\bbX_w$ must map $t_w$ to itself and must send $c_e$ to the conjugate $\gamma_e^{-1} c_e^{\pm 1} \gamma_e$.
            Consider now the set $\{1,-1\}^{\edges X}$ of functions from $\edges X$ to $\{-1,1\}$. For every $\sigma \in \{1,-1\}^{\edges X}$, it is possible to determine whether there is a fibre and orientation preserving $\Psi =(Id,\psi_v,\psi_e,\delta_e) \in \delta_0\autfo\bbX$ such that in the  white vertex groups we have equality between conjugacy classes \[
                [\psi_w(c_e)] =  [c_e^{\sigma(e)}],
            \] 
            Indeed, this is precisely an instance of the classical Whitehead problem on the fibre in $\bbX_w$, which is a free group. The $\delta_e$ in $\Psi$ can then be chosen to be the appropriate conjugators.
            
            Next, determining whether such a collection of vertex group automorphisms can then be extended to an element of $\delta_0\autfo\bbX$ amounts to finding automorphisms of the black vertex groups (which are isomorphic to $\bbZ^2$) that complete the diagram \eqref{eq:ext-adj-square}.
            
            This collection of found automorphisms gives a complete set of $\delta_1\autfo\bbX$-coset representatives in $\delta_0\autfo\bbX$.
        \end{proof} 
        
        Corollary \ref{cor:graph-aut-to-gog-aut}, Lemma \ref{cor:graph-aut-to-gog-aut}, and definitions immediately imply the following.
        
        \begin{cor}\label{cor:delta_1_repr}
            We can compute a complete list of $\delta_1\autfo\bbX$-coset representatives in $\delta\autfo\bbX$.
        \end{cor}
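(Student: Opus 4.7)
The plan is to exploit the filtration $\delta_1\autfo\bbX \leq \delta_0\autfo\bbX \leq \delta\autfo\bbX$ and compute coset representatives at each level. The key observation is that the quotient $\delta\autfo\bbX/\delta_0\autfo\bbX$ embeds in $\aut X$, which is a finite group since the underlying graph $X$ of the canonical splitting is finite. So the number of cosets at each of the two stages is finite, and the composition of representative systems will yield the desired complete system.

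First I would enumerate all automorphisms $s\colon X \to X$ of the finite underlying graph (there are only finitely many). For each such $s$, I would invoke Corollary \ref{cor:graph-aut-to-gog-aut} to decide whether $s$ lifts to a fibre and orientation preserving graph of groups automorphism $\Sigma_s = (s,(\varphi_v),(\varphi_e),(\gamma_e)) \in \delta\autfo\bbX$; a positive decision in fact produces such a $\Sigma_s$ explicitly from the data used in the proof of that corollary (via the vertex group isomorphism tests and the peripheral matching procedure adapted from the proof of Proposition \ref{prop:iso-problem-for-suspensions}). The list of successful lifts $\{\Sigma_{s_1},\ldots,\Sigma_{s_\ell}\}$ is then, by definition of $\delta_0\autfo\bbX$ as the kernel of the restriction to $\aut X$, a complete set of $\delta_0\autfo\bbX$-coset representatives in $\delta\autfo\bbX$.

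Next I would apply Lemma \ref{lem:delta_0-in-delta_1} to produce a complete set $\{\phi_1,\ldots,\phi_k\} \subset \delta_0\autfo\bbX$ of $\delta_1\autfo\bbX$-coset representatives in $\delta_0\autfo\bbX$. Then I claim that the finite collection
\[
\{\, \phi_i \circ \Sigma_{s_j} \mid 1 \le i \le k,\ 1 \le j \le \ell\,\}
\]
is a complete set of $\delta_1\autfo\bbX$-coset representatives in $\delta\autfo\bbX$. Indeed, given any $\Psi \in \delta\autfo\bbX$, its induced graph automorphism equals some $s_j$, so $\Psi \circ \Sigma_{s_j}^{-1} \in \delta_0\autfo\bbX$, and this element lies in some $\phi_i \cdot \delta_1\autfo\bbX$; hence $\Psi \in \phi_i \circ \Sigma_{s_j} \cdot \delta_1\autfo\bbX$. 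Conversely, distinct pairs $(i,j)$ yield distinct $\delta_1\autfo\bbX$-cosets: if $\phi_i \circ \Sigma_{s_j}$ and $\phi_{i'} \circ \Sigma_{s_{j'}}$ were in the same $\delta_1\autfo\bbX$-coset, projecting to $\aut X$ forces $s_j = s_{j'}$, and then projecting into $\delta_0\autfo\bbX/\delta_1\autfo\bbX$ forces $i = i'$.

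The main obstacle I expect is the bookkeeping step of actually \emph{constructing} the lifts $\Sigma_{s_j}$ from Corollary \ref{cor:graph-aut-to-gog-aut}: that corollary is phrased as a decision procedure, but its proof (through Proposition \ref{prop:iso-problem-for-suspensions}) tests the existence of vertex group isomorphisms, peripheral markings, and adjustment elements $\gamma_e$ by effective means, and one has to verify that these witnesses can be assembled into a bona fide element of $\delta\autfo\bbX$ satisfying the Bass diagram \eqref{eq:Bass_Diagram}. Once that is in hand, the corollary is a direct consequence of the two cited results and the finiteness of $\aut X$.
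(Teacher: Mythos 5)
Your proposal is correct and takes essentially the same route as the paper, which obtains the corollary directly from Corollary \ref{cor:graph-aut-to-gog-aut} together with Lemma \ref{lem:delta_0-in-delta_1} and the filtration $\delta_1\autfo\bbX \leq \delta_0\autfo\bbX \leq \delta\autfo\bbX$. (One small nit: writing the representatives as $\Sigma_{s_j}\circ\phi_i$, or observing that each $\Sigma_{s_j}$ normalizes $\delta_1\autfo\bbX$, removes the implicit use of normality in your coset manipulation $\phi_i\delta\Sigma_{s_j}\in\phi_i\Sigma_{s_j}\delta_1\autfo\bbX$; this is immediate from the composition rule.)
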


    \section{Piecewise trivial suspensions are hereditarily algorithmically tractable}\label{sec;algo_trac}

        This section is devoted to a few uniform algorithmic properties of the class of unipotent linear suspensions or piecewise trivial suspensions.

        A finitely presented group is \emph{coherent} if any finitely generated subgroup is finitely presented. A class of group is \define{effectively coherent} if, from a presentation of the group and a finite set of elements, an algorithm computes a finite presentation of the generated subgroup.

        For example, free abelian groups are an  effectively coherent class of groups: given a finite subset of size $n$ in a free abelian group $A$, the Smith normal form of the given homomorphism $\bbZ^n\to A$ reveals a basis for the generated subgroup.
        
        Free groups are nevertheless the archetype of an effectively coherent class of groups: any subgroup of a free group is free, but the task is to compute a basis. Stallings foldings provide an  algorithmic and elegant way for this.

        In a work that deepens the foldings technique, Kapovich Weidmann and Miasnikov proved that fundamental groups of so-called benign graphs of groups are effectively coherent \cite{kapovich_foldings_2005}.

        Free-by-cyclic groups are coherent by a deep result of Feighn and Handel \cite{feighn_mapping_1999}.
        
        We will prove here that unipotent linear suspensions of finitely generated free groups are  effectively coherent (Proposition \ref{prop;eff_coh_ump_for_GDT}) by proving the benign character of their structural graph of groups decompositions.  
         
          Following \cite{touikan_detecting_2018}, we say that a class of groups is
          hereditarily algorithmic tractable, if
          the class is closed for taking finitely generated subgroups, is effectively coherent,  
          the presentations of the groups in the class being furthermore recursively enumerable, and if the class has a uniform solution to the generation problem, and to the conjugacy problem.

          In other words, there is an algorithm that given a group presentation in the class, and any finite subset of the group, produces a finite presentation of the generated subgroup, indicates if it is the whole group, and tells whether the given elements are conjugate. 
          
          This combination of algorithmic problems is useful to treat and compare decomposition in graphs of groups in which the elements of the class are vertex groups, see for instance \cite{touikan_detecting_2018, dahmani_deciding_2019, dahmani_touikan_reducing_2021}. Through Propositions \ref{prop;eff_coh}, \ref{prop;eff_gen}, and \ref{prop;eff_CP}, we will prove the following.
        
        \begin{thm}\label{thm;algo_trac}
         The class of finitely generated subgroups of a given globally fibered piecewise trivial suspension  
         of a free group is a hereditarily algorithmically tractable class.
        \end{thm}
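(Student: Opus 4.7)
My plan is to verify each of the three defining properties of hereditary algorithmic tractability separately for the class of finitely generated subgroups of a fixed piecewise trivial suspension $\Gamma = F \rtimes_\alpha \bk t$, leveraging the canonical structural splitting of $\Gamma$ (Proposition \ref{prop:structural_tree_is_canonical}) and its interaction with subgroups (Proposition \ref{prop;subpdp}, Corollary \ref{coro;hybrid_canonical_for_sbgp}). Closure under taking finitely generated subgroups is tautological, and recursive enumeration of presentations is obtained by enumerating finite tuples of words in $\Gamma$ and pairing each with the presentation produced by the effective coherence procedure below.

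The first substantive task is effective coherence. I would verify that the canonical bipartite graph of groups decomposition $\bbX$ of $\Gamma$, with black vertex groups $\bbZ^2$ and white vertex groups $F_w \times \bbZ$ and edge groups $\bbZ^2$ that are maximal abelian in each incident vertex group, is \emph{benign} in the sense of Kapovich-Weidmann-Miasnikov \cite{kapovich_foldings_2005}. The vertex groups are effectively coherent on their own (free groups via Stallings foldings, direct products $F_w\times\bbZ$ by coordinate-wise reduction), and the edge-group inclusions can be tested algorithmically. Applying the graph-of-groups folding algorithm of \cite{kapovich_foldings_2005} to a finite generating set $S$ of a subgroup $H\le\Gamma$ then produces a finite immersed graph of groups whose fundamental group is $H$, from which a finite presentation is read off. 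This is Proposition \ref{prop;eff_coh_ump_for_GDT}/\ref{prop;eff_coh}.

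Once coherence is in hand, Corollary \ref{coro;hybrid_canonical_for_sbgp} can be invoked to construct, for each finitely generated $H\le \Gamma$, a canonical splitting $\hat\bbX$ whose vertex groups are either finitely generated free groups or trivial suspensions $K\times\bbZ$ with $K$ a finitely generated free group, and whose edge groups are cyclic or $\bbZ^2$. The generation problem reduces to comparing the presentation of $\langle S\rangle$ to that of $H$ vertex-by-vertex in $\hat\bbX$: in free vertex groups we compare Stallings cores, and in $K\times\bbZ$ we project onto each factor and run the analogous test in a free group and in $\bbZ$. This yields Proposition \ref{prop;eff_gen}.

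The main obstacle is the uniform conjugacy problem (Proposition \ref{prop;eff_CP}). Having passed to cyclically reduced Bass-Serre normal forms in $\hat\bbX$, one separates elliptic from hyperbolic elements. For hyperbolic elements, the 4-acylindricity inherited from Lemma \ref{lem;keep_acyl} via Proposition \ref{prop;subpdp} bounds the cyclic rotations that need be tested; checking conjugacy then reduces to a finite sequence of twisted double-coset questions of the form ``does $g\in \bbX_v\cdot h\cdot \tau_e(\bbX_e)$'' in successive vertex groups. These are solvable in free groups by classical methods and in $K\times\bbZ$ by splitting the $K$-coordinate (a free-group conjugacy with coset constraints, handled by Stallings automata) from the $\bbZ$-coordinate (a linear Diophantine condition). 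The elliptic case reduces to conjugacy in a single vertex group combined with migration of a putative conjugator across incident edge groups; the bookkeeping is most delicate here, because the $\bbZ$-direct summand of a white vertex group interacts non-trivially with the $\bbZ^2$ edge groups, so one must simultaneously track the free and the central coordinates of the conjugator. Once this terminating procedure is assembled from the vertex-group algorithms and the finitary acylindrical combinatorics of $\hat\bbX$, the three propositions combine to give Theorem \ref{thm;algo_trac}.
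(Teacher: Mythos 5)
Your first half matches the paper: effective coherence is obtained exactly as in Propositions \ref{prop:direct-prod-benign} and \ref{prop;eff_coh_ump_for_GDT}, by checking that the canonical splitting is benign and invoking the Kapovich--Weidmann--Miasnikov folding theorem, and closure under subgroups plus recursive enumerability are handled the same way. But two of the remaining ingredients have genuine gaps as you state them. For the generation problem, ``comparing the presentation of $\langle S\rangle$ to that of $H$ vertex-by-vertex in $\hat\bbX$'' is not a decision procedure: presentations are not canonical, and agreement of the data attached to vertex groups does not by itself certify $\langle S\rangle = H$ (nor is it clear how you would algorithmically align the two splittings). The paper avoids this entirely: KWM gives not only effective coherence but also the uniform membership problem, and Proposition \ref{prop;eff_gen} decides generation simply by testing the two mutual inclusions $\calS_K \subset \langle \calS_H\rangle$ and $\calS_H\subset\langle\calS_K\rangle$ via membership. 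Your argument is repairable along those lines, but as written it does not go through.

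The larger gap is in the conjugacy problem. In the hyperbolic case, saying that 4-acylindricity ``bounds the cyclic rotations'' skips the real difficulty: in a graph of groups that is not an amalgam or HNN extension, a cyclically reduced element can have syllable length strictly larger than its translation length, so one must recenter at other vertices of the axis before any rotation/double-coset analysis makes sense. The paper handles this with the short-position machinery (Section \ref{sec;short_pos} and Proposition \ref{prop;compute_finitely_many_short}), where acylindricity is used to show each hyperbolic element has finitely many computable short conjugates; nothing in your sketch plays that role, and without it the termination and completeness of your ``finite sequence of twisted double-coset questions'' is unjustified. In the elliptic case you acknowledge the bookkeeping is delicate but do not supply the key finiteness statement the paper proves: a conjugator between two elliptic elements moves a fixed vertex a distance at most $4$ (by 4-acylindricity and the fact that central elements of white vertex groups fix balls of radius $2$), so it suffices to test conjugation across loops of length at most $4$ in the underlying graph (Lemma \ref{lem:conj-elliptic}). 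Without that bound, the ``migration of a putative conjugator across incident edge groups'' is an unbounded search, and the elliptic case is not decided. So the overall architecture is the paper's, but the generation and conjugacy steps need the membership-based reduction and the acylindricity-driven finiteness arguments, respectively, to become proofs.
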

        
        An immediate application of this work is that we will be able to fulfill the hypotheses of Proposition \ref{prop;struct_tree_compute} and Corollary \ref{coro;hybrid_canonical_for_sbgp} and therefore algorithmically construct the canonical structural splittings of suspensions as well as of subgroups of suspensions, which in turn is foundation for the computations in Sections \ref{sec;MWP} and \ref{sec;EMP}.

        \subsection{Effective coherence, effective generation, subgroup membership}

          Recall a definition introduced by Kapovich, Weidmann, and Miasnikov \cite{kapovich_foldings_2005}.

          \begin{defn}[Benign graphs of groups, {\cite[Definition 5.6]{kapovich_foldings_2005}}]\label{defn:benign}
            A finite graph of groups $\bbA$ is \emph{benign} if the following
            conditions are satisfied:
            \begin{enumerate}[(i)]
            \item\label{it:B1} For each vertex $v \in \verts A$ and an edge $e
              \in \edges A$ with $o(e)=v$ there is an algorithm with the
              following property. Given a finite set $X \subset A_v$ and an
              element $a \in A_v$ the algorithm decides whether \[I = \bk X
                \cap a \alpha_e(A_e)\] is empty. If $I \neq \emptyset$, the
              algorithm produces an element of $I$.
            \item\label{it:B2} Every edge group $A_e$  of $\bbA$ is slender.
            \item\label{it:B3} Every edge group $A_e$ of $\bbA$ has solvable uniform
              membership problem, i.e. there is an algorithm which, given a
              finite subset $X \subset A_e$ and an element $a \in A_e$ decides
              whether or not $a \in \bk X$.
            \item\label{it:B4} For each vertex $v \in \verts A$ and edge $e \in \edges A$
              with $o(e) = v$ there is an algorithm with the following
              property. For any finite subset $X \subset A_v$ the algorithm computes a
              finite generating set for the subgroup $\alpha_e(A_e) \cap \bk X.$
            \end{enumerate}
         \end{defn}
        
         Observe that conditions $(i)$ to $(iv)$ are conditions on vertex
         groups, edge groups and the attachment maps of their adjacent edge groups. From an algorithmic viewpoint, we prefer to state this definition in the context of graphs of groups, however, it can also be stated in the context of $G$-trees.

          \begin{prop}\label{prop:direct-prod-benign}
            Let $\bbX$ be  a finite graph of groups, such that any vertex group  $\bbX_v$  
            is of the form $\freegp{v} \oplus \bk {z_v}$, where $\freegp{v}$ is a
            free group and any attached image of edge group $\alpha_e(\bbX_e)\leq \bbX_v$ is of the form
            $\bk{c_v,z_v}, c_v \in \freegp{v}$, then $\bbX$  is benign: properties (\ref{it:B1}) and
            (\ref{it:B4}) of Definition \ref{defn:benign} hold.
          \end{prop}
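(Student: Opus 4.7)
The plan is to reduce both (i) and (iv) to classical algorithmic problems in finitely generated free groups via the projection $\pi\colon \bbX_v = \freegp v \oplus \bk{z_v} \onto \freegp v$ with central kernel $\bk{z_v}$. Crucially, $\bk{z_v} \subseteq \alpha_e(\bbX_e) = \bk{c_v, z_v}$, and $\pi(\alpha_e(\bbX_e)) = \bk{\pi(c_v)}$ is cyclic in $\freegp v$. As a preliminary step useful for both properties, given $X = \{x_1, \ldots, x_n\} \subset \bbX_v$, I would apply Stallings foldings to $\pi(X)$ to obtain a free basis $y_1, \ldots, y_r$ of $\pi(\bk X) \leq \freegp v$, together with words expressing $\pi(x_i) = u_i(y_\bullet)$ and $y_j = v_j(\pi(x_\bullet))$. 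Lifting yields $\tilde y_j := v_j(x_1, \ldots, x_n) \in \bk X$ and $k_i := x_i \, u_i(\tilde y_\bullet)^{-1} \in \bk{z_v}$. Centrality of $\bk{z_v}$ then gives a direct product decomposition $\bk X = \bk{k_1, \ldots, k_n} \times \bk{\tilde y_1, \ldots, \tilde y_r}$, whence $\bk X \cap \bk{z_v} = \bk{z_v^d}$ with $d$ the $\gcd$ of the $z_v$-exponents of the $k_i$.

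For (i), since $\bk{z_v} \subseteq \alpha_e(\bbX_e)$, an element $w \in \bk X$ lies in $a\alpha_e(\bbX_e)$ if and only if $\pi(w) \in \pi(a)\bk{\pi(c_v)}$ in $\freegp v$; the $z_v$-component is then automatically correct. This reduces (i) to the following problem: given a finitely generated subgroup $H \leq \freegp v$ and elements $\bar a, \bar c \in \freegp v$, decide whether $\bar a \in H\bk{\bar c}$ and produce a witness. I would compute $L := \bk{H, \bar c}$ via Stallings foldings and the cyclic intersection $H \cap \bk{\bar c} = \bk{\bar c^m}$ via Howson--Stallings, then (after checking $\bar a \in L$) exploit the amalgamated free product structure $L = H *_{\bk{\bar c^m}} \bk{\bar c}$ (or the free product $L = H * \bk{\bar c}$ when the intersection is trivial) to test $\bar a \in H\bk{\bar c}$ by normal form analysis and extract $\bar h = \bar a \bar c^n \in H$. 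Writing $\bar h$ as a word in $\pi(X)$ and applying the same word to $X$ lifts it to the required element of $\bk X \cap a\alpha_e(\bbX_e)$.

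For (iv), the intersection sits in the central exact sequence
\[
1 \longrightarrow \bk X \cap \bk{z_v} \longrightarrow \alpha_e(\bbX_e) \cap \bk X \longrightarrow \pi(\bk X) \cap \bk{\pi(c_v)} \longrightarrow 1,
\]
whose kernel has been computed as $\bk{z_v^d}$; the quotient is an intersection of a finitely generated subgroup with a cyclic subgroup in a free group, cyclic by Howson and computable via Stallings foldings as either trivial or $\bk{\pi(c_v)^m}$ for a minimal $m \geq 1$. When nontrivial, expressing $\pi(c_v)^m$ as a word in $y_\bullet$ and lifting through the $\tilde y_\bullet$ produces $h_0 \in \bk X$ with $\pi(h_0) = \pi(c_v)^m$; centrality forces the extension to split, so $\{z_v^d, h_0\}$ generates $\alpha_e(\bbX_e) \cap \bk X$ (and $\{z_v^d\}$ alone suffices in the trivial case). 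The only subtle point throughout is that one must stay alert to the central factor $\bk{z_v}$ when lifting elements and bases; all actual algorithmic difficulty is absorbed by the classical Stallings folding technology in free groups.
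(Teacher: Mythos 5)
Your overall reduction is the same as the paper's: project $\bbX_v=\freegp v\oplus\bk{z_v}$ onto $\freegp v$, use that the kernel $\bk{z_v}$ is contained in $\alpha_e(\bbX_e)=\bk{c_v,z_v}$, reduce (i) to the coset problem $\bar a\in \pi(\bk X)\bk{\pi(c_v)}$ and (iv) to intersecting $\pi(\bk X)$ with $\bk{\pi(c_v)}$. Your lifted-basis direct product decomposition of $\bk X$ is a correct variant of the paper's Nielsen-reduction step (it works because free reduction preserves exponent sums, so the lifts $\tilde y_j$ freely generate a complement of $\bk X\cap\bk{z_v}$), and your generating set $\{z_v^d,h_0\}$ in (iv) is right -- although the reason is not that centrality splits the extension but simply that the quotient is infinite cyclic; in any case the direct verification (given $g$ in the intersection, divide by a power of $h_0$ and land in $\bk X\cap\bk{z_v}=\bk{z_v^d}$) is all you need.

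The one genuine flaw is in your solution of the coset problem in (i). You assert that $L=\bk{H,\bar c}$ carries the amalgamated product structure $H *_{H\cap\bk{\bar c}}\bk{\bar c}$ (or $H*\bk{\bar c}$ when the intersection is trivial). This is false in general: in $F(a,b)$ take $H=\bk{ab,\,ba}$ and $\bar c=a$; then $H\cap\bk{a}=1$, but $\bk{H,a}=F(a,b)$ has rank $2$ while $H*\bk{a}$ has rank $3$, so the natural map is not injective and there is no normal form of the kind your argument invokes. Hence the extraction of $\bar h=\bar a\bar c^{\,n}\in H$ does not go through as written. The fact you need -- decidability of ``does there exist $n$ with $\bar a\bar c^{\,n}\in H$'' for a finitely generated $H\leq\freegp v$, with a witness -- is true, but must be justified differently: for instance, $H\bk{\bar c}$ is a rational subset of the free group, so membership is decidable by Benois' theorem (or by tracing $\bar a\bar c^{\,n}$ in the Stallings graph of $H$ and using eventual periodicity). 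The paper sidesteps this by treating the decision part as a routine coset-intersection check in the free group and then producing the witness by enumerating $\bk X$ and testing the criterion $g\in a\bk{c_v,z_v}\iff[a^{-1}g,c_v]=1$. With that sub-step replaced, your proof is complete and otherwise follows the paper's route.
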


          \begin{proof}
            Consider the vertex group $\bbX_v= \freegp{v} \oplus \bk z$, and an
            adjacent edge $e$, with attaching map $\alpha_e: \bbX_e\to \bbX_v$.  Consider the standard projection
            $\pi_{\freegp{v}}: \freegp{v} \oplus \bk z \onto \freegp{v}$. Given any finite set
            $S \subset \freegp{v} \oplus \bk z$ it is possible to compute the image
            $\bar S = \pi_{\freegp{v}}(S)$. By hypotheses the groups $\alpha_e(\bbX_e)$
            that occur in Definition \ref{defn:benign} are subgroups of the
            form $\bk{c, z}$ where $c \in \freegp{v}$.  In particular such groups
            contain $\ker(\pi_{\freegp{v}})$. Therefore
            $I = \bk S \cap a \alpha_e(\bbX_e) \neq \emptyset$ if and only if
            $\bar S \cap a\bk c \neq \emptyset$, which is routine to check in
            a free group. Because $g \in a\bk{c, z}$ if and only if $[a^{-1}g,c]=1$, an
            element of the intersection can be found by enumerating $\bk{S}$
            and checking whether the commutator vanishes. (\ref{it:B1}) of Definition
            \ref{defn:benign} therefore holds.
        
            We now show that property (\ref{it:B4}) of Definition
            \ref{defn:benign} holds. 
            Write\[ S =(f_1z^{n_1},\ldots,f_kz^{n_k})\] and perform
            Nielsen reduction (see \cite[\S I.2]{lyndon_combinatorial_2001})
            on the tuple $(f_1z^{n_1},\ldots,f_kz^{n_k})$ ignoring the
            $z$-letters. This will give a tuple\[
              R=(r_1z^{m_1},\cdots, r_j z^{m_j},1\cdot z^{m_{j+1}},\ldots,1\cdot z^{m_k})
            \] with $r_i \neq 1$ for $i\leq j$ (we allow $j=0$). Let
            $e = \gcd(m_{j+1},\ldots,m_k)$ then we
            have\[ \bk{S} = \bk{r_1z^{m_1},\cdots, r_j z^{m_j}}\oplus\bk{z^e}
            \] with $\pi_{\freegp{v}}(\bk{S})=\bk{r_1,\cdots, r_j}=\bk{\bar S}$, where the
            $r_i$ form a basis of $\bk{\bar S}$. It is possible to decide the
            minimal exponent $l$ such that $\bk{\bar S} \cap \bk c =\bk{c^l}$
            and to find the explicit (possibly empty) product (or word)
            $c^l=W(r_1,\cdots, r_j)$. It follows
            that\[ \bk{S} \cap \bk{c,z} = \bk{W(r_1z^{m_1},\cdots,
                r_jz^{m_j}),z^e} = \bk{c^lz^{m_1+\cdots+m_j},z^e},\] with $e$
            possibly zero and with $l=0$ if and only if
            $\bk c \cap \bk{\bar S} =\{1\}$. Since every step was computable,
            (\ref{it:B4}) of Definition \ref{defn:benign} holds.
            
          \end{proof}

        We will use the benign property with the following result  of Kapovich Weidmann and Miasnikov.

          \begin{thm}[{\cite[Theorems 5.8 (b)) and 5.13]{kapovich_foldings_2005}}]\label{thm:folding}
        	Let $\bbA$ be a benign graph of groups such that every vertex
        	group is effectively  
        	coherent
        	and has  solvable uniform membership problem. Then
        	$\pi_1(\bbA, a_0)$ is also effectively coherent and has solvable
        	uniform membership problem. Moreover, the algorithms for these two problems can be constructed from those given by the assumption.
        \end{thm}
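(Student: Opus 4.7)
The plan is to adapt the Stallings-style foldings machinery for $\bbA$-graphs. Given a finite tuple $h_1,\ldots,h_k$ of generators of a subgroup $H\leq \pi_1(\bbA,a_0)$, I would first express each $h_i$ as a word in vertex group elements and edge letters and construct an initial $\bbA$-labelled graph $\calH_0$: a wedge of loops at $a_0$, each loop spelling one $h_i$, with vertex subgroups generated by the vertex-group letters carried at that vertex and edge subgroups initially trivial. The natural morphism $\pi_1(\calH_0,a_0)\to \pi_1(\bbA,a_0)$ has image $H$, but it need not yet be injective; the foldings remedy that.

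Next I would iteratively apply folds of two flavors, each of which simplifies $\calH$ in a fixed complexity without enlarging the image subgroup. A type-one fold identifies two edges of $\calH$ issuing from a common vertex and mapping to the same edge of $\bbA$, provided the carried data are compatible modulo an element of an edge-group coset; a type-two fold enlarges the vertex subgroup at an endpoint to absorb commutators or relations forced by the identifications. To decide when a fold applies and to realize it, I would invoke the benign hypotheses: property \eqref{it:B1} decides whether two translates of edge-group images meet a given coset and returns a witness, which is exactly what is needed to patch together edges; property \eqref{it:B4} computes the intersection of a finitely generated vertex subgroup with the image of an edge group, yielding the refined edge subgroup attached at an endpoint after a fold; property \eqref{it:B3} supplies the membership tests inside edge groups used by both. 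Effective coherence of vertex groups produces finite presentations of the enlarged vertex subgroups at each stage.

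Termination is the crux, and is where slenderness (\eqref{it:B2}) is indispensable: bounded rank in edge groups, combined with finiteness of $\bbA$, bounds the lexicographic complexity $(\#\edges{\calH}, \text{edge subgroup ranks}, \#\verts{\calH})$ in a well-order, forcing the procedure to halt with a fully folded $\calH$. Standard Bass--Serre arguments then show the induced map $\pi_1(\calH,a_0)\to \pi_1(\bbA,a_0)$ is injective, so $\calH$ gives a graph-of-groups splitting of $H$ with effectively computed vertex and edge groups; reading off the Bass--Serre presentation yields a finite presentation of $H$. For the uniform membership problem, given a candidate $g$ written as an $\bbA$-word, I would trace it from $a_0$ through the folded $\calH$, at each step using the vertex-group effective coherence and membership algorithms to decide whether the running vertex-group element lies in the needed coset of the next edge subgroup; $g$ lies in $H$ iff the trace closes at $a_0$ with a trivial residual vertex-group element.

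The main obstacle I anticipate is arranging the complexity so that both folding moves are strictly monotone; one must be careful, when a type-two fold enlarges a vertex subgroup, that it does not create infinitely many new type-one opportunities, and slenderness together with property \eqref{it:B4} is precisely what rules this out.
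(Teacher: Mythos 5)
The paper does not prove this statement: it is quoted, with its source citation, from Kapovich, Weidmann, and Miasnikov's foldings paper, so there is no in-paper proof to compare against. Your outline does follow the approach of the cited source --- an initial $\bbA$-graph built from a wedge of loops, two species of fold, the benign conditions supplying the effective decidability needed to apply folds and to compute the resulting vertex and edge subgroups, slenderness as the engine of termination, injectivity of the map from the fully folded object, and a trace-based membership test --- and at this level of detail it is sound. The caveat you raise yourself, that the complexity measure must be carefully chosen so that type-two folds do not spawn unboundedly many type-one opportunities, is exactly the delicate point; the lexicographic triple you propose is too coarse as stated and KWM use a more elaborate measure, but you have identified the correct place where the argument must be made rigorous.
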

        
          From this theorem we deduce the following, about piecewise trivial suspensions of free groups. 

          \begin{prop}\label{prop;eff_coh_ump_for_GDT}
         If $\torus \alpha$ is a piecewise trivial suspension 
         globally fibered with fiber a finitely generated  free group, then it is 
         effectively coherent and has effectively
            solvable uniform membership problem.
          \end{prop}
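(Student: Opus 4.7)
The plan is to use the canonical graph of groups decomposition $\bbX$ of $\torus\alpha$ as a piecewise trivial suspension (which exists by Proposition \ref{prop;ULG->PWT} and Proposition \ref{prop:structural_tree_is_canonical}, and is computable by Proposition \ref{prop;struct_tree_compute}) and apply the Kapovich--Weidmann--Miasnikov machinery (Theorem \ref{thm:folding}). Specifically, I will verify the two hypotheses of that theorem: (a) $\bbX$ is benign, and (b) each vertex group is effectively coherent with solvable uniform membership problem.

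For (a), recall from the structural description that $\bbX$ is bipartite with white vertex groups of the form $F_w \oplus \bk{z_w}$ (with $F_w$ a finitely generated free group and $z_w$ a generator of the centre prescribed by the orientation) and black vertex groups isomorphic to $\bbZ^2$, which we may write as $\bbZ \oplus \bbZ = \bk{c_b} \oplus \bk{z_b}$ to fit the same template. The edge group images are then exactly of the form $\bk{c_v, z_v}$ inside each incident vertex group: in the white case this is immediate from the tree of cylinders description, and in the black case the image is the entire vertex group, which trivially has this form. This is precisely the hypothesis of Proposition \ref{prop:direct-prod-benign}, which gives properties (i) and (iv) of Definition \ref{defn:benign}. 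Properties (ii) and (iii) are immediate since edge groups are $\bbZ^2$, hence slender with solvable uniform membership problem (standard linear algebra via Smith normal form).

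For (b), free groups are effectively coherent with solvable UMP via Stallings foldings, and $\bbZ^n$ is trivially so. The vertex groups $F_w \oplus \bk{z_w}$ can be handled by projecting to the two factors: given a finite subset $S$, Nielsen reduction on the $F_w$-components (while tracking $z_w$-exponents, exactly as in the proof of Proposition \ref{prop:direct-prod-benign}) computes a presentation of $\bk{S}$ as a direct sum of a free group and a cyclic group, and membership of an element $a = fz^n$ in $\bk{S}$ reduces to a membership question in $\bk{\pi_{F_w}(S)}\leq F_w$ followed by a congruence check in $\bk{z_w}$.

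Having verified both hypotheses, Theorem \ref{thm:folding} applies directly and yields that $\pi_1(\bbX) = \torus\alpha$ is effectively coherent and has effectively solvable uniform membership problem, with the algorithms produced uniformly from the algorithms on vertex and edge groups. The only mild subtlety is the initial computation of the canonical splitting $\bbX$ from a presentation of $\torus\alpha$, but this is exactly what Proposition \ref{prop;struct_tree_compute} provides, so no genuine obstacle arises.
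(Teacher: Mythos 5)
Your proposal is correct and follows essentially the same route as the paper: verify that the canonical (abelian) splitting is benign via Proposition \ref{prop:direct-prod-benign} together with slenderness and membership in the abelian edge groups, check effective coherence and the uniform membership problem for the vertex groups of the form $\freegp{}\oplus\bk{z}$, and then invoke Theorem \ref{thm:folding}. The only difference is that you spell out the computability of the splitting (Proposition \ref{prop;struct_tree_compute}) and the black-vertex case explicitly, which the paper leaves implicit.
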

          \begin{proof}
            Given the abelian splitting 
            of
            $\torus \alpha$, properties (\ref{it:B2}) and (\ref{it:B3}) of
            Definition \ref{defn:benign} follow immediately. Let us now focus
            on groups of the form $\freegp{v} \oplus \bk z$, where $\freegp{v}$
            is free. Properties (\ref{it:B1}) and (\ref{it:B4}) follow from
            Proposition \ref{prop:direct-prod-benign}. The result now follows from
            Theorem \ref{thm:folding}.
          \end{proof}

          Treating a finite subset of a subgroup of $\torus \alpha$ as generating a subgroup of $\torus \alpha$, we immediately obtain: 
          
          \begin{prop}\label{prop;eff_coh}
          	The class of finitely generated subgroups of piecewise trivial suspensions globally fibered, with fiber a  finitely generated free group, is effectively coherent.  
          \end{prop}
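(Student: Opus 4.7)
The plan is to reduce the statement directly to Proposition \ref{prop;eff_coh_ump_for_GDT}, which already gives effective coherence for the ambient group $\torus\alpha$ itself. The hint in the paragraph preceding the proposition is exactly this: one treats a finite subset of a subgroup as a finite subset of $\torus\alpha$.

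More concretely, I would proceed as follows. First, I fix the input format: a group $H$ in the class is presented as a finitely generated subgroup of some piecewise trivial suspension $\torus\alpha$ of a free group, specified by a finite generating set $h_1,\ldots,h_k$ whose entries are given as words in the generators of $\torus\alpha$. A finite subset $S \subset H$ is then provided as a list of words in $h_1,\ldots,h_k$; by literal substitution one rewrites each element of $S$ as a word in the generators of $\torus\alpha$. Second, I apply Proposition \ref{prop;eff_coh_ump_for_GDT} to the subset $S$ viewed as a finite subset of $\torus\alpha$: this yields, algorithmically, a finite presentation of $\bk{S} \leq \torus\alpha$. Third, I observe that the subgroup generated by $S$ inside $H$ is identical (as an abstract subgroup of $\torus\alpha$) to the subgroup generated by $S$ inside $\torus\alpha$, so the presentation just produced is a presentation of $\bk{S}$ as a subgroup of $H$ as well.

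There is essentially no obstacle: the content is all packaged in Proposition \ref{prop;eff_coh_ump_for_GDT}. The only point that requires any care is the bookkeeping in the input format, ensuring that elements of $H$ can be uniformly rewritten as elements of $\torus\alpha$; this is automatic once $H$ is understood as being given by a finite generating set inside $\torus\alpha$. Hence the proof collapses to a one-line observation invoking the previous proposition, which explains why the authors flag it as immediate.
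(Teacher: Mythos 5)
Your argument is exactly the paper's: the authors prove Proposition \ref{prop;eff_coh} by precisely the observation you make, namely that a finite subset of a subgroup of $\torus\alpha$ can be regarded as a finite subset of $\torus\alpha$ itself, so Proposition \ref{prop;eff_coh_ump_for_GDT} applies directly. Your extra remarks on input bookkeeping are harmless elaboration of the same one-line reduction.
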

         
          We can also solve the generation problem.
          
          \begin{prop}\label{prop;eff_gen}
          	The class of finitely generated subgroups of piecewise trivial suspensions  globally fibered, with fiber a finitely generated free group, has a uniform solution to the generation problem.

            More generally, if a class of groups  is (uniformly) effectively
          coherent, and has a uniform (over the groups in the class) solution
          to the uniform (over the f.p. subgroups in a group) membership
          problem, then it has a uniform  solution to the generation problem
          for its subgroups.
        \end{prop}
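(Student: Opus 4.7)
The first statement follows from the second by combining Proposition \ref{prop;eff_coh} (the class of finitely generated subgroups of piecewise trivial suspensions over finitely generated free groups is effectively coherent) with Proposition \ref{prop;eff_coh_ump_for_GDT} (the same class has solvable uniform membership problem, obtained by restricting the ambient algorithm for $\torus\alpha$ to the subgroup). I therefore concentrate on the general reduction.

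The input is: a group $G$ in a class $\mathcal{C}$ given by a finite presentation, a finitely generated subgroup $H\leq G$ specified by a generating tuple $T=(t_1,\dots,t_k)$, and a candidate finite tuple $S=(s_1,\dots,s_\ell)\subset G$; the task is to decide whether $\bk{S}=H$. My plan is the obvious double-inclusion reduction to the uniform membership hypothesis. First, I would feed the generating tuple $T$ into the effective coherence algorithm to compute a finite presentation of $H$; this promotes $H$ to a genuinely finitely presented subgroup of $G$, which is precisely the kind of object on which the hypothesised uniform membership procedure is allowed to operate.

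Next, I would check the inclusion $\bk{S}\subseteq H$: for each $s_i\in S$, test whether $s_i\in\bk{T}=H$ via the uniform membership algorithm applied to the finitely generated subgroup $H\leq G$ (here the ambient finitely presented group is $G$ itself, which is an f.p. subgroup of itself). If some $s_i$ fails, output NO. Otherwise all the $s_i$ lie in $H$, so $\bk{S}$ is a finitely generated subgroup of the now-finitely-presented group $H$; I would then, for each $t_j\in T$, invoke the uniform membership algorithm \emph{inside} $H$ to decide whether $t_j\in\bk{S}$. Output YES precisely when every such test succeeds.

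There is no real obstacle here: correctness is just the equivalence $\bk{S}=H$ iff $\bk{S}\subseteq H$ and $H\subseteq\bk{S}$, and all the constituent subroutines are provided by hypothesis. The role of effective coherence is exactly to convert the generating tuple $T$ into a finite presentation of $H$, so that $H$ qualifies as a finitely presented subgroup to which the membership hypothesis applies; the resulting generation algorithm is then uniform in $G$ and across $\mathcal{C}$ because both input algorithms are.
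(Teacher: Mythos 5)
Your argument is essentially the paper's proof: both inclusions $\bk S\subseteq H$ and $H\subseteq\bk S$ are tested by combining effective coherence (to get finite presentations) with the hypothesised uniform membership algorithm. One small correction for the general (second) assertion: your test of $t_j\in\bk S$ is phrased as an invocation of the membership algorithm \emph{inside} $H$, but the hypothesis only provides uniform membership for f.p.\ subgroups of groups \emph{in the class}, and $H$ need not itself belong to the class in the general setting; the paper instead computes a presentation of $\bk S$ as well and runs both membership tests in the ambient group $G$, which is what you should do here (in the specific case of finitely generated subgroups of piecewise trivial suspensions your detour is harmless, since that class is closed under taking finitely generated subgroups).
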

        
        \begin{proof}
        Let us prove the second assertion. Given a subgroup $H$ given by generators $\calS_H$, and a family $\calS_K$ of elements, one can compute a presentation of $H$ and a presentation of $\langle \calS_K \rangle$, one can then use the uniform membership problem for f.p. subgroups to determine whether $ \calS_K  < H$ and $\calS_H  < \langle \calS_K \rangle$. The first assertion then follows from Proposition \ref{prop;eff_coh}.
        \end{proof}
          
          \subsection{Consequences of effective coherence: computing canonical splittings and solving the conjugacy problem in subgroups}
          
          We are now in a position to compute canonical splittings of finitely generated subgroups of $\torus\alpha$. Since effective coherence lets us algorithmically obtain a presentation for finitely generated subgroups and since subgroups of $\torus\alpha$ are without 2-torsion and have solvable word problem, we can apply \cite[Corollary 1.1]{touikan_detecting_2018} to get:
          
          \begin{cor}\label{cor;subgroupsgrushko}
            Given a finite generating set of a subgroup $H$ of a piecewise trivial suspension, we can compute a Grushko decomposition and presentations of the indecomposable free free factors. In particular, we can decide if $H$ is freely indecomposable.
          \end{cor}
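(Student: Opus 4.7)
The plan is to verify the hypotheses of the Grushko-decomposition algorithm of \cite[Corollary 1.1]{touikan_detecting_2018} and then invoke it directly. That result requires, as input, a finite presentation of the group, a solution to its word problem, and absence of $2$-torsion.

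First, using the effective coherence established in Proposition \ref{prop;eff_coh} (itself a consequence of Proposition \ref{prop;eff_coh_ump_for_GDT}), the given finite generating set of $H \leq \torus\alpha$ can be converted into an explicit finite presentation of $H$. Second, $\torus\alpha$ is an extension of a free group by $\bbZ$, hence is torsion-free, and therefore so is every finitely generated subgroup $H$; in particular $H$ has no $2$-torsion. Third, Proposition \ref{prop;eff_coh_ump_for_GDT} supplies a solution to the uniform membership problem in $\torus\alpha$, and specializing this to the trivial subgroup decides equality with the identity in $\torus\alpha$; combined with the computed finite presentation, this yields a solution to the word problem in $H$.

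With these three hypotheses in place, \cite[Corollary 1.1]{touikan_detecting_2018} outputs a Grushko decomposition of $H$ as a free product of freely indecomposable factors together with finite generating sets for each factor. Applying Proposition \ref{prop;eff_coh} once more to each indecomposable free factor upgrades these generating sets to finite presentations. Free indecomposability of $H$ can then be read off directly from the output: $H$ is freely indecomposable if and only if the decomposition consists of a single factor. I expect no real obstacle here beyond the careful invocation of the prerequisite algorithmic results, since the novelty of the statement is precisely to make visible that the ingredients developed in this section place finitely generated subgroups of piecewise trivial suspensions within the scope of the general Grushko algorithm.
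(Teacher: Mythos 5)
Your proposal is correct and follows essentially the same route as the paper: effective coherence (Proposition \ref{prop;eff_coh}) gives a finite presentation of $H$, torsion-freeness of $\torus\alpha$ rules out $2$-torsion, the membership problem gives the word problem, and then \cite[Corollary 1.1]{touikan_detecting_2018} applies. The paper states exactly these ingredients in the sentence immediately preceding the corollary; you have merely spelled them out in slightly more detail.
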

         
        \begin{prop}\label{prop;construct-canonical}
            Let $S=\{h_1,\ldots,h_r\}$ be a collection of elements in a piecewise trivial suspension $\Gamma=\torus\alpha$. Then we can compute a Grushko decomposition of $H = \bk S$ and for each one-ended free factor of $H$, we can compute the canonical splitting $\hat\bbX$ given by Corollary \ref{coro;hybrid_canonical_for_sbgp}.
        \end{prop}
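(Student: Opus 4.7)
The plan is to combine the coherence and Grushko-type results already established with the algorithmic construction of canonical splittings given by Corollary \ref{coro;hybrid_canonical_for_sbgp}. The three ingredients stack on top of each other naturally, so the proposal is essentially to verify that the hypotheses line up and then to chain the algorithms.

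First, I would apply Proposition \ref{prop;eff_coh_ump_for_GDT} (or, equivalently for subgroups, Proposition \ref{prop;eff_coh}) to the generating tuple $S = \{h_1,\ldots,h_r\}$ to produce a finite presentation $\bk{X_H \mid R_H}$ of $H = \bk{S}$. In parallel, effective coherence also yields an effective solution to the uniform membership problem for subgroups of $H$, which will be needed in the later steps. At this point we know that $H$ is finitely presented and that $H$ is itself a finitely generated subgroup of the piecewise trivial suspension $\Gamma = \torus{\alpha}$, so it belongs to the class of groups treated in Theorem \ref{thm;algo_trac}.

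Next, feed the presentation $\bk{X_H \mid R_H}$ into Corollary \ref{cor;subgroupsgrushko} (which itself invokes \cite[Corollary 1.1]{touikan_detecting_2018}; its hypotheses are satisfied because $H$ has solvable word problem by effective coherence and inherits torsion-freeness from $\Gamma$). This produces a Grushko decomposition $H = H_1 * \cdots * H_k * F_\ell$ together with explicit finite presentations of the freely indecomposable one-ended free factors $H_i$ and a free basis for the free part $F_\ell$.

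Finally, for each one-ended factor $H_i$, I would invoke Corollary \ref{coro;hybrid_canonical_for_sbgp} to compute the canonical splitting $\hat{\bbX}_i$. To apply that corollary, the main task is to check that its hypotheses are all simultaneously available for the pair $(G,\Gamma) = (F,\torus{\alpha})$ where $F$ is the free fibre: $F$ is finitely generated, CSA with cyclic maximal abelians, and hereditarily algorithmically tractable (Stallings foldings give all the relevant algorithms, including centralisers of cyclic subgroups in finitely generated subgroups of $F$); $\Gamma$ is a piecewise trivial suspension by hypothesis, has decidable word problem, and is effectively coherent by Proposition \ref{prop;eff_coh_ump_for_GDT}. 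With these in hand, Corollary \ref{coro;hybrid_canonical_for_sbgp} directly provides the algorithm, and we output $\hat{\bbX}_i$ for each $i$.

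The main obstacle, if there is one, is really bookkeeping rather than a genuine difficulty: ensuring that all the verifications required by Corollary \ref{coro;hybrid_canonical_for_sbgp} (checking bipartiteness, maximality of cyclic vertex groups, rigidity or QH type for thin components, cylinder conditions at black vertices) can be carried out from the presentation produced in the first step, which is exactly what the proof of that corollary already guarantees once effective coherence and the word problem are in place. Thus the whole construction reduces to a finite chain of calls to previously established algorithms, performed first to $H$, then to each one-ended Grushko factor $H_i$.
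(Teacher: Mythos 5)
Your proposal matches the paper's proof: it uses Proposition \ref{prop;eff_coh_ump_for_GDT} together with Corollary \ref{cor;subgroupsgrushko} to compute presentations of the one-ended Grushko factors, then checks that the free fibre is hereditarily algorithmically tractable with computable centralizers of cyclic subgroups so that Corollary \ref{coro;hybrid_canonical_for_sbgp} applies to each factor. The argument is correct and takes essentially the same route as the paper, just spelled out in more detail.
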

        \begin{proof}
            Proposition \ref{prop;eff_coh_ump_for_GDT} and Corollary \ref{cor;subgroupsgrushko} enable us to get presentations for the freely indecomposable free factors of $H$ or to assert that $H$ is in fact a free group.
             
            Note that the free group $F$ is hereditarily algorithmically tractable and that it is possible to compute centralizers of cyclic subgroups in subgroups of $F$. It follows that the requirements of Corollary \ref{coro;hybrid_canonical_for_sbgp} are fulfilled for the one-ended free factors of $H$, and therefore that it allows to compute the canonical splitting $\hat\bbX$.
        \end{proof}

         Bogopolski, Martino, Maslakova, and Ventura \cite{BMMV} proved that the conjugacy problem in [finitely-generated free]-by-cyclic groups is solvable. We can extend this result to finitely generated subgroups. 
        
        \begin{prop}\label{prop;eff_CP}
           The Conjugacy Problem is uniformly solvable for the class of finitely generated subgroups of a piecewise trivial suspension of a finitely generated free group.
        \end{prop}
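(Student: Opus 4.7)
The plan is to reduce conjugacy in $H$ to conjugacy in the vertex groups of its canonical splitting together with decidable double-coset problems in those vertex groups. Given a finite generating set for $H \leq \torus\alpha$, Corollary \ref{cor;subgroupsgrushko} and Proposition \ref{prop;construct-canonical} let me compute a Grushko decomposition $H = H_1 * \cdots * H_k * F_r$ and, for each one-ended freely indecomposable factor $H_i$, the canonical splitting $\hat\bbX_i$ of Corollary \ref{coro;hybrid_canonical_for_sbgp}. Since conjugacy in a free product with decidable CP in the factors is a classical consequence of Bass--Serre normal forms (for the splitting with trivial edge stabilizers), I reduce to CP in each $H_i$. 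If $H_i$ is cyclic there is nothing to do; otherwise the vertex groups of $\hat\bbX_i$ are either finitely generated free groups or trivial suspensions $F_w \oplus \bk{t_w}$, the edge groups are infinite cyclic or isomorphic to $\bbZ^2$, and the action of $H_i$ on the dual Bass--Serre tree is acylindrical, inherited from the $4$-acylindrical action of $\torus\alpha$ on its structural tree (Lemma \ref{lem;keep_acyl}).

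CP is solvable in each vertex group: in a free group by the classical algorithm, and in a trivial suspension $F_w \oplus \bk{t_w}$ by noting that $ft_w^n$ is conjugate to $f't_w^{n'}$ iff $n = n'$ and $f, f'$ are conjugate in $F_w$, since $t_w$ is central. I then apply a standard graph-of-groups conjugacy algorithm on $\hat\bbX_i$. Each element of $H_i$ has a Bass--Serre normal form that is either elliptic (conjugate into a vertex group) or hyperbolic, with a cyclically reduced form whose underlying loop in the graph is a conjugacy invariant up to cyclic permutation. For hyperbolic elements, conjugacy then reduces, for each cyclic shift of the loop (a finite list), to solving a system of double-coset equations $a_j x_j b_j = y_j$ with $a_j, b_j$ in images of adjacent edge groups and $x_j, y_j$ in vertex groups. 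In our vertex groups this is decidable: after projecting any trivial-suspension factor onto $F_w$, each such equation becomes a question about double cosets of a cyclic subgroup in a free group, solvable by Stallings foldings, while the central $\bk{t_w}$-factor contributes only elementary linear equations over $\bbZ$.

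For elliptic elements one uses acylindricity and the effective computability of $\hat\bbX_i$ to enumerate a finite complete list of vertex-group representatives of the conjugacy class, after which vertex-group CP concludes. The hardest part is precisely organizing this elliptic case: a single elliptic conjugacy class in $H_i$ may be visited in several different vertex groups through conjugations that cross edges, and one must systematically produce a finite exhaustive set of such representatives. Acylindricity bounds the lengths of the paths one needs to cross, and the structural constraints on edge-group images in $\hat\bbX_i$ (cyclic or $\bbZ^2$, with centers or fibre-generators pinned down) make the resulting finite list effectively computable, at which point solvability of CP in each vertex group closes the argument.
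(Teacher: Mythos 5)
Your overall architecture is the same as the paper's: reduce to one-ended Grushko factors via Corollary \ref{cor;subgroupsgrushko}, compute the canonical splitting via Proposition \ref{prop;construct-canonical}, solve CP in the vertex groups (free or $F_w\oplus\bk{t_w}$, which is immediate), and handle elliptic elements by using $4$-acylindricity to bound the conjugator's path in the tree and reduce to finitely many edge-crossing checks --- this last part is exactly the paper's case analysis (loops of length at most $4$, with the special role of central elements and edge-group images), so your "hardest part" is in fact the part where you and the paper agree.

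The genuine gap is in your hyperbolic case. First, in a graph of groups with several vertices, a cyclically reduced element of $\pi_1(\hat\bbX_i,b)$ can have syllable length strictly larger than its translation length (the axis need not pass through a lift of the basepoint), and then two conjugate cyclically reduced elements are \emph{not} related by a cyclic permutation of syllables; the paper flags exactly this issue and fixes it by recentering in $\Bass(\hat\bbX_i)$ and working with short positions. Second, even after recentering, deciding conjugacy is not a list of independent double-coset membership questions solvable one syllable at a time by Stallings foldings: the conjugating edge-group element that exits one vertex group and the one that enters the next are images of the \emph{same} edge-group element under the two edge monomorphisms, so you must decide the existence of a globally consistent choice of edge-group conjugators around the whole loop. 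Per-syllable membership $y_j\in I_{e_j}x_jO_{e_{j+1}}$ is necessary but not sufficient. With cyclic and $\bbZ^2$ edge groups and acylindricity this coupled problem is solvable (it is essentially the power-vector/linear-algebra bookkeeping of Section \ref{sec;MWP}), but your proposal does not address it. The paper sidesteps both difficulties at once via Proposition \ref{prop;compute_finitely_many_short}: by $4$-acylindricity every hyperbolic element has a finite, computable set of conjugates in short position (unique per anchor vertex), so conjugacy of hyperbolic elements is decided by comparing two finite computable lists. To repair your argument you would either need to import that short-position machinery, or explicitly solve the coupled conjugator system around the loop; as written, "apply a standard graph-of-groups conjugacy algorithm" hides precisely the step that needs proof.
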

            Given that we have a canonical tree for such a group, it is not surprising that the proof will split in two cases, the case of elliptic elements (in which the trivial action of the normalizer of edge groups helps to reduce to the vertex case), and the case of hyperbolic elements. Although the proof in this later case invokes computations with normal forms,  and Proposition \ref{prop;compute_finitely_many_short}  which is only proved later, we note that this Proposition \ref{prop;eff_CP} is not actually used anywhere else in this paper so there is no circular logic. Proposition \ref{prop;compute_finitely_many_short} is a statement about the normal forms that we develop in Section \ref{sec;MWP}.
        \begin{proof} We are given a piecewise trivial suspension as in the statement, a finite set $S$ of elements in it, and we denote by $H$ the subgroup generated by $S$.  By Corollary \ref{cor;subgroupsgrushko} we can decide whether $H$ is freely decomposable, and if so, find a Grushko decomposition of $H$. Through a classical reduction to free factors (see for instance 
        \cite{lyndon_combinatorial_2001}), it is enough to prove that the conjugacy problem is solvable in the case where $H=\bk S$ is one-ended. Hence, we assume in the following that $H$ is one-ended. 
        
        Compute (by Proposition \ref{prop;construct-canonical})  the canonical splitting of $H$ from Corollary \ref{coro;hybrid_canonical_for_sbgp}, and denote it by $\bbX$.   
        
        We are now given $g,g' \in H$ and must decide whether they are conjugate. Given the splitting constructed in \ref{coro;hybrid_canonical_for_sbgp} and our understanding of free abelian groups and conjugacy in the vertex groups, which are all either free, free abelian of rank 2, or a direct product of a free group and a cyclic group, we can decide if $g, g'$ are in reduced form, and if not perform elementary reductions to obtain words of shorter syllable length. In this manner we can decide whether $g$ and $g'$ are hyperbolic or elliptic. Obviously, if they are not both hyperbolic or both elliptic, they cannot be conjugate.
        
        If $g,g'$ are both hyperbolic elements, Proposition \ref{prop;compute_finitely_many_short} can be used to decide whether they are conjugate. 
        
       If $g,g' \in H$ are elliptic, then since $\bbX$ is computed,  we can decide whether $g,g'$ are conjugate into the same vertex group $\bbX_v$. If for each vertex $v$ of $\bbX$,  one of them is not conjugate into $\bbX_v$, then they cannot be conjugate to one another. Hence we may assume that we found $v$ such that both $g, g'$ are conjugate into  $\bbX_v$. We may as well assume that they are both in $\bbX_v$, or (by a slight abuse of notation and a choice of lift of $v$ in the dual Bass-Serre tree $T$), both in $H_v$ (the stabilizer of $v\in T$ in $H$).  
       

        Let $T_g = \{x \in T|g\cdot x = x\}$ be the pointwise fixed subtree  of $g$, and  $T_{g'}$ defined analogously. By hypothesis, $v \in T_g \cap T_{g'}$. If there exists $h\in H$ such that $hgh^{-1}=g'$, then  $h\cdot T_g = T_{g'}$. 
        
        If $g$  is not conjugate into any images of edge groups,  then $T_g=v$, then any such conjugator  $h$ must be in $\bbX_v$.
        In this case, the existence of such $h$ is decided by a solution to the conjugacy problem  in $\bbX_v$, a free group or a trivial suspension of a free group.

        We may therefore assume that  $g$ and $g'$ are conjugate to the image of a certain edge group, adjacent to $v$.   
        Observing that $H=\pi_1(\bbX,v)  \leq \Bass(\bbX)$ is a free factor,  if $g,g'$ are conjugate in $\Bass(\bbX)$, then there must be some reduced word $w=a_0e_1a_1\cdots e_n a_n$ with the $e_i$ being symbols in $(\edges X)^{\pm 1}$ and $a_i \in \bbX_{v_i}$, for which
        \begin{equation}\label{eqn:conj}
             g' = w^{-1} g w = (a_n^{-1}\bar e_n \cdots \bar e_1 a_0^{-1}) g (a_0e_1a_1\cdots e_n a_n) \, \in \bbX_v.
        \end{equation} 
        For the right-hand side expression to cancel down to $g'$ we need $e_1,\ldots,e_n$ to be a loop based at $v$,  and the word $w$ defines an element of $\pi_1(\bbX,v)$. Furthermore, in the Bass-Serre  $T$, 
        then \eqref{eqn:conj} says that $g$ stabilizes $v$ and $w\cdot v$. Since $d(v,w\cdot v)=n$, and since the action on $T$ is  4-acylindrical,  $n\leq 4$. We distinguish two cases.
        
        \noindent \textbf{Case 1:} If $v$ is an abelian black vertex, then we can assume $a_0$ is trivial and $\bar e_1 g e_1=g_1$ lies in lies in the image of an edge group in $\bbX_{v_1}$. Now if $e_2 = \bar e_1$ then $a_1$ needs to centralize $\bar e_1 g e_1$ which means $a_1$ is in the image of the same edge group; contradicting that $w$ is reduced. Otherwise,  $g_1$ is conjugate into the image of the edge group associated to $e_2$. By our description of the canonical splitting of $H$, this is only possible if $g_1$ is a non-trivial element of the centre of the white vertex group $\bbX_{v_1}$, in which case $a_1^{-1}g_1a_1= g_1$. Let $g_2 = \bar e_2 g_1 e_2$ then $g_2 \in \bbX_{v_2}$ which, again, is a black abelian vertex group so $a_2^{-1}g_1 a_2=g_2$, so we can assume $a_2$ is trivial and since $w$. If $v_2=v$ and $n=2$ we have $g' = \bar e_2 \bar e_1 g e_1 e_2$.
        
        If $n > 2$ then $n=4$ as all loops in $X$ must have even length. Let $g_3 = \bar e_3 g_2 e_3 = \bar e_3 \bar e_2 \bar e_1 g e_1 e_2 e_3$. We must have that $g_3$ is conjugate in $\bbX_{v_3}$ to the image of the edge group associated to $e_4$, which is distinct from the image of the edge group associated to $e_3$ in $\bbX_{v_3}$. Again this is only possible if $g_3$ is in the centre of $\bbX_{v_3}$, which is a non-abelian white vertex group. Now, the equality $g_3 = \bar e_3 \bar e_2 g_1 e_2 e_3$ in $\Bass(\bbX)$ translates to the fact that there are vertices $w,w'$ distance 2 apart and some non-trivial element $h \in H$ that lies in the centre of $H_w$ and $H{w'}$, but since we know that central elements in white vertices fix the ball of radius 2 about those vertices, we get a contradiction to 4-acylindricity.
        
        It follows that in Case 1, it is enough to take all loops $\ell=e_1e_2$ of length 2 based at $v$ in $X$ and verify if $g' = \bar{\ell} g \ell$, to decide whether $g,g'$ are conjugate.
        
        \noindent\textbf{Case 2:} If $v$ is a non-abelian white vertex, then a similar analysis to Case 1 above lets us conclude that either\[
            g' = a_2^{-1}\bar e_2\bar e_1a_0^{-1} g a_0 e_1e_2a_2
        \] where $g$ is conjugated in $\bbX_v$ by $a_0$ into the image of the edge group associated to $e_1$ and $g'$ is conjugated in $\bbX_v$ by $a_2^{-1}$ into the image of the edge group associated to $e_2$, or\[
        g' = a_4^{-1}\bar e_4\bar e_3 \bar e_2e_1a_0^{-1} g a_0 e_1e_2e_3e_4a_4, 
        \] where $g$ is conjugated in $\bbX_v$ by $a_0$ into the image of the edge group associated to $e_1$ and $g'$ is is conjugated in $\bbX_v$ by $a_4^{-1}$ into the image of the edge group associated to $e_4$, and in particular $\bar e_2 \bar e_1 a_0^{-1} g a_0 e_1 e_2$ is in the centre of $\bbX_{v_2}$. Note that for $H$, in a non-abelian white vertex group, the normalizer of the image of an edge group acts trivially on that subgroup by conjugation.
        
        It follows that in Case 2, to decide if $g,g'$ are conjugate, it is enough to consider all loops $\ell$ of length 4 based at $v$, to decide if there are elements $a,a'$ that conjugate $g,g'$ (respectively) into the appropriate edge groups and check whether $g'=a'^{-1} \bar\ell a^{-1} g a \ell a'$.
        
        Thus in both cases the problem reduces to deciding  whether $g,g'$ are conjugate into the edge groups (which is a trivial task for abelian black vertex groups) and then if these conjugates, which are unique, are "immediately" conjugate by the Bass-Serre relations, i.e. a finite composition (at most 4) of the edge group monomorphisms specified in $\bbX$ and their inverses bring the conjugate of $g$ to the conjugate of $g'$.
        \end{proof}
    
    The following restatement of what was shown in the proof above will be useful.
    \begin{lem}\label{lem:conj-elliptic}
        $g,h \in \bbX_v$ are conjugate in $\Bass(\bbX)$ if and only if either\begin{enumerate}
            \item $g$ and $h$ are conjugate in $\bbX_v$, or
            \item $g$ is conjugate in $\bbX_v$ to $g' \in \tau_{e}(\bbX_e)$, $h$ is  conjugate in $\bbX_v$ to $h' \in \tau_{e'}(\bbX_{e'})$. The conjugates $g',h'$ are unique and there is some $\ell$ that is an edge path in $X$ based at $v$ of length at most 4 such that $g' = \ell g' \bar\ell$ in $\Bass(\bbX)$
        \end{enumerate} 
    \end{lem}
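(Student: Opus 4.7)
The plan is to present this lemma as a clean extraction of the case analysis performed inside the proof of Proposition \ref{prop;eff_CP}, which is why it is only stated as a restatement. The easy direction (2 implying conjugacy in $\Bass(\bbX)$) is immediate from reading off the formula, so essentially all the work lies in the converse: showing that if $g,h \in \bbX_v$ are conjugate in $\Bass(\bbX)$ but not in $\bbX_v$, then one may choose a conjugator of very restricted form.

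First I would take a conjugator $w = a_0 e_1 a_1 \cdots e_n a_n$ in reduced Bass normal form with $h = w^{-1} g w$, and rule out the trivial case $n=0$ (which is case 1). Assuming $n \geq 1$, I would pass to the Bass-Serre tree $T$ dual to $\bbX$, fix a lift $\tilde v \in \verts T$ of $v$, and observe that $g$ fixes both $\tilde v$ and $w \cdot \tilde v$, which are at distance $n$. By peeling off letters I would deduce inductively that at each intermediate vertex $v_i$ along the edge path $e_1 \cdots e_n$, the conjugate $\bar e_i \cdots \bar e_1 a_0^{-1} g a_0 e_1 \cdots e_i$ lies in the incident edge-group image; in particular $g$ is conjugate in $\bbX_v$ into $\tau_{e_1}(\bbX_{e_1})$ and, symmetrically, $h$ is conjugate into $\tau_{e_n}(\bbX_{e_n})$. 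Uniqueness of $g',h'$ follows because in the canonical splitting $\bbX$, at each white vertex distinct edge groups have non-conjugate images, while black abelian edge groups are maximal abelian.

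The heart of the argument is the length bound $n \leq 4$, which I would obtain by combining 4-acylindricity (Lemma \ref{lem;keep_acyl}) with the special structure of the two vertex colors. If $v$ is black abelian, after one step of the peeling the conjugate $g_1 = \bar e_1 g e_1$ must sit in two distinct edge group images at a white vertex $v_1$; by the canonical splitting this forces $g_1$ to lie in the centre of $\bbX_{v_1}$, so $a_1$ drops out. Repeating this at $v_2$ (again black abelian) and $v_3$ (white), a potential fifth segment would force a non-trivial centre element of two white vertex groups at distance 2 to coincide, contradicting 4-acylindricity since such a central element fixes the ball of radius 2 about its vertex. The case where $v$ is a white non-abelian vertex proceeds analogously, bounding $n \in \{2,4\}$.

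The main obstacle in this plan is really the careful bookkeeping in that case analysis: each peeling step must simultaneously record which edge group the new conjugate lies in, observe that the adjusting element $a_i$ centralises that image (so may be absorbed), and keep track of the two vertex colors alternating along the path. Everything else, including the uniqueness of $g'$ and $h'$ and the final reformulation $h' = \ell g' \bar\ell$ in $\Bass(\bbX)$ along a loop $\ell = e_1 \cdots e_n$ of length at most $4$ based at $v$, is then a direct transcription of the statements already obtained in the two cases analysed inside Proposition \ref{prop;eff_CP}.
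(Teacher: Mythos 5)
Your proposal is correct and follows essentially the same route as the paper: the lemma is explicitly stated there as a restatement of the case analysis inside the proof of Proposition~\ref{prop;eff_CP}, and your plan — reduced Bass form for the conjugator, the tree-distance/4-acylindricity bound $n\leq 4$, peeling off syllables using that elements lying in two distinct edge-group images at a white vertex must be central and that such central elements fix balls of radius~2 — is exactly that argument (your black-vertex case is slightly less sharp than the paper's, which rules out $n=4$ there altogether, but this does not affect the statement). No gap to report.
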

    
    \section{The mixed Whitehead problem in  globally fibered piecewise trivial suspensions.}\label{sec;MWP}

        The action of a group $G$ on itself by conjugation extends on the set of all tuples, which is    $G^{(\bbN)}= \sqcup_{n>0}  G^n $ (all our tuples are ordered).   If $T\in G^{(\bbN)}  $, we write $[T]$ for its conjugacy class, that is,  the orbit of $T$ under the action of $G$ by conjugation.

        The mixed Whitehead problem under a subgroup $H$  of $\aut G$ or of $\out G$  is the following decision problem. Given an integer $k$, and   for  all $i\in \{1, \dots, k\}$   two tuples  $S_i \in G^{(\bbN)} $ and $T_i \in G^{(\bbN)} $,    decide whether there exists an automorphism $\sigma \in H$ such $\forall i, \,     [\sigma (S_i)] = [T_i]$.  
        If there exists such $\sigma$, we say that  $\vec S = (S_1, \dots, S_k)$ and $ \vec T = (T_1,\dots T_k)$  are \emph{mixed Whitehead equivalent} under $H$. This problem was emphasized and studied by Bogopolski and Ventura \cite{Bogopolski_Ventura}, initially for torsion-free hyperbolic groups.

        In \cite{dahmani_deciding_2019} the mixed Whitehead problem is used for comparing attaching maps of edge groups in vertex groups in certain graphs of groups. Typically, if one is  given a graph, a group for each vertex and for each edge, and  two families of attaching maps from edge groups to vertex groups, one has  two graphs of groups; the mixed Whitehead problem helps to decide whether these two graphs of groups are isomorphic as such. 
        More specifically, suppose for example that  we have a graph. with two vertices $v,w$ and two edges $e_1, e_2$ from $w$ to $v$. Assume that the group of $w$ is a given group $H$, and the edge groups are $H_1, H_2$ with given attaching maps to $H$. Let $G$ be the group of $v$. We need attaching maps $H_i\to G$. Consider  two pairs of monomorphisms $\alpha_1,\alpha_2:H_1 \to G$ and $\beta_1,\beta_2: H_2 \to G$. The choices $(\alpha_1, \beta_1)$ and $(\alpha_2, \beta_2)$ define two graphs of groups. In order to  
        decide whether they 
        are (globally) isomorphic, 
        we want to decide whether there exists an automorphism $\sigma \in \aut G$ such that $\alpha_i$ is conjugate in $G$ to $\sigma\circ\beta_i$, for $i=1,2$. This is the mixed Whitehead problem over $\aut{G}$ for the tuples that are images of a choice of generating tuples in $H_1$ and $H_2$.

        In the refinement \cite{dahmani_touikan_reducing_2021} that is designed to study the conjugacy problem in $\out{F_n}$, the vertex and edge groups have a fibre and an orientation. Typically, the group $G$ that appears there is isomorphic to $\torus \alpha$ for $\alpha$ a polynomially growing automorphism of a free group.  In this particular context, we need to solve the mixed Whitehead problem over the subgroup of $\out{G}$ that preserves the fibre and the orientation.

        The goal of this section is to prove the following:

        \begin{thm}\label{thm;MWP_fo}
            If $\torus \alpha= F\rtimes \bbZ$  is a globally fibered piecewise trivial suspension with fiber a free group $F$, then the mixed Whitehead problem for $\torus \alpha$ under  $ {\torus \alpha}$ is decidable. 
        \end{thm}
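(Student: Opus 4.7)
The plan is to reduce the problem to simultaneous orbit problems inside the vertex groups of the canonical graph of groups decomposition $\bbX$ of $\torus\alpha$, then solve those via a linkage construction combined with Gersten's orbit theorem for subgroups of free groups. First, compute $\bbX$ using Proposition \ref{prop;struct_tree_compute}. By Proposition \ref{prop;delta-aut-surjecte-sur-out}, every element of $\outfo{\torus\alpha}$ lifts to $\delta\autfo\bbX$, and by Corollary \ref{cor:delta_1_repr} we may enumerate a finite set $\Phi_1,\dots,\Phi_N$ of $\delta_1\autfo\bbX$-coset representatives in $\delta\autfo\bbX$. Applying each $\Phi_j$ to $\vec S$ reduces the original decision problem to $N$ instances of the following question: does there exist an element of $\delta_1\autfo\bbX$ carrying $[\Phi_j(\vec S)]$ to $[\vec T]$ simultaneously in all coordinates?

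For the residual problem, rewrite each entry $g$ of the tuples in cyclically reduced normal form with respect to the Bass-Serre tree dual to $\bbX$. Elliptic elements are encoded, via Lemma \ref{lem:conj-elliptic}, by a vertex orbit together with a conjugacy class in that vertex group, while a hyperbolic element is encoded by a cyclic sequence $a_1 e_1 \cdots a_n e_n$ in which the walk $e_1 \cdots e_n$ in $X$ is a conjugacy invariant, and each $a_i$ is determined only up to the double coset $\tau_{\bar e_i}(\bbX_{e_i}) \backslash \bbX_{v_i} / \tau_{e_{i+1}}(\bbX_{e_{i+1}})$. An element of $\delta_1\autfo\bbX$ acts trivially on the underlying graph and on edge groups, so it preserves these combinatorial invariants; its effect on each $a_i$ is multiplication on the two sides by modular coefficients lying in the adjacent edge group images, composed with the underlying vertex-group isomorphism. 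Thus, after a preliminary combinatorial check that the walks induced by $\Phi_j(\vec S)$ and $\vec T$ agree, the decision question splits into a simultaneous orbit problem at each vertex $v$ of $X$: find a vertex group automorphism of the prescribed type carrying all attached double cosets and elliptic conjugacy classes at $v$ to their counterparts.

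At each white vertex $w$, write $\bbX_w = F_w \oplus \bk{t_w}$ and, following the strategy sketched in the introduction, associate to every double coset $\tau_{\bar e}(\bbX_e) g \tau_{e'}(\bbX_{e'})$ its \emph{linkage} $L_g = \bk{\tau_{\bar e}(\bbX_e),\, g \tau_{e'}(\bbX_{e'}) g^{-1}} \leq \bbX_w$, together with its ordered pair of peripheral cyclic subgroups marking the two sides. Projecting to the free factor $F_w$ turns the required equivalence of linkages into an orbit problem for a tuple of marked finitely generated subgroups under $\aut{F_w}$. The unmarked version of this orbit problem is solvable by Gersten's theorem \cite{gersten_whiteheads_1984}; the extra marking data is handled by a dual enumeration of admissible peripheral structures in the style of Proposition \ref{prop:iso-problem-for-suspensions}, Step 3. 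For a black vertex, $\bbX_b \simeq \bbZ^2$ and the analogous orbit problem reduces to routine linear algebra over $\bbZ$. Gluing these per-vertex solutions together, subject to the edge-group identifications enforced by $\delta_1\autfo\bbX$, produces the algorithm.

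The principal obstacle is precisely this gluing: Gersten's theorem yields, for each vertex, an automorphism of the fibre carrying one tuple of subgroups to the conjugacy class of another, but a priori such an automorphism need neither preserve the ordered \emph{left} and \emph{right} peripheral markings distinguishing the two sides of each double coset, nor be compatible across vertices to assemble into a single element of $\delta_1\autfo\bbX$. Circumventing this forces the dual linkage construction and a careful bookkeeping argument, ensuring that the locally produced vertex-group automorphisms can be realized simultaneously without disrupting the double-coset invariants witnessed at the other endpoint of each edge.
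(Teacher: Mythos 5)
Your outline follows the same strategy as the paper — reduce via coset representatives to $\delta_1\autfo\bbX$, polarize into normal forms, convert double-coset matching into a subgroup orbit problem via linkages, and invoke Gersten — but two load-bearing components are missing, and without them the argument does not close.

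First, you correctly observe that Gersten's theorem is not directly sufficient, but you then defer the fix to ``a dual linkage construction and a careful bookkeeping argument.'' In the paper this gap is bridged by a genuinely new orbit theorem, Theorem \ref{thm;gersten_hack}: the mixed Whitehead problem for \emph{tuples of conjugacy classes of tuples of subgroups}. Gersten's Theorem \ref{thm:gersten} decides whether $\alpha(A_1),\dots,\alpha(A_r)$ are conjugate to $B_1,\dots,B_r$ by possibly \emph{different} conjugators; what is needed here is that all linkages attached to a single vertex group, coming from a single tuple $S_i$, be moved by a \emph{common} conjugator, so that Proposition \ref{prop;same_linkages_imples_double_coset} recovers the double coset. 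That refinement requires the $F(X,K)$-star encoding and rigid test subgroups of Section \ref{sec;hack} (Propositions \ref{prop;gersten-hack-easy}, \ref{prop;rabbattre}, \ref{prop;rebalancer}) and is not a bookkeeping detail — it is a theorem with its own multi-page proof.

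Second, you omit the power-vector step entirely. Matching double cosets (equivalently, pairs of dual linkages) pins down the $\bar f_i$ in the normal form~\eqref{eq:normalform}, but it leaves completely free the exponents $(d_0,d_2,\dots,d_{n+1},k)$, i.e.\ the factors $c_e^{d}$ and $t_b^k$ absorbed by central and edge-group elements. The $\delta_1\autfo\bbX$-action and the conjugation by elements of $i_e(\bbX_e)$ act on these exponents affinely (Propositions \ref{prop:power-moves} and \ref{prop:power-edge-moves}, Corollary \ref{cor:action-on-power-vectors}), and the final reduction is a $\bbZ$-linear algebra problem in the embedding space $\calE(\vec T')$ using the homomorphism $\chi$ and a computable generating set of the stabilizer $A_{\vec T'}$. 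Your proposal stops at the linkage comparison and so never resolves this residual degree of freedom. A related omission is the short-position machinery of Section \ref{sec;short_pos}: because the Bass--Serre tree is not an amalgam or HNN, cyclically reduced words over $\pi_1(\bbX,b)$ are not unique, and you need the finitely many centered short conjugates (Proposition \ref{prop;compute_finitely_many_short}) and the enrichment of tuples to a form $(g_0,g_1,\dots)$ in order to bound the conjugators appearing in~\eqref{eqn:final} by elements of a fixed edge group.
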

    
        Given $\torus\alpha$, by Proposition \ref{prop;construct-canonical}, we can compute its structural tree to get $\torus\alpha = \pi_1(\bbX,b)$ and express any $g \in \torus\alpha$ as some word\[
        g = g_1\, e_1 \, g_2\, e_2\, \cdots \, e_n\, g_{n+1}
        \] where the $g_i$ are elements of the vertex groups that are isomorphic to $F_n\oplus \bbZ$ (for different $n$, depending on the vertices). In this sense, elements of $\torus\alpha$ can be thought of as tuples of elements of the ``simpler'' vertex groups. Unfortunately, this approach is too naive. 

        In Subsection \ref{sec:nf} we present normal form conventions that enable us to ``coordinatize'' elements $g\in \torus\alpha$, or elements of one-ended subgroups of $\torus\alpha$, using a tuple $DC(g,e)$ of double cosets, a tuple of integers $\mathrm{pow}(g,e)$ of powers of generators of cyclic groups, and some power $t^k$ of the stable letter. These normal forms rely on a basepoint $b$ of $\bbX$, that defines $\pi_1(\bbX,b)$, and on some edge $e$ called a polarizing edge, whose importance will only become fully apparent in Subsection \ref{sec:power-tuples}.

        In subsection \ref{sec;short_pos} we introduce short positions, which are a strengthening of cyclically reduced conjugates. Although the concept of cyclically reduced is well-understood in free groups or simple graphs of groups such as amalgamated products or HNN extensions, the concept of cyclically reduced in $\pi_1(\bbX,b)$ is not satisfactory when the axis of translation of a hyperbolic element in the dual Bass-Serre tree doesn't contain a lift of the base point $b$. This leads us to ``changing basepoints'', i.e. to consider $\pi_1(\bbX,v)$ instead of $\pi_1(\bbX,b)$, where both $b,v \in \verts X$. This is achieved by noting that \[
        \pi_1(\bbX,v),\pi_1(\bbX,b) \leq \Bass(\bbX)
        \] are conjugate subgroups. By exploiting the 4-acylindricity of the canonical trees for $\torus\alpha$ and its one-ended subgroups, we can show that hyperbolic elements have a finite number of distinguished conjugates, called conjugates in short position and that these can be computed. On the one hand, this result covers the conjugacy problem for hyperbolic elements in one-ended subgroups of $\torus\alpha$. On the other hand, short positions play another important role. Given two tuples of tuples $\vec S = (S_1,\ldots,S_k)$ and $\vec T = (T_1,\ldots,T_k)$, the mixed Whitehead problem is actually an orbit problem for $\autfo{\torus\alpha}\times(\torus\alpha^k)$ as we need to find not only $\sigma \in \autfo{\torus\alpha}$ but also conjugators $g_1,\ldots,g_k$ such that $g_i\sigma(S_i)g_i^{-1}=T_i$ for $i=1,\ldots,k$. Short positions will play a role in restricting the conjugators we need to consider.

In general, for a free group $F$ there is no reasonable way in which $\aut F$ will act on a double coset space $H\backslash F /K$ and no known algorithms that decide if there is an automorphism that sends a double coset to some target double coset. In Section \ref{sec:link-config} we associate a pair of subgroups to the double cosets that occur in $DC(g,e)$. The reason for doing so is that there is an algorithm due to Gersten (see Theorem \ref{thm:gersten}) that can decide if there is an automorphism of a free group that sends a subgroup of a free group to another subgroup. We then introduce linkage configurations which is a way of collecting all the linkages from a sequence $DC(g,e)$ (as well as their duals) that will be used later on to decide if it is possible to bring all the double cosets, in the right order, arising from words in the tuples in $\vec S$ to corresponding cosets arising from the content of $\vec T$.

In Section \ref{sec:tuples-of-tuples} we describe slight modifications to the tuples in $\vec S$ and $\vec T$ that will make them easier to work with. These amount to ensuring that the first entry of a tuple is hyperbolic, if possible, and conjugating in $\Bass(\bbX)$ so that the first entry is also in centred short position.

In Section \ref{sec;action_delta_1} we verify that a special kind of action of $\delta_1\autfo\bbX$ on tuples of tuples behaves well with respect to choices of normal forms and other conventions. In particular, we show that a variation of Gersten's algorithm: the mixed Whitehead problem for subgroups of free groups (see Theorem \ref{thm;gersten_hack}) can be used to simultaneously send all double cosets that occur in $\vec S$ to the double cosets that occur in $\vec T$.

In Section \ref{sec:solving-MWHP} we finally show, through a series of reductions, how to solve the fibre and orientation preserving mixed Whitehead problem in $\torus\alpha$ by using all the material computations developed up to that point. Finally in Section \ref{sec;hack} we give the proof of Theorem \ref{thm;gersten_hack}, which is substantial and was delayed so as not to break the flow of our arguments.

\subsection{Describing elements}\label{sec:describe}

\subsubsection{Normal forms}\label{sec:nf}

For this subsection, we will exceptionally consider $G$ to either be a piecewise trivial suspension or a finitely generated one-ended subgroup of a piecewise trivial suspension. We have a  canonical graph of groups $\bbX$ and for some $b \in \verts X$ we identify $G = \pi_1(\bbX,b)$, where $b$ is chosen to be a white vertex, or equivalently, that $\bbX_b$ is non-abelian. We also fix a \emph{breadth first search} (BFS) spanning tree $\tau \subset X$ based at $b$, this means that for any $v \in \verts X$, $\tau$ contains a shortest path from $b$ to $v$.

Consider a word in the generators of the Bass group representing an element in $\pi_1({\bbX},b)$: 
    \begin{equation}\label{eqn:bass-grp-elt}
        w= g_1\, e_1 \, g_2\, e_2\, \cdots \, e_n\, g_{n+1} \in \Bass(\bbX)
    \end{equation}
with $g_j \in \bbX_{\tau(e_{j-1})}$ if $j>1$ and  $g_j \in \bbX_{i(e_{j})}$ if $j\leq n$. We start by giving vocabulary to describe a few characteristics of $w$ and the element it defines. Its \emph{underlying $X$-path} (in $X$) is the path $(e_1, \dots, e_n)$, from $b$ to $b$. The word $w$ is \define{reduced} if its $X$-path has minimal length among all words representing the same element. Its \emph{syllable length} is the number of edges that appear in the $X$-path for any reduced word. 

Even among reduced words, the choice of elements $g_i$ that appear in \eqref{eqn:bass-grp-elt} are not unique. A normal form is a function that assigns to every element of $g\in G$ of syllable length $n$ a tuple $(g_1,\ldots,g_{n+1})$ of elements lying in the appropriate vertex groups, i.e. it is a function that assigns to every $g \in G$ a specific representative word.

The vertex groups of $\bbX$ are either isomorphic to $F_v$, or $F_v\oplus\bk{t_v}$, or $\bbZ\oplus \bbZ$, or $\bbZ$, where $F_v$ is a free group of rank $n_v$ (isomorphic to a subgroup of the fiber $F$, but not necessarily in $F$, in the case where $G$ is not the entire globally fibered ambient group). A choice of ordered basis for each $F_v$ induces the \texttt{shortlex} well-order $\leq_v$ on $F_v$. If $\bbX_v \simeq F_v\oplus\bk{t_v}$  then we write $(w,t_v^n) < (u,t_v^m)$ if and only if either $|n|<|m|$ or, if $|n|=|m|$ then $|m|/m < |n|/n$, or if $n=m$ then $w <_v u$. In this manner, we have a well-order on each non-abelian vertex group.

For all edges $e$ with $\tau(e)= v$, we denote $I_e = \tau_e(\bbX_{e})\leq \bbX_v$. Similarly, if $v = i_e(e)$, we denote by $O_e = i_e(\bbX_{e}) \leq \bbX_v$. Note that $O_e=I_{\bar e}$. A pair of edges $(e,e')$ such that $\tau(e)=i(e')$ is called a \emph{turn}. An element represented by a word $w$ as in \eqref{eqn:bass-grp-elt} is an element of $\pi_1(\bbX,v)$ if and only if $i(e_1)=b$ and each $(e_i,e_{i+1})$ is a turn where indices $i,i+1$ are taken are modulo $\bbZ/n\bbZ$. The \emph{double coset space associated to the turn $(e,e')$} is $I_e\backslash\bbX_v/O_{e'}$ and we define\[
    DCR(e,e') = \{\bar g \in \bbX_v : \bar g \textrm{ is minimal w.r.t $<$ in its $I_e\backslash\bbX_v/O_{e'}$ double coset}\},
\] to be the set of \emph{double coset representatives for the turn $(e,e').$}

We will now define normal forms. First, we note that $\bbX$ will always have an underlying bipartite graph with black and white vertices. Furthermore, if $v$ is a black vertex then $\bbX_v$ will be abelian (either isomorphic to $\bbZ$ or $\bbZ^2$) and it will coincide with the images of all incident edge groups. In particular, we can always use the Bass group relations to rewrite $w$ as in \eqref{eqn:bass-grp-elt} so that $g_{2j}=1$ for all $1\leq j \leq n/2$.

\begin{conv}
    When writing elements of $\pi_1(\bbX,b)$ as in \eqref{eqn:bass-grp-elt}, we will always ensure that $b$ is a white vertex with $\bbX_b$ non-abelian and we will ensure that $g_i$ that lie in abelian vertex groups are trivial. 
\end{conv}

Given a word $w$ as in \eqref{eqn:bass-grp-elt}, due to not being ``sandwiched'' between two letters from $\edges X$, there is some ambiguity in the choice of double coset space for the elements $g_1,g_{n+1}$. We remedy this by picking an edge $e \in \edges X$ such that $i(e)=b$. Let $g \in \pi_1(\bbX,b)$ be written as a reduced word as in \eqref{eqn:bass-grp-elt} then the \emph{tuple of double cosets for $g$ polarized by $e$} is the tuple\[
DC(g,e)=(I_{\bar e}g_1O_{e_1},1,I_{e_2}g_3O_{e_3},1,\ldots,I_{e_n}g_{n+1}O_{e}),
\] in particular we take the first and last cosets to lie in the double coset spaces associated to the turns $(\bar e,e_1)$ and $(e_n,e)$. This choice of $e$ will become important later on.

We also have the corresponding sequence of \emph{double coset representatives for $g$ polarized by $e$.}
\[
    DCR(g,e) = (\bar f_1, 1, \bar f_3 ,\ldots,\bar f_{n+1}).
\]
        We note that, as indicated by the notation, $DCR(g,e)$ and $DC(g,e)$ depend only on $g \in \pi_1(\bbX,b)$ and not on the choice of word used to represent $g$. 
        Let $\bbX_v = F_v\oplus\bk{t_v}$ be a non-abelian vertex group with infinite cyclic centre generated by $t_v$.  Since $G$ is a one-ended subgroup of a piecewise trivial suspension, Proposition \ref{prop;canonical_for_subgroups} item \ref{it:interstitial-central} implies that the element $t_v$ lies in $O_{e}$ for all $e$ such that $i(e)=v$.

        For any $g \in \pi_1(\bbX,b)$, expressed as a word $w$ of the form of  \ref{eqn:bass-grp-elt},  let  $\bar f_i$ be the $i$-th entry in the tuple $DCR(g,e)$, and for all edge $e'\in \edges X $, let $c_{e'}$ is the $<$-minimal generator of the cyclic group $I_{e'}\cap F_{\tau(e')}$. The previous observation that $t_{i( \bar {e'})}$ lies in $O_{\bar {e'}} = I_{e'}$,  allows to express $g$ as a new word \begin{equation}\label{eq:normalform}
    \bar{w}=c_{\bar e}^{d_0}\bar f_1 e_1e_2 c_{e_2}^{d_2}\bar f_3 \cdots e_nc_{e_n}^{d_n} \bar f_{n+1} c_{\bar e}^{d_{n+1}} t_b^k.
\end{equation}

The word $\bar w$ given in \eqref{eq:normalform} is called the \emph{normal form for $g \in \pi_1(\bbX,b)$ polarized by $e$} the tuple of exponents\[
\mathrm{pow}(g,e)=(d_0,d_2,\ldots,d_n,d_{n+1},k)
\] is called the \define{power vector} and the exponent $k$ is called the \emph{abscissa}, where we set $k=0$ if $t_b=1$. In summary.
\begin{lem}[Normal form polarized at $e$]
  Given $g\in \pi_1(\bbX,b)$ and some $e \in \edges X$ such that $i(e)=b$ the $X$-path underlying $g$, the  tuples $DCR(g,e)$ and $\mathrm{pow}(g,e)$, and the abscissa of $g$ are well-defined.
\end{lem}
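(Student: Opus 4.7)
The plan is to establish the four well-definedness claims in sequence, exploiting the fact that Bass-Serre duality together with the Bass edge-slide relations account for all ambiguity between reduced expressions of a single element.

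First, the underlying $X$-path is well-defined by standard Bass-Serre theory: a reduced word as in \eqref{eqn:bass-grp-elt} representing $g$ corresponds to a geodesic in the Bass-Serre tree $T$ dual to $\bbX$ joining a fixed lift $\tilde b$ of $b$ to $g \cdot \tilde b$. Since $T$ is a tree, this geodesic is unique and its projection to $X$ depends only on $g$. Second, any two reduced expressions of $g$ with the same $X$-path differ by iterated applications of the Bass relation $i_{e_i}(\gamma) e_i = e_i \tau_{e_i}(\gamma)$ for $\gamma \in \bbX_{e_i}$. Such a slide modifies $g_i$ by right multiplication by an element of $O_{e_i}$ and modifies $g_{i+1}$ by left multiplication by the corresponding element of $I_{e_i}$. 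For an interior factor this shows directly that $I_{e_{i-1}} g_i O_{e_i}$ is invariant. At the boundaries, $g_1$ is modifiable only on the right (there is no $e_0$) and $g_{n+1}$ only on the left, so the cosets $g_1 O_{e_1}$ and $I_{e_n} g_{n+1}$ are invariant; enlarging them by the polarization-prescribed subgroups $I_{\bar e}$ and $O_e$ preserves well-definedness. Hence $DC(g,e)$ depends only on $g$ and $e$, and $DCR(g,e)$ follows by taking the $<_v$-minimum in each entry.

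The expected main obstacle is uniqueness of the power vector and the abscissa. Once the edges $e_i$ and the minimal representatives $\bar f_i$ are fixed, the skeleton of the normal form $\bar w$ in \eqref{eq:normalform} is determined, and the only freedom remaining is in the exponents $(d_0, d_2, \ldots, d_n, d_{n+1}, k)$. The plan is to argue uniqueness slot by slot: in a given reduced representative of $g$, the ``true'' vertex-group factor $g_i$ satisfies $g_i = \alpha_i \cdot \bar f_i \cdot \beta_i$ with $\alpha_i \in I_{e_{i-1}}$ and $\beta_i \in O_{e_i}$; the content of the normal form convention is that $\alpha_i$ is forced to be the unique power $c_{e_{i-1}}^{d_{i-1}}$ of infinite order in the cyclic group $I_{e_{i-1}} \cap F_{\tau(e_{i-1})}$, with any $t_{\tau(e_{i-1})}$-component slid across $e_i$ to join the next factor. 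By Proposition \ref{prop;canonical_for_subgroups}, edge groups are maximal cyclic in the adjacent non-abelian vertex groups and map surjectively onto the centres of adjacent abelian cylinder vertices; this rigidity ensures the decomposition is unique.

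For the boundary, the remaining discrepancy between $g$ and the skeleton word is an element of $\bbX_b \simeq F_b \oplus \bk{t_b}$, which decomposes uniquely as $c_{\bar e}^{d_0}(\,\cdot\,)c_{\bar e}^{d_{n+1}} t_b^k$ using independence of the direct summand $\bk{c_{\bar e}}$ in $F_b$ (where $c_{\bar e}$ is by construction a generator of the cyclic group $I_{\bar e} \cap F_b$) and the central factor $\bk{t_b}$; this pins down $d_0, d_{n+1}$, and $k$ separately. Since all these decompositions are unique, the tuple $\mathrm{pow}(g,e)$ and the abscissa $k$ depend only on $g$ and the polarizing edge $e$, completing the proof.
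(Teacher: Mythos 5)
The paper does not actually supply a proof of this lemma---it is stated ``In summary,'' with the key well-definedness claim (``the word $\bar w$ \dots is fully defined'') simply asserted in the preceding prose. Your outline has the right shape: $X$-path via the Bass--Serre geodesic, $DC(g,e)$ and $DCR(g,e)$ via invariance of the relevant cosets under the Bass slide relations. But the uniqueness of the power vector is where the work is, and your argument there is too loose. For an interior factor $g_i=\alpha_i\bar f_i\beta_i$ with $\alpha_i\in I_{e_{i-1}}$ and $\beta_i\in O_{e_i}$, uniqueness of the exponent $d_{i-1}$ is equivalent to triviality of $\bk{c_{e_{i-1}}}\cap\bar f_i\bk{c_{\bar e_i}}\bar f_i^{-1}$ inside $F_v$; ``maximal cyclic'' alone does not deliver this. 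The decisive fact---which you gesture at via ``rigidity'' but never isolate---is that $e_{i-1}$ and $\bar e_i$ are distinct edges of the tree of cylinders at $v$ (by reducedness of the $X$-path), so $c_{e_{i-1}}$ and $c_{\bar e_i}$ are non-conjugate in $F_v$, and then $\bk{c_{e_{i-1}}}$ and $\bar f_i\bk{c_{\bar e_i}}\bar f_i^{-1}$ are two distinct maximal cyclic subgroups of a free group, hence meet trivially.

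Your boundary argument is not correct, and there is a genuine corner case. ``Independence of the direct summand $\bk{c_{\bar e}}$'' does not pin down $d_0$: the relevant condition is again triviality of $\bk{c_{\bar e}}\cap\bar f_1\bk{c_{\bar e_1}}\bar f_1^{-1}$, and this fails when the polarizing edge satisfies $e=e_1$ and the minimal representative $\bar f_1=1$ (i.e.\ $g_1\in O_{e_1}$, whence $c_{\bar e}=c_{\bar e_1}$). In that situation a power of $c_{\bar e}$ may be slid across $e_1e_2$ freely, so $(d_0,d_2)$ and $(d_0+r,d_2\mp r)$ represent the same element of $\pi_1(\bbX,b)$, and $\mathrm{pow}(g,e)$ is not uniquely determined without a further convention; the analogous ambiguity between $d_{n+1}$ and $d_n$ arises when $e=\bar e_n$ and $\bar f_{n+1}=1$. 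The paper absorbs this silently downstream: the prefix-short conventions of Section~\ref{sec;short_pos} set $d_0=0$ in precisely this situation, and the closing remark of the proof of Proposition~\ref{prop:power-moves} observes that the corresponding twist coordinate vanishes when $g$ is prefix clean. Your proof should either restrict the polarizing edge so that it avoids $e_1$ and $\bar e_n$, or explicitly normalize the offending exponent to $0$ in the corner cases and then argue uniqueness of the remaining entries.
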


\subsubsection{Short positions}\label{sec;short_pos}

We say that $g \in \pi_1(\bbX,b)$ is \emph{cyclically reduced} if it has minimal syllable length among all its conjugates in $\pi_1(\bbX,b)$. For graphs of groups that are more complicated than amalgamated products or HNN extensions, it may be that $g$ is cyclically reduced, but its syllable length exceeds the translation length of the action of $g$ on the dual Bass-Serre Tree. By our convention that $\tau$ is a BFS-tree we have that $g$ has a family of cyclically reduced conjugates of the form\[
    E g_v \bar E
\] where $E$ is a path from $b$ to some white vertex $v$ in $\tau$ and $g_v\in \pi_1(\bbX,v) \leq \Bass(\bbX)$. Furthermore, the syllable length of $g_v$ coincides with the translation length of $g$. We say that $g_v$ is a \emph{recentering of $g$ at $v$}. We note that the axis of translation of $g$ in the Bass-Serre tree must contain a lift of $v$.

We assume that $g \in \pi_1(\bbX,b)$ is a hyperbolic element and therefore that any recentering $g_v$ is also hyperbolic. We will switch the basepoint to $v$ and use the notation developed in Section \ref{sec:nf} in this new context.

We say that $h \in \pi_1(\bbX,v)$ is \emph{centred $e_1e_2$-prefix-short} if it is cyclically reduced, has syllable length equal to its translation length, and can be written in normal form
\[
    h = e_1e_2 c_{e_2}\bar f_2 \cdots.
\] It is obvious that any hyperbolic element recentered at $v$ is conjugate in $\pi_1(\bbX,v)$ to an element in prefix-short form. Two prefix-short elements\[
 e_1e_2 c_{e_2}^{d_2}\bar f_2 \cdots, e_1e_2 c_{e_2}^{d'_2}\bar f_2' \cdots
\] are \emph{elliptically-conjugate} if they are conjugate by an element of $O_{e_1}\leq \pi_1(\bbX,v)$ and we say they lie in the same \emph{elliptic-class}. 

We say that $h\in\pi_1(\bbX,b)$ is \emph{$e_1e_2$-prefix-short} if there exists a path $E\subset \tau$ from $b$ to $v$ such that \[
    h = Eg_v\bar E
\] and $g_v$ is centred $e_1e_2$prefix-short. It is clear that every hyperbolic  $g\in\pi_1(\bbX,b)$ has at least one prefix-short conjugate. 

If $h \in \pi_1(\bbX,v)$ is centred $e_1e_2$-prefix-short, we say it is in \emph{centred short position} if its $e_1$-polarized normal form is
 \begin{itemize}
     \item $h = e_1e_2\bar c_{e_2}^{d_2} f_2c_{\bar e_1}^{d}t_v^k$, with $d_2=0$ and $d$ minimal (w.r.t some fixed well-order on $\bbZ$) among all other conjugates of $h$ that are in $e_1e_2$-prefix-short position with $d_2=0$, if $h$ has translation length 2
     \item $h = e_1e_2c_{e_2}^{d_2}\bar f_2e_3e_4c_{e_4}^{d}\bar f_4 \cdots$,  with $d_2=0$ and $d$ minimal (w.r.t some fixed well-order on $\bbZ$) among all other conjugates of $h$ that are in $e_1e_2$-prefix-short position with $d_2=0$, if $h$ has translation length 4 or more.
 \end{itemize}
 
 Although they are not explicitly invoked in the definition, the prefix $e_1e_2$ as well as the basepoint $v$ are completely specified. To show that this is canonical we have the following:
 
\begin{lem}\label{lem:6-edges}
  The exponent $d$ given in the definition of centred short position determines the first 6 edges of the segment $[\tilde v,h\cdot \tilde v]$ of the axis of $h$ for the action on the Bass-Serre tree dual to $\pi_1(\bbX,v)$.
\end{lem}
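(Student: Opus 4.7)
The plan is to unfold the axis of $h$ in the Bass--Serre tree dual to $\pi_1(\bbX,v)$ as an explicit concatenation of translates of the fundamental domain $[\tilde v,h\cdot\tilde v]$, then track how the exponent $d$ appears in the coset labels of the first six edges. First, since $h$ is cyclically reduced with syllable length equal to translation length (by the definition of prefix-short), the segment $[\tilde v,h\tilde v]$ is a fundamental domain with no backtracking, and the axis of $h$ is the bi-infinite concatenation
\[
\cdots\;\cup\;h^{-1}\!\cdot[\tilde v,h\tilde v]\;\cup\;[\tilde v,h\tilde v]\;\cup\;h\!\cdot[\tilde v,h\tilde v]\;\cup\;\cdots
\]
I interpret ``the first 6 edges of the segment $[\tilde v,h\tilde v]$ of the axis'' as the six edges of the axis encountered when leaving $\tilde v$ in the direction of $h\tilde v$ (continuing into $h\cdot[\tilde v,h\tilde v]$ and beyond whenever the translation length is smaller than $6$).

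Next, I would write $h$ as a word in $\Bass(\bbX)$, using the expression given in the definition of centered short position. In the translation length $2$ case, $h=e_1e_2\bar f_2\,c_{\bar e_1}^{d}t_v^{k}$, and the tail $c_{\bar e_1}^{d}t_v^{k}$ lies in the abelian subgroup $O_{e_1}\leq\bbX_v$; the fixed data $e_1,e_2,\bar f_2,k$ specify the first four edges of the axis up to the edge incident to $h^{2}\tilde v$, while the fifth and sixth edges are the images under $h^{2}$ of the first two, and their cosets in $\pi_1(\bbX,v)$ involve a factor of $c_{\bar e_1}^{d}$ that depends genuinely on $d$. In the translation length $\geq 4$ case, $h=e_1e_2\bar f_2e_3e_4c_{e_4}^{d}\bar f_4\cdots$, and the dependency on $d$ already appears as an $O$-coset within the first fundamental domain, in the fourth or fifth edge; six edges is always enough to see both the position where $d$ appears and its effect on the gluing of consecutive fundamental domains.

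The key step is to verify, using $4$-acylindricity of the action of $\pi_1(\bbX,v)$ on its Bass--Serre tree (Lemma \ref{lem;keep_acyl} for suspensions, and Proposition \ref{prop;canonical_for_subgroups} together with the same argument for one-ended subgroups), that distinct values of $d$ yield distinct configurations of these six edges. Indeed, once all other parameters of the centered short position are fixed, a change of $d$ by $\Delta d$ modifies the tail by $c_{\bar e_1}^{\Delta d}\in O_{e_1}$; this element acts non-trivially on the edges adjacent to $\tilde v$ that are not of type $e_1$, so at least one of the six edges is moved to a different coset. Conversely, knowing the first six edges of the axis lets one read off the coset $c_{\bar e_1}^{d}t_v^{k}\cdot O_{e_1}$ modulo the ambiguity already absorbed in the fixed parameters, and then $d$ is recovered uniquely.

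The main obstacle is the case distinction between translation length $2$ and translation length $\geq 4$, and the bookkeeping of which specific edge carries the $d$-dependency. The number $6$ is essentially forced: the longest fundamental domain in our setting has length $4$, and $4$-acylindricity requires an overlap of length at least $2$ into a neighbouring fundamental domain in order to rigidify the local combinatorial type, giving the total of $6$.
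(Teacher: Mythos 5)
Your proposal is essentially the same argument as the paper's: unfold the axis as a bi-infinite concatenation of translates of the fundamental domain and read off the $d$-dependence from the coset labels. The paper packages this more directly by computing the $e_1$-polarized normal form of a single power of $h$ ($h^3$ when the translation length is $2$, $h^2$ otherwise) so that all six edges are visible in one word, with the dependence showing up as the migrated exponent; you instead track cosets fundamental-domain-by-fundamental-domain, but the two computations are the same.

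One thing worth flagging: your self-described ``key step'' --- invoking $4$-acylindricity to show that \emph{distinct} values of $d$ give distinct $6$-edge configurations --- is not what this lemma asserts, and is not needed for it. The lemma is the \emph{forward} determination: given $d$ (and the already-fixed data $e_1$, $e_2$, $\bar f_2$, $k$, etc.), the first six edges are pinned down. That is an explicit computation with no acylindricity. Acylindricity enters only in the corollary immediately after, where one needs that an element of $\pi_1(\bbX,v)$ fixing a segment of length $6$ is trivial; you have in effect moved the corollary's argument into the lemma. Also, your closing heuristic (``the longest fundamental domain in our setting has length $4$'') is not quite right, since the translation length can be arbitrarily large; the reason $6$ suffices is simply that the $d$-exponent first appears by the fifth edge of the unfolded axis in every case, and a six-edge window therefore already records it.
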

\begin{proof}
  First, consider the case where $h$ has translation length 2. Then the axis of $h$ is the same as the axis of $h^3$, passing to $e_1$-polarized normal forms we have\[
    h^3= e_1e_2\bar f_2 e_1e_2c_e^{d'}\bar f_2 e_1e_2 c_{e_2}^{d''} \bar f_2 t_v^{3k},
  \] where ${c_{e_2}}^{d'}$ is obtained by migrating $c_{\bar e_1}^{d}$ across $e_1e_2$, and it is clear that the exponent $d'$ determines the first 6 edges of the path $[\tilde v,h^4\tilde v]$ and therefore of the axis of $h$.
  Similarly, if $h$ has translation length 4 or more we can take $h^2$ and pass to $e_1$-polarized normal form\[
    h^2 = e_1e_2\bar f_2e_3e_4c_{e_4}^{d}\bar f_4 e_5e_6 c_{e_6}^{d_6} \bar f_6 \cdots,
  \] and again it is apparent that the first 6 edges of the path $[\tilde v,h^4\tilde v]$ and therefore of the axis of $h$ are determined by the exponent $d$.
\end{proof}

\begin{cor}
    Let $g \in \pi_1(\bbX,v)$ be a hyperbolic element whose syllable length coincides with its translation length. Then there is a unique element of $\bbX_v\leq \pi_1(\bbX,v)$ that conjugates $g$ to centred short position.
\end{cor}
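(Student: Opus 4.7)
The plan is to split the argument into existence and uniqueness. Since $g$ has syllable length equal to its translation length, $\axis{g}$ in the Bass-Serre tree dual to $\pi_1(\bbX,v)$ passes through the base lift $\tilde v$, and in $e_1$-polarized normal form $g = e_1e_2 c_{e_2}^{d_2}\bar f_2 e_3e_4 \cdots$, where the labels $e_1,e_2 \in X$ are the $X$-images of the first two edges of $[\tilde v, g\cdot \tilde v]$ along the axis. Since elements of $\bbX_v$ act on $T$ by label-preserving deck transformations, these labels $(e_1,e_2)$ are preserved under any conjugation in $\bbX_v$, so the prefix $e_1 e_2$ appearing in the definition of centered short position is canonically determined by $g$.

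For existence, I would first show that every $e_1e_2$-prefix-short conjugate of $g$ in $\pi_1(\bbX,v)$ is of the form $y g y^{-1}$ with $y \in O_{e_1} = \bk{c_{\bar e_1}, t_v}$. Given such $y$, the axis of $y g y^{-1}$ passes through $\tilde v$ with forward axis $X$-labels $e_1, e_2$, so $y^{-1}\tilde v$ is a white vertex on $\axis{g}$ whose forward axis labels are $e_1, e_2$. A period argument using the bipartite structure of $X$ together with the hypothesis that translation length equals syllable length rules out any shift along $\axis{g}$ by a proper fraction of the period, so $y^{-1}\tilde v = g^k\tilde v$ for some $k$. Requiring further that the specific lift $\tilde e_1$ be the first edge (not merely an edge with label $e_1$) then forces $y$ to stabilize $\tilde e_1$, i.e.\ $y \in O_{e_1}$. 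Within $O_{e_1} \simeq \bbZ^2$, an explicit Bass-Serre computation shows that conjugation by $c_{\bar e_1}^a$ shifts $d_2$ by $a$ and leaves the next exponent alone, whereas conjugation by $t_v^b$ propagates centrally through the word and modifies $d$; choosing $a$ so that $d_2 = 0$ and then $b \in \bbZ$ so that $d$ is minimal with respect to the fixed well-order on $\bbZ$ produces the desired $x = c_{\bar e_1}^a t_v^b \in \bbX_v$.

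For uniqueness, suppose $x_1, x_2 \in \bbX_v$ both conjugate $g$ to centered short position, and set $h_i = x_i g x_i^{-1}$ for $i = 1,2$. Both $h_1$ and $h_2$ are $e_1e_2$-prefix-short with $d_2 = 0$ and both realize the same value of $d$, as this minimum is an invariant of the conjugacy class of $g$ in $\pi_1(\bbX,v)$. By Lemma~\ref{lem:6-edges}, this common $d$ determines a common sextuple $\epsilon_1, \ldots, \epsilon_6 \in \edges{T}$ of first six edges of $[\tilde v, h_i\cdot\tilde v]$ on $\axis{h_i}$. Setting $y = x_2 x_1^{-1} \in \bbX_v$, one has $h_2 = y h_1 y^{-1}$, so $\axis{h_2} = y \cdot \axis{h_1}$; since $y$ fixes $\tilde v$, this forces $y \cdot \epsilon_i = \epsilon_i$ for $i = 1, \ldots, 6$, and $y$ pointwise fixes a segment of length six. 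Lemma~\ref{lem;keep_acyl} (4-acylindricity) then forces $y = 1$, so $x_1 = x_2$.

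The main obstacle will be the reduction in the existence step identifying $e_1 e_2$-prefix-short conjugates of $g$ with $O_{e_1}$-conjugates, which requires the axis-period analysis together with a careful unwinding of the Bass-Serre relations to see that the two degrees of freedom in $O_{e_1}$ really do suffice to simultaneously achieve $d_2 = 0$ and minimality of $d$. The uniqueness step, by contrast, is a clean application of Lemma~\ref{lem:6-edges} together with 4-acylindricity once the existence picture is in place.
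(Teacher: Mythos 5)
Your uniqueness argument is essentially the paper's own: both centered short position conjugates carry the same minimal exponent $d$, Lemma~\ref{lem:6-edges} pins down the first six edges of the axis, and since $y=x_2x_1^{-1}$ fixes $\tilde v$ and carries one axis to the other, it fixes a segment of length six, so $4$-acylindricity (Lemma~\ref{lem;keep_acyl}) gives $y=1$. That half is fine.

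The existence half has genuine gaps. First, the hypothesis that syllable length equals translation length does not make the leading vertex syllable of $g$ trivial: the $e_1$-polarized normal form is $g=c_{\bar e_1}^{d_0}\bar f_1\,e_1e_2\cdots$ with $\bar f_1\in\bbX_v$ possibly nontrivial (think of $ab$ in an amalgam with $a,b$ outside the edge group; the axis still passes through $\tilde v$). Consequently your proposed conjugator $c_{\bar e_1}^{a}t_v^{b}\in O_{e_1}$ cannot in general even bring $g$ to prefix-short form; the paper's proof begins precisely by conjugating away $c_{\bar e_1}^{d_0}\bar f_1$, and it is this step that forces the conjugator to live in $\bbX_v$ rather than in $O_{e_1}$. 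Second, the ``period argument'' you invoke --- that prefix-shortness of $ygy^{-1}$ forces $y^{-1}\tilde v\in\bk{g}\cdot\tilde v$ --- is false: bipartiteness only forces shifts of even length, and nothing in the hypotheses prevents the $X$-label sequence along the axis from repeating the pair $(e_1,e_2)$ at an even position strictly inside one period (an axis with label pattern $e_1e_2e_1e_2$ per period is perfectly possible), which produces prefix-short conjugates obtained by recentering at a mid-period vertex, not by elements of $O_{e_1}\bk{g}$. Your instinct to classify all prefix-short conjugates does address a point the paper passes over quickly (it only compares against $Z_{e_1}$-conjugates), but the classification you assert is not true as stated, so it cannot be used to reduce the minimization of $d$ to an $O_{e_1}$-computation.

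Third, the concrete $\bbZ^2$-computation is off: the claim that conjugation by $t_v^{b}$ moves $d$ while leaving $d_2$ untouched is unjustified, since conjugation by $t_v$ acts on the fiber as the multiple Dehn twist substitution $e\mapsto ec_e^{n_e}$ (cf.\ the proof of Proposition~\ref{prop:power-edge-moves}), which generically shifts every even exponent, $d_2$ included; so the scheme ``first choose $a$ to kill $d_2$, then choose $b$ to minimize $d$'' does not go through in that order. The paper avoids exactly this interference by first normalizing the leading syllable, killing $d_2$ with a single $a\in O_{e_1}$, and then minimizing $d$ only over the subgroup $Z_{e_1}\leq O_{e_1}$ of elements fixing the length-four arc through $e_1e_2$ (i.e.\ those that preserve $d_2=0$). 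Your write-up needs these three repairs before the existence statement is established.
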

\begin{proof}
    In $e_1$-polarized normal form we have $g=c_{\bar e_1}^{d_0}\bar f_1e_1e_2{c_{e_2}}^{d_2}\bar f_2 \cdots$ conjugating by $f_1 \in \bbX_V$ and then by the element of $a\in O_{e_1} \leq \bbX_v$ such that $a e_1e_2 = e_1e_2 {c_{e_2}}^{-d_2}$ gives us a normal form\[
        g_1=(a^{-1}\bar f_1^{-1} c_{\bar e_1}^{-d_0})g (c_{\bar e_1}^{d_0}\bar f_1 a) = e_1e_2 \bar f_2' \cdots
    \] whose "$f_1$-syllable" and $"d_2"$-exponent are trivial. We further note that any conjugation by an element of $\bbX_v \setminus O_{e_1}$ will undo this condition.
    
    Now there is a subgroup $Z_{e_1}\leq O_{e_1}$ such that $ze_1e_2 = e_1e_2 z'$ and $z'\bar f_2' = \bar f_2' z'$, which is either trivial or cyclic, that can affect the exponent $d$ given in the definition of centred short position while keeping the exponent $d_2=0$. I.e. $Z_{e_1}$ is a group of elements that fix an arc of length 4 in the Bass-Serre tree. 
    
    Let $k \in Z_{e_1}$ be the conjugator that gives the minimal exponent $d$. We have that $p=kc_{\bar e_1}^{d_0}\bar f_1 a$ is a conjugator that brings $g$ to centred short position. Note that the initial conjugator $c_{\bar e_1}^{d_0}\bar f_1 a$ is well defined up to left multiplication by some element $z \in Z_{e_1}$. It follows that in all cases, the double coset representative $\bar f_2'$ is common to all possible centred short positions.
    
    To see that it is unique, suppose there was another $p'$ such that $(p')^{-1} g p'$ was in centred short position. Then\[
        p^{-1}p' (e_1e_2\bar f_2' *\cdots) (p')^{-1}p^{-1} = e_1e_2 f_2' *'\cdots,
    \] where, both $h=e_1e_2 f_2' *$ and $h'e_1e_2 \bar f_2' *'$ are in centred short position with the same minimal exponent $d$. Now note that $\mathrm{axis}(h') = (p^{-1}p')\cdot \mathrm{axis}(h)$ and so by Lemma \ref{lem:6-edges} $(p^{-1}p')$ fixes a segment of length 6, which by 4-acylindricity implies that $p'=p$. This completes the proof.
\end{proof}

Finally we say that a hyperbolic $g \in \pi_1(\bbX,b)$ is in \emph{short position} if \[
    g = E g_v \bar E
\] where $E$ is a path in $\tau$ from $b$ to $v$ and $g_v$ is in centred short position. We call the vertex $v \in \verts X$ the \emph{anchor vertex of the short position} and the first edge $e_1$ of $g_v$ the \emph{polarizing edge of the short position}. Thus, given some $g \in \pi_1(\bbX,b)$, by going through the $\bk{g}$-orbits of the white vertices of $\mathrm{axis}(g)$ and applying the corollary above gives:

\begin{prop}\label{prop;compute_finitely_many_short}
  Let $G=\pi_1(\bbX,v)$ be a one-ended subgroup of a piecewise trivial suspension. Let $g\in \pi_1(\bbX,b)$ be hyperbolic with translation length $n$ then its conjugacy class contains at most $n/2$ elements in short position and the normal forms, anchor vertices and polarizing edges, of these elements can be computed.
\end{prop}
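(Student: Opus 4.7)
The plan is to parametrize the short-position conjugates of $g$ by the distinct images in $\verts X$ of the white vertices lying in one fundamental domain of $\bk g$ acting on its axis $L\subset T$, where $T$ is the Bass--Serre tree dual to $\bbX$. Since $X$ is bipartite and $g$ has even translation length $n$, the axis $L$ alternates colors and one fundamental domain contains exactly $n/2$ white vertices, whose projections in $\verts X$ give at most $n/2$ candidate anchor vertices.

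First I would compute some cyclically reduced conjugate $g_0$ of $g$ of syllable length $n$: this is a finite search using conjugations that move the basepoint lift along a fundamental domain of the axis, made effective by the algorithmic tractability established in Theorem \ref{thm;algo_trac} together with the solvability of the word problem and membership in images of edge groups guaranteed by Proposition \ref{prop;eff_coh_ump_for_GDT}. From such a representative one reads off directly the finite list of white vertices of $\verts X$ appearing on the axis. Then, for each candidate anchor vertex $v$, I would recenter $g$ at $v$ using the unique $\tau$-path $E$ from $b$ to $v$ (available because $\tau$ is a BFS spanning tree), obtaining a conjugate of the form $E g_v \bar E$ with $g_v\in\pi_1(\bbX,v)$ cyclically reduced of syllable length $n$. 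The corollary immediately preceding the proposition then provides the \emph{unique} element of $\bbX_v$ that conjugates $g_v$ into centered short position; finding it is a finite computation involving the double coset representatives $DCR(e,e')$ for turns at $v$ and a minimization over the (trivial or cyclic) subgroup $Z_{e_1}\leq O_{e_1}$ acting by further conjugation.

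To establish that this procedure exhausts all short positions and that distinct anchors give distinct short positions, I would combine the uniqueness statement of the corollary with Lemma \ref{lem:6-edges} and the $4$-acylindricity provided by Lemma \ref{lem;keep_acyl} (and its analogue inside Proposition \ref{prop;canonical_for_subgroups}): any conjugator between two centered short positions at a common anchor would have to fix a segment of length at least $6$ in $T$, hence be trivial, while conjugators across different anchors correspond bijectively to elements of $\pi_1(\bbX,b)$ translating between lifts on the axis. The main obstacle, and the reason the bound is only $n/2$ rather than exactly $n/2$, is that two distinct white vertices of one fundamental domain of $L$ may project to the same vertex of $X$; when they do, the recentering conjugators differ by a power of $g$ and produce the same element of $\pi_1(\bbX,b)$, so no new short position is created. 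Handling this correctly amounts to enumerating the white vertices of $L$ only up to the induced $\bk g$-action, which is straightforward once the underlying $X$-path of $g_0$ has been computed.
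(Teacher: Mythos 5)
Your proposal follows essentially the same route as the paper: the paper's argument is exactly the one-line reduction of enumerating the $\bk{g}$-orbits of white vertices on $\mathrm{axis}(g)$ (at most $n/2$ of them, by bipartiteness and translation length $n$) and applying the preceding corollary to obtain, at each anchor, the unique conjugator into centered short position, with effectiveness supplied by the algorithmic results you cite. The one small slip — two white vertices in a single fundamental domain that project to the same vertex of $X$ are \emph{not} related by a power of $g$ and may yield distinct short positions anchored at the same vertex of $X$ — does not affect the bound, since the enumeration runs over the $n/2$ orbit representatives on the axis in any case.
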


\subsection{Tuples of cosets and linkage configurations in piecewise trivial suspensions}\label{sec:link-config}

We now return our focus to a globally fibered piecewise trivial suspension $\torus\alpha = F \semidirect \alpha \bk t = \pi_1(\bbX,b)$. In the notation of subsection \ref{sec:nf} we will assume every white vertex group is decomposed as $\bbX_v = F_v \oplus \bk{t_v}$ where $F_v = F \cap \bbX_v$ (which is well-defined since $F \triangleleft \torus\alpha)$), thus elements in the fibre will always be smaller with respect to $<$ than other elements. In particular, if we write \[
g = a_1\bar f_1 b_1 e_1e_2 a_3 \bar f_3 b_3 \cdots e_n a_{n+1}\bar f_{n+1} b_{n+1}
\] where $\bar f_i$ is the $i$th entry in $DCR(g,e)$ for some $e$ with $i(e)=b$, then the representation of this form that minimizes the tuple $(a_1,b_1,\ldots,a_{n+1},b_{n+1})$ is precisely the normal form polarized by $e$. When it is not necessary to see the exponents in the power vector, we will call an expression\[
    g = f_1e_1e_2 f_3 \cdots e_n f_{n+1}t^k
\] in \emph{reduced fiber and abscissa form} provided $g_f=f_1e_1e_2 f_3 \cdots e_n f_{n+1} \in \pi_1(\bbX,b)\cap F$ is a reduced word. This initial subword $g_f$ is called the \emph{fibre part} and is well-defined by our normal form conventions. We record some more observations.

\begin{lem}\label{lem:DC-fibre}
  The fibre part $g_f$ of $g \in \torus\alpha$ is well-defined. Furthermore if $g,g' \in \torus\alpha$ and $g_f=g'_f$ then \[
    DC(g,e)=DC(g',e) \textrm{~and~} pow(g,e)=pow(g',e)
  \] for all $e$ with $i(e)=v$.
\end{lem}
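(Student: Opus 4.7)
The plan is to handle the two assertions separately. For well-definedness of $g_f$, I would use the split exact sequence $1 \to F \to \torus\alpha \to \bbZ \to 1$: any $g \in \torus\alpha$ has a canonical decomposition $g = g_f t^k$ with $k \in \bbZ$ its abscissa (the image under the projection $\torus\alpha \onto \bbZ$) and $g_f = gt^{-k} \in F$. Realizing $g_f$ as a reduced word $f_1 e_1 e_2 f_3 \cdots e_n f_{n+1}$ with $f_i \in F_{v_i}$ amounts to starting from any reduced Bass-Serre expression for $g$ in $\Bass(\bbX)$ and splitting each white vertex factor $g_i = f_i t_{v_i}^{k_i}$ using $\bbX_v = F_v \oplus \bk{t_v}$. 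Since each central element $t_{v_i}$ lies in $I_e$ and $O_e$ for every incident edge (by Proposition \ref{prop;gdt-PTS} and the structure of trivial-suspension vertex groups), one can migrate each $t_{v_i}^{k_i}$ across the next edge $e_i$; crossing the intervening black vertex $b'$ yields $t_{v_{i+1}}^{k_i}$ times an element of $F \cap \bbX_{b'}$, thanks to Lemma \ref{lem:black-fibers}. Iterating collects a single $t^k$-tail on the right and leaves the claimed reduced fibre word on the left.

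For the second assertion, the strategy is to show that both $DC(g,e)$ and the non-abscissa entries of $\mathrm{pow}(g,e)$ are functions of $g_f$ alone. At each white vertex $v_j$ along the reduced word, the relevant double coset $I_{e_j} g_{j+1} O_{e_{j+1}}$ is invariant under left or right multiplication of $g_{j+1}$ by powers of $t_{v_j}$, since $t_{v_j} \in I_{e_j} \cap O_{e_{j+1}}$. Hence the sequence of double cosets, the $<$-minimal fibre representatives $\bar f_j$, and the fibre-generator exponents $d_j$ (powers of $c_{e_j} \in F$) all depend only on $g_f$. Consequently $g_f = g'_f$ forces equality of $DC(g,e)$ with $DC(g',e)$ and equality of every entry of $\mathrm{pow}(g,e)$ with the corresponding entry of $\mathrm{pow}(g',e)$ up to the abscissa; equality of the abscissae is then immediate under the reading that $g_f = g'_f$ refers to the fibre part extracted from the reduced fibre and abscissa form of the same abscissa.

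The main bookkeeping obstacle is verifying that the migration of $t_v$-powers through black vertices never mixes the fibre and $\bbZ$-quotient directions and does not disturb the fibre-valued double-coset data. This is exactly the content of Lemma \ref{lem:black-fibers}: the fibre intersects each black vertex group in a canonical codimension-one subgroup, so the transfer from $t_{v_i}$ to $t_{v_{i+1}}$ across a black vertex is always by a fibre element. With that lemma in hand, both parts reduce to routine tracking of the normal form conventions set up in Section \ref{sec:nf}.
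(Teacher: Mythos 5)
Your argument is sound in substance, but note that the paper offers no proof of this lemma at all: it is recorded as an immediate consequence of the normal form conventions of Section \ref{sec:nf}, the point being that in the $e$-polarized normal form \eqref{eq:normalform} the ordering convention (fibre elements are $<$-smaller) forces every double coset representative $\bar f_i$ and every $c_f$ into the fibre, so the normal form is literally a reduced fibre word followed by $t_b^k$, and $g_f$, $DC(g,e)$ and the $d$-entries of $\mathrm{pow}(g,e)$ are read off that unique word. Your route is more explicit: you first split $g$ along $1\to F\to \torus\alpha\to\bbZ\to 1$ and then migrate the central $t_{v_i}$-powers rightwards, using that $t_{v_i}$ lies in every incident edge-group image and that transport across a black vertex changes $t_{v_i}$ to $t_{v_{i+1}}$ only by a fibre element of the black vertex group (Lemma \ref{lem:black-fibers}; in fact normality of $F$ alone already gives this, since $t_e t_f^{-1}$ dies in $\torus\alpha/F$). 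This buys a self-contained verification that the migration never leaves the fibre direction, which the paper leaves implicit; the paper's implicit argument buys brevity, since uniqueness of the polarized normal form does all the work at once. Two small caveats. First, your canonical decomposition should be $g=g_f\,t_b^{-k}\cdot t_b^{k}$, i.e.\ the tail collected by migration is a power of the local central element $t_b$, not of the global stable letter $t$; since $t_b\in tF$ this does not affect the abscissa, but $gt^{-k}$ and $gt_b^{-k}$ are different fibre elements, and it is the latter that is the paper's $g_f$. Second, you correctly spotted that the abscissa coordinate of $\mathrm{pow}(g,e)$ is \emph{not} determined by $g_f$ (e.g.\ $g'=gt_b$ has $g'_f=g_f$ but a different abscissa), so the literal statement needs the reading you adopt: $g_f$ determines $DC(g,e)$ and all entries of $\mathrm{pow}(g,e)$ except the abscissa, with equality of abscissae holding only if it is assumed separately. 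Your resolution "same abscissa by the intended reading" is the right interpretation, but it should be stated as a hypothesis (or the conclusion weakened to the non-abscissa entries) rather than derived, since nothing in $g_f=g'_f$ forces it.
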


We wish to describe how automorphisms act on the double cosets that appear in the sequences $DC(g,e)$. Even in a  free group, we shouldn't expect automorphisms to act in any sensible way on a double coset space. That said, it will turn out that the constraints of the Bass diagram will lead to a sensible action on the double coset spaces we are interested in. 

We will also develop an algorithmic theory to compute orbits of our double cosets. Our approach is to reduce the double coset orbit problems to something related to orbit problems of finitely generated subgroups of free groups (see Theorem \ref{thm:gersten}.) This next subsection establishes the dictionary between double cosets and subgroups of free groups.

    \subsubsection{Linkages of double cosets}
    Turns were defined in Subsection \ref{sec:nf}. We now define the entrance of a turn and  the linkage of a double coset at a given turn. Before continuing we will make a slight modification to our notation.
 
    \begin{conv}[Double cosets for elements of $\torus\alpha$]
        For the rest of Section \ref{sec;MWP} we will assume that the double cosets that appear in $DC(g,e)$ are actually double cosets in the fibre part $F_v = \bbX_v\cap F$. In particular, if we write $I_e\bar f O_{e'}$ then $I_e = \bk{c_e}$ and $O_{e'}=\bk{c_{\bar e'}}$, where $c_e,c_{\bar e'} \in F_v$ are, as usual, generators of the images of the corresponding incident edge groups.
    \end{conv} 
    
    We note that by Lemma \ref{lem:DC-fibre} and the fact that our ordering convention forces double coset representatives to lie in $F_v$ considering double cosets to lie in the fibre part of the vertex groups causes no loss of information.

    Whenever $f \in F_v$, and $(e, e')$ is a turn at $v$, the entrance of the turn is the element $c_e$ (it generates $I_e <\bbX_v$). We define the  linkage $\link{e, f, e'}$ and the dual linkage $(\mathscr{l}_*)_{e, f, e'}$ to be the following  subgroups of $ \bbF_v$:
        \[ \begin{array}{cclcl} \link{e, f, e'} &  = &   \langle I_e,  f O_{e'} f^{-1} \rangle & &  \\ (\mathscr{l}_*)_{e, f, e'} & =  & \mathscr{l}_{\bar e', f^{-1}, \bar e}   & = &   \langle O_{e'},  f^{-1} I_{e} f \rangle  \end{array}   \]

        If the double coset appears in the reduced form of an element $g$ in the fibre, the dual linkage is the linkage of a coset appearing in the reduced form of $g^{-1}$.

        Double cosets determine linkages, as stated in the following Lemma, which  is immediate.
        
        \begin{lem}\label{lem;same_double_coset_implies_linkages}
            If $f$ and $f'$ are in the same double coset 
            $I_e x O_{e'}$ then   $\link{e, f, e'}=\link{e, f', e'}$. 
            
            In particular, if $f$ is in the fibre of $G$, all of the reduced words representing it share the same sequence of edges, of entrances, and of linkages.
        \end{lem}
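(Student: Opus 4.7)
The plan is to handle the two assertions separately; both should be quick given what is already in place.

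For the first assertion, the idea is a direct computation showing mutual inclusion. Since $f$ and $f'$ lie in the same double coset $I_e x O_{e'}$, we can write $f = i f' o$ for some $i \in I_e$ and $o \in O_{e'}$. Then
\[
f O_{e'} f^{-1} = i f' o \, O_{e'} \, o^{-1} f'^{-1} i^{-1} = i \bigl( f' O_{e'} f'^{-1} \bigr) i^{-1},
\]
using $o O_{e'} o^{-1} = O_{e'}$. So $\link{e,f,e'} = \langle I_e, \, i (f' O_{e'} f'^{-1}) i^{-1} \rangle$. Since $i \in I_e$ lies in this group, we can conjugate the second generating subgroup by $i^{-1}$ without leaving the group, which shows $\link{e,f,e'} \supseteq \langle I_e, f' O_{e'} f'^{-1}\rangle = \link{e,f',e'}$. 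The reverse containment follows by the symmetric argument (write $f' = i^{-1} f o^{-1}$). This gives the desired equality. The dual linkage case follows verbatim, or by noting that $(\mathscr{l}_*)_{e,f,e'} = \link{\bar{e'}, f^{-1}, \bar e}$ and that $f \in I_e x O_{e'}$ iff $f^{-1} \in O_{e'} x^{-1} I_e = I_{\bar{e'}} x^{-1} O_{\bar e}$.

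For the second assertion, I would invoke standard Bass–Serre normal form theory, which guarantees that any two reduced words representing the same element $f$ of $\pi_1(\bbX,b)$ share the same underlying $X$-path, so in particular the same sequence of edges. The corresponding sequence of entrances $c_{e_i}$ is then automatically the same, since the entrance depends only on the edge. For the linkages, note that by Lemma \ref{lem:DC-fibre} the sequence $DC(f,e)$ depends only on the element $f$ and not on the reduced representative chosen; hence each double coset in the sequence is determined by $f$, and by the first part the linkage at each turn depends only on that double coset. Thus the sequence of linkages is also intrinsic to $f$.

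No step presents any real obstacle; the only thing to be careful about is checking that the ordering conventions from Section \ref{sec:nf} force double coset representatives into the fibre $F_v$ so that the linkage, which is defined as a subgroup of $F_v$, is indeed well-defined. That verification is immediate from the convention recalled just before the statement.
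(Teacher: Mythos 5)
Your proof is correct: the conjugation computation $fO_{e'}f^{-1}=i\,(f'O_{e'}f'^{-1})\,i^{-1}$ with $i\in I_e$ absorbed into the generated subgroup, plus the Bass--Serre fact that reduced words for the same element share the underlying edge path (so entrances and, via the first part and the well-definedness of the double cosets, linkages agree), is exactly the verification the paper has in mind. The paper in fact gives no proof, stating the lemma is immediate, and your argument fills in precisely that routine check.
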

        
        However, in principle, different elements $f$ can lead to the same linkage while being in different double cosets. 
         It is thus  worth warning that a given linkage can have different duals if it can be defined by elements in different double cosets.
                
        We propose a converse to Lemma \ref{lem;same_double_coset_implies_linkages}, in which double cosets are characterized by a pair of linkages that are dual to one another.
        
        \begin{prop}\label{prop;same_linkages_imples_double_coset}
          Consider a pair of linkages $\mathscr{l} = \mathscr{l}_{e,f,e'}$ and  $\mathscr{l}'= \mathscr{l}_{e,h,e'}$ and a   dual pair $\mathscr{l}_* =  \mathscr{l}_{\bar e', f^{-1}, \bar e}$, and $\mathscr{l}_*'=  \mathscr{l}_{\bar e', h^{-1}, \bar e} $. 
          
          If one has equality of pairs   \[( \mathscr{l}, \mathscr{l}_*   ) = (\mathscr{l}', \mathscr{l}_*') \]
           then the double cosets $I_e f O_{e'}$ and $I_e h O_{e'}$ are equal.
        \end{prop}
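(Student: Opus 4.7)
My plan is to prove $f \in I_e h O_{e'}$, equivalently $g := fh^{-1} \in I_e \cdot (hO_{e'}h^{-1})$. The starting observation is that by direct conjugation one has $\mathscr{l}_* = f^{-1}\mathscr{l} f$ and $\mathscr{l}_*' = h^{-1}\mathscr{l}' h$, so combining with the hypotheses $\mathscr{l} = \mathscr{l}'$ and $\mathscr{l}_* = \mathscr{l}_*'$ one obtains $f^{-1}\mathscr{l} f = h^{-1}\mathscr{l} h$, meaning $g$ normalizes $\mathscr{l}$ in $F_v$. I would then split into cases according to whether the $2$-generated subgroup $\mathscr{l} \leq F_v$ is cyclic or free of rank $2$.

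In the cyclic case I would exploit the fact that in the canonical structural tree of a piecewise trivial suspension, the subgroups $I_e = \langle c_e\rangle$ and $O_{e'} = \langle c_{\bar e'}\rangle$ are maximal cyclic in $F_v$, forcing $\mathscr{l} = I_e$ and $\mathscr{l}_* = O_{e'}$. From $f c_{\bar e'} f^{-1}\in I_e$ and $f^{-1}c_e f \in O_{e'}$ a short substitution gives $fc_{\bar e'}f^{-1} = c_e^{\pm 1}$, and similarly $hc_{\bar e'}h^{-1}=c_e^{\pm 1}$. Opposite signs would force $h^{-1}f$ to invert $c_{\bar e'}$ in the torsion-free free group $F_v$, which is impossible since centralizers of primitives are cyclic; with agreeing signs $h^{-1}f$ centralizes $c_{\bar e'}$, so $h^{-1}f \in \langle c_{\bar e'}\rangle = O_{e'}$, giving $f \in h O_{e'} \subseteq I_e h O_{e'}$.

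In the rank-$2$ case I would invoke the classical fact that a finitely generated non-cyclic subgroup $H$ of a free group has finite index in its normalizer (proved via the action of $N_F(H)/H$ on the finite Stallings core of $H$), and combine this with Schreier's formula applied to the rank-$2$ subgroup $\mathscr{l}$ to conclude $N_{F_v}(\mathscr{l})=\mathscr{l}$. Thus $g\in\mathscr{l}=F(c_e,y)$ where $y:=hc_{\bar e'}h^{-1}$. The element $gyg^{-1}=fc_{\bar e'}f^{-1}$ is then a basis-mate of $c_e$ in the rank-$2$ free group $\mathscr{l}$; by Nielsen's classification of bases of $F_2$, basis-mates of $c_e$ in $F(c_e,y)$ are exactly the elements of the form $c_e^p y^{\pm 1} c_e^q$. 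Since $gyg^{-1}$ is conjugate to $y$ in $\mathscr{l}$, matching cyclic words, and using that no nontrivial element of a free group is conjugate to its inverse, forces $p+q=0$ and the positive sign, so $gyg^{-1}=c_e^p y c_e^{-p}$. Consequently $c_e^{-p}g$ centralizes $y$ in $\mathscr{l}$, hence lies in $\langle y\rangle$, so $g=c_e^p y^s$ for some $s$, which translates back to $f = c_e^p h c_{\bar e'}^s \in I_e h O_{e'}$.

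The main obstacle is the rank-$2$ case: placing $g$ inside $\mathscr{l}$ via the normalizer theorem is just the first step, and refining it to the product set $\langle c_e\rangle \cdot \langle y\rangle$ requires Nielsen's classification of basis-mates together with careful bookkeeping on cyclic reductions and conjugacy classes. The cyclic case is more elementary once one uses maximality of the edge-group images, but one still has to rule out the incompatible-sign situation via torsion-freeness.
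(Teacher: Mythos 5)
Your proof is correct, and in the main (rank-two) case it takes a genuinely different route from the paper's. The paper proves the statement via a lemma that manipulates the linkage equality head-on: after absorbing a coset factor it arrives at an identity $hc_{e_+}h^{-1}=c_{e_-}^{k}\,h''c_{e_+}h''^{-1}$, observes that $c_{e_-}^{k}$ is then a conjugate of a commutator, invokes Sch\"utzenberger's theorem (a nontrivial commutator in a free group is not a proper power) to force $|k|\leq 1$, and kills the residual case $|k|=1$ (``dangerous turns'') by running the same computation on the dual pair, the two dangerous configurations being incompatible by a residual-nilpotence lemma. You instead exploit the dual equality at the very start: since $\mathscr{l}_{*}=f^{-1}\mathscr{l}f$ and $\mathscr{l}_{*}'=h^{-1}\mathscr{l}'h$, the two hypotheses say precisely that $g=fh^{-1}$ normalizes $\mathscr{l}$; self-normalization of rank-two subgroups of free groups (finite index in the normalizer, by the Karrass--Solitar/Greenberg argument you sketch, combined with Schreier's formula) places $g$ inside $\mathscr{l}$, and Nielsen's description of the basis-mates of a fixed basis element of $F_2$ (equivalently $\mathrm{Out}(F_2)\cong GL_2(\mathbb{Z})$), together with the cyclic-word and conjugate-to-inverse bookkeeping, pins $g$ down to $c_e^{p}y^{s}$, i.e.\ $f\in I_e h O_{e'}$. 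Your cyclic case is essentially the paper's, both resting on maximality of the edge-group images and on the impossibility of conjugating a nontrivial element to its inverse. The trade-off: the paper's route avoids the normalizer fact and the classification of bases of $F_2$, needing only the commutator-not-a-proper-power theorem plus the duality trick, whereas yours gives a more structural reading of the hypothesis (the pair of equalities is exactly a normalization condition) and replaces the dangerous-turn/residual-nilpotence step by standard $F_2$ facts; note that, like the paper's, your argument genuinely uses both linkage equalities, as it must.
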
 
        
        \begin{proof}
        The result will then follow from this statement.
        
        \begin{lem}\label{lem;h_hprime_double_cosets}
          Let $\tau(e_-)= i(e_+) =v$ and, to simplify notation, let $\bk{c_{e_{-}}} = I_{e_{-}}$ and $\bk{c_{e_{+}}} = O_{e_+}.$
          
          If $h, h' \in \bbF_v$ are such that  \[\langle c_{e_-},\, h' c_{e_+} h'^{-1}  \rangle \, = \, \langle c_{e_{-}},\, h c_{e_+} h^{-1}  \rangle,  \; \hbox{ and } \; \langle c_{e_+},\, h'^{-1} c_{e_-} h'  \rangle \, = \, \langle c_{e_{+}},\, h^{-1} c_{e_-} h  \rangle      \]
          then  $h\in  \langle c_{e_{-}}\rangle   h'    \langle c_{e_{+}}\rangle$.
        \end{lem}
        
        \begin{proof}
            Start  with the equality $\langle c_{e_-},\, h' c_{e_+} h'^{-1}  \rangle \; = \; \langle c_{e_{-}},\, h c_{e_+} h^{-1}  \rangle$. 
            
            These two groups are subgroups of $\bbF_{v_i}$, hence free. 
        
        If they have rank one, then  by maximality of the edge groups, $h'c_{e_+}h'^{-1} = c_{e_{-}}^{\pm 1}= (h c_{e_+} h^{-1})^{\pm 1}$. It follows  that $h^{-1}h'$  centralizes $c_{e_+}$ (it cannot  conjugate it to its inverse). Therefore it is in $O_{e_+}$.  
        
        If they are rank two, one has that \[  h c_{e_+} h^{-1} \in I_{e_{-}}\, h' c_{e_+} h'^{-1} \, I_{e_{-}}.  \]

        Let us write  $ h c_{e_+} h^{-1} \in I_{e_{-}}\, h' c_{e_+} h'^{-1} \,c_{e_{-}}^{-r} $. Let $h'' =c_{e_{-}}^{r} h' $. We have that  $ h c_{e_+} h^{-1} \in I_{e_{-}}\, h'' c_{e_+} h''^{-1}$ and also that $\langle c_{e_{-}},\, h'' c_{e_+} h''^{-1}  \rangle \; = \; \langle c_{e_{-}},\, h c_{e_+} h^{-1} \rangle $, and we still want to show that $h\in  I_{e_{-}}   h''    O_{e_+}$.

        Let us write that, for some integer $k$,  $ h c_{e_+} h^{-1} = c_{e_{-}}^k\, h'' c_{e_+} h''^{-1}$: 
      
       \[  c_{e_{-}}^k =  h c_{e_+}\; h^{-1}h'' \; c_{e_+}^{-1} \; h''^{-1},  \]  
        
        \[ c_{e_{-}}^k =   ( [ h''^{-1}h, \; c_{e_+}])^{h''^{-1}}.\]
        
        In a free group, a non-trivial commutator is not a proper power 
        \cite{schutzenberger_equation_1959}, \cite[Lemma 36.4]{baumslag_aspects_1960}, \cite{duncan_genus_1991},  
        hence $|k|\leq 1$.  
        
        If $k=0$, then $h''^{-1} h$ commutes with $c_{e_{+}}$ hence is in $O_{e_{+}}$, which is what we wanted to show.
        
        If $|k|= 1$, then $c_{e_{-}}$ is conjugate to a commutator of an certain element with $c_{e_+}$. This is a property of the turn  $(e_{-}, e_+)$ and in this case we say that $(e_{-}, e_+)$ is a \define{dangerous turn.} 
        
        We claim that in this case, $(\bar e_+, \bar e_{-})$ (which is also a turn at the same vertex) is not dangerous. Indeed, if it was dangerous too, it would mean that $c_{e_+}$ is conjugate to a commutator of an element and $c_{e_{-}}$ too. To get the contradiction, it suffices to prove the following lemma. 
        \begin{lem}
            In a free group, if $a$ is conjugate to $[b,c]$ and $b$ is conjugate to $[a,d]$, then $a=b=1$.
        \end{lem}
        \begin{proof}
            Iwasawa and Magnus proved  that free groups are residually nilpotent, meaning that any element survives in a nilpotent quotient (see \cite[\S 6.1.9, \S 6.1.10]{robinson_course}. 
            Any such a quotient satisfies the law that all iterated commutators $[[\dots[ \cdot, \cdot ], \cdot ], \cdot \dots ]$ of certain depth must vanish. However, $a$ and $b$ can be written as commutators of arbitrary depth, by iterative substitution. 
        \end{proof}
        
        Now that we know that the turn $(\bar e_+, \bar e_{-})$ is not dangerous, we may apply  our analysis above  on the dual configuration to get that $h^{-1}\in O_{e_+}(h'_i)^{-1} I_{e_{-}}$, which is equivalent to what we wanted to show.
        \end{proof}
        \end{proof}

        \subsubsection{The linkage configuration at a vertex induced by a path and a sequence of double cosets}\label{sec;link_conf}\label{sec;linkage_config_from_DC}
        
        Given a loop  $(e_1, \dots e_n)$ based at $v \in \verts X$ and a sequence of double cosets over this loop \[( I_{\bar e_0}f_1 O_{e_1}, 1 ,I_{e_2}f_3O_{e_3}, \dots, 1 ,I_{e_n}f_{n+1} O_{e_0}),\] where $e_0$ is some polarizing edge, we can construct the associated linkage tuple \[\vec{\link{}}=(\link{1}, \dots \link{n+1}) = (\link{\bar e_0,f_1,e_1},1,\ldots,\link{e_n, f_{n+1}, e_0}).\]

        We reverse this path to get $\bar e_n,\ldots,\bar e_1$ and the corresponding \emph{dual coset tuple}\begin{multline*}
            (O_{e_0}f_{n+1}^{-1}I_{e_n}, 1 ,\ldots , O_{e_3}f_3^{-1}I_{e_2},1,O_{e_1}f_1^{-1}I_{\bar e_0})=\\
            (I_{\bar e_0}f_{n+1}^{-1}O_{\bar e_n},1,
            \ldots,I_{\bar e_3} f_3^{-1} O_{\bar e_2},1,
            I_{\bar e_1} f_1^{-1} O_{e_0})
        \end{multline*}
        and the corresponding dual linkage tuple that works out to\[
            \vec{\link{}} = ((\link *)_{n+1},\ldots,(\link *)_1),
        \] i.e. reading $\vec{\link{}}$ and $\vec{\link *}$ in opposite directions we get pairs of dual linkages of the double cosets $I_{e_{j-1}}f_j O_{e_j}.$ We note that if a sequence of double cosets came from $DC(g,e)$ then the dual sequence would come from $DC(g^{-1},e).$  By Lemma \ref{lem;same_double_coset_implies_linkages} and Proposition \ref{prop;same_linkages_imples_double_coset}, given the underlying loop and polarizing edge, the dual pair $(\vec{\link{}},\vec{\link *})$ fully determines the tuple of double cosets and vice-versa.

        Consider the two pairs\[
            (\bar e_0,e_1,\ldots,e_n,e_{n+1}), (\link{1}, \dots \link{n+1}) \textrm{~and~} (\bar e_{n+1},\bar e_n,\ldots,\bar e_1, e_0), ((\link{*})_{n+1}, \dots (\link{*})_1)
        \] with common indices $1\leq i\leq n+1$, where $e_0$ is the polarizing edge and $e_{n+1}=e_0$. Consider $v$ a vertex in $X$, and let $J_v \subset \{1, \dots, n\}$ be the set of indices $j\leq n$ such that $\tau(e_j) = v$, and $J^*_v$ the set of indices for which $\tau(\bar e_j) = v$. For each oriented edge $e$ in $X$, $J_{v, e}$  is the subset of $J_v$ for which $e_j= e$ and, similarly, ${J^*}_{v,e} = \{ j \in {J^*}_v : \bar e_j=e\}$.
    
        For each  $J_{v,e}$ (possibly empty), one associates the following $(|J_{v,e}|)$-tuple $\calP_{v,e} = (\link{{j+1}} : j\in J_{v, e})$, where the $\link{j+i}$ are entries from $\vec{\link{}}$. We note that $\calP_{v,e}$ consists of entries from $\vec{\link{}}$ of the form $\link{e,f,e'}$, where $f,e'$ are arbitrary. Similarly, we define $\calP^*_{v,e} = ((\link{*})_{j}: j\in J^{*}_{v,e})$, where the $(\link{*})_{j+i}$ are entries from $\vec{\link{*}}$. We note that entries of $\calP^*_{v,e}$ consist of linkages of the form $\link{e ,f,e'}$ where $f,e'$ are arbitrary.

        The triple $\calQ_{v,e}(g,e_0) =  (c_e, \calP_{v,e},  \calP^*_{v,e})$  is the \emph{linkage configuration of the $e_0$-polarized element $g$ at $v$ for $e$}. The following will be useful to remember in the proof of Proposition \ref{prop;same_linkages_imples_double_coset}.
        \begin{lem}\label{lem:all-e-linkages}
          Every entry $\link{e,f,e'}$ in $\vec{\link{}}$ and $\vec{\link{*}}$ that is associated to a turn $(e,e')$ at $v$ is in $\calQ_{v,e}$.
        \end{lem}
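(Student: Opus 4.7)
The plan is to unwind the definitions in Subsection \ref{sec;linkage_config_from_DC} and track indices. An entry of $\vec{\link{}}$ with nontrivial value is, by construction, of the form $\link{j+1} = \link{e_j, f_{j+1}, e_{j+1}}$ for some $j \in \{0, 1, \ldots, n\}$ (with the convention that the symbol $e_0$ at the beginning stands for $\bar e_0$ and $e_{n+1}$ at the end stands for the polarizing edge $e_0$, so that the first and last entries cover the turns at the basepoint). Similarly, an entry of $\vec{\link{*}}$ is of the form $(\link{*})_j = \link{\bar e_j, f_j^{-1}, \bar e_{j-1}}$, using the same boundary conventions. With this framework the lemma becomes a routine verification.

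First, I would handle the case of $\vec{\link{}}$. Suppose $\link{j+1}$ has the form $\link{e, f, e'}$ with $\tau(e)=i(e')=v$. Then by matching the first arguments we have $e_j = e$, whence $\tau(e_j) = \tau(e) = v$, so $j \in J_v$, and since $e_j = e$, in fact $j \in J_{v,e}$. By the definition of $\calP_{v,e} = (\link{j+1} : j \in J_{v,e})$, the entry $\link{j+1}$ is collected in $\calP_{v,e}$ and therefore lies in $\calQ_{v,e}$.

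Next, I would treat $\vec{\link{*}}$. Suppose $(\link{*})_j$ has the form $\link{e,f,e'}$ at the turn $(e,e')$ at $v$. Matching first arguments with the formula $(\link{*})_j = \link{\bar e_j, f_j^{-1}, \bar e_{j-1}}$ gives $\bar e_j = e$, and therefore $\tau(\bar e_j) = \tau(e) = v$, placing $j \in J_v^*$ and then $j \in J_{v,e}^*$. Consequently $(\link{*})_j$ belongs to $\calP_{v,e}^*$, hence to $\calQ_{v,e}$.

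There is no real obstacle beyond bookkeeping; the only subtlety is ensuring that the indexing conventions for $J_v$ and $J_v^*$ are interpreted so that the first and last positions of $\vec{\link{}}$ and $\vec{\link{*}}$, which correspond to the polarizing turns at $v_0 = b$ involving $\bar e_0$ and $e_0$, fall into the appropriate index sets. Once this is observed, the statement follows immediately from the definitions and no further argument is required.
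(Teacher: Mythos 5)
Your proof is correct and matches the paper's treatment: the lemma is stated there as an immediate consequence of the definitions of $\calP_{v,e}$, $\calP^*_{v,e}$ and $\calQ_{v,e}$, and your index bookkeeping (matching the first edge of each linkage $\link{e_j,f_{j+1},e_{j+1}}$, resp. $\link{\bar e_j, f_j^{-1},\bar e_{j-1}}$, against the defining conditions of $J_{v,e}$ and $J^*_{v,e}$) is exactly the intended verification. Your remark about extending the index conventions so that the polarizing turns at the basepoint (involving $\bar e_0$ and $e_{n+1}=e_0$) are covered is also the right reading of the paper's conventions.
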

        Note that the order of appearance of each linkage is part of the data of $\calQ_{v,e}(g,e_0)$, i.e. $\calQ_{v,e}(g,e_0)$ is not just a set of linkages.

        Ordering the edges of $X$,  one thus obtains, from a linkage tuple, and for each $v$,  a tuple $\calQ_v(g,e)$ of conjugacy classes of linkage configurations at $v$: $\calQ_v(g,e_0)$ is the tuple of the conjugacy classes of the tuple  $\calQ_{v,e}(g,e_0)$, $e$ ranging over the edges of $X$ terminating at $v$, in the chosen order.  We call it the \emph{total linkage configuration} at $v$ for the given sequence of double cosets.

       \subsection{Tuples of tuples and associated data}\label{sec:tuples-of-tuples}

        \subsubsection{Elliptic, lineal and hyperbolic tuples, and their enrichments}
        
        We consider a tuple of $r-2$ elements $S=(g_2, \dots, g_r)$ of $\torus\alpha$ (we start at index $2$ on purpose).
        
        The tuple is said elliptic if there exists $\tilde v \in T$ that is fixed by all $g_i$. It is said small elliptic if $\tilde v \in \tilde \tau$. Define its enriched tuple $S^+ = (g_0, g_1, g_2, \dots, g_r)$ by setting $g_0=g_1=g_2$.

        If $S$ is not elliptic, either there exists $i$ a smallest index such that $g_i$ is not elliptic, or  all $g_i$ are elliptic and there exists $(i,j)$ a smallest pair  in lexicographical order so that $g_i$ and $g_j$ do not fix the same vertex (there exists such a pair because pairwise intersecting convex sets in trees have global intersection). Let $g_0=g_i$ in the first case and $g_0=g_ig_j$ in the second case. It is a hyperbolic element. 
        
        Again there are two cases: either all elements of $S$ commute with $g_0$ or one of them does not. 
        
        In the first case, we say that the tuple is lineal and we set its  enriched tuple $S^+ = (g_0, g_1, g_2, \dots, g_r)$ by setting $g_1=g_0$.
        
        In the second case, we say that the tuple is hyperbolic, and  we set its  enriched tuple $S^+ = (g_0, g_1, g_2, \dots, g_r)$ by setting $g_1=g_0^{g_i}$ where $i$ is the first index such that $g_i$ does not commute with $g_0$.

            \subsubsection{Small tuples}
        
        An enriched tuple  $(g_0, g_1, g_2, \dots, g_r)$ is \emph{small} if: 
        \begin{itemize}
            \item if it is elliptic:  the elements have a common  fixed point  is in $\tilde \tau$.  
            \item if it is lineal: $g_0$ is cyclically reduced in short position. 
            \item if it is hyperbolic: $g_0$ is cyclically reduced in short position, and $g_1$ has minimal syllable length among its conjugates by powers of $g_0$.
        \end{itemize}
        
        In the first case, there exists a simple path $(e_0e_1 \dots e_k)$ in $\tau$ starting at $b_0$,  such that for each $i$, there exists $h_i \in \bbX_{\tau(e_k)}$ for which  \[g_i \, =\, (e_0e_1\dots e_k) h_i (\bar e_k \bar e_{k-1}\dots  \bar e_0).\]
        
        In the second case,  $g_0$ (hence the tuple)  preserves a line $L$ in $T$ that intersects $\tau$. Let $\tilde v_0$ in this line closest to $\tilde b$, and $v_0$ its image in $X$.
        The element $g_0$ can thus be expressed as   $(e_0 e_1\dots e_k) h_0 (\bar e_k \bar e_{k-1}\dots  \bar e_0)$ for  $(e_0e_1 \dots e_k)$ a simple path  in $\tau$ from $b$ to $ v_0$, and $h \in \pi_1(\bbX,  v_0)$ is in centred short position and $v_0$ is the anchor vertex of the short position.

        From Proposition \ref{prop;compute_finitely_many_short},  we easily obtain:
        
        \begin{prop}\label{prop;compute_short}  Given a tuple, it is decidable whether it is an elliptic, lineal or hyperbolic tuple, and a small conjugate of the enriched  tuple is computable. If it is not elliptic a finite list of all small conjugates of the enriched tuple is computable. 
        \end{prop}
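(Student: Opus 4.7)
The plan is to decide the type (elliptic, lineal, or hyperbolic) of $S$, form the enriched tuple, and compute a small conjugate, by successive reductions to previously established algorithms. For each $g_i$, we first decide whether $g_i$ acts elliptically or hyperbolically on the Bass--Serre tree $T$ dual to $\bbX$ by computing a cyclically reduced representative using the normal forms of Subsection \ref{sec:nf}; the element is elliptic iff the representative has syllable length zero, in which case the reduction also exhibits a vertex $v \in \verts X$ and a conjugator bringing $g_i$ into $\bbX_v$.

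Assume first that every $g_i$ is elliptic. By Lemma \ref{lem;keep_acyl} the action is $4$-acylindrical, so each non-trivial elliptic element fixes a subtree of diameter at most $4$. Combined with Proposition \ref{prop;eff_CP}, this yields a finite enumeration of the $\bk{g_i}$-orbits of vertices fixed by $g_i$, each represented by a bounded-length word in the Bass group. We then test pairwise whether two entries share a common fixed vertex; by the Helly property of trees, $S$ is elliptic iff every pair is. If $S$ is elliptic, the intersection test produces an explicit common fixed vertex $\tilde v$, and a conjugator bringing $\tilde v$ above a vertex of $\tau$ is read off the normal forms, giving a small elliptic conjugate. If instead $S$ is not elliptic, either some $g_i$ is already hyperbolic (set $g_0 := g_i$ for the smallest such $i$) or the pairwise test identifies the smallest pair $(i,j)$ of elliptic entries with disjoint fixed subtrees, in which case $g_0 := g_i g_j$ is hyperbolic by the standard Bass--Serre translation-length argument.

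To distinguish lineal from hyperbolic we test commutation of $g_0$ with each remaining $g_k$: since $g_0$ is hyperbolic and the action $4$-acylindrical, $g_k$ commutes with $g_0$ iff it stabilises $\axis{g_0}$ with matching translation, which is decidable from the normal forms. Proposition \ref{prop;compute_finitely_many_short} then produces the finite list of conjugates of $g_0$ in short position. For each such position, we conjugate the entire enriched tuple accordingly, yielding small conjugates in the lineal case. In the hyperbolic case we further enumerate translates of the resulting $g_1$ by $g_0^n$, for $n$ ranging through a complete set of residues modulo the translation length of $g_0$, and retain those minimising the syllable length of $g_1$. Together this gives the finite list of all small conjugates in the non-elliptic case.

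The main obstacle is the elliptic common-fixed-vertex test, which in principle ranges over all of $T$: $4$-acylindricity (Lemma \ref{lem;keep_acyl}) together with the effective coherence and conjugacy algorithms of Section \ref{sec;algo_trac} is what reduces it to an explicit finite search; all subsequent steps are direct applications of Proposition \ref{prop;compute_finitely_many_short} and bounded normal-form manipulations.
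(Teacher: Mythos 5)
Your decomposition of the problem---deciding ellipticity of each entry via cyclic reduction, using $4$-acylindricity (Lemma~\ref{lem;keep_acyl}) to bound fixed subtrees, the Helly property of trees for the pairwise intersection test, constructing $g_0$ as in the definition, testing commutativity to distinguish lineal from hyperbolic, and finally invoking Proposition~\ref{prop;compute_finitely_many_short} to list short positions of $g_0$---matches the approach the paper intends, which it leaves implicit (``From Proposition~\ref{prop;compute_finitely_many_short}, we easily obtain'').

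There is, however, a concrete error in your handling of the hyperbolic case. To put the enriched tuple in small position you must minimise the syllable length of $g_1$ over its conjugates $g_0^{-n}g_1g_0^{n}$, and you propose to do this by letting $n$ range over a complete set of residues modulo the translation length of $g_0$. This cannot be right: the syllable length of $g_0^{-n}g_1g_0^{n}$ is not periodic in $n$. One has $\axis{g_0^{-n}g_1g_0^{n}}=g_0^{-n}\cdot\axis{g_1}$, and since $\axis{g_0}\neq\axis{g_1}$ (their coincidence would force $g_i$ to stabilise $\axis{g_0}$, which by torsion-freeness and acylindricity would make $g_i$ commute with $g_0$, contradicting the choice of $g_1=g_0^{g_i}$), the two axes share only a bounded segment. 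Translating $\axis{g_1}$ along $\axis{g_0}$ by $g_0^{-n}$ therefore pushes it arbitrarily far from the basepoint $\tilde b$ as $|n|\to\infty$, so the syllable length tends to infinity in both directions. The minimum is attained on a bounded set of integers, but that set is not a residue class; the correct procedure is a terminating linear search, justified precisely by this eventual monotone growth. Two minor remarks: the phrase ``$\bk{g_i}$-orbits of vertices fixed by $g_i$'' is vacuous, since each fixed vertex is its own $\bk{g_i}$-orbit---what you actually compute is the bounded-diameter fixed subtree of $g_i$; and deciding whether $g_k$ commutes with $g_0$ is simply an instance of the word problem, so the detour through axis stabilisation is unnecessary.
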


        \subsubsection{Centred non-elliptic tuples}\label{sec:centered-hyp}
        
        Given a small tuple $S$ of elements in $\pi_1(\bbX,b)$, we define its \emph{central vertex} $v_S \in \verts X$ as follows. If the leading entry $g_0$ is elliptic then every entry in $S$ is conjugate in $\Bass(\bbX)$ into some $\bbX_{v_T}$, so $v_T$ is the central vertex. Otherwise, if $S$ is lineal or hyperbolic, recalling the terminology of Subsection \ref{sec;short_pos}, we take $v_S$ to be the anchor of the short position of the leading entry $g_0$. If $S$ is lineal or hyperbolic we further define the \emph{polarizing edge} $e_S \in \edges X$ to be the  polarizing edge of the short position of the leading entry $g_0$.
        
        Now given a small tuple $S$ of elements in $\pi_1(\bbX,b)$ we can conjugate it in $\Bass(\bbX)$ be the element $E$ given by the path in the spanning tree $\tau$ from $b$ to $v_S$ such that $g_0 = E (g_0)_{v_S} \bar E$, where $(g_0)_{v_S}$ is in centred short position.
        
        It follows that the conjugate $\bar E S E$ is now a tuple of elements in $\pi_1(\bbX,v_S)$, its leading entry is in centred short position, and $e_S$ can be read off as the first symbol occurring in the leading entry. We call the conjugate of $\bar E S E$ the \emph{the re-centring $S$} and we say that $\bar E S E$ a \emph{centred tuple}. Proposition \ref{prop;compute_short} immediately implies that for any tuple $S$ we can compute the complete set of all its centred tuples in $\Bass(\bbX)$.
        
        The following result will have the immediate benefit of providing reassurance that taking conjugates outside $\pi_1(\bbX,b)$ is sensible.
        
        \begin{lem}\label{lem:conj-in-Bass}
          Let $g,h \in \pi_1(\bbX,v)$. $g$ and $h$ are conjugate in $\pi_1(\bbX,v)$ if and only if they are conjugate in $\Bass(\bbX)$.
        \end{lem}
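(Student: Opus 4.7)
The plan is to exploit the retraction of the Bass group onto $\pi_1(\bbX,v)$ that is furnished by Serre's theorem quoted just after the definition of the Bass group. One direction (conjugate in $\pi_1(\bbX,v)$ implies conjugate in $\Bass(\bbX)$) is trivial by the inclusion $\pi_1(\bbX,v) \leq \Bass(\bbX)$, so everything comes down to the converse.

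Fix a spanning tree $\tau \subset X$ through the vertex $v$ and consider the quotient map $F : \Bass(\bbX) \onto \pi_1(\bbX,\tau) = \Bass(\bbX)/\langle\!\langle \edges \tau \rangle\!\rangle$. By Serre's theorem (the statement recalled in Section 2.1 of the paper), the restriction $F|_{\pi_1(\bbX,v)} : \pi_1(\bbX,v) \to \pi_1(\bbX,\tau)$ is an isomorphism. Composing $F$ with its inverse then yields a homomorphism
\[
r = \bigl(F|_{\pi_1(\bbX,v)}\bigr)^{-1} \circ F : \Bass(\bbX) \longrightarrow \pi_1(\bbX,v),
\]
which is the identity on $\pi_1(\bbX,v)$: in other words, $r$ is a retraction of $\Bass(\bbX)$ onto $\pi_1(\bbX,v)$.

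Now suppose $g,h \in \pi_1(\bbX,v)$ and there exists $w \in \Bass(\bbX)$ with $wgw^{-1} = h$. Applying the retraction $r$ to both sides gives $r(w)\, g\, r(w)^{-1} = h$, and since $r(w) \in \pi_1(\bbX,v)$ this exhibits $g$ and $h$ as conjugate in $\pi_1(\bbX,v)$, finishing the proof.

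There is essentially no obstacle here once Serre's theorem is invoked; the only thing to be careful about is noting that the inclusion $\pi_1(\bbX,v) \hookrightarrow \Bass(\bbX)$ composed with $F$ is precisely $F|_{\pi_1(\bbX,v)}$, so that $r$ is a genuine retraction (identity on $\pi_1(\bbX,v)$) rather than merely a homomorphism onto it.
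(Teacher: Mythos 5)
Your proof is correct and is essentially the paper's own argument: both reduce to Serre's theorem that the quotient map $F:\Bass(\bbX)\to\pi_1(\bbX,\tau)$ restricts to an isomorphism on $\pi_1(\bbX,v)$, and then push the conjugator through (the paper phrases this as taking the image of the conjugator in the quotient and pulling back, while you package the same step as a retraction $r=(F|_{\pi_1(\bbX,v)})^{-1}\circ F$). No substantive difference.
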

        \begin{proof}
          On the one hand we have that $\pi_1(\bbX,v)\leq \Bass(\bbX)$, on the other hand, for a spanning subtree $\tau$ of $X$, we have a commuting diagram \[
            \begin{tikzpicture}[xscale=2]
                \node (B) at (0,0) {$\Bass(\bbX)$};
                \node (H) at (0,-1) {$\pi_1(\bbX,v)$};
                \node (Bbar) at (2,-1){$\Bass(\bbX)/_{\bk{\bk{\edges{\tau}}}}$};
                \draw[->>] (B) -- (Bbar);
                \path (H) -- node[sloped]{$\leq$} (B);
                \draw[->] (H) --node[above]{$\sim$} (Bbar);
            \end{tikzpicture}
          \] where the horizontal arrow is an isomorphism. If two elements in $\pi_1(\bbX,v)$ are conjugate in $\Bass(\bbX)$ then the image of the conjugator $\Bass(\bbX)/_{\bk{\bk{\edges{\tau}}}}$ gives a conjugator in $\pi_1(\bbX,v)$.

        \end{proof}
    
    \subsubsection{Centred elliptic tuples and variations}\label{sec:centered-elliptic}

    If $S = (g_0,\ldots,g_{r})$ is an elliptic tuple then its \emph{central vertex} $v_s \in \verts X$ is the vertex such all the $g_i$ are simultaneously conjugate in $\Bass(\bbX)$ into $\bbX_{v_s}$. We say $S$ is centred if its entries are in  $\bbX_{v_s}$. If $S$ is centred and all its entries are simultaneously conjugate into the image of the same edge group, then we will require the \emph{centring} of $S$ to be a conjugate in which all entries lie in the image of some common edge group and we call the \emph{variations} of a centred $S'$ to be all the tuples $(g'_0,\ldots,g'
    _r)$ such that \[
        g_i' = \ell g_i \bar \ell \in \bbX_{v_s}
    \] where $\ell$ is a loop based at $v_s$ of length at most 4. (See Lemma \ref{lem:conj-elliptic}.)
   
   We define the \emph{linkage configuration of $S$} to be the tuple \[\calQ(S) = (\bk{(g_0)_f},\ldots,\bk{(g_r)_f}) \in F\cap \bbX_{v_s}\] of cyclic subgroups of free groups generated by fibre parts.
   
   \begin{prop}
    Given an elliptic tuple, it is possible to compute its centring and all its variations.
   \end{prop}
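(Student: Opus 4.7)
The plan is to (i) locate the central vertex $v_S$ together with a conjugator in $\Bass(\bbX)$ placing every entry of $S$ into $\bbX_{v_S}$, (ii) decide whether a further conjugation inside $\bbX_{v_S}$ sends all entries simultaneously into the image of a single incident edge group, and (iii) enumerate the finitely many variations by conjugating by the edge loops in $X$ of length at most $4$ based at $v_S$.

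For (i), by Proposition \ref{prop;construct-canonical} we can compute $\bbX$. For each entry $g_i$ we compute a reduced word over $\bbX$ and read off whether $g_i$ is elliptic; when it is, we produce a conjugator $h_i \in \Bass(\bbX)$ and a vertex $v_i \in \verts X$ with $h_i g_i h_i^{-1} \in \bbX_{v_i}$. Because $S$ is elliptic, its entries share a common fixed vertex in the Bass--Serre tree, so a central $v_S$ exists. To find a common conjugator $h$, start from $h_0$ (which places $g_0$ in some $\bbX_{v_0}$), and for each $i \ge 1$ use the conjugacy test for elliptic elements provided by Lemma \ref{lem:conj-elliptic} to produce either an adjustment of $h$ inside $\bbX_{v_0}$ or an edge path of length at most $4$ realising the required further conjugation in $\Bass(\bbX)$. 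The $4$-acylindricity of the action (Lemma \ref{lem;keep_acyl}) bounds the search space, while effective coherence of the vertex groups and the uniform membership problem (Proposition \ref{prop;eff_coh_ump_for_GDT}) render each step effective. This yields a tuple $S'$ whose entries all lie in $\bbX_{v_S}$.

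For (ii), we check for each edge $e$ incident to $v_S$ whether there exists $c \in \bbX_{v_S}$ such that $cg_i'c^{-1} \in \tau_e(\bbX_e)$ for all $i$. Each such vertex group is a free group or a direct product of a free group with $\bbZ$, and each $\tau_e(\bbX_e)$ is an explicit cyclic or rank-$2$ abelian subgroup, so this is a simultaneous conjugacy-into-a-subgroup problem decidable using effective coherence of $\torus\alpha$. If some common $c$ is found, we record the conjugate $c\,S'\,c^{-1}$ as the centering; otherwise the centering is $S'$ itself.

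For (iii), the underlying graph $X$ is finite, so there are only finitely many edge loops of length at most $4$ based at $v_S$. For each such loop $\ell$ we compute the conjugate $\ell g_i' \bar\ell$ via the Bass relations, retaining those tuples that still lie in $\bbX_{v_S}$. This finite enumeration outputs the complete list of variations. The main obstacle is step (i): ensuring the simultaneous placement terminates correctly, which rests on the bound on conjugator length supplied by Lemma \ref{lem:conj-elliptic} together with effective coherence to verify the required memberships.
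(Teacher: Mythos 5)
Your proposal is correct, and since the paper states this proposition without proof, it essentially supplies the argument the authors take for granted: computability of the canonical splitting (Proposition \ref{prop;construct-canonical}), algorithmic tractability of the vertex groups (effective coherence and uniform membership), and the $4$-acylindricity bound on conjugators that underlies Lemma \ref{lem:conj-elliptic}. One point in your step (i) deserves tightening: Lemma \ref{lem:conj-elliptic} as stated compares two elements that already lie in the same vertex group, so it is not literally the device for the simultaneous placement of the whole tuple; the clean justification is that once the first non-trivial entries have been placed in a vertex group, the subgroup they generate fixes a subtree of diameter at most $4$ (Lemma \ref{lem;keep_acyl}), so the candidate common fixed vertices form a finite computable set, and whether the remaining conjugated entries fix one of them is a membership question in conjugates of vertex groups, decidable by Proposition \ref{prop;eff_coh_ump_for_GDT} — this is your "bounded search" made precise, and conjugating the entire current tuple by the resulting adjustment guarantees that entries placed earlier are not disturbed. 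Steps (ii) and (iii) are fine as written: simultaneous conjugation into a cyclic or $\bbZ^2$ edge-group image inside $F_v$ or $F_v\times\bbZ$ is decidable, and the variations are a finite enumeration over loops of length at most $4$ checked against the Bass relations.
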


\subsection{The actions of $\delta_1 \autfo \bbX$}\label{sec;action_delta_1}

The mixed Whitehead problem involves the action of $\delta\autfo(\bbX)$ as well as the action of $(\torus\alpha)^k$ on a tuple of tuples. We start by studying the action of $\delta\autfo{\bbX}$ on centred tuples.

\subsubsection{Preliminaries}
            
    Recall that $\delta \autfo \bbX$ surjects on $\outfo \G$ and that by Corollary \ref{cor:delta_1_repr} we can compute a complete list $\nu = \{\nu_1,\ldots, \nu_v\}$ of $\delta_1\autfo\bbX$-coset representatives. If for each $j\in \{1, \dots, v\} $,  we define $\vec S_j = \nu_j(\vec S) = (\nu_j(S_1),\ldots,\nu_j(S_k))$, then we immediately get a first reduction, before diving into the main argument:

            \begin{lem}\label{lem:variations} \label{lem;reduc_delta_1}
                $\vec S$ and $\vec T$ are mixed Whitehead equivalent under $\autfo \G$  if and only if there is     
                $j\in \{1, \dots, v\}$
                such that      $\vec S_j$ and $\vec T$ are Whitehead equivalent under $\delta_1 \autfo \bbX $.   In particular, the mixed Whitehead  problem under $\autfo \G$ reduces to the mixed Whitehead problem under $\delta_1 \autfo \bbX $.
            \end{lem}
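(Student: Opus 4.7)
The plan is to prove both directions by exploiting the surjectivity of $\delta\autfo\bbX \to \outfo\G$ (Proposition \ref{prop;delta-aut-surjecte-sur-out}) together with the coset decomposition $\delta\autfo\bbX = \bigsqcup_{j=1}^v \delta_1\autfo\bbX \cdot \nu_j$ afforded by Corollary \ref{cor:delta_1_repr}. The key preliminary observation is that mixed Whitehead equivalence under any subgroup of $\autfo\G$ depends only on the image of that subgroup in $\outfo\G$: since inner automorphisms preserve conjugacy classes of tuples, the equivalence $[\sigma(S_i)]=[T_i]$ depends only on the outer class of $\sigma$. Consequently mixed Whitehead equivalence under $\autfo\G$ coincides with equivalence under the image of $\delta\autfo\bbX$ in $\outfo\G$.

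For the forward direction, I would start from $\sigma \in \autfo\G$ realizing the equivalence $[\sigma(S_i)]=[T_i]$. Using surjectivity, lift the outer class of $\sigma$ to some $\Psi \in \delta\autfo\bbX$; then $[\Psi(S_i)]=[T_i]$ still holds. Writing $\Psi = \psi \circ \nu_j$ for some $\psi \in \delta_1\autfo\bbX$ and some coset representative $\nu_j$, we obtain $[\psi(\vec S_j)] = [\psi(\nu_j(\vec S))] = [\Psi(\vec S)] = [\vec T]$ entry by entry, which is exactly the statement that $\vec S_j$ and $\vec T$ are Whitehead equivalent under $\delta_1\autfo\bbX$. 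For the reverse direction, given $j$ and $\psi \in \delta_1\autfo\bbX$ with $[\psi(\nu_j(S_i))]=[T_i]$, the composition $\psi\circ\nu_j \in \delta\autfo\bbX$ projects to $\outfo\G$ and hence to an element of $\autfo\G$ that witnesses the equivalence $\vec S \sim \vec T$ under $\autfo\G$.

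For the algorithmic reduction asserted in the second sentence, I would observe that Corollary \ref{cor:delta_1_repr} explicitly produces the finite list $\nu_1,\dots,\nu_v$, and that each $\nu_j$ can be evaluated on the finitely many entries of $\vec S$ (using Proposition \ref{prop;formula_for_GDT} and effective coherence, Proposition \ref{prop;eff_coh_ump_for_GDT}). Thus $\vec S_j$ is computable for each $j$, and the mixed Whitehead problem under $\autfo\G$ is decided by running the mixed Whitehead algorithm under $\delta_1\autfo\bbX$ on each of the $v$ pairs $(\vec S_j, \vec T)$ and returning "yes" iff at least one instance returns "yes".

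There is no real obstacle here: the content of the lemma is entirely packaged into the earlier results, namely surjectivity of $\delta\autfo\bbX \to \outfo\G$ and the effective computability of coset representatives. The lemma serves as a bookkeeping reduction that isolates the group $\delta_1\autfo\bbX$, whose elements act trivially on edge groups and hence interact cleanly with the normal form and linkage configuration machinery developed in the previous subsections, which is where the genuine work of solving the mixed Whitehead problem will take place.
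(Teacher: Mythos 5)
Your proof is correct and follows exactly the route the paper intends: the paper states this lemma without proof ("we immediately get a first reduction"), relying on the surjectivity of $\delta\autfo\bbX\to\outfo\G$, the computed coset representatives of Corollary \ref{cor:delta_1_repr}, and the fact that mixed Whitehead equivalence only depends on outer classes. Your writeup simply spells out these intended immediate steps, including the factorization $\Psi=\psi\circ\nu_j$ and the computability of the $\vec S_j$.
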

        
        Observe that $\delta_1 \autfo \bbX$ preserves the paths (hence the syllable lengths), and the abscissa. Note that, according to our convention,  these paths are to be understood as paths in $X$, not in $T$. 
        
        \begin{lem}
           If $\alpha \in \delta_1\autfo \bbX$ is seen as a homomorphism of the Bass group, then it preserves the $X$-paths and abscissa of elements in $\pi_1(\bbX,v)$ for all $v \in \verts X$. If $w$ is reduced, then $\alpha(w)$ is reduced and of the same syllable length. If $g$ is cyclically reduced, then $\alpha(g)$ is cyclically reduced.
        \end{lem}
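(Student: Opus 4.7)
The plan is to unwind the definitions: an element $\alpha = (Id_X, (\phi_v)_v, (Id_e)_e, (\gamma_e)_e) \in \delta_1\autfo{\bbX}$ acts on the Bass group by $v$-vertex restriction to $\phi_v$ on $\bbX_v$ and by sending each generator $e \in \edges{X}$ to $\gamma_{\bar e}^{-1} e \gamma_e$. In particular, applied to a word
\[
w = g_1 e_1 g_2 e_2 \cdots e_n g_{n+1},
\]
we get
\[
\alpha(w) = \phi_{v_0}(g_1)\,\gamma_{\bar e_1}^{-1} e_1 \gamma_{e_1}\,\phi_{v_1}(g_2)\,\gamma_{\bar e_2}^{-1} e_2 \gamma_{e_2} \cdots e_n\gamma_{e_n}\,\phi_{v_n}(g_{n+1}).
\]
Since the $\gamma_e$ lie in vertex groups (hence contain no edge letters) and the edges appear in the same order, the underlying $X$-path $(e_1,\dots,e_n)$ is literally preserved, proving the first assertion about $X$-paths.

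The main step, reducedness, uses that because $\phi_e = Id_e$ for all $e$, the Bass diagram \eqref{eq:Bass_Diagram_delta_0} simplifies to $\phi_{\tau(e)} \circ \tau_e = \ad{\gamma_e} \circ \tau_e$. Consequently $\phi_{\tau(e)}$ sends $\tau_e(\bbX_e)$ to itself (acting on it as conjugation by $\gamma_e$). Now $w$ fails to be reduced exactly when some turn $(e_i, e_{i+1})$ with $e_{i+1} = \bar e_i$ satisfies $g_{i+1} \in \tau_{e_i}(\bbX_{e_i})$. In $\alpha(w)$ the corresponding subword is $e_i\,\gamma_{e_i}\phi_{v_i}(g_{i+1})\gamma_{e_i}^{-1}\,\bar e_i$, so the analogous obstruction is $\gamma_{e_i}\phi_{v_i}(g_{i+1})\gamma_{e_i}^{-1} \in \tau_{e_i}(\bbX_{e_i})$. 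Since $\phi_{v_i}$ stabilises $\tau_{e_i}(\bbX_{e_i})$ setwise, this is equivalent to $g_{i+1} \in \tau_{e_i}(\bbX_{e_i})$. Therefore $\alpha(w)$ is reduced iff $w$ is, giving the claim about reducedness and, combined with the first paragraph, preservation of syllable length.

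For cyclic reducedness, if $g$ is cyclically reduced then $g^2$ is reduced, so by the previous step $\alpha(g)^2 = \alpha(g^2)$ is reduced, which in turn forces $\alpha(g)$ to be cyclically reduced. Finally, for the abscissa: since $\alpha \in \autfo{\G}$ preserves the fibre $F$ and the orientation $tF$, it descends to the identity on the quotient $\G/F \simeq \bbZ$. The abscissa of an element of $\pi_1(\bbX,v)$ is, by our normal form conventions and the fact that $c_e \in F$ for every edge $e$, exactly its image under $\G \to \G/F$; hence it is invariant under $\alpha$. No single step is a real obstacle; the only subtlety is the observation that $\phi_{v}$ stabilises the images of incident edge groups setwise, which follows directly from the simplified Bass diagram available in $\delta_1\aut{\bbX}$.
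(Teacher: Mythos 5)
Your treatment of the $X$-path, reducedness/syllable-length and abscissa claims is essentially the paper's argument: in both, the point is that since $\phi_e=Id_e$, the Bass relation $\phi_{\tau(e)}\circ\tau_e=\ad{\gamma_e}\circ\tau_e$ makes the obstruction to reducedness at a turn $(e_i,\bar e_i)$ invariant. One small wording fix: $\phi_{\tau(e)}$ does not in general send $\tau_e(\bbX_e)$ to itself setwise; it sends it onto the conjugate $\gamma_e^{-1}\tau_e(\bbX_e)\gamma_e$. What the Bass relation actually gives is that the map $y\mapsto\gamma_e\,\phi_{\tau(e)}(y)\,\gamma_e^{-1}$ restricts to the identity on $\tau_e(\bbX_e)$, and since it is an automorphism of the vertex group, membership of $\gamma_{e_i}\phi_{v_i}(g_{i+1})\gamma_{e_i}^{-1}$ in $\tau_{e_i}(\bbX_{e_i})$ is indeed equivalent to membership of $g_{i+1}$; so your conclusion stands, just not for the stated reason.

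The cyclic-reducedness step, however, has a genuine gap. With the definition in force here, ``cyclically reduced'' means minimal syllable length among all conjugates in $\pi_1(\bbX,b)$, and this does \emph{not} imply that $g^2$ is reduced. The syllable length of $g$ is $d(\tilde b,g\tilde b)$ in the Bass--Serre tree; when no vertex of the axis (or fixed set) of $g$ lies in the orbit of $\tilde b$ --- exactly the phenomenon the paper highlights when introducing short positions and recentering --- one has $d(\tilde b,g\tilde b)=2D+\ell$ with $D>0$ and $\ell$ the translation length, so $d(\tilde b,g^2\tilde b)=2D+2\ell<2\,d(\tilde b,g\tilde b)$ and the concatenation of two reduced words for $g$ cancels. (Your converse observation, that $\alpha(g)^2$ reduced forces $\alpha(g)$ cyclically reduced, is correct; it is the forward implication that fails, already for elliptic $g=E a\bar E$ with $E$ a nontrivial path.) The repair is short and is what the paper does: since $\alpha^{-1}\in\delta_1\autfo\bbX$ as well, $\alpha$ preserves the syllable length of every element and maps the conjugacy class of $g$ bijectively onto that of $\alpha(g)$; hence a strictly shorter conjugate of $\alpha(g)$ would pull back under $\alpha^{-1}$ to a strictly shorter conjugate of $g$, contradicting cyclic reducedness of $g$.
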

 
        \begin{proof} 
        Any fibre and orientation-preserving automorphism induces the identity on the cyclic quotient of the suspension, hence  preserves the abscissa. 
        
        Being in $\delta_1\autfo \bbX$, $\alpha$ is of the form $\alpha = (Id_X, (\phi_v), (Id_e), (\gamma_e))$. It sends each edge $e_i$ to $\gamma_{\overline {e_i}}^{-1} e_i \gamma_{e_i} $  in the Bass group. In particular, it preserves the paths.  
        
        Also, it sends the word $g_1e_1g_2\cdots e_ng_{n+1}$ to $h_1e_1h_2e_2\cdots e_nh_{n+1}$ where  $h_i =    \gamma_{e_{i-1}}  \phi_{v_i}(g_i) \gamma_{\overline {e_{i}}}^{-1} $, if $1< i < n+1$. 
        In particular, it  is in the same vertex groups as  $g_i$.
         If the word  $h_1e_1h_2e_2\cdots e_nh_{n+1}$  fails to be reduced, there exists $i$ in $\{2, \dots,  n-1\}$ such that $e_{i+1}= \bar e_i$, and  $h_i \in \tau_{e_i}(\bbX_{e_i}) < \bbX_{v_{i+1}}$. The Bass diagram implies that $g_1e_1g_2\cdots e_ng_{n+1}$     is not reduced either. 
         
         If $g$ is cyclically reduced, consider $\alpha(g)$, and assume by contradiction that it  is not cyclically reduced. There is a shorter conjugate, and its image by $\alpha^{-1}$ (also in $\delta_1\autfo \bbX$, inducing an isometry of the Bass-Serre tree),  
         is a conjugate of $g$ that is represented by a word that is reduced by the previous argument, and shorter: a contradiction.  
        \end{proof}
 
    \subsubsection{Action on linkages, and on double cosets}
        \begin{conv}
            For the remainder of this section, we assume any tuple $T$ of elements is  centred and its elements lie in $\pi_1(\bbX,v_T)$ where $v_T$ is the central vertex of the centred tuple. The leading entry of $T$ is in centred short position and the first edge in its prefix is $e_T$. We will always use $e_T$-polarized normal forms to work with the element of $T.$
        \end{conv}

    Let us compute the action  of $\delta_1 \autfo \bbX$ on the linkage tuples. Observe that it is equivalent to consider $\delta_1 \aut \bbF$. First of all, we will want to fix an edge $e$ and consider $e$-polarized normal forms. Obviously, this only makes sense for elements of $\pi_1(\bbX,v)$ with $i(e)=v$. Given $g \in \pi_1(\bbX,i(e))$ and $\phi=(Id_X, (\phi_v), (Id_e), (\gamma_e))$ an element of $\delta_1 \autfo \bbX$ we define the \emph{$e$-polarized action of $\phi$} to be \[
        \phi\cdot_e g = (\ad{\gamma_{\bar e}^{-1}}\circ \phi)(g),
    \] where for this section $\ad{x}(y) = x^{-1}yx$. Note that $(\ad x \circ \ad y) (z) = \ad x (y^{-1}zy) = x^{-1}y^{-1}z y x = \ad{yx}(z)$, which due to the fact that our conjugation convention gives a right action but functions act on the left.
    
    \begin{lem}
      The $e-$polarized action is a well-defined group action.
    \end{lem}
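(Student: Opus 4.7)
The plan is to verify two things: first that $\phi \cdot_e g$ still lies in $\pi_1(\bbX, i(e))$ so the formula gives a map, and second that the axioms $(\phi_1 \phi_2)\cdot_e g = \phi_1 \cdot_e (\phi_2 \cdot_e g)$ and $Id \cdot_e g = g$ hold. Both will be quick consequences of the Bass composition rule \eqref{eq:delta_0-composition} together with the fact that $\gamma_{\bar e} \in \bbX_{i(e)}$.

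For well-definedness, I would first note that any $\phi \in \delta_1 \autfo \bbX$ has $\phi_X = Id_X$, so $\phi$ as an automorphism of the Bass group preserves the subgroup $\pi_1(\bbX, v)$ for every $v \in \verts X$: indeed, a loop $a_0 e_1 a_1 \cdots e_n a_n$ based at $v$ is sent to $\phi_v(a_0) \gamma_{\bar e_1}^{-1} e_1 \gamma_{e_1} \phi_{\tau(e_1)}(a_1) \cdots \gamma_{\bar e_n}^{-1} e_n \gamma_{e_n} \phi_v(a_n)$, which is again a loop based at $v$. Since $\gamma_{\bar e} \in \bbX_{\tau(\bar e)} = \bbX_{i(e)} \leq \pi_1(\bbX, i(e))$, conjugation by $\gamma_{\bar e}$ also preserves $\pi_1(\bbX, i(e))$, so $\phi \cdot_e g \in \pi_1(\bbX, i(e))$.

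For the action axioms, given $\phi_1, \phi_2 \in \delta_1 \autfo \bbX$ with coefficients $(\gamma^1_f)$ and $(\gamma^2_f)$, the composition rule \eqref{eq:delta_0-composition} (restricted to $\delta_1$, where all $\phi_e = Id$) gives the coefficient at $\bar e$ of $\phi_1 \circ \phi_2$ as $\gamma^1_{\bar e}\, \phi^1_{i(e)}(\gamma^2_{\bar e})$. Then
\[
(\phi_1 \phi_2) \cdot_e g = \bigl(\gamma^1_{\bar e}\,\phi^1_{i(e)}(\gamma^2_{\bar e})\bigr)\, (\phi_1 \circ \phi_2)(g)\, \bigl(\gamma^1_{\bar e}\,\phi^1_{i(e)}(\gamma^2_{\bar e})\bigr)^{-1},
\]
while on the other hand, using that $\phi_1$ is a Bass-group automorphism and that $\phi_1$ restricted to $\bbX_{i(e)}$ equals $\phi^1_{i(e)}$,
\[
\phi_1 \cdot_e (\phi_2 \cdot_e g) = \gamma^1_{\bar e}\, \phi_1\bigl(\gamma^2_{\bar e}\, \phi_2(g)\, {\gamma^2_{\bar e}}^{-1}\bigr)\, {\gamma^1_{\bar e}}^{-1} = \gamma^1_{\bar e}\, \phi^1_{i(e)}(\gamma^2_{\bar e})\, \phi_1(\phi_2(g))\, \phi^1_{i(e)}(\gamma^2_{\bar e})^{-1}\, {\gamma^1_{\bar e}}^{-1},
\]
which matches. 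For the identity, $\gamma_{\bar e} = 1$ and $\phi = Id$, so $Id \cdot_e g = g$.

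No step is a real obstacle: the only thing to watch is that the two ways of seeing the action — as conjugation in the Bass group by $\gamma_{\bar e}^{-1}$ followed by $\phi$, versus the intrinsic composition law on $\delta_1 \autfo \bbX$ — match, and this matching is exactly the content of the $\gamma$-entry in \eqref{eq:delta_0-composition}. The $\ad{\gamma_{\bar e}^{-1}}$ adjustment is what normalizes the action so that it is compatible with the way $\phi$ moves the "endpoint" data associated to the polarizing edge $e$; once this is spelled out, the proof reduces to the straightforward computation above.
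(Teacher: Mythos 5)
Your proof is correct and follows essentially the same route as the paper: both reduce the action axiom to the $\gamma$-entry of the composition rule \eqref{eq:delta_0-composition} and observe that $\phi_1$ applied to $\gamma^2_{\bar e}\in\bbX_{i(e)}$ is just $\phi^1_{i(e)}(\gamma^2_{\bar e})$, so the conjugating element for $\phi_1\circ\phi_2$ is exactly $\gamma^1_{\bar e}\,\phi^1_{i(e)}(\gamma^2_{\bar e})$. The only cosmetic difference is that the paper phrases the cancellation via the identity $\ad x\circ\ad y=\ad{yx}$ on adjoint operators, while you expand the conjugations directly; you also spell out the two easy additional checks (stability of $\pi_1(\bbX,i(e))$ and the identity axiom) that the paper leaves implicit.
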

    \begin{proof}
      Let $\phi=(Id_X, (\phi_v), (Id_e), (\alpha_e))$, $\psi=(Id_X, (\psi_v), (Id_e), (\beta_e))$  and let $\theta = \psi\circ\phi = (Id_X, (\psi_v), (Id_e), (\gamma_e))$. On the one hand, we have\[
        \psi(\phi(e))=\psi(\alpha_{\bar e}^{-1})
        \beta_{\bar e}^{-1}
        e \beta_e \psi(\alpha_e) \Rightarrow \gamma_{\bar e}^{-1} = \psi(\alpha_{\bar e}^{-1})\beta_{\bar e}^{-1}.
      \] (See also the Bass diagram \eqref{eq:delta_0-composition}.)  On the other hand, we have\[
        (\ad{\beta_{\bar e}^{-1}}\circ\psi)\circ(\ad{\alpha_{\bar e}^{-1}}
        \circ \phi) = \ad{\beta_{\bar e}^{-1}}\circ\ad{\psi(\alpha_{\bar 
        e}^{-1})} \circ \psi\circ\phi = \ad{\psi(\alpha_{\bar 
        e}^{-1})\beta_{\bar e}^{-1}} \circ \psi\circ\phi.
      \] It follows that\[
        (\psi\circ \phi)\cdot_e (g) = (\ad{\gamma_{\bar e}^{-1}}\circ \psi\circ \phi)(g) = \psi\cdot_e(\phi\cdot_e g),
      \] as required.
    \end{proof}

    \begin{lem}\label{lem;image_of_reduced}
        Let $g\in \pi_1(\bbX, v)$, and let its reduced fiber form be \[g=\bar f_1e_1 \bar f_2\cdots e_nf_{n+1} t^k,\] where $t$ is our oriented generator of the centre of $\bbX_v$ and write $v_i = \tau(e_{i-1}) = i(e_i)$, so that $f_i \in \bbX_{v_i}\cap F$.
        Let $\phi=(Id_X, (\phi_v), (Id_e), (\gamma_e))$ an element of $\delta_1 \autfo \bbX$. Let $e$ be such that $i(e)=v$.
        The image of $\phi\cdot_e g$  has the same $X$-path and the same sequence of turns as $g$. 
        
        The linkage tuple of $\phi \cdot_e g$ with respect to the $e$-polarization has $i$-th coordinate that is equal to $ (\phi_{v_i} ( \langle I_{e_{i-1}}, f_i O_{e_i} f_i^{-1} \rangle ))^{ \gamma_{e_{i-1}}^{-1}}      $, which is $\phi_{v_i} ( \vec{\link{}}(g)_i)^{\gamma_{e_{i-1}}^{-1}} $. 
          
        \end{lem}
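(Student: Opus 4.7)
The plan is to compute $\phi \cdot_e g$ explicitly by expanding through the definition, and then read off the linkage at each position. First, I would expand $\phi(g)$ using $\phi(e_i) = \gamma_{\bar e_i}^{-1} e_i \gamma_{e_i}$ (from the specification of the automorphism's action on the Bass group generators) together with $\phi(f) = \phi_{v_i}(f)$ for $f\in \bbX_{v_i}$. Because $\phi$ is fibre and orientation preserving, each $\phi_{v_i}$ fixes the oriented central generator $t=t_v$, so the trailing factor $t^k$ is untouched and may be commuted past $\gamma_{\bar e}^{-1}$ (which lies in $\bbX_v$) after applying $\ad{\gamma_{\bar e}^{-1}}$. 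The outcome is the word
\[
\phi\cdot_e g \;=\; h_1\, e_1\, h_2\, e_2\, \cdots\, e_n\, h_{n+1}\, t^k,
\]
where $h_i = \gamma_{e_{i-1}}\,\phi_{v_i}(\bar f_i)\,\gamma_{\bar e_i}^{-1}$ under the boundary convention $e_0=\bar e$ and $e_{n+1}=e$. The underlying $X$-path $(e_1,\dots,e_n)$ is manifestly unchanged, hence so is the sequence of turns; by the previous lemma, the displayed word is reduced.

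Next, I would read off the linkage at position $i$ from this word:
\[
\link{i}(\phi\cdot_e g) \;=\; \langle I_{e_{i-1}},\, h_i\, O_{e_i}\, h_i^{-1}\rangle.
\]
The key algebraic step is to invoke the Bass commutative diagram \eqref{eq:Bass_Diagram_delta_0}, which for an element of $\delta_1\autfo{\bbX}$ reads $\phi_{v_i}\circ\tau_{e} = \ad{\gamma_{e}}\circ\tau_{e}$ at any edge $e$ incident to $v_i$. Applying this to $e_{i-1}$ yields $\gamma_{e_{i-1}}\,\phi_{v_i}(I_{e_{i-1}})\,\gamma_{e_{i-1}}^{-1} = I_{e_{i-1}}$, and applying it to $\bar e_i$ yields $\gamma_{\bar e_i}\,\phi_{v_i}(O_{e_i})\,\gamma_{\bar e_i}^{-1} = O_{e_i}$.

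Substituting these two identities into the expression for $\link{i}(\phi\cdot_e g)$, the $\gamma_{\bar e_i}$ factors cancel against those hidden in $O_{e_i}$, leaving
\[
\link{i}(\phi\cdot_e g) \;=\; \gamma_{e_{i-1}}\,\phi_{v_i}\!\left(\langle I_{e_{i-1}},\, \bar f_i\, O_{e_i}\, \bar f_i^{-1}\rangle\right)\gamma_{e_{i-1}}^{-1} \;=\; \phi_{v_i}(\link{i}(g))^{\gamma_{e_{i-1}}^{-1}},
\]
as claimed. The main (minor) obstacle is the bookkeeping at the two endpoints $i=1$ and $i=n+1$: here one must be careful to use the polarization convention $e_0=\bar e$, $e_{n+1}=e$, and to verify that the leftmost conjugation by $\gamma_{\bar e}$ inserted by $\cdot_e$ combines correctly with the $\gamma_{\bar e_1}^{-1}$ appearing in $\phi(e_1)$ at the first position, while on the right the commuted $\gamma_{\bar e}^{-1}$ fills the role of the missing ``$\gamma_{\bar e_{n+1}}^{-1}$'' under the convention $e_{n+1}=e$. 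Once these boundary identifications are made, the interior computation applies uniformly and the lemma follows.
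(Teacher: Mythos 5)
Your proposal is correct and follows essentially the same route as the paper: expand $\phi\cdot_e g$ syllable by syllable using $\phi(e_i)=\gamma_{\bar e_i}^{-1}e_i\gamma_{e_i}$ and the boundary convention $e_0=\bar e$, $e_{n+1}=e$, read off the linkage $\langle I_{e_{i-1}}, h_i O_{e_i} h_i^{-1}\rangle$, and then apply the two Bass-diagram identities $\gamma_{e_{i-1}}\phi_{v_i}(I_{e_{i-1}})\gamma_{e_{i-1}}^{-1}=I_{e_{i-1}}$ and $\gamma_{\bar e_i}\phi_{v_i}(O_{e_i})\gamma_{\bar e_i}^{-1}=O_{e_i}$ to extract the conjugate $\phi_{v_i}(\vec{\link{}}(g)_i)^{\gamma_{e_{i-1}}^{-1}}$, exactly as the paper does.
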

        
        \begin{proof}
        First note that $\phi(t) = t$ since $\phi$ is orientation preserving and $t$ is a generator of the centre of $\bbX_{b}$. The equality of the paths is immediate, given that $\phi$ induces $Id_X$, hence the sequence of turns is the same too.

        We write 
        \begin{multline}\label{eqn:phieofg}
        \phi\cdot_e g = \gamma_{\bar e} \phi(g) \gamma_{\bar e}^{-1} =  \gamma_{\bar e} \phi_{v_1}(f_1) \gamma_{\bar e_1}^{-1} \cdot e_1 \cdot \gamma_{e_1}\,\phi_{v_2} (f_2) \gamma_{\bar e_2}^{-1} \cdot e_2 \cdots\\ e_n \cdot \gamma_{e_n}\cdot\phi_{v_{n+1}} (f_{n+1}) \gamma_{\bar e}^{-1}  \, t^{\pm k}
        \end{multline}
         
        Considering the sequence $DC(\phi\cdot_e g,e)$ we see that we can express $\vec{\link{}}(\phi(g))_i$ as 
        \[\vec{\link{}}(\phi(g))_i = \langle I_{e_{i-1}}, \gamma_{e_{i-1}} \phi_{v_i} (f_i) \gamma_{\bar e_i}^{-1}\, \cdot \, O_{e_i} \,\cdot \,  \gamma_{\bar e_i}     \phi_{v_i} (f_i)^{-1} \gamma_{e_{i-1}}^{-1}    \rangle, \] for $1\leq i \leq n+1$ and where, for the sake of defining cosets in the context of $e$-polarization, we define $e_0=\bar e$ and $e_{n+1}=e$. The Bass diagram (for $e_{i-1}$ and $\bar e_i$) ensures:
        \begin{equation}\label{eq:bass-diag-oeie}
        \begin{array}{rclr}
             \gamma_{\bar e_i}^{-1} O_{e_i}  \gamma_{\bar e_i} & = & \phi_{v_i} (O_{e_i}) &  (\hbox{image of } O_{e_i}) 
             \\  I_{e_{i-1}} & = &  \gamma_{e_{i-1}} \phi_{v_i} ( I_{e_{i-1}})\gamma_{e_{i-1}}^{-1} &  (\hbox{image of } I_{e_{i-1}}).
        \end{array}
        \end{equation}
        
        It follows that  \begin{multline*}\langle I_{e_{i-1}}, \gamma_{e_{i-1}} \phi_{v_i} (f_i) \gamma_{\bar e_i}^{-1} O_{e_i}  \gamma_{\bar e_i}     \phi_{v_i} (f_i)^{-1} \gamma_{e_{i-1}}^{-1}    \rangle = \\
        \langle \phi_{v_i} (I_{e_{i-1}}),  \phi_{v_i} (f_i) \phi_{v_i} (O_{e_i})       \phi_{v_i} (f_i)^{-1}    \rangle^{\gamma_{e_{i-1}}^{-1}}, \end{multline*} which is what we wanted. 
        
          \end{proof}

            For all vertex $v$ and edges $e_-, e_+$ such that $\tau(e_-) = i(e_+)=v$,  we consider the following action of $\delta_1 \autfo \bbX$ on the set  $I_{e_-} \backslash \bbF_v/ O_{e_+}$ of double cosets   $I_{e_-} f O_{e_+}$   in $\bbF_v$.
            
            \begin{conv}\label{conv;action_DC}
                Let  $\phi= (Id_X, (\phi_v), (Id_e), (\gamma_e)) \in \delta_1 \autfo \bbX$, let  $w\in \verts X$ be a white vertex, and let $e_-, e_+$ edges such that $\tau(e_-) = i(e_+)=w$.   Let   $f\in \bbF_{v}$. and consider  the set   of  double cosets  $I_{e_-} f O_{e_+}$   in $\bbF_v$. We set  
                \[\phi \cdot  (I_{e_-} f O_{e_+}) \; = \;  I_{e_-} \, \gamma_{e_-} \phi_v(f) \gamma_{\bar e_+}^{-1}  \, O_{e_+}. \]

            \end{conv}        
            One easily checks from the composition formula in Section \ref{sec;autom_gog} that this defines an action of $\delta_1 \autfo \bbX$  on  $I_{e_-} \backslash \bbF_v/ O_{e_+}$. By Lemma \ref{lem;image_of_reduced},  it is the natural action of the thus obtained automorphisms of $\torus \alpha$ on the double cosets defined by polarized reduced forms:
            
            \begin{prop}\label{prop;orbit_DC_determined_by_LC}
                Consider $g \in \pi_1(\bbX,v)$ which, in $e$-polarized normal form, is written $g=f_1e_1f_2\cdots e_nf_{n+1} t^k$,  and $\phi \in \delta_1 \autfo \bbX$. For all $j$, denote $v_j=i(e_j)$. Then for all $i$, there exists $f'_i \in \bbF_{v_i}$ such that  \[\phi\cdot_e g = (\ad{\gamma_{\bar e}^{-1}}\circ \phi (g))=f'_1e_1f'_2e_2\cdots e_nf'_{n+1} t^k,\] and the double coset $I_{e_{i-1}} f'_i O_{e_i}$ is $\phi\cdot (I_{e_{i-1}} f_i O_{e_i})$. Equivalently if $DC(g,e)=(I_{\bar e}f_1 O_{e_1},1,\ldots,1, I_{e_n} f_{n+2} O_{\bar e})$ then \[\phi\cdot DC(g,e)=DC(\phi\cdot_e g,e) = \left(\phi\cdot(I_{\bar e}f_1 O_{e_1}),1,\ldots,1, \phi\cdot(I_{e_n} f_{n+2} O_{e})\right).\]
         
            \end{prop}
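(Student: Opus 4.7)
The proposition is essentially a direct reformulation of Lemma~\ref{lem;image_of_reduced} in the language of $e$-polarized normal forms and the double coset action introduced in Convention~\ref{conv;action_DC}. The plan is to expand $\phi\cdot_e g = \gamma_{\bar e}\,\phi(g)\,\gamma_{\bar e}^{-1}$ by applying $\phi$ edge-by-edge (using the formula $\phi(e_j) = \gamma_{\bar e_j}^{-1} e_j \gamma_{e_j}$ in the Bass group) and then to read off the factors sitting between consecutive edges.

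Concretely, adopt the boundary conventions $e_0 := \bar e$ and $e_{n+1} := e$. By Lemma~\ref{lem;image_of_reduced} (specifically the displayed expansion \eqref{eqn:phieofg}), the element $\phi\cdot_e g$ can be written as
\[
\phi \cdot_e g \;=\; \tilde f_1 \, e_1 \, \tilde f_2 \, e_2 \, \cdots \, e_n \, \tilde f_{n+1} \, \phi(t^k),
\qquad
\tilde f_i \;=\; \gamma_{e_{i-1}} \, \phi_{v_i}(f_i) \, \gamma_{\bar e_i}^{-1}.
\]
Comparing this with Convention~\ref{conv;action_DC} gives directly
\[
I_{e_{i-1}}\, \tilde f_i \, O_{e_i} \;=\; I_{e_{i-1}} \, \gamma_{e_{i-1}}\phi_{v_i}(f_i) \gamma_{\bar e_i}^{-1} \, O_{e_i} \;=\; \phi \cdot \bigl(I_{e_{i-1}} f_i O_{e_i}\bigr),
\]
which is the desired identity on double cosets.

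It remains to verify that one may take $f'_i \in \bbF_{v_i}$ (the fibre part) and that the trailing factor is exactly $t^k$. Since $\phi \in \autfo{\torus\alpha}$ preserves the fibre $F$ and the orientation, the induced map on $\torus\alpha/F \simeq \bbZ$ is the identity, so $\phi\cdot_e g$ has the same abscissa $k$ as $g$. Moreover $\phi_{v_i}$ restricts to an automorphism of $F_{v_i} = \bbX_{v_i} \cap F$, so $\phi_{v_i}(f_i) \in F_{v_i}$. The only obstruction to $\tilde f_i \in F_{v_i}$ is a possible $t_{v_i}$-component coming from the prefactor $\gamma_{e_{i-1}}$ or the suffix $\gamma_{\bar e_i}^{-1}$. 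Because $t_{v_i}$ is the generator of the centre of the white vertex group $\bbX_{v_i}$ and therefore lies in the image of every edge group incident to $v_i$, the Bass relations let us migrate such central components across adjacent edges. This migration does not alter the double coset $I_{e_{i-1}} \tilde f_i O_{e_i}$ (since $t_{v_i}\in I_{e_{i-1}}\cap O_{e_i}$), and after collecting all displaced central factors at the right end one recovers the abscissa $\phi(t^k) = t^k$ already predicted.

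The only genuine bookkeeping is tracking how the $t_{v_i}$-components produced by the $\gamma_e$'s are shuffled along the path; this is routine and forced by the centrality of $t_{v_i}$ in the white vertex groups together with the fact that $\phi$ preserves the abscissa. Once this is carried out, the resulting $f'_i \in \bbF_{v_i}$ satisfy both conclusions of the proposition simultaneously.
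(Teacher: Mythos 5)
Your proof is correct and takes essentially the same route as the paper: both rely on the expansion \eqref{eqn:phieofg} from Lemma \ref{lem;image_of_reduced}, match it directly against Convention \ref{conv;action_DC}, and then observe that migrating the residual factors across edges does not change the double cosets. The paper phrases the migration step in terms of the factors $\gamma_{e_{2i-1}}\gamma_{\bar e_{2i}}^{-1}$ appearing at the abelian (black) vertices, whereas you focus on the central $t_{v_i}$-components at white vertices; these are two faces of the same routine bookkeeping, and both land on the same conclusion.
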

            \begin{proof}
            The expression of $\phi\cdot_e g$ has already been computed in \eqref{eqn:phieofg} in the proof Lemma \ref{lem;image_of_reduced}, where setting $e_0=\bar e$ and $e_{n+1}=e$ we have 
             \[\phi\cdot e(g) = \gamma_{e_0}\phi_{v_1}(f_1) \gamma_{\bar e_1}^{-1} \cdot e_1 \cdot \gamma_{e_1} \phi_{v_2} (f_2) \gamma_{\bar e_2}^{-1} \cdot e_2 \cdots e_n \cdot \gamma_{e_n} \phi_{v_{n+1}} (f_{n+1})\gamma_{\bar e_{n+1}}^{-1} \; t^{\pm k}.  \] We further note that in the word above the factors $(\gamma_{e_{2i-1}}\gamma_{\bar e_{2i}}^{-1}), i=1,\ldots,n/2$ that appear in the abelian vertex groups can be migrated through either $e_{2i-1}$ or $e_{2i}$ without affecting the double coset. The proposition now follows.
            \end{proof}
        
        \subsubsection{The action of automorphisms on sequences of double cosets, and a linkage configuration}
        
        \begin{prop}\label{prop;sending_DC_tuple} 
        Consider two double coset sequences $DC(g,e_0)$ and $DC(g',e_0)'$ over the same $X$-path with polarizing edge $e_0$. Consider the linkage configurations induced for each vertex and oriented edge in $X$ by these sequences, $\calQ_{v,e}=\calQ_{v,e}(g,e_0)$ and $\calQ'_{v,e}(g',e_0)$.

        There exists an automorphism $\phi =(Id_X, (\phi_v), (Id_e), \gamma_e) \in \delta_1 \autfo \bbX$ that sends $DC(g,e)$ to $DC(g',e)'$ (for the action defined by Convention \ref{conv;action_DC} and as in Proposition \ref{prop;orbit_DC_determined_by_LC})  if, and only if,  
         for each $v \in \verts X$ and each $e \in \edges X$, there exists a $\phi_v$ and $\gamma_e$,  such that $\phi_v(\calQ_{v,e}) = (\calQ'_{v,e})^{\gamma_e}$. 
         
         \end{prop}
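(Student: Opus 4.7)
The plan is to prove the two directions separately, with the forward direction being a bookkeeping exercise and the reverse direction requiring us to glue local data into a global automorphism and then invoke the uniqueness statement of Proposition \ref{prop;same_linkages_imples_double_coset}.

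For the forward direction, suppose $\phi = (Id_X, (\phi_v), (Id_e), (\gamma_e)) \in \delta_1 \autfo \bbX$ sends $DC(g,e_0)$ to $DC(g',e_0)'$. Proposition \ref{prop;orbit_DC_determined_by_LC} shows that the $i$-th double coset $I_{e_{i-1}} f_i O_{e_i}$ is sent to $I_{e_{i-1}} \gamma_{e_{i-1}} \phi_{v_i}(f_i) \gamma_{\bar e_i}^{-1} O_{e_i}$, and the computation in the proof of Lemma \ref{lem;image_of_reduced} shows that the associated linkage is $\gamma_{e_{i-1}} \phi_{v_i}(\link{e_{i-1}, f_i, e_i}) \gamma_{e_{i-1}}^{-1}$. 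Grouping positions according to their vertex $v$ and incoming edge $e$ using Lemma \ref{lem:all-e-linkages} exhibits the identity $\phi_v(\calP_{v,e}) = (\calP'_{v,e})^{\gamma_e}$; applying the same reasoning to $DC(g^{-1},e_0)$ and its image yields $\phi_v(\calP^*_{v,e}) = ((\calP^*_{v,e})')^{\gamma_e}$. Finally, the Bass diagram \eqref{eq:Bass_Diagram_delta_0} (with $\phi_e = Id_e$) forces $\phi_v(c_e) = \gamma_e^{-1} c_e \gamma_e$, which matches the first component of $\calQ_{v,e}$. Hence $\phi_v(\calQ_{v,e}) = (\calQ'_{v,e})^{\gamma_e}$.

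For the reverse direction, assume we are given a coherent collection $(\phi_v)_{v \in \verts X}$ and $(\gamma_e)_{e \in \edges X}$ satisfying the hypothesis and define $\phi := (Id_X, (\phi_v), (Id_e), (\gamma_e))$. To check that $\phi$ belongs to $\delta_1 \autfo \bbX$, one must verify the Bass diagram: since $\phi_e = Id_e$, this reduces to $\phi_v(c_e) = \gamma_e^{-1} c_e \gamma_e$ on each image $I_e = \tau_e(\bbX_e) = \langle c_e \rangle$, which is exactly the first-coordinate condition of $\phi_v(\calQ_{v,e}) = (\calQ'_{v,e})^{\gamma_e}$. Fibre and orientation preservation on vertex groups is similarly encoded: the hypothesis forces $\phi_v$ to send cyclic subgroups generated by fibre elements to conjugates of the same, and $t_v$ is fixed because both linkage configurations pick it out as a canonical central generator.

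It remains to show $\phi$ actually sends $DC(g,e_0)$ to $DC(g',e_0)'$. By Proposition \ref{prop;orbit_DC_determined_by_LC} the image $i$-th double coset has linkage $\phi_{v_i}(\link{e_{i-1},f_i,e_i})^{\gamma_{e_{i-1}}^{-1}}$, which by the hypothesis applied to $\calP_{v_i, e_{i-1}}$ equals the $i$-th linkage of $g'$. A parallel computation applied to the dual sequence (whose entries are linkages of the form $\link{\bar e_i, f_i^{-1}, \bar e_{i-1}}$) together with the hypothesis applied to $\calP^*_{v_i, \bar e_i}$ gives equality of the dual linkages. Proposition \ref{prop;same_linkages_imples_double_coset} then concludes that the $i$-th double cosets match. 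Thus $\phi \cdot DC(g,e_0) = DC(g', e_0)'$.

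The principal subtlety is making sure that when we change reading direction of the $X$-path to access the dual linkages we index things correctly: the dual of the $i$-th linkage of $g$ lives in $\calP^*_{v_i, \bar e_i}$ (not $\calP^*_{v_i, e_{i-1}}$), and the hypothesis quantifies over all $v, e$ precisely to cover this. Once that is set up, the argument is a clean application of Proposition \ref{prop;same_linkages_imples_double_coset}, which is exactly the tool that tells us that double cosets --- where no natural automorphism-orbit structure exists --- are determined by the pair of (linkage, dual linkage), objects on which $\delta_1 \autfo \bbX$ does act meaningfully.
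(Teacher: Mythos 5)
Your proposal is correct and follows essentially the same route as the paper: the forward direction via the computation of Lemma \ref{lem;image_of_reduced}/Proposition \ref{prop;orbit_DC_determined_by_LC} together with the Bass relations, and the reverse direction by first recovering the Bass diagram from the first coordinate $\phi_v(c_e)=c_e^{\gamma_e}$, then matching both the linkage and the dual linkage of each image coset with those of the target and invoking Proposition \ref{prop;same_linkages_imples_double_coset}. Your handling of the duals through $DC(g^{-1},e_0)$ and the indexing remark about $\calP^*_{v_i,\bar e_i}$ is just a repackaging of the symmetric computation the paper carries out directly on the dual linkages.
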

        
        \begin{proof} 
        
        Assume $\phi=(Id_X, (\phi_v), (Id_e), (\gamma_e))$ sends  $DC(g,e)$ to $DC(g',e)$. 
        Consider a linkage $\mathscr{l}$ that is associated, or dually associated, to a double coset $I_{e_-} f O_{e_+}$ in $DC(g,e)$, and $\mathscr{l}'$ the corresponding linkage for the image $I_{e_-} f' O_{e_+} $ by $\phi$ of this double coset.
        
        Write $\mathscr{l} = \langle c_{e_-} , f c_{e_+} f^{-1} \rangle $, and $\mathscr{l}'= \langle c_{e_-} , f' c_{e_+} f'^{-1} \rangle $. We want to show that $\phi_v (\mathscr{l})$ is conjugate to $\mathscr{l}'$ by an element that only depends on $\phi$ and $e_-$.
        
        We know (by Proposition \ref{prop;orbit_DC_determined_by_LC}) that $\phi \cdot I_{e_-} f O_{e_+} = I_{e_-} f' O_{e_+} $, but it is also, by definition of the action  $I_{e_-}  \gamma_{e_-}  \phi_v(f)   \gamma_{\bar e_+}^{-1}  O_{e_+}$. 
        
        It follows that  $\mathscr{l}'= \langle c_{e_-} ,
          \gamma_{e_-}  \phi_v(f)   \gamma_{\bar e_+}^{-1}  c_{e_+}  (\gamma_{e_-}  \phi_v(f)   \gamma_{\bar e_+}^{-1})^{-1}\rangle      $.

        
        By the Bass diagrams (as recorded in \eqref{eq:bass-diag-oeie} in the proof of Lemma \ref{lem;image_of_reduced}), we also know that  $\phi_v (\mathscr{l})=   \langle c_{e_-}^{\gamma_{e_-}} ,  \phi_v(f) c_{e_+}^{\gamma_{\bar e_+}} \phi_v(f)^{-1} \rangle $. 
        
        This means that $\phi_v (\mathscr{l})=  \langle c_{e_-} ,  \gamma_{e_-} \phi_v(f) c_{e_+}^{\gamma_{\bar e_+}}  \phi_v(f)^{-1}  \gamma_{e_-}^{-1} \rangle^{\gamma_{e_-}}$, which is a conjugate of 
         $\mathscr{l}'$ by $\gamma_{e_-}$, as required.
        
        We now show the converse. Suppose we are given automorphisms $\phi_v$ for each vertex of $X$ and an element $\gamma_e$ for each edge of $X$. By hypothesis, 
         $\phi_v(\calQ_{v,e}) =  (\calQ_{v,e}')^{\gamma_e}$. 
        Observe  the first coordinate of $\calQ_{v,e}$ and  $\calQ_{v,e}'$: one has   $\phi_v(c_e)= c_e^{\gamma_e}$.  This ensures that the tuple $(Id_X, (\phi_v), (Id_e), (\gamma_e))$ satisfies the Bass diagrams for all edges. This therefore defines an automorphism $(Id_X,(\phi_v),Id_e,\gamma_e)\in \delta_1 \autfo \bbX$. 
        
        Consider the $j$-th double coset in the tuple $DC(g,e)$, and denote it $I_{e_-} f O_{e_+} $. and denote by   $I_{e_-} f' O_{e_+} $  the $j$-th double coset of $DC(g',e)$ (observe that it must be a double coset for the same pair of groups because the underlying path is the same).
        
        We want to show that  $\phi \cdot  I_{e_-} f O_{e_+}$ is  equal to $I_{e_-} f' O_{e_+} $.  Recall that by definition  $\phi\cdot I_{e_-} f O_{e_+} = I_{e_-}  \gamma_{e_-}  \phi_v(f)   \gamma_{\bar e_+}^{-1}  O_{e_+} $. So we immediately rephrase our goal as to show equality between  
        
        \[  I_{e_-}  \gamma_{e_-}  \phi_v(f)   \gamma_{\bar e_+}^{-1}  O_{e_+} \; \hbox{ and } \;  I_{e_-} f' O_{e_+}.  \]

         The double coset $I_{e_-} f O_{e_+} $ defines the two linkages $\mathscr{l}_j$ and $(\mathscr{l}_*)_j$, which, with the notations of Section \ref{sec;linkage_config_from_DC} are respectively part of     
         the linkage configurations $\calQ_{v,e_-}$ and $\calQ_{v, \bar e_+}$ (see Lemma \ref{lem:all-e-linkages}.) 
        
        Let $\mathscr{l}'_j, ({\mathscr{l}'}^*)_{j}$ be the (mutually dual) linkages of $I_{e_-} f' O_{e_+} $ for the corresponding index for $DC(g',e)$.

        By assumption on the preservation of linkage configurations, $\phi_v(\mathscr{l}_j) =  (\mathscr{l}'_j)^{\gamma_{e_-}}$ and  $\phi_v(\mathscr{l}^*_j) =  ({\mathscr{l}'}^*_j)^{\gamma_{\bar e_+}}$.

        Let us write these equalities in more detail. Recall that $\mathscr{l}_j = \langle c_{e_-},  f c_{e_+}f^{-1}\rangle $, and that, by the Bass diagrams, we have   $ \phi_v(c_{e_-}) = c_{e_-}^{\gamma_{e_-} }$, and   $\phi(c_{e_+}) = c_{e_+}^{\gamma_{\bar e_+} }$. 
        
        We have 
        
         \[\begin{array}{rcl}
        
         \phi_v(\mathscr{l}_j) & = &  \langle c_{e_-}^{\gamma_{e_-}},  \phi_v(f) c_{e_+}^{\gamma_{\bar e_+}} \phi_v(f)^{-1} \rangle \\  
         & = &  \langle c_{e_-}, \;    \gamma_{e_-} \phi_v(f)\gamma_{\bar e_+}^{-1} \,  c_{e_+} \,{\gamma_{\bar e_+}} \phi_v(f)^{-1} \gamma_{e_-}^{-1} \rangle^{\gamma_{e_-}} \end{array}  \]     
        
         However, as already mentioned,  $\phi_v(\mathscr{l}_j) = (\mathscr{l}'_j)^{\gamma_{e_-}}$.  Therefore \[\mathscr{l}'_j \; = \;  \langle c_{e_-}, \;    \gamma_{e_-} \phi_v(f)\gamma_{\bar e_+}^{-1} \,  c_{e_+} \,{\gamma_{\bar e_+}} \phi_v(f)^{-1} \gamma_{e_-}^{-1} \rangle. \]
        
        In the same way, considering the duals, one checks that 
        \[{\mathscr{l}^*}'_j \; = \;  \langle c_{\bar e_+},    \;\gamma_{\bar e_+} \phi_v(f)^{-1} \gamma_{e_-}^{-1} \,  c_{e_-} \, {\gamma_{e_-}} \phi_v(f) \gamma_{\bar e_+}^{-1} \rangle.  \]
        
        It remains to apply Proposition \ref{prop;same_linkages_imples_double_coset} to obtain, as desired,  that 
         \[  I_{e_-}  \gamma_{e_-}  \phi_v(f)   \gamma_{\bar e_+}^{-1}  O_{e_+} \;  =   \;  I_{e_-} f' O_{e_+}.  \]
        \end{proof}

    \subsubsection{Orbits in vertices I: subgroups,  linkages}
        
        Suppose we are given two centred tuples of tuples $\vec S=(S_1,\ldots,S_k)$ and $\vec T=(T_1,\ldots, T_k)$. Denote by $e_i$ the polarizing edge $e_{S_i}$ and $v_i$ the central vertex $v_{S_i}$. Consider the action\begin{equation}\label{eq:tupletuple-action}
            \left.
            \begin{array}{l}
            \phi\cdot \vec S = (\phi\cdot S_1,\ldots,\phi\cdot S_k)\\
            \phi\cdot S_i = \begin{cases} 
            (\phi\cdot_{e_i} S_{i1},\ldots,\phi\cdot_{e_i} S_{il_i}) 
             & \text{if  $S_i$ is not elliptic,}\\
            (\phi_{v_i}(S_{i1}),\ldots,\phi_{v_i}(S_{il_i})) 
             & \text{if $S_i$ is elliptic.}
            \end{cases}
            \end{array}
            \right.
        \end{equation}

        Our first goal is to decide if this action can bring all the double cosets in $\vec{DC}(S_i)$ to $\vec{DC}(T_i)$ for $1\leq i \leq k$ for the $S_i,T_i$ that are non-elliptic, where $$\vec{DC}(T_i) = DC(T_{i1},e_{T_i})\odot \cdots \odot DC(T_{il_i},e_{T_i}),$$ where $\odot$ denotes concatenation of tuples.

        We recall the following theorem of Gersten improving the classical Whitehead algorithm.

                \begin{thm}[{See \cite[Theorems W and M]{gersten_whiteheads_1984}}]\label{thm:gersten}
                    Let  $\bbF$ be a finitely generated free group, and $A,B$ be two tuples of conjugacy classes of finitely
                    generated subgroups of $\bbF$ then,
                    \begin{enumerate}
                        \item\label{it:gersten-orbit-G} There is an effective procedure for determining if there is
                        some $\alpha \in \aut{\bbF}$ such that $\alpha A = B$.
                        \item\label{it:stab-pres-G} The stabilizer of $B$ in $\aut{\bbF}$ is finitely
                        presented and a finite presentation can be effectively
                        determined.
                    \end{enumerate}
                \end{thm}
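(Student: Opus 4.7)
The plan is to adapt Whitehead's classical algorithm from conjugacy classes of elements to conjugacy classes of finitely generated subgroups, as Gersten does. The first step is representation: each conjugacy class of a finitely generated $H \leq \freegp{}$ is encoded by its Stallings core graph, the minimal folded graph immersing in the rose $R_{\freegp{}}$. A tuple $A = ([H_1], \ldots, [H_n])$ becomes a tuple of folded graphs. I would define a complexity $c(A)$ (summed edge counts across the cores after a change of basis of $\freegp{}$) and set out to minimize $c$ over the $\aut{\freegp{}}$-orbit. The effect of an automorphism of $\freegp{}$ on each core graph can be computed by a combinatorial unfold-refold procedure, so $c(\alpha\cdot A)$ is computable from $\alpha$ and $A$.

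The core technical step is a peak reduction lemma: if any $\alpha \in \aut{\freegp{}}$ strictly decreases $c$ on $A$, then some Whitehead automorphism from a fixed finite family also decreases $c$ on $A$; moreover if $\alpha$ keeps $c$ minimal then $\alpha$ is a composition of Whitehead automorphisms preserving minimal complexity. The proof proceeds following Higgins--Lyndon's reformulation of Whitehead's argument, but applied vertex-by-vertex to the Whitehead graph built on the set of directed edges incident to each vertex of each Stallings core (rather than to the Whitehead graph of a cyclic word). The main obstacle will live here: one must argue that a reduction visible globally for some $\alpha$ can be localized as a cut in one of these Whitehead graphs, simultaneously for all components $H_i$, and moreover must be compatible with the conjugacy ambiguity (so the proof must track distinguished basepoints and their orbits carefully).

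Granted peak reduction, item \ref{it:gersten-orbit-G} follows at once: algorithmically compute the Stallings cores for $A$ and $B$, greedily apply Whitehead automorphisms to reach minimal-complexity representatives $A_0$ and $B_0$, then enumerate the finite orbit of $A_0$ under the (finite) group of complexity-preserving Whitehead automorphisms and check whether $B_0$ appears. For item \ref{it:stab-pres-G}, the stabilizer of $A$ in $\aut{\freegp{}}$ is conjugate to the stabilizer of $A_0$, and by peak reduction the latter is generated by the complexity-preserving Whitehead automorphisms (a finite, explicit set). To obtain a finite presentation, I would invoke the standard trick of realizing this stabilizer as the fundamental group, based at $A_0$, of the finite connected graph whose vertices are minimal-complexity representatives and whose edges are Whitehead moves between them; Bass--Serre theory applied to this finite graph with finite vertex groups (given by stabilizers in the symmetric groups permuting components) produces the required presentation computably.
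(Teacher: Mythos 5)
The paper does not actually prove this statement: it is imported wholesale from Gersten (Theorems W and M of \emph{On Whitehead's algorithm}), so the only meaningful comparison is with Gersten's argument. Your outline --- Stallings core graphs for conjugacy classes of subgroups, a complexity summed over the cores, a Higgins--Lyndon style peak-reduction lemma for Whitehead automorphisms, then greedy descent plus a finite search at the minimal level --- is exactly Gersten's strategy, and your decision procedure for the orbit problem is correctly assembled from it, \emph{granted} the peak-reduction lemma. Do note, however, that what you call ``the main obstacle'' is not a technical wrinkle: the peak-reduction statement for tuples of cores is the entire content of Gersten's theorem, and your sketch gives no indication of how the cut/Whitehead-graph analysis is carried out for immersed graphs rather than cyclic words, which is precisely where the work lies. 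Since the theorem is a citation in this paper, that is acceptable as a plan, but it should not be presented as routine.

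The genuine error is in your treatment of the stabilizer. You propose to realize it as the fundamental group, in the Bass--Serre sense, of a finite graph with finite vertex groups (``stabilizers in the symmetric groups permuting components''). Any such group is virtually free, and stabilizers of tuples of conjugacy classes of subgroups are typically not: for example the stabilizer in $\aut{F_4}$ of the conjugacy class of $\bk{[a,b][c,d]}$ contains a genus-two surface mapping class group. The complexity-preserving Whitehead automorphisms that fix a minimal vertex include honest type-II moves, not merely permutations and inversions of the basis or of the tuple entries; these give loops generating infinite groups, and the relations among them cannot be recorded by a graph alone. The correct route --- McCool's, which Gersten follows for Theorem M --- is to build a finite $2$-complex: vertices are the minimal-complexity tuples in the orbit, edges are Whitehead moves between them, and $2$-cells come from a fixed finite list of relations among Whitehead automorphisms, which peak reduction allows one to rewrite so that they stay at the minimal level. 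The stabilizer is then $\pi_1$ of this finite $2$-complex based at $B_0$ (and the stabilizer of $B$ is conjugate to that of $B_0$ by the minimizing automorphism, not of $A_0$ as you wrote), whence finitely presented with a computable presentation. Without the $2$-cells, and without correctly accounting for the loop edges at a minimal vertex, your construction computes the wrong group.
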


                We need the following refinement, in which the conjugators for certain subgroups are required to be equal.
                
                \begin{thm}[Mixed whitehead problems for subgroups of free groups] \label{thm;gersten_hack}
                    Let $\bbF$ be a finitely generated free group, and $$A=([A_1], \dots [A_r]), B=([B_1], \dots, [B_r])$$ be two tuples of \emph{ conjugacy classes of tuples of} finitely
                    generated subgroups of $\bbF$ then,
                    \begin{enumerate}
                        \item\label{it:gersten-orbit} There is an effective procedure for determining if there is
                        some $\alpha \in \aut{\bbF}$ such that, for all $i$,   $[\alpha A_i]=[B_i]$ (as conjugacy classes of tuple of subgroups).
                        
                        \item\label{it:stab-pres} The stabilizer of $B$ in $\aut{\bbF}$ is finitely
                        presented and a finite presentation can be effectively
                        determined.
                    \end{enumerate}
                \end{thm}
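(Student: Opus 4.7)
The plan is to bootstrap Gersten's classical theorem (Theorem \ref{thm:gersten}), first solving the relaxed problem where subgroups within a tuple are conjugated individually (not jointly), and then imposing the joint-conjugacy constraint as a further algorithmic filter. As a first step, apply Theorem \ref{thm:gersten}(1) to the ``flattened'' tuples $([A_{1,1}], \ldots, [A_{r,n_r}])$ and $([B_{1,1}], \ldots, [B_{r,n_r}])$. If they are not in the same $\aut{F}$-orbit then the joint tuples cannot be either, and we answer negatively. Otherwise one obtains a candidate $\alpha_0 \in \aut{F}$ sending each $[A_{i,j}]$ to $[B_{i,j}]$, and by Theorem \ref{thm:gersten}(2) one computes a finite presentation of the stabilizer $S = \Stab_{\aut{F}}(([B_{1,1}], \ldots, [B_{r,n_r}]))$. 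Every candidate solution to the mixed problem lies in the coset $S\alpha_0$.

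For any $\alpha = s\alpha_0 \in S\alpha_0$ and each pair $(i,j)$, there is a conjugator $c_{i,j}(\alpha) \in F$, uniquely determined modulo the normalizer $N_F(B_{i,j})$, such that $c_{i,j}(\alpha)^{-1} B_{i,j} c_{i,j}(\alpha) = \alpha(A_{i,j})$. The mixed-Whitehead condition that a common $g_i \in F$ realizes $g_i^{-1}B_{i,j}g_i = \alpha(A_{i,j})$ for every $j$ translates to the non-emptiness, for each $i$, of the coset intersection
\[
 \bigcap_{j=1}^{n_i} c_{i,j}(\alpha) N_F(B_{i,j}).
\]
Normalizers of finitely generated subgroups of $F$ are themselves finitely generated and computable (via Stallings foldings), and intersections of cosets of finitely generated subgroups of a free group are effectively decidable, so for any given $\alpha$ this test is routine.

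The main obstacle is that $S$ is infinite, and one must search over it effectively. The key observation enabling this is that replacing $s$ by $s \cdot \ad{f}$ (for $f \in F$) shifts every $c_{i,j}(s\alpha_0)$ uniformly by $s(f)$, so the coset-intersection condition descends to $S/\mathrm{Inn}(F)$ and defines a ``twisted'' map from the finitely presented group $S$ to the product $\prod_{i,j} F/N_F(B_{i,j})$. The plan is to use the Whitehead-type generating set for $S$ produced by Gersten to recursively enumerate the image of this map, and to decide membership of this image in the feasibility region (described by the intersection conditions) via subgroup-membership and coset-intersection algorithms in free groups. Claim (\ref{it:stab-pres}) then follows by the same analysis: the stabilizer of the joint tuples is the preimage in $S$ of this feasibility region, and its finite presentation can be extracted from the presentation of $S$ by standard subgroup-presentation arguments once the feasibility region is effectively described.
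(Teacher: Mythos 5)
Your reduction to the coset $S\alpha_0$ and the translation of the joint-conjugacy constraint into non-emptiness of the intersections $\bigcap_j N_F(B_{i,j})\,c_{i,j}(\alpha)$ is sound for any \emph{fixed} $\alpha$, and the observation that precomposing by $\ad f$ shifts all conjugators by the common element $s(f)$ is correct. But the decisive step is missing: you must decide whether \emph{some} element of the infinite group $S$ produces a feasible conjugator tuple, and what you offer is only a recursive enumeration of the image of the map $s \mapsto (c_{i,j}(s\alpha_0))_{i,j}$ in $\prod_{i,j} F/N_F(B_{i,j})$, tested against the feasibility conditions. This is a semi-decision procedure: it halts on positive instances but gives no termination criterion when no such $s$ exists. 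The map in question is not a homomorphism, its target is a product of coset spaces rather than groups, and the feasibility region has no structure (finiteness, subgroup of finite index, rational subset, etc.) that you identify which would make membership of the image in it decidable. In effect you have rephrased the mixed Whitehead problem as an orbit problem for the $S$-action on a product of coset spaces, which is of the same order of difficulty as the problem you set out to solve; the paper avoids exactly this by encoding each tuple $A_i$ as a \emph{single} subgroup $\dot A_i$ of an enlarged free group $F(X,K)$ (via tuple graphs built on $K_i$-stars, together with the marker subgroups $F(X)$, $F(K_i)$ and rigid $R_i$), so that Gersten's theorem applied in $F(X,K)$ already answers both the positive and the negative instances, the work then being to show (by Stallings-folding arguments such as Lemma \ref{lem;length-folding} and Proposition \ref{prop;rabbattre}) that solutions upstairs restrict to solutions in $F(X)$.

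The same gap undermines your treatment of part (\ref{it:stab-pres}). The stabilizer of the joint tuple is indeed a subgroup of $S$, but it may have infinite index there (a single flattened orbit can split into infinitely many joint orbits), so ``standard subgroup-presentation arguments'' (Reidemeister--Schreier) do not apply, and describing it as the preimage of the feasibility region under a non-homomorphic map gives neither finite generation nor finite presentability, let alone an effective presentation. The paper obtains this part by computing, via Gersten, a presentation of the stabilizer of $\hat B$ in $\aut{F(X,K)}$ and then pushing its generators down to $\aut{F(X)}$ (Proposition \ref{prop;rebalancer} and Lemma \ref{lem;back_in_FX}); some substitute for that mechanism is needed in your outline.
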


                Due to its  specific independence with the current notations and objects,  we differ the proof of Theorem \ref{thm;gersten_hack} to Section \ref{sec;hack}. We finish now our  study of the Mixed Whitehead Problem assuming that Theorem \ref{thm;gersten_hack} is established.

                \begin{cor}[Aligning coset tuples and elliptic tuples]\label{cor;DC-tuples-of-tuples}
                  Given two centred tuples of tuples $\vec S=(S_1,\ldots,S_k)$ and $\vec T=(T_1,\ldots, T_k)$, it is decidable whether there exists some $\phi \in \delta_1\autfo\bbX$ such that\[
                    \vec{DC}(\phi\cdot S_i) = \vec{DC}(T_i)
                  \] for $i=1,\ldots,k$ and $S_i,T_i$ are non-elliptic, and such that for elliptic tuples \[
                      [\phi\cdot S_i]_{v_i}=[Ti]_{v_i}
                  \]
                  where the action is as defined in \eqref{eq:tupletuple-action} and where $[S_i]_{v_i}$ denotes conjugacy classes in $\bbX_{v_i}$. Furthermore, a finite generating set of the stabilizer $A_{\vec T}\leq\delta_1\autfo\bbX$ of all the tuples $\vec{DC}(T_i)$ and $[T_j]$ for $T_i$ non-elliptic and $T_j$ elliptic (respectively) can be computed.
                \end{cor}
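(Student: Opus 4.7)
The plan is to reduce this global problem to a decoupled collection of per-vertex problems, each solvable by Theorem \ref{thm;gersten_hack}, and then glue the local solutions into a single element of $\delta_1\autfo\bbX$ using the fact that the Bass-diagram relations are automatically encoded by the linkage configurations. Concretely, at each vertex $v \in \verts X$ and for each oriented edge $e$ with $\tau(e)=v$, I would extract from each non-elliptic $S_i$ (respectively $T_i$) the linkage configuration $\calQ_{v,e}(S_i)$ from Subsection \ref{sec;link_conf}, which is an ordered tuple of finitely generated subgroups of $F_v$ with distinguished first entry $c_e$. For each elliptic $S_j$ centered at $v$, I would retain $S_j$ itself as a tuple of elements of $\bbX_{v} = F_v \oplus \bk{t_v}$. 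By Proposition \ref{prop;sending_DC_tuple}, the existence of some $\phi = (Id_X,(\phi_v),(Id_e),(\gamma_e)) \in \delta_1\autfo\bbX$ matching the non-elliptic data reduces, at each $v$ and each incoming edge $e$, to finding $\phi_v \in \aut{\bbX_v}$ and a single $\gamma_e$ simultaneously conjugating the source configurations for all $S_i$ sharing $e$ to the corresponding target configurations; the Bass-diagram condition $\phi_v(c_e)=c_e^{\gamma_e}$ needed for an honest element of $\delta_1\autfo\bbX$ is automatic, since $c_e$ is the first entry of every $\calQ_{v,e}$.

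At each white vertex $v$ (the black vertices being abelian, so the problem there reduces to $\mathrm{GL}_2(\bbZ)$-linear algebra), any fiber and orientation preserving $\phi_v$ is determined by an automorphism $\alpha_v \in \aut{F_v}$ acting trivially on $\bk{t_v}$. All linkages and fiber parts of elliptic entries lie in $F_v$, and the $t_v$-exponents of each elliptic entry must already coincide between source and target. Grouping the per-vertex data so that, for each incoming edge $e$, the inner tuple is the concatenation of all $\calQ_{v,e}(S_i)$ across non-elliptic $i$, and so that, for each elliptic $S_j$ at $v$, the inner tuple consists of the cyclic subgroups generated by the fiber parts of the entries of $S_j$, yields exactly the input for Theorem \ref{thm;gersten_hack}(\ref{it:gersten-orbit}). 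The algorithm is then to run this test at every vertex and answer YES if and only if every per-vertex test succeeds, as any collection of local solutions then assembles, via the automatically satisfied Bass diagrams, into an element of $\delta_1\autfo\bbX$ with the required property.

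For the stabilizer $A_{\vec T}$, Theorem \ref{thm;gersten_hack}(\ref{it:stab-pres}) supplies a finite presentation of the per-vertex stabilizer of the target data; combined with the linear constraints from black vertices and from the central-coordinate matching in white vertices, this yields a finite generating set for $A_{\vec T}$. The main obstacle I anticipate is the element-level (as opposed to subgroup-level) bookkeeping for elliptic tuples: Gersten's theorem naturally works at the level of cyclic hulls, so to match ordered tuples of elements one must enumerate finitely many sign patterns per elliptic tuple and, for each pattern, use the conjugator supplied by Theorem \ref{thm;gersten_hack} to verify the element-level equality. Correctly translating the ``single conjugator $\gamma_e$ shared across all $S_i$'' condition into the nested tuple structure of Theorem \ref{thm;gersten_hack} is precisely what forces the passage from the classical result to the mixed refinement given by Theorem \ref{thm;gersten_hack}.
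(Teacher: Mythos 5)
Your proposal is correct and follows essentially the same strategy as the paper: decoupling to per-vertex instances, bundling the linkage configurations $\calQ_{v,e}(S_i)$ (with the dual linkages and the distinguished generator $c_e$) across non-elliptic tuples together with the cyclic hulls of fiber parts of elliptic tuples into a single tuple-of-tuples input for Theorem \ref{thm;gersten_hack}, using Proposition \ref{prop;sending_DC_tuple} to assemble the local $\phi_v$, $\gamma_e$ into an element of $\delta_1\autfo\bbX$, and then tidying up the sign ambiguity using the finitely presented stabilizer supplied by Theorem \ref{thm;gersten_hack}. The paper handles the sign issue by first checking whether the stabilizer of the target tuple contains an element restoring correct signs and then passing to the finite-index sign-preserving subgroup of that stabilizer, rather than by enumerating sign patterns up front, but these are equivalent; your remark that the shared-conjugator-per-edge requirement is exactly what necessitates the nested "tuples of conjugacy classes of tuples" structure of Theorem \ref{thm;gersten_hack} captures the key point.
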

            \begin{proof}
                
                This problem is solved by using an instance of the mixed Whitehead Problem for subgroups for each free group $F_v, v \in \verts X$. For a given vertex $v$ the tuple of tuples $\calT_v(\vec S)$ for a tuple of tuples $\vec S$ is constructed as follows:
                \begin{enumerate}
                    \item Let $I_v$ denote the indices $i$ such that $S_i$ is elliptic and the central vertex $v_{S_i}=v$, and let $J$ denote the indices of the non-elliptic tuples.
                    \item Let $\calE_v = (\calQ(S_i):i \in I_v)$, be the tuple of linkage configurations of elliptic tuples centred at $v$.
                    \item Denote by\[
                        \calH_{v,e}(S_i) = \bigodot_{j=1}^{l_j}\left( \calP_{v,e,j}\odot\calP_{v,e,j}^*\right)
                    \] where $\calQ_{v,e}(S_{ij},e_j) = (c_e,\calP_{v,e},\calP_{v,e}^*)$ is the linkage configuration for the entry $S_{ij}$ of $S_i$ and $\odot$ denotes \emph{concatenation of tuples.}
                    \item Set $\calH_{v,e} = (c_e) \odot\left(\bigodot_{i \in J} \calH_{v,e}(S_i)\right)$ then make the tuple of tuples of subgroups \[\calH_v = (\calH_{v,e}: e \in \edges X).\]
                    \item The required tuple of tuples\[
                        \calT_v = \calE_v \odot \calH_v.    
                    \]
                \end{enumerate}
                
                By Theorem \ref{thm;gersten_hack} the Mixed whitehead problem for tuples of tuples of subgroups of $F_v$ is decidable so we can decide $\calT_v(\vec S)$ and $\calT_v(\vec T)$ are equivalent via some $\phi_v \in \aut{F_v}$. The issue remains that even we preserved the conjugacy classes of subgroups $\bk{c_e}$ or entries $\bk{(T_{ij})_f}$ in $\phi_v(\calE_v)$, we may have sent the certain of the $c_e$ or $(T_{ij})_f$ to conjugates of their inverses. However, because we can compute a finite generating set of the stabilizer of $\calT_v(\vec T)$, we can determine if there is an element of that stabilizer that also preserves the sign of those elements. Once we have done this and amended $\phi_v$ we can then compute a generating set of the finite index subgroup of the stabilizer of $\calT_v(\vec T)$ that also preserves the signs of individual elements.
                
                The result now follows by applying Proposition \ref{prop;sending_DC_tuple} for corresponding pairs of non-elliptic tuples and by the definition of the Mixed Whitehead problem applied corresponding pairs of elliptic tuples.
            \end{proof}             
             
     \subsubsection{Orbits in vertices II: actions on power tuples.}\label{sec:power-tuples}

                Fix an $X$-loop $\rho=e_1,\ldots,e_n$ based at $v$ a polarizing edge $e$ with $i(e)=v$ and a sequence of cosets $D = (I_{\bar e} f_1 O_{e_1},1,I_{e_2}f_3O_{e_3},\ldots,I_{e_n}f_{n+1}O_{e})$. We call $\calE_{(D,p,e)} = \mathbb Z^{n/2+3}$ the \emph{embedding space} for $(D,\rho,e)$. There is a bijection from $\calE_{(D,\rho,e)}$ to the set:
                \[ \calG(D,\rho,e)=\{ g \in \pi_1(\bbX,v)| g \textrm{~has underlying $X$-path $\rho$ and~} DC(g,e)=D\}\]
                given by the inverse maps:
                \begin{eqnarray*}
                    (d_0,d_2,\ldots,d_{n+1},k) & \mapsto & c_{\bar e}^{d_0}\bar f_1 e_1e_2 c_{e_2}^{d_2}\bar f_3 \cdots e_nc_{e_n}^{d_n} \bar f_{n+1} c_{\bar e}^{d_{n+1}} t_b^k\\
                    \mathrm{pow}(g,e) & \mapsfrom & g
                \end{eqnarray*}
                Where the $\bar f_i$ are the coset representatives from $DCR(g,e)$. This next lemma justifies the use of polarized actions.
                
        \begin{lem}[Prefix-short preservation]\label{lem:bonus}
        If $g \in \pi_1(\bbX,i(e_1))$ is centred $e_1e_2$-prefix-short and $\phi=(Id_X, (\phi_v), (Id_e), (\gamma_e)) \in \delta_1 \autfo \bbX$. Then $\phi\cdot_{e_1} g$ is also centred $e_1e_2$-prefix-short. In other words the $e_1$-polarized action of $\delta_1 \autfo \bbX$ acts on the set of centred $e_1e_j$-prefix-short elements.
        \end{lem}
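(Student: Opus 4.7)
The plan is to leverage the structural results already established for the polarized action on normal forms. First, I would invoke Lemma \ref{lem;image_of_reduced}: since $\phi \in \delta_1 \autfo \bbX$ and $g$ is reduced with $X$-path $e_1 e_2 \cdots e_n$, the word $\phi(g)$ is reduced with the same $X$-path. Then $\phi \cdot_{e_1} g = \gamma_{\bar e_1} \phi(g) \gamma_{\bar e_1}^{-1}$, being conjugate to $\phi(g)$ by an element of the vertex group $\bbX_{i(e_1)}$, still has underlying $X$-path $e_1 e_2 \cdots e_n$ and the same syllable length. Because $\phi$ induces a simplicial isomorphism of the Bass–Serre tree and translation length is invariant under conjugation, the translation length equals the syllable length for $\phi \cdot_{e_1} g$ just as it did for $g$, so cyclic reducedness with syllable length = translation length is preserved.

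Next, I would examine the $e_1$-polarized normal form of $\phi \cdot_{e_1} g$. By Proposition \ref{prop;orbit_DC_determined_by_LC}, $DC(\phi \cdot_{e_1} g, e_1) = \phi \cdot DC(g, e_1)$, with the action specified by Convention \ref{conv;action_DC}. The centered $e_1 e_2$-prefix-short hypothesis says the initial double coset of $DC(g, e_1)$, corresponding to the turn $(\bar e_1, e_1)$, is the trivial one $I_{\bar e_1} \cdot 1 \cdot O_{e_1}$. Applying the action gives
\[
\phi \cdot (I_{\bar e_1} \cdot 1 \cdot O_{e_1}) \;=\; I_{\bar e_1}\, \gamma_{\bar e_1}\, \phi_v(1)\, \gamma_{\bar e_1}^{-1}\, O_{e_1} \;=\; I_{\bar e_1}\, O_{e_1},
\]
which is again the trivial double coset. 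Consequently, in the $e_1$-polarized normal form \eqref{eq:normalform} of $\phi \cdot_{e_1} g$, the initial prefix term $c_{\bar e_1}^{d_0}\bar f_1$ is trivial (so $d_0 = 0$ and $\bar f_1 = 1$), and the normal form starts with $e_1 e_2 \cdots$. Together with the first paragraph this is exactly the definition of being centered $e_1 e_2$-prefix-short.

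The only subtlety — and the reason the polarized action was set up the way it was — is that a naive application of $\phi$ would introduce a leading factor $\gamma_{\bar e_1}^{-1}$ coming from $\phi(e_1) = \gamma_{\bar e_1}^{-1} e_1 \gamma_{e_1}$, which would spoil the trivial prefix. Conjugating by $\gamma_{\bar e_1}$ on the left is designed precisely to cancel this factor. Once this cancellation is observed at the level of double cosets (which is what the computation above witnesses), the rest of the normal form simply rearranges itself (the intermediate factor $\gamma_{e_1}\gamma_{\bar e_2}^{-1}$ in the black vertex group migrates across $e_2$ using that edge-into-cylinder maps are surjective), but the prefix $e_1 e_2$ remains intact. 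The concluding statement, that $\delta_1 \autfo \bbX$ acts on centered $e_1 e_j$-prefix-short elements by the $e_1$-polarized action for each admissible $e_j$, is then immediate.
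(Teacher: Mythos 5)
Your proposal is correct and its decisive step is exactly the paper's argument: the leading $\gamma_{\bar e_1}$ of the polarized action cancels the $\gamma_{\bar e_1}^{-1}$ coming from $\phi(e_1)$, and the leftover factor $\gamma_{e_1}\gamma_{\bar e_2}^{-1}$, lying in the abelian black vertex group which coincides with the images of its incident edge groups, migrates across $e_2$, so the word still begins $e_1e_2$. The extra packaging through Proposition \ref{prop;orbit_DC_determined_by_LC} (triviality of the first double coset) and the explicit check that cyclic reducedness and translation length are preserved are consistent with, and slightly more detailed than, the paper's proof, which leaves those points implicit.
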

        \begin{proof}
        Let $g = e_1e_2f_2\cdots$ and applying $\phi$ to each symbol gives\[
        \phi\cdot_{e_1} g = \gamma_{\bar e} \phi(g)\gamma_{\bar e}^{-1}
            = \gamma_{\bar e_1}( \gamma_{\bar e_1}^{-1}e_1\gamma_{e_1}\gamma_{\bar{e_2}}^{-1}e_2\gamma_{e_2}\phi_{v_2}(f_2) \cdots )\gamma_{\bar e_1}^{-1}. 
       \] Since $v_1=\tau(e_1)$ is abelian and coincides with the images of incident edge groups we can migrate $\gamma_{e_1}\gamma_{\bar{e_2}}^{-1}$ across $e_2$ to get\[
       \phi\cdot_{e_1} g = e_1e_2f_2'\cdots,
       \] as required.
       \end{proof}

    \begin{cor}\label{cor:prefix-short-pres}
      For any $\phi \in \delta_1\autfo\bbX$ and $g \in \pi_1(\bbX,i(e))$, $g$ is $ee_j$-prefix short, for some other $e_j \in \edges X$ if and only if $\phi\cdot_e g $ is $ee_j$-prefix short.
    \end{cor}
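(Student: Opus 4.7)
The plan is to derive the corollary directly from Lemma \ref{lem:bonus} together with the fact, established just above that lemma, that the $e$-polarized action of $\delta_1\autfo\bbX$ on the set of elements of $\pi_1(\bbX, i(e))$ is a genuine group action. Since $\delta_1\autfo\bbX$ is a group, the inverse $\phi^{-1}$ is also in $\delta_1\autfo\bbX$, and we have $\phi^{-1}\cdot_e(\phi\cdot_e g) = g$ for every $g\in \pi_1(\bbX, i(e))$.

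The forward implication is exactly Lemma \ref{lem:bonus}: if $g$ is $ee_j$-prefix-short then so is $\phi\cdot_e g$. For the reverse implication, assume $h = \phi\cdot_e g$ is $ee_j$-prefix-short. Apply Lemma \ref{lem:bonus} to $\phi^{-1}\in \delta_1\autfo\bbX$ and to $h$: the element $\phi^{-1}\cdot_e h$ is also $ee_j$-prefix-short. But by the group action property, $\phi^{-1}\cdot_e h = \phi^{-1}\cdot_e(\phi\cdot_e g) = g$, so $g$ is $ee_j$-prefix-short, as required. There is no real obstacle here; the only thing to verify is that no additional hypothesis hidden in Lemma \ref{lem:bonus} (such as a condition on the polarizing edge being the first edge of the underlying $X$-path) obstructs applying it to $h$ with the same edge $e$, but $e$ was fixed from the start and the polarized action preserves the underlying $X$-path by Lemma \ref{lem;image_of_reduced}, so this is automatic.
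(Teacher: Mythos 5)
Your proof is correct and is essentially the argument the paper intends: the paper presents the corollary as an immediate consequence of Lemma~\ref{lem:bonus} (whose statement explicitly notes that the polarized action is an action on the set of prefix-short elements), and you have simply spelled out the group-theoretic reasoning—apply the lemma to $\phi$ for the forward direction and to $\phi^{-1}$ for the reverse, using the group-action identity $\phi^{-1}\cdot_e(\phi\cdot_e g)=g$.
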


    With this detail established we now have:
    \begin{prop}\label{prop:power-moves}
      Let $D$ be a tuple of cosets, and let $A_D \leq \delta_1\autfo\bbX$ be a subgroup of automorphisms that preserves the cosets in $D$ via the action in Convention \ref{conv;action_DC}. Then $A_D$ acts on $\calG(D,\rho,e)$ and also, viewing $\calE_{D,\rho,e}$ as an abelian group, there is a homomorphism \[\epsilon_{D,\rho,e}: A_D \to \calE_{D,\rho,e}\] such that \[
        \mathrm{pow}(\phi\cdot_e g,g) = \mathrm{pow}(g,e)+\epsilon_{D,\rho,e}(\phi).
      \]
    \end{prop}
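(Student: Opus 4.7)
The plan is to establish the three assertions in sequence: that $A_D$ acts on $\calG(D,\rho,e)$; that the induced action on $\calE_{D,\rho,e}\cong\bbZ^{n/2+3}$ is by translation, giving a shift vector $\epsilon_{D,\rho,e}(\phi)$ for each $\phi\in A_D$; and that $\phi\mapsto\epsilon_{D,\rho,e}(\phi)$ is a homomorphism. The first is immediate from Proposition \ref{prop;orbit_DC_determined_by_LC}: since $\phi\in A_D$ fixes every coset of $D$, we have $DC(\phi\cdot_e g, e)=\phi\cdot DC(g,e)=D$ and the underlying $X$-path is preserved, so $\phi\cdot_e g\in\calG(D,\rho,e)$.

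For the translation property, I expand $g$ in the $e$-polarized normal form \eqref{eq:normalform} and compute $\phi\cdot_e g = \gamma_{\bar e}\phi(g)\gamma_{\bar e}^{-1}$ symbol by symbol, using that, by the Bass diagram \eqref{eq:Bass_Diagram_delta_0} with $\phi_e=Id_e$, one has $\phi_v(c_e^d)=\gamma_e^{-1}c_e^d\gamma_e$ and $\phi(e_i)=\gamma_{\bar e_i}^{-1}e_i\gamma_{e_i}$. Each occurrence of $c_{e_i}^{d_i}$ then contributes a literal factor $c_{e_i}^{d_i}$ once the adjacent $\gamma$'s cancel, while each $\bar f_i$ becomes $W_i:=\gamma_{e_{i-1}}\phi_{v_i}(\bar f_i)\gamma_{\bar e_i}^{-1}$. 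Since $\phi\in A_D$, Convention \ref{conv;action_DC} forces $W_i$ into the double coset $I_{e_{i-1}}\bar f_i O_{e_i}$, so $W_i=c_{e_{i-1}}^{p_i}\bar f_i c_{\bar e_i}^{p'_i}z_i$ with integers $p_i,p'_i$ and central factor $z_i\in\bk{t_{v_i}}$ depending only on $\phi$, $\bar f_i$, and the attaching maps. At each black vertex $v_{2j}$, the residual factor $\gamma_{e_{2j-1}}\gamma_{\bar e_{2j}}^{-1}\in\bbX_{v_{2j}}=O_{e_{2j}}$ migrates across $e_{2j}$ via $i_{e}(x)e=e\tau_e(x)$ into $I_{e_{2j}}\leq\bbX_{v_{2j+1}}$ as a fixed element $c_{e_{2j}}^{A_{2j}}t_{v_{2j+1}}^{B_{2j}}$. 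The leftover $c_{\bar e_i}^{p'_i}$ and central contributions from each white vertex likewise lie in edge-group images and migrate through the subsequent edges, ultimately affecting only the later $c$-exponents and the final abscissa. By orientation preservation $\phi_v(t_v)=t_v$, and by Proposition \ref{prop;canonical_for_subgroups}(\ref{it:max-thick-sgps}), which places $t_v$ inside every incident edge group, these central migrations are themselves independent of the power vector of $g$. Collecting the contributions, the $e$-polarized normal form of $\phi\cdot_e g$ has exponents $d_i'=d_i+\Delta_i(\phi)$ and abscissa $k'=k+\Delta_k(\phi)$, where each $\Delta$ depends only on $\phi$ and the fixed data $(D,\rho,e)$. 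Setting $\epsilon_{D,\rho,e}(\phi):=(\Delta_0,\Delta_2,\ldots,\Delta_n,\Delta_{n+1},\Delta_k)$ then gives the desired formula $\mathrm{pow}(\phi\cdot_e g,e)=\mathrm{pow}(g,e)+\epsilon_{D,\rho,e}(\phi)$.

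The homomorphism property is formal: using the composition rule \eqref{eq:delta_0-composition} together with the identity $\psi\cdot_e(\phi\cdot_e g)=(\psi\circ\phi)\cdot_e g$ (a direct manipulation of the definition of the $e$-polarized action), and applying the translation formula to both sides yields $\mathrm{pow}(g,e)+\epsilon(\psi\circ\phi)=\mathrm{pow}(g,e)+\epsilon(\phi)+\epsilon(\psi)$, so $\epsilon_{D,\rho,e}$ is additive. The principal difficulty is the careful bookkeeping of the migrations of the central elements $t_{v_i}$, which are the only contributions that can thread through the entire path before collecting into the abscissa shift; orientation preservation together with the inclusion of $t_{v_i}$ in every adjacent edge group keep these contributions additive in the exponents and independent of the particular choice of $g\in\calG(D,\rho,e)$.
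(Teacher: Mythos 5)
Your proof is correct and follows essentially the same route as the paper: expand $\phi\cdot_e g$ symbol by symbol, use the Bass diagram to turn each $\phi(c_e^d)$ into $\gamma_e^{-1}c_e^d\gamma_e$ and each $\phi(e_i)$ into $\gamma_{\bar e_i}^{-1}e_i\gamma_{e_i}$, invoke $D$-preservation to force the double coset representatives $\bar f_i$ to survive unchanged, and migrate the leftover $c_{\bar e_i}$-powers and the $\gamma_{e_i}\gamma_{\bar e_{i+1}}^{-1}$ residues at the abelian vertices to produce an exponent shift independent of $g$. Your write-up is slightly more careful than the paper's in naming the central $t_v$-factors $z_i$ (the paper's calculation silently relies on orientation preservation to keep the abscissa fixed) and in spelling out the homomorphism property from $\psi\cdot_e(\phi\cdot_e g)=(\psi\circ\phi)\cdot_e g$, which the paper leaves implicit; you omit only the paper's closing remark about the $n_0$-entry vanishing when $g$ is prefix clean, which is a bonus observation rather than part of the statement.
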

    \begin{proof}
      The proof amounts to applying $\phi\cdot_e$ and keeping track of exponents. Applying $\phi$ to every symbol and making obvious cancellations gives:
      \begin{multline*}
        \phi\cdot_e g=  \phi\cdot_e (c_{\bar e}^{d_0}\bar f_1 e_1e_2 c_{e_2}^{d_2}\bar f_3 \cdots e_nc_{e_n}^{d_n} \bar f_{n+1} c_{\bar e}^{d_{n+1}} t_b^k)\\
        = \gamma_{\bar e} \phi(c_{\bar e}^{d_0})\phi(\bar f_1)\phi(e_1)\phi(e_2) \cdots 
        \phi(e_n)\phi(c_{e_n}^{d_n}) \phi(\bar f_{n+1})\phi(c_{\bar e}^{d_{n+1}})\phi( t_b^k)
        \gamma_{\bar e}^{-1}\\
        = \gamma_{\bar e} \gamma_{\bar e}^{-1}c_{\bar e}^{d_0}\gamma_{\bar e}\phi(\bar f_1)\gamma_{\bar e_1}^{-1}e_1 \gamma_{e_1} \gamma_{\bar e_2}^{-1}e_2 \gamma_{e_2} \cdots\\ 
        \gamma_{\bar e_n}^{-1}e_n\gamma_{e_n}\gamma_{e_n}^{-1}c_{e_n}^{d_n} \gamma_{e_n} \phi(\bar f_{n+1})\gamma_{\bar e}^{-1} c_{\bar e}^{d_{n+1}} \gamma_{\bar e}\gamma_{\bar e}^{-1}t_b^k\\
        =c_{\bar e}^{d_0}(\gamma_{\bar e}\phi(\bar f_1)\gamma_{\bar e_1}^{-1})e_1 (\gamma_{e_1} \gamma_{\bar e_2}^{-1})e_2 \gamma_{e_2} \cdots
        \gamma_{\bar e_n}^{-1}e_nc_{e_n}^{d_n} (\gamma_{e_n} \phi(\bar f_{n+1})\gamma_{\bar e}^{-1}) c_{\bar e}^{d_{n+1}}t_b^k
      \end{multline*}
    Now note that for every odd index $i$, noting that the vertex group $\bbX_{\tau(e_i)}$  must be abelian and coincides with images of incident edge groups we have
    \[e_i(\gamma_{e_i}\gamma_{\bar e_{i+1}}^{-1}) e_{i+1} = e_{i}e_{i+1}c_{e_i+1}^{m_i},\]for some $m_i \in \bbZ$. This gives 
    \begin{multline*}
        \phi\cdot_e g = c_{\bar e}^{d_0}(\gamma_{\bar e}\phi(\bar f_1)\gamma_{\bar e_1}^{-1})e_1 e_2 c_{e_2}^{m_2} \cdots
        e_n c_{e_n}^{m_n}c_{e_n}^{d_n} (\gamma_{e_n} \phi(\bar f_{n+1})\gamma_{\bar e}^{-1}) c_{\bar e}^{d_{n+1}}t_b^k
    \end{multline*}
    
    Finally note that our assumption that $\phi$ preserves $D$ implies that since $\bar f_{i+1} \in \bk{c_{e_i}}\bar f_{i+1} \bk{c_{\bar e_{i+1}}}$ then we must have that $\phi(\bar f_{i+1}) = c_{e_i}^{n_i}\bar f_{i+1} c_{\bar e_{i+1}}^{n_{i+1}}$ for some $n_i,n_{i+1} \in \bbZ$,
    provided we set $e_0 = \bar e$ and $e_n = e$. Applying the identities \[
    c_{\bar e_{i+1}}^{n_{i+1}} e_{i+1}e_{i+2}=e_{i+1}e_{i+2} c_{e_{i+2}}^{n_{i+1}}
    \] finally gives\[
    \phi\cdot_e g = c_{\bar e}^{d_0+n_0}\bar f_1 e_1e_2 c_{e_2}^{d_2+n_1+m_1+n_2}\bar f_3 \cdots e_nc_{e_n}^{d_n+n_{n-1}+n_n+m_{n-1}} \bar f_{n+1} c_{\bar e}^{d_{n+1}+n_{n+1}} t_b^k.
    \] Thus, if $g$ is not $ee_j$-prefix clean we have \[\mathrm{pow}(\phi\cdot_e g,e) = \mathrm{pow}(g,e)+\underbrace{(n_0,m_1+n_1+n_2,\ldots,m_{n-1}+n_{n-1}+n_n,n_{n+1})}_{\epsilon_{D,\rho,e}(\phi)},\] since $\epsilon_{D,\rho,e}(\phi)$ doesn't depend on $\mathrm{pow}(g,e)$, but only on $D,\rho,e,\phi$.
    
    If $g$ is $ee_j$ prefix clean, then note that since $e_1=e$ we have
    \[
        \phi\cdot_e e_1e_2c_{e_2}^{d_2}\bar f_3 \cdots = \gamma_{\bar e} \gamma_{\bar e}^{-1}e_1\gamma_{e_1}\gamma_{e_2}^{-2}e_2 \cdots =e_1\gamma_{e_1}\gamma_{e_2}^{-2}e_2 \cdots = e_1e_2\cdots
    \]
    In particular if $g$ is $ee_j$ prefix clean then the $n_0$-entry in $\epsilon_{D,\rho,\epsilon}(\phi)$ is trivial.    
    \end{proof}

    Up to this point, we have been focusing mainly on fibre parts of words we will now consider conjugation by elements that are not in the fibre.
    
    \begin{prop}\label{prop:power-edge-moves}
      Let $D$ be a tuple of cosets, $e$ some polarizing edge and $\rho$ some $X$-loop based at $i(e)=v$ that is compatible with $D$. Then the edge group $\bbX_e$ acts on $\calG(D,\rho,e)$ via\[
        a \cdot g = i_e(a) g i_e(a)^{-1}, g \in \calG(D,\rho,e), a \in \bbX_e
      \] and this action induces a homomorphism $\alpha_{D,\rho,e}:\bbX_e \to \calE_{D,\rho,e}$ such that\[
      \mathrm{pow}(a\cdot g,g) = \mathrm{pow}(g,e)+\alpha{D,\rho,e}(a).  
      \]
    \end{prop}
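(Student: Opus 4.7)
The plan is to prove both claims by direct computation in the normal form polarized by $e$, closely following the template of Proposition \ref{prop:power-moves}. Since $i(e) = v$ is white, $\bbX_v = F_v \oplus \langle t_v\rangle$ and $i_e(\bbX_e) = O_e$ is a rank-$2$ abelian subgroup containing both $t_v$ and the fiber generator $c_{\bar e}$. Pick generators $g_e, t_e$ of $\bbX_e$ with $i_e(g_e) = c_{\bar e}$ and $i_e(t_e) = t_v$, so that any $a \in \bbX_e$ is $g_e^\ell t_e^m$. Because conjugation by elements of $\bbX_v$ preserves the underlying $X$-path and because $c_{\bar e}$ and $t_v$ commute, it suffices to analyze the two cases $a = g_e^\ell$ and $a = t_e^m$ separately and to check that (i) the coset sequence $D$ is preserved, and (ii) the change in $\mathrm{pow}(g, e)$ is linear in $(\ell, m)$ with coefficients depending only on $(D, \rho, e)$, not on $g$.

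Conjugation by $c_{\bar e}^\ell$ is straightforward: pushing $c_{\bar e}^\ell$ into the initial fiber coefficient and $c_{\bar e}^{-\ell}$ into the final one replaces $d_0$ with $d_0 + \ell$ and $d_{n+1}$ with $d_{n+1} - \ell$, leaving the $\bar f_i$, the middle exponents and $k$ unchanged. The first coset is preserved because $c_{\bar e}^\ell \in I_{\bar e}$ absorbs on the left, the last because $c_{\bar e}^\ell \in O_e$ absorbs on the right, and middle cosets are untouched. So the change to the power vector is $\ell\,(1, 0, \ldots, 0, -1, 0)$.

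For conjugation by $t_v^m$, I migrate $t_v^m$ through $g$ using the Bass relations. Since $t_v$ is central in $\bbX_v$, it commutes past $c_{\bar e}^{d_0}\bar f_1$; crossing $e_1$ turns it into an element $s_1^m \in I_{e_1} = \bbX_{\tau(e_1)}$, using the surjectivity of edge maps onto black-vertex cylinders guaranteed in a piecewise trivial suspension (Proposition \ref{prop;gdt-PTS}). As $s_1^m$ is central in the abelian group $\bbX_{\tau(e_1)}$, it commutes through and migrates across $e_2$, and by coherence of the global orientation (every $t_{v_j}$ lies in $tF \cap \bbX_{v_j}$), the resulting element of $\bbX_{\tau(e_2)}$ has the form $c_{e_2}^{p_1} t_{v_2}$ for an integer $p_1$ depending only on the pair $(e_1, e_2)$ and on $\bbX$, not on $g$. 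Iterating around $\rho$, the central element propagates as $t_{v_j}$ at each white vertex and accumulates one fiber twist $c_{e_{2j}}^{p_j}$ per pair of edges, finally returning to $v$ as $t_v$, which cancels the $t_v^{-m}$ on the right. The coset sequence is preserved because each $t_{v_j}$ commutes past $\bar f_{2j+1}$ and each $c_{e_{2j}}^{p_j} \in I_{e_{2j}}$ preserves its double coset by left multiplication. The change to the power vector is therefore $m\,(0, p_1, \ldots, p_{n/2}, 0, 0)$.

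Summing yields the map $\alpha_{D, \rho, e}(g_e^\ell t_e^m) = (\ell, m p_1, \ldots, m p_{n/2}, -\ell, 0)$, which is visibly a homomorphism $\bbX_e \to \calE_{(D, \rho, e)}$ satisfying the stated formula. The one delicate point is confirming that $t_v$ returns to $v$ precisely as $t_v$ itself after traversing the loop $\rho$; this reduces to the consistency of orientation across cylinders, which is built into the definition of a piecewise trivial suspension via the global abscissa map $\Gamma \to \bbZ$ sending each $t_{v_j}$ to the same generator.
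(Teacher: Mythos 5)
Your argument is correct, but it handles the central generator differently from the paper. For conjugation by $c_{\bar e}^{\ell}$ both you and the paper do the same direct check on the polarized normal form. For conjugation by $t_v^m$, however, the paper does not redo any migration: it observes that conjugation by $t_v$ restricted to the fiber is exactly the multiple Dehn twist $\alpha$, given by the $\edges X$-substitution $e \mapsto e c_e^{n_{e,\alpha}}$, hence is realized by an element of $\delta_1\autfo\bbX$ preserving $D$, and then simply quotes Proposition \ref{prop:power-moves} to get the additive, $g$-independent increment. You instead recompute this special case by hand, propagating $t_v^m$ around the loop $\rho$ with the Bass relations, using surjectivity of edge groups onto the black cylinder groups to cross each black vertex and the coherence of the orientation (the abscissa map sending every $t_{v_j}$ to $1$) to see that the propagated element re-emerges at each white vertex as exactly $c_{e_{2j}}^{p_j}t_{v_{j+1}}$. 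What the paper's route buys is brevity and reuse of Proposition \ref{prop:power-moves}; what yours buys is an explicit description of the increment vector (the per-turn twist exponents $p_j$) without invoking the realization of $\ad{t_v}$ as a graph-of-groups automorphism — in effect you reprove the needed instance of Proposition \ref{prop:power-moves}. Your abscissa argument for why the $t$-exponent stays equal to $1$ at each crossing is a nice explicit justification of a point the paper leaves implicit.

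One small caveat, which the paper's own proof flags and yours does not: when $g$ is $ee_j$-prefix clean (so $e_1=e$ and $\bar f_1=1$), the initial symbol $c_{\bar e}^{\ell}$ should be migrated through the prefix $e_1e_2$, so under the paper's normal-form conventions the increment lands in the $d_2$-slot rather than in $d_0$, and your formula $\ell\,(1,0,\ldots,0,-1,0)$ is not literally the increment in that configuration. This does not affect the conclusion — whether or not the degenerate configuration occurs is determined by $(D,\rho,e)$ alone, so the migrated increment is still independent of $g$ and additivity survives — but the formula should be stated case-by-case, as in the last paragraph of the proof of Proposition \ref{prop:power-moves}.
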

    \begin{proof}
        By the definition of $e-$polarized normal forms, conjugation of $g$ by $i_e(a)=c_{\bar e} \in i_e(\bbX_e)$ doesn't change $DC(g,e)$. If $g$ is $ee_j$-prefix clean then we can migrate the initial $c_{\bar e}^{n_0}$ symbol through the $ee_j$ prefix and stay in prefix clean form. It's easy to verify that the new power vector is obtained from the old power vector by adding some $\epsilon_{D,\rho,e}(a)$. The other generator of $i_e(\bbX_e)$ is $t_v$, the generator of the centre of $\bbX_v$. The action by conjugation by $t_v$ on the fibre $F \leq \torus\alpha$ is precisely the multiple Dehn twist $\alpha$ which is given by the $\edges X$ substitution $e \mapsto e c_e^{n_{e,\alpha}}$. In particular conjugation by $t_v$ can be realized by an element of $\delta_1\autfo\bbX$ so Proposition \ref{prop:power-moves} implies the result for conjugation by powers of $t_v$, and the result follows.
    \end{proof}
    
    \begin{cor}[Action on embedding space]\label{cor:action-on-power-vectors}
        Let $A_D \leq \delta_1\autfo\bbX$ be a subgroup of automorphisms that preserve the cosets in $D$. Then $\bbX_e\times A_D$ acts on $\calG(D,\rho,e)$ via\[
           (a,\phi)\cdot g = i_e(a)(\phi\cdot_e g)i_e(a)^{-1} 
        \] and this action gives rise to a homomorphism\[
            \chi_{D,\rho,e}:\bbX_e\times A_D \to \calE_{D,\rho,e}
        \] such that
        \begin{equation}\label{eqn:power-action}
            \mathrm{pow}((a,\phi)\cdot g,e)=\mathrm{pow}(g,e)+\chi_{D,\rho,e}(a,\phi)
        \end{equation}
    \end{cor}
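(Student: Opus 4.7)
The plan is to combine Propositions \ref{prop:power-moves} and \ref{prop:power-edge-moves} by showing that the two actions commute on $\calG(D,\rho,e)$ and then taking the sum of the associated homomorphisms. The first step is to verify that the formula $(a,\phi)\cdot g = i_e(a)(\phi\cdot_e g)i_e(a)^{-1}$ really does define an action of the direct product $\bbX_e\times A_D$. For this we need the key identity
\[
\phi\cdot_e (i_e(a)\,g\,i_e(a)^{-1}) \;=\; i_e(a)\,(\phi\cdot_e g)\,i_e(a)^{-1}
\]
for every $a\in\bbX_e$ and $\phi=(Id_X,(\phi_v),(Id_e),(\gamma_e))\in A_D\le\delta_1\autfo\bbX$. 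Since $\phi_e=Id_e$, the Bass diagram \eqref{eq:Bass_Diagram_delta_0} forces $\phi_v(i_e(a))=\gamma_{\bar e}^{-1}\,i_e(a)\,\gamma_{\bar e}$ in $\bbX_v$; substituting this into $\phi\cdot_e g = \gamma_{\bar e}\,\phi(g)\,\gamma_{\bar e}^{-1}$ and distributing the conjugation by $i_e(a)$ through gives the identity at once. Combined with the fact that both $\phi\mapsto \phi\cdot_e$ and $a\mapsto \ad{i_e(a)}$ are group actions individually, this shows $(ab,\phi\psi)\cdot g = (a,\phi)\cdot\big((b,\psi)\cdot g\big)$, so we have a genuine action of the direct product.

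The next step is to define
\[
\chi_{D,\rho,e}(a,\phi) \;=\; \alpha_{D,\rho,e}(a)\,+\,\epsilon_{D,\rho,e}(\phi),
\]
where $\alpha_{D,\rho,e}$ and $\epsilon_{D,\rho,e}$ are the homomorphisms produced in Propositions \ref{prop:power-edge-moves} and \ref{prop:power-moves} respectively. Because $\calE_{D,\rho,e}=\mathbb{Z}^{n/2+3}$ is abelian and the two summands are already homomorphisms on the respective factors, the map $\chi_{D,\rho,e}:\bbX_e\times A_D\to\calE_{D,\rho,e}$ is manifestly a homomorphism. The identity \eqref{eqn:power-action} then follows by applying the two propositions in sequence: starting from $g$, first apply $\phi\cdot_e$ to shift the power vector by $\epsilon_{D,\rho,e}(\phi)$, then conjugate by $i_e(a)$ to shift by $\alpha_{D,\rho,e}(a)$, and use that both shifts are independent of the power vector of the intermediate element.

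What remains is to check one book-keeping point: Proposition \ref{prop:power-moves} was stated for elements $g$ that may be prefix-short (in which case the first entry of $\epsilon_{D,\rho,e}(\phi)$ vanishes). By Corollary \ref{cor:prefix-short-pres} the action of $A_D$ preserves the property of being $ee_j$-prefix-short, and conjugation by $i_e(a)$ only shifts the $c_{\bar e}$-exponents at the two ends of the normal form, which is precisely the behaviour encoded in $\alpha_{D,\rho,e}$. So the sum formula makes sense in both the prefix-short and general cases without any case distinction. I expect the main (very mild) obstacle to be exactly the commutation identity in the first paragraph, since it is the only place where the fact that $\phi\in\delta_1\autfo\bbX$ (rather than just $\delta_0\autfo\bbX$) genuinely intervenes; once that is in hand the corollary is essentially a formal combination of the two preceding propositions.
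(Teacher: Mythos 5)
Your proof is correct, and it supplies the detail the paper leaves entirely implicit — the corollary is stated without proof, as a formal combination of Propositions~\ref{prop:power-moves} and~\ref{prop:power-edge-moves}. The one genuinely non-trivial point you identify, and verify correctly, is the commutation identity $\phi\cdot_e\big(i_e(a)\,g\,i_e(a)^{-1}\big) = i_e(a)\,(\phi\cdot_e g)\,i_e(a)^{-1}$, which is what makes the direct-product action well-defined and lets you add the two homomorphisms. Your derivation of $\phi_v(i_e(a)) = \gamma_{\bar e}^{-1}\,i_e(a)\,\gamma_{\bar e}$ is correct, though to be precise it comes from the Bass square for the edge $\bar e$ (recall $i_e = \tau_{\bar e}$ and $\phi_{\bar e}=\mathrm{Id}$), not the square for $e$; the conclusion is the same. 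It would also be worth recording explicitly that $(a,\phi)\cdot g$ lands back in $\calG(D,\rho,e)$, which follows because $\phi\in A_D$ preserves $D$ and the $X$-path (Proposition~\ref{prop:power-moves}) and conjugation by $i_e(a)$ changes neither (Proposition~\ref{prop:power-edge-moves}); this is what legitimizes composing the two power-vector shifts. With those two small clarifications your argument is exactly the intended one.
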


    \subsection{Solving the fiber and orientation preserving Mixed Whitehead problem}\label{sec:solving-MWHP}
    
        In this section, we describe and prove the correctness of our algorithm. Suppose we are given two tuples of tuples $\vec S=(S_1,\ldots,S_k)$ and $\vec T=(T_1,\ldots, T_k)$ of elements from $\torus\alpha$. The fibre and orientation preserving Mixed Whitehead problem is the decision problem of whether $\vec S$ and $\vec T$ in the same orbit under the action of $\delta\autfo\bbX\times(\torus\alpha^k)$ given by
        \begin{equation}\label{eqn:OG-action}
                (\phi,g_1,\ldots,g_k)\cdot (S_1,\ldots,S_k) = (g_1^{-1}\phi(T_1)g_1,\ldots,g_k^{-1}\phi(T_k)g_k).
        \end{equation}
        We first note that $\vec S$ and $\vec T$ are in the same orbit if and only if their augmentations $\vec S^+=(S_1^+,\ldots,S_k^+)$ and $\vec T^+=(T_1^+,\ldots, T_k^+)$ are in the same orbit. So, without loss of generality, we will assume that $\vec S$ and $\vec T$ are tuples of augmented tuples, if they aren't then we replace their entries by augmentations (which can be done algorithmically).
        
        By Lemma \ref{lem;reduc_delta_1} this problem could be reduced to a finite computable set of instances of similar orbit problems under $\delta_1\autfo\bbX\times(\torus\alpha^k)$. Specifically, $\vec S$ and $\vec T$ are in the same $\delta_1\autfo\bbX\times(\torus\alpha^k)$-orbit if and only if for one of the representatives $\nu_i$ of a coset of $\delta_1\autfo\bbX\backslash\delta\autfo\bbX$ we have that $\vec{\nu_i(S)}$ is in the same $\delta_1\autfo\bbX\times(\torus\alpha^k)$-orbit as $\vec T$.
        
        Now given some tuple $T$ of elements, we denote by $\calT$ the set of all the $\Bass(\bbX)$-conjugates of $T$ that are in centred short position (recall Section \ref{sec:centered-hyp}), if $T$ is hyperbolic or lineal, otherwise $\calT$ is the set containing the centring of $T$ and all its variations, should any exist (recall Section \ref{sec:centered-elliptic}.) With this notation, we define two sets of tuples of tuples\[
            \vec{\calS} = \bigcup_{i=1}^v\calS_{i1}\times\cdots\times\calS_{ik}
        \] where $\{\nu_1,\ldots,\nu_v\}$ is the computable set of representatives given by Lemma \ref{lem;reduc_delta_1} and $\vec{\nu_i(S)} = (S_{i1},\ldots,S_{ik}); i=1,\ldots,v$. We also define\[
        \vec\calT = \calT_1\times\cdots\times\calT_k
        \] where $\vec T = (T_1,\ldots,T_k)$.
        
        On the one hand, $\delta_1\autfo\bbX$ actually extends to a group of automorphisms $\Bass(\bbX)$. On the other hand, Lemma \ref{lem:conj-in-Bass} implies that we can extend our consideration to conjugation in the Bass group. Therefore our original $\vec S$ and $\vec T$ are in the same $\delta\autfo\bbX\times(\torus\alpha^k)$-orbit if and only if there exists at least one pair of tuples of tuples $\vec S' \in \vec\calS$ and  $\vec T' \in \vec\calT$ that are in the same $\delta_1\autfo\bbX\times(\Bass(\bbX)^k)$-orbit.
        
        In Section \ref{sec;action_delta_1}, we considered polarized actions which are not exactly the same as the action given in \eqref{eqn:OG-action}. However, since the Mixed Whitehead problem involves post-conjugation by group elements, this does not  
        change the problem. To be clear: our original $\vec S$ and $\vec T$ are in the same $\delta\autfo\bbX\times(\torus\alpha^k)$-orbit if and only if there exists at least one pair of tuples of tuples $\vec S' \in \vec\calS$ and  $\vec T' \in \vec\calT$ that are in the same $\delta_1\autfo\bbX\times(\Bass(\bbX)^k)$-orbit where the new action is given by:
        \begin{equation}\label{eqn:newschool-action}
            (\phi,g_1,\ldots,g_k)\cdot (S'_1,\ldots,S'_k) = \left(g_1^{-1}(\phi \cdot S'_1) g_1,\ldots,g_k^{-1} (\phi\cdot S'_k)g_k\right).
        \end{equation}
        Where the action is given by \eqref{eq:tupletuple-action}. This is because if $S'_j$ is hyperbolic or lineal, then $\phi\cdot S'_j = \gamma_{\bar e} \phi(S'_j)\gamma_{\bar e}^{-1}$.

        By Lemma \ref{lem:bonus}, prefix short forms are preserved by polarized actions. So suppose that $\vec S'$ and $\vec T'$ are in the same $\delta_1\autfo\bbX\times(\Bass(\bbX)^k)$ orbit, under the new action. Then there is some $\phi \in \delta_1\autfo\bbX$ such $\phi\cdot S'_i$ is conjugate in $\Bass(\bbX)$ to $T'_i$ for all $i=1,\ldots,k$. Now non-elliptic tuples in $\phi \cdot \vec S'$ may no longer be centred short position, but their leading entries are in prefix clean position. It follows that there are elements in $a_i\in i_{e_i}(\bbX_{i_(e_i)})$ such that the leading entry of\[
            a_i (\phi\cdot S'_i)a_i^{-1}
        \] is in short position for all appropriate indices. We'll now show why the entire tuple  $a_i (\phi\cdot S'_i)a_i^{-1}$ is in centred short position (and not only its leading entry). If that isn't the case, then we would need to further conjugate $a_i (\phi\cdot S'_i)a_i^{-1}$ by some element in the centralizer of the first entry to get to short position of the tuple. If $S'_i$ is lineal, doing so doesn't change any of the entries in $a_i (\phi\cdot S'_i)a_i^{-1}$, so $a_i (\phi\cdot S'_i)a_i^{-1}$ is already in short position. Otherwise, $S'_i$ was already chosen so that its second entry had shortest syllable length among all conjugates by the centralizer of the first entry since applying $\phi$ and conjugation by $a_i$ did not change syllable lengths, further conjugation by an element of the centralizer of the first entry of $a_i (\phi\cdot S'_i)a_i^{-1}$ will increase the syllable length of the second entry, so in this case as well $a_i (\phi\cdot S'_i)a_i^{-1}$ is already in short position.
        
        Now all the tuples in $\vec T'$ are also in centred short position. Thus either $a_i (\phi\cdot S'_i)a_i^{-1} = T'_i$, otherwise $a_i (\phi\cdot S'_i)a_i^{-1}$ is equal to one of the other finitely many short conjugates of $T'_i$. In particular there must be some other $\vec T'' \in \vec\calT$ such that $a_i (\phi\cdot S'_i)a_i^{-1} = T''_i$.
        
        If $S'_i$ is elliptic and $\phi\cdot S'_i$ is conjugate to $T'_i$ then either $\phi\cdot S'_i$ is conjugate to $T'_i$ by an element of $\bbX_{v_{S'_i}}$ or by some element of the form $\bbX_{v_{S'_i}}\ell$ where $\ell$ is a loop in $X$ based at $v_{S'_i}$. By our definition of variations and by Lemma \ref{lem:conj-elliptic}, there is some $\vec S''$ such that $\phi\cdot S''_i$ is conjugate to $T'_i$ by an element of $\bbX_{v_{S''_i}}$.
        
        Therefore our original instance of the fibre and orientation preserving Mixed Whitehead problem has a positive solution if and only if there exist $\vec S' \in \vec\calS$ and $\vec T' \in \vec\calT$, some $\phi \in \delta_1\autfo\bbX$ and elements $a_i$ where $a_i \in F_{v_i}$ if $S'_i$ is elliptic and $a_i \in i_{e_i}(\bbX_{e_i})$ if $S'_i$ is lineal or hyperbolic such that
        \begin{equation}\label{eqn:final}
            a_i (\phi\cdot S'_i) a_i^{-1}=T'_i.
        \end{equation}
        
        Note that by Proposition \ref{prop:power-edge-moves} conjugation by such elements $a_i$ for non-elliptic tuples leave invariant $DC(\phi\cdot S_i)$. We therefore finally apply Corollary \ref{cor;DC-tuples-of-tuples} to $\vec S'$ and $\vec T'$. This gives us an automorphism $\phi_1 \in \delta_1\autfo\bbX$ such that for elliptic tuples we already have $[\phi_1 \cdot S'_i]_{v_i} = [T'_i]_{v_i}$, so \eqref{eqn:final} has a solution for elliptic tuples.
        
        Without loss of generality the first $h$ entries $S'_1,\ldots,S'_h$ of $\vec S'$ are precisely the non-elliptic tuples, the same must be true for $\vec T'$. For a tuple of elements $T_{i}=(T_{i1},\ldots,T_{il_i})$ we define the embedding space\[
            \calE_{T_i} = \bigoplus_{j=1}^{l_i} \calE_{DC(T_{ij},e_i),\rho_{ij},e_i}
        \] where $\rho_{ij}$ is the $X$-path underlying $T_{ij}$. For the tuple of tuples $\vec T'$ we define the embedding space\[
            \calE(\vec T') = \bigoplus_{i=1}^h \calE_{T_i}.
        \]
        
        Now by Corollary \ref{cor;DC-tuples-of-tuples} we can decide if there is some  $\phi_1 \in \delta_1\autfo\bbX$ such that in addition $\vec{DC}(S'_i,e_i)=\vec{DC}(T'_i,e_i)$ in which case we would also have $\calE(\phi_1\cdot \vec S') = \calE(\vec T')$. 
        
        Consider now analogous mappings \[\mathrm{pow}(T_i) = \bigodot_{j=1}^{l_i} \mathrm{pow}(T_{ij},e_i) \textrm{~and~} \mathrm{pow}(\vec T') = \bigodot_{i=1}^h \mathrm{pow}(T_i),\] where $\odot$ denotes concatenation of tuples. By Corollary \ref{cor:action-on-power-vectors} there is an action of $A_{\vec T'}\times \bbX_{e_1} \times \cdots\times \bbX_{e_h}$,  where $A_{\vec T'}$ is as given in Corollary \ref{cor;DC-tuples-of-tuples}, obtained by assembling the actions on embedding spaces and a corresponding homomorphism\[
            \chi: A_{\vec T'}\times \bbX_{e_1} \times \cdots\times \bbX_{e_h} \to \calE(\vec T'),
        \]
        that satisfies the analogue of \eqref{eqn:power-action}.
        
        It finally follows that our instance of the Mixed Whitehead problem has a solution if and only if for the fixed $\phi_1 \in \delta_1\autfo\bbX$ that we computed above, there is some $(\phi_2,a_1,\ldots,a_n) \in A_{\vec T'}\times \bbX_{e_1} \times \cdots\times \bbX_{e_h}$ such that\[
            \mathrm{pow}(\vec T') = \mathrm{pow}(\phi_1\cdot \vec S')+\chi(\phi_2,a_1,\ldots,a_n).
        \] Since we have an explicit generating set of 
        $A_{\vec T'}\times \bbX_{e_1} \times \cdots\times \bbX_{e_h}$ and can compute $\chi$ the problem reduces to deciding whether given a generating set for a subgroup $\bk{h_1,\ldots,h_n}=H \leq \bbZ^n$ and two elements $\vec s$ and $\vec t$ whether there is $\vec h \in H$ such that $\vec t = \vec s + \vec h$. This amounts to elementary linear algebra over $\bbZ$ and is readily decidable.
        
        It therefore follows that the fibre and orientation preserving Mixed Whitehead problem is decidable in $\torus\alpha.$

\subsection{The proof of Theorem \ref{thm;gersten_hack}}
\label{sec;hack}

    We finally come back to  Theorem \ref{thm;gersten_hack}, for which we promised a proof.
    
    Let fix a finite basis $X$ of $F=F(X)$, then there is a one-to-one correspondence between subgroups of $F$ and \emph{core graphs with a base vertex $v$} (see \cite[Theorem 5.1]{kapovich_stallings_2002} or \cite{stallings_topology_1983}.) If $H \leq \freegp n$ is finitely generated, then the corresponding based core graph is also finite. \emph{An unbased core graph} is a directed graph with labels in $X$ which is folded (in the sense of \cite[Definition 2.3]{kapovich_stallings_2002}) without vertices of valence one. By \cite[Proposition 7.7]{kapovich_stallings_2002} there is a bijective correspondence between conjugacy classes of subgroups of $F$ and unbased core graphs. $\aut{F}$ acts on this set of graphs as follows: given an automorphism $\alpha$ and an unbased graph $\StalGraph H$, the image $\alpha(\StalGraph H)$ is computed using the following steps.
    \begin{enumerate}[(I)]
        \item \label{it:subdivide} For each edge labelled $y$    subdivide it into a path graph along which we read $\alpha(y)$.
         \item \label{it:fold} Perform a sequence of Stallings folds, until the graph is folded.
        \item \label{it:core} Perform a \emph{coring} operation, i.e. repeatedly contract edges adjacent to vertices of valence 1 to points until none such remain.
    \end{enumerate}
     Details are given in \cite[\S 9.2]{diao_grushko_2005}. Note that each of these operations can be realized as a sequence of continuous quotient maps between graphs sending vertices to vertices. In particular each vertex of $\StalGraph H)$ has a well-defined image in $\alpha'(\StalGraph H)$. A crucial fact is that the individual folds or edge contractions in steps \ref{it:fold} and \ref{it:core} can be done in any order and the end result will be the same. We will use Gersten's Theorem \ref{thm:gersten} presented earlier.

    As mentioned earlier, an actual subgroup $H \leq F$ is given by a \emph{core graph with a base vertex $\BasedStalGraph H v$} which is a directed folded graph with labels in $X$ all of whose vertices, except possibly the base vertex $v$, have valence one. The elements of $H$ are precisely the reduced words one reads of reduced loops starting and ending at the base vertex $v$.
    
    \subsubsection{Construction of an auxiliary $F(X,K)$ with its tuples of conjugacy classes of subgroups}
    
    For the remainder of this section we fix tuples $A=(A_1,\ldots,A_r)$ and $B=(B_1,\ldots,B_r)$ 
    of tuples of subgroups of $F=F(X)$ (i.e. each $A_i$ is a 
    tuple of subgroups).  We assume that $|A_i|=|B_i|$, and by convention, we assume that $|A_i|\geq 2$ for all $i$. If it happens that one is interested in $A_i$ being a singleton, one may take a pair of identical copies of the same subgroup so that this convention is satisfied. This duplication does not add any further constraint, hence is done without loss of generality.
    
    For each $i$, let $K_i$ be a  set of symbols  so that $|K_i|=|A_i|=|B_i|$.  Let $K=K_1 \sqcup \cdots \sqcup K_r$ and consider $F=F(X)$ as a free factor of $F(X,K)$.

    We list $K_i = \{\kappa_{i1},\ldots,\kappa_{in_i}\}$ as a set of symbols double indexed by positive integers such that $K_i \cap X = \emptyset$ and such that $i\neq j \Rightarrow K_i\cap K_j = \emptyset$. A \emph{$K_i$-star} is a directed labelled tree with a root vertex $v_i$ and for each symbol in $\kappa_{ij} \in K_i$ there is an edge outgoing from $v_i$ with label $\kappa_{ij}$. 
    
    We encode the tuples $A_i=(A_{i1},\ldots,A_{in_i})$ and $B_i=(B_{i1},\ldots,B_{in_i})$ of subgroups of $F$ using a \emph{tuple-graph}, denoted $\StalGraph{A_i}$ and $\StalGraph{B_i}$, respectively. They are constructed by taking a $K_i$-star, the disjoint union of the core graphs with a base vertex $\BasedStalGraph{A_{ij}}{u_{ij}}$ (respectively $\BasedStalGraph{B_{ij}}{u_{ij}}$), and identifying the vertex of the $K_i$-star with incoming edge labelled $\kappa_{ij}$ to the base vertex $u_{ij}$ (respectively $v_{ij}$). The key fact is that although $\StalGraph{A_i}$ is not a based graph, due to its specific structure, and edges with special labels $\kappa_{ij}$ it is possible to recover the core graphs with base vertex $\BasedStalGraph{A_{ij}}{u_{ij}}$ or $\BasedStalGraph{B_{ij}}{v_{ij}}$ and thus the ordered tuple of subgroups $A_i$ or $B_i$ respectively.
    
    A tuple-graph always has a distinguished vertex $v_i$, namely the only vertex with outgoing edges labelled by the elements $K_i$. Denote by $\dot A_i=\pi_1(\StalGraph{A_i},v_i) \leq F(X,K)$ the subgroup consisting of the elements that are represented as the words read along closed loops in $\StalGraph{A_i}$ based at $v_i$ and define $\dot{B_i}$ analogously. Note that the conjugacy classes $[\dot A_i]$ and $[\dot B_i]$ correspond exactly to the tuple graphs $\StalGraph{A_i}$ and $\StalGraph{B_i}$ respectively.
    
    Fix the following tuples of subgroups:
    \begin{eqnarray}
        \hat A &=& (\dot A_1, \ldots, \dot A_r, F(X), F(K_1),\ldots,F(K_r), R_1,\ldots R_r)\\
        \hat B &=& (\dot B_1, \ldots, \dot B_r, F(X), F(K_1),\ldots,F(K_r), R_1,\ldots R_r),\label{def;hatB}
    \end{eqnarray}
    where $R_i \leq F(K_i)$ is a rigid subgroup, in the sense that ${\rm Stab}_{\aut{F(K_i)}}(R_i)\leq \Inn{F(K_i)}$, for example $R_i = \bk{c_i}$ where $c_i$ is a C-test element as given in \cite{ivanov_Ctest_1998}. These are precisely the objects that Theorem \ref{thm:gersten} is about.
    
    \begin{lem}\label{lem;gersten-hack}
      Let $\beta \in \aut{F(X,K)}$ send $\hat A$ to $\hat B$ or fix $\hat B$. Then we have the following restrictions
      \begin{enumerate}
          \item $\beta|_{F(X)} = \ad{w}\circ \alpha$, where $\alpha \in \aut{F(X)}$ and $w \in F(X,K)$, and
          \item $\beta|_{F(K_i)} = \ad{w_i}$ for some $w_i \in F(X,K)$.
      \end{enumerate}
    \end{lem}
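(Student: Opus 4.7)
The plan is to exploit the fact that $\hat A$ and $\hat B$ share several entries in the same positions, namely $F(X)$, each $F(K_i)$, and each $R_i$. Whether $\beta$ carries $\hat A$ to $\hat B$ or stabilises $\hat B$, the hypothesis on tuples of conjugacy classes forces
\[ [\beta(F(X))]=[F(X)],\quad [\beta(F(K_i))]=[F(K_i)],\quad [\beta(R_i)]=[R_i] \]
in $F(X,K)$. So I can pick elements $w,w_i,u_i\in F(X,K)$ with $\beta(F(X))=wF(X)w^{-1}$, $\beta(F(K_i))=w_iF(K_i)w_i^{-1}$ and $\beta(R_i)=u_iR_iu_i^{-1}$.

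Statement (1) then falls out immediately: $\ad{w^{-1}}\circ\beta|_{F(X)}$ carries $F(X)$ to itself, hence is an automorphism $\alpha\in\aut{F(X)}$, and $\beta|_{F(X)}=\ad{w}\circ\alpha$.

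For (2), the plan has two ingredients. First, a Bass-Serre argument to align the conjugator with the factor $F(K_i)$: the inclusion $\beta(R_i)\leq\beta(F(K_i))$ gives $v_iR_iv_i^{-1}\leq F(K_i)$ where $v_i:=w_i^{-1}u_i$. Viewing $F(X,K)=F(X)\ast F(K_1)\ast\cdots\ast F(K_r)$ acting on the Bass-Serre tree of this free product, $F(K_i)$ is the stabiliser of a unique vertex $\nu_i$ while edge stabilisers are trivial; since $R_i=\langle c_i\rangle$ is nontrivial and fixes $\nu_i$, the conjugate $v_iR_iv_i^{-1}$ fixes both $\nu_i$ and $v_i\cdot\nu_i$, forcing $v_i\cdot\nu_i=\nu_i$, i.e.\ $v_i\in F(K_i)$. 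Second, rigidity of $R_i$: the automorphism $\gamma_i:=\ad{w_i^{-1}}\circ\beta|_{F(K_i)}$ of $F(K_i)$ satisfies $\gamma_i(R_i)=v_iR_iv_i^{-1}$, so $\ad{v_i^{-1}}\circ\gamma_i$ fixes $R_i$ setwise; by hypothesis ${\rm Stab}_{\aut{F(K_i)}}(R_i)\leq\Inn{F(K_i)}$, hence $\gamma_i\in\Inn{F(K_i)}$, say $\gamma_i=\ad{v_i'}$. Then $\beta|_{F(K_i)}=\ad{w_i}\circ\gamma_i=\ad{w_iv_i'}$, proving (2).

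The main obstacle is the Bass-Serre step that promotes a conjugacy in $F(X,K)$ between two subgroups of $F(K_i)$ to a conjugacy by an element of $F(K_i)$; everything else is bookkeeping. This step relies crucially on $R_i$ being nontrivial (which is why $R_i=\langle c_i\rangle$ is a reasonable choice) and on the triviality of edge stabilisers in the Bass-Serre tree of the free product decomposition, so one should double-check that the chosen rigid $R_i$ never lies in a proper free factor pathology that would complicate this—taking $R_i$ cyclic generated by a C-test element makes this immediate.
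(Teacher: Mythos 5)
Your proof is correct and follows essentially the same route as the paper's: point (1) is the direct restriction argument, and point (2) combines conjugating $R_i$ back into $F(K_i)$ with the rigidity hypothesis ${\rm Stab}_{\aut{F(K_i)}}(R_i)\leq\Inn{F(K_i)}$. The only difference is that you justify explicitly, via the Bass--Serre tree of the free product decomposition of $F(X,K)$ (nontrivial subgroup of $F(K_i)$ conjugated into $F(K_i)$ forces the conjugator into $F(K_i)$, by triviality of edge stabilizers), a step the paper's proof simply asserts when it writes that the conjugator $g_i$ lies in $F(K_i)$ — so your write-up is, if anything, slightly more complete.
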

    \begin{proof}
      By hypothesis $\beta$ maps $F(X)$ to a conjugate $w^{-1}F(X)w$ and similarly $\beta(F(K_i))={w_i}^{-1}F(K_i)w_i$. This implies that the restriction $\ad{w^{-1}}\circ\beta|_{F(X)}$ is given by an automorphism $\alpha: F(X) \to F(X) \leq F(X,K).$ This proves the first point.
      
      For the second point, note that we also require that $\beta$ maps $R_i$ to a conjugate of $R_i$ since the restriction $\ad{w_i}\circ\beta|_{F(K_i)}$ is an automorphism of $\alpha_i$ of $F(K_i)$ and therefore $\ad{w_i}\circ\beta(R_i)=g_i^{-1}R_ig_i$ for some $g_i \in F(K_i)$. The hypothesis that $R_i$ is rigid implies that $\ad{w_i}\circ\beta|_{F(K_i)}$ actually restricts to an inner automorphism on $F(K_i)$. This proves the second point.
    \end{proof}

    \subsubsection{From a solution in $F(X)$ to a solution in $F(X,K)$}
    
    \begin{prop}\label{prop;gersten-hack-easy}
        If there is an $\alpha \in \aut{F(X)}$ such that $[\alpha(A_i)] =   [B_i]$
        then there is an automorphism $\beta \in \aut{F(X,K)}$ such that \[
        \beta(\hat A)=\hat B.
        \]
    \end{prop}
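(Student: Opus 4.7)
The plan is to construct $\beta$ explicitly by extending $\alpha$ from $F(X)$ to $F(X,K)$ via a carefully chosen partial conjugation on the free factor generators $\kappa_{ij}$. Unpacking the hypothesis $[\alpha(A_i)] = [B_i]$ for each $i$, I would first choose conjugators $g_i \in F(X)$ such that
\[
g_i^{-1}\alpha(A_{ij})g_i = B_{ij} \qquad \text{for every } j = 1,\dots,n_i.
\]
The existence of a single $g_i$ working simultaneously for all $j$ is the content of $[\alpha(A_i)] = [B_i]$ as conjugacy classes of tuples of subgroups.

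Next, I would define $\beta$ on the generating set $X \cup K$ of $F(X,K)$ by setting $\beta|_X = \alpha|_X$ (extended trivially to $F(X,K)$, since $\alpha(X) \subset F(X) \subset F(X,K)$) and
\[
\beta(\kappa_{ij}) \;=\; g_i\,\kappa_{ij}\,g_i^{-1}.
\]
To see that $\beta$ is an automorphism of $F(X,K)$, note that the image generating set $\alpha(X) \cup \{g_i\kappa_{ij}g_i^{-1}\}$ is obtained from $X \cup K$ by applying $\alpha$ on $X$ and then conjugating each $\kappa_{ij}$ by the element $g_i \in F(X)$. Since $g_i$ lies in the image subgroup $\beta(F(X)) = F(X)$, each $\kappa_{ij}$ is recoverable as $g_i^{-1}\beta(\kappa_{ij})g_i$ from the image of $\beta$, so $\beta$ is surjective, hence an automorphism by the Hopf property of finitely generated free groups.

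Finally, I would verify entry by entry that $\beta(\hat A) = \hat B$. The easy entries are immediate: $\beta(F(X)) = \alpha(F(X)) = F(X)$; and $\beta(F(K_i)) = \langle g_i\kappa_{ij}g_i^{-1} : j \rangle = g_i F(K_i) g_i^{-1}$ is conjugate to $F(K_i)$, whence $\beta(R_i) = g_i R_i g_i^{-1}$ is conjugate to $R_i$. For the key entries $\dot A_i$, recall that $\dot A_i$ is generated (as a subgroup of $F(X,K)$) by the conjugates $\kappa_{ij} A_{ij} \kappa_{ij}^{-1}$, $j=1,\dots,n_i$, read off from the tuple-graph $\StalGraph{A_i}$. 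Applying $\beta$ to each generator gives
\[
\beta(\kappa_{ij}\,A_{ij}\,\kappa_{ij}^{-1}) \;=\; g_i\kappa_{ij}g_i^{-1}\cdot\alpha(A_{ij})\cdot g_i\kappa_{ij}^{-1}g_i^{-1} \;=\; g_i\kappa_{ij}\,B_{ij}\,\kappa_{ij}^{-1}g_i^{-1},
\]
using $g_i^{-1}\alpha(A_{ij})g_i = B_{ij}$. Therefore $\beta(\dot A_i) = g_i\,\dot B_i\,g_i^{-1}$, which is conjugate to $\dot B_i$ as required.

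There is no serious obstacle here; the only delicate point is finding the extension of $\alpha$ to $F(X,K)$ that simultaneously preserves the conjugacy classes of $F(K_i)$ and sends $\dot A_i$ to a conjugate of $\dot B_i$. The trick is to realize $\beta|_{F(K_i)}$ as the inner conjugation $\ad{g_i^{-1}}$ (restricted to $F(K_i)$) --- this is exactly the form of restriction forced on any such $\beta$ by Lemma \ref{lem;gersten-hack}, and choosing $g_i$ to be the conjugator provided by the hypothesis makes both requirements compatible in a single stroke.
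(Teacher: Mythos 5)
Your construction is essentially the paper's own: the paper defines $\beta=\beta'\circ\hat\alpha$, where $\hat\alpha$ extends $\alpha$ fixing $F(K)$ pointwise and $\beta'$ conjugates each $\kappa_{ij}$ by the tuple-conjugator in $F(X)$ (your $g_i$, up to an inverse convention), so the proposal is correct and takes the same route; the statement $\beta(\hat A)=\hat B$ is, as in the paper, to be read entrywise as equality of conjugacy classes of subgroups, which is exactly what you establish. The only difference is in the verification of $[\beta(\dot A_i)]=[\dot B_i]$: the paper argues via subdividing, folding and coring the tuple graph $\StalGraph{A_i}$, whereas you compute directly on the generators $\kappa_{ij}A_{ij}\kappa_{ij}^{-1}$ of $\dot A_i$, which is a perfectly valid (and arguably more streamlined) check.
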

    \begin{proof}
      The $\alpha$ given in the hypothesis extends to an automorphism $\hat\alpha$ of $F(X,K)$ that fixes the elements of the free factor $F(K)$ elementwise. Let $w_i \in F(x)$ be the elements such that $w_i\alpha(A_i)w_i^{-1} = B_i$.
      
      The conjugacy class $[\dot A_i]$ corresponds to the (unbased) tuple graph $\StalGraph{A_i}=\StalGraph{\dot A_i}$. Denote by $v_{i1},\ldots,v_{in_i}$ the vertices that are the basepoints of the based subgraphs $\BasedStalGraph{A_{ij}}{v_{ij}} \subset \StalGraph{A_i}$ coming from the entries $A_{ij}$ of the tuple $A_i$. Every edge of $\StalGraph{A_i}$ with a label in $X$ lies in one of these subgraphs. Applying $\hat\alpha$ to $\StalGraph{A_i}$ is achieved by subdividing every edge with label $x \in X$ to an appropriately oriented path with label $\alpha(x)$, leaving the edges  with label $K_i$ unchanged, and finally folding and coring. The same result can be achieved by a tuple graph for $(\alpha(A_{i1}),\ldots,\alpha(A_{in_i}))$. This is not quite the target tuple we want, but we can remedy this by conjugating the special symbols the $K_i$ by the appropriate elements of $F(X)$. We achieve this with the automorphism $\beta' \in \aut{F(X,K)}$ that is defined on the basis $X\cup K$ as follows:\[
      \beta'(z) = \begin{cases}
        z & \textrm{if~} z=x\in X\\
        w_i z w_i^{-1} &\textrm{if~} z=\kappa_{ij} \in K_i
      \end{cases}.
      \] The effect of first applying $\beta'$ to each $\hat\alpha(\StalGraph{A_i})$ and then folding and coring is depicted in Figure \ref{fig;gersten-hack-easy}. Thus, taking $\beta = \beta'\circ\hat\alpha$, we have $[\beta(\dot A_i)]= [\dot B_i]$. Furthermore, it is immediate from the construction that $\beta$ maps $F(X)$ to itself and the restrictions $\beta|_{F(K_i)}(g) = \ad{w_i^{-1}}(g)$. So, in particular, $\beta$ maps the $F(K_i)$ to themselves and the subgroups $R_i$ to conjugates in $F(X,K)$.
    \end{proof}
        \begin{figure}[htb]
            \centering
            \includegraphics[width=\textwidth]{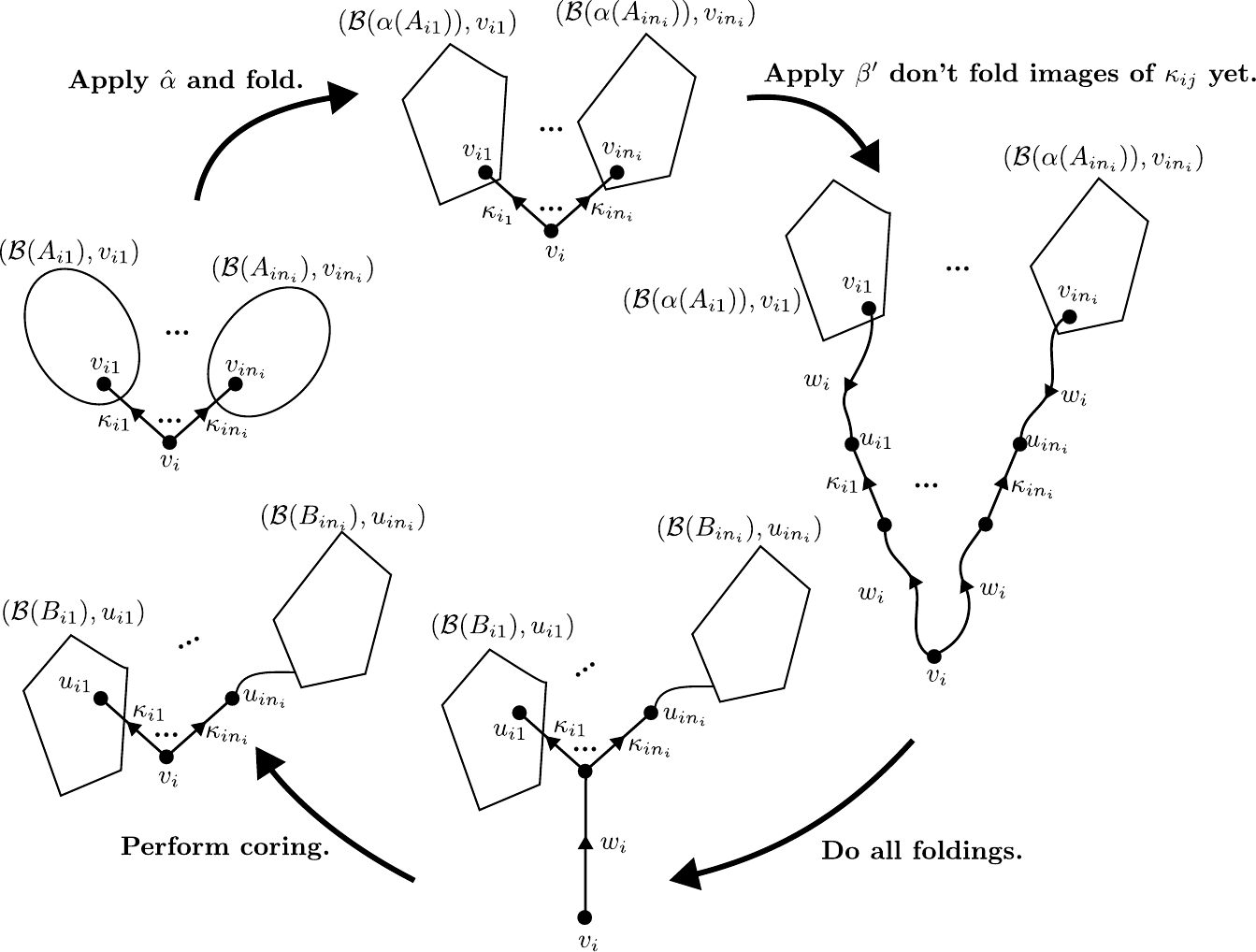}
            \caption{The effect of applying $\beta'$ to $\hat\alpha(\StalGraph{A_i})$, is shown in four steps. Each arrow is a continuous map, we use the same notation to designate a vertex and its image.}
            \label{fig;gersten-hack-easy}
        \end{figure}

    \subsubsection{From a solution in $F(X,K)$ to a solution in $F(X)$}
    
    Before stating and proving Proposition \ref{prop;rabbattre}, let us prove a lemma about foldings.

    \begin{lem}\label{lem;length-folding}
        Let $\calG$ be a directed graph with labels in $X\cup K$ that is constructed as follows. Take some $K_i$-star, whose vertices of valence 1 are denoted $u_1,\ldots,u_{n_i}$. Then take a disjoint union of directed graphs with basepoints $(\calG_1,v_1),\ldots,(\calG_{n_i},v_{n_i})$ with labels in $X$, all of which have non-trivial fundamental groups. Also, take $n_i$ copies of some path (a directed labelled graph homeomorphic to a line segment) $\gamma$ with labels in $X\cup K$ with initial vertex $u_\gamma$ and terminal vertex $v_\gamma$. Suppose furthermore that the word read along $\gamma$ is reduced. Finally from $\calG$ by joining each $\calG_i$ to the $K_i$-star by a copy of $\gamma$ by identifying $u_i$ with the copy of $u_\gamma$ and the copy of $v_\gamma$ with $v_i$. See Figure \ref{fig;length-folding}
        
        If $\gamma$ contains any edges with a label that is not in $X$, then $\calG$ cannot be brought to an unbased core graph that is the tuple graph of some tuple $(H_1,\ldots,H_{n_i})$ of subgroups of $F(X)$ by a sequence of folds or coring operations.
    \end{lem}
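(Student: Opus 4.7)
My strategy is to compute the subgroup of $F(X,K)$ represented by $\calG$ at $v_i$ in two ways and then derive a contradiction using the action on the Bass--Serre tree of the splitting $F(X,K)=F(X)\ast F(K)$, where $F(K)=F(K_1)\ast\cdots\ast F(K_r)$. First, note that the central vertex $v_i$ carries $n_i\geq 2$ outgoing edges with pairwise distinct $K_i$-labels, so its image under any composition of folds and corings still has $n_i$ distinct outgoing $K_i$-edges, and in particular $v_i$ is never removed by coring. Therefore, if $\calG$ could be reduced by folds and coring to a tuple graph $\StalGraph{(H_1,\ldots,H_{n_i})}$ with $H_j\leq F(X)$, the image of $v_i$ would have to be identified with the central vertex of that tuple graph (the only vertex with multiple outgoing $K_i$-edges). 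Since folding and coring preserve the subgroup of $F(X,K)$ represented by the based graph, this gives the equality
\[
  G \;:=\; \Asterisk_{j=1}^{n_i} \kappa_{ij}\,\tilde\gamma\, H_j'\, \tilde\gamma^{-1}\kappa_{ij}^{-1} \;=\; \Asterisk_{j=1}^{n_i} \kappa_{ij}\, H_j\, \kappa_{ij}^{-1}
\]
in $F(X,K)$, with $\tilde\gamma$ the reduced word read along $\gamma$ and $H_j'=\pi_1(\calG_j,v_j)\neq 1$; these are genuine free products inside $F(X,K)$ because, at the junctions between arms, one reads distinct $\kappa_{ij}$-letters that cannot cancel.

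Next, I would consider the action of $G$ on the Bass--Serre tree $T$ of $F(X,K)=F(X)\ast F(K)$, and let $v_X\in T$ denote the vertex with $\Stab(v_X)=F(X)$. Let $V_X(G)\subset T$ be the set of $F(X)$-type vertices fixed by some non-trivial element of $G$, together with its partition into $G$-orbits; these data are intrinsic to $G$. By Kurosh's subgroup theorem, each free product decomposition of $G$ into conjugates of subgroups of $F(X)$ furnishes a complete set of representatives for the $G$-orbits on $V_X(G)$. A key preliminary observation is that $p(G)=1$, where $p\colon F(X,K)\onto F(K)$ is the projection killing $X$: every loop at $v_i$ traverses each $K$-labelled edge of each arm with balanced orientation. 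Using $p(G)=1$, one verifies that both $\{\kappa_{ij}v_X\}_{j}$ and $\{\kappa_{ij}\tilde\gamma v_X\}_{j}$ consist of $n_i$ pairwise distinct $G$-orbits, and so there exist a permutation $\sigma$ and elements $g_j\in G$ with $g_j\kappa_{ij}v_X = \kappa_{i\sigma(j)}\tilde\gamma v_X$.

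Rewriting this last equation as $\tilde\gamma^{-1}\kappa_{i\sigma(j)}^{-1}g_j\kappa_{ij}\in \Stab(v_X)=F(X)$ and applying $p$ (using $p(g_j)=1$) yields
\[
  p(\tilde\gamma) \;=\; \kappa_{i\sigma(j)}^{-1}\kappa_{ij} \qquad \text{in } F(K_i), \quad \text{for every } j\in\{1,\ldots,n_i\}.
\]
Since the left-hand side does not depend on $j$, the reduced word $\kappa_{i\sigma(j)}^{-1}\kappa_{ij}$ (of length $0$ or $2$ in $F(K_i)$) must be constant in $j$. Because distinct ordered pairs of generators give distinct length-$2$ reduced words, this forces $\sigma=\mathrm{id}$ and $p(\tilde\gamma)=1$, i.e.\ $\tilde\gamma\in F(X)$, contradicting the hypothesis that $\gamma$ contains an edge labelled outside $X$.

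The main obstacle is the orbit-matching step: one must carefully invoke Kurosh's subgroup theorem (or the Bass--Serre analysis of $G$ acting on $T$) to justify that both free product decompositions of $G$ induce the same partition of $V_X(G)$ into $G$-orbits, even when the subgroups $H_j$ or $H_j'$ are themselves freely decomposable. Once that identification is secured, the projection computation in $F(K_i)$ that closes the argument is completely routine.
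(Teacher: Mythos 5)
Your approach (replace the folding combinatorics by Bass--Serre/Kurosh bookkeeping for $F(X,K)=F(X)*F(K)$) is genuinely different from the paper's proof, which analyses the folding sequence directly, but it has two concrete gaps. First, the opening claim that the image of the central vertex $v_i$ ``still has $n_i$ distinct outgoing $K_i$-edges'' under any folds and corings is false, and it fails exactly in the case the lemma is really about: if the first letter of $\gamma$ is $\kappa_{ij_0}^{-1}$, then the star edge labelled $\kappa_{ij_0}$ folds with the first edge of the $j_0$-th copy of $\gamma$ (identifying the root with the second vertex of that copy), the vertex $u_{j_0}$ becomes valence one, and the coring deletes it together with that $\kappa_{ij_0}$-edge; the root's image then has only $n_i-1$ of the original star edges and has been merged into the interior of $\gamma$. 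So you cannot conclude that the root maps to the central vertex of the target tuple graph, and hence you only get that the represented subgroups are \emph{conjugate} in $F(X,K)$, $G'=g\dot H g^{-1}$, not equal; the unknown $p(g)$ then contaminates your orbit equations. (Your Kurosh step also needs repair independently: an arbitrary free product decomposition of $G$ into conjugates of subgroups of $F(X)$ does not give a complete set of orbit representatives unless each factor is the full intersection $G\cap hF(X)h^{-1}$ -- e.g.\ $\langle a\rangle *\langle b\rangle=F(a,b)$ has two factors but one orbit -- so you must prove this for both decompositions, which is delicate for the conjugators $\kappa_{ij}\tilde\gamma$ when cancellation with $\kappa_{ij}$ occurs.)

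Second, even granting the on-the-nose equality, the endgame does not prove the lemma: the conclusion you reach is $p(\tilde\gamma)=1$ in $F(K)$, and this does \emph{not} imply $\tilde\gamma\in F(X)$. For instance $\tilde\gamma=\kappa_{i1}x\kappa_{i1}^{-1}$ with $x\in X$ is a reduced word containing $K$-letters whose projection to $F(K)$ is trivial, so your argument produces no contradiction for such $\gamma$, although the lemma (and its application in Proposition \ref{prop;rabbattre}, where one must conclude that the reduced word $w_i$ literally lies in $F(X)$) requires it. The obstruction is structural: the Bass--Serre/orbit data only records the double cosets $G\backslash F(X,K)/F(X)$ of the conjugators $\kappa_{ij}\tilde\gamma$, and that invariant cannot detect whether the reduced spelling of $\tilde\gamma$ uses letters from $K$. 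This letter-level information is exactly what the paper's proof extracts by following the folds of the copies of $\gamma$ across the $K_i$-star edge by edge, so some combinatorial argument of that kind (or a substantially finer algebraic invariant) is unavoidable here.
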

    \begin{proof}
      Let the word read along $\gamma$ as we read from $u_\gamma$ to $v_\gamma$ be written as \[z_1\cdots z_r,\] where $z_i \in (X\cup K)^{\pm 1}$. Suppose towards a contradiction that some $z_i \not\in X^{\pm 1}$ but that $\calG$ after folding and coring can be brought to the tuple graph of some tuple of subgroups of $F(X)$.
      
      Without loss of generality, we can assume that the last symbol $z_r$ read along $\gamma$ lies in $K$. Indeed we are supposing that there is some symbol labelling an edge of $\gamma$ that is in $K$. We can redefine the graphs $(\calG_1,v_1)$, by adjoining the maximal terminal segments of the copies of $\gamma$ only containing symbols from $X$ and redefining $\gamma$ so that it is the minimal initial segment containing all symbols in $K$. The resulting graph is exactly the same. One of the properties of a tuple graph is that the only edges with labels in $K$ are the ones contained in the $K_i$-star. By hypothesis, the paths that are copies of $\gamma$ contain edges with labels in $K$, there must therefore be some folding to identify all of these edges with the edges in the $K_i$-star.
      
      We start by performing folds within the graphs $\calG_i$. Since the terminal edge in $\gamma$ has a label in $K$, there is no possible folding between the edges in the $\calG_i$s and edges in terminal segments of copies of $\gamma$. We may then proceed to coring. At this point we can now assume that the only possible foldings occur between the initial edge of some copy of $\gamma$ and some edge of the $K_i$-star. 
      
      Without loss of generality, we may assume that $z_1 = \kappa_{in_i}^{-1}$ and this means that at this point there is at most one pair of edges that can be folded. We will now consider the folding process step-by-step. Denote by $\gamma_i$ the copy of $\gamma$ attached to $u_i$, then we first fold the first edge of $\gamma_{n_i}$ with the edge of the $K_i$-star labelled $\kappa_{in_i}$. If there are no other possible folds then the resulting graph is not a tuple graph as required, so we must continue folding. Without loss of generality $z_1=\kappa_{i1}$, and the terminal segment of $\gamma_{n_i}$ starts being partially identified with an initial segment of $\gamma_1$ this gives equalities 
      \begin{equation}\label{eq;gamma-fold}
        z_3=z_1, z_4=z_2,\ldots          
      \end{equation}
     for each edge of $\gamma_{n_i}$ that gets folded with an edge of $\gamma_1$. Note, however, that \eqref{eq;gamma-fold} implies that if $\gamma^+$ is the initial segment of $\gamma_{1}$ consisting of edges that get identified with edges of $\gamma_{n_i}$. If $\gamma^+$ has label\[
     z_1\cdots z_\ell
     \] then we have \[
     z_i = \begin{cases}
        {\kappa_{in_i}}^{-1} & \textrm{if $i$ is odd}\\
        \kappa_{i1} & \textrm{if $i$ is even.}
     \end{cases}\]
     for all $1\leq i\leq \ell+2$. Furthermore, by choice of $\ell$ we must have that $z_{\ell+3} \neq z_{\ell+1}$, which includes the possibility that $z_{\ell+3}$ does not exist (which would happen if the terminal segment of $\gamma_{n_i}$ completely folded into $\gamma_1$ and $|\gamma|=\ell+2$.)
     
     Suppose now that we have folded up to identifying the $\ell^{\mathrm{th}}$ edge in $\gamma_1$ (labelled by $z_\ell$) up with the $(\ell+2)^{\mathrm{th}}$ edge of $\gamma_{n_i}$. There are two possibilities:
     \begin{itemize}
         \item If $z_{\ell+3}$ exists, i.e. $|\gamma|\geq \ell+3$, then there are no more possible folding operations and the resulting graph cannot be a tuple graph (see Figure \ref{fig;length-folding}).
         \item If $z_{\ell+3}$ does not exist, i.e. the terminal segment of $\gamma_{n_i}$ has completely folded into $\gamma_1$, then we are in a situation where the graph obtained by folding and coring $\calG_{n_i}$ is attached to a vertex in $\gamma_1$ between edges with labels $z_\ell$ and $z_\ell+1$, both of which lie in $K^{\pm 1}$, and there is no more possible folding. The resulting graph cannot be a tuple graph either.
     \end{itemize}
     In all cases, we arrive at a contradiction so the result holds.
    \end{proof}
\begin{figure}[htb]
    \centering
    \includegraphics[width=0.66\textwidth]{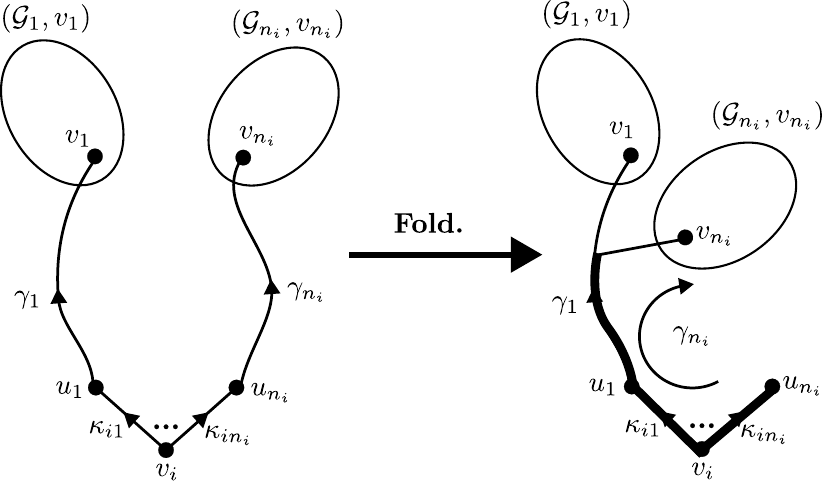}
    \caption{The possibly unfolded graph $\calG$ of Lemma \ref{lem;length-folding} and attempting to fold it down to a tuple graph. The thickened segment represents the edges that got folded with edges of $\gamma_{n_i}.$}
    \label{fig;length-folding}
\end{figure}
    
    We may now prove the main proposition of this subsection.
    
    \begin{prop}\label{prop;rabbattre}
     Let $A, B$ be two tuples of conjugacy classes of tuples of subgroups of the free group $F(X)$, and let $F(X,K)$ be the extension as above, and $\dot A_i, \dot B_i$ the corresponding groups of tuple graphs (they are subgroups of $F(X,K)$). 
     
    Assume that $\beta \in \aut{F(X,K)}$ is such that 
    \begin{itemize}
        \item $\beta(F(X)) = F(X)$,
        \item for all $i$, $[\beta(F(K_i))] = [F(K_i)]$ and $[\beta(R_i)]= [R_i]$,
        \item for all $i$, $[\beta(\dot A_i)] = [\dot B_i]$. 
        
    \end{itemize}
        Then $\beta|_{F(X)}: F(X) \to F(X)$ is an automorphism that sends the $F(X)$-conjugacy class (of tuples of subgroups) $[A_i]$ to $[B_i]$. 
    \end{prop}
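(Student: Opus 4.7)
The plan is to combine Lemma \ref{lem;gersten-hack}, Lemma \ref{lem;length-folding}, and a uniqueness property of unbased tuple-graphs in order to extract the desired conjugacy statement in $F(X)$. Set $\alpha := \beta|_{F(X)}$, which is an automorphism of $F(X)$ by the first hypothesis $\beta(F(X))=F(X)$, giving immediately the first part of the conclusion. By Lemma \ref{lem;gersten-hack} combined with the rigidity hypothesis $[\beta(R_i)]=[R_i]$, we may write $\beta|_{F(K_i)} = \ad{u_i}$ for some $u_i \in F(X,K)$. It then suffices to show that in fact $u_i \in F(X)$ and that $u_i\alpha(A_{ij})u_i^{-1} = B_{ij}$ for every $j$; conjugating by $u_i^{-1} \in F(X)$ will yield $[\alpha(A_i)]=[B_i]$.

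The first step is to compute the Stallings graph of $\beta(\dot A_i)$ using steps \ref{it:subdivide}--\ref{it:core} of the construction recalled at the start of Section \ref{sec;hack}. Every edge of $\StalGraph{A_i}$ labelled $x\in X$ is subdivided into a path spelling $\alpha(x)$, and every edge labelled $\kappa_{ij}\in K_i$ into a path spelling $\beta(\kappa_{ij})=u_i^{-1}\kappa_{ij}u_i$ in reduced form. After performing the folds at the central vertex $v_i$ (where all $n_i$ outgoing paths start with the common prefix $u_i^{-1}$, in the generic case where $u_i$ does not begin or end with a letter in $K_i^{\pm 1}$, the exceptional case being handled by a parallel case analysis), one obtains an intermediate graph consisting of a dangling path from $v_i$ to a new vertex $v_i'$ spelling $u_i^{-1}$ (attached only at $v_i'$, hence eventually cored away), together with the graph produced by attaching a $K_i$-star centred at $v_i'$ whose $n_i$ leaves are each joined by a path spelling $u_i$ to the $\alpha$-subdivided based core graph of $A_{ij}$. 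This latter graph is precisely the configuration of Lemma \ref{lem;length-folding} (with $\gamma=u_i$, and with the $\calG_j$ the $\alpha$-subdivided graphs of the $A_{ij}$, which have only $X$-labels and --- provided each $A_{ij}$ is non-trivial --- non-trivial fundamental group). Since the whole folding-and-coring process must produce the Stallings core graph of $\dot B_i$, namely the tuple-graph $\StalGraph{B_i}$, Lemma \ref{lem;length-folding} forces $u_i \in F(X)$.

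With $u_i\in F(X)$ established, every $u_i\alpha(A_{ij})u_i^{-1}$ is a subgroup of $F(X)$, so $\dot{(u_i\alpha(A_i)u_i^{-1})}$ is well defined as the tuple-graph-subgroup of a tuple of subgroups of $F(X)$. A direct computation gives $\beta(\dot A_i) = u_i^{-1}\,\dot{(u_i\alpha(A_i)u_i^{-1})}\,u_i$, so by hypothesis $\dot{(u_i\alpha(A_i)u_i^{-1})}$ is $F(X,K)$-conjugate to $\dot B_i$. Their unbased Stallings core graphs --- the tuple-graphs $\StalGraph{(u_i\alpha(A_i)u_i^{-1})}$ and $\StalGraph{B_i}$ --- therefore coincide as labelled unbased graphs. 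Because $n_i=|A_i|\geq 2$, the central vertex of each tuple-graph is uniquely characterised as the only vertex with $n_i$ distinct outgoing edges labelled by all elements of $K_i$; the graph isomorphism must therefore send central to central, leaf to corresponding leaf along matching $\kappa_{ij}$-edges, and restrict to based label-isomorphisms between the attached based core graphs. This forces $u_i\alpha(A_{ij})u_i^{-1}=B_{ij}$ for every $j$, and hence $[\alpha(A_i)]=[B_i]$ in $F(X)$.

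The main obstacle is setting up the intermediate graph correctly so that Lemma \ref{lem;length-folding} applies: one must first fold the common $u_i^{-1}$-prefix at $v_i$ and verify that the resulting dangling path is genuinely cored away without being reinforced by propagated folds, which requires the small case analysis concerning whether the extremal letters of $u_i$ lie in $K_i^{\pm 1}$. A subsidiary point is the case where some $A_{ij}$ is trivial, which makes the corresponding $\calG_j$ a single vertex and prevents a direct application of Lemma \ref{lem;length-folding}; this is handled either by padding each trivial entry with a fixed rigid non-trivial finitely generated subgroup of $F(X)$ (tracking its preserved orbit throughout), or by a minor strengthening of Lemma \ref{lem;length-folding}.
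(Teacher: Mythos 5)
Your argument is correct, and its first half is the same as the paper's: you extract $\alpha=\beta|_{F(X)}$, use rigidity of $R_i$ to write $\beta|_{F(K_i)}=\ad{u_i}$ (the paper's $w_i=k_ix_i$), and then force $u_i\in F(X)$ by observing that the graph presenting (a conjugate of) $\beta(\dot A_i)$ is exactly the configuration of Lemma \ref{lem;length-folding} with $\gamma=u_i$, while its folded core must be the tuple graph $\StalGraph{B_i}$ — this is precisely how the paper uses that lemma, and your hedged case analysis about extremal letters of $u_i$ is largely superfluous, since the proof of Lemma \ref{lem;length-folding} already absorbs cancellation of initial $K$-letters of $\gamma$ into the star and of terminal $X$-segments into the $\calG_j$ (alternatively, apply the lemma directly to the obvious generating graph of $u_i\beta(\dot A_i)u_i^{-1}=\langle \kappa_{ij}\,u_i\alpha(A_{ij})u_i^{-1}\,\kappa_{ij}^{-1}\rangle$, avoiding the subdivision of $\StalGraph{A_i}$ altogether). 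Where you genuinely diverge is the endgame: the paper proves, by a second hands-on folding argument (the inner lemma, which analyses a putative conjugator $y$ via the reduced words $y\kappa_{ij}c\kappa_{ij}^{-1}y^{-1}$ and uses the convention $|A_i|\geq 2$), that $\beta(\dot A_i)=(\dot B_i)^{w_i}$ with the explicit conjugator $w_i$; you instead never touch the conjugator and argue that $\dot{(u_i\alpha(A_i)u_i^{-1})}$ and $\dot B_i$, being conjugate, have the same unbased core graph, which is a tuple graph, and a tuple graph determines its ordered tuple, whence $u_i\alpha(A_{ij})u_i^{-1}=B_{ij}$. This is a softer and arguably cleaner route, resting on the bijection between conjugacy classes and unbased core graphs together with the ``key fact'' the paper asserts in its setup; its cost is that the tuple graphs must genuinely be their own cores, which uses $n_i\geq 2$ and non-triviality of the entries $A_{ij}, B_{ij}$ — but that non-triviality is equally implicit in the paper (Lemma \ref{lem;length-folding} assumes the $\calG_j$ have non-trivial fundamental group, and the inner lemma needs a non-trivial $c\in C_{ij}$), so flagging it and proposing the padding fix is a point in your favour rather than a gap.
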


    Observe that  $\beta|_{F(X)}$ is indeed an automorphism of $F(X)$ by assumption. 
    
    \begin{proof}
    Proving the proposition amounts to proving that the element that conjugates $\beta(\dot A_i)$ to 
    $\dot B_i$ can be taken in $F(X)$. 

    For each index $i$, there is a right coset $F(K_i)x_i$  such that $\beta(F(K_i))= F(K_i)^{x_i} $. 
    
    By the rigidity property of $R_i$, one has actually an element $ k_i \in F(K_i)$ such that, for all index $j$, $\beta(\kappa_{ij}) = \kappa_{ij}^{k_ix_i}$. Let us write $w_i=k_ix_i$.

    Applying Lemma \ref{lem;length-folding} to $\gamma = w_i$ and $\calG=\StalGraph{B_i}$, we obtain that    $w_i\in F(X)$.  
    
    It is therefore sufficient to prove that  $\beta(\dot A_i)= (\dot B_i)^{w_i}$. 
    This is the next lemma.

        \begin{lem}
             $\beta(\dot A_i)= (\dot B_i)^{w_i}$.
        \end{lem}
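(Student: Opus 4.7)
The plan is to combine the explicit free-product decomposition of $\dot A_i$ (coming from the wedge-of-core-graphs structure of the tuple graph) with the fact that the central vertex $v_i$ is intrinsically identifiable inside the unbased core graph.

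First I would record that since $\StalGraph{A_i}$ is the $K_i$-star wedged with the based core graphs $\BasedStalGraph{A_{ij}}{u_{ij}}$ at its leaves, a standard Bass--Serre / van Kampen computation gives the free-product decomposition inside $F(X,K)$
\[
\dot A_i \;=\; \kappa_{i1}A_{i1}\kappa_{i1}^{-1} \,*\, \cdots \,*\, \kappa_{in_i}A_{in_i}\kappa_{in_i}^{-1},
\]
and analogously for $\dot B_i$ with the $B_{ij}$. This representation will be the bookkeeping device that lets us compare subgroups factor by factor.

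Next I would apply $\beta$ to this decomposition. By Lemma~\ref{lem;gersten-hack} and the paragraph preceding the current lemma, $\beta$ preserves $F(X)$ and satisfies $\beta(\kappa_{ij}) = \kappa_{ij}^{w_i} = w_i^{-1}\kappa_{ij}w_i$ with the single element $w_i\in F(X)$. Setting $C_{ij} := w_i\,\beta(A_{ij})\,w_i^{-1} \leq F(X)$ and computing term by term, the $w_i$'s factor out cleanly on both sides, yielding
\[
\beta(\dot A_i) \;=\; w_i^{-1}\Bigl(\kappa_{i1}C_{i1}\kappa_{i1}^{-1} \,*\, \cdots \,*\, \kappa_{in_i}C_{in_i}\kappa_{in_i}^{-1}\Bigr)\,w_i \;=\; (\dot C_i)^{w_i},
\]
where $\dot C_i$ is the subgroup associated to the tuple graph $\StalGraph{C_i}$ for the tuple $C_i = (C_{i1},\ldots, C_{in_i})$. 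Hence it suffices to prove $\dot C_i = \dot B_i$ as subgroups of $F(X,K)$, since by the free-product form this would force $C_{ij} = B_{ij}$ for each $j$, and thus $\beta(\dot A_i) = (\dot B_i)^{w_i}$.

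The main obstacle is this last rigidity step, which I would handle graph-theoretically. From $[\beta(\dot A_i)]=[\dot B_i]$ we get $[\dot C_i]=[\dot B_i]$, so the unbased core graphs of $\dot C_i$ and $\dot B_i$ coincide as labelled directed graphs. In such an unbased tuple graph, the vertex $v_i$ is the \emph{unique} vertex at which $n_i$ edges labelled by distinct symbols of $K_i$ are incident: edges with labels in $K_i$ appear only in the star part of the construction, each $\kappa_{ij}$ occurs exactly once (by foldedness), the leaves $u_{ij}$ have only one incident $K_i$-edge each, and $v_i$ has all $n_i$ of them. Therefore any graph isomorphism of the unbased core graph must fix $v_i$. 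Since $\dot C_i$ and $\dot B_i$ are precisely the subgroups corresponding to this common core graph based at $v_i$, the conjugator between them corresponds to a loop at $v_i$, hence already lies in $\dot B_i$, and $\dot C_i = \dot B_i$ follows. This closes the argument.
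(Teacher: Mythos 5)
Your proof is correct, and close in spirit to the paper's, but packages the decisive step at a higher level of abstraction. The paper works directly with the conjugator: it picks $y$ realizing $(\kappa_{ij}C_{ij}\kappa_{ij}^{-1})^y \leq \dot B_i$ for all $j$, uses the convention $|A_i|\geq 2$ to choose $j$ with $y\kappa_{ij}$ reduced, and traces the words $y\kappa_{ij}c\kappa_{ij}^{-1}y^{-1}$ by folding on $\StalGraph{B_i}$, noting that the only vertex where the $\kappa_{ij}$-edge can be read is the star-center $v_i$, hence $y\in\dot B_i$. You instead invoke the uniqueness of the unbased core graph for the conjugacy class $[\dot C_i]=[\dot B_i]$ together with the graph-theoretic distinguishability of $v_i$ to force the based core graphs, and hence the subgroups $\dot C_i$ and $\dot B_i$, to coincide outright. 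The underlying structural fact is identical — the paper's ``the only vertex on which the next edge labelled $\kappa_{ij}$ can fold'' is exactly your characterization of $v_i$ — and both arguments genuinely need $|A_i|\geq 2$ (for $n_i = 1$ the star-center has valence one and the wedge is not a core graph, so the distinguished vertex is ill-defined). Your route is arguably cleaner, avoiding the choice of conjugator and the reduced-word bookkeeping, at the cost of explicitly leaning on the subgroup/core-graph dictionary; also note that, like the paper's proof, yours implicitly assumes the subgroups $A_{ij}$ (hence $B_{ij},C_{ij}$) are nontrivial so that the tuple graph is a genuine folded core graph without pendant vertices. The free-product decomposition you record is a nice additional observation but not strictly needed: the reduction to $\dot C_i = \dot B_i$ is already immediate from $\beta(\dot A_i) = (\dot C_i)^{w_i}$ without extracting the factors $C_{ij}=B_{ij}$.
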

        \begin{proof}
        Knowing that $w_i\in F(X) $, we can consider the image of $\dot A_i$: it is generated by the groups $w_i^{-1}\kappa_{ij}w_i\beta(A_j) w_i^{-1}\kappa_{ij}^{-1} w_i$. We also know that $\beta(A_j) <F(X)$ since $\beta$ preserves $F(X)$.
        
        Let $g_{i,j,k}$ ($k$ ranging in some set $S_{i,j}$) be generators of $w_i^{-1}\kappa_{ij}w_i\beta(A_j) w_i^{-1}\kappa_{ij}^{-1} w_i$.
        
        All the considered generators $g_{i,j,k}$  are in $F(X)^{\kappa_{ij}^{-1} w_i}$ (for various $j$).  Foldings in the word $g_{i,j,k} (\neq 1)$ will never cancel the two letters 
        $\kappa_{ij}$ and $\kappa_{ij}^{-1}$ since all other letters are in $X$, and thus reveal a common prefix $w_i^{-1}$ and suffix $w_i$.
        
        Conjugating  $\beta(\dot A_i)$ by $w_i^{-1}$ produces a group generated by the $\kappa_{ij} C_{ij} \kappa_{ij}^{-1}$  (for $j$ ranging in the indices of the tuple $A_i$) with $C_{ij} <F(X)$.
        We must show that the generated group is $\dot B_i$.
        
         By assumption on $\beta$, each $\kappa_{i,j} C_{ij} \kappa_{i,j}^{-1}$  must be in a conjugate of $\dot B_i$ and their union generates this conjugate. Thus, let $y$ be such that for all $j$,  $ C_{ij}^{\kappa_{ij}^{-1}y^{-1}} <\dot B_i$, and the union (over $j$) generate $\dot B_i$. 
        
        Recall that by convention,  there are at least two indices $j$ in the tuple $A_i$. Write $y$ as a reduced word over $X\cup K$, and select $j$ such that $y\kappa_{ij}$ is reduced (there exists such $j$). Then considering reduced words $c\in C_{ij}$ (in $X$ only), the  words $y\kappa_{ij}c\kappa_{ij}^{-1}y^{-1}$ are reduced and are all in $\dot B_i$. Folding them on the graph $\StalGraph{B_i}$, it easily follows that $y\in \dot B_i$ (i.e. its folding finishes back to the vertex $v_i$, the only vertex on which the next edge labelled $\kappa_{ij}$ can fold). Hence  $ C_{ij}^{\kappa_{ij}^{-1}} <\dot B_i$, and the union over $j$ generate $\dot B_i$. In other words  $\beta(\dot A_i) =(\dot B_i)^{w_i}$, which we wanted to show.

        \end{proof}
        
    \end{proof}
   
   Propositions \ref{prop;gersten-hack-easy} and \ref{prop;rabbattre}  make the following an application of Gersten's theorem \ref{thm:gersten} (first point). 
   
   \begin{cor}\label{coro;new_ger_orb}
     Let $F$ be a free group, then given two tuples of conjugacy classes of tuples of finitely generated subgroups, one can decide whether an automorphism of $F$ sends the first to the second.
   \end{cor}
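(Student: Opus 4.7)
The plan is to reduce the decision problem on \emph{tuples of conjugacy classes of tuples} of subgroups in $F = F(X)$ to Gersten's classical decision problem on a single tuple of conjugacy classes of subgroups, but in the enlarged free group $F(X,K)$. Given the input $A = ([A_1],\dots,[A_r])$ and $B = ([B_1],\dots,[B_r])$, I first introduce auxiliary disjoint alphabets $K_i$ with $|K_i| = |A_i| = |B_i|$, set $K = K_1 \sqcup \cdots \sqcup K_r$, and form the tuple-graphs $\StalGraph{A_i}$ and $\StalGraph{B_i}$ and their associated subgroups $\dot A_i, \dot B_i \leq F(X,K)$. Then I build the enlarged tuples
\[
\hat A = (\dot A_1,\dots,\dot A_r,F(X),F(K_1),\dots,F(K_r),R_1,\dots,R_r), \qquad \hat B \text{ analogously},
\]
with $R_i \leq F(K_i)$ rigid (take $R_i$ generated by a $C$-test element as in \cite{ivanov_Ctest_1998}).

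Next I run Gersten's algorithm (Theorem~\ref{thm:gersten}(\ref{it:gersten-orbit-G})) on $\hat A$ and $\hat B$ in $\aut{F(X,K)}$ to decide whether some $\beta \in \aut{F(X,K)}$ satisfies $[\beta(\hat A)] = [\hat B]$ entry by entry. If the answer is NO, then Proposition~\ref{prop;gersten-hack-easy} — which shows that any $\alpha \in \aut{F(X)}$ realizing $[\alpha(A_i)] = [B_i]$ extends to such a $\beta$ — forces the original answer to be NO as well. If the answer is YES with witness $\beta$, then Lemma~\ref{lem;gersten-hack}, together with the constraint that $\beta$ preserves the conjugacy classes $[F(X)]$ and each $[F(K_i)]$ and, by rigidity of $R_i$, acts as an inner automorphism on each $F(K_i)$, lets me post-compose $\beta$ with an inner automorphism of $F(X,K)$ so that the adjusted $\beta$ restricts to a genuine automorphism of $F(X)$. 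Proposition~\ref{prop;rabbattre} then guarantees that this restriction $\beta|_{F(X)} \in \aut{F(X)}$ realizes $[\beta|_{F(X)}(A_i)] = [B_i]$ in $F(X)$, giving a YES answer to the original question.

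The main substantive obstacle — already handled in the paper — is ensuring that a positive answer in $F(X,K)$ descends to a positive answer in $F(X)$. The two ingredients that make this work are the rigidity of $R_i$, which pins down $\beta|_{F(K_i)}$ up to inner automorphism so that each $\beta(\kappa_{ij})$ is a conjugate of $\kappa_{ij}$ by a single element $w_i$, and Lemma~\ref{lem;length-folding}, which rules out any non-$X$ letters in these conjugators $w_i$ by showing that otherwise the folded-cored tuple-graph encoding could not match $\StalGraph{B_i}$. Once $w_i \in F(X)$ is secured, a short direct argument in the final lemma of the proof of Proposition~\ref{prop;rabbattre} shows $\beta(\dot A_i) = (\dot B_i)^{w_i}$, so $\beta|_{F(X)}$ takes $[A_i]$ to $[B_i]$. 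Since every step — constructing $\hat A$ and $\hat B$, running Gersten's algorithm, extracting the restriction — is effective, this yields the desired decision procedure.
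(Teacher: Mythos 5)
Your proposal is correct and matches the paper's approach: the paper's own proof of this corollary is simply the observation that Propositions~\ref{prop;gersten-hack-easy} and~\ref{prop;rabbattre} reduce the question to an instance of Gersten's Theorem~\ref{thm:gersten}(\ref{it:gersten-orbit-G}) in $F(X,K)$. You have spelled out the same reduction, including the necessary post-composition by an inner automorphism (licensed by Lemma~\ref{lem;gersten-hack}) to put $\beta$ in the exact form required by Proposition~\ref{prop;rabbattre}, and correctly identified the rigidity of $R_i$ and Lemma~\ref{lem;length-folding} as the technical ingredients making the descent from $F(X,K)$ to $F(X)$ work.
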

   
    \subsubsection{From generators of stabilizers in $\aut {F(X,K)}$ to generators of stabilizers in $\aut{F(X)}$}
    
    \newcommand{\SAFF}{{\rm Stab}_{\aut{F(X,K)}}(F(X))}

    \newcommand{\SAFXKB}{{\rm Stab}_{\aut{F(X,K)}}(\hat B)}
    

     \newcommand{\SAFB}{{\rm Stab}_{\aut{F(X)}}(B)}
     
    \newcommand{\SSABFX}{{\rm Stab}_{{\rm Stab}_{\aut{F(X,K)}}(\hat B)}(F(X))}
    
    
%
    
     Let $B$ be a tuple of conjugacy classes of tuples of subgroups of the free group $F(X)$, and let $F(X,K)$ and $\hat B$ be the extensions as above ($\hat B$ is a tuple of conjugacy classes of subgroups containing $F(X)$, whose list is in equation \ref{def;hatB}).
     
     We are interested in $\SAFXKB$ 
     (which we can compute by Theorem \ref{thm:gersten}), and its relation with  $\SAFB$. 
     
     As an auxiliary tool, we will consider \[\SSABFX
     <\SAFXKB.\]

    \begin{prop}\label{prop;rebalancer}
    
      Let $B$ be a tuple of conjugacy classes of tuples of subgroups of the free group $F(X)$, and let $F(X,K)$ and $\hat B$ be the extensions as above ($\hat B$ is a tuple of conjugacy classes of subgroups). 
      
      Let $\{\beta_j, \;  j\in J \}$ be a generating set of $\SAFXKB$, that contains the elements ${\rm ad}_x, x\in X$. 
      
      Then for all $j\in J$, there exists $\gamma_j \in F(X,K)$ such that 
      \[{\rm ad}_{\gamma_j} \circ \beta_j \in \SSABFX   < \SAFF, \]
      
      and $\{ {\rm ad}_{\gamma_j} \circ \beta_j,\; j\in J\}$ is a generating set of
       $\SSABFX$. 
       
    \end{prop}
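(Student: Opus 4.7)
The plan is to extract from Lemma \ref{lem;gersten-hack} the structural constraint on each $\beta_j$, namely that $\beta_j(F(X))$ is a conjugate of $F(X)$ in $F(X,K)$. This provides a canonical choice of $\gamma_j$. A commutator rewriting in the generating set of $\SAFXKB$ will then show that the renormalized elements $\tilde\beta_j := \ad{\gamma_j} \circ \beta_j$ generate $\SSABFX$.

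For the choice of $\gamma_j$: for each $j \in J$, Lemma \ref{lem;gersten-hack} provides $w_j \in F(X,K)$ with $\beta_j(F(X)) = w_j^{-1} F(X) w_j$. Setting $\gamma_j = w_j^{-1}$, the map $\tilde\beta_j = \ad{\gamma_j}\circ\beta_j$ satisfies $\tilde\beta_j(F(X)) = F(X)$ and so lies in $\SSABFX$. For the generators of the form $\beta_j = \ad{x}$ with $x\in X$, which already stabilize $F(X)$, one may take $\gamma_j = 1$, obtaining $\tilde\beta_j = \ad{x}$.

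For generation: given $\sigma \in \SSABFX$, express it as a word $\sigma = \beta_{j_1}^{\epsilon_1} \cdots \beta_{j_n}^{\epsilon_n}$ in the original generators. Substituting $\beta_j = \ad{\gamma_j^{-1}} \circ \tilde\beta_j$ and $\beta_j^{-1} = \tilde\beta_j^{-1}\circ\ad{\gamma_j}$, then migrating every inner-automorphism factor to the left using the rule $\tilde\beta^{\pm 1} \circ \ad{\gamma} = \ad{\tilde\beta^{\pm 1}(\gamma)} \circ \tilde\beta^{\pm 1}$, one obtains a decomposition $\sigma = \ad{u} \circ \Sigma$, where $\Sigma$ is an explicit word in the $\tilde\beta_j^{\pm 1}$ and $u \in F(X,K)$. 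Since both $\sigma$ and $\Sigma$ stabilize $F(X)$, so does $\ad{u}$, which means that $u$ normalizes $F(X)$ inside $F(X,K) = F(X) * F(K)$. A non-trivial free factor of a free product is self-normalizing (two distinct vertices of the Bass--Serre tree cannot both be fixed by $F(X)$ once $F(K)$ is non-trivial), so $u \in F(X)$. Writing $u = x_{i_1}^{\eta_1} \cdots x_{i_k}^{\eta_k}$ as a word in $X^{\pm 1}$ and using the convention $\ad{\gamma_1}\circ\ad{\gamma_2} = \ad{\gamma_2\gamma_1}$ gives $\ad{u} = \ad{x_{i_k}}^{\eta_k}\cdots\ad{x_{i_1}}^{\eta_1}$; since each $\ad{x}$ is one of the chosen $\tilde\beta_j$'s, $\sigma = \ad{u}\circ\Sigma$ is a word in the $\tilde\beta_j$'s, proving that $\{\tilde\beta_j\}_{j\in J}$ generates $\SSABFX$.

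The only delicate point is the bookkeeping in migrating inner automorphisms across the $\tilde\beta_j^{\pm 1}$'s (signs, left/right convention), and the invocation of the self-normalizing property of $F(X)$ in $F(X)*F(K)$, which is the conceptual linchpin of the argument: without it, $u$ could fail to lie in $F(X)$ and the reduction to $\ad{x}$'s would collapse.
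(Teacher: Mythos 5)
Your proposal is correct and follows essentially the same route as the paper's proof: renormalise each $\beta_j$ by an inner automorphism using the fact that it must send $F(X)$ to a conjugate, rewrite an arbitrary element of $\SSABFX$ as an inner automorphism composed with a word in the renormalised generators, identify the leftover conjugator as an element of $F(X)$ (the paper cites malnormality of the free factor, you use self-normalisation via the Bass--Serre tree --- equivalent facts), and absorb it using the generators $\ad{x}$, $x\in X$. The only cosmetic differences are your appeal to Lemma \ref{lem;gersten-hack} where the paper argues directly from the definition of $\hat B$, and the implicit (trivially true) observation that post-composing with inner automorphisms does not affect membership in $\SAFXKB$.
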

    
    Observe that the condition that the given generating set contains the elements ${\rm ad}_x, x\in X$ is not restrictive since all these automorphisms preserve trivially $\hat B$, since the latter is a tuple of conjugacy classes.
    
    \begin{proof}
        By definition of $\hat B$, the automorphisms of $F(X,K)$ preserving $\hat B$ must preserve the conjugacy class of $F(X)$. 
        Therefore there exists $\gamma_j$ such that ${\rm ad}_{\gamma_j} \circ \beta_j \in \SAFF$. 
        We may take $\gamma_j = 1$ for all the $\beta_j$ that are of the form ${\rm ad}_x, x\in X$.

        Moreover, as we recalled, all inner automorphisms of $F(X,K)$ preserve trivially $\hat B$. It follows that 
        \[{\rm ad}_{\gamma_j} \circ \beta_j \in \SSABFX
        < \SAFF. 
        \]
        
        It remains to prove that the obtained automorphisms $\{ {\rm ad}_{\gamma_j} \circ \beta_j, \; j \in J \}$ generate the group 
        $\SSABFX$. 
        
        Let $\alpha_j =  {\rm ad}_{\gamma_j} \circ \beta_j$, to ease notations. 
        
        Consider an element  $w$ of $\SSABFX $.
        Since $\{\beta_j, \;  j\in J \}$ is a generating set of $\SAFXKB$,  
        we may write $w$ as \[w = \beta_{a_1}\beta_{a_2} \dots \beta_{a_s}\] with $a_i \in J$ for all $i$. Consider than $w' = \alpha_{a_1}\alpha_{a_2} \dots \alpha_{a_s} $. Substituting $\alpha_{a_i}$ by ${\rm ad}_{\gamma_{a_i}} \circ \beta_{a_i}$ and rearranging, it is elementary that there exists $\gamma$ such that $w' = {\rm ad}_\gamma \circ w$. 
        
        On one hand, $w'(F(S)) = F(S)$ because it is a composition of the  $\alpha_j$. On the other hand, $w(F(S)) = F(S)$ by assumption on $w$. It follows that $ {\rm ad}_\gamma$ preserves $F(S)$. However, since $F(S)$ is malnormal in $F(S,K)$, we can conclude that $\gamma \in F(S)$.  
        
        Since ${\rm ad}_x, x\in X$ are elements of the given generating set,  by our choice of $\gamma_j$, they are also elements of  $\{ \alpha_j, \; j\in J\}$. It is therefore possible to write 
         ${\rm ad}_\gamma$ as a word in the $\alpha_j$, and finally ${\rm ad}_{\gamma^{-1}}  \circ w' \in \langle \alpha_j, \; j\in J \rangle$.  Since ${\rm ad}_{\gamma^{-1}}  \circ w' = w$ we have proved that $w\in \langle \alpha_j \; j\in J \rangle$.
         
         We therefore have established the equality  \[\SSABFX 
         =  \langle \alpha_j \; j\in J \rangle.\] 
        
    \end{proof}
    
    Observe now the following.
    \begin{lem} \label{lem;back_in_FX}
    If  $w\in \SSABFX$, 
    then \[w|_{F(X)} \in \SAFB.\]

    If $w_0 \in \SAFB$, 
    then there exists 
     $w\in \SSABFX$,  
     for which $w|_{F(X)} = w_0$.
     
     \end{lem}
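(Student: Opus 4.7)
\textbf{Proof proposal for Lemma \ref{lem;back_in_FX}.}

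The plan is to handle the two implications separately, each being essentially a direct consequence of a proposition already proved in this subsection.

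For the forward implication, suppose $w \in \SSABFX$. By definition, $w$ is an automorphism of $F(X,K)$ which preserves $F(X)$, preserves the conjugacy class of every $F(K_i)$ and of every $R_i$ (these appear in $\hat B$), and satisfies $[w(\dot B_i)] = [\dot B_i]$ in $F(X,K)$ for each $i$. These are exactly the hypotheses of Proposition \ref{prop;rabbattre} applied with $A = B$ (in the notation of that proposition, $\dot A_i = \dot B_i$). That proposition then yields that $w|_{F(X)}$ is an automorphism of $F(X)$ sending the $F(X)$-conjugacy class $[B_i]$ of the tuple of subgroups to itself, for each $i$. In other words, $w|_{F(X)} \in \SAFB$.

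For the backward implication, I would simply re-run the construction of Proposition \ref{prop;gersten-hack-easy} with the input data $\alpha = w_0$ and $A_i = B_i$. Since $w_0 \in \SAFB$, for each $i$ there is an element $w_i \in F(X)$ such that $w_i\, w_0(B_i)\, w_i^{-1} = B_i$ as ordered tuples of subgroups of $F(X)$. Let $\hat w_0 \in \aut{F(X,K)}$ be the extension of $w_0$ fixing every element of $K$, and let $\beta' \in \aut{F(X,K)}$ be defined on the basis $X \cup K$ by $\beta'(x) = x$ for $x \in X$ and $\beta'(\kappa_{ij}) = w_i \kappa_{ij} w_i^{-1}$. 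Set $w = \beta' \circ \hat w_0$. Exactly as in the proof of Proposition \ref{prop;gersten-hack-easy}, one checks that $w(\hat B) = \hat B$, so in particular $[w(\dot B_i)] = [\dot B_i]$, $[w(F(K_i))] = [F(K_i)]$, and $[w(R_i)] = [R_i]$, so $w \in \SAFXKB$. Moreover both $\hat w_0$ and $\beta'$ preserve the free factor $F(X)$ setwise, so $w(F(X)) = F(X)$, hence $w \in \SSABFX$. Finally, since the $w_i$ lie in $F(X)$, the automorphism $\beta'$ acts on elements of $F(X)$ by conjugation by elements of $F(X)$ \emph{inside} $F(X,K)$, but on the subgroup $F(X)$ itself these conjugations are trivial on the letters $x \in X$, so $\beta'|_{F(X)} = \mathrm{id}_{F(X)}$; hence $w|_{F(X)} = \hat w_0|_{F(X)} = w_0$.

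The only subtle point to double-check is the very last identity $\beta'|_{F(X)} = \mathrm{id}_{F(X)}$: the definition of $\beta'$ fixes each generator $x \in X$, so as an automorphism of $F(X,K)$ it restricts to the identity on $F(X)$ (the conjugations by $w_i$ only affect the $K$-letters). Everything else reduces to bookkeeping already carried out in Propositions \ref{prop;gersten-hack-easy} and \ref{prop;rabbattre}. I do not anticipate any genuine obstacle here; the lemma is packaging what these two propositions already prove into a statement relating stabilizers in $\aut{F(X,K)}$ to stabilizers in $\aut{F(X)}$.
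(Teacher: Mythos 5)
Your proof is correct, and it is in fact more complete than the paper's own two-line argument, so a comparison is worth recording. For the first implication the paper dismisses the claim as a rewriting of the definitions, but as you implicitly recognize, membership of $w$ in $\SSABFX$ only provides a conjugator in $F(X,K)$ carrying $w(\dot B_i)$ to $\dot B_i$; converting this into a single conjugator lying in $F(X)$ for the whole tuple $(B_{i1},\dots,B_{in_i})$ is precisely the content of Proposition \ref{prop;rabbattre} applied with $A=B$, and invoking it is the right move. For the second implication the paper's recipe is to extend $w_0$ by the identity on $K$; taken literally this is not sufficient: if $w_0(B_{ij})=g_iB_{ij}g_i^{-1}$ with $g_i$ nontrivial, the subgroup $\langle \kappa_{ij}\,g_iB_{ij}g_i^{-1}\,\kappa_{ij}^{-1} : j\rangle$ has an unbased core graph in which a path spelling (part of) $g_i$ separates the $K_i$-star from the core graph of $B_{ij}$, so it need not be conjugate to $\dot B_i$ in $F(X,K)$. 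Your composition with $\beta'$, which twists each $\kappa_{ij}$ by the conjugator, is exactly what repairs this; it is the construction of Proposition \ref{prop;gersten-hack-easy} run with $\alpha=w_0$ and $A=B$, and since $\beta'$ fixes $X$ pointwise you indeed get $w|_{F(X)}=w_0$, as needed. One cosmetic slip, inherited from the wording of Proposition \ref{prop;gersten-hack-easy} itself: with your normalization $w_i\,w_0(B_i)\,w_i^{-1}=B_i$ you should set $\beta'(\kappa_{ij})=w_i^{-1}\kappa_{ij}w_i$ (or flip the normalization), so that in $w(\kappa_{ij}b\kappa_{ij}^{-1})=\beta'(\kappa_{ij})\,w_0(b)\,\beta'(\kappa_{ij})^{-1}$ the element trapped between $\kappa_{ij}$ and $\kappa_{ij}^{-1}$ is $w_i w_0(b) w_i^{-1}\in B_{ij}$ rather than $w_i^{-1}w_0(b)w_i$; this is a matter of bookkeeping and does not affect the validity of your argument.
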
        
     
     \begin{proof}
       The first part is a rewriting of the definitions, and for the second part, extend $w_0$ as the identity on $K$.
     \end{proof}
     
     \begin{cor}\label{coro;new_ger_stab}
       Given $B$ as in  Proposition \ref{prop;rebalancer}, a generating set of the group  $\SAFB$ 
       is computable.
     \end{cor}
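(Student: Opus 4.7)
The plan is to combine the machinery assembled in Propositions \ref{prop;gersten-hack-easy} and \ref{prop;rabbattre} with Proposition \ref{prop;rebalancer} and Lemma \ref{lem;back_in_FX}, grounding the computation in Theorem \ref{thm:gersten}\eqref{it:stab-pres-G}. First, I would form the auxiliary free group $F(X,K)$ with its rigid witnesses $R_i = \bk{c_i}$ and assemble the tuple $\hat B$ of conjugacy classes of subgroups of $F(X,K)$, as in the construction before Lemma \ref{lem;gersten-hack}. Theorem \ref{thm:gersten}\eqref{it:stab-pres-G}, applied to the tuple $\hat B$, produces a computable finite generating set $\{\beta_j : j \in J_0\}$ of $\SAFXKB$. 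I would then augment this set by adjoining the inner automorphisms $\ad{x}$, $x \in X$ (which lie in $\SAFXKB$ since $\hat B$ is a tuple of conjugacy classes), to obtain a generating set $\{\beta_j : j \in J\}$ satisfying the hypothesis of Proposition \ref{prop;rebalancer}.

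Second, I would produce the conjugating elements $\gamma_j$ whose existence is asserted in Proposition \ref{prop;rebalancer}. Since $\beta_j(F(X))$ is a finitely generated subgroup of $F(X,K)$ conjugate to $F(X)$, one can compute an explicit $\gamma_j \in F(X,K)$ with $\ad{\gamma_j} \circ \beta_j(F(X)) = F(X)$ by solving the subgroup conjugacy problem for finitely generated subgroups of free groups, which is classical and effective via Stallings foldings. For the inner generators $\ad{x}$ I would take $\gamma_j = 1$, as in the proof of Proposition \ref{prop;rebalancer}. Proposition \ref{prop;rebalancer} then guarantees that the resulting automorphisms $\alpha_j := \ad{\gamma_j} \circ \beta_j$ generate $\SSABFX$.

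Third, Lemma \ref{lem;back_in_FX} tells me that the restriction map $w \mapsto w|_{F(X)}$ sends $\SSABFX$ onto $\SAFB$. Consequently the restrictions $\{\alpha_j|_{F(X)} : j \in J\}$ form a generating set of $\SAFB$; this restriction is explicitly computable from the description of the $\alpha_j$ on the basis $X \cup K$.

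The only step that requires genuine computation beyond what is invoked from Theorem \ref{thm:gersten} is the determination of the $\gamma_j$, and this reduces to deciding conjugacy of two finitely generated subgroups in the free group $F(X,K)$, so no real obstacle arises. Hence the generating set of $\SAFB$ is computable, as desired.
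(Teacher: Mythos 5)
Your proposal follows essentially the same route as the paper: build $F(X,K)$ and $\hat B$, compute generators of the stabilizer of $\hat B$ in $\aut{F(X,K)}$ via Gersten's theorem, adjoin the inner automorphisms $\ad x$, apply Proposition \ref{prop;rebalancer} to pass to the subgroup preserving $F(X)$, and restrict via Lemma \ref{lem;back_in_FX} to obtain generators of the stabilizer of $B$ in $\aut{F(X)}$. Your only addition is to spell out that the conjugators $\gamma_j$ are effectively computable (via the subgroup conjugacy problem for finitely generated subgroups of free groups), a detail the paper leaves implicit; this is correct and harmless.
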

     
     \begin{proof}
     Construct $F(X,K)$ and $\hat B$, compute, by Gersten's theorem \ref{thm:gersten} a generating set of  $ \SAFB$,   
     apply Proposition \ref{prop;rebalancer}, this provides, by Lemma \ref{lem;back_in_FX}  a generating set of  $\SAFB$. 
     \end{proof}

    By Corollary \ref{coro;new_ger_orb}, and Corollary \ref{coro;new_ger_stab}, we obtain Theorem \ref{thm;gersten_hack}, as promised.

    \section{Effective Minkowskian congruences}\label{sec;EMP}

            If $G$ is a group, $P$ is a characteristic subgroup of $G$ if it is invariant by automorphisms of $G$. We then write $P\charsgp G$, and it induces a homomorphism $\out{G}\to \out{G/P}$.  

          If $P_0$ is a finite index subgroup of a finitely generated group $G$,  by taking the intersection of all the (finitely many) subgroups of $G$ of the same index as $P_0$, one obtains  an associated characteristic finite index subgroup. 

          \begin{defn}\label{defn:speparating-congruences}
            Let $P \charsgp G$ be a finite index characteristic subgroup. We
            say \emph{$P$-congruences separate the torsion of $\out G$} 
            if every finite order outer automorphism
            $[\alpha] \in \out G$ has non-trivial image $[\bar\alpha]$ through the
            natural map\[ \out G \to \out{G/P}.
                  \]
                  
                  In that case, the finite quotient $G/P$ is a \define{Minkowskian congruence.}
                  
            A class of groups is effectively Minkowskian if each group admits a finite Minkowskian congruence 
            that is computable from a presentation.
          \end{defn}
        

        The proposed  terminology comes from a classical theorem of Minkowski: if $p\geq 3$, the kernel of the quotient $GL_n(\mathbb{Z}) \to GL_n(\mathbb{Z}/p\mathbb{Z})$ is torsion-free. In other words $\mathbb{Z}^n/(3\mathbb{Z})^n$ is  a Minkowskian congruence for  $\mathbb{Z}^n$, and free abelian groups form an effectively Minkowskian class.

            The following is a useful remark. 
                
            \begin{lem}
                If $P \charsgp G$    
                and $H\charsgp P$ is  a
                characteristic subgroup of $P$, then
                $H \charsgp G$ and  if   $P$-congruences separate the torsion in $\out G$, then $H$-congruences separate the torsion in $ \out G$. 
            \end{lem}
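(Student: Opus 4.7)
The proof is short and almost formal; I would structure it in two independent steps.

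First I would show that $H$ is characteristic in $G$. Given any $\alpha \in \aut G$, the hypothesis that $P \charsgp G$ gives $\alpha(P) = P$, so that $\alpha$ restricts to an automorphism $\alpha|_P \in \aut P$. Then the hypothesis that $H \charsgp P$ gives $\alpha(H) = (\alpha|_P)(H) = H$. Since $\alpha$ was arbitrary, $H \charsgp G$.

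Next I would verify that the kernel of the natural map $\out G \to \out{G/H}$ is contained in the kernel of $\out G \to \out{G/P}$, from which the result follows by the hypothesis that the latter is torsion-free. The key observation is that the inclusion $H \leq P$ yields a commutative diagram of natural quotient homomorphisms
\begin{equation*}
\begin{tikzcd}
G \arrow[r, two heads] \arrow[dr, two heads] & G/H \arrow[d, two heads] \\
& G/P
\end{tikzcd}
\end{equation*}
so that any $\alpha \in \aut G$ induces compatibly $\bar\alpha \in \aut{G/H}$ and $\tilde\alpha \in \aut{G/P}$, and these are compatible under the vertical quotient map. Now suppose $[\alpha] \in \out G$ lies in the kernel of $\out G \to \out{G/H}$, i.e., $\bar\alpha = \ad{gH}$ for some $g \in G$. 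Then for every $x \in G$, writing $\pi \colon G/H \twoheadrightarrow G/P$ for the vertical quotient,
\[
\tilde\alpha(xP) = \pi(\bar\alpha(xH)) = \pi\bigl((gH)(xH)(gH)^{-1}\bigr) = (gP)(xP)(gP)^{-1},
\]
so $\tilde\alpha = \ad{gP}$ is inner in $G/P$, meaning $[\alpha]$ lies in the kernel of $\out G \to \out{G/P}$.

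Combining the two inclusions of kernels with the Minkowskian hypothesis for $P$ gives that the kernel of $\out G \to \out{G/H}$ is torsion-free, i.e., $H$-congruences separate the torsion of $\out G$. There is no real obstacle here — everything reduces to naturality of conjugation under quotient maps — but the point worth emphasizing in the write-up is that $H \charsgp G$ is needed in order for the map $\out G \to \out{G/H}$ to even be defined, and that this is precisely what the first step provides.
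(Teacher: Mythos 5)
Your proposal is correct and follows essentially the same route as the paper: first deduce $H \charsgp G$ by restricting automorphisms to $P$, then use the compatibility of the quotients $G \onto G/H \onto G/P$ to see that an outer automorphism killed in $\out{G/H}$ is killed in $\out{G/P}$, contradicting the Minkowskian hypothesis for $P$. Your kernel-containment phrasing is just the contrapositive of the paper's "survives in $\out{G/P}$, hence also in $\out{G/H}$", and your explicit check that only automorphisms induced from $\aut G$ are needed (so that the map to $\out{G/P}$ is defined) is a welcome precision.
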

            \begin{proof}
                Any automorphism of $G$ induces an automorphism of $P$, hence preserves $H$.  
                There is a natural map $\out{ G/H} \to \out {G/P}$ which commutes with $\out G\to \out {G/P}$.  Any  finite order outer automorphism of $G$ survives in $\out {G/P}$, hence also in $\out {G/H}$. 
            \end{proof}

            The aim of this section is to prove the following.
        
            \begin{thm}\label{theo;SPTS_M}
                Finitely generated subgroups of piecewise trivial suspensions of free groups, are effectively Minkowskian.
            \end{thm}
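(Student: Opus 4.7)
The plan is to start with a finitely generated subgroup $H$ of a piecewise trivial suspension, and apply Corollary \ref{coro;hybrid_canonical_for_sbgp} to obtain its canonical graph of groups $\hat\bbX$, whose vertex groups are either free groups or trivial suspensions $F_v\oplus\bk{t_v}$ and whose edge groups are $\bbZ$ or $\bbZ^2$. We already know by Proposition \ref{prop;construct-canonical} that this decomposition is computable. The whole argument will be a three step reduction: pass to a characteristic finite index ``clean'' cover $V$, then Dehn-fill the centers of the thick vertices to obtain a virtually free quotient, then separate via a finite quotient $Q$ of that virtually free group, arranging so that a prescribed finite set of \emph{witnesses} for each finite order outer automorphism of $H$ survives distinct in $Q$.

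First, I would produce, as in \cite{cotton-barratt_conjugacy_2012}, a characteristic clean cover. The goal is to find a finite index characteristic subgroup $V\charsgp H$ such that the induced graph of groups structure on $V$ has all edge groups $\bbZ^2$ mapping to rank-$2$ direct summands in adjacent trivial suspensions, so that the edge-group-centralizer inclusions become ``clean''. This is done by the covering of graphs of spaces technique: one realizes $\hat\bbX$ as a graph of spaces (classifying spaces of vertex groups glued along edge spaces), applies Marshall Hall's theorem to lift edge subgroups to finite-index retracts inside the free factors $F_v$ of the white vertex groups, and applies Wise's omnipotence theorem \cite{wise_subgroup_2000} simultaneously across all vertex groups to match indices. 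Taking an intersection over all finitely many subgroups of $H$ of the same index as this finite index cover makes it characteristic. The computability of all this follows from effective coherence (Proposition \ref{prop;eff_coh}) and the algorithmic version of Marshall Hall, together with the effective form of the omnipotence theorem.

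Next, on $V$, I would perform a Dehn filling: in each white (thick) vertex group $\bbX_w\cap V = F_w'\oplus\bk{t_w^{n_w}}$, quotient out the cyclic center $\bk{t_w^{n_w}}$. Because the cover was clean, these central elements lie as direct summands inside every adjacent $\bbZ^2$ edge group, so killing them globally collapses the edge groups to $\bbZ$ and transforms $V$ into a finite graph of free groups with cyclic edge groups, i.e.\ a virtually free group $\bar V$. The kernel $N\triangleleft V$ of the quotient $V\onto \bar V$ is normally generated by the central elements and is characteristic in $V$, hence characteristic in $H$. The key point here is that the central elements $t_w$ are characteristic data of the canonical splitting (being generators of centers of thick vertex groups), so the construction is canonical.

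Before that last step, for each finite order outer automorphism class $[\alpha] \in \out H$, I would choose a finite set of \emph{witnesses} $\calW_\alpha\subset H$, for instance elements of $H$ whose image under any lift $\alpha'$ of $\alpha$ is not conjugate to themselves; since $\alpha$ is not inner, such elements exist, and we may take them to lie in vertex groups of $V$ after a conjugation. Now using that virtually free groups are conjugacy separable (a classical result going back to Dyer), and in fact effectively so, I would produce a finite characteristic quotient $\bar V\onto Q_0$ in which all conjugacy classes of the finitely many witness elements and their $\alpha'$-images remain distinct. Then take $Q$ to be the further finite extension of $Q_0$ by $H/V$, made characteristic by again intersecting with conjugates. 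The composition $H\onto V/\ker \onto Q$ has characteristic kernel $P\charsgp H$ of finite index, and each $[\alpha]$ has non-trivial image in $\out Q$ because it moves the residue classes of its witnesses off their conjugacy classes. The main obstacle, which is delicate but achievable, is arranging all of this to be \emph{uniformly} effective: the witness construction requires knowing, from the finitely many $\out H$-orbits of finite order elements (accessible via the description of $\out H$ by Proposition \ref{prop;delta-aut-surjecte-sur-out} applied to $\hat\bbX$), a computable presentation that allows us to verify separation in $Q$. Since $\out H$ need not itself be finite, one has to argue that although there may be infinitely many finite-order classes, only finitely many distinct ``types'' of witnesses arise from the combinatorics of the finite graph $\hat X$, so the separation is ensured by finitely many conditions on $Q$.
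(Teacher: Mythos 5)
There are genuine gaps. First, your very first step does not cover all finitely generated subgroups: Corollary \ref{coro;hybrid_canonical_for_sbgp} only applies to one-ended subgroups, so a freely decomposable $H$ (a Grushko free product of one-ended pieces and a free group) is not treated at all. Finite order outer automorphisms of such an $H$ may permute free factors or act by twists on the free part, and separating these requires a separate argument (in the paper, Theorem \ref{theo;EMP_free_product}, which uses Hensel--Kielak's relative Nielsen realization to put the automorphism in graph-of-groups form over a free splitting and then analyses the cases of Lemmas \ref{lem;induce}, \ref{lem;permute}, \ref{lem;fm_conv}). Second, your Dehn filling step fails as stated: quotienting $V$ only by the centers $\bk{t_w^{n_w}}$ of the thick vertex groups yields a graph of \emph{infinite} free groups with cyclic edge groups, which is in general not virtually free, so Dyer's conjugacy separability of virtually free groups cannot be invoked for $\bar V$. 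The paper's Proposition \ref{prop:vertex-fill} instead fills each vertex group by a finite-index subgroup $K_v\times\bk{t_v^N}$ (with $K_v$ chosen inside a prescribed $N_v$ so that the witnesses survive), producing a graph of \emph{finite} groups, which is manifestly virtually free; cleanliness and omnipotence are needed precisely to make the exponents compatible across edges so that this filling is well defined on the whole graph of groups.

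Third, your witness construction is not justified: asserting that ``since $\alpha$ is not inner, such elements exist'' amounts to claiming $H$ has no non-inner pointwise-inner automorphisms, which is not known for these graph-of-groups fundamental groups and is not what the paper proves. The paper instead reduces via Proposition \ref{prop;2cases}: a finite-order, non-inner element of $\delta\aut\bbX$ either permutes vertices (witnesses are then vertex elements not conjugate across the two vertex groups) or restricts to a non-trivial finite-order outer automorphism of some vertex group, where witnesses exist by Grossman's theorem applied to free and free-times-cyclic groups, and there are only finitely many such classes per vertex by Nielsen realization. Relatedly, you acknowledge but do not resolve the uniformity issue: the quotient must be characteristic, so the separation of witnesses must be preserved under all automorphisms; the paper handles this by demanding \emph{strong} conjugacy separation (invariance under peripheral automorphisms of vertex groups, Proposition \ref{prop;clean-filled} and Corollary \ref{cor:all_survivors_in_VF}) and by separating $\xi_1$ from all $\Xi$-conjugacy classes $(\xi_2)^\mu$, $\mu$ ranging over coset representatives of the characteristic finite-index subgroup (Proposition \ref{prop;all_survivors_in_finite}). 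Without these two devices your final step, passing from separation in $Q_0$ to non-triviality of the induced outer automorphism of the characteristic finite quotient $Q$ of $H$, does not go through.
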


            Recall that we have two levels of decomposition for such a group, from top to bottom: 
            \begin{itemize} 
                \item the Grushko decomposition in free product, 
                \item for each freely indecomposable free factor,  the canonical decomposition given by Corollary \ref{coro;hybrid_canonical_for_sbgp}: 
                an  acylindrical  bipartite  graph-of-groups with  edge groups either cyclic or isomorphic to $\bbZ^2$,  and white vertex groups that are either finitely generated free 
                or direct product of finitely generated free groups with an infinite cyclic group (a canonical trivially partially suspended decomposition). 
            \end{itemize}
        
            By Proposition \ref{prop;construct-canonical} such decompositions can be found effectively. In order to prove Theorem \ref{theo;SPTS_M} we will show the two following theorems.
            
            \begin{thm}\label{theo;EMP_free_product}
              A free product of  freely indecomposable, residually finite,  effectively Minkowskian, torsion-free groups  and free groups is effectively Minkowskian.
            \end{thm}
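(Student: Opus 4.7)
The plan is to build a finite characteristic quotient of $G$ by first killing a Minkowskian subgroup in each non-free Grushko factor and then quotienting the resulting virtually free group further. First I compute a Grushko decomposition $G = G_1 * \cdots * G_n * F$, which is available algorithmically (in our setting by Corollary \ref{cor;subgroupsgrushko}, or classically). By Grushko's uniqueness, any automorphism of $G$ permutes the conjugacy classes of the $G_i$ and preserves the rank of the free factor. For each isomorphism class of freely indecomposable factor I fix once and for all a canonical choice of Minkowskian subgroup using the effective Minkowski hypothesis, and transport this choice canonically to each $G_i$ by any isomorphism; this is well-defined because the subgroup is characteristic in $G_i$. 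The resulting normal closure $N = \langle\!\langle \bigcup_i P_i \rangle\!\rangle^G$ is then characteristic in $G$, and $\tilde G = G/N$ is isomorphic to $Q_1 * \cdots * Q_n * F$ with the $Q_i = G_i/P_i$ finite, hence $\tilde G$ is virtually free.

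Virtually free groups are themselves effectively Minkowskian: combining the residual finiteness of $\out{\tilde G}$ (Grossman's theorem for free groups and its generalizations to virtually free groups) with the realization of finite subgroups of $\out{\tilde G}$ as symmetries of canonical graph-of-groups splittings (Culler/Khramtsov for free groups, Guirardel--Levitt for free products), which gives finitely many conjugacy classes of finite subgroups, one can compute a characteristic finite quotient $\tilde G \onto \bar G$ for which $\out{\tilde G} \to \out{\bar G}$ has torsion-free kernel. Pulling back to $G$ gives a computable characteristic finite-index subgroup $P \charsgp G$.

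The heart of the argument is to show that $P$-congruences separate the torsion of $\out G$, which by construction reduces to verifying that the map $\out G \to \out{\tilde G}$ has torsion-free kernel. Any finite-order $\Phi \in \out G$ is represented by a finite-order automorphism $\phi$ that, after an inner adjustment and using realization of finite subgroups of $\out G$ as symmetries of Grushko-type splittings, permutes the $G_i$'s by some $\sigma$, restricts to isomorphisms $\phi_i : G_i \to G_{\sigma(i)}$, and acts on the free factor data in a controlled manner. If $\Phi$ maps trivially to $\out{\tilde G}$ then each induced map $Q_i \to Q_{\sigma(i)}$ coincides with the restriction of an inner automorphism of $\tilde G$; the Minkowskian property of $P_i$ in $G_i$ then promotes this to the outer class of $\phi_i$ being realized by inner data of $G$.

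The main obstacle is handling the free factor $F$: it is not preserved by an arbitrary automorphism of $G$, since $\aut G$ contains Whitehead-type partial conjugations mixing the free and non-free parts, so detecting twist or Fouxe-Rabinovitch data in the finite quotient is the subtle point. To address it I would use the explicit Fouxe-Rabinovitch presentation of $\out{G_1 * \cdots * G_n * F}$ to isolate a finite list of elementary generators whose non-triviality must be certified in $\bar G$, and ensure (by enlarging $\bar G$ if necessary) that each of these elementary generators acts non-trivially on $\bar G$; this becomes an explicit finite check once the canonical quotient $\tilde G$ is fixed.
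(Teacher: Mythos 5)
Your overall architecture (kill Minkowskian subgroups in the indecomposable factors, pass to a virtually free quotient, and argue that the kernel of $\out{G}\to\out{\tilde G}$ is torsion free) is not unreasonable, and the easy halves of it do track the paper: non-trivial finite-order restrictions to a factor $G_i$ are caught by the Minkowskian quotient of $G_i$, and non-trivial permutations of the factors are caught in a quotient built from finite characteristic quotients of the $G_i$ (for this the paper is careful to take \emph{non-trivial} quotients $\bar G_i^c$ distinguishing isomorphism types; your $Q_i=G_i/P_i$ may be trivial, since a group with torsion-free $\out{G_i}$ is vacuously Minkowskian with $P_i=G_i$, so your quotient need not see the permutation at all). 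But the decisive case --- a finite-order $[\alpha]\in\out G$ whose restriction to every $G_i$ is inner, so that all the information sits in partial conjugations, transvections and the action on the free part --- is exactly where your argument has a hole, and your proposed repair does not close it. Certifying that a finite list of Fouxe--Rabinovitch generators acts non-trivially on $\bar G$ proves nothing about the torsion in the kernel: those generators typically have \emph{infinite} order in $\out G$ (partial conjugations and transvections by infinite-order elements), the finite-order elements you must separate are products of them and are not conjugate to generators, and survival of generators in a finite quotient does not pass to survival of an arbitrary finite-order product. Nor have you justified that there are only finitely many conjugacy classes of such torsion elements, which is what would make any ``finite check'' meaningful; and the assertion that virtually free groups are effectively Minkowskian (needed to push the problem into $\tilde G$) is itself left unproved.

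The paper's route through this case supplies precisely the missing ideas. By Hensel--Kielak's Nielsen realisation for free products, a finite-order $[\alpha]$ that is inner on every $G_i$ fixes a point of the relative outer space, hence is realised as an automorphism $(\alpha_X,(\mathrm{Id}_v),(\mathrm{Id}_e),(\gamma_e))$ of a free splitting; finiteness of the order together with torsion-freeness of the factors forces every twisting element $\gamma_e$ to be trivial, so $[\alpha]$ preserves a complementary free factor $L'_r$ and restricts there to a non-trivial finite-order element of $\out{L'_r}$, which is then separated using the Minkowski property for free groups from \cite{dahmani_touikan_reducing_2021}. The same realisation theorem bounds the number of conjugacy classes of such $[\alpha]$ (Lemma \ref{lem;fm_conv}), so finitely many congruence quotients suffice, and these are merged into a single characteristic one at the end. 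Without an argument that forces the twisting data to vanish (or some substitute for it), your claim that the kernel of $\out G\to\out{\tilde G}$ is torsion free is unsupported --- all the twisting by elements of the $P_i$ dies in $\tilde G$, so this is exactly the point that needs proof rather than assertion.
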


            \begin{thm}\label{theo;TPSC_Mink}
                Any freely indecomposable group that has a  canonical trivially partially suspended  decomposition, as defined above, is effectively Minkowskian. 
            \end{thm}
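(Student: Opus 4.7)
The plan is to construct the Minkowskian quotient $Q$ in several stages, following the strategy sketched in the introduction. The group $H$ comes equipped with its canonical splitting $\bbX$ with white vertex groups $\bbX_w$ of the form $F_w$ or $F_w \oplus \bbZ$, black vertex groups isomorphic to $\bbZ^2$, and cyclic or $\bbZ^2$ edge groups, all computable by Corollary \ref{coro;hybrid_canonical_for_sbgp} and Proposition \ref{prop;construct-canonical}. First I would pass to a characteristic finite index subgroup $V\charsgp H$ whose induced graph of groups decomposition $\bbY$ is \emph{clean}, in the sense that in each white vertex group $\bbY_w = F_w' \oplus \bbZ$ every incident edge group has image a direct summand $\bk{c_e} \oplus \bk{t_w}$ with $c_e$ primitive in $F_w'$, with a compatible cleanness condition at the $\bbZ^2$ vertices. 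This is produced via the covering-of-graphs-of-spaces argument of Cotton-Barratt: each white vertex group, viewed as a graph of spaces, admits finite covers that promote the incident $\bk{c_e}$'s to primitive subgroups (using Marshall Hall's theorem applied to the fibre $F_w$), and Wise's omnipotence theorem for free groups makes it possible to prescribe the indices of these local covers so that they glue to a global finite cover; intersecting with its $\aut H$-translates produces the characteristic $V$.

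Second, for each non-trivial finite order class $[\alpha] \in \out H$, I would build a \emph{witness} of its non-triviality: a tuple of elements of $H$, in fact chosen to lie (up to conjugation) inside the vertex groups of $\bbY$ since $\alpha$ has a power that preserves $V$ and the canonicity of $\bbY$ (Proposition \ref{prop;canonical_for_subgroups}) forces $\alpha$ to act by a graph-of-groups automorphism of $\bbY$. The witnesses should encode how $\alpha$ acts on fibre parts, central directions, and peripheral cyclic subgroups at each vertex, packaged so that any representative of $[\alpha]$ sends the tuple to one that is not globally conjugate to the original. Finite-orderness of $[\alpha]$ ensures only finitely many classes must be handled at once.

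Third, I would Dehn-fill $\bbY$: in each white vertex group $F_w' \oplus \bbZ$ quotient the $\bbZ$-factor by a large power of a generator; in each black $\bbZ^2$-vertex quotient by a compatible cyclic subgroup chosen so that the images of the edge-group inclusions remain consistent. The resulting graph of groups $\overline\bbY$ has free-or-finite-cyclic vertex groups and hence a virtually free fundamental group $\overline V$. Choosing the Dehn-filling exponents sufficiently divisible (using residual finiteness of free and abelian groups) ensures that the images of the witnesses survive with the intended order and remain non-conjugate in $\overline V$. Since virtually free groups are conjugacy separable with respect to finite order elements, I can then pick a finite quotient $\overline V \onto Q_0$ in which all the (finitely many) relevant conjugacy classes remain distinct, whence the images of the witnesses are kept apart in $Q_0$.

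Finally I would assemble $Q$ from $Q_0$: by taking the intersection of $\ker(V\onto Q_0)$ with its $\aut V$-translates (and $\aut V$ receives a map from the finite-index stabilizer of $V$ in $\aut H$) one obtains a characteristic subgroup of $V$, hence a finite index characteristic subgroup of $H$ with finite quotient $Q\supseteq Q_0$. For each finite order $[\alpha]\in\out H$, the induced outer automorphism of $Q$ permutes the images of the witnesses non-trivially, so $[\alpha]\mapsto [\bar\alpha]\neq 1$. The main obstacle is the construction of robust witnesses: they must simultaneously be (i) genuine obstructions to triviality of $[\alpha]$ in $\out H$, (ii) invariant in meaning under the passage $H\rightsquigarrow V\rightsquigarrow \overline V$, and (iii) tractable enough to be detected by the conjugacy separability in the virtually free quotient. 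Getting this to work requires exploiting the direct-product structure $F_w\oplus\bbZ$ of the white vertex groups (so that the central direction is canonically visible and interacts predictably with Dehn filling) together with the 4-acylindricity (Lemma \ref{lem;keep_acyl}) of the canonical tree, which keeps the various witnesses from "collapsing" into each other at any stage. Computability at each step follows from the effective coherence and algorithmic tractability established in Section \ref{sec;algo_trac}, the effective constructibility of the canonical splittings, and the effective versions of Marshall Hall, Wise's omnipotence, and conjugacy separability for virtually free groups.
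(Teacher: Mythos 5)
Your plan matches the paper's proof essentially step for step: characteristic clean covering of the canonical graph of groups (Marshall Hall plus Wise omnipotence, Proposition \ref{prop;clean}), virtually free vertex filling (Proposition \ref{prop:vertex-fill}), conjugacy separability in the virtually free quotient (Dyer, Wilson--Zalesskii, as used in Propositions \ref{prop;clean-filled} and \ref{prop;all_survivors_in_finite}), and then assembling a characteristic finite quotient $Q$ of $\Gamma$ from the one for the finite index subgroup.

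The one place where your sketch is noticeably lighter than what the paper actually does is the construction of the witnesses, and that is where a real argument is needed. You assert that the canonicity of $\bbY$ forces $\alpha$ to act as a graph-of-groups automorphism and that tuples ``encoding how $\alpha$ acts on fibre parts, central directions, and peripheral cyclic subgroups'' will do the job, but it is not automatic that a non-trivial finite-order outer automorphism of $\Gamma$ is already visible at the level of vertex groups. The paper isolates this as Proposition \ref{prop;2cases}: writing $\phi=(\phi_Y,(\phi_v),(\phi_e),(\gamma_e))$, if $\phi_Y=\mathrm{Id}$ and all $\phi_v$ are inner (so there are no vertex-level witnesses) then after normalizing along a spanning tree one reduces to an automorphism given purely by the twist parameters $\gamma_e$, and since these have infinite order or are trivial, $\phi$ cannot be of non-trivial finite order in $\out\Gamma$. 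Only with this dichotomy in hand can one invoke Grossman's theorem (pointwise-inner automorphisms of free groups are inner, extended here to $F_w\oplus\bbZ$) to actually exhibit a conjugacy class that is moved, and then carry the whole finite set $\calS$ of such witnesses, together with their orbits under peripheral automorphisms and coset representatives, through the clean cover, the vertex filling, and the conjugacy separation. You would also want to record the short argument that the restriction $\phi|_\Xi$ of a non-trivial finite-order $\phi\in\out\Gamma$ to the characteristic finite-index subgroup $\Xi$ remains non-trivial and of finite order in $\out\Xi$, which the paper handles via uniqueness of roots. With these points spelled out your outline becomes the paper's proof.
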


            In the free product case, we will use Hensel and Kielak's version of the Nielsen realization theorem for free products: a finite order automorphism of a free product is realized as a graph-of-group automorphism that takes advantage of a symmetry of a certain graph of groups and induces finite order outer automorphisms of the fixed vertex groups.
            
            In the one-ended case (Theorem \ref{theo;TPSC_Mink}), this is more involved.  We will first construct a clean covering of the given graph of groups, inspired by \cite{wise_subgroup_2000}. The cover will be taken to be characteristic so that automorphisms of the ambient group $\Gamma$ induce automorphisms of the finite-index subgroup $\Xi$  associated with the cover.
            
            Then we will take a quotient of $\Xi$, which we will call a vertex filling, to obtain a virtually free group $V$, which is a graph of finite groups over the same underlying graph, in which sufficiently many witnesses of the non-triviality of the finite order outer automorphisms will have survived. 
          
            Finally, we will take a quotient of $V$ to a finite group $Q_0$, in which all the conjugacy classes of finite order elements remain disjoint. By studying the images of the witnesses in the induced quotient $Q$ of $\Gamma$, we will finally establish that $Q$ is in fact a  Minkowskian congruence, thus giving us the desired result.
    
        \subsection{Effective Minkowskian property for free products}

            In this section, we prove Theorem \ref{theo;EMP_free_product}. Let $G_1, \dots, G_k$ be non-trivial, torsion-free  freely indecomposable groups that are effectively Minkowskian and residually finite. 
            The group $L_r$ is a free group of rank $r$. We consider the free product $\Gamma = G_1 *G_2 \dots G_k*L_r$. 
            
            \begin{prop}\label{prop;fm_good_quotients}
            There exists a finite list of finite quotients of $\Gamma$ such that,     
              for each automorphism $\alpha$ of $\Gamma$ that is non-trivial in $\out \Gamma$ and  has finite order in $\out \Gamma$, there is $\alpha'$ conjugate to $\alpha$ in $\out \Gamma$, a finite quotient $\Gamma \to \bar \Gamma$ in the list,  whose kernel is preserved by $\alpha'$, and on which $\alpha'$ descends as an automorphism $\bar {\alpha'} \in \aut{\bar \Gamma}$ whose outer class is non-trivial (in $\out{\bar \Gamma}$). 
            \end{prop}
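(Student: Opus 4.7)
The plan is to combine the Hensel--Kielak Nielsen realisation theorem for free products with the effective Minkowskian hypothesis on each freely indecomposable factor (and the classical analogue for free groups).

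\emph{First,} I would apply Hensel--Kielak to the finite cyclic subgroup $\bk{[\alpha]} \leq \out{\Gamma}$: up to replacing $[\alpha]$ by a conjugate, this produces a graph-of-groups decomposition $\mathbb{Y}$ of $\Gamma$ whose non-trivial vertex groups are precisely the Grushko factors $G_1,\dots,G_k$, with trivial vertex groups realising the free part $L_r$ via the fundamental group of the underlying graph, and a representative $\alpha'\in\Aut{\Gamma}$ of $[\alpha]$ acting as a graph-of-groups automorphism of $\mathbb{Y}$. This action permutes vertex groups within isomorphism classes and, on each $G_i$ it stabilises setwise, it induces a finite-order outer automorphism $[\beta_i]\in\out{G_i}$. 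Since $[\alpha]\neq 1$, at least one of the following witnesses the non-triviality: (a) some $[\beta_i]$ is non-trivial in $\out{G_i}$; (b) $\alpha'$ permutes non-trivially a class of isomorphic factors; or (c) $\alpha'$ acts non-trivially on the topology of the underlying graph (detectable on the induced automorphism of $L_r$). The combinatorial data of the pair $(\mathbb{Y},\alpha')$ is bounded by the Grushko rank of $\Gamma$, so only finitely many shapes occur as $[\alpha]$ ranges over finite-order outer classes.

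\emph{Second,} for each isomorphism class $[G]$ appearing among the $G_i$'s, I would fix once and for all a Minkowskian characteristic subgroup $P\charsgp G$ provided by the effective Minkowski hypothesis, and transport this choice consistently along isomorphisms to each copy $G_i\cong G$ in the Grushko decomposition; similarly fix a Minkowskian subgroup of $L_r$. Let $N\nsgp \Gamma$ be the normal closure of the union of these subgroups. Then $\Gamma/N$ is a free product of the finite groups $G_i/P_i$ with a finite quotient of $L_r$, hence a virtually free group, and in particular residually finite. The compatibility of the choices ensures that $\alpha'(N)=N$, so $\alpha'$ descends to $\bar\alpha'\in\Aut{\Gamma/N}$.

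\emph{Third,} I would argue that $[\bar\alpha']$ is non-trivial in $\out{\Gamma/N}$: in case (a) the image $[\bar\beta_i]\in\out{G_i/P_i}$ is non-trivial by the Minkowskian property of $P_i$, and this witness survives under the natural inclusion into $\out{\Gamma/N}$ since $G_i/P_i$ is a canonical free factor of $\Gamma/N$; in case (b), $\bar\alpha'$ non-trivially permutes the distinct non-trivial finite subgroups $G_i/P_i\leq\Gamma/N$; in case (c), the induced automorphism on the finite quotient of $L_r$ (visible on $\Gamma/N$) is non-trivial. Since $\Gamma/N$ is virtually free hence residually finite, a further characteristic finite-index subgroup yields a finite quotient $\Gamma\onto\bar\Gamma$ preserving $[\bar\alpha']\neq 1$. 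Because only finitely many shapes of $(\mathbb{Y},\alpha')$ arise, it suffices to take one such finite quotient per shape, yielding the desired finite list.

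\emph{The main obstacle} is ensuring, in the second step, that the choices $P_{[G]}$ are made \emph{uniformly} across all copies of a given isomorphism class, so that an $\alpha'$ which non-trivially permutes these copies still preserves $N$. Effective Minkowskianity provides such a $P$ for each $G$, but one must verify it can be made functorial enough (e.g.\ canonical, or at least stable under all possible identifications coming from the Nielsen realisation) for a single $N$ to simultaneously accommodate every finite-order shape; alternatively, by enlarging $N$ to the intersection of the finitely many candidate choices (still characteristic and of finite index), one can sidestep this rigidity at the cost of a single, slightly larger $N$.
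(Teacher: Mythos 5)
Your overall strategy — Hensel–Kielak realisation, then a trichotomy of witnesses, then Minkowskian quotients of factors — is close in spirit to the paper's, but the paper does \emph{not} try to funnel all cases through a single quotient $\Gamma/N$. It separates cases (a) and (b) from the remaining case \emph{before} invoking Hensel–Kielak, and handles each with an explicit, directly-constructed \emph{finite} quotient: a Minkowskian quotient of a single $G_i$ for case (a), the direct product $\bar G_1^c\times\cdots\times\bar G_k^c$ for case (b), and a Minkowskian quotient of a free factor $L'_r$ for the remaining case. This lets the paper avoid ever having to pass from an infinite virtually free intermediate group back down to a finite quotient.

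There are two genuine gaps in your write-up. \textbf{First}, your trichotomy (a)--(b)--(c) is not exhaustive. A graph-of-groups automorphism $(\alpha_X,(\phi_v),(\phi_e),(\gamma_e))$ can have all $\phi_v$ inner, trivial permutation of factors, and trivial $\alpha_X$, yet still be nontrivial because of the twisting elements $\gamma_e$. None of your three cases detects this, since the $\gamma_e$ do not change the ``topology of the underlying graph.'' The paper's key (and easy to miss) observation is that finite order in $\out\Gamma$ \emph{forces} $\gamma_e=1$, because $\gamma_e$ lies in a torsion-free vertex group and $(\alpha')^s$ has twisting component $\gamma_e^s$; this reduction is what legitimizes thinking of the residual case as a pure graph automorphism of $L'_r$. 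Without it, your claim that ``at least one of (a), (b), (c) witnesses non-triviality'' is unjustified. \textbf{Second}, your step from $[\bar\alpha']\neq 1$ in $\out(\Gamma/N)$ to a finite quotient $\bar\Gamma$ preserving it is asserted by appealing only to residual finiteness of the virtually free group $\Gamma/N$. Residual finiteness separates \emph{elements}, not non-inner outer automorphisms; what you actually need here is a Minkowskian-type statement for a free product of finite groups (or at minimum conjugacy separability applied to a chosen witness, together with a careful argument that the resulting kernel can be made characteristic in $\Gamma$, not just in $\Gamma/N$). The paper sidesteps this entirely by never leaving the category of finite quotients in the free-product case; the conjugacy-separability machinery is reserved for the genuinely one-ended situation.

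Your concern about uniformity of the choice of $P_{[G]}$ across isomorphic factors is real, and the fix you propose (intersecting the finitely many candidate characteristic subgroups) is fine; the paper handles the same issue by requiring the $\bar G_i^c$ to be chosen compatibly with isomorphisms. That part of your argument is sound.
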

            
            We will distinguish several cases according to the properties of $\alpha$. First, observe that $\alpha$ must permute (and can possibly preserve) the conjugacy classes of the groups $G_i$. For each $i$, let $G_i \to \overline{G_i}^{M}$ be a Minkowskian finite quotient. 
            
            \begin{lem}\label{lem;induce}
                If there is an index $i\leq k$, and $\gamma_i \in \Gamma$ such that $\alpha(G_i)^{\gamma_i} = G_i$, and if $ {\rm ad}_{\gamma_i} \circ \alpha|_{G_i}: G_i \to G_i$ defines a non-trivial outer automorphism of $G_i$, then the quotient \[\Gamma \to G_i \to \overline{G_i}^{M}\] satisfies the conclusion of Proposition \ref{prop;fm_good_quotients} for $\alpha$.
            \end{lem}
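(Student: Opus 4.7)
Write $\alpha' := \ad{\gamma_i} \circ \alpha$, so by hypothesis $\alpha'(G_i) = G_i$, the restriction $\alpha'|_{G_i}$ represents a non-trivial finite-order class in $\out{G_i}$, and $\alpha'$ shares the outer class of $\alpha$. The plan is to find a representative $\alpha''$ in the outer class of $\alpha'$, together with a retraction $r_i : \Gamma \to G_i$, such that $\alpha''$ preserves the kernel of the composition $\phi := \pi_M^i \circ r_i : \Gamma \to \overline{G_i}^M$, where $\pi_M^i : G_i \onto \overline{G_i}^M$ is the chosen Minkowskian quotient and $K_M^i := \ker \pi_M^i$. Once this is achieved, the descent $\bar\alpha'' \in \aut{\overline{G_i}^M}$ is automatically non-trivial in $\out{\overline{G_i}^M}$: since $\phi|_{G_i} = \pi_M^i$, $\bar\alpha''$ is forced to equal the reduction of $\alpha''|_{G_i}$ modulo $K_M^i$, and that reduction stays non-trivially outer by the Minkowski property of $K_M^i$ in $G_i$.

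The obstacle is that the \emph{naive} retraction killing all the other Grushko factors $G_j$ ($j \neq i$) and $L_r$ is typically not $\alpha'$-equivariant: if $\alpha'$ sends a free basis element $x_\ell$ of $L_r$ to an element with non-trivial $G_i$-content, then the image of $\alpha'(x_\ell)$ under the naive retraction need not lie in $K_M^i$, and no inner modification of $\alpha'$ in $\Gamma$ can repair this, since inner automorphisms act trivially on the abelianisation. To bypass this I would invoke the Hensel--Kielak Nielsen realisation theorem for free products, applicable because the outer class of $\alpha$ has finite order in $\out\Gamma$. It produces a representative $\alpha''$ in this outer class acting as a graph-of-groups automorphism of a Grushko splitting $\Gamma = G_1 * \cdots * G_k * L_r$: in particular $\alpha''$ permutes the vertex groups $G_j$ as actual subgroups of $\Gamma$ and permutes a chosen basis of $L_r$ up to inversion. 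A further inner adjustment, available by malnormality of $G_i$ in $\Gamma$, arranges $\alpha''(G_i) = G_i$ while keeping $\alpha''|_{G_i}$ in the same outer class in $\out{G_i}$ as $\alpha'|_{G_i}$, hence still non-trivial there.

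With $\alpha''$ respecting the Grushko splitting in this way, the retraction $r_i$ associated to this splitting intertwines with $\alpha''$ on every generator, yielding $r_i \circ \alpha'' = \alpha''|_{G_i} \circ r_i$. Since $K_M^i$ is characteristic in $G_i$ it is preserved by $\alpha''|_{G_i}$, whence $\alpha''(\ker \phi) = \ker \phi$. The descent $\bar\alpha''$ on $\overline{G_i}^M$ is then the reduction of $\alpha''|_{G_i}$ modulo $K_M^i$, non-trivial in $\out{\overline{G_i}^M}$ by Minkowskianness, as explained in the first paragraph. The principal difficulty is the Nielsen realisation step; the remainder is a direct transfer of structure, lifting the characteristic $K_M^i \charsgp G_i$ to the $\alpha''$-invariant $\ker \phi \nsgp \Gamma$.
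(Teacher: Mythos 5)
Your worry about whether a representative of the outer class of $\alpha$ actually preserves $\ker\bigl(\Gamma\to G_i\to \overline{G_i}^{M}\bigr)$ is a legitimate one, and your route is genuinely different from the paper's: the paper's proof is a short restriction argument (it notes that $\ad{\gamma_i}\circ\alpha$ preserves $G_i$, that its restriction is a non-trivial finite-order element of $\out{G_i}$, and invokes the Minkowski property of $\overline{G_i}^{M}$), and it does not attempt any realisation on the Grushko splitting; Hensel--Kielak is only used later, in Lemma~\ref{lem;fm_conv}, for automorphisms whose restrictions to all the $G_j$ are trivial up to conjugation.

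However, your key step fails. Nielsen realisation for free products does not give a representative $\alpha''$ that permutes a basis of $L_r$ up to inversion: a fixed point in relative outer space only yields a graph-of-groups automorphism $(\phi_X,(\phi_v),(\phi_e),(\gamma_e))$, and stable letters are sent to $\gamma_{\bar e}^{-1}\phi_X(e)\gamma_e$; when the $\phi_v$ are non-trivial (which is exactly the situation of this Lemma) the corrections $\gamma_e$ cannot in general be removed, not even after conjugating by an arbitrary element of $\out\Gamma$. Concretely, take $\Gamma=G_1*\bk{x}$ with $G_1=\bk{a,b}\simeq\bbZ^2$ and $\alpha\colon a\mapsto a^{-1},\ b\mapsto b^{-1},\ x\mapsto xa$. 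Then $\alpha^2=\mathrm{id}$, $\alpha$ is not inner, and $\alpha|_{G_1}=-I$ is non-trivial in $\out{G_1}$, so the hypotheses hold. Every $\psi\in\aut\Gamma$ preserves the normal closure of $G_1$, so on $H_1(\Gamma)=\bbZ^2\oplus\bbZ$ it acts block upper-triangularly with diagonal blocks $A\in GL_2(\bbZ)$ and $\epsilon=\pm1$ and off-diagonal block $v\in\bbZ^2$; a direct computation then shows that every $\theta$ in the $\out\Gamma$-conjugacy class of $\alpha$ satisfies $r_1(\theta(x))=\pm(A\bar a+2v)$ (using that $G_1$ is abelian, so $r_1$ is read off the $\bbZ^2$-summand of $H_1$), which is non-zero modulo $2$ and in particular never trivial. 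Hence no representative of the class makes the Grushko retraction equivariant, and the identity $r_i\circ\alpha''=\alpha''|_{G_i}\circ r_i$, on which your invariance of $\ker\phi$ rests, is unobtainable. Note also that the weaker condition your final argument really needs, namely $r_i(\alpha''(x_\ell))\in K_M^i$, is not established by anything you say, and in the example above whether it can be arranged depends on the chosen Minkowskian quotient (it can be achieved modulo $3$ but provably not modulo $4$), so it cannot follow from finite order and the Minkowski property alone; some additional argument or a different choice of quotient is required at this point.
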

            \begin{proof}
                Observe that $ {\rm ad}_{\gamma_i} \circ \alpha$ is the same outer class as $\alpha$, hence has finite order in $\out \Gamma$. It follows that in its restriction to $G_i$ (that it preserves) it has finite order up to conjugation by an element normalizing $G_i$, thus in $\out{G_i}$. By assumption, it is non-trivial. Applying the Minkowski property for $G_i$, it survives in the quotient $\overline{G_i}^{M}$.
            \end{proof}
            
            In the next lemma, consider for each $i$ a non-trivial finite quotient by a characteristic subgroup $G_i \to \bar{G_i}^c$. This exists by residual finiteness, and one chooses them so that $ \bar{G_i}^c \simeq \bar{G_j}^c$ if and only if $G_i\simeq G_j$.  
            
            Construct the free product quotient of $\Gamma$: \[ \Gamma \to \overline{\Gamma}^{vf} \; = \;\bar{G_1}^c*\bar{G_2}^c*\dots \bar{G_k}^c*L_r.\]
            
            Observe that $\overline{\Gamma}^{vf}$ is virtually free. Make the further quotient to a finite group \[\overline{\Gamma}^{vf} \to \overline{\Gamma}^{dp}   \; = \; \bar{G_1}^c\times \bar{G_2}^c \times \dots \bar{G_k}^c.\]

            \begin{lem}\label{lem;permute} 
                Let $\alpha$ be an automorphism of $\Gamma$ with an index $i$  for which  for which $[\alpha(G_i)] \neq [G_i]$. Then $\alpha$ descends to the quotient  $\overline{\Gamma}^{dp}$ as a non-trivial outer automorphism.

            \end{lem}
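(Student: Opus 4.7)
The plan is to combine Kurosh/Grushko uniqueness, a canonicity argument ensuring descent, and an elementary structural observation about inner automorphisms of direct products of groups.

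First, by the Kurosh subgroup theorem and uniqueness of Grushko decompositions (the $G_i$ being torsion-free, freely indecomposable, non-free), $\alpha$ induces a permutation $\sigma$ of $\{1,\dots,k\}$ with $[\alpha(G_i)] = [G_{\sigma(i)}]$, and the restriction $\alpha|_{G_i}$ (after a conjugation) gives an isomorphism $G_i \simeq G_{\sigma(i)}$. By hypothesis there is some $i_0$ with $\sigma(i_0)\neq i_0$. Since $\sigma$ identifies $i$ with $j$ only when $G_i\simeq G_j$, and since $\bar{G_i}^c\simeq \bar{G_j}^c$ holds under exactly the same condition by our choice of characteristic quotients, the permutation $\sigma$ is meaningful at the level of the direct factors of $\overline{\Gamma}^{dp}$.

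Next I would verify that (some outer representative of) $\alpha$ preserves the kernel of the quotient map $q:\Gamma\to\overline{\Gamma}^{dp}$. The key is to choose the characteristic kernels $N_i=\ker(G_i\to\bar{G_i}^c)$ canonically: for instance, take $N_i$ to be the intersection of all normal subgroups of $G_i$ of index at most some fixed $n$, with $n$ depending only on the isomorphism type of $G_i$, so that any isomorphism $G_i\to G_j$ automatically carries $N_i$ to $N_j$. Then, appealing to the Hensel--Kielak realization theorem (which is available in the finite-order setting of Proposition~\ref{prop;fm_good_quotients}), I would replace $\alpha$ by a suitable outer-class representative $\alpha'$ acting as a graph-of-groups automorphism, in which $\alpha'(G_i)=G_{\sigma(i)}$ is an exact equality and the remaining relations defining the direct product quotient are respected. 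This produces a descended automorphism $\bar{\alpha'}\in\aut{\overline{\Gamma}^{dp}}$ that carries the $i$-th direct factor $\bar{G_i}^c$ to the $\sigma(i)$-th.

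Finally, to see that $[\bar{\alpha'}]$ is non-trivial in $\out{\overline{\Gamma}^{dp}}$, I would invoke the following elementary fact: any inner automorphism of a direct product $\prod_j H_j$ of groups acts on the $j$-th factor as conjugation by the $j$-th coordinate of the conjugator alone (since other coordinates commute with $H_j$), and therefore preserves each direct factor as a subgroup. Since $\bar{\alpha'}$ sends $\bar{G_{i_0}}^c$ to the \emph{distinct} subgroup $\bar{G_{\sigma(i_0)}}^c$, it cannot be inner.

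The main obstacle in this outline is the descent step: one must simultaneously arrange that the characteristic kernels $N_i$ be canonical enough for the permutation $\sigma$ to preserve the normal closure $\bk{\bk{\bigcup_i N_i}}$, and that a representative of the outer class $[\alpha]$ be chosen (via Hensel--Kielak) so that the full kernel of $q$ --- including the relations that kill $L_r$ and impose commutation between the factors --- is invariant. Once descent is secured, the non-triviality in $\out{\overline{\Gamma}^{dp}}$ is a robust and essentially tautological consequence of the direct product structure.
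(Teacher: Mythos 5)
Your outer framework is sound, and your final step is essentially the paper's argument: the descended map sends a surviving element of $G_{i_0}$ into the direct factor $\bar{G_{\sigma(i_0)}}^c$, and since conjugation in a direct product acts coordinate-wise (each factor is normal), no inner automorphism of $\overline{\Gamma}^{dp}$ can move a nontrivial element of one factor to another factor; so non-triviality of the outer class is indeed immediate once descent is secured. Your canonical choice of the kernels $N_i$ (so that any isomorphism $G_i\to G_j$ carries $N_i$ to $N_j$) is also a reasonable implementation of the compatibility the paper builds into the choice of the quotients $\bar{G_i}^c$.

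The gap is the descent step, which you yourself leave open, and your proposed route does not fit the statement. Lemma \ref{lem;permute} is about an arbitrary automorphism moving some $[G_i]$ — it is used in Proposition \ref{prop;fm_good_quotients} precisely to dispose of such automorphisms before any finite-order normal form is taken — so the Hensel--Kielak realization theorem, which applies to finite-order outer classes, is not available here; and even granting a graph-of-groups representative with $\alpha'(G_i)=G_{\sigma(i)}$, you never address the actual difficulty, which sits in the free part: $\ker q$ (for $q:\Gamma\onto\overline{\Gamma}^{dp}$) contains the chosen complement $L_r$, and an automorphism may send a basis element $x$ of $L_r$ to $xg$ with $g\in G_j\setminus N_j$, so that $q\circ\alpha$ does not kill $\ker q$; realization does not eliminate such twisting. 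The paper's own proof treats descent as immediate (relying on the compatible characteristic choice of the $N_i$) and then runs the same non-triviality argument you give. If you want to close your acknowledged obstacle without extra hypotheses, the economical repair is the one the paper sets up at the start of Section \ref{sec;EMP}: replace $\ker q$ by its characteristic core (the intersection of all subgroups of $\Gamma$ of the same finite index). Every automorphism descends to that characteristic finite quotient, which still surjects onto $\overline{\Gamma}^{dp}$; choosing $g_{i}\in G_{i}$ with $q(\alpha(g_{i}))\neq 1$, the images of $g_i$ and $\alpha(g_i)$ remain non-conjugate there because their projections to $\overline{\Gamma}^{dp}$ are supported in the distinct coordinates $i$ and $\sigma(i)$, and the conclusion follows with no realization theorem at all.
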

            \begin{proof}
                First, it is clear that all automorphisms of $\Gamma$ descend to $\overline{\Gamma}^{dp}$.
                Let $g_i \in G_i$ that survives in $ \bar{G_i}^c$. Then $\bar \alpha$ sends $\bar g_i$ to a non-trivial element of another direct factor, hence a non-conjugate element; this proves the claim. 
            \end{proof}

            \begin{conv} \label{conv}
                From now on,  $\alpha$ is an automorphism of $\Gamma$ that is non-trivial in $\out \Gamma$, and of finite order in $\out \Gamma$, and such that, for all indices $i$, there exists $\gamma_i \in \Gamma$ for which ${\rm ad}_{\gamma_i} \circ \alpha|_{G_i}: G_i \to G_i$ is the identity of $G_i$. 
            \end{conv}
            
            To make sense of such $\alpha$ we need the following statement.
            
            \begin{lem}\label{lem;fm_conv}
              There are only finitely many conjugacy classes of elements as given in Convention \ref{conv} in $\out \Gamma$.
              
              Moreover, for each such $\alpha$, there exists $L'_r<\Gamma$ and $\gamma\in \Gamma$  such that\begin{itemize} \item $\Gamma$ splits as a free product $G_1*\dots G_k*L'_r$, \item   $\ad{\gamma} \circ \alpha (L'_r) = L'_r$
              \item $\ad{\gamma} \circ \alpha|_{L'_r}$ is non-trivial and of finite order in $\out{L'_r}$. 
              \end{itemize}
            \end{lem}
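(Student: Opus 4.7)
The plan is to apply Nielsen realisation for free products to bring $\alpha$ into a canonical geometric form. First I would invoke the Nielsen realisation theorem for free products: every finite subgroup of $\out\Gamma$ preserving the conjugacy class of each $G_i$ is realisable as an action on some Grushko $\Gamma$-tree, equivalently as a graph-of-groups automorphism of the associated quotient graph of groups. Applying this to the finite cyclic subgroup $\bk{[\alpha]} \leq \out\Gamma$, whose elements preserve each $[G_i]$ by Convention \ref{conv}, one obtains a Grushko graph of groups $\calG$ of $\Gamma$---with vertex groups the $G_i$ together with finitely many trivial vertex groups, and trivial edge groups---and a graph-of-groups automorphism $\tilde\alpha$ representing $[\alpha]$ such that $\tilde\alpha$ fixes every vertex carrying some $G_i$.

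Second, by Convention \ref{conv}, for each $i$ the vertex-group automorphism of $\tilde\alpha$ at the $G_i$-vertex is inner in $G_i$, say equal to $\ad{h_i}$ for some $h_i \in G_i$. I would adjust $\tilde\alpha$ by composing, at each $G_i$-vertex, with the graph-of-groups automorphism that is trivial everywhere except that it twists the edges incident to that vertex by $h_i$; this turns the induced vertex-group automorphism into the identity simultaneously for every $i$, while remaining in the outer class $[\alpha]$. The resulting $\tilde\alpha'$ thus acts trivially on each $G_i$ and permutes the trivial-vertex part of $\calG$ combinatorially; the corresponding free factor $L'_r$ in the induced Grushko splitting $\Gamma = G_1 * \cdots * G_k * L'_r$ is preserved by $\tilde\alpha'$, and the element $\gamma$ is extracted from the change-of-basepoint data, so that $\tilde\alpha' = \ad\gamma \circ \alpha$ as automorphisms of $\Gamma$.

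Third, the restriction $\tilde\alpha'|_{L'_r} \in \aut{L'_r}$ inherits finite order in $\out{L'_r}$ from the finite order of $[\alpha]$ in $\out\Gamma$; non-triviality holds because otherwise, triviality on each $G_i$ together with innerness on $L'_r$ would force $\tilde\alpha'$ itself to be inner on $\Gamma$---using that centralisers of non-trivial elements of $G_i$ in $\Gamma$ lie inside $G_i$, a standard malnormality fact in free products---contradicting non-triviality of $[\alpha]$ in $\out\Gamma$. For the finiteness of conjugacy classes, the above construction sends each $[\alpha]$ satisfying Convention \ref{conv} to an isomorphism class of pairs $(\calG,\tilde\alpha')$; the underlying graph of $\calG$ has complexity bounded in terms of $k$ and $r$, and once the graph is fixed the group of combinatorial graph automorphisms fixing the $G_i$-vertices is finite, yielding finitely many pairs $(\calG,\tilde\alpha')$ and hence finitely many conjugacy classes in $\out\Gamma$.

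The hard part will be step two: the local conjugators $h_i \in G_i$ cannot in general be amalgamated into a single inner automorphism of $\Gamma$, and one must exploit that the twist coefficients attached to different edges in the Bass diagram of $\calG$ give independent degrees of freedom in the outer class, which can be adjusted at each $G_i$-vertex separately without altering $[\tilde\alpha]$ in $\out\Gamma$.
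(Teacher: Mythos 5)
Your overall route is the same as the paper's: invoke the Hensel--Kielak Nielsen realisation theorem for free products to represent $[\alpha]$ as an automorphism of a Grushko graph of groups fixing the $G_i$-vertices, normalise the vertex maps to the identity using Convention \ref{conv}, and take $L'_r$ to be the free factor carried by the underlying graph. However, there is a genuine gap at the decisive step. After you kill the vertex automorphisms $\ad{h_i}$, the graph-of-groups automorphism still carries Bass twisting data $(\gamma_e)_e$ with $\gamma_e$ lying in the vertex groups, and in particular possibly in the $G_i$. Your claim that the resulting $\tilde\alpha'$ ``acts trivially on each $G_i$ and permutes the trivial-vertex part of $\calG$ combinatorially'', and hence preserves $L'_r$, is exactly what fails if some $\gamma_e\neq 1$: in the Bass group an edge generator is sent to $\gamma_{\bar e}^{-1}e\gamma_e$, which does not lie in the free factor generated by the graph. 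Adjusting ``twist coefficients at each $G_i$-vertex separately'', as you propose in your last paragraph, does not remove this data without leaving the outer class: nontrivial twists are non-inner and generate infinite-order elements of $\out\Gamma$.

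This is precisely where the paper uses the hypotheses you never invoke at this point, namely that $[\alpha]$ has \emph{finite order} in $\out\Gamma$ and that the $G_i$ are \emph{torsion-free}. After normalising so that $\alpha'=(\alpha_X,(\mathrm{Id}_v),(\mathrm{Id}_e),(\gamma_e))$, one computes that powers of $\alpha'$ have twisting coefficients built from powers of the $\gamma_e$; since each nontrivial $\gamma_e$ has infinite order, finiteness of $[\alpha]$ in $\out\Gamma$ forces $\gamma_e=1$ for every edge. Only then is $\alpha'=(\alpha_X,\mathrm{Id},\mathrm{Id},1)$, so that $L'_r$ is preserved and the restriction to $L'_r$ is induced by the graph automorphism, giving the finite-order, non-trivial outer class (non-trivial because otherwise $\alpha'$ would be trivial in $\out\Gamma$). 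Your finiteness count and your non-triviality argument via malnormality are acceptable sketches, but without the vanishing of the $\gamma_e$ the second and third bullet points of the lemma are not established.
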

            
            \begin{proof}
               By \cite[Corollary 6.1]{hensel_kielak_nielsen_2018} all such outer automorphisms must fix a barycentre of a simplex in the relative outer-space of the free product. In other words,  the group   $\Gamma$ splits as a graph of groups with trivial edge groups and infinite vertex groups that are conjugate to $G_i$, in such a way that $\alpha$ induces an automorphism of graph of groups.
                
                This allows us to write an element  $\ad{\gamma} \circ \alpha$ of the outer class of  $\alpha$, as a graph-of-group automorphism of some fixed free-splitting graph of groups $\bbX$. Let it be  $\ad{\gamma} \circ \alpha = (\alpha_X, \alpha_v, \alpha_e, \gamma_e)$. By assumption on $\alpha$, one may take $\alpha_v = Id$ for all vertices and $\alpha_e = Id$ for all edges as well.  Also, the graph automorphism $\alpha_X$ must induce the trivial permutation on the non-free vertices of the graph (i.e. those that are attached to a non-trivial group).

                Write  $\ad{\gamma} \circ \alpha = \alpha'$ to ease notation.

                It follows that $(\alpha')^s = (\alpha_X^s, (Id_v)_v, (Id_e)_e, (\gamma_e^s)_e )$. Since $\gamma_e$ has infinite order,\footnote{The result would hold with little modification if each free factor of $\Gamma$ had only finitely many torsion elements  -- in particular for the free product of finite groups} 
                if $[\alpha^m] =[Id]$, then $\gamma_e = 1$. 
            
                Thus $\ad{\gamma} \circ \alpha = (\alpha_X, Id_v, Id_e, 1)$. Let $L'_r$ be the free factor of $\pi_1(\bbX, v_0)$ defined by the graph $X$. We see that $L'_r$ is preserved by $\alpha'$, and that $\alpha'$ induces a finite order automorphism of $L'_r$, which is non-trivial in $\out{L'_r}$ (otherwise $\alpha'$ would be  trivial in $\out \Gamma$.
            
                It follows that there cannot be more conjugacy classes of automorphisms $\alpha$ (in $\out \Gamma$) than the number of orbits of cells in the relative outer space multiplied by the maximal order of graph automorphism groups of thus appearing graphs.
            \end{proof}
            
            We may thus proceed to analyze the automorphisms $\alpha$ satisfying Convention \ref{conv} one by one.
            
            \begin{lem}\label{lem;one-by-one_conv}
              If $\alpha$ is an automorphism satisfying Convention \ref{conv}, then there exists a finite quotient of $\Gamma$ in which $\alpha$ descends as a non-trivial outer automorphism.  
            \end{lem}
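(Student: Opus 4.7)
The plan is to descend $\alpha$ to a finite direct product using the Minkowski property for free groups. First I would invoke Lemma \ref{lem;fm_conv} together with the graph-of-groups structure exhibited in its proof: there is a representative $\alpha' = \ad{\gamma} \circ \alpha$ of the outer class of $\alpha$ and a free splitting $\Gamma = G_1 \ast \cdots \ast G_k \ast L'_r$ such that $\alpha'$ is realized as a graph-of-groups automorphism of the form $(\alpha_X, \mathrm{Id}_v, \mathrm{Id}_e, 1)$. In particular $\alpha'$ preserves each free factor (with $L'_r$ non-trivial), restricts to $\mathrm{Id}$ on every $G_i$, and restricts to $L'_r$ as a non-trivial, finite-order outer automorphism.

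Next I would use the classical Minkowski property for finitely generated free groups: the characteristic quotient $L'_r \twoheadrightarrow \bar L'_r := L'_r / ([L'_r, L'_r] \cdot (L'_r)^3) \simeq (\bbZ/3\bbZ)^{\mathrm{rk}\, L'_r}$ induces a map $\out{L'_r} \to GL_{\mathrm{rk}\, L'_r}(\bbZ/3\bbZ)$ with torsion-free kernel (Baumslag--Taylor's theorem, a corollary of Minkowski's lemma). Hence the class of $\alpha'|_{L'_r}$ in $\out{\bar L'_r}$ is non-trivial. For each $G_i$, using the residual finiteness hypothesis I would choose any finite quotient $G_i \twoheadrightarrow \bar G_i$ -- no characteristic condition is needed since $\alpha'|_{G_i} = \mathrm{Id}$. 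Composing these surjections produces a finite quotient
\[
\Gamma \twoheadrightarrow \bar G_1 \ast \cdots \ast \bar G_k \ast \bar L'_r \twoheadrightarrow D := \bar G_1 \times \cdots \times \bar G_k \times \bar L'_r,
\]
to which $\alpha'$ descends, since its kernel is preserved factor-by-factor: trivially on each $G_i$, and via characteristicity of $\bar L'_r$ on the last factor.

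It remains to verify that the induced automorphism $\bar\alpha' \in \aut D$ has non-trivial class in $\out D$. Suppose for contradiction that $\bar\alpha' = \ad{d}$ for some $d = (g_1, \ldots, g_k, \ell) \in D$. Because distinct factors of a direct product centralize one another, the restriction of $\ad d$ to the $\bar L'_r$-factor coincides with $\ad \ell$, so $\bar\alpha'|_{\bar L'_r}$ would be inner in $\bar L'_r$ -- contradicting our Minkowski choice. Since $\alpha$ and $\alpha'$ have the same class in $\out \Gamma$, their images in $\out D$ also agree, and therefore $\alpha$ itself descends to $D$ as a non-trivial outer automorphism. The only real point requiring care is the compatibility between the collapse from the free product $\bar G_1 \ast \cdots \ast \bar G_k \ast \bar L'_r$ onto the direct product $D$ and the non-triviality of $[\alpha'|_{L'_r}]$; this is precisely what the direct-product centralizer argument above secures.
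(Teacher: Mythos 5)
Your argument is correct, and it follows the same strategy as the paper's (terse) proof, but you actually supply a step the paper leaves implicit and which is not entirely obvious: how to pass from a finite quotient of $L'_r$ separating $\alpha'$ to a finite quotient of $\Gamma$ doing the same. The paper's one-line proof just says ``it suffices to notice that there is a finite quotient of $L'_r$ on which $\alpha'$ descends as a non-trivial outer-automorphism,'' leaving the reader to construct the promotion to $\Gamma$. Your direct-product construction $D = \bar G_1 \times \cdots \times \bar G_k \times \bar L'_r$ does exactly that: the kernel of $\Gamma \to D$ is $\alpha'$-invariant because $\alpha'$ is the identity on each $G_i$ and preserves $L'_r$ with $\ker(L'_r \to \bar L'_r)$ characteristic, and then the direct-product centralizer argument cleanly shows that $[\bar\alpha'] \neq 1$ in $\out D$ (using that $\bar L'_r$ is abelian, so inner automorphisms of $D$ act trivially on the $\bar L'_r$-factor). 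Two small observations worth recording: first, your use of Baumslag--Taylor plus Minkowski is exactly what the paper's cited Corollary 3.2 packages; second, while the lemma does not require the quotient to be characteristic, the later combining lemma in the paper (``$K$ separates both $\alpha_1, \alpha_2$'') handles the upgrade to characteristic quotients when it is needed for Theorem \ref{theo;EMP_free_product}, so you were right not to worry about that here.
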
 
            
            \begin{proof}
              Taking the notations of the previous lemma, it suffices to notice that there is a finite quotient of $L'_r$ on which $\alpha'$ descends as a non-trivial outer automorphism. This is the Minkowskian property for free groups, proved in \cite[Corollary 3.2]{dahmani_touikan_reducing_2021}
            \end{proof}

             We thus showed Lemma \ref{lem;one-by-one_conv}  that all automorphisms satisfying the convention \ref{conv} are separated in a finite quotient, and Lemma \ref{lem;fm_conv}, that there are only finitely many conjugacy classes of them. We also showed (Lemma \ref{lem;induce},  \ref{lem;permute}) that there are finitely many finite quotients separating all finite order outer automorphisms that do not satisfy the convention.
        
                 Proposition \ref{prop;fm_good_quotients} is thus established.
        
            It is therefore sufficient to prove the following, in order to obtain Theorem \ref{theo;EMP_free_product}. Let $\alpha \in \out \Gamma$ and let $K\leq \Gamma$ be a finite index subgroup. If $\alpha([K])=[K]$ and $\alpha$ descends to a non-trivial outer automorphism of $\Gamma/K$ then we say \define{$K$ separates $\alpha$}.
            
            \begin{lem}
              If $K_1$ separates $\alpha_1$ and $K_2$ separates $\alpha_2$, then there is a subgroup $K$ that separates both $\alpha_1$ and $\alpha_2$.
            \end{lem}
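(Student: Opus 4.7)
The plan is to produce a common separating subgroup $K$ by taking a suitable characteristic finite-index subgroup of $\Gamma$ contained in both $K_1$ and $K_2$, and then arguing that separation at the level of $K_i$ forces separation at the level of $K$ via the canonical quotient map $\Gamma/K \to \Gamma/K_i$.

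Concretely, I would start with $K_0 = K_1 \cap K_2$, which has finite index in $\Gamma$. Since $\Gamma$ is finitely generated (being a free product of finitely generated groups), there are only finitely many subgroups of $\Gamma$ of index at most $[\Gamma : K_0]$; let $K$ be their intersection. Then $K$ is a finite-index characteristic subgroup of $\Gamma$ contained in $K_0 \leq K_1 \cap K_2$. In particular, every automorphism of $\Gamma$ preserves $K$, so $\alpha_1$ and $\alpha_2$ descend to automorphisms $\bar\alpha_1, \bar\alpha_2 \in \aut{\Gamma/K}$.

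The verification that $K$ separates both outer classes is then a diagram chase. For each $i$, the inclusion $K \leq K_i$ gives a surjection $p_i : \Gamma/K \twoheadrightarrow \Gamma/K_i$ whose kernel $K_i/K$ is $\alpha_i$-invariant (because $\alpha_i(K_i)=K_i$ by hypothesis and $\alpha_i(K)=K$ by characteristicity). Hence $\bar\alpha_i$ descends through $p_i$ to the automorphism of $\Gamma/K_i$ representing the outer class which, by the separation hypothesis, is not inner. If $\bar\alpha_i$ were inner, represented by conjugation by some coset $gK$, then its further descent would be conjugation by $gK_i$ in $\Gamma/K_i$, hence also inner — contradicting that $K_i$ separates $\alpha_i$. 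Therefore $\bar\alpha_i$ is non-trivial in $\out{\Gamma/K}$ for $i=1,2$, so $K$ separates both $\alpha_1$ and $\alpha_2$, as required.

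There is no real obstacle here: the only point requiring even mild care is the passage from the arbitrary finite-index $K_0$ to a characteristic finite-index $K$, which is handled by the standard trick of intersecting all subgroups of index bounded by $[\Gamma:K_0]$ (a finite family, since $\Gamma$ is finitely generated).
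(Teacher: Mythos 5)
Your proof is correct and follows essentially the same route as the paper: pass to a characteristic finite-index subgroup contained in $K_1\cap K_2$ and observe that, since $\Gamma/K_i$ is a further quotient of $\Gamma/K$ and inner automorphisms descend to inner automorphisms, non-triviality in $\out{\Gamma/K_i}$ forces non-triviality in $\out{\Gamma/K}$. The only (immaterial) difference is the construction of $K$: you intersect all subgroups of index at most $[\Gamma:K_1\cap K_2]$, while the paper intersects the automorphic images of $K_1\cap K_2$; both yield a characteristic finite-index subgroup inside $K_1\cap K_2$.
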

            \begin{proof}
                 We will consider $K$ to be the intersection of all images of $K_1 \cap K_2$ by automorphisms (it is a finite intersection of subgroups sharing the same index in $\Gamma$). To show that $K$ is as required, we need examine to how $\alpha_1$ survives in $\Gamma/K$. First it induces an automorphism of $\Gamma/K$ since the quotient is characteristic. Second, $\Gamma/K_1$ is a quotient of $\Gamma/K$ on which $\alpha_1$ descends non-trivially non-trivially as an outer automorphism, thus it cannot be a trivial outer automorphism in $\Gamma/K$.
             \end{proof}
             
        \subsection{Effective Minkowskian property for one-ended groups with canonical splittings.}
     
            In this section, we consider the  Minkowskian property for groups for which a canonical decomposition as a graph-of-groups $\bbX$ is given. In that case, all outer automorphisms can be expressed as elements of $\delta \aut \bbX$, thus we clarify a possible congruence strategy to separate the torsion in  $\delta \aut \bbX$.
     
            \subsubsection{From graphs-of-groups to vertex groups}
     
     The following Proposition isolates two origins of torsion in  $\delta \aut \bbX$, either witnessed by permuting vertices or by restricting to a non-trivial finite order outer automorphism of a vertex group.
     
     \begin{prop}\label{prop;2cases}
       Let $\Gamma$ be the fundamental group of a   
       graph of group $\bbX$, and let $\phi = (\phi_X, (\phi_v)_v, (\phi_e)_e, (\gamma_e)_e) \in \delta \aut \bbX$ be non-trivial and of finite order in $\out \Gamma$. Suppose furthermore that for each non-abelian vertex group $\bbX_v$, the quotient by its centre is torsion-free.

       Then if $\phi_X = Id$, for at least  one vertex $v$, $\phi_v$ defines a non-trivial, finite order element in $\out{\bbX_v}$.
     \end{prop}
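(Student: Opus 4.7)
The plan is to prove the contrapositive: under the assumption $\phi_X = \mathrm{Id}$, if no vertex component $\phi_v$ is non-trivial of finite order in $\out{\bbX_v}$, then $\phi$ is trivial in $\out \Gamma$.

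First I would check that finite order of $[\phi]$ in $\out \Gamma$ automatically forces each $[\phi_v]$ to be finite order in $\out{\bbX_v}$. Indeed, if $\phi^N = \ad g$ is inner for some $g \in \Gamma$, then since $\phi_X = \mathrm{Id}$ the conjugation $\ad g$ preserves each vertex subgroup $\bbX_v \leq \Gamma$ up to $\Gamma$-conjugacy, and a direct inspection of the composition rule \eqref{eq:delta_0-composition} and of the induced twisted-equivariant action on the Bass-Serre tree shows that $\phi_v^N$ is trivial in $\aut{\bbX_v}$. Under the contrapositive hypothesis, this forces every $\phi_v$ to already be trivial in $\out{\bbX_v}$, so I can write $\phi_v = \ad{g_v}$ with $g_v \in \bbX_v$. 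The Bass diagram \eqref{eq:Bass_Diagram_delta_0} combined with injectivity of $\tau_e$ then forces each $\phi_e$ to be an inner automorphism $\ad{h_e}$ with $h_e \in \bbX_e$.

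The second step is to reduce $\phi$ within its class in $\out \Gamma$ to a pure generalized Dehn twist. The kernel of the map $\delta_0 \aut \bbX \to \out \Gamma$ contains the ``inner'' graph-of-groups automorphisms arising from conjugation in $\Gamma$, which have exactly the inner vertex/edge shape described above, with the $\gamma_e$ parameters determined by Bass-diagram compatibility. So by composing $\phi$ with a suitable element of this kernel, I may replace $\phi$ by an equivalent representative $\tilde\phi = (\mathrm{Id}, (\mathrm{Id}_v), (\mathrm{Id}_e), (\tilde\gamma_e))$, an element of the small modular group of $\bbX$ with $\tilde\gamma_e \in Z_{\bbX_{\tau(e)}}(\tau_e(\bbX_e))$, still of finite order in $\out \Gamma$.

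The main obstacle, and the technical heart of the argument, will be the third step: showing that such a finite-order generalized Dehn twist must be trivial in $\out \Gamma$, using the hypothesis that $\bbX_v/Z(\bbX_v)$ is torsion-free for each non-abelian $\bbX_v$. By Proposition \ref{prop;formula_for_GDT}, $\tilde\phi^n$ is again a Dehn twist, with parameters $\tilde\gamma_e^n$. The relation $\tilde\phi^n = \ad g$ (for some $g \in \Gamma$ and $n \geq 1$) constrains the $\tilde\gamma_e^n$ to arise coherently from the images of $g$ in vertex groups along the underlying graph. The hypothesis confines all torsion in each non-abelian vertex group to its centre; this lets me extract an $n$-th root coherently edge by edge in the centralizers, and then glue these roots into a single element $g_0 \in \Gamma$ realizing $\tilde\phi$ itself as $\ad{g_0}$, contradicting non-triviality in $\out \Gamma$. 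The hardest part will be carrying out this $n$-th root extraction globally over the whole graph of groups while controlling the interaction with the central parts of the vertex groups; this is precisely where the torsion-free-quotient-by-centre hypothesis is indispensable.
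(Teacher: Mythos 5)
Your step 2 is where the argument breaks. From ``each $\phi_v$ is inner in $\bbX_v$, say $\phi_v=\ad{g_v}$'' you claim you can compose $\phi$ with an element of the kernel of $\delta_0\aut\bbX\to\out\Gamma$ and obtain a representative of the \emph{same} class in $\out\Gamma$ of the form $(\mathrm{Id},(\mathrm{Id}_v),(\mathrm{Id}_e),(\tilde\gamma_e))$. But the kernel does not contain arbitrary tuples with inner vertex components: an inner automorphism $\ad{h}$ of $\Gamma$ restricts on the (conjugates of the) vertex groups to conjugations all governed by the single element $h$, whereas your correction requires independent conjugators $g_v$, one per vertex. Vertex-wise conjugations by independent elements are precisely the kind of graph-of-groups automorphisms (like Dehn twists themselves) that are in general \emph{non}-trivial in $\out\Gamma$, so composing with them changes the outer class; your $\tilde\phi$ need be neither non-trivial nor of finite order in $\out\Gamma$, and step 3 is then applied to the wrong object. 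Step 1 has a related flaw: $\phi^N=\ad{g}$ does not give $\phi_v^N=\mathrm{Id}$ in $\aut{\bbX_v}$ (already for $N=1$, take $\phi=\ad{h}$ with $h\in\bbX_v$ non-central); at best $\phi_v^N$ is conjugation by an element of $\Gamma$ normalizing a conjugate of $\bbX_v$, and getting finite order in $\out{\bbX_v}$ from that needs a normalizer argument you do not supply. Finally, step 3 --- which you yourself identify as the technical heart (finite-order generalized Dehn twists are trivial, via a coherent $n$-th root extraction and gluing) --- is only announced, not carried out, and it is exactly the hard part: the relation $\tilde\phi^n=\ad{g}$ constrains $g$ only through normal forms across the whole graph, not edge by edge.

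For comparison, the paper never reduces to a pure Dehn twist. It composes with a single inner automorphism so that $\phi_{v_0}=\mathrm{Id}$ at one base vertex, then re-expresses $\phi$ (changing the tuple but not the induced automorphism of $\Gamma$) so that $\gamma_e=1$ on a spanning tree, and keeps the remaining conjugators $\ad{g_v}$. Taking $v_1$ closest to $v_0$ with $g_{v_1}$ non-central, the hypothesis that $\bbX_{v_1}/Z(\bbX_{v_1})$ is torsion free is used exactly to guarantee that $g_{v_1}^n$ is non-central for every $n$; an explicit normal-form computation of $\phi^n$ applied to a test element $g_0e_1\cdots e_k\,g\,\bar e_k\cdots\bar e_1 g_0'$ then shows $\phi^n$ is not inner for any $n$, contradicting finite order. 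If you want to rescue your outline you would have to replace step 2 by this kind of local normalization and run the normal-form analysis with the residual conjugators $g_v$ and the twistors $\gamma_e$ present simultaneously, rather than hoping to separate the two.
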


     \begin{proof}
        Assume the contrary: all $\phi_v$ must be of the form ${\rm ad}_{g_v}$. 
        We choose a preferred vertex $v_0$, with non-abelian vertex group,  and identify $\Gamma$ with $\pi_1(\bbX, v_0)$. After composition with an inner automorphism of   $\Gamma$, we may assume that  $g_{v_0} =1$, that is $\phi_{v_0} = Id$.

        Since $\phi_X $ is the identity, in the Bass group each edge $e$ is sent, by $\phi$,  on $\gamma_{\bar e}^{-1} e \gamma_e$. 
        
         If we choose $\tau $ a maximal subtree of $X$, one may change the expression of $\phi$ without changing its image in $\out \Gamma$, so that, for each edge $e$ in $\tau$ pointing outward from  $v_0$, the element $\gamma_e$ is trivial. To see this,  enumerate the edges in $\tau$ respectively to the distance to $v_0$ and modify iteratively as follows:  for a given $e$,  change   $\gamma_e$ by $1$, change    $g_{\tau(e)}$ by $ g_{\tau(e)}\gamma_e^{-1}$, and all  $\gamma_{\bar e'}$ for $e'$ exiting $\tau(e)$ by $ \gamma_{\bar e'} \gamma_e^{-1} $ (this was already done for $e' = \bar e$ !). It is an immediate computation to see that the Bass diagrams are satisfied, and the automorphism on $\Gamma= \pi_1(\bbX, v_0)$ is the same. 
         (For that, note that, in the Bass group, the element $ehe'$ is sent on $\gamma_{\bar e}^{-1} e \gamma_e g_{\tau(e)}^{-1}\,  h\,  g_{\tau(e)}   \, \gamma_e^{-1} \gamma_e \,  \gamma_{\bar e'}^{-1} e' \gamma_{e'} $.)

        If all $\phi_v$ are the identity, then, since all the $\gamma_e$ are trivial or have infinite order, it follows that $\phi$ is trivial or has infinite order.

        Consider $v_1$ closest to $v_0$ (in $\tau$) such that $\phi_{v_1}$ is not the identity. It is ${\rm ad}_{g_{v_1}}$ for some $g_{v_1}$ non-central in $\bbX_{v_1}$ (in particular $\bbX_{v_1}$ is non-abelian). By assumption, no power of $g_{v_1}$ is central. Let $p=e_1\dots e_k$ be a path from $v_0$ to $v_1$ in $\tau$, and consider \[\gamma(g) = g_0 e_1 \dots e_k g \bar e_k \dots \bar e_1 g'_0\] for $g\in \bbX_{v_1}$, and $g_0, g'_0$ not in adjacent edge groups and not inverse of each other (which is possible by choice of $v_0$).  
        
        Using that $\gamma_{e_i}=1$, one can compute that  the image of $\gamma(g)$ by $\phi^n$ is 
        
         \[\phi^n(\gamma(g)) \; = \;  g_0  \, x_{1,n} \, e_1 \, x_{2, n} \, e_2 \dots x_{k,n} e_k   
         \, g^{g_{v_1}^n}  
         \, \bar e_k \,  x_{k,n}^{-1} \, \bar e_{k-1} \dots   x_{1,n}^{-1} \,  g'_0  \]
        
        where the $x_{i, n} =( \gamma_{\bar{e_i}})^{-n}$ are in the vertex group $\bbX_{\tau(e_{i-1})}$.

        For all $n$, $ g_{v_1}^{n}$ is, by assumption, non-central in its vertex group, therefore  we may find  
        $g$ outside the centralizer of $g_{v_1}^{n}$,  
        so that 
        $  
        g^{g_{v_1}^n} 
        $ is different from $g$.  It follows that, since the expression of $\phi^n(\gamma(g))$ is in normal form,
        that $\phi^n$ is non-trivial. Since it is trivial on $\bbX_{v_0}$, but non-trivial on $\bbX_{v_1}$ it is non-trivial in $\out \Gamma$. This is in contradiction with the assumption that $\phi$ has finite order.
        \end{proof}
        
        In order to separate the torsion in the outer automorphism group, we will need particular quotients of our graphs of groups. The first step is to quotient the vertex groups to reach a virtually free group, which we achieve in Proposition \ref{prop:vertex-fill}. This will be achieved in two steps, first producing a clean cover of the graph of groups (Proposition \ref{prop;clean}), and second, filling appropriately the vertex groups. We will then proceed to use this quotient to separate conjugacy classes (Proposition \ref{prop;clean-filled}).

            \subsubsection{Clean covers and vertex fillings} 

    Recall that an element of a free group
    $\bbF$ is said to be \define{primitive} if  belongs to some basis
    of $\bbF$, or equivalently if it generates a cyclic free factor in some free product decomposition.

    We  consider graphs of groups in which vertex groups are either free groups or direct products of free groups with $\mathbb{Z}$, and edge groups are either cyclic or isomorphic to $\mathbb{Z}^2$ and maximal, as in Corollary \ref{coro;hybrid_canonical_for_sbgp}. For short we will call them trivially partially suspended.

    For such a graph-of-groups $\bbX$, we denote by $\bbX_v = \bbF_v \times \langle t_v \rangle$, with $\bbF_v$ free, and $t_v$ either of infinite order or trivial.

    Following \cite{wise_subgroup_2000}, we say that  a  graph-of-groups in this class is  \emph{clean}  if, for all edges $e$, the intersection $\bbF_{\tau(e)}\cap \tau_e(\bbX_e)$ is a cyclic free factor in $\bbF_{\tau(e)}$.

    \begin{prop}\label{prop;clean}
      For any trivially partially suspended  graph-of-groups (as defined above), there exists a computable characteristic finite index subgroup $\Gamma_0 $ of $\Gamma= \pi_1(\bbX, v_0)$ such that the graph of groups decomposition $\bbY$ of $\Gamma_0$ given by its action on the Bass-Serre tree dual to $\bbX$ is trivially partially suspended,  and clean.
    \end{prop}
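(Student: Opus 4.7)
The approach follows the template of \cite{cotton-barratt_conjugacy_2012}: realize $\bbX$ topologically as a graph of spaces, construct a clean finite cover using Marshall Hall's theorem together with Wise's omnipotence theorem, and then intersect with its $\aut\Gamma$-translates to obtain a characteristic subgroup. First I would realize $\bbX$ as a graph of spaces $\calX$ with $\pi_1(\calX)\simeq \Gamma$: each white vertex space is a rose on a free basis of $\bbF_v$ crossed with a circle representing $\langle t_v\rangle$ (or just the rose if $t_v$ is trivial); $\mathbb{Z}^2$ edge spaces are tori $T^2\times I$ attached via the images of the edge groups; cyclic edge spaces are cylinders $S^1\times I$ similarly attached. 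For each vertex $v$ and incident edge $e$, whenever $\bbF_v\cap \tau_e(\bbX_e)$ is nontrivial it is a cyclic subgroup $\langle c_{e,v}\rangle$ of the free group $\bbF_v$, and cleanness amounts to making each such $c_{e,v}$ primitive in the covered vertex group.

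By Marshall Hall's theorem, at each vertex $v$ the finitely many elements $\{c_{e,v}\}$ extend to a free basis of some finite index subgroup $\bbF_v^\sharp\leq \bbF_v$. The main obstacle is the global gluing: along any cyclic edge $e$ from $u$ to $v$, the two boundary circles of the cylinder must lift with matching degrees to the covers of the circles corresponding to $c_{\bar e,u}$ and $c_{e,v}$ respectively, and this must hold simultaneously at every edge. This is precisely what Wise's omnipotence theorem \cite{wise_subgroup_2000} delivers: it allows simultaneous control of the indices with which a prescribed family of elements is detected in finite quotients of free groups, and so permits refining all the Marshall Hall covers consistently. The tori and central-$\langle t_v\rangle$ directions pose no difficulty as they live in abelian quotients. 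The end result is a finite cover $\tilde\calX\to\calX$ corresponding to a finite index subgroup $\Gamma_1\leq\Gamma$, whose induced graph-of-groups decomposition $\bbY_1$ (dual to the Bass-Serre tree of $\bbX$) is clean. That $\bbY_1$ is trivially partially suspended is automatic: vertex stabilizers of the form $\bbF_v\times \langle t_v\rangle$ intersected with $\Gamma_1$ remain of that type after splitting the central projection, and edge stabilizers ($\mathbb{Z}$ or $\mathbb{Z}^2$) only possibly reduce within the same class.

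To upgrade to a characteristic subgroup, let $\Gamma_0$ be the intersection of all $\aut\Gamma$-translates of $\Gamma_1$. Since $\Gamma$ is finitely generated there are only finitely many subgroups of a given finite index, so this intersection is finite and $\Gamma_0$ is a characteristic finite index subgroup. It remains to see that cleanness descends from $\bbY_1$ to $\bbY_0$: at each white vertex of $\bbY_0$ the free part is a finite index subgroup $\bbF\leq \bbF_v^\sharp$; applying Bass-Serre theory to the free product decomposition $\bbF_v^\sharp=\langle c_{e,v}\rangle * K$ (which has trivial edge stabilizers), the subgroup $\bbF$ acts on the associated tree with trivial edge stabilizers and so inherits a free product decomposition in which $\bbF\cap \langle c_{e,v}\rangle$ is a cyclic free factor of $\bbF$. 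Thus $\bbY_0$ is clean. Computability of $\Gamma_0$ follows from the effective form of Marshall Hall's theorem, the effectiveness of Wise's omnipotence in the relevant free-group context, and the ability to enumerate the finitely many subgroups of $\Gamma$ of a prescribed index.
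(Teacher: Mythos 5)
Your proposal follows essentially the same route as the paper: Marshall Hall's theorem to make the edge elements virtually primitive in finite-index subgroups of the vertex free groups, Wise's omnipotence theorem to equalize the degrees (lengths) of all elevations, a graph-of-spaces gluing to assemble a clean finite cover, and then intersecting automorphic images together with the observation that cleanness passes to further finite-index subgroups to obtain a computable characteristic $\Gamma_0$. The one step you elide is the assembly itself: equal elevation degrees alone do not make the \emph{numbers} of elevations agree on the two sides of an edge, and the paper handles this by taking $d_v$ copies of each vertex cover with $d_uE_u=d_vE_v$ before pairing the edge lifts (and coordinates the central directions by using the same exponent $t_v^L$ at every vertex) --- a routine but necessary piece of bookkeeping that your "the end result is a finite cover" glosses over.
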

    We will follow Wise's argument for graphs of free groups, in \cite{wise_subgroup_2000}.  Before proving the proposition, we first need a few lemmas.
    
\begin{lem}\label{lem;plenty_primitives}
  Let $c_1,\ldots,c_k$ be a collection of elements of a free group
  $\bbF$, then there is a computable finite index normal subgroup $K \leq \bbF$
  such that each for any $f \in \bbF$ the
  intersection $  K \cap \langle c_i^f \rangle$   is a free factor of $K$.
  \end{lem}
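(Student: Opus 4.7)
The plan is first to reduce, via normality of $K$ and the invariance of free factors under automorphisms, to the case where $f = 1$, then to apply Marshall Hall's theorem together with the Kurosh subgroup theorem to obtain a normal finite index subgroup in which the roots of the $c_i$ generate cyclic free factors, and finally to enforce a divisibility condition to cover the case where the $c_i$ themselves are proper powers.

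Since $K$ will be normal in $\bbF$, for any $f \in \bbF$ the conjugation $\ad{f}$ is an automorphism of $K$, and $K \cap \langle c_i^f \rangle = (K \cap \langle c_i \rangle)^f$. As automorphisms carry free factors to free factors, it suffices to arrange that $K \cap \langle c_i \rangle$ is a cyclic free factor of $K$ for each $i$ individually. Compute the root $\hat c_i$ of $c_i$, so that $c_i = \hat c_i^{p_i}$ with $\hat c_i$ not a proper power (effectively, from cyclic reduction in $\bbF$). By Marshall Hall's theorem, for each $i$ one effectively constructs a finite index subgroup $H_i \leq \bbF$ such that $H_i = \langle \hat c_i \rangle * H_i'$ as a free product.

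Let $L := \bigcap_i H_i$ and let $K_0$ denote its normal core in $\bbF$, both computable and of finite index in $\bbF$. For each $i$, Bass--Serre theory applied to the tree dual to $H_i = \langle \hat c_i \rangle * H_i'$ (equivalently, the Kurosh subgroup theorem applied to $K_0 \leq H_i$) shows that $K_0$ admits a free product decomposition in which $K_0 \cap \langle \hat c_i \rangle = \langle \hat c_i^{r_i} \rangle$ appears as a cyclic free factor, where $r_i$ is the order of $\hat c_i$ in $\bbF/K_0$. This already settles the lemma whenever each $c_i$ is not a proper power.

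To address proper powers, observe that $K_0 \cap \langle c_i \rangle = \langle \hat c_i^{\mathrm{lcm}(r_i, p_i)} \rangle$ coincides with $K_0 \cap \langle \hat c_i \rangle$ precisely when $p_i \mid r_i$; otherwise it is a proper power of a primitive element and hence not a free factor. To force the divisibility, exploit the primitivity of $\hat c_i$ in $H_i$: the surjection $H_i \onto \bbZ/p_i\bbZ$ sending $\hat c_i \mapsto 1$ and $H_i' \mapsto 0$ has a kernel $H_i''$ of finite index in $\bbF$, and its normal core $N_i \nsgp \bbF$ is a computable finite index normal subgroup of $\bbF$ in which $\hat c_i$ has order divisible by $p_i$ (because $H_i/N_i$ surjects onto $H_i/H_i'' \cong \bbZ/p_i\bbZ$, while $\hat c_i \in H_i$ forces the order of $\hat c_i$ in $\bbF/N_i$ to equal its order in $H_i/N_i$). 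Setting $K := K_0 \cap \bigcap_i N_i$, a computable finite index normal subgroup of $\bbF$, the order $m_i$ of $\hat c_i$ in $\bbF/K$ is divisible by $p_i$, so $K \cap \langle c_i \rangle = K \cap \langle \hat c_i \rangle$; a second application of Kurosh to $K \leq K_0 = \langle \hat c_i^{r_i} \rangle * K_0'$ identifies this intersection as a cyclic free factor of $K$. The main technical point is the simultaneous handling of all the $c_i$: once $K_0$ is chosen so each $K_0 \cap \langle \hat c_i \rangle$ is a cyclic free factor, the crucial observation is that passing to a further finite index subgroup through Kurosh preserves this property, so the divisibility refinement costs us nothing.
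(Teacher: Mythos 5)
Your argument is correct, and it rests on the same two pillars as the paper's proof (Marshall Hall's theorem plus the Kurosh/Bass--Serre description of subgroups of a free product), but the packaging differs in two ways. First, your reduction of the conjugates to the case $f=1$ — normality of $K$ gives $K\cap\bk{c_i^f}=(K\cap\bk{c_i})^f$, and the restriction of $\ad{f}$ to $K$ is an automorphism, which carries free factors to free factors — is cleaner than the paper's treatment, which instead intersects all conjugates $K_i^f$ and reruns, for each $f$, an ellipticity argument in a Bass--Serre tree for a free-product-of-cyclics decomposition of $K_i^f$. Second, your detour through the root $\hat c_i$, and the ensuing $\bbZ/p_i$-quotient needed to force $p_i\mid m_i$, solves a problem you created yourself: Marshall Hall's theorem is a statement about finitely generated subgroups, so it applies directly to $\bk{c_i}$ even when $c_i$ is a proper power in $\bbF$ (e.g. $\bk{a^2}$ is a free factor of the index-two subgroup $\bk{a^2,b,aba^{-1}}$ of $F(a,b)$). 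Applying Hall to $\bk{c_i}$ itself, as the paper does, makes $c_i$ primitive in $H_i$ at once, and then a single application of Kurosh to the normal core yields that $K\cap\bk{c_i}$ is a cyclic free factor, with no divisibility bookkeeping. Your refinement via $N_i$ is nonetheless correct as written (the order of $\hat c_i$ in $\bbF/K$ is indeed divisible by $p_i$, so $K\cap\bk{c_i}=K\cap\bk{\hat c_i}=K\cap\bk{\hat c_i^{r_i}}$, and Kurosh applied to $K\leq\bk{\hat c_i^{r_i}}*K_0'$ finishes), and all the constructions you use are effective, so the computability claim also stands.
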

  \begin{proof}
    By the Marshall Hall Theorem \cite{hall_subgroups_1949} for
    $i=1,\ldots,k$ there are finite index subgroups $K_i \leq \bbF$
    such that $K_i\cap \bk{c_i}$ is primitive. Since $[\bbF:K_i]<
    \infty$, the intersection\[
       K=\bigcap_{f \in \bbF, \\ i \in \{1, \dots k\}} K_i^f
    \] only involves finitely many terms so it is a finite index normal
    subgroup. 
    
    For each $i$ and $f$, the group $\langle c_i^f\rangle \cap K_i^f$ is a free factor in $K_i^f$, hence maximal-elliptic in some Bass-Serre  $K_i^f$-tree associated to a free product of cyclic groups decomposing $K_i^f$. Therefore   $\langle c_i^f\rangle \cap K$ is maximal-elliptic in this tree and is a cyclic free factor of $K$. 
  \end{proof}

  Let $K_v \fidx\leq \bbF_v$ be a finite
  index subgroup and let $e \in \edges X$ such that $\tau(e)=v$. In our notations, 
  $\tau_e(\bbX_e) \cap F_n = \bk{c_e}$. Analogously to
  \cite{wise_subgroup_2000}, we call each  $K_v$-conjugacy class of a  non-trivial intersection
  with a conjugate $K_v \cap \bk{c_e^f}$ an \emph{$e$-elevation} we
  denote the set of $e$-elevations as
  \[\elev{e}{K_v} = \{ \bk{\gamma} \mid\exists f \in F_v, \bk{\gamma} = K_v
    \cap \bk{c_e^f}\}.\] Given an $e$-elevation $\bk{\gamma} \in \elev
  e{K_v}$ we define its \emph{length} to be the number $\ell_\bk\gamma$
  such that \[
    \gamma^f = c_e^{\ell_\bk\gamma}
  \] for some $f \in F_v$. An easy finite degree covering space
  argument gives the following:
  \begin{lem}[Index and lengths]\label{lem:deg-length}
    The sum of the lengths of elevations of an edge equals the
    index. That is to say, if $\tau(e) = v$
    then\[ \sum_{[\bk{\gamma}]_{K_v}\in \elev e{K_v}} \ell_\bk\gamma = [\bbF_v:K_v].
      \]
  \end{lem}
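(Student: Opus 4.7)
My plan is to prove this by a standard orbit-counting argument on cosets, which is the algebraic shadow of the covering space picture the authors are invoking. The key point is that elevations are precisely the orbits of $\bk{c_e}$ acting on $K_v \backslash \bbF_v$ by right multiplication, and the length of an elevation equals the size of the corresponding orbit.

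First I would consider the right action of the cyclic subgroup $\bk{c_e} \leq \bbF_v$ on the finite set of cosets $K_v \backslash \bbF_v$, which has cardinality $[\bbF_v : K_v]$. The orbits of this action are in natural bijection with the double cosets $K_v \backslash \bbF_v / \bk{c_e}$, and via the map $K_v f \bk{c_e} \mapsto [K_v \cap \bk{c_e^{f^{-1}}}]_{K_v}$ (or a suitable variant matching the authors' convention $\bk{\gamma} = K_v \cap \bk{c_e^f}$), these double cosets are in bijection with the set $\elev{e}{K_v}$ of $e$-elevations.

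Next I would compute the size of the orbit through a given coset $K_v f$: it equals $[\bk{c_e} : \mathrm{Stab}(K_v f)]$, and the stabilizer is $\bk{c_e} \cap f^{-1} K_v f$. If $\bk{\gamma} = K_v \cap \bk{c_e^f}$ is the associated elevation with $\gamma^{f^{-1}} = c_e^{\ell_\bk\gamma}$, then conjugation by $f$ identifies $\bk{\gamma}$ with $\bk{c_e^{\ell_\bk\gamma}} \leq \bk{c_e}$, and this is precisely the stabilizer. Hence the orbit has size exactly $\ell_\bk\gamma$.

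Finally, summing the orbit sizes over all orbits recovers the total cardinality of $K_v \backslash \bbF_v$, yielding
\[
\sum_{[\bk{\gamma}]_{K_v} \in \elev{e}{K_v}} \ell_\bk\gamma = [\bbF_v : K_v],
\]
as claimed. There is no serious obstacle here; the only thing to handle carefully is the bookkeeping that matches the authors' convention for which conjugate of $\bk{c_e}$ defines an elevation and on which side one lets $\bk{c_e}$ act, but once the bijection between orbits, double cosets, and $K_v$-conjugacy classes of elevations is set up, the length-equals-orbit-size identity is immediate from the cyclic stabilizer computation.
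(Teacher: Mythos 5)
Your orbit-counting argument is exactly the algebraic form of the ``finite degree covering space argument'' the paper invokes, and two of its three steps are fine: orbits of the right $\bk{c_e}$-action on $K_v\backslash\bbF_v$ correspond to double cosets, and the stabilizer of $K_vf$ is $\bk{c_e}\cap f^{-1}K_vf$, whose index in $\bk{c_e}$ is the length of the associated intersection subgroup. The genuine gap is the step you wave off as bookkeeping: the map from double cosets (equivalently, orbits) to $e$-elevations, $K_vf\bk{c_e}\mapsto [K_v\cap f\bk{c_e}f^{-1}]_{K_v}$, is surjective but is \emph{not} injective for an arbitrary $c_e$, and without injectivity the count breaks --- indeed the statement is false as a bare statement about an arbitrary finite-index $K_v$ and an arbitrary $c_e$. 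Concretely, take $\bbF_v=F(a,b)$, $c_e=a^2$, and $K_v$ the kernel of $F(a,b)\to\bbZ/4\bbZ$ sending $a\mapsto 1$, $b\mapsto 0$. The index is $4$ and the action of $\bk{a^2}$ on the four cosets has two orbits of size $2$, but both orbits yield the single elevation $[\bk{a^4}]_{K_v}$, whose length is $2$: the sum of lengths is $2\neq 4$. So two distinct orbits can collapse onto one $K_v$-conjugacy class of intersection subgroups, and your claimed bijection is where this must be ruled out.

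What rescues the lemma in the paper's setting is the standing hypothesis (from Corollary \ref{coro;hybrid_canonical_for_sbgp} and the assumptions at the start of the clean-cover construction, Proposition \ref{prop;clean}) that the images of edge groups are \emph{maximal}, which forces $c_e$ not to be a proper power in $\bbF_v$, hence $\bk{c_e}$ is maximal cyclic and malnormal (equal to its own normalizer). You need to use this explicitly to get injectivity: if $K_v\cap f_1\bk{c_e}f_1^{-1}$ and $K_v\cap f_2\bk{c_e}f_2^{-1}$ are conjugate by some $k\in K_v$, both are non-trivial because $K_v$ has finite index in $\bbF_v$, so $f_1\bk{c_e}f_1^{-1}$ and $(kf_2)\bk{c_e}(kf_2)^{-1}$ intersect non-trivially; malnormality of $\bk{c_e}$ then gives $f_1^{-1}kf_2\in\bk{c_e}$, hence $K_vf_1\bk{c_e}=K_vf_2\bk{c_e}$. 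With that argument inserted (plus the easy remark that $\ell_{\bk\gamma}$ is independent of the chosen conjugate, since distinct positive powers of $c_e$ are not conjugate in a free group), your proof is complete and coincides with the intended covering-space count.
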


        We may now prove Proposition \ref{prop;clean}
        
        \begin{proof}        
        For each $v$, by Lemma \ref{lem;plenty_primitives}, there exists a $P_v$ a normal subgroup of finite index in $\bbF_v$ such that every elevation of adjacent edge group is a cyclic free factor in $P_v$.  Let $l_\gamma$ the length of the elevation $\bk\gamma$ of an edge group, and $l_0$ the least common multiple of all the $l_\gamma$, ranging over the images of all edge groups in all vertex groups. We apply Wise's omnipotence Theorem \cite[Theorem 3.5]{wise_subgroup_2000} to the collection of conjugacy classes of the generators $\gamma$ of elevations $\bk\gamma$: there exists a constant $L_0$ such that for each tuple of elevations of edge groups, there exists a finite quotient of $P_v$ in which the generator $\gamma$ has order $L_0\times l_0/l_\gamma$. Let $K_v\leq P_v$ be the kernel of this finite quotient. By choice of constants, viewing every $K_v \leq \freegp v$, we have that all elevations have length $L_0\times l_0 =L$.
        
        Now $\bbX_v$ either has an infinite centre but isn't cyclic, in which case it is isomorphic to $\freegp x \times \bk{t_v}$, or $\bbX_v$ is a subgroup of a free group. In all cases, $t_v$ is a generator of the centre of $\bbX_v$ if $\bbX_v$ isn't cyclic and we allow $t_v=1$ if $\bbX_v$ is a subgroup of a free group.  We consider the subgroup $H_v= \bk{K_v,t_v^L} \leq \bbX_v $. It has finite index in $\bbX_v$, and its non-trivial intersections with each conjugate of an edge group $\tau_e(\bbX_e)^g$, for $g\in \bbX_v$, is $L \tau_e(\bbX_e)^g $ (in the notation of abelian groups, as are the  $\tau_e(\bbX_e)^g$). In particular, each intersection is a characteristic subgroup of $\tau_e(\bbX_e)^g $.
        
        We will now construct the graph of groups $\bbY$ with the required properties by constructing a covering space of $\calX$ a standard graph of spaces underlying $\bbX$ as in \cite{scott_topological_1979}. Recall that $X$ is the graph underlying $\bbX$.
        For each vertex $v \in \verts X$ let  $E_v = \frac{[F_v:K_v]}{L}$, and for each edge $e \in \edges X$ we have, by Lemma \ref{lem:deg-length}, that if $i(e)=v$ and $\tau(e)=i(\bar e)=w$ then
        \[|\elev{e}{K_v}| = E_v \textrm{~and~}|\elev{\bar e}{K_w}| =E_w\]
    
     For each vertex $v$, let  
    \[d_v = \frac{\mathrm{lcm}\left(E_w, w\in \verts X\right)}{E_v},\]
    so that for any $u,v \in \verts X$ we have \[ d_uE_u=d_vE_v.
    \] 
    
    We thus define a graph $Y$ by taking as a set of vertices $d_v$ copies of the vertex $v\in \verts X$, for each $v$. We call them lifts of $v$.
    
    For each pair $(u,v)$ related by an edge $\{e, \bar e\}$ in $X$ we thus define, as edges of $Y$,    $d_uE_u$ copies of the edge $\{e, \bar e\}$, attached to the vertices that are lifts of $u$ and $v$ so that there are $E_u$ edges attached to each of the lifts of $u$, and $E_v$ attached to each of the lifts of $v$. Since $d_uE_u=d_vE_v$ is it possible to pair the edges.
    
    Each lift of a vertex $u$ is endowed with a copy of the group $H_u$. Each lift of an edge $e$ is endowed with the group  $L \tau_e(\bbX_e)$. The attaching maps are then natural, considering the elevations of the edges groups of $\bbX$ in our set of vertex groups. 
    
    This thus defines a graph of groups $\bbY$. It is easy to see that there is a natural homomorphism $\pi_1(\bbY, u_0) \to \pi_1(\bbX, u)$ for $u_0$ a lift of $u$, and that it is injective. A classical consideration of graph of spaces associated to $\bbY$, covering a graph of spaces associated to $\bbX$ with $d_{u_0}E_{u_0}$-sheets,    reveals that the image has finite index (see also \cite{scott_topological_1979,wise_subgroup_2000}).
    Finally, intersecting the conjugates of the image of  $\pi_1(\bbY, u_0)$ gives a characteristic finite index subgroup of $\pi_1(\bbX,u)$ that is still clean, and trivially partially suspended.

  \end{proof}

        \begin{prop}[Virtually free vertex fillings]\label{prop:vertex-fill}

        Let $\bbX$ be a trivially partially suspended graph-of-groups, with conventions of notation as above.
    
        Assume furthermore that $\bbX$ is clean, and for each edge $e$, let  $c_e$ be a generator of  $\tau_e(\bbX_e) \cap \bbF_e$ (hence a primitive element).

        Let, for each $v$, $N_v$ be a finite index subgroup of $\bbF_v$. 
        
        Then, for each vertex $v$, there is a computable characteristic finite quotient  $p_v: \bbX_v \to Q_v$ for which $\ker (p_v)\cap \bbF_v  $ is in $N_v$, and such that:
        for each edge $e$, the maps $t: \bbX_e \to \bbX_{\tau(e)} \to Q_{\tau(e)}$ and $i_e: \bbX_e \to \bbX_{i(e)} \to Q_{i(e)}$ both factorize through a finite quotient $\bbX_e \to Q_e$ that is isomorphic to the images in $Q_{\tau(e)}$ and $Q_{i(e)}$.

        In particular, there is a (computable) graph-of-group homomorphism $\bbX \to \bbX_Q$, obtained by substituting the vertex and edge groups by $Q_v, Q_e$,  for which $\pi_1(\bbX_Q, v_0)$ is virtually free.
      
        \end{prop}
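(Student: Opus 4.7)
My plan is to produce, at each vertex $v$, a characteristic finite quotient $p_v\colon\bbX_v\onto Q_v$ that induces the \emph{same} finite quotient of $\bbX_e$ on both sides of every edge, and then to deduce the virtually free statement from Bass--Serre theory applied to a finite graph of finite groups. The core tool is Wise's omnipotence theorem \cite[Theorem 3.5]{wise_subgroup_2000}, applied to the primitive family $\{c_e\}$ supplied by cleanness, combined with a characteristic closure step to ensure invariance under $\aut{\bbX_v}$.

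Concretely, I first fix a large integer $N$ (to be enlarged along the way) and declare the common target on each edge to be $Q_e:=\bbX_e/N\bbX_e$, which is finite abelian and characteristic in $\bbX_e\simeq\bbZ^{\le 2}$. By cleanness together with the structural description in Corollary \ref{coro;hybrid_canonical_for_sbgp}, each incident edge group embeds into $\bbX_v=\bbF_v\oplus\langle t_v\rangle$ as $\langle c_e\rangle$, $\langle t_v\rangle$, or $\langle c_e,t_v\rangle$, with $c_e$ primitive in $\bbF_v$. The required matching of the two edge maps reduces to building a quotient of $\bbX_v$ in which every incident $c_e$ and the element $t_v$ have order exactly $N$ and in which the two factors are independent.

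To build this quotient at $v$, I list the incident primitives $c_{e_1},\ldots,c_{e_k}$ and a finite transversal of $N_v$ in $\bbF_v$. Omnipotence then yields a constant $\kappa_v$ and a finite quotient $\bbF_v\onto\bar{\bbF}_v$ whose kernel lies in $N_v$ (via the transversal separation) and in which every $c_{e_j}$ has order exactly $\kappa_v N$. Replacing $N$ by $\mathrm{lcm}_v(\kappa_v)\cdot N$ globally and rerunning, one may assume all $\kappa_v=1$, so that $Q_v^0:=\bar{\bbF}_v\oplus\bigl(\langle t_v\rangle/\langle t_v^N\rangle\bigr)$ is a finite quotient whose kernel intersects $\bbF_v$ inside $N_v$, and in which every incident edge group surjects onto $Q_e$ with kernel exactly $N\cdot\bbX_e$.

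Finally I replace $K_v^0:=\ker(\bbX_v\onto Q_v^0)$ by its characteristic closure, namely the intersection of all subgroups of $\bbX_v$ of the same (finite) index, which is a finite intersection since a finitely generated group has only finitely many subgroups of any given index. Passing to this smaller characteristic kernel preserves the containment $K_v\cap\bbF_v\subseteq N_v$, but could \emph{a priori} refine the image of each edge group unevenly at its two endpoints; this is the main obstacle. I handle it by an LCM-and-iterate loop: enlarge $N$ to absorb any new orders that appear on the characteristic closures at either endpoint, then rerun the omnipotence step. The process terminates in finitely many rounds because, at each step, the characteristic quotients of $\bbX_e\simeq\bbZ^{\le 2}$ of index bounded by $[\bbX_v:K_v^0]$ are finite in number and of the form $\bbX_e/N'\bbX_e$, so once $N$ is divisible by every such $N'$ the closure no longer refines the edge image. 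The resulting substitution yields a finite graph of finite groups whose fundamental group is virtually free by the classical theorem of Karrass--Pietrowski--Solitar, and all steps (omnipotence as in \cite{wise_subgroup_2000}, intersections of finitely many finite-index subgroups, and the LCM bookkeeping) are effective from a finite presentation of $\bbX$.
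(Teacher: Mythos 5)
Your overall architecture (finite quotients at the vertices that agree on the edges, then a finite graph of finite groups, hence a virtually free fundamental group) is the right one, and the per-vertex use of omnipotence with an lcm adjustment does produce quotients of the fibres $\bbF_v$ with kernel inside $N_v$ in which all incident $c_e$ acquire a common order $N$. The genuine gap is in the step that makes these quotients characteristic. After replacing $\ker(\bbX_v\to Q_v^0)$ by its characteristic core, the intersection with $\bk{c_e}$ can jump to $\bk{c_e^{N'_e}}$ with $N'_e$ depending on the edge \emph{and} on the endpoint, and your termination argument for the ``LCM-and-iterate'' loop is circular: the exponents $N'$ that you propose to absorb into $N$ are bounded only in terms of the index $[\bbX_v:K_v^0]$, which itself changes as soon as you enlarge $N$ and rerun the omnipotence step. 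Each round can therefore create new, larger characteristic cores and hence new exponents, and nothing in the argument shows that the sequence of required values of $N$ stabilises; without termination you never reach a single $N$ for which both edge maps factor through the same $Q_e=\bbX_e/N\bbX_e$.

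The paper avoids this difficulty entirely, and without omnipotence (which it only needs earlier, to build the clean cover). It forms the free group $\bbF_*$ given by the free product of all the vertex fibres $\bbF_v$, intersects the pullbacks of the quotients $\bbF_v/N_v$ to get a finite-index subgroup, and passes to a characteristic finite-index subgroup $K$ of $\bbF_*$ contained in it. Cleanness makes each $c_e$ primitive in $\bbF_{\tau(e)}$, hence primitive in $\bbF_*$; since all primitive elements of $\bbF_*$ lie in a single $\Aut{\bbF_*}$-orbit and $K$ is characteristic, every $c_e$, at every vertex simultaneously, has one and the same order $N$ in $\bbF_*/K$. Setting $K_v=K\cap\bbF_v$, $Q_v=\bbX_v/(K_v\times\bk{t_v^N})$ and $Q_e=\bbX_e/N\bbX_e$ then yields the matching of the two edge maps, the containment $K_v\subseteq N_v$, and computability in one pass. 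If you wish to keep a vertex-by-vertex construction, you must replace your loop by an argument of this kind: enforce the uniformity of the order of the $c_e$ through a symmetry valid for all primitives at once (or some other a priori bound independent of the quotient being built), rather than through an iteration whose stopping condition depends on its own output.
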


        The argument presented below is taken from the proof of
        \cite[Lemma 4.2]{cotton-barratt_conjugacy_2012}.
        \begin{proof}
            First, after possibly taking the intersection in $\bbF_v$ of the conjugates of $N_v$, we may assume that each $N_v$ is normal.

            Consider the free product\[
            \bbF_* = \bigast_{w \in \verts X} \bbF_w
            \] which is itself a free group.  Consider  $\ker ( \bbF_* \to \bbF_v \to \bbF_v/ N_n)$ which is a finite index subgroup, take the intersection $K_0$ of all of them for $v$ ranging in $\verts X$, and   take a characteristic finite index subgroup of $K_0$. It intersects each $\bbF_v$ inside $N_v$.

            We claim that there is $N$ such that, for each vertex $v$, and for each incident edge group $e$, $\tau_e(\bbX_e) \cap K_0$ has index index  $N$ in   $\tau_e(\bbX_e) \cap \bbF_v$.

            We now note that by hypothesis on $\Xi=\pi_1(\bbX)$, 
            for any $e \in \edges X$ the generator $c_e$ of 
            $t (\bbX_e )\cap \bbF_{\tau(e)}$ 
            is a primitive element of $\bbF_{\tau(e)}$, hence of $\bbF_*$ as well. It, therefore, follows that for
            any pair $e,f \in \edges Y$ there is an automorphism
            $\phi \in \Aut{\bbF_*}$ such that $\phi(c_e) =c_f$.  Since  
            $K_0\charsgp \bbF$ is characteristic, the order of  $c_e$ and of $c_f$ in the quotient $\bbF_*/K_0$ are the same:  there exists some
            $N \in \bbZ$ such that for each $e \in \edges X$
                \[K\cap \bk{c_e} = c_e^N.\] 
    
           Recall that according to our notations, for each vertex $v$, $\bbX_v = \bbF_v \times \langle t_v \rangle$, with  $t_v$ of infinite order if $\bbX_v$ is not free, and with $t_v=1$ if $\bbX_v$ is free.   
   
            Setting $K_v = K\cap \bbF_v \triangleleft \bbF_v$, and $Q_v = \bbX_v / (K_v \times \langle t_v^N \rangle)$, and $Q_e = \bbX_e/ N\bbX_e$ (as abelian groups) produces the desired graph of groups $\bbX_Q$, with all the natural attaching maps. 
    
        \end{proof}
    
            \subsubsection{Separation in virtually free vertex fillings}
    
    Show that certain conjugacy classes of $\Gamma$ are separated in a preferred, characteristic,   virtually free vertex filling  of $\bbX$.  
    For that, we begin by expressing the combined effect of clean covers and vertex filling.
    
    Proposition \ref{prop;clean-filled_0} below is of interest and close to what we need. However, we will need a refinement that is harder to parse, but of a similar spirit, that implies this proposition. We state the easier result in order to indicate the direction we are taking.

     \begin{prop}\label{prop;clean-filled_0}
      Let $\Gamma$ be the fundamental group of a  trivially partially suspended  graph of groups $\bbX$. For any pair $(\gamma_1, \gamma_2)$ of non-conjugate elements of $\Gamma$ that are in vertex groups, but not conjugate to images of edge groups,
      there exists a normal (characteristic) finite index subgroup $\Xi$ of $\Gamma$,  
      $m \geq 1$  such that $\gamma_1^m, \gamma_2^m$ are in $\Xi$, and
      there exists a virtually free quotient $V$ of $\Xi$ in which the   elements  $\gamma_1^m$ and $(\gamma_2^m) \in \Xi$ map on non-conjugate elements of $V$.
      
    \end{prop}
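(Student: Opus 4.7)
The plan is to combine the clean covering construction of Proposition \ref{prop;clean} with a sufficiently fine vertex filling from Proposition \ref{prop:vertex-fill}, exploiting residual finiteness and malnormality of primitive cyclic subgroups of free groups to separate the targeted conjugacy classes.

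First I would apply Proposition \ref{prop;clean} to produce a characteristic finite index subgroup $\Xi \leq \Gamma$ whose induced graph-of-groups decomposition $\bbY$ is trivially partially suspended and clean. Taking $m=[\Gamma:\Xi]!$, one has $\gamma_1^m,\gamma_2^m\in \Xi$, and up to conjugation in $\Xi$ I may assume $\gamma_i^m$ lies in some vertex group $\bbY_{v_i}$ of $\bbY$. Because $\gamma_i$ is not $\Gamma$-conjugate to an edge group image of $\bbX$, the element $\gamma_i^m$ is not $\Xi$-conjugate to an edge group image in $\bbY$. Writing $\bbY_{v_i}=\bbF_{v_i}\times\bk{t_{v_i}}$ and $\gamma_i^m=f_i\,t_{v_i}^{k_i}$, cleanness then forces $f_i$ to be non-conjugate in $\bbF_{v_i}$ to any power of the primitive generator $c_e\in\bbF_{v_i}$ of any incident edge group.

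Next I would choose, for each vertex $v$ of $Y$, a normal finite index subgroup $N_v\triangleleft \bbF_v$ satisfying:
\begin{enumerate}
\item[(a)] the image of $f_i$ in $\bbF_{v_i}/N_{v_i}$ is not $\bbF_{v_i}/N_{v_i}$-conjugate to any power of the image of $c_e$, for every edge $e$ incident to $v_i$;
\item[(b)] if $v_1=v_2=v$, then the images of $\gamma_1^m$ and $\gamma_2^m$ in the resulting finite quotient $Q_v$ of $\bbY_v$ are not $Q_v$-conjugate.
\end{enumerate}
Existence of such $N_v$ follows from residual finiteness of $\bbF_v$ together with the fact that $\bk{c_e}$ is a maximal cyclic (hence malnormal) subgroup, so that the non-conjugacy conditions can be detected in some finite quotient of $\bbF_v$; (b) is a standard application of conjugacy separability for finite subsets of free groups. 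Feeding these $N_v$ into Proposition \ref{prop:vertex-fill} produces a virtually free quotient $V$ of $\Xi$, realized as the fundamental group of a graph of finite groups $\bbY_Q$ with the same underlying graph $Y$.

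Finally I would verify non-conjugacy of the images $\bar\gamma_1^m,\bar\gamma_2^m$ in $V$ using the Bass--Serre description of conjugacy in a fundamental group of a graph of groups: two elliptic elements can be conjugate only via a finite chain of transfers across edges, each transfer requiring the element in question to be conjugate in its current vertex group to an element of the image of an incident edge group. Condition (a) rules out any such transfer from $\bar\gamma_i^m$, so its $V$-conjugacy class coincides with its $Q_{v_i}$-conjugacy class; then if $v_1\neq v_2$ non-conjugacy is automatic, and if $v_1=v_2$ it is furnished by (b). The main obstacle to watch is condition (a): it requires separating $f_i$ from the malnormal cyclic subgroup $\bk{c_e}$ and all of its conjugates in some finite quotient of $\bbF_{v_i}$, for each incident edge $e$ simultaneously; this is where cleanness and the primitivity of $c_e$ play the decisive role, reducing the task to standard residual-finiteness arguments for free groups rather than requiring anything as strong as Wise's omnipotence theorem used in Proposition \ref{prop;clean}.
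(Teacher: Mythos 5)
Your overall architecture is the same as the paper's: pass to the characteristic clean cover $\Xi$ of Proposition \ref{prop;clean}, take a power $m$ landing in $\Xi$, feed suitable finite-index subgroups $N_v\leq \bbF_v$ into the vertex-filling Proposition \ref{prop:vertex-fill}, and then use the standard Bass--Serre description of conjugacy of elliptic elements in the resulting graph of finite groups. The genuine gap is in your justification of condition (a). Asking for a finite quotient of $\bbF_{v_i}$ in which the image of $f_i$ is not conjugate to \emph{any} power of the image of $c_e$ is exactly the assertion that the finitely generated cyclic subgroup $\bk{c_e}$ is \emph{conjugacy distinguished} (into-conjugacy separable) in $\bbF_{v_i}$: one must separate $f_i$ from the whole union of conjugates of $\bk{c_e}$ in a single finite quotient. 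This is a genuine theorem, not a formal consequence of ``residual finiteness plus malnormality'': malnormality of a subgroup gives no finite-quotient detection of non-conjugacy into it, and your sentence gives no mechanism for it. The paper supplies precisely this ingredient by citing Wilson--Zalesskii (conjugacy distinguishedness of finitely generated subgroups of virtually free groups); for free groups one could equally cite Ribes--Zalesskii or Bogopolski--Grunewald. With such a citation your step (a) is correct, and in fact imposing (a) at every vertex uniformly is a slightly cleaner organization than the paper's case split (same vertex: conjugacy separability \`a la Stebe/Dyer; different vertices: Wilson--Zalesskii), since it removes any worry about images becoming conjugate into edge groups after the filling.

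Two smaller points you should patch. First, your claim that $\gamma_i^m$ is not $\Xi$-conjugate into an edge group of $\bbY$ because $\gamma_i$ is not $\Gamma$-conjugate into one of $\bbX$ needs the maximality/root-closedness of edge groups in the vertex groups (if $x^m$ lies in a maximal abelian $\bk{c_e}\times\bk{t_v}$ with $c_e$ not a proper power, then $x$ does too); say this, rather than attributing it to cleanness. Second, condition (b) is stated as a property of $Q_{v}$, but Proposition \ref{prop:vertex-fill} only lets you prescribe $N_v\leq \bbF_v$ and produces the exponent $N$ of $t_v$ on its own; if $\gamma_1^m$ and $\gamma_2^m$ have conjugate fibre parts and differ only in the $t_v$-exponent, you must additionally arrange (using primitivity of $c_e$, e.g. a retraction onto $\bk{c_e}$) that the order of $c_e$, hence $N$, does not divide the difference of exponents. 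Both are repairable with a few lines, but as written they are asserted rather than proved.
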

    
    As mentioned this statement, however archetypal, is not sufficient for our needs. Recall that prior to the proof of Proposition \ref{prop:iso-problem-for-suspensions} we defined the \define{peripheral structure} of a vertex group $\bbX_v$ of a graph of groups $\bbX$ to be the collection in $\bbX_v$ of conjugates of the images of the incident edge groups. We will define a \define{peripheral automorphism} to be an automorphism of $\bbX_v$ that preserves the peripheral structure. We will need a single quotient that separates multiple pairs simultaneously. For that, we need the following definition.
    
    \begin{defn}
      In a graph of group $\bbX$, equipped with a characteristic
      quotient $\pi_1 (\bbX, v_0)\to Q$, a pair of elements $g_1, g_2$ belonging to vertex groups $\bbX_{v_1}, \bbX_{v_2}$ but not conjugate into the images of the incident edge groups, is said to be \emph{strongly  $Q$-conjugacy-separated}  if
      
      \begin{itemize}
          \item if $v_1= v_2$,  then any of the pairs $(g'_1, g'_2)$ that are images of $(g_1, g_2)$ by a peripheral automorphism of the vertex group  $\bbX_{v_1}$,  maps to a pair $(\bar g'_1, \bar g'_2)$ of non-conjugate elements in $Q$,
          \item if $v_1\neq v_2$,  then any of their images by a pair of peripheral automorphisms of $\bbX_{v_1}, \bbX_{v_2}$, maps to a pair of non-conjugate elements in $Q$.
      \end{itemize}
    \end{defn}

    Here is the result we want to establish.
    
     \begin{prop}\label{prop;clean-filled}
      Let $\Gamma$ be the fundamental group of a  partially suspended  graph of groups $\bbX$. For any pair $(\gamma_1, \gamma_2)$ of non-conjugate elements of $\Gamma$ that are in vertex groups, but not conjugate to images of edge groups,
      there exists a computable characteristic finite index subgroup $\Xi$ of $\Gamma$,  
      $m \geq 1$  such that $\gamma_1^m, \gamma_2^m$ are in $\Xi$, and
      there exists a virtually free quotient $ \Xi\to V$ of $\Xi$ in which, for all $\gamma \in \Gamma$, the  
      pair $(\gamma_1^m, (\gamma_2^m)^\gamma)$ of elements of  $ \Xi$ is strongly $V$-conjugacy-separated.

    \end{prop}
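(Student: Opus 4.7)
The plan is to build $\Xi$ and $V$ in two stages: first using Proposition \ref{prop;clean} to reach a clean, partially suspended setting, then using Proposition \ref{prop:vertex-fill} with a carefully chosen vertex filling. Concretely, I first apply Proposition \ref{prop;clean} to obtain a computable characteristic finite-index subgroup $\Xi\leq\Gamma$ whose induced graph-of-groups decomposition $\bbY$ is trivially partially suspended and clean, and set $m=[\Gamma:\Xi]$; then $\gamma_1^m,\gamma_2^m\in\Xi$ remain elliptic and, up to conjugation inside $\Xi$, lie in vertex groups of $\bbY$.

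Next I would reduce the strong separation condition to a finite amount of data. On the one hand, since $\Xi\triangleleft\Gamma$ has finite index, the $\Gamma$-conjugates $(\gamma_2^m)^\gamma$ split into at most $m$ $\Xi$-conjugacy classes, and each intersects any given vertex group $\bbY_v$ in only finitely many $\bbY_v$-conjugacy classes (by the acylindricity of the action, as in Lemma \ref{lem;keep_acyl}, together with cocompactness of the $\Xi$-action on the dual Bass-Serre tree). On the other hand, once one applies Proposition \ref{prop:vertex-fill} to produce \emph{characteristic} vertex fillings $Q_v$, any peripheral automorphism of $\bbY_v$ descends to an automorphism of the \emph{finite} group $Q_v$; since $\mathrm{Aut}(Q_v)$ is finite, each relevant image admits only finitely many peripheral-automorphism orbits in $Q_v$. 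Strong $V$-conjugacy-separation of $(\gamma_1^m,(\gamma_2^m)^\gamma)$ for all $\gamma\in\Gamma$ thus reduces to distinguishing a computable finite list of pairs of image classes inside $V$.

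Finally I would apply Proposition \ref{prop:vertex-fill} with $N_v\leq\bbF_v$ chosen by combining Marshall Hall's theorem (as in Lemma \ref{lem;plenty_primitives}) with residual/conjugacy separability of free groups, so that: (i) the image in $Q_v$ of each element on the finite list is separated from every peripheral subgroup of $Q_v$; and (ii) the finitely many pairs that need distinguishing remain non-conjugate inside each $Q_v$. Point (i), via the conjugacy criterion in graphs of groups applied to the virtually free group $V=\pi_1(\bbY_Q)$ (the analogue of Lemma \ref{lem:conj-elliptic}), guarantees that no Bass-Serre move can identify two of the relevant classes, so conjugacy in $V$ collapses to conjugacy within a single $Q_v$, which (ii) then rules out. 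The main obstacle is arranging (ii) through a \emph{single characteristic} subgroup $N_v$: one must simultaneously separate the entire finite list of peripheral-automorphism orbits while preserving characteristicity so that peripheral automorphisms continue to descend to $Q_v$. I expect to handle this by starting from a preliminary finite quotient of $\bbF_v$ that separates the list, then intersecting it with all its translates under $\mathrm{Aut}(\bbF_v)$ before feeding the resulting $N_v$ into Proposition \ref{prop:vertex-fill}.
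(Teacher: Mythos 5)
Your overall strategy matches the paper's: take the clean cover of Proposition~\ref{prop;clean}, reduce to the finitely many $\Xi$-conjugacy classes $(\gamma_2^m)^\mu$ (for $\mu$ ranging over coset representatives of $\Xi$ in $\Gamma$), apply Proposition~\ref{prop:vertex-fill} with characteristic kernels so that peripheral automorphisms descend, and then observe that separating one representative pair automatically separates all its automorphic images. That last step is exactly how the paper handles the ``strong'' quantifier, and your proposed fix (find a preliminary finite quotient, then intersect over $\Aut{\bbF_v}$-translates) is the same device.

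The genuine gap is in your step~(i): you claim to separate the relevant elements from the peripheral subgroups of $Q_v$ ``by combining Marshall Hall's theorem (as in Lemma~\ref{lem;plenty_primitives}) with residual/conjugacy separability of free groups.'' Neither of those tools gives you what you need. Marshall Hall tells you a finitely generated subgroup $H\leq F$ is a free factor of a finite-index subgroup, and conjugacy separability tells you two non-conjugate \emph{elements} stay non-conjugate in some finite quotient; but what you actually need is that a finitely generated subgroup $H$ is \emph{conjugacy distinguished}, i.e.\ that $g\notin\bigcup_f H^f$ persists in a finite quotient. This is a strictly stronger property. The paper handles it by invoking Wilson–Zalesskii (\cite[Prop.~2.5]{wilson_conjugacy_1998}) on the vertex groups together with their incident edge groups; you would need to cite that result (or Ribes–Zalesskii for the free case) explicitly, because it does not follow from the tools you list. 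Concretely, this is what allows the paper's Case~2 (when $\gamma_1^m$ and $(\gamma_2^m)^\mu$ lie in non-conjugate vertex groups) to conclude: once the elements are separated from all edge-group images in the filled vertex groups $Q_v$, no Bass–Serre move can make them conjugate, so their distinct vertices already separate them.

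Two minor points. First, the vertex groups of $\bbY$ are $\bbF_v\times\bk{t_v}$, not free, so the conjugacy separability you need is for those direct products (the paper quotes Dyer for finite-order classes in virtually free groups, and handles the $\bbZ$-factor by explicit computation in Proposition~\ref{prop:vertex-fill}); citing ``conjugacy separability of free groups'' is not quite right. Second, your reduction to a ``computable finite list of pairs of image classes inside $V$'' by counting $\Aut{Q_v}$-orbits is unnecessarily circular (you'd need $Q_v$ to compute the list, but need the list to pick $Q_v$): the paper's cleaner route is to separate the original pair in a finite quotient, then pass to the characteristic intersection, at which point separation of all peripheral-automorphism images follows for free from the fact that automorphisms and the quotient map commute. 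You arrive at the same fix in your last sentence, so this is more a matter of exposition than of substance.
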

    \begin{proof}
    
        Let $\Xi$ be a characteristic (normal) finite index subgroup corresponding to a clean cover $\bbY$ of $\bbX$, as provided by Proposition \ref{prop;clean}.

       Consider  $\gamma_1, \gamma_2$ as in the statement.  Since  $\Xi$ is of finite index,  there exists an integer $m\geq 1$ such that  $\gamma_1^m, \gamma_2^m$ are in $\Xi$.  Note that the conjugacy class of $\gamma_2^m$ in $\Gamma$ is contained in $\Xi$ (by normality), and breaks into finitely many $\Xi$-conjugacy classes, given by $(\gamma_2^m)^\mu \in \Xi$, for $\mu$ ranging over a set $M$ of representatives of right cosets of $\Xi$ in $\Gamma$. Since  $\gamma_1, \gamma_2$ are not conjugate in $\Gamma$, none of the $(\gamma_2^m)^\mu $ is conjugate to $\gamma_1$ in $\Xi$.
       
       We are looking for a virtually free vertex filling of $\bbY$ that  strongly separates, for each $\mu\in M$,   the conjugacy classes of  $\gamma_1^m$ and of  $ (\gamma_2^m)^\mu$, and any of their automorphic images.  Since there are only finitely many conjugacy classes $ (\gamma_2^m)^\mu$, we may argue independently for each of them, in order to find finite index subgroups in the vertex groups that suitably define a vertex filling, and then take their finite intersection in order to obtain a vertex filling suitable for each of them.
       This reduces our study to   the following statement, in which the notation is simplified.
       
       \begin{lem}
         Let $\bbY$ be a clean  trivially partially suspended graph of groups, and $g_0$ an element of a vertex group $\bbY_{v_0}$, and 
      let   $g_1$  be an element of a vertex group $\bbY_{v_1}$, such that neither $g_i$ is conjugate to the other, neither to an adjacent edge group.

         Then there exists a computable virtually free vertex filling of $\bbY$ that strongly separates the pair $(g_0, g_1)$. 
       \end{lem}
       \begin{proof}

    We distinguish whether $g_1$ is in the same vertex group as $g_0$ or not.

       Assume first that they are in the same vertex group $\bbY_v$.
       Virtually free groups are conjugacy separable, in particular, one can separate the conjugacy class of all the elements of the vertex groups in finite quotients \cite{dyer_separating_1979}\footnote{Stebe \cite{stebe_residual_1970}, Remeslennikov \cite{remeslennikov_groups_1972}, and Wehrfritz \cite{wehrfritz} proved with different methods the conjugacy separability of infinite order elements, while Dyer  \cite{dyer_separating_1979} treated the case of finite order elements in virtually free groups, which is the case we are using.}. 
       Therefore,  there exists a normal subgroup $N_v \triangleleft \bbY_v$ such that their image in $\bbY_v / N_n$
       are still non-conjugate.  It can be computed by enumeration.  We choose $N_v$ characteristic by taking the intersection of its images by automorphisms of $\bbY_v$. Since it is an intersection of given index subgroups, it is finite index. Now, all automorphisms of $\bbY_v$ descend to the quotient $\bbY_v\to \bbY_v/N_v$. Therefore, since $g_0, g_1$ are conjugacy separated in one quotient, all the $\psi(g_0), \psi(g_1) $, for $\psi$ an automorphism of $\bbY_v$,  are conjugacy separated in $\bbY_v/N_v$.   
       Applying Proposition \ref{prop:vertex-fill}, one gets a virtually free quotient of $\pi_1(\bbY, v_0)$ in which $\bbY_v$ maps on a group that still quotient further on  $\bbY_v / N_n$. The pair $(g_0, g_1)$ has thus been strongly separated in a single virtually free vertex filling.

       Assume that  $g_0, g_1$ are not conjugate in the same  vertex groups of $\bbY$ (but are respectively in vertex groups $\bbY_{v_0}, \bbY_{v_1}$ of $\bbY$). By assumption, they are not conjugate in the edge groups adjacent to $v_0$ and $v_1$.

       By a result of Wilson and Zalesskii  \cite[Prop 2.5]{wilson_conjugacy_1998},  finitely generated subgroups of virtually free groups are conjugacy distinguished: given two (or finitely many) of them that are non-conjugate, there is a finite quotient in which they remain non-conjugate. It can be computed by enumeration.     
       
       Applying this result to the two direct products $\bbY_{v_0}$, $\bbY_{v_1}$,  their adjacent edge groups and the group generated by the elements $g_0, g_i$,  there exist finite quotients of $\bbY_{v_0}$ and of $\bbY_{v_1}$, in which $g_0$ is not conjugate in  the images of any of the adjacent edge groups, and similarly for $\bbY_{v_i}$. Let $N'_{v_0}, N'_{v_1}$ the respective kernels of these quotients, and $N_{v_0}, N_{v_1}$ the intersection of all their images by peripheral automorphisms.   We can now apply  Proposition \ref{prop:vertex-fill}, to get a virtually free quotient of $\pi_1(\bbY, v_0)$, that has a graph of group decomposition with finite vertex groups,  in which the images of $g_0$ and $g_i$ are in two distinct vertex groups, and in no adjacent edge group. This ensures that they are not conjugate in this virtually free quotient. A similar argument using the property that automorphisms of $\bbY_{v_i}$ commute with the quotient $\bbY_i\to \bbY_i/N_{v_i}$ ensures that the same is true for all the pairs obtained as images by such automorphisms of vertex groups.
       
       Hence, the pair $(g_0, g_1)$ is strongly separated in the obtained virtually free vertex filling.

       \end{proof}
    As already mentioned, the Lemma ensures the Proposition. 
       \end{proof}
       
       We end with the following corollary.
       Recall that by the Nielsen Realization Theorem, each finitely generated free group, and immediately each direct product of a finitely generated free group with a cyclic group has only finitely many conjugacy classes of outer automorphisms of finite order.
       
       By a result of Grossman \cite{grossman_residual_1974}, (see also Lubotzky \cite{lubotzky_normal_1980}), pointwise-inner automorphisms of a free group are inner.  It follows that pointwise inner automorphisms of direct products of free groups with a cyclic group are inner.    Hence   for each non-trivial outer automorphism of a group that is free or a direct product of free and cyclic, there exists a conjugacy class of elements  (actually infinitely many) that is sent on another by the automorphism.
       Call them witnesses of non-triviality.

       \begin{cor}\label{cor:all_survivors_in_VF}
         Let $\Gamma$ be the fundamental group of a  trivially partially suspended graph of groups $\bbX$, let $\bbY$ be a characteristic clean cover of $\bbX$, $\Xi$ its fundamental group, and $M$ a set of right coset representatives of $\Xi$ in $\Gamma$. 
    
        For each white vertex group $\bbY_v$ of $\bbY$, let $\calS^0_v$ consist of a finite set of witnesses for a representative of each conjugacy class of non-trivial, finite order automorphism, that are in addition,  not conjugate in any adjacent edge group. We choose $\calS_v^0$ in such a way that if $\bbY_v$ is isomorphic to $\bbY_w$ by an isomorphism preserving the peripheral structure of the adjacent edge groups, then there is such an isomorphism sending $\calS_v^0$ to $\calS_w^0$.
        
        For each white vertex $v$ in $Y$, let  $\calS_v $ be the set of images of $\calS_v^0$ by the group of peripheral  
        automorphisms of $\bbY_v$ (for the edge groups structure). 
        
         Let $\calS$ be the union of the $\calS_v$.
         
         Then there exists a computable characteristic vertex filling virtually free group in which the elements of $\calS$, and their conjugates by elements of $M$ map on non-trivial torsion elements, and such that whenever two of them are non-conjugate in $\pi_1(\bbY, v_0)$, they are not conjugate in the virtually free quotient.
         
       \end{cor}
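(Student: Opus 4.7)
The plan is to reduce the corollary to finitely many applications of the internal lemma in the proof of Proposition \ref{prop;clean-filled}, and then take a common refinement. The key observation is that the strong $V$-conjugacy-separation property already handles the peripheral-automorphism images (which may form an infinite family) uniformly, so that only finitely many base pairs need to be separated by hand.

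First I would argue that the finite set $\calS^0 := \bigcup_v \calS_v^0$ is computable and finite. Each vertex group $\bbY_v$ is a direct product of a finitely generated free group with a (possibly trivial) cyclic group, so by Nielsen Realisation it has only finitely many conjugacy classes of finite order outer automorphisms, and by construction we chose only finitely many witnesses in each class (and arranged the choice to be equivariant under isomorphisms of vertex groups preserving peripheral structure). Thus $\calS^0 \times M$ is a finite set of elements of $\Xi$ (after conjugating each $s \in \calS_v^0$ by $\mu \in M$ and moving the result into a vertex group of $\bbY$ by a $\Xi$-conjugation), and each element is non-trivial in $\Xi$.

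Next, enumerate the finite collection of ordered pairs $\{(x,y) : x,y \in \calS^0 \cdot M\}$ and partition them into those that are $\Xi$-conjugate and those that are not. For each pair $(x,y)$ of the second type, neither $x$ nor $y$ is conjugate into an adjacent edge group (by our choice of witnesses), so the lemma embedded in the proof of Proposition \ref{prop;clean-filled} applies to the clean trivially partially suspended graph of groups $\bbY$ and yields a computable virtually free vertex filling $\bbY \twoheadrightarrow \bbY_Q^{(x,y)}$ that strongly separates $(x,y)$: that is, for every pair of peripheral automorphisms $(\psi_1,\psi_2)$ of the vertex groups containing $x,y$, the images of $\psi_1(x),\psi_2(y)$ in $V^{(x,y)} = \pi_1(\bbY_Q^{(x,y)})$ are non-conjugate. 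In addition, for each singleton $x \in \calS^0 \cdot M$ I would use residual finiteness of virtually free groups to select a vertex filling in which $x$ survives as a non-trivial torsion element.

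Finally, take the common refinement of this finite collection of vertex fillings. Concretely, in each vertex group $\bbY_v$ intersect the finitely many characteristic finite-index kernels produced above (the intersection is again characteristic and of finite index), and apply Proposition \ref{prop:vertex-fill} to the refined family of vertex normal subgroups. This yields a single computable characteristic vertex filling $\Xi \twoheadrightarrow V$ with $V$ virtually free. Because a common refinement (smaller kernel) preserves both non-triviality of elements and non-conjugacy of pairs of elements, the resulting $V$ simultaneously (i) realises every $s^\mu$, $s \in \calS^0$, $\mu \in M$, as a non-trivial torsion element, and (ii) strongly separates each non-$\Xi$-conjugate pair. Writing an arbitrary element of $\calS$ as $\psi(s)$ for $s\in\calS^0$ and $\psi$ a peripheral automorphism, the strong separation clause then handles all pairs of elements of $\calS$ (and their $M$-conjugates) at once, which is precisely the conclusion of the corollary. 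The main subtlety — and the reason the "strong" version of separation in Proposition \ref{prop;clean-filled} is essential — is that $\calS$ itself can be infinite whenever some $\bbY_v$ admits infinite peripheral automorphism groups; without the uniformity over peripheral automorphisms we would not be able to reduce to a finite computation.
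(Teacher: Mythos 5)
Your proposal is correct and follows essentially the same route as the paper: reduce to finitely many base pairs (the paper uses orbits of $\calS$ under vertex-group automorphisms, you use $\calS^0\cdot M$), invoke the strong separation of Proposition \ref{prop;clean-filled} to absorb the possibly infinite peripheral-automorphism orbits, and intersect the finitely many characteristic vertex kernels to get one common characteristic vertex filling. Your only deviations are cosmetic: you cite the internal lemma rather than the proposition itself, and you add an explicit residual-finiteness step for non-triviality of the images (which the paper leaves implicit, and where you should really appeal to residual finiteness of the vertex groups $\bbF_v\times\bk{t_v}$ rather than of virtually free groups).
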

      
      \begin{proof}
      There are only finitely many orbits (by automorphisms of vertex groups) of elements in $\calS$, therefore one may apply Proposition \ref{prop;clean-filled} for each pair of those, and obtain a vertex filling suitable for this pair.  Take the intersection of the kernels of these finitely many fillings. It is of finite index in each vertex group. Take the intersection of all images by automorphisms: it is still of finite index in each vertex group since it is an intersection of same index subgroups,  and it is now characteristic. Take the quotient by this intersection. It quotients further on each of the vertex fillings. 
      Therefore, it strongly separate every pair, and is still a vertex filling by finite index subgroups in the vertex groups. Hence it is suitable for the conclusion.
     \end{proof}

            \subsubsection{Separation of many conjugacy classes}

    The following observation does not depend on the previous context, but since it will be applied to our previous construction, we take matching notations. The argument 
    was inspired by \cite[Lemmas 2.7, 3.1]{cotton-barratt_conjugacy_2012}, but the arguments of the previous section let us avoid needing to deploy profinite machinery.

  Let $\Gamma$ be a finitely generated group, $\Xi <\Gamma$ a characteristic subgroup, $\Xi \to V$ a characteristic virtually free quotient of $\Xi$, and let $\calC_V$ be the (finite) set of conjugacy classes of elements of finite order in $V$.
 
 If $\xi \in \Xi$, we denote by $[\xi]_V$ the $V$-conjugacy class of its image in the quotient $V$.  We will be interested in elements that map on finite order elements in $V$, thus defining this way an element in $\calC_V$. 
 
 Let $M$ be a set of right coset representatives of $\Xi$ in $\Gamma$.

\begin{prop}\label{prop;all_survivors_in_finite}

  Let $\Gamma, \Xi \triangleleft \Gamma$, with    $\Xi \to V$, and $\calC_V$, and $M$ be as above.

Then there exists  a computable finite quotient $q: \Gamma \to Q$ of $\Gamma$, with kernel in $\Xi$, whose restriction to $\Xi$ factorizes through $V$,  such that whenever $\xi_1, \xi_2 \in \Xi$ are such that,  for all $\mu \in M$,   $[\xi_1]_V, [ (\xi_2)^\mu]_V $ are in $\calC_V$ and are different, one has  $[q(\xi_1)]_Q \neq  [q(\xi_2)]_Q$.
\end{prop}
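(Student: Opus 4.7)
The plan is to build $Q$ by pulling back a carefully chosen finite quotient of $V$, made invariant under the (conjugation) action of the finite group $\Gamma/\Xi$. The key preliminary observation is that, since both $\Xi$ and the kernel of $\Xi\to V$ are characteristic in $\Gamma$, conjugation by $\Gamma$ on $\Xi$ descends to an action of the finite group $\Gamma/\Xi$ on $V$ by automorphisms. For any $g\in\Gamma$, write $g=\xi_0\mu$ with $\xi_0\in\Xi$ and $\mu\in M$; then $g\xi_2 g^{-1}$ is $\Xi$-conjugate to $\mu\xi_2\mu^{-1}$, and after absorbing the difference between $\mu^{-1}$ and an element of $M$ into $\Xi$, one sees that $g\xi_2 g^{-1}$ is $\Xi$-conjugate to $(\xi_2)^{\mu'}$ for some $\mu'\in M$. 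Consequently, the set of $V$-classes of the form $[g\xi_2 g^{-1}]_V$ with $g\in\Gamma$ equals $\{[(\xi_2)^\mu]_V : \mu\in M\}$, and the hypothesis of the proposition is exactly that $[\xi_1]_V$ avoids this (finite) set.

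The next step is to choose a $\Gamma/\Xi$-invariant finite-index normal subgroup $K\triangleleft V$ whose quotient separates any two distinct classes of the finite set $\calC_V$. I would invoke the effective version of Dyer's theorem on conjugacy separability of finite-order elements in virtually free groups \cite{dyer_separating_1979}: for each pair $(c,c')$ of distinct classes of $\calC_V$, one computes a finite-index normal subgroup $K_{c,c'}\triangleleft V$ in whose quotient representatives of $c$ and $c'$ become non-conjugate. Set $K_0=\bigcap_{c\neq c'}K_{c,c'}$, and then $K=\bigcap_{\sigma\in\Gamma/\Xi}\sigma(K_0)$. Since $\Gamma/\Xi$ is finite and its action on $V$ is computable, $K$ is still computable, finite-index, normal in $V$, and now $\Gamma/\Xi$-invariant. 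Because $K\subseteq K_0$, the canonical surjection $V/K\twoheadrightarrow V/K_0$ shows that pairs non-conjugate in $V/K_0$ remain non-conjugate in $V/K$, so the separation property for $\calC_V$ persists in $V/K$.

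Finally, let $W'$ be the preimage in $\Xi$ of $K$; the $\Gamma/\Xi$-invariance of $K$ makes $W'$ normal in $\Gamma$, and I set $Q=\Gamma/W'$ with the associated quotient map $q$. Then $\ker q = W'\subseteq\Xi$, and $q|_\Xi$ factors through $V\to V/K$ as required. To verify the separation property, suppose that $\xi_1,\xi_2$ satisfy the hypothesis but that $[q(\xi_1)]_Q=[q(\xi_2)]_Q$; then some $g\in\Gamma$ realizes $q(\xi_1)=q(g\xi_2 g^{-1})$. By the first paragraph, $g\xi_2 g^{-1}$ is $\Xi$-conjugate to $(\xi_2)^{\mu'}$ for some $\mu'\in M$, so inside $V/K=\Xi/W'$ the images of $\xi_1$ and $(\xi_2)^{\mu'}$ are $V$-conjugate. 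But $[\xi_1]_V\neq[(\xi_2)^{\mu'}]_V$ in $\calC_V$ by hypothesis, contradicting the separation property built into $K$.

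The only subtle point is to keep straight the distinction between $Q$-conjugacy (by an element of $Q$, not of $q(\Xi)$) and $V$-conjugacy; the right-coset decomposition $\Gamma=\bigsqcup_{\mu\in M}\Xi\mu$ is exactly what reduces a hypothetical $Q$-conjugacy between $q(\xi_1)$ and $q(\xi_2)$ to $V$-conjugacy between $\xi_1$ and some $(\xi_2)^\mu$ modulo $W'$. Everything else is an effective bookkeeping of finite intersections and finite group actions.
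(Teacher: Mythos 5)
Your proof is correct and follows essentially the paper's approach: you invoke Dyer's conjugacy separability for finite-order elements of the virtually free group $V$ to build a finite quotient separating the classes in $\calC_V$, enlarge the kernel's invariance so that its preimage in $\Xi$ is normal in $\Gamma$, and then run the same contradiction argument via the coset decomposition $\Gamma=\bigsqcup_{\mu\in M}\Xi\mu$. The only (harmless) difference is that you take $K\triangleleft V$ to be $\Gamma/\Xi$-invariant (exactly what normality of the preimage in $\Gamma$ requires), while the paper asks the pulled-back kernel to be characteristic in $\Xi$; both suffice.
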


The situation is illustrated in the diagram:
\[
\begin{array}{ccccccc}
    \Gamma  &   & \stackrel{q}{\longrightarrow} &  & Q & {\scriptstyle \hbox{{\footnotesize question: }} [q(\xi_1)]_Q \neq  [q(\xi_2)]_Q ?} \\
        \triangledown    & & & & \vee \\
        \Xi & \to & V & \to &Q_0 & {\scriptstyle \hbox{{\footnotesize by construction: }} [q(\xi_1)]_{Q_0} \neq  [q({\xi_2}^\mu)]_{Q_0}}\\
        {\scriptstyle \xi_1, \xi_2}  & & {\scriptstyle [\xi_1]_V\neq [\xi_2^\mu]_V } &  
\end{array}
\]

\begin{proof}

Recall that $V$ is conjugacy separated, in particular, its finite order conjugacy classes are separated in finite quotients \cite{dyer_separating_1979}.  We may thus find a finite quotient $Q_0$ of $V$ in which all finite  finite order conjugacy classes of $V$ map on different conjugacy classes. This defines a quotient map $\Xi \to Q_0$ with kernel of finite index in $\Xi$. Up to reducing the kernel (thus keeping the separation property), we may as well choose it so that the kernel in $\Xi$ is characteristic and finite index. It is therefore normal in $\Gamma$, and defines a finite quotient $q:\Gamma \onto Q$ with same kernel (hence in $\Xi$).

Assume that $\xi_1, \xi_2$ in $\Xi$ are as in the statement. To prove that the $Q$-conjugacy classes of their images are distinct, we proceed by contradiction, assuming $[q(\xi_1)]_Q= [q(\xi_2)]_Q$.   

Reformulating this equality: there is $\mu \in M$ and $\zeta \in \Xi$ such that $q(\xi_1)=q(\xi_2^{  \mu\zeta})$. Lifting this in $\Gamma$, we have that there exists  
$\zeta'\in \ker q$ such that $\xi_1=\xi_2^{ \mu\zeta} \zeta'$. We rewrite this as $\xi_1\in (\xi_2^{ \mu})^{\zeta}\cdot \ker q$. 

Recall that $\xi_1, \xi_2, \zeta$, and  $\xi_2^\mu$  are all in $\Xi$.   Since $\ker q  = \ker (\Xi\to Q_0) <\Xi$, 
this relation descends in $Q_0$ (through $V$) as 
$[q(\xi_1)]_{Q_0}=[q(\xi_2^{ \mu})]_{Q_0}$. However, this is contrary to our assumption on $\xi_1, \xi_2$ and the fact that $\Xi \to Q_0$ separates all conjugacy classes in $\calC_v$.


\end{proof} 

            \subsubsection{Minkowskian congruence}

            \begin{prop}
              If $\Gamma$ is the fundamental group of a trivially partially suspended graph of group $\bbX$, then, for a characteristic clean covering $\bbY$ of $\bbX$, with fundamental group $\Xi \triangleleft \Gamma$, a virtually free vertex filling $\Xi \to V$ of $\Xi$ satisfying Corollary \ref{cor:all_survivors_in_VF}, and a characteristic finite quotient $Q$ of $\Gamma$ satisfying Proposition \ref{prop;all_survivors_in_finite}, then for each element $\phi$ of $\delta \aut \bbX$ that is non-trivial, and of finite order in $\out \Gamma$, the induced automorphism of $Q$ by $\phi$ is non-trivial in $\out Q$.  
            \end{prop}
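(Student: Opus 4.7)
The plan is, given a non-trivial finite-order $\phi=(\phi_X,(\phi_v),(\phi_e),(\gamma_e))\in\delta\aut\bbX$, to exhibit a witness $s\in\calS\subset\Xi$ such that $q(s)$ and $q(\phi(s))=\bar\phi(q(s))$ lie in distinct conjugacy classes of $Q$; since inner automorphisms preserve conjugacy classes, this forces the induced automorphism $\bar\phi$ of $Q$ to be non-trivial in $\out Q$. Every non-abelian vertex group $\bbX_v$ is of the form $F_v\times\bk{t_v}$ with $F_v$ free, so its quotient by the center is torsion-free and the hypothesis of Proposition \ref{prop;2cases} is satisfied. The argument then splits according to whether $\phi_X=\mathrm{Id}$.

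If $\phi_X=\mathrm{Id}$, Proposition \ref{prop;2cases} provides a white vertex $v$ at which $\phi_v$ defines a non-trivial, finite-order element of $\out{\bbX_v}$. By the construction of $\calS_v^0$ in Corollary \ref{cor:all_survivors_in_VF}, there exists a witness $s\in\calS_v^0$ whose $\bbX_v$-conjugacy class is moved by $\phi_v$ and which is not conjugate into any adjacent edge group. Lemma \ref{lem:conj-elliptic} then upgrades this to the statement that $s$ and $\phi(s)=\phi_v(s)$ are not conjugate in $\Gamma$ either; moreover, $\phi_v$ preserves the peripheral structure of $\bbX_v$, so $\phi(s)\in\calS_v\subset\calS$.

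If $\phi_X\neq\mathrm{Id}$, pick $v$ with $w:=\phi_X(v)\neq v$ and any $s\in\calS_v^0$. In the Bass-Serre tree dual to $\bbX$, $s$ fixes a vertex whose $\Gamma$-orbit corresponds to $v$, while $\phi(s)\in\bbX_w$ fixes one in the orbit of $w$; these orbits being distinct, $s$ is not $\Gamma$-conjugate to $\phi(s)$. The restriction of $\phi$ to $\bbX_v\to\bbX_w$ is an isomorphism preserving peripheral structure (via the Bass diagram), so by the compatibility requirement in the choice of the $\calS_v^0$ across such isomorphic pairs in Corollary \ref{cor:all_survivors_in_VF}, $\phi(s)$ is $\bbX_w$-conjugate to some $s'\in\calS_w$, hence $\Xi$-conjugate to an element of $\calS$.

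In both cases, $s\in\calS$, $\phi(s)$ is $\Xi$-conjugate to some $s'\in\calS$, and $s$ is not $\Gamma$-conjugate to $s'$. A direct computation shows that $s$ is therefore not $\Xi$-conjugate to $(s')^\mu$ for any $\mu\in M$, and $[\phi(s)^\mu]_V=[(s')^\mu]_V$. Both $s$ and $(s')^\mu$ lie in finite vertex groups of the vertex filling $\bbX_Q$, so their images in $V$ are of finite order. Corollary \ref{cor:all_survivors_in_VF} therefore gives $[s]_V\neq[\phi(s)^\mu]_V$ for every $\mu\in M$, and applying Proposition \ref{prop;all_survivors_in_finite} with $\xi_1=s$ and $\xi_2=\phi(s)$ yields $[q(s)]_Q\neq[q(\phi(s))]_Q$, completing the argument. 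The main subtlety is the case $\phi_X\neq\mathrm{Id}$: we must ensure $\phi(s)$ lands, up to $\Xi$-conjugacy, among the chosen witnesses, which is precisely why the compatibility clause on the $\calS_v^0$ was built into Corollary \ref{cor:all_survivors_in_VF}.
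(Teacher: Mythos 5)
There is a genuine gap: your argument conflates the original graph of groups $\bbX$ (fundamental group $\Gamma$) with its characteristic clean cover $\bbY$ (fundamental group $\Xi$). The witnesses of Corollary \ref{cor:all_survivors_in_VF} are constructed inside the white vertex groups $\bbY_v$ of the \emph{cover}: each $\calS_v^0$ detects the finite-order outer automorphisms of $\bbY_v$, its elements lie in $\Xi$, and the filling $V$ together with Proposition \ref{prop;all_survivors_in_finite} only separates elements of $\calS$ (and their $M$-conjugates) inside $\Xi$. You instead run Proposition \ref{prop;2cases} at the level of $\bbX$, obtain a vertex $v$ of $X$ with $\phi_v$ non-trivial of finite order in $\out{\bbX_v}$, and then assert that "by the construction of $\calS_v^0$" there is a witness in $\calS_v^0$ moved by $\phi_v$. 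The corollary provides no such thing: it says nothing about automorphisms of the $\bbX_v$'s, and a witness for an automorphism of $\bbX_v$ need not lie in $\Xi$ at all (so neither $V$ nor Proposition \ref{prop;all_survivors_in_finite} would apply to it). The same mismatch recurs in your second case, where you index witnesses by vertices of $X$ and refer to "finite vertex groups of the vertex filling $\bbX_Q$", although the filling in this proposition is of $\bbY$.

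To repair this you must first descend to $\Xi$, which is exactly the step the paper's proof does and yours omits: since $\Xi$ is characteristic, $\phi$ induces an element of $\delta\aut\bbY$, and one must check that its class in $\out\Xi$ is still of finite order (a power of $\phi$ is $\ad g$, and $g^{[\Gamma:\Xi]}\in\Xi$) and still non-trivial (via uniqueness of roots: if $\phi|_\Xi$ were inner by an element of $\Xi$, the same conjugator would make $\phi$ inner in $\Gamma$). Only then can Proposition \ref{prop;2cases} be applied to $\bbY$, producing either a non-trivial permutation $\phi_Y$ of the vertices of $Y$ or some $\phi_v$ non-trivial of finite order in $\out{\bbY_v}$; in both cases the witnesses $\calS_v^0\subset\bbY_v\subset\Xi$ of Corollary \ref{cor:all_survivors_in_VF} are precisely adapted, and the chain through $V$ and $Q$ that you describe (including the reduction of $\Gamma$-conjugacy to $\Xi$-conjugacy of $M$-conjugates, which is fine) then goes through. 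As written, however, the invocation of the corollary is for objects it does not cover, so the proof does not close.
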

            
                \begin{proof}
                Let $\bbX, \Gamma,\Xi$  and $\phi$ be in the statement of the proposition. $\phi$  induces an automorphism of $\bbY$ as well, since it is characteristic.
                
                First, we check that $\phi|_\Xi$ has finite order in $\out \Xi$.  A power $\phi'$ of $\phi$ is inner in $\Gamma$, hence is $\ad{g}$ for some $g\in \Gamma$. Therefore, $(\phi')^m= \ad{g^m}$, and for  $m= [\Gamma; \Xi]$, $g^m\in \Xi$. 
                In particular a power of $\phi|_\Xi$ is inner in $\Xi$.
                
                We next check that $\phi|_\Xi$ is non-trivial in $\out \Xi$:
                if $\phi$ itself is inner in $\Xi$, then using uniqueness of roots, one can easily check that the same conjugator makes it inner in $\Gamma$.

                Denote by $\phi= (\phi_Y, (\phi_v), (\phi_e), \gamma_e)$ the obtained automorphism of $\bbY$.
                By Proposition \ref{prop;2cases},  there are two cases for $\phi$ in $\delta \aut \bbY$. 
            
                The first case is when $\phi_Y$ is non-trivial. Then let $v$ be a white vertex of $Y$ such that $\phi_Y(v)= w \neq v$. Then $\bbY_v, \bbY_w$ with their adjacent edges peripheral structures are isomorphic.

                If $g_v \in \calS_v^0$ (from Corollary \ref{cor:all_survivors_in_VF}) then it is not conjugate to $\phi_v(g_v)$, which by construction is also in $\calS$. By that corollary, both are then sent on finite order non-conjugate elements of the quotient $V$ of the corollary. The Proposition \ref{prop;all_survivors_in_finite} then ensures that they are sent to non-conjugate elements of the finite quotient $Q$ of $\Gamma$, therefore $\phi$ induces on $Q$ an automorphism that does not preserve a certain conjugacy class. It is therefore non-trivial.

                If $\phi_Y$ is trivial, there must be some $v$, such that $\phi_v$ is non-trivial and of finite order in $\out{\bbY_v}$. By the construction in Corollary \ref{cor:all_survivors_in_VF},  there exists an element in $\calS_v$ that is sent on a non-conjugate element. Again, by that Corollary, it is sent on finite order non-conjugate elements of the quotient $V$ of the corollary. The Proposition \ref{prop;all_survivors_in_finite} again allows us to conclude in the same way. 
                \end{proof}
            
                We just proved Theorem \ref{theo;TPSC_Mink} by the proposition, and therefore we proved Theorem \ref{theo;SPTS_M} as well.




{\small
\bibliographystyle{alpha} \bibliography{unipotent_linear_suspensions}

\noindent {\sc Fran\c{c}ois Dahmani, Institut Fourier, Universit\'e Grenoble Alpes, 38058 Grenoble cedex 9, France.}\\
e-mail. {\tt francois.dahmani@univ-grenoble-alpes.fr} \\  \url{https://www-fourier.univ-grenoble-alpes.fr/~dahmani}\\

\noindent {\sc Nicholas Touikan, Department of Mathematics and Statistics, University of New Brunswick, Fredericton, N.B.,  Canada, E3B 5A3}\\
e-mail. {\tt ntouikan@unb.ca}\\
\url{https://ntouikan.ext.unb.ca}

}


\end{document}